\newtheorem{theorem}{Theorem}[section]
\newtheorem{lemma}[theorem]{Lemma}
\newtheorem{corollary}[theorem]{Corollary}
\newtheorem{conjecture}[theorem]{Conjecture}
\theoremstyle{definition}
\theoremstyle{remark}
\newtheorem{example}[theorem]{Example}
\numberwithin{equation}{section}
\newlength{\tabcolseptemp}
\newcounter{theoremitemcounter}
\newenvironment{theoremtable}[1][rX]{
\setcounter{theoremitemcounter}{0}%
\noindent\ignorespaces%
\setlength{\tabcolseptemp}{\the\tabcolsep}%
\setlength{\tabcolsep}{2pt}%
\\[-0.5\baselineskip]%
\tabularx{\textwidth}{@{}#1@{}}%
}{
\endtabularx%
\\[-0.75\baselineskip]\vphantom{a}%
\setlength{\tabcolsep}{\tabcolseptemp}%
\par\noindent%
\ignorespacesafterend%
}
\newcommand{\theoremitem}[1]{
{\textup{#1}}%
\renewcommand{\thetheoremitemcounter}{{#1}}
\refstepcounter{theoremitemcounter}%
}
\newcommand{\proofitem}[2][\quad\qquad ]{{\rlap{#2:}#1}}
\newcommand{\push}[1]{\ifmeasuring@#1\else\omit$\displaystyle#1$\ignorespaces\fi}
\newcommand{\pushleft}[1]{\push{#1\hfill}}
\newcommand{\ignore}[1]{}
\newcommand{\set}[1]{\left\{#1\right\}}
\newcommand{\setl}[1]{\left\{#1\right.\mathclose{}\kern-\nulldelimiterspace}
\newcommand{\setr}[1]{\kern-\nulldelimiterspace\mathopen{}\left.#1\right\}}
\newcommand{\intint}[2]{{\llbracket#1,#2\rrbracket}}
\newcommand{\intintuptoex}[1]{{\underline{\smash{#1}}\vphantom{#1}}}
\newcommand{\intintuptoin}[1]{{\overline{#1}}}
\newcommand{\ex}{\exists\:}
\newcommand{\fa}{\forall\:}
\newcommand{\ce}{\:\colonequals\:}
\newcommand{\rce}{\:\equalscolon\:}
\newcommand{\abs}[1]{\left\lvert#1\right\rvert}
\newcommand{\floor}[1]{\left\lfloor#1\right\rfloor}
\newcommand{\ceiling}[1]{\left\lceil#1\right\rceil}
\newcommand{\powerset}{\mathcal{P}}
\newcommand{\id}{\operatorname{id}}
\newcommand{\modulo}{\%}
\newcommand{\modulus}[1]{{\mod{#1}}}
\newcommand{\Gcd}[1]{\operatorname{gcd}\left(#1\right)}
\newcommand{\Lcm}[1]{\operatorname{lcm}\left(#1\right)}
\newcommand{\Sgn}[1]{\operatorname{sgn}\left(#1\right)}
\newcommand{\I}{\mathrm{i}}
\renewcommand{\P}{\mathbb{P}}
\newcommand{\N}{\mathbb{N}}
\newcommand{\Z}{\mathbb{Z}}
\newcommand{\Q}{\mathbb{Q}}
\newcommand{\R}{\mathbb{R}}
\newcommand{\Nz}{{\mathbb{N}_0}}
\newcommand{\CoSequences}{{\mathbf S}}
\newcommand{\Sf}[1]{{#1}}
\newcommand{\SFinitial}[1]{{\mathcal{I}\left(#1\right)}}
\newcommand{\SFperiodic}[1]{{\mathcal{P}\left(#1\right)}}
\newcommand{\SFposition}[2]{{\operatorname{pos}\left(#1,#2\right)}}
\newcommand{\SFcount}[2]{{\operatorname{cnt}\left(#1,#2\right)}}
\newcommand{\SPlength}[1]{{\operatorname{len}_{#1}}}
\newcommand{\SPfinite}{{\operatorname{fin}}}
\newcommand{\SPempty}{{\operatorname{emp}}}
\newcommand{\SPboundedby}[1]{{\operatorname{bnd}_{#1}}}
\newcommand{\SPprefix}[1]{{\operatorname{pre}_{#1}}}
\newcommand{\SPsuffix}[1]{{\operatorname{suf}_{#1}}}
\newcommand{\SPperiodic}{{\operatorname{per}}}
\newcommand{\SPultimatelyperiodic}{{\operatorname{uper}}}
\newcommand{\SPaperiodic}{{\operatorname{aper}}}
\newcommand{\SPweakblock}[2]{{\operatorname{w-block}_{#1,#2}}}
\newcommand{\SPblock}[2]{{\operatorname{block}_{#1,#2}}}
\newcommand{\CoSequenceTables}{{\mathcal{S}}}
\newcommand{\STf}[1]{{\mathrm{#1}}}
\newcommand{\STFdomain}{{\operatorname{dom}}}
\newcommand{\STPdomain}[1]{{\operatorname{dom}_{#1}}}
\newcommand{\STPlength}[1]{{\operatorname{len}_{#1}}}
\newcommand{\STPfinite}{{\operatorname{fin}}}
\newcommand{\STPempty}{{\operatorname{emp}}}
\newcommand{\STPboundedby}[1]{{\operatorname{bnd}_{#1}}}
\newcommand{\SoDigitTables}[1]{{\mathcal{D}_{#1}}}
\newcommand{\DTPweakblockat}[1]{{\operatorname{w-block}_{#1}}}
\newcommand{\DTPweakblock}{{\operatorname{w-block}}}
\newcommand{\DTPblockat}[1]{{\operatorname{block}_{#1}}}
\newcommand{\DTPblock}{{\operatorname{block}}}
\newcommand{\DTPweakblockFat}[1]{{\operatorname{w-block-F}_{#1}}}
\newcommand{\DTPweakblockF}{{\operatorname{w-block-F}}}
\newcommand{\DTPweakblockSat}[1]{{\operatorname{w-block-S}_{#1}}}
\newcommand{\DTPweakblockS}{{\operatorname{w-block-S}}}
\newcommand{\SoFibredFunctions}[1]{{\mathcal{F}_{#1}}}
\newcommand{\FFf}[1]{{\mathrm{#1}}}
\newcommand{\FFFdomain}{{\operatorname{dom}}}
\newcommand{\FFFST}{{\STf{S}}}
\newcommand{\FFFDT}{{\STf{D}}}
\newcommand{\FFFSoPP}{{\operatorname{PerP}}}
\newcommand{\FFFSoUPP}{{\operatorname{UPerP}}}
\newcommand{\FFFSoAPP}{{\operatorname{APerP}}}
\newcommand{\FFFSoGenPP}{{\operatorname{PerP-Gen}}}
\newcommand{\FFFSoGenUPP}{{\operatorname{UPerP-Gen}}}
\newcommand{\FFFSoGenAPP}{{\operatorname{APerP-Gen}}}
\newcommand{\FFPweakcanonicalform}{{\operatorname{w-canf}}}
\newcommand{\FFPcanonicalform}{{\operatorname{canf}}}
\newcommand{\FFPdomain}[1]{{\operatorname{dom}_{#1}}}
\newcommand{\FFPboundedby}[1]{{\operatorname{bnd}_{#1}}}
\newcommand{\FFPclosed}{{\operatorname{closed}}}
\newcommand{\FFPweakblockat}[1]{{\operatorname{w-block}_{#1}}}
\newcommand{\FFPweakblock}{{\operatorname{w-block}}}
\newcommand{\FFPblockat}[1]{{\operatorname{block}_{#1}}}
\newcommand{\FFPblock}{{\operatorname{block}}}
\newcommand{\FFPweakblockFat}[1]{{\operatorname{w-block-F}_{#1}}}
\newcommand{\FFPweakblockF}{{\operatorname{w-block-F}}}
\newcommand{\FFPweakblockSat}[1]{{\operatorname{w-block-S}_{#1}}}
\newcommand{\FFPweakblockS}{{\operatorname{w-block-S}}}
\newcommand{\FFPavoiding}{{\operatorname{avoid}}}
\newcommand{\FFPpolynomialcoefficientsdegree}[2]{{\operatorname{poly}_{#1,#2}}}
\newcommand{\FFPpolynomialcoefficients}[1]{{\operatorname{poly}_{#1}}}
\newcommand{\FFPpolynomial}{{\operatorname{poly}}}
\newcommand{\FFPlinearpolynomialcoefficients}[1]{{\operatorname{lin-poly}_{#1}}}
\newcommand{\FFPlinearpolynomial}{{\operatorname{lin-poly}}}
\newcommand{\FFPcontractive}{{\operatorname{contr}}}
\newcommand{\FFPexpansive}{{\operatorname{exp}}}
\newcommand{\FFPmixed}{{\operatorname{mix}}}
\newcommand{\FFPcontractsdenominators}{{\operatorname{d-contr}}}
\newcommand{\FFPexpandsdenominators}{{\operatorname{d-exp}}}
\newcommand{\FFPmixesdenominators}{{\operatorname{d-mix}}}
\newcommand{\FFPsatisfies}[1]{{\left[#1\right]}}
\newcommand{\FFPperiodicon}[1]{{\operatorname{per-on}_{#1}}}
\newcommand{\FFPultimatelyperiodicon}[1]{{\operatorname{uper-on}_{#1}}}
\newcommand{\FFPaperiodicon}[1]{{\operatorname{aper-on}_{#1}}}
\newcommand{\SoFibredRationalFunctions}[1]{{\mathcal{R}_{#1}}}
\newcommand{\FRFf}[1]{{\mathrm{#1}}}
\newcommand{\FRFFdomain}{{\operatorname{dom}}}
\newcommand{\FRFF}[2]{{#1_{#2}}}
\newcommand{\FRFFST}[1]{{\STf{S}_{#1}}}
\newcommand{\FRFlinA}[2]{{A_{#1}(#2)}}
\newcommand{\FRFlinB}[2]{{B_{#1}(#2)}}
\newcommand{\FRFFint}{{\operatorname{int}}}
\newcommand{\FRFPdomain}[1]{{\operatorname{dom}_{#1}}}
\newcommand{\FRFPboundedby}[1]{{\operatorname{bnd}_{#1}}}
\newcommand{\FRFPclosed}{{\operatorname{closed}}}
\newcommand{\FRFPintegral}{{\operatorname{integral}}}
\newcommand{\FRFPavoiding}{{\operatorname{avoid}}}
\newcommand{\FRFPpolynomialcoefficientsdegree}[2]{{\operatorname{poly}_{#1,#2}}}
\newcommand{\FRFPpolynomialcoefficients}[1]{{\operatorname{poly}_{#1}}}
\newcommand{\FRFPpolynomial}{{\operatorname{poly}}}
\newcommand{\FRFPlinearpolynomialcoefficients}[1]{{\operatorname{lin-poly}_{#1}}}
\newcommand{\FRFPlinearpolynomial}{{\operatorname{lin-poly}}}
\newcommand{\FPweaklysuitableat}[3]{{\operatorname{w-suit}_{#1,#2,#3}}}
\newcommand{\FPweaklysuitable}[2]{{\operatorname{w-suit}_{#1,#2}}}
\newcommand{\FPsuitableat}[3]{{\operatorname{suit}_{#1,#2,#3}}}
\newcommand{\FPsuitable}[2]{{\operatorname{suit}_{#1,#2}}}
\newcommand{\FPavoiding}[2]{{\operatorname{avoid}_{#1,#2}}}
\newcommand{\FPintegral}[2]{{\operatorname{integral}_{#1,#2}}}
\newcommand{\SoSystems}[1]{{\overline{\mathcal{F}}_{#1}}}
\newcommand{\SoFunctions}[1]{{\overline{\mathcal{Z}}_{#1}}}
\newcommand{\SoTables}[1]{{\overline{\mathcal{D}}_{#1}}}
\newcommand{\SoPermutations}[1]{{\overline{\mathcal{P}}_{#1}}}
\newcommand{\vl}{{\mathrlap{\smash{\,\rule[-1pt]{0.4pt}{7pt}}}}}
\def\myparagraph{\@startsection{paragraph}{4}%
  \z@\z@{-\fontdimen2\font}%
  {\normalfont\bfseries\\[-0.5\baselineskip]}}
\newcommand{\myparagraphtoc}[1]{
\myparagraph{#1}
\addcontentsline{toc}{subsection}{\makebox[\widthof{\thesection.}]{}\quad\;\;--\;#1}
}
\title[An introduction to $p$-adic systems]{An introduction to $p$-adic systems: A new kind of number systems inspired by the Collatz $3n+1$ conjecture}
\author[M.~Weitzer]{Mario~Weitzer}
\address{Institute of Analysis and Number Theory,
Graz University of Technology, Steyrergasse 30/II, 8010 Graz, AUSTRIA}
\email{weitzer@math.tugraz.at}
\subjclass[2010]{Primary: 11S82, Secondary: 11A63, 11C08}
\keywords{$p$-adic systems, number systems, permutation polynomials, Collatz conjecture}
\begin{document}

\begin{abstract}
This article introduces a new kind of number systems on $p$-adic integers which is inspired by the well-known $3n+1$ conjecture of Lothar Collatz. A $p$-adic system is a piecewise function on $\mathbb{Z}_p$ which has branches for all residue classes modulo $p$ and whose dynamics can be used to define digit expansions of $p$-adic integers which respect congruency modulo powers of $p$ and admit a distinctive ``block structure''. $p$-adic systems generalize several notions related to $p$-adic integers such as permutation polynomials and put them under a common framework, allowing for results and techniques formulated in one setting to be transferred to another. The general framework established by $p$-adic systems also provides more natural versions of the original Collatz conjecture and first results could be achieved in the context. A detailed formal introduction to $p$-adic systems and their different interpretations is given. Several classes of $p$-adic systems defined by different types of functions such as polynomial functions or rational functions are characterized and a group structure on the set of all $p$-adic systems is established, which altogether provides a variety of concrete examples of $p$-adic systems. Furthermore, $p$-adic systems are used to generalize Hensel's Lemma on polynomials to general functions on $\mathbb{Z}_p$, analyze the original Collatz conjecture in the context of other ``linear-polynomial $p$-adic systems'', and to study the relation between ``polynomial $p$-adic systems'' and permutation polynomials with the aid of ``trees of cycles'' which encode the cycle structure of certain permutations of $\mathbb{Z}_p$. To outline a potential roadmap for future investigations of $p$-adic systems in many different directions, several open questions and problems in relation to $p$-adic systems are listed.
\end{abstract}

\maketitle

\tableofcontents

\newpage

\section{Introduction and motivation}
\label{SIntroduction}
The aim of this paper is to introduce a new kind of number systems on the $p$-adic integers denoted by \emph{$p$-adic systems} and to derive first non-trivial results. It is the author's hope that $p$-adic systems will provide a useful framework to describe several deeply mysterious phenomena (a central one being the famous Collatz conjecture or $3n+1$-problem) which will allow to express related results thus far contained in conceptionally and notationally isolated papers in terms of a common language, but also to gain entirely new results and insights. While problems such as the Collatz conjecture may remain a distant goal at the horizon, embedding them into a set of related yet more accessible questions will, hopefully, indicate a path in the right direction.

In order to motivate the definition of $p$-adic systems, we will start by repeating the statement of the $3n+1$-problem formulated by the German mathematician Lothar Collatz in 1937. For that we define the transformation
\begin{align}
\label{ECollatzZ}
\FFf{F}_C:\N&\to\N.\\
\nonumber
n&\mapsto
\begin{cases}
\frac{n}{2}&\text{if }n\equiv0\modulus{2}\\
\frac{3n+1}{2}&\text{if }n\equiv1\modulus{2}
\end{cases}
\end{align}
Applying $\FFf{F}_C$ repeatedly, one finds that the orbit of any natural number up to $2^{60}$ \cite{Roosendaal:2019,Silva:2019} eventually reaches $1$ where it enters the $2$-cycle $(1,2)$. Remarkably, despite the extremely simple formulation and high popularity of the problem (for an extensive overview of related work see \cite{Lagarias:1985,Lagarias:2003,Lagarias:2006,Wirsching:1998} which can all be found in the book \cite{Lagarias:2010}, and \cite{Wirsching:1998}), it has remained unproven for more than $80$ years and it appears that we are no closer to a solution than Lothar Collatz was when he first found it. To continue we consider the slightly modified transformation
\begin{align}
\FFf{F}_2:\Nz&\to\Nz.\\
\nonumber
n&\mapsto
\begin{cases}
\frac{n}{2}&\text{if }n\equiv0\modulus{2}\\
\frac{n-1}{2}&\text{if }n\equiv1\modulus{2}
\end{cases}
\end{align}
It is of course the transformation which can be used to compute the standard binary expansion of a natural number by taking its orbit modulo $2$. Obviously, the orbit of any natural number under $\FFf{F}_2$ eventually enters the $1$-cycle $(0)$. By simply replacing the linear polynomial $3x+1$ by another linear polynomial ($x-1$) the question for the ultimate behavior of the corresponding transformation changes from extremely hard to trivial. Yet, there is one decisive property which the orbits produced by both transformations have in common, i.e. that they can be used to define a ``number system'' on the $p$-adic integers that satisfies a rather natural condition. It is this property which will be central to the definition of $p$-adic systems. Clearly, the definitions of both $\FFf{F}_C$ and $\FFf{F}_2$ naturally extend to the $2$-adic integers (for an introduction to $p$-adic integers see e.g. \cite{Mahler:1981,Gouvea:1997,Katok:2007}) with the only parts to change being the domains and codomains:
\begin{align}
\label{ECollatzZtwo}
\FFf{F}_C:\Z_2&\to\Z_2,&\FFf{F}_2:\Z_2&\to\Z_2.\\
\nonumber
n&\mapsto
\begin{cases}
\frac{n}{2}&\text{if }n\equiv0\modulus{2}\\
\frac{3n+1}{2}&\text{if }n\equiv1\modulus{2}
\end{cases}
&
n&\mapsto
\begin{cases}
\frac{n}{2}&\text{if }n\equiv0\modulus{2}\\
\frac{n-1}{2}&\text{if }n\equiv1\modulus{2}
\end{cases}
\end{align}
The term ``number system'' above is put in quotation marks, as there is no strict definition of what a number system ``of'' or ``on'' some set $X$ actually is. To the author's mind it is something that can be used to give unique ``names'' (in our case infinite strings over a finite alphabet) to all elements of said set $X$ which, ideally, are not chosen at random but follow certain rules and encode some information on the represented elements. Examples would be the usual binary or decimal representations of natural numbers which come with easy algorithms that allow fast addition and multiplication of the represented numbers but, somewhat mysteriously, are of no good use when trying to obtain the factors of, say, the product of two large primes. The information on the factors is encoded in the digits of the product, but cannot easily be extracted. Alternatively, a natural number can be represented by giving a list of its prime factors which could also be considered a number system on $\N$. In this setting multiplication and factorization are straight forward, but in return addition is just as hard as factorization is in the other setting. That there appears to be no number system allowing for fast addition, multiplication and factorization all at once, is a phenomenon at the heart of many of the biggest open problems in mathematics today. The idea of defining number systems on arbitrary sets $X$ by iterative application of some transformation on $X$ led to the very general definition of \emph{fibred systems} in \cite{Schweiger:1995} (cf. also \cite{BaratBertheLiardetThuswaldner:2006}). Examples of fibred systems which have been the focus of extensive research in recent years and decades are \emph{positional notation systems} (standard and non-standard), \emph{double base number systems} \cite{DimitrovJullienMiller:1999,DimitrovImbertPradeep:2008}, \emph{continued fractions}, \emph{$\beta$-expansions} \cite{Renyi:1957,Parry:1960,Bertrand-Mathis:1989,Blanchard:1989,FrougnySolomyak:1992,Frougny:2000,Lothaire:2002,Sidorov:2003}, \emph{canonical number systems (CNS)} \cite{Knuth:1960,Penney:1965,KataiSzabo:1975,KataiKovacs:1980,KataiKovacs:1981,Kovacs:1981,Gilbert:1981,Pethoe:1991,Knuth:1998,KovacsPethoe:2006}, and \emph{shift radix systems} \cite{AkiyamaBorbelyBrunottePethoeThuswaldner:2005,AkiyamaBrunottePethoeThuswaldner:2006,AkiyamaBrunottePethoeThuswaldner:2008a,AkiyamaBrunottePethoeThuswaldner:2008b,BrunotteKirschenhoferThuswaldner:2011,KirschenhoferThuswaldner:2014,Weitzer:2015a,Weitzer:2015b,PethoeVargaWeitzer:2015,Weitzer:2015c}.

In our setting the ``names'' of $p$-adic integers are of course obtained by taking the orbits modulo $2$ which, in the case of $\FFf{F}_2$, yields the usual binary representation. The tables below show the initial parts of the orbits (which we will refer to as \emph{sequences} in the following) of several natural numbers as well as the resulting ``names'' (\emph{(digit)-expansions}).
\begin{table}[H]
\bgroup
\fontsize{8}{10}\selectfont
\begin{align*}
&
\begin{array}{r|rrrrr}
1&1&2&1&2&\cdots\\[-3pt]
2&2&1&2&1&\cdots\\[-3pt]
3&3&5&8&4&\cdots\\[-3pt]
4&4&2&1&2&\cdots\\[-3pt]
5&5&8&4&2&\cdots\\[-3pt]
6&6&3&5&8&\cdots\\[-3pt]
7&7&11&17&26&\cdots\\[-3pt]
8&8&4&2&1&\cdots\\[-3pt]
9&9&14&7&11&\cdots\\[-3pt]
10&10&5&8&4&\cdots\\[-3pt]
11&11&17&26&13&\cdots\\[-3pt]
12&12&6&3&5&\cdots\\[-3pt]
13&13&20&10&5&\cdots\\[-3pt]
14&14&7&11&17&\cdots\\[-3pt]
15&15&23&35&53&\cdots\\[-3pt]
16&16&8&4&2&\cdots\\[-3pt]
\vdots&\vdots&\vdots&\vdots&\vdots&\ddots\\
\hline
\FFFST(\FFf{F}_C)&0&1&2&3&\cdots
\end{array}
&
\begin{array}{r|rrrrr}
1&\smash{\mathrlap{\rule[6pt]{48.82pt}{0.4pt}}}1\vl&0\vl&1\vl&0\vl&\cdots\\[-3pt]
2&0\vl&1\vl&0\vl&1\vl&\cdots\\[-3pt]
3&\smash{\mathrlap{\rule[6pt]{6pt}{0.4pt}}}1&1\vl&0\vl&0\vl&\cdots\\[-3pt]
4&0&0\vl&1\vl&0\vl&\cdots\\[-3pt]
5&\smash{\mathrlap{\rule[6pt]{20pt}{0.4pt}}}1&0&0\vl&0\vl&\cdots\\[-3pt]
6&0&1&1\vl&0\vl&\cdots\\[-3pt]
7&1&1&1\vl&0\vl&\cdots\\[-3pt]
8&0&0&0\vl&1\vl&\cdots\\[-3pt]
9&\smash{\mathrlap{\rule[6pt]{34.4pt}{0.4pt}}}1&0&1&1\vl&\cdots\\[-3pt]
10&0&1&0&0\vl&\cdots\\[-3pt]
11&1&1&0&1\vl&\cdots\\[-3pt]
12&0&0&1&1\vl&\cdots\\[-3pt]
13&1&0&0&1\vl&\cdots\\[-3pt]
14&0&1&1&1\vl&\cdots\\[-3pt]
15&1&1&1&1\vl&\cdots\\[-3pt]
16&\smash{\mathrlap{\rule[-1pt]{48.82pt}{0.4pt}}}0&0&0&0\vl&\cdots\\[-3pt]
\vdots&\vdots&\vdots&\vdots&\vdots&\ddots\\
\hline
\FFFDT(\FFf{F}_C)&0&1&2&3&\cdots
\end{array}\\
&
\begin{array}{r|rrrrr}
1&1&\phantom{0}0&\phantom{0}0&\phantom{0}0&\cdots\\[-3pt]
2&2&1&0&0&\cdots\\[-3pt]
3&3&1&0&0&\cdots\\[-3pt]
4&4&2&1&0&\cdots\\[-3pt]
5&5&2&1&0&\cdots\\[-3pt]
6&6&3&1&0&\cdots\\[-3pt]
7&7&3&1&0&\cdots\\[-3pt]
8&8&4&2&1&\cdots\\[-3pt]
9&9&4&2&1&\cdots\\[-3pt]
10&10&5&2&1&\cdots\\[-3pt]
11&11&5&2&1&\cdots\\[-3pt]
12&12&6&3&1&\cdots\\[-3pt]
13&13&6&3&1&\cdots\\[-3pt]
14&14&7&3&1&\cdots\\[-3pt]
15&15&7&3&1&\cdots\\[-3pt]
16&16&8&4&2&\cdots\\[-3pt]
\vdots&\vdots&\vdots&\vdots&\vdots&\ddots\\
\hline
\mathrlap{\FFFST(\FFf{F}_2)}\phantom{\FFFST(\FFf{F}_C)}&0&1&2&3&\cdots
\end{array}
&
\begin{array}{r|rrrrr}
1&\smash{\mathrlap{\rule[6pt]{48.82pt}{0.4pt}}}1\vl&0\vl&0\vl&0\vl&\cdots\\[-3pt]
2&0\vl&1\vl&0\vl&0\vl&\cdots\\[-3pt]
3&\smash{\mathrlap{\rule[6pt]{6pt}{0.4pt}}}1&1\vl&0\vl&0\vl&\cdots\\[-3pt]
4&0&0\vl&1\vl&0\vl&\cdots\\[-3pt]
5&\smash{\mathrlap{\rule[6pt]{20pt}{0.4pt}}}1&0&1\vl&0\vl&\cdots\\[-3pt]
6&0&1&1\vl&0\vl&\cdots\\[-3pt]
7&1&1&1\vl&0\vl&\cdots\\[-3pt]
8&0&0&0\vl&1\vl&\cdots\\[-3pt]
9&\smash{\mathrlap{\rule[6pt]{34.4pt}{0.4pt}}}1&0&0&1\vl&\cdots\\[-3pt]
10&0&1&0&1\vl&\cdots\\[-3pt]
11&1&1&0&1\vl&\cdots\\[-3pt]
12&0&0&1&1\vl&\cdots\\[-3pt]
13&1&0&1&1\vl&\cdots\\[-3pt]
14&0&1&1&1\vl&\cdots\\[-3pt]
15&1&1&1&1\vl&\cdots\\[-3pt]
16&\smash{\mathrlap{\rule[-1pt]{48.82pt}{0.4pt}}}0&0&0&0\vl&\cdots\\[-3pt]
\vdots&\vdots&\vdots&\vdots&\vdots&\ddots\\
\hline
\mathrlap{\FFFDT(\FFf{F}_2)}\phantom{\FFFDT(\FFf{F}_C)}&0&1&2&3&\cdots
\end{array}
\end{align*}
\egroup
\caption{Sequences and expansions of natural numbers as given by $\FFf{F}_C$ and $\FFf{F}_2$.}
\label{TSTDTEx}
\end{table}

\noindent
It can be seen that the tables of expansions of both $\FFf{F}_C$ and $\FFf{F}_2$ admit a specific block structure which translates to a very natural condition on any number system on the $p$-adic integers:
\begin{align}
\tag{block}
\text{the first $k$ elements of the expansions of $m$ and $n$ coincide}\;\;\Leftrightarrow\;\; m\equiv n\modulus{p^k}.
\end{align}
It is this block structure which is the essential condition in the definition of $p$-adic systems. We will provide a formal definition in the upcoming section, but conclude this introduction by a verbal description and by a summary of the above observations: A $p$-adic system is a number system on $p$-adic integers which assigns an infinite string (expansion) over the alphabet $\set{0,\ldots,p-1}$ (the \emph{digits}) to any element of $\Z_p$ such that the complete ``table of expansions'' satisfies the block property. The main goal of studying $p$-adic system is to understand which parameters control what kind of expansions one can get on specific subsets of $\Z_p$, such as the integers or rational numbers in $\Z_p$. Simple changes in the definition of a $p$-adic system (such as going from $\FFf{F}_2$ to $\FFf{F}_C$) can shift questions on the resulting expansions from trivial to very hard. The hope is that by studying the entirety of $p$-adic systems one can find examples of intermediate difficulty which may shed some light on the true nature of the hard problems and help identify the ``right questions to ask''. A possible list of such examples and questions will be provided in the upcoming sections and it is the author's hope that they will arouse the curiosity of many and convince them to join in a common effort to approach them.

\newcommand{\si}[1]{{\textbf{#1}}}

\myparagraphtoc{Structure.}
This manuscript is structured as follows:\\[0.25\baselineskip]
\si{Section~\ref{SIntroduction}} summarizes the motivation and philosophy behind $p$-adic systems.\\[0.25\baselineskip]
\si{Section~\ref{SNotDef}} provides rigorous definitions of basic concepts and notions which are used throughout the paper, up to and including the central objects ``$p$-adic systems''.\\[0.25\baselineskip]
\si{Section~\ref{SInterpretations}} lists different interpretations (``ordinary functions on $\Z_p$ with block property'', ``$p$-digits tables with block property'', and ``$p$-adic permutations'') of $p$-adic systems and outlines how to translate between these different viewpoints. Furthermore, a group structure on the set of $p$-adic systems is established and the notion of ``trees of cycles'' is introduced.\\[0.25\baselineskip]
\si{Section~\ref{SCharacterization}} provides a characterization of $p$-adic systems which considers the defining functions independently from one another. This allows for the complete characterization of all $p$-adic systems which are defined by polynomials in $\Z_p[x]$, $\Q_p[x]$ and certain rational functions on $\Z_p$. These classes provide a multitude of concrete examples of $p$-adic systems.\\[0.25\baselineskip]
\si{Section~\ref{SHensel}} shows how $p$-adic systems can be used to generalize Hensel's Lemma on polynomials to general functions on $\mathbb{Z}_p$ in two different ways. The notion of ``$p$-fibred rational functions'' is introduced and investigated here.\\[0.25\baselineskip]
\si{Section~\ref{SPeriodicExpansions}} provides general results on periodic and ultimately periodic digit expansions of ``contractive'' and ``expansive'' $p$-adic systems.\\[0.25\baselineskip]
\si{Section~\ref{SLinPoly}} studies the class of ``linear-polynomial $p$-adic systems'' which are closest to the original Collatz transformation. Several conjectures which generalize the Collatz conjecture within the framework of $p$-adic systems along with first related results are listed here.\\[0.25\baselineskip]
\si{Section~\ref{SPermPoly}} describes the relation between $p$-adic systems and permutation polynomials and analyzes properties of the trees of cycles of $p$-adic systems from different classes.\\[0.25\baselineskip]
\si{Section~\ref{SQuestions}} provides a list of open questions and problems related to $p$-adic systems.\\[0.25\baselineskip]
\si{In the appendix} a list and short summary of all theorems (lemmas, corollaries, examples, etc.) can be found, as well as a list of all used symbols in order of first appearance.

\section{Notation and definitions}
\label{SNotDef}
The purpose of this somewhat technical section is to provide a solid conceptual and notational foundation for the clear and efficient, but at the same time comprehensible discussion of $p$-adic systems.

\myparagraphtoc{Basic notation.}
For any set $A$ and any set of predicates $\mathcal{P}$ we let
\begin{align}
\label{DSetPred}
A(\mathcal{P})\ce\set{a\in A\mid\fa P\in\mathcal{P}:P(a)}
\end{align}
denote the set of all elements of $A$ which satisfy all predicates in $\mathcal{P}$. If $\mathcal{P}=\set{P_1,\ldots,P_n}$ for some $n\in\Nz$, we also define the shorter version $A(P_1,\ldots,P_n)\ce A(\mathcal{P})$. For $a,b\in\R\cup\set{\pm\infty}$ let
\begin{align}
\label{DIntInt}
\intint{a}{b}&\ce\set{n\in\Z\mid a\leq n\leq b},
&
\intintuptoex{b}&\ce\intint{0}{b-1},
&
\intintuptoin{b}&\ce\intint{0}{b}.
\end{align}
For sets $A$ and $B$ let $B^A$ denote the \emph{set of all mappings} from $A$ to $B$, respectively the \emph{set of all indexed families} with index set $A$ and entries in $B$. For $n\in\Nz$ let also $B^n\ce B^\intintuptoex{n}$. For any set $A$ we may identify elements of $A$, $\set{\set{a}\mid a\in A}$, and $A^1$ by
\begin{align}
\label{EIdentify}
a\mapsto\set{a}\mapsto(a).
\end{align}

\noindent
Throughout the paper we will make heavy use of the \emph{modulo function} which shall be denoted by $\modulo$ (C++ notation). Specifically, for $0\neq m\in\Z$ and $a\in\Z$, $a\modulo m$ denotes the unique element of $\intintuptoex{\abs{m}}$ satisfying $a-a\modulo m\equiv0\modulus{m}$. Additionally, for any $2\leq p\in\N$, $a=\sum_{i=0}^\infty a_ip^i\in\Z_p$ with $a_i\in\intintuptoex{p}$ for all $i\in\Nz$, and $k\in\Nz$ let $a\modulo p^k\ce\sum_{i=0}^{k-1} a_ip^i\in\intintuptoex{p^k}$.

\myparagraphtoc{Sequences.}
In order to deal with orbits and digit expansions, we introduce the notion of sequences: the elements of any set $A^k$, where $A$ is a set and $k\in\Nz\cup\set{\infty}$, are called \emph{sequences}. The \emph{class of all sequences} shall be denoted by \label{DCoSequences}$\CoSequences$. For any sequence $\Sf{S}\in A^k$ we set \label{DSlength}$\abs{\Sf{S}}\ce k$, the \emph{length} or \emph{size} of $\Sf{S}$. For any set $A$ we define \label{DSsubseq}$\Sf{S}[A]$ to be the subsequence of $\Sf{S}$ consisting of the entries with indices in $A\cap\intintuptoex{\abs{\Sf{S}}}$ in increasing order. Furthermore, for $i,j\in\R\cup\set{\pm\infty}$ we define the shorter version \label{DSsubseqshort}$\Sf{S}[i,j]\ce\Sf{S}[\intint{i}{j}]$. Note that by Eqn.~(\ref{EIdentify}), $\Sf{S}[k]$ is defined for all $k\in\intintuptoex{\abs{\Sf{S}}}$ ($k\mapsto\set{k}$) and may be interpreted as the subsequence of length $1$ of $\Sf{S}$ which consists of the entry of $\Sf{S}$ with index $k$, or as this entry itself ($k\mapsto(k)$).

\noindent
We define the following predicates on $\CoSequences$ ($\Sf{S},\Sf{T}\in\CoSequences$, $A$ set):
\begin{flalign}
\pushleft{\SPlength{A}(\Sf{S})}&\Leftrightarrow\abs{\Sf{S}}\in A&\hspace{-2em}\text{$\Sf{S}$ has \emph{length in $A$} or, if $A=\set{a}$, $\Sf{S}$ is \emph{of length $a$}}\label{DSPlength}\\
\pushleft{\SPfinite(\Sf{S})}&\Leftrightarrow\SPlength{\Nz}(\Sf{S})&\text{$\Sf{S}$ is \emph{finite}, otherwise \emph{infinite}}\label{DSPfinite}\\
\pushleft{\SPempty(\Sf{S})}&\Leftrightarrow\SPlength{0}(\Sf{S})&\text{$\Sf{S}$ is \emph{empty}, otherwise \emph{non-empty}}\label{DSPempty}\\
\pushleft{\SPboundedby{A}(\Sf{S})}&\Leftrightarrow\fa n\in\intintuptoex{\abs{\Sf{S}}}:\Sf{S}[n]\in A&\text{$\Sf{S}$ is \emph{$A$-bounded}}\label{DSPboundedby}\\
\pushleft{\SPprefix{\Sf{T}}(\Sf{S})}&\Leftrightarrow\Sf{S}[\intintuptoex{\abs{\Sf{T}}}]=\Sf{T}&\text{$\Sf{S}$ has \emph{prefix $\Sf{T}$}}\label{DSPprefix}\\
\pushleft{\SPsuffix{\Sf{T}}(\Sf{S})}&\Leftrightarrow\Sf{S}[\abs{\Sf{S}}-\abs{\Sf{T}},\abs{\Sf{S}}-1]=\Sf{T}&\text{$\Sf{S}$ has \emph{suffix $\Sf{T}$}}\label{DSPsuffix}
\end{flalign}
\emph{Multiplication} \label{DSmult}$\Sf{S}\cdot\Sf{T}$ of two sequences $\Sf{S}$ and $\Sf{T}$ with $\Sf{S}$ being finite is defined by concatenation. If $\Sf{S}$ is infinite and $\Sf{T}$ is empty we set $\Sf{S}\cdot\Sf{T}\ce\Sf{S}$ and $\Sf{T}\cdot\Sf{S}\ce\Sf{S}$. Being the neutral element of sequence multiplication, the empty sequence shall be the result of empty products. The \emph{$n$-th power} \label{DSpow}$\Sf{S}^n$ of a finite sequence $\Sf{S}$ is the $n$-fold multiplication of $\Sf{S}$ by itself. If $\Sf{S}\neq()$, then \label{DSpowinf}$\Sf{S}^\infty$ is the infinite periodic sequence with period $\Sf{S}$, otherwise we set $()^\infty\ce()$. For an infinite sequence $\Sf{S}$ we define \label{DSFinitial}$\SFinitial{\Sf{S}}\in\CoSequences$ to be the \emph{initial part} of $\Sf{S}$ and \label{DSFperiodic}$\SFperiodic{\Sf{S}}\in\CoSequences(\SPfinite)$ to be the \emph{periodic part} of $\Sf{S}$, i.e. $\SFinitial{\Sf{S}}$ and $\SFperiodic{\Sf{S}}$ are chosen shortest possible (with $\SFinitial{\Sf{S}}$ having the precedence) such that $\Sf{S}=\SFinitial{\Sf{S}}\cdot\SFperiodic{\Sf{S}}^\infty$.

\noindent
We define the following additional predicates on $\CoSequences$ ($\Sf{S}\in\CoSequences$):
\begin{flalign}
\pushleft{\SPperiodic(\Sf{S})}&\Leftrightarrow\neg\SPfinite(\Sf{S})\land\SPempty(\SFinitial{\Sf{S}})&\text{$\Sf{S}$ is \emph{(purely) periodic}}\label{DSPperiodic}\\
\pushleft{\SPultimatelyperiodic(\Sf{S})}&\Leftrightarrow\neg\SPfinite(\Sf{S})\land\SPfinite(\SFinitial{\Sf{S}})&\text{$\Sf{S}$ is \emph{ultimately periodic}}\label{DSPultimatelyperiodic}\\
\pushleft{\SPaperiodic(\Sf{S})}&\Leftrightarrow\neg\SPfinite(\SFinitial{\Sf{S}})&\text{$\Sf{S}$ is \emph{aperiodic}}\label{DSPaperiodic}
\end{flalign}
Any function $f:A\to B$ between arbitrary sets $A$ and $B$ extends naturally to $\CoSequences(\SPboundedby{A})$ by \label{DSfunc}$\Sf{S}\mapsto(f(\Sf{S}[k]))_{k\in\intintuptoex{\abs{\Sf{S}}}}\in\CoSequences(\SPboundedby{B})$ (\emph{entry-wise application of $f$ to $\Sf{S}$}).

\myparagraphtoc{Sequence tables.}
The entirety of orbits of a transformation will be collected in ``tables'' (cf. Table~\ref{TSTDTEx}) which motivates the definition of sequence tables: the elements of any set $\CoSequences(\SPboundedby{B},\SPlength{k})^A$, where $A$ and $B$ are sets and $k\in\Nz\cup\set{\infty}$, are called \emph{sequence tables}. The \emph{class of all sequence tables} shall be denoted by \label{DCoSequenceTables}$\CoSequenceTables$. For any sequence table $\STf{S}\in\CoSequences(\SPboundedby{B},\SPlength{k})^A$ we set \label{DSTFdomain}$\STFdomain(\STf{S})\ce A$, the \emph{domain} of $\STf{S}$, and \label{DSTlength}$\abs{\STf{S}}\ce k$, the \emph{length} or \emph{size} of $\STf{S}$. For any $\STf{S}\in\CoSequenceTables$ and any $n\in\STFdomain(\STf{S})$ let \label{DSTentry}$\STf{S}[n]$ denote the $n$-th entry of $\STf{S}$ (the \emph{$n$-th row of $\STf{S}$} or \emph{$\STf{S}$-sequence of $n$}). For any subset $A$ of $\STFdomain(\STf{S})$ we define \label{DSTrestriction}$\STf{S}\vert_A\ce(\STf{S}[n])_{n\in A}$, the \emph{restriction of $\STf{S}$ to $A$}, and for any arbitrary set $A$ we define \label{DSTsubtable}$\STf{S}\llbracket A\rrbracket\ce(\STf{S}[n][A])_{n\in\STFdomain(\STf{S})}$. Furthermore, for $i,j\in\R\cup\set{\pm\infty}$ we define the shorter version \label{DSTsubtableshort}$\STf{S}\llbracket i,j\rrbracket\ce\STf{S}\llbracket\intint{i}{j}\rrbracket$. \label{DSTmultpowfunc}\emph{Multiplication} and \emph{exponentiation} of sequences as well as \emph{entry-wise applications of functions} to sequences carry over to sequence tables (row-wise).

\noindent
We define the following predicates on $\CoSequenceTables$ ($\STf{S}\in\CoSequenceTables$, $A$ set):
\begin{flalign}
\pushleft{\STPdomain{A}(\STf{S})}&\Leftrightarrow\STFdomain(\STf{S})=A&\text{$\STf{S}$ has \emph{domain $A$}}\label{DSTPdomain}\\
\pushleft{\STPlength{A}(\STf{S})}&\Leftrightarrow\abs{\STf{S}}\in A&\hspace{-4em}\text{$\STf{S}$ has \emph{length in $A$} or, if $A=\set{a}$, $\STf{S}$ is \emph{of length $a$}}\label{DSTPlength}\\
\pushleft{\STPfinite(\STf{S})}&\Leftrightarrow\STPlength{\Nz}(\STf{S})&\text{$\STf{S}$ is \emph{finite}, otherwise \emph{infinite}}\label{DSTPfinite}\\
\pushleft{\STPempty(\STf{S})}&\Leftrightarrow\STPlength{0}(\STf{S})&\text{$\STf{S}$ is \emph{empty}, otherwise \emph{non-empty}}\label{DSTPempty}\\
\pushleft{\STPboundedby{A}(\STf{S})}&\Leftrightarrow\fa n\in\STFdomain(\STf{S}):\SPboundedby{A}(\STf{S}[n])&\text{$\STf{S}$ is \emph{$A$-bounded}}\label{DSTPboundedby}
\end{flalign}

\myparagraphtoc{$p$-digit tables.}
In addition to sequence tables, we define the specialised $p$-digit tables which will be used to represent collections of expansions. Let $2\leq p\in\N$. An element $\STf{D}$ of any set $\CoSequenceTables(\STPdomain{A},\STPboundedby{\intintuptoex{p}})$, where $A\subseteq\Z_p$\footnote{Note that many authors define $p$-adic integers only for prime numbers $p$. The main reason is that if $p$ has at least two different prime factors, then the ring $\Z_p$ is no domain anymore making it less useful in many situations. However, in the setting of this paper the existence of zero divisors does not cause any problems and we thus do not limit our definition to prime numbers. Note that $\Z_p\simeq\Z_{p_1}\times\cdots\times\Z_{p_\ell}$ where $p_1,\ldots,p_\ell$ are the distinct prime factors of $p$. In the appendix a short discussion of the issue can be found.}, is called \emph{$p$-digit table} if it satisfies the condition
\begin{align}
\label{EDT}
&\fa n\in\STFdomain(\STf{D}):\STf{D}[n][0]=n\modulo p.
\end{align}
The \emph{set of all $p$-digit tables} shall be denoted by \label{DSoDigitTables}$\SoDigitTables{p}$. For any $\STf{D}\in\SoDigitTables{p}$ and $n\in\STFdomain(\STf{D})$ the \emph{$\STf{D}$-digit expansion of $n$} is given by \label{DDTentry}$\STf{D}[n]$ and for $k\in\Nz$ the \emph{$k$-th digit of $n$ with respect to $\STf{D}$} is given by \label{DDTentryentry}$\STf{D}[n][k]$.

\noindent
We define the following predicates on $\SoDigitTables{p}$ ($\STf{D}\in\SoDigitTables{p}$, $K\subseteq\Nz$) (cf. Table~\ref{TSTDTEx}):
\begin{flalign}
\pushleft{\DTPweakblockat{K}(\STf{D})}&\Leftrightarrow\fa k\in K:\fa m,n\in\STFdomain(\STf{D}):&\hspace{-4em}\text{$\STf{D}$ has the \emph{weak block property at $K$}}\label{DDTPweakblockat}\\
\nonumber
&\phantom{\vphantom{a}\Leftrightarrow\vphantom{a}}\quad m\equiv n\modulus{p^k}\Rightarrow \STf{D}[m][\intintuptoex{k}]=\STf{D}[n][\intintuptoex{k}]\\
\pushleft{\DTPweakblock(\STf{D})}&\Leftrightarrow\DTPweakblockat{\Nz}(\STf{D})&\text{$\STf{D}$ has the \emph{weak block property}}\label{DDTPweakblock}\\
\pushleft{\DTPblockat{K}(\STf{D})}&\Leftrightarrow\fa k\in K:\fa m,n\in\STFdomain(\STf{D}):&\text{$\STf{D}$ has the \emph{block property at $K$}}\label{DDTPblockat}\\
\nonumber
&\phantom{\vphantom{a}\Leftrightarrow\vphantom{a}}\quad m\equiv n\modulus{p^k}\Leftrightarrow\STf{D}[m][\intintuptoex{k}]=\STf{D}[n][\intintuptoex{k}]\\
\pushleft{\DTPblock(\STf{D})}&\Leftrightarrow\DTPblockat{\Nz}(\STf{D})&\text{$\STf{D}$ has the \emph{block property}}\label{DDTPblock}
\end{flalign}

\myparagraphtoc{$p$-fibred functions.}
The transformations $\FFf{F}_C$ and $\FFf{F}_2$ defined in the previous section are both piecewise functions on $\Z_2$ with branches for both residue classes modulo $2$. Throughout the paper we will use a very useful notation for functions given in this way. Let $2\leq p\in\N$. The elements of any set $\CoSequences(\SPboundedby{(\Z_p)^A},\SPlength{p})$, where $A\subseteq\Z_p$, are called \emph{$p$-fibred functions}, i.e. a $p$-fibred function $\FFf{F}=(\FFf{F}[0],\ldots,\FFf{F}[p-1])$ is a $p$-tuple of functions $\FFf{F}[r]:A\to\Z_p$, $r\in\intintuptoex{p}$, on some fixed subset $A$ of the $p$-adic integers. The \emph{set of all $p$-fibred functions} shall be denoted by \label{DSoFibredFunctions}$\SoFibredFunctions{p}$. For any $p$-fibred function $\FFf{F}\in\CoSequences(\SPboundedby{(\Z_p)^A},\SPlength{p})$ we set \label{DFFFdomain}$\FFFdomain(\FFf{F})\ce A$, the \emph{domain of $\FFf{F}$}. We interpret $\FFf{F}$ itself as a function on its domain in the following way:
\begin{align}
\label{EFibredFunction}
\FFf{F}:\FFFdomain(\FFf{F})&\to\Z_p\\
\nonumber
n&\mapsto\frac{\FFf{F}[n\modulo p](n)-\FFf{F}[n\modulo p](n)\modulo p}{p}
\end{align}
where \label{Dmodulo}$\modulo$ is the modulo function (C++ notation). Note that the subtrahend in the numerator of the fraction above has the mere function to guarantee that the result is divisible by $p$. For any subset $A$ of $\FFFdomain(\FFf{F})$ we define \label{DFFrestriction}$\FFf{F}\vert_A\ce(\FFf{F}[0]\vert_A,\ldots,\FFf{F}[p-1]\vert_A)$, the \emph{restriction of $\FFf{F}$ to $A$}. For any ordinary function on a subset $A$ of $\Z_p$ there is a $p$-fibred function showing the same behavior (i.e. the images of all elements of $A$ under both the ordinary function and the $p$-fibred function coincide). If $f:A\to\Z_p$ is such an ordinary function, then one such $p$-fibred function showing the same behavior is given by
\begin{align}
\label{ROrdinaryFunction}
(pf,\ldots,pf).
\end{align}
In general (indeed, in any case) there are several different $p$-fibred functions representing a given ordinary function. ``Representing the same ordinary function'' defines an equivalence relation on $\SoFibredFunctions{p}$:
\begin{align}
\label{EEquiv}
\FFf{F}\sim_p\FFf{G}\quad\Leftrightarrow\quad\FFFdomain(\FFf{F})=\FFFdomain(\FFf{G})\land\fa n\in\FFFdomain(\FFf{F}):\FFf{F}(n)=\FFf{G}(n).
\end{align}
For every equivalence class of $\sim_p$ there is a canonical representative fixed by the following predicate on $\SoFibredFunctions{p}$ ($\FFf{F}\in\SoFibredFunctions{p}$):
\begin{flalign}
\pushleft{\FFPcanonicalform(\FFf{F})}&\Leftrightarrow\fa r\in\intintuptoex{p}:\fa n\in\FFFdomain(\FFf{F}):&\text{$\FFf{F}$ is \emph{in canonical form}}\label{EPCanF}\\
\nonumber
&\phantom{\vphantom{a}\Leftrightarrow\vphantom{a}}\quad\FFf{F}[r](n)\in
\begin{cases}
p\Z_p&\text{if }n\equiv r\modulus{p}\\
\set{0}&\text{if }n\not\equiv r\modulus{p}
\end{cases}
\end{flalign}
The canonical representative of the equivalence class of a $p$-fibred function $\FFf{F}$ is called the \emph{canonical form} of $\FFf{F}$. In some cases it is useful to consider the following weaker predicate on $\SoFibredFunctions{p}$ ($\FFf{F}\in\SoFibredFunctions{p}$):
\begin{flalign}
\pushleft{\FFPweakcanonicalform(\FFf{F})}&\Leftrightarrow\fa r\in\intintuptoex{p}:\FFf{F}[r]((r+p\Z_p)\cap\FFFdomain(\FFf{F}))\subseteq p\Z_p&\text{$\FFf{F}$ is \emph{in weak canonical form}}\label{EPWCanF}
\end{flalign}
If $\FFf{G}$ is a $p$-fibred function in weak canonical form and $\FFf{F}\sim_p\FFf{G}$, then $\FFf{G}$ is called a \emph{weak canonical form} of $\FFf{F}$. If $\FFf{F}$ is a $p$-fibred function in weak canonical form, the definition of its corresponding ordinary function simplifies to
\begin{align}
\FFf{F}:\FFFdomain(\FFf{F})&\to\Z_p.\\
\nonumber
n&\mapsto\frac{\FFf{F}[n\modulo p](n)}{p}
\end{align}

\noindent
We define the following predicates on $\SoFibredFunctions{p}$ ($\FFf{F}\in\SoFibredFunctions{p}$, $A$ set):
\begin{flalign}
\pushleft{\FFPdomain{A}(\FFf{F})}&\Leftrightarrow\FFFdomain(\FFf{F})=A&\text{$\FFf{F}$ has \emph{domain $A$}}\label{DFFPdomain}\\
\pushleft{\FFPboundedby{A}(\FFf{F})}&\Leftrightarrow\FFf{F}(\FFFdomain(\FFf{F}))\subseteq A&\text{$\FFf{F}$ is \emph{$A$-bounded}}\label{DFFPboundedby}\\
\pushleft{\FFPclosed(\FFf{F})}&\Leftrightarrow\FFPboundedby{\FFFdomain(\FFf{F})}(\FFf{F})&\text{$\FFf{F}$ is \emph{closed}}\label{DFFPclosed}
\end{flalign}

\noindent
For any closed $p$-fibred function $\FFf{F}$ we call \label{DFFFST}$\FFFST(\FFf{F})\ce\big(\big(\FFf{F}^k(n)\big)_{k\in\Nz}\big)_{n\in\FFFdomain(\FFf{F})}\in\CoSequenceTables(\STPdomain{\FFFdomain(\FFf{F})},\neg\STPfinite)$ the \emph{$\FFf{F}$-sequence table}, and \label{DFFFDT}$\FFFDT(\FFf{F})\ce\FFFST(\FFf{F})\modulo p\in\SoDigitTables{p}(\STPdomain{\FFFdomain(\FFf{F})},\neg\STPfinite)$ the \emph{$\FFf{F}$-digit table}. For $n\in\FFFdomain(\FFf{F})$ the \label{DFseq}\emph{$\FFf{F}$-sequence of $n$} is given by the $\FFFST(\FFf{F})$-sequence of $n$, the \label{DFdigexp}\emph{$\FFf{F}$-digit expansion of $n$} by the $\FFFDT(\FFf{F})$-digit expansion of $n$, and for $k\in\Nz$ the \label{DFdigit}\emph{$k$-th digit of $n$ with respect to $\FFf{F}$} is given by the $k$-th digit of $n$ with respect to $\FFFDT(\FFf{F})$.

\noindent
The predicates \label{DFPblock}$\FFPweakblockat{K}$, $\FFPweakblock$, $\FFPblockat{K}$, and $\FFPblock$ carry over to $\SoFibredFunctions{p}(\FFPclosed)$ by $\FFFDT(\FFf{F})$.

\myparagraphtoc{$p$-adic systems.}
We are now in the position to define our central object of interest. A \emph{$p$-adic system} is a $p$-fibred function with domain $\Z_p$ (which implies that it is closed) which has the block property. The \emph{set of all $p$-adic systems} is thus given by \label{DSoSystems}$\SoSystems{p}\ce\SoFibredFunctions{p}(\FFPdomain{\Z_p},\FFPblock)$.

Following the definition above we are able to write
\begin{align}
\label{ECollatz}
\FFf{F}_C&\ce(x,3x+1)\\
\label{EBinary}
\FFf{F}_2&\ce(x,x-1)\sim_p(x,x).
\end{align}
$(x,x-1)$ is a weak canonical form of $(x,x)$ and the canonical form of $\FFf{F}_C$ is given by
\begin{align}
(x(x\equiv0\modulus{2}\;?\;1:0),(3x+1)(x\equiv1\modulus{2}\;?\;1:0))
\end{align}
where \label{Dcondfunc}$(P(x)\;?\;f(x):g(x))$ is a function on $\Z_p$ which reads ``if $P(x)$ then $f(x)$, else $g(x)$'' (again, C++ notation). Both $\FFf{F}_C$ and $\FFf{F}_2$ have the block property (as we will show later, Corollary~\ref{CPropPolyF}~(2)) and are thus examples of $p$-adic systems.

\section{Three and a half interpretations of $p$-adic systems}
\label{SInterpretations}
In this sections we will establish different ways to think about $p$-adic systems, all of which are valid interpretations in there own right along the one which we ultimately chose to be the formal definition. As with any introduction to a mathematical object, one has to decide which description to call ``definition'' and which ``characterization'' instead. Recall that the questions on $p$-adic systems we are most interested in are those on their ultimate behavior (such as whether the orbits of all natural numbers under $\FFf{F}_C$ actually end up in $1$). It is in this regard that the mentioned interpretations will be equivalent.

\myparagraphtoc{The formal definition: $p$-fibred functions with block property.}
We begin by repeating the formal definition which would be the first interpretation in our list of three and a half: A $p$-adic system is a $p$-fibred function $\FFf{F}$ with domain $\Z_p$, i.e. a piecewise function on $\Z_p$ with branches for all residue classes modulo $p$, such that the $p$-digit table $\FFFDT(\FFf{F})$ given by the $\FFf{F}$-digit expansions satisfies the block property: $\FFFDT(\FFf{F})[m][\intintuptoex{k}]=\FFFDT(\FFf{F})[n][\intintuptoex{k}]$ if and only if $m\equiv n\modulus{p^k}$ for all $k\in\Nz$ and for all $m,n\in\Z_p$. According to this definition a $p$-adic system is a ``dynamical object'' as the property we are most interested in (the corresponding digit expansions) is defined by the dynamical process of repeatedly applying the $p$-adic system. We recall that $\sim_p$ (cf. Eqn.~(\ref{EEquiv})) defines an equivalence relation on $\SoFibredFunctions{p}$ (we provide a formal proof in the lemma below), where equivalent $p$-fibred functions show exactly the same dynamical behavior and thus define equal $p$-digit tables. Since the $p$-digit tables are what we really care about, one might consider a $p$-adic system to be a whole equivalence class of $\sim_p$ instead of a single $p$-fibred function.

\begin{lemma}
\label{LFFCanForm}
Let $2\leq p\in\N$ and $\FFf{F}\in\SoFibredFunctions{p}$. Then, there is a unique $\FFf{F}_c\in\SoFibredFunctions{p}$ such that $\FFf{F}_c$ is a canonical form of $\FFf{F}$. In this case $\FFFST(\FFf{F}_c)=\FFFST(\FFf{F})$ and $\FFFDT(\FFf{F}_c)=\FFFDT(\FFf{F})$. If $\FFf{G}$ is another $p$-fibred function and $\FFf{G}_c$ its canonical form, then $\FFf{F}\sim_p\FFf{G}$ if and only if $\FFf{F}_c=\FFf{G}_c$. In particular, $\sim_p$ is an equivalence relation on $\SoFibredFunctions{p}$ and every equivalence class contains a unique canonical form.
\end{lemma}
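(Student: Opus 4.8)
The plan is to prove that the induced ordinary function, together with the domain, determines the canonical form by an explicit formula, and then to read off all four assertions from this single fact.

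First I would settle existence and uniqueness together by unwinding the canonical-form predicate~(\ref{EPCanF}). The key observation is that, by Eqn.~(\ref{EFibredFunction}), the value $\FFf{F}(n)$ depends only on the single branch $\FFf{F}[n\modulo p]$ evaluated at $n$, all other branches being irrelevant to the induced ordinary function. So suppose $\FFf{F}_c$ is \emph{any} canonical form of $\FFf{F}$ (i.e.\ $\FFf{F}_c\sim_p\FFf{F}$ and $\FFf{F}_c$ in canonical form). For $n\not\equiv r\modulus{p}$ the predicate forces $\FFf{F}_c[r](n)=0$; for $n\equiv r\modulus{p}$, i.e.\ $r=n\modulo p$, it forces $\FFf{F}_c[r](n)\in p\Z_p$, so that $\FFf{F}_c[r](n)\modulo p=0$ and hence $\FFf{F}(n)=\FFf{F}_c(n)=\FFf{F}_c[r](n)/p$, giving $\FFf{F}_c[r](n)=p\FFf{F}(n)$. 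Every branch is thus forced by the pair $(\FFFdomain(\FFf{F}),\FFf{F})$, which yields uniqueness. Conversely, I would \emph{define}
\[
\FFf{F}_c[r](n)\ce
\begin{cases}
p\FFf{F}(n)&\text{if }n\equiv r\modulus{p}\\
0&\text{if }n\not\equiv r\modulus{p}
\end{cases}
\]
for $r\in\intintuptoex{p}$ and $n\in\FFFdomain(\FFf{F})$, and check the three requirements: it is a well-defined $p$-fibred function with the same domain (each branch lands in $p\Z_p\subseteq\Z_p$); it is in canonical form by construction; and since $\FFf{F}_c[n\modulo p](n)=p\FFf{F}(n)\in p\Z_p$ one computes $\FFf{F}_c(n)=p\FFf{F}(n)/p=\FFf{F}(n)$, so $\FFf{F}_c\sim_p\FFf{F}$.

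For the statement on tables I would first note that $\FFf{F}_c\sim_p\FFf{F}$ means the two induce the same ordinary function on a common domain; in particular one is closed exactly when the other is, so $\FFFST$ and $\FFFDT$ are simultaneously defined. A routine induction on $k$ then gives $\FFf{F}_c^k(n)=\FFf{F}^k(n)$ for all $n$ in the domain, whence $\FFFST(\FFf{F}_c)=\FFFST(\FFf{F})$, and applying $\modulo p$ row-wise yields $\FFFDT(\FFf{F}_c)=\FFFDT(\FFf{F})$.

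Finally, the explicit formula shows $\FFf{F}_c$ depends only on $\FFFdomain(\FFf{F})$ and the induced function $\FFf{F}$; hence for a second $p$-fibred function $\FFf{G}$ one has $\FFf{F}\sim_p\FFf{G}$ (equal domains and equal induced functions) if and only if the formula returns identical branches, i.e.\ $\FFf{F}_c=\FFf{G}_c$. That $\sim_p$ is an equivalence relation is immediate from its definition~(\ref{EEquiv}) as equality of domains plus pointwise equality of induced functions, and each class contains a unique canonical form by the existence and uniqueness already shown, since a function in canonical form is its own canonical form. I expect no serious obstacle: the only point needing genuine care is the well-definedness of division by $p$ on $p\Z_p$ for composite $p$, which amounts to the injectivity of $x\mapsto px$ on $\Z_p$ and follows from $\Z_p\simeq\Z_{p_1}\times\cdots\times\Z_{p_\ell}$ (the $p_i$ the distinct prime factors of $p$).
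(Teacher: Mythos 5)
Your proposal is correct and follows essentially the same route as the paper: the explicit branch formula you write down, $\FFf{F}_c[r](n)=p\FFf{F}(n)$ on $n\equiv r\modulus{p}$ and $0$ elsewhere, coincides with the paper's $\FFf{F}[r](n)-\FFf{F}[r](n)\modulo p$, and the remaining assertions are read off from it in the same way. You merely spell out the uniqueness and well-definedness points (which the paper dismisses with ``by construction'') in more detail, which is fine.
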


\begin{proof}
Let $\FFf{F}_c\in\SoFibredFunctions{p}$ with
\begin{align}
\FFf{F}_c[r]:\FFFdomain(\FFf{F})&\to\FFFdomain(\FFf{F})\\
\nonumber
n&\mapsto
\begin{cases}
\FFf{F}[r](n)-\FFf{F}[r](n)\modulo p&\text{if }n\equiv r\modulus{p}\\
0&\text{if }n\not\equiv r\modulus{p}
\end{cases}
\end{align}
for all $r\in\intintuptoex{p}$. Then, $\FFf{F}_c$ is a canonical form of $\FFf{F}$ and its uniqueness follows by construction. Clearly, $\FFFST(\FFf{F}_c)=\FFFST(\FFf{F})$ and hence $\FFFDT(\FFf{F}_c)=\FFFDT(\FFf{F})$.

Let $\FFf{G}\in\SoFibredFunctions{p}$. If $\FFf{F}\sim_p\FFf{G}$, then $\FFFdomain(\FFf{F})=\FFFdomain(\FFf{G})$, and for all $r\in\intintuptoex{p}$ and all $n\in(r+p\Z_p)\cap\FFFdomain(\FFf{F})$ we get
\begin{align}
\FFf{F}_c[r](n)&=\FFf{F}[r](n)-\FFf{F}[r](n)\modulo p=p\FFf{F}(n)=p\FFf{G}(n)=\FFf{G}[r](n)-\FFf{G}[r](n)\modulo p=\FFf{G}_c[r](n)
\end{align}
and hence $\FFf{F}_c=\FFf{G}_c$.

If, however, $\FFf{F}_c=\FFf{G}_c$, then again $\FFFdomain(\FFf{F})=\FFFdomain(\FFf{G})$ and for all $r\in\intintuptoex{p}$ and all $n\in(r+p\Z_p)\cap\FFFdomain(\FFf{F})$ we get
\begin{align}
\FFf{F}(n)&=\frac{\FFf{F}[r](n)-\FFf{F}[r](n)\modulo p}{p}=\frac{\FFf{F}_c[r](n)}{p}=\frac{\FFf{G}_c[r](n)}{p}=\frac{\FFf{G}[r](n)-\FFf{G}[r](n)\modulo p}{p}=\FFf{G}(n)
\end{align}
and hence $\FFf{F}\sim_p\FFf{G}$.
\end{proof}

\noindent
From now on we identify elements of $\SoFibredFunctions{p}$ and $\SoFibredFunctions{p}\slash_{\sim_p}$ by $\FFf{F}\mapsto[\FFf{F}]_{\sim_p}$, e.g. $[\FFf{F}]_{\sim_p}(n)=\FFf{F}(n)$, $\FFFDT([\FFf{F}]_{\sim_p})=\FFFDT(\FFf{F})$, etc.

\myparagraphtoc{Ordinary functions on $\Z_p$ with block property.}
This alternative interpretation accounts for the ``half'' in the title of this section and has already been mentioned in the formal definition of $p$-adic systems in the previous section. Any $p$-fibred function defines an ordinary function on its domain (Eqn.~(\ref{EFibredFunction})) and, vice versa, any ordinary function on some subset of $\Z_p$ is equal to the ordinary function defined by some $p$-fibred function (Eqn.~(\ref{ROrdinaryFunction})). A class of the equivalence relation $\sim_p$ consists of exactly those $p$-fibred functions which represent the same ordinary function. For any $A\subseteq\Z_p$ there is thus a one-to-one correspondence between $\SoFibredFunctions{p}(\FFPdomain{A})\slash_{\sim_p}$ and $(\Z_p)^A$. The distinction between $p$-fibred functions and ordinary functions is therefore only a matter of notation and the question that remains is: how does the block property of $p$-fibred functions translate to ordinary functions? The answer is given by the following Lemma.

\begin{lemma}
\label{LOrdFuncBlock}
Let $2\leq p\in\N$, $\FFf{F}\in\SoFibredFunctions{p}(\FFPclosed)$, and $k\in\N\cup\set{\infty}$. Then,

\begin{theoremtable}
\theoremitem{(1)}&$(\fa\ell\in\intint{1}{k}:\fa m,n\in\FFFdomain(\FFf{F})$ with $m\equiv n\!\!\modulus{p}:m\equiv n\!\modulus{p^\ell}\Rightarrow\FFf{F}(m)\equiv\FFf{F}(n)\!\modulus{p^{\ell-1}})$\tabularnewline
&\quad$\Rightarrow\FFPweakblockat{\intintuptoin{k}}(\FFf{F})$\tabularnewline
\theoremitem{(2)}&$(\fa\ell\in\intint{1}{k}:\fa m,n\in\FFFdomain(\FFf{F})$ with $m\equiv n\!\modulus{p}:m\equiv n\!\modulus{p^\ell}\Leftrightarrow\FFf{F}(m)\equiv\FFf{F}(n)\!\modulus{p^{\ell-1}})$\tabularnewline
&\quad$\Leftrightarrow\FFPblockat{\intintuptoin{k}}(\FFf{F})$.
\end{theoremtable}
\end{lemma}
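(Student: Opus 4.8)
The plan is to reduce both parts to a single induction on the level, powered by the shift relation between the $\FFf{F}$-digit expansion of $n$ and that of $\FFf{F}(n)$. Since $\FFf{F}^{j}(n)=\FFf{F}^{j-1}(\FFf{F}(n))$ for every $j\geq 1$, the $j$-th digit of $n$ equals the $(j-1)$-th digit of $\FFf{F}(n)$; hence for every level $j$ the prefix $\FFFDT(\FFf{F})[n][\intintuptoex{j+1}]$ is obtained by prepending $n\modulo p$ to $\FFFDT(\FFf{F})[\FFf{F}(n)][\intintuptoex{j}]$. Because $\FFf{F}$ is closed, $\FFf{F}(m),\FFf{F}(n)\in\FFFdomain(\FFf{F})$ whenever $m,n\in\FFFdomain(\FFf{F})$, so these shifted prefixes are themselves rows of $\FFFDT(\FFf{F})$ and the inductive hypothesis will apply to them.

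For (1) I would prove by induction on $j\in\intintuptoin{k}$ the assertion \emph{weak block at level $j$}, namely that $m\equiv n\modulus{p^{j}}$ forces $\FFFDT(\FFf{F})[m][\intintuptoex{j}]=\FFFDT(\FFf{F})[n][\intintuptoex{j}]$. The cases $j=0$ (empty prefix) and $j=1$ (the zeroth digit is $n\modulo p$) are immediate. For the step to $j+1\leq k$, assume $m\equiv n\modulus{p^{j+1}}$; the zeroth digits agree since $m\equiv n\modulus{p}$, and applying the hypothesis of (1) with $\ell=j+1$ gives $\FFf{F}(m)\equiv\FFf{F}(n)\modulus{p^{j}}$. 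The inductive hypothesis then yields agreement of the first $j$ digits of $\FFf{F}(m)$ and $\FFf{F}(n)$, which by the shift relation are precisely digits $1,\dots,j$ of $m$ and $n$; hence the first $j+1$ digits agree.

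For the forward direction of (2) I would run the same induction but with the full biconditional \emph{block at level $j$}. In the step to $j+1$, if $m\not\equiv n\modulus{p}$ then both $m\equiv n\modulus{p^{j+1}}$ and the agreement of the zeroth digits fail, so one may assume $m\equiv n\modulus{p}$; then condition (2) with $\ell=j+1$ converts $m\equiv n\modulus{p^{j+1}}$ into $\FFf{F}(m)\equiv\FFf{F}(n)\modulus{p^{j}}$, which by the inductive hypothesis applied to $\FFf{F}(m),\FFf{F}(n)$ is equivalent to agreement of their first $j$ digits, i.e.\ of digits $1,\dots,j$ of $m,n$, and with the zeroth digit this is agreement of the first $j+1$ digits. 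For the reverse direction, assuming $\FFPblockat{\intintuptoin{k}}(\FFf{F})$, I would unwind, for $m\equiv n\modulus{p}$ and $1\leq\ell\leq k$, the chain $m\equiv n\modulus{p^{\ell}}\Leftrightarrow\FFFDT(\FFf{F})[m][\intintuptoex{\ell}]=\FFFDT(\FFf{F})[n][\intintuptoex{\ell}]\Leftrightarrow\FFFDT(\FFf{F})[\FFf{F}(m)][\intintuptoex{\ell-1}]=\FFFDT(\FFf{F})[\FFf{F}(n)][\intintuptoex{\ell-1}]\Leftrightarrow\FFf{F}(m)\equiv\FFf{F}(n)\modulus{p^{\ell-1}}$, using block at level $\ell$, the shift relation (the zeroth digits agreeing), and block at level $\ell-1$; this is precisely condition (2).

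The mathematical content is light and the work is entirely bookkeeping. The main obstacle is keeping the off-by-one indices aligned: the drop from $p^{\ell}$ to $p^{\ell-1}$ under $\FFf{F}$ must mirror the drop of exactly one digit position under the shift, and conflating the two would derail the induction. One must also note that the side condition $m\equiv n\modulus{p}$ is automatic in the $\Rightarrow$ directions (being implied by $m\equiv n\modulus{p^{\ell}}$) yet genuinely needed in the $\Leftarrow$ direction of (2), where for $\ell=1$ the congruence $\FFf{F}(m)\equiv\FFf{F}(n)\modulus{p^{0}}$ is vacuous; closedness of $\FFf{F}$ is precisely what licenses applying the inductive hypothesis to the rows $\FFf{F}(m),\FFf{F}(n)$, and the case $k=\infty$ needs no separate argument since the induction already ranges over all finite levels.
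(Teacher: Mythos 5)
Your proposal is correct and follows essentially the same route as the paper's proof: an induction on the level driven by the digit-shift relation $\FFFDT(\FFf{F})[n][\intintuptoex{\ell}]=(n\modulo p)\cdot\FFFDT(\FFf{F})[\FFf{F}(n)][\intintuptoex{\ell-1}]$ for parts (1) and the forward direction of (2), and the same chain of equivalences for the reverse direction of (2). The bookkeeping points you flag (the off-by-one alignment, the role of closedness, and the need for the side condition $m\equiv n\modulus{p}$ in the reverse direction) all match the paper's argument.
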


\noindent
Before we give a proof of the lemma, we analyze its meaning. (2) completely characterizes when an ordinary function $f:A\to A$, $A\subseteq\Z_p$ (which is the function given by the $p$-fibred function $\FFf{F}$ in the lemma with $A=\FFFdomain(\FFf{F})$) has the block property by only considering a single application of $f$. However, (1) only provides a sufficient condition for the weak block property as the following example shows.

\begin{example}
\label{EOrdFuncBlock}
Let $\FFf{F}\in\SoFibredFunctions{2}(\FFPdomain{\Z_2})$ with $\FFf{F}[0](n)=(n=8\;?\;4:0)$ and $\FFf{F}[1](n)=2$ for all $n\in\Z_2$. Then, $\FFFDT(\FFf{F})=((n\modulo2)^\infty)_{n\in\Z_2}$ and, hence, $\FFPweakblock(\FFf{F})$. At the same time we get $0\equiv8\modulus{2^3}$ but $\FFf{F}(0)=0\not\equiv2=\FFf{F}(8)\modulus{2^2}$.
\end{example}

\noindent
We will show later (Example~\ref{ENoCharacWBlk}) that no characterization of the weak block property of the above kind can exist that is both necessary and sufficient.

\begin{proof}[Proof of Lemma~\ref{LOrdFuncBlock}]
$ $\\
\proofitem{(1)}%
We prove $\FFPweakblockat{\ell}(\FFf{F})$ for all $\ell\in\intintuptoin{k}$ by induction on $\ell$: clearly, $\FFPweakblockat{0}(\FFf{F})$. Assume $\ell\in\intint{1}{k}$ and that the statement holds for $\ell-1$. Let $m,n\in\FFFdomain(\FFf{F})$ with $m\equiv n\modulus{p^\ell}$. Then $m\equiv n\modulus{p}$ and thus
\begin{align}
\FFFDT(\FFf{F})[m][0]=\FFFDT(\FFf{F})[n][0]
\end{align}
by definition of $\FFFDT(\FFf{F})$. Furthermore,
\begin{align}
m\equiv n\modulus{p^\ell}
&\Rightarrow\FFf{F}(m)\equiv\FFf{F}(n)\modulus{p^{\ell-1}}\\
\text{(ind. hyp.)\quad}&\Rightarrow\FFFDT(\FFf{F})[\FFf{F}(m)][0,\ell-2]=\FFFDT(\FFf{F})[\FFf{F}(n)][0,\ell-2]\\
&\Leftrightarrow\FFFDT(\FFf{F})[m][0]\cdot\FFFDT(\FFf{F})[\FFf{F}(m)][0,\ell-2]=\FFFDT(\FFf{F})[n][0]\cdot\FFFDT(\FFf{F})[\FFf{F}(n)][0,\ell-2]\\
&\Leftrightarrow\FFFDT(\FFf{F})[m][\intintuptoex{\ell}]=\FFFDT(\FFf{F})[n][\intintuptoex{\ell}]
\end{align}
which implies $\FFPweakblockat{\ell}(\FFf{F})$.

\noindent
\proofitem{(2)}%
We prove $``\Rightarrow$'' by showing $\FFPblockat{\ell}(\FFf{F})$ for all $\ell\in\intintuptoin{k}$ by induction on $\ell$: clearly, $\FFPblockat{0}(\FFf{F})$. Assume $\ell\in\intint{1}{k}$ and that the statement holds for $\ell-1$. Let $m,n\in\FFFdomain(\FFf{F})$ with $m\equiv n\modulus{p^\ell}$. Then $m\equiv n\modulus{p}$ and thus
\begin{align}
\FFFDT(\FFf{F})[m][0]=\FFFDT(\FFf{F})[n][0]
\end{align}
as before. Furthermore,
\begin{align}
m\equiv n\modulus{p^\ell}
&\Leftrightarrow\FFf{F}(m)\equiv\FFf{F}(n)\modulus{p^{\ell-1}}\\
\text{(ind. hyp.)\quad}&\Leftrightarrow\FFFDT(\FFf{F})[\FFf{F}(m)][0,\ell-2]=\FFFDT(\FFf{F})[\FFf{F}(n)][0,\ell-2]\\
&\Leftrightarrow\FFFDT(\FFf{F})[m][0]\cdot\FFFDT(\FFf{F})[\FFf{F}(m)][0,\ell-2]=\FFFDT(\FFf{F})[n][0]\cdot\FFFDT(\FFf{F})[\FFf{F}(n)][0,\ell-2]\\
&\Leftrightarrow\FFFDT(\FFf{F})[m][\intintuptoex{\ell}]=\FFFDT(\FFf{F})[n][\intintuptoex{\ell}]
\end{align}
which implies $\FFPblockat{\ell}(\FFf{F})$.

For the proof of ``$\Leftarrow$'' let $m,n\in\FFFdomain(\FFf{F})$ with $m\equiv n\modulus{p}$. Then,
\begin{align}
\FFFDT(\FFf{F})[m][0]=\FFFDT(\FFf{F})[n][0].
\end{align}
Furthermore,
\begin{align}
m\equiv n\modulus{p^\ell}
&\Leftrightarrow\FFFDT(\FFf{F})[m][\intintuptoex{\ell}]=\FFFDT(\FFf{F})[n][\intintuptoex{\ell}]\\
&\Leftrightarrow\FFFDT(\FFf{F})[m][0]\cdot\FFFDT(\FFf{F})[\FFf{F}(m)][0,\ell-2]=\FFFDT(\FFf{F})[n][0]\cdot\FFFDT(\FFf{F})[\FFf{F}(n)][0,\ell-2]\\
&\Leftrightarrow\FFFDT(\FFf{F})[\FFf{F}(m)][0,\ell-2]=\FFFDT(\FFf{F})[\FFf{F}(n)][0,\ell-2]\\
&\Leftrightarrow\FFf{F}(m)\equiv\FFf{F}(n)\modulus{p^{\ell-1}}.
\end{align}
\end{proof}

If we denote by \label{DSoFunctions}$\SoFunctions{p}$ the set of all functions $f:\Z_p\to\Z_p$ which satisfy\begin{align}
\fa k\in\N:\fa m,n\in\Z_p:m\equiv n\modulus{p^k}\Leftrightarrow f(m)\equiv f(n)\modulus{p^{k-1}},
\end{align}
as a consequence, there is a bijection between $\SoSystems{p}\slash_{\sim_p}$ and $\SoFunctions{p}$ given by
\begin{align}
\SoSystems{p}\slash_{\sim_p}&\leftrightarrow\SoFunctions{p}.\\
\nonumber
\FFf{F}&\mapsto(\FFf{F}:\Z_p\to\Z_p)\\
\nonumber
(pf,\ldots,pf)&\mapsfrom f
\end{align}

\myparagraphtoc{$p$-digit tables with block property.}
As argued above, both of the previous interpretations consider $p$-adic systems to be ``dynamical objects''. The dynamical behavior of a (closed) $p$-fibred function is used to define a ``static object'', the corresponding $p$-digit table, whose structure determines whether the $p$-fibred function is considered a $p$-adic system (which is the case precisely if the $p$-digit table has the block property). Surprisingly, it is also possible to go in the other direction, i.e. start out with the static object and use it to define a dynamical one. Indeed, there is a one-to-one correspondence between $\SoSystems{p}\slash_{\sim_p}$ and \label{DSoTables}$\SoTables{p}\ce\SoDigitTables{p}(\STPdomain{\Z_p},\neg\STPfinite,\DTPblock)$. Clearly, every $p$-adic system $\FFf{F}$ defines an infinite $p$-digit table with block property by its $\FFf{F}$-digit table $\FFFDT(\FFf{F})$ and any $p$-adic system that is equivalent to $\FFf{F}$ defines the same $p$-digit table. The other two facts implied by the existence of the mentioned one-to-one correspondence are probably less obvious: the $p$-digit tables of two $p$-adic systems $\FFf{F}$ and $\FFf{G}$ are identical if and only if $\FFf{F}$ and $\FFf{G}$ are equivalent and every infinite $p$-digit table with domain $\Z_p$ and block property is the $\FFf{F}$-digit table of some $p$-adic system $\FFf{F}$. In the following we will prove just that and we begin by interpreting any $p$-digit table as a multivalued function on its domain. We will show that if $\STf{D}$ is infinite and has domain $\Z_p$ and the block property, then this function actually defines the $p$-adic system we are looking for. For any $\STf{D}\in\SoDigitTables{p}$ let
\begin{align}
\label{DDTfunc}
\STf{D}:\STFdomain(\STf{D})&\to\powerset(\STFdomain(\STf{D}))\\
\nonumber
n&\mapsto\set{m\in\STFdomain(\STf{D})\mid\STf{D}[m][0,\abs{\STf{D}}-2]=\STf{D}[n][1,\abs{\STf{D}}-1]}
\end{align}
where \label{Dpowerset}$\powerset(A)$ denotes the power set of a set $A$. What $\STf{D}$ (as a function) does is, it takes an element $n$ from its domain, drops the first entry (entry with index $0$) from the $\STf{D}$-digit expansion of $n$, and returns all elements of its domain which have the resulting sequence as their initial $\STf{D}$-digit expansion. Using this  notation we continue with a characterization of all $p$-fibred functions which define a given $p$-digit table.

\begin{theorem}
\label{TDTFF}
Let $2\leq p\in\N$ and $\STf{D}\in\SoDigitTables{p}$. Then, for every $\FFf{F}\in\SoFibredFunctions{p}(\FFPdomain{\STFdomain(\STf{D})},\FFPclosed)$ we get $\FFFDT(\FFf{F})\llbracket\intintuptoex{\abs{\STf{D}}}\rrbracket=\STf{D}$ if and only if $\FFf{F}(n)\in\STf{D}(n)$ for every $n\in\STFdomain(\STf{D})$. In particular, there is a $\FFf{F}\in\SoFibredFunctions{p}(\FFPdomain{\STFdomain(\STf{D})},\FFPclosed)$ such that $\FFFDT(\FFf{F})\llbracket\intintuptoex{\abs{\STf{D}}}\rrbracket=\STf{D}$ if and only if $\STf{D}(n)\neq\emptyset$ for every $n\in\STFdomain(\STf{D})$.
\end{theorem}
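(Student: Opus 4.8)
The plan is to reduce everything to one structural identity for the digit table of a closed $p$-fibred function — a ``shift relation'' — together with an induction on the column index. First I would record that for closed $\FFf{F}$ and any $n\in\FFFdomain(\FFf{F})$ one has $\FFFST(\FFf{F})[n][k]=\FFf{F}^k(n)$, so that $\FFFDT(\FFf{F})[n][0]=n\modulo p$ and, for $k\geq1$, $\FFFDT(\FFf{F})[n][k]=\FFFDT(\FFf{F})[\FFf{F}(n)][k-1]$ (because $\FFf{F}^k(n)=\FFf{F}^{k-1}(\FFf{F}(n))$). In words, the $\FFf{F}$-digit expansion of $n$ consists of the digit $n\modulo p$ followed by the expansion of $\FFf{F}(n)$. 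Next I would unwind the defining condition of the multivalued function $\STf{D}$: the statement $\FFf{F}(n)\in\STf{D}(n)$ says exactly that $\STf{D}[\FFf{F}(n)][j]=\STf{D}[n][j+1]$ for all $j\in\intint{0}{\abs{\STf{D}}-2}$, i.e.\ that dropping the first digit of the $\STf{D}$-row of $n$ reproduces the initial segment of the $\STf{D}$-row of $\FFf{F}(n)$. Setting $m=\FFf{F}(n)\in\STFdomain(\STf{D})$ (well-defined since $\FFf{F}$ is closed) makes the parallel between these two facts manifest; this is the conceptual core.

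For the equivalence I would argue both directions directly. ($\Rightarrow$) Assuming $\FFFDT(\FFf{F})\llbracket\intintuptoex{\abs{\STf{D}}}\rrbracket=\STf{D}$, fix $n$ and put $m=\FFf{F}(n)$; for each $j\in\intint{0}{\abs{\STf{D}}-2}$ both $j$ and $j+1$ lie in $\intintuptoex{\abs{\STf{D}}}$, so the hypothesis and the shift relation give $\STf{D}[m][j]=\FFFDT(\FFf{F})[m][j]=\FFFDT(\FFf{F})[n][j+1]=\STf{D}[n][j+1]$, which is precisely $m\in\STf{D}(n)$. ($\Leftarrow$) Assuming $\FFf{F}(n)\in\STf{D}(n)$ for all $n$, I would prove $\FFFDT(\FFf{F})[n][k]=\STf{D}[n][k]$ for all $n$ and all $k\in\intintuptoex{\abs{\STf{D}}}$ by induction on $k$. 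The base case $k=0$ holds since both sides equal $n\modulo p$. For the step, with $m=\FFf{F}(n)$ I chain the shift relation, the induction hypothesis applied to $m$ at index $k-1$, and the membership (with $j=k-1$): $\FFFDT(\FFf{F})[n][k]=\FFFDT(\FFf{F})[m][k-1]=\STf{D}[m][k-1]=\STf{D}[n][k]$.

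The ``in particular'' claim is then formal. If such an $\FFf{F}$ exists, the equivalence gives $\FFf{F}(n)\in\STf{D}(n)$, so each $\STf{D}(n)$ is nonempty. Conversely, if every $\STf{D}(n)\neq\emptyset$, I would choose $f(n)\in\STf{D}(n)$ for each $n$ and take the $p$-fibred function $(pf,\ldots,pf)$ of Eqn.~(\ref{ROrdinaryFunction}); it has domain $\STFdomain(\STf{D})$, is closed since $f(n)\in\STf{D}(n)\subseteq\STFdomain(\STf{D})$, and represents the ordinary function $f$, so $\FFf{F}(n)=f(n)\in\STf{D}(n)$ and the equivalence applies. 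I expect the only real friction to be notational rather than conceptual: keeping the column-index arithmetic consistent across the finite and infinite cases for $\abs{\STf{D}}$, so that the subsequences $\STf{D}[\,\cdot\,][0,\abs{\STf{D}}-2]$ and $\STf{D}[\,\cdot\,][1,\abs{\STf{D}}-1]$ always have matching length and index range, and reading the base case correctly when $\abs{\STf{D}}$ is $0$ or $1$ (where the membership condition becomes vacuous). Since the shift relation holds verbatim in both cases, once the indexing conventions are pinned down the induction goes through uniformly.
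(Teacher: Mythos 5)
Your proposal is correct and follows essentially the same route as the paper's proof: the forward direction via the shift identity $\FFFDT(\FFf{F})[n][1,\infty]=\FFFDT(\FFf{F})[\FFf{F}(n)]$, the converse by induction on the column index using that same identity, and the ``in particular'' part by choosing a selector $f(n)\in\STf{D}(n)$ and forming $(pf,\ldots,pf)$. The only difference is cosmetic (inducting on the column index $k$ rather than on the prefix length $k$), and your explicit check that the constructed $\FFf{F}$ is closed because $\STf{D}(n)\subseteq\STFdomain(\STf{D})$ is a detail the paper leaves implicit.
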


\begin{proof}
For ``$\Rightarrow$'' let $n\in\STFdomain(\STf{D})$ and observe that $\FFFDT(\FFf{F})[n][1,\infty]=\FFFDT(\FFf{F})[\FFf{F}(n)]$ which implies that $\STf{D}[n][1,\abs{\STf{D}}-1]=\FFFDT(\FFf{F})[n][1,\abs{\STf{D}}-1]=\FFFDT(\FFf{F})[\FFf{F}(n)][0,\abs{\STf{D}}-2]=\STf{D}[\FFf{F}(n)][0,\abs{\STf{D}}-2]$. Therefore, $\FFf{F}(n)\in\STf{D}(n)$.

To show ``$\Leftarrow$'' we prove $\FFFDT(\FFf{F})\llbracket\intintuptoex{k}\rrbracket=\STf{D}\llbracket\intintuptoex{k}\rrbracket$ for all $k\in\intint{1}{\abs{\STf{D}}}$ by induction on $k$. If $k=1$ then this is clearly true by the definition of $p$-digit tables. Now assume $k\geq2$ and let $n\in\STFdomain(\STf{D})$. We get $\FFFDT(\FFf{F})\llbracket\intintuptoex{k-1}\rrbracket=\STf{D}\llbracket\intintuptoex{k-1}\rrbracket$ by the induction hypothesis. Furthermore, $\FFFDT(\FFf{F})[n][k-1]=\FFFDT(\FFf{F})[\FFf{F}(n)][k-2]=\STf{D}[\FFf{F}(n)][k-2]=\STf{D}[n][k-1]$. Thus, $\FFFDT(\FFf{F})\llbracket\intintuptoex{k}\rrbracket=\STf{D}\llbracket\intintuptoex{k}\rrbracket$ and consequently $\FFFDT(\FFf{F})\llbracket\intintuptoex{\abs{\STf{D}}}\rrbracket=\STf{D}$.
\end{proof}

It is clear from the previous theorem that in order to get a better understanding of the relation between $p$-adic systems and $p$-digit tables with block property, we need to investigate the structure of the sets $\STf{D}(n)$. Before doing so, we need to prove two basic but useful lemmas on finite $p$-digit tables with block property at their lengths. From now on we interpret any complete residue system (CRS) $R\subseteq\Z_p$ modulo $p^k$ (i.e. $\abs{R}=p^k\in\N$ and no two distinct elements of $R$ are congruent modulo $p^k$, e.g. $R=\intintuptoex{p^k}$) as a function $R:\Z_p\to R$ which maps any $n\in\Z_p$ to the unique \label{DCRSfunc}$R(n)\in R$ with $n\equiv R(n)\modulus{p^k}$.

\begin{lemma}
\label{LFinBlDTCompl}
Let $2\leq p\in\N$, $k\in\N$, $\STf{D}\in\SoDigitTables{p}(\STPlength{k},\DTPblockat{k})$, $R\subseteq\STFdomain(\STf{D})$ a CRS modulo $p^k$, and $\Sf{D}\in\CoSequences(\SPboundedby{\intintuptoex{p}},\SPlength{k})$. Then there is a unique $n\in R$ such that $\STf{D}[n]=\Sf{D}$.
\end{lemma}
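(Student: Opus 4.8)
The plan is to package both assertions into a single map and dispatch them by a counting argument. Define the evaluation map $\phi\colon R\to\CoSequences(\SPboundedby{\intintuptoex{p}},\SPlength{k})$ by $n\mapsto\STf{D}[n]$; then uniqueness is the injectivity of $\phi$ and existence is its surjectivity. First I would check that $\phi$ is well-defined. Since $\STf{D}$ is a $p$-digit table of length $k$, it is $\intintuptoex{p}$-bounded and each row has length $k$, so every $\STf{D}[n]$ with $n\in R\subseteq\STFdomain(\STf{D})$ is indeed a length-$k$ sequence over the alphabet $\intintuptoex{p}$, i.e.\ an element of the stated codomain.

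For injectivity I would invoke the hypothesis $\DTPblockat{k}(\STf{D})$. Because the table has length $k$, the full row $\STf{D}[n]$ coincides with $\STf{D}[n][\intintuptoex{k}]$, so the block property at the single level $k$ reads: for all $m,n\in\STFdomain(\STf{D})$, $\STf{D}[m]=\STf{D}[n]\Leftrightarrow m\equiv n\modulus{p^k}$. If $\phi(m)=\phi(n)$ for some $m,n\in R$, the forward direction of this equivalence yields $m\equiv n\modulus{p^k}$; since $R$ is a CRS modulo $p^k$, no two distinct elements are congruent, hence $m=n$. This is precisely the uniqueness claim, and it already uses only the ``$\Leftarrow$'' half of the block property (equal rows force congruence).

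For existence I would run a cardinality count. By the definition of a CRS modulo $p^k$ the domain $R$ has exactly $p^k$ elements, while the codomain $\CoSequences(\SPboundedby{\intintuptoex{p}},\SPlength{k})$ is the set of all length-$k$ strings over the $p$-letter alphabet $\intintuptoex{p}$ and hence also has exactly $p^k$ elements. An injection between finite sets of equal cardinality is automatically a bijection, so $\phi$ is surjective: for the given target $\Sf{D}$ there is some $n\in R$ with $\STf{D}[n]=\Sf{D}$.

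Since the whole argument is essentially pigeonhole, I do not expect a genuine obstacle. The only points requiring care are the bookkeeping that lets one read $\DTPblockat{k}$ as an equivalence on entire rows (which is exactly where the assumption $\STPlength{k}$ is consumed), and confirming that both $R$ and the string space have cardinality exactly $p^k$ so that injectivity upgrades to surjectivity. It is worth noting that both directions of the block property are used: ``$\Leftarrow$'' for injectivity and the equicardinality (a numerical shadow of ``$\Rightarrow$'') for surjectivity.
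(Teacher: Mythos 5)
Your proposal is correct and follows essentially the same argument as the paper, which proves the lemma in one line by observing that $\abs{\CoSequences(\SPboundedby{\intintuptoex{p}},\SPlength{k})}=\abs{R}=p^k$ and that $\DTPblockat{k}(\STf{D})$ forces each sequence to occur exactly once among the rows $\STf{D}[r]$, $r\in R$. Your write-up merely makes explicit the injectivity-plus-counting structure that the paper leaves implicit.
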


\begin{proof}
Since $\abs{\CoSequences(\SPboundedby{\intintuptoex{p}},\SPlength{k})}=\abs{R}=p^k$ and $\DTPblockat{k}(\STf{D})$, every sequence in $\CoSequences(\SPboundedby{\intintuptoex{p}},\SPlength{k})$ occurs exactly once among the sequences $\STf{D}[r]$, $r\in R$.
\end{proof}

\begin{lemma}
\label{LSBlDTBl}
Let $2\leq p\in\N$, $k,\ell\in\Nz$ with $k<\ell$, $\STf{D}\in\SoDigitTables{p}(\STPlength{\ell},\DTPblockat{\ell},\DTPweakblockat{k})$ such that $\STFdomain(\STf{D})$ contains a CRS modulo $p^\ell$. Then $\DTPblockat{k}(\STf{D})$.
\end{lemma}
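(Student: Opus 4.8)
The block property at $k$ asserts the equivalence $m\equiv n\modulus{p^k}\Leftrightarrow\STf{D}[m][\intintuptoex{k}]=\STf{D}[n][\intintuptoex{k}]$ for all $m,n\in\STFdomain(\STf{D})$. The forward implication is precisely $\DTPweakblockat{k}(\STf{D})$, which is among the hypotheses, so the whole task reduces to the reverse implication: equal length-$k$ prefixes must force congruence modulo $p^k$. The plan is to prove this first on a fixed CRS $R\subseteq\STFdomain(\STf{D})$ modulo $p^\ell$ by a counting argument, and then to bootstrap from $R$ to the entire domain using the block property at $\ell$.

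For the restriction to $R$, I would consider the truncation map $\phi\colon R\to\CoSequences(\SPboundedby{\intintuptoex{p}},\SPlength{k})$ defined by $r\mapsto\STf{D}[r][\intintuptoex{k}]$. Since $\STf{D}$ has length $\ell$ and $\DTPblockat{\ell}(\STf{D})$ holds, Lemma~\ref{LFinBlDTCompl} (applied with the CRS $R$) shows that $r\mapsto\STf{D}[r]$ is a bijection from $R$ onto the set of all length-$\ell$ digit sequences. Because every length-$k$ sequence is the prefix of exactly $p^{\ell-k}$ length-$\ell$ sequences, the map $\phi$ is surjective and every fiber of $\phi$ has cardinality exactly $p^{\ell-k}$. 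On the other hand, $\phi$ is constant on each residue class modulo $p^k$ inside $R$ by $\DTPweakblockat{k}(\STf{D})$, and since $R$ is a CRS modulo $p^\ell$ each such class contains exactly $p^{\ell-k}$ elements of $R$. Thus every residue class is contained in a single fiber of $\phi$, and as class and fiber share the common cardinality $p^{\ell-k}$, the inclusion is an equality; the two partitions of $R$ into classes and into fibers coincide. In particular, $\STf{D}[r_1][\intintuptoex{k}]=\STf{D}[r_2][\intintuptoex{k}]$ forces $r_1\equiv r_2\modulus{p^k}$ for all $r_1,r_2\in R$, which is exactly the reverse implication on $R$.

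To pass to arbitrary $m,n\in\STFdomain(\STf{D})$ with $\STf{D}[m][\intintuptoex{k}]=\STf{D}[n][\intintuptoex{k}]$, I would pick $r_m,r_n\in R$ with $m\equiv r_m$ and $n\equiv r_n\modulus{p^\ell}$, which exist because $R$ is a CRS modulo $p^\ell$. The block property at $\ell$, together with $\abs{\STf{D}}=\ell$, gives $\STf{D}[m]=\STf{D}[r_m]$ and $\STf{D}[n]=\STf{D}[r_n]$, so truncating to the first $k$ digits yields $\STf{D}[r_m][\intintuptoex{k}]=\STf{D}[r_n][\intintuptoex{k}]$. The reverse implication already established on $R$ then gives $r_m\equiv r_n\modulus{p^k}$, and since $k<\ell$ the congruences $m\equiv r_m$ and $n\equiv r_n\modulus{p^\ell}$ descend to $m\equiv r_m$ and $n\equiv r_n\modulus{p^k}$; chaining these yields $m\equiv n\modulus{p^k}$, as required.

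I expect the counting step on $R$ to be the crux. The delicate observation is that both the residue classes modulo $p^k$ and the fibers of $\phi$ partition $R$ into blocks of the same size $p^{\ell-k}$, which is what lets the containment ``class inside fiber'' coming from the weak block property be upgraded to an equality purely by cardinality, and hence lets the weak implication be reversed on $R$. Everything else — the forward direction, which is free, and the descent from $R$ to the full domain via the block property at $\ell$ — is routine.
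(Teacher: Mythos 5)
Your proposal is correct and rests on the same key ingredient as the paper's proof: a counting argument over a CRS modulo $p^\ell$ via Lemma~\ref{LFinBlDTCompl}, combined with the weak block property at $k$ to see that residue classes modulo $p^k$ sit inside prefix-fibers of equal cardinality $p^{\ell-k}$. The paper phrases this as a contradiction (two incongruent elements with equal $k$-prefixes would force the rows over $R$ to realize at most $p^\ell-p^{\ell-k}$ sequences), while you argue directly that the two partitions of $R$ coincide and then descend to the full domain; this is a presentational difference only.
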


\begin{proof}
Let $m,n\in\STFdomain(\STf{D})$ such that $\STf{D}[m][\intintuptoex{k}]=\STf{D}[n][\intintuptoex{k}]\rce\Sf{D}$ and assume $m\not\equiv n\modulus{p^k}$. Furthermore, let $R\subseteq\STFdomain(\STf{D})$ be a CRS modulo $p^\ell$ and
\begin{align}
M\ce\set{r\in R\mid r\equiv m\modulus{p^k}\lor r\equiv n\modulus{p^k}}.
\end{align}
Then $\abs{M}=2p^{\ell-k}$ and because of $\DTPweakblockat{k}(\STf{D})$, we get $\STf{D}[r][\intintuptoex{k}]=\Sf{D}$ for every $r\in M$. Thus, $\set{\STf{D}[r]\mid r\in R}$ can have at most $p^\ell-p^{\ell-k}$ elements, which contradicts Lemma~\ref{LFinBlDTCompl} and hence implies $\DTPblockat{k}(\STf{D})$.
\end{proof}

The following theorem describes the structure of the sets $\STf{D}(n)$ in dependence of the structure of $\STf{D}$.

\begin{theorem}
\label{TDnStructure}
Let $2\leq p\in\N$, $k\in\N$, $\STf{D}\in\SoDigitTables{p}(\STPlength{k})$, $R,S\subseteq\STFdomain(\STf{D})$ CRSs modulo $p^k$ and $p^{k-1}$ respectively, and $n\in\STFdomain(\STf{D})$. Then,

\begin{theoremtable}
\theoremitem{(1)}&$\mathrlap{\DTPweakblockat{k}(\STf{D})}\phantom{\DTPblockat{k}(\STf{D})\land\DTPweakblockat{k-1}(\STf{D})}\Rightarrow\STf{D}(n)=(\STf{D}(R(n))\cap R+p^k\Z_p)\cap\STFdomain(\STf{D})$\tabularnewline
&$\phantom{\DTPblockat{k}(\STf{D})\land\DTPweakblockat{k-1}(\STf{D})\Rightarrow\vphantom{a}}$\rlap{In particular: }\phantom{In particular: }\textup{(1)}\;\,$\fa m\in(n+p^k\Z_p)\cap\STFdomain(\STf{D}):\STf{D}(n)=\STf{D}(m)$\tabularnewline
&$\phantom{\DTPblockat{k}(\STf{D})\land\DTPweakblockat{k-1}(\STf{D})\Rightarrow\vphantom{a}}$\phantom{In particular: }\textup{(2)}\;\,$\STf{D}(n)=(\STf{D}(n)+p^k\Z_p)\cap\STFdomain(\STf{D})$\tabularnewline
\theoremitem{(2)}&$\mathrlap{\DTPweakblockat{\set{k,k-1}}(\STf{D})}\phantom{\DTPblockat{k}(\STf{D})\land\DTPweakblockat{k-1}(\STf{D})}\Rightarrow\STf{D}(n)=(\STf{D}(R(n))\cap S+p^{k-1}\Z_p)\cap\STFdomain(\STf{D})$\tabularnewline
&$\phantom{\DTPblockat{k}(\STf{D})\land\DTPweakblockat{k-1}(\STf{D})\Rightarrow\vphantom{a}}$\rlap{In particular: }\phantom{In particular: }$\STf{D}(n)=(\STf{D}(n)+p^{k-1}\Z_p)\cap\STFdomain(\STf{D})$\tabularnewline
\theoremitem{(3)}&$\mathrlap{\DTPblockat{k}(\STf{D})}\phantom{\DTPblockat{k}(\STf{D})\land\DTPweakblockat{k-1}(\STf{D})}\Rightarrow\abs{\STf{D}(n)\cap R}=p$\tabularnewline
\theoremitem{(4)}&$\DTPblockat{k}(\STf{D})\land\DTPweakblockat{k-1}(\STf{D})\Rightarrow\fa l,m\in\STf{D}(n):l\equiv m\modulus{p^{k-1}}$\tabularnewline
&\leavevmode\phantom{$\DTPblockat{k}(\STf{D})\land\DTPweakblockat{k-1}(\STf{D})\Rightarrow\vphantom{a}$}In particular: $\abs{\STf{D}(n)\cap S}=1$ and\tabularnewline
&\leavevmode\phantom{$\DTPblockat{k}(\STf{D})\land\DTPweakblockat{k-1}(\STf{D})\Rightarrow\vphantom{a}$In particular: }$\STf{D}(n)=(s_{R(n)}+p^{k-1}\Z_p)\cap\STFdomain(\STf{D})$ where\tabularnewline
&\leavevmode\phantom{$\DTPblockat{k}(\STf{D})\land\DTPweakblockat{k-1}(\STf{D})\Rightarrow\vphantom{a}$In particular: }$s_{R(n)}$ is the unique element of $\STf{D}(n)\cap S$.
\end{theoremtable}
\end{theorem}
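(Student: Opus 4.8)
The plan is to handle the four parts in an order that lets the later ones reuse the earlier ones, exploiting throughout the single structural fact that, since $\abs{\STf{D}}=k$, membership $m\in\STf{D}(n)$ is literally the condition $\STf{D}[m][0,k-2]=\STf{D}[n][1,k-1]$ on the first $k-1$ digits of $m$; in particular $\STf{D}(n)$ depends on $n$ only through the block $\STf{D}[n][1,k-1]$. I would first record the two readings of the (weak) block property used repeatedly: weak block at a level $j$ says $m\equiv n\pmod{p^j}$ forces $\STf{D}[m][\intintuptoex{j}]=\STf{D}[n][\intintuptoex{j}]$, whereas block at $j$ makes this an equivalence. For part (1), both ``in particular'' assertions drop out of weak block at $k$: if $m\equiv n\pmod{p^k}$ then $m,n$ share their first $k$ (hence first $k-1$) digits, giving $\STf{D}(m)=\STf{D}(n)$; and if $m\in\STf{D}(n)$ then every domain element $m'\equiv m\pmod{p^k}$ shares the first $k-1$ digits of $m$, giving $\STf{D}(n)=(\STf{D}(n)+p^k\Z_p)\cap\STFdomain(\STf{D})$. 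For the main identity I would show $\STf{D}(n)+p^k\Z_p=(\STf{D}(n)\cap R)+p^k\Z_p$ by replacing any $m\in\STf{D}(n)$ with its representative $R(m)\in R$, which again lies in $\STf{D}(n)$ by weak block at $k$; intersecting with the domain and using $\STf{D}(R(n))=\STf{D}(n)$ then finishes it.

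Part (2) is the verbatim analogue one level down: weak block at $k-1$ gives closure of $\STf{D}(n)$ under $p^{k-1}\Z_p$ (this is the ``in particular'' statement), and replacing each element by its $S$-representative (legal by weak block at $k-1$), together with $\STf{D}(R(n))=\STf{D}(n)$ (legal by weak block at $k$, which is part of the hypothesis), yields the stated formula. Part (3) I would deduce from Lemma~\ref{LFinBlDTCompl}: under block at $k$ the map $r\mapsto\STf{D}[r]$ is a bijection from the CRS $R$ onto all $p^k$ length-$k$ digit sequences, so the number of $r\in R$ whose length-$(k-1)$ prefix equals the fixed block $\STf{D}[n][1,k-1]$ equals the number of completions by one further digit, namely $p$.

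The subtle part, and the main obstacle, is (4). The assertion that all elements of $\STf{D}(n)$ are congruent modulo $p^{k-1}$ is precisely the converse of weak block at $k-1$, and so it needs the \emph{full} block property at $k-1$, which is not assumed. The key move is to derive $\DTPblockat{k-1}(\STf{D})$ from the given $\DTPblockat{k}(\STf{D})$ and $\DTPweakblockat{k-1}(\STf{D})$ by invoking Lemma~\ref{LSBlDTBl} with $\ell=k$, using that $R\subseteq\STFdomain(\STf{D})$ is a CRS modulo $p^k$. Once block at $k-1$ is in hand, any two elements of $\STf{D}(n)$ share their first $k-1$ digits and are therefore congruent modulo $p^{k-1}$.

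The remaining ``in particular'' claims of (4) then follow quickly. By (3) the set $\STf{D}(n)$ is nonempty, and since all its elements sit in one class modulo $p^{k-1}$, the CRS $S$ can meet it in at most one point; existence of the meeting point comes from replacing any $m\in\STf{D}(n)$ with its $S$-representative $S(m)$, which lies in $\STf{D}(n)$ by weak block at $k-1$, so $\abs{\STf{D}(n)\cap S}=1$ with unique element $s_{R(n)}$. Finally, feeding $\STf{D}(R(n))\cap S=\STf{D}(n)\cap S=\{s_{R(n)}\}$ into the formula already proved in part (2) gives $\STf{D}(n)=(s_{R(n)}+p^{k-1}\Z_p)\cap\STFdomain(\STf{D})$, as desired.
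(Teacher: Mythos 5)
Your proposal is correct and follows essentially the same route as the paper's proof: the CRS-representative replacements justified by the (weak) block properties for (1) and (2), the counting via Lemma~\ref{LFinBlDTCompl} for (3), and the crucial upgrade to $\DTPblockat{k-1}(\STf{D})$ via Lemma~\ref{LSBlDTBl} for (4) are exactly the paper's steps. The only difference is cosmetic ordering (you derive the ``In particular'' closure statements before the main set identities rather than after), which does not change the argument.
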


\noindent
Before we give a proof of the theorem, we will discuss its claims. (1) states that if $\STf{D}$ has the weak block property at $k$, then in order to know the structure of any $\STf{D}(n)$, it suffices to know the structures of $\STf{D}(r)$ within the finite set $R$ alone, where $r\in R$ is again one element of only finitely many. In addition, the ``In particular'' part states that $\STf{D}(n)$ is closed under addition of multiples of $p^k$ (if the result lands in the domain of $\STf{D}$). (2) and (3) give even more precise information on the structure of $\STf{D}(n)$ if more is known on the structure of $\STf{D}$. The results are stated in terms of arbitrary CRS $R$ and $S$. To get a better understanding of the theorem, it is helpful to consider the most important special case $\STFdomain(\STf{D})\supseteq\Nz$, $R=\intintuptoex{p^k}$, $S=\intintuptoex{p^{k-1}}$, and hence $R(n)=n\modulo p^k$ for all $n\in\Z_p$. As examples consider the three $2$-digit tables below which correspond to the three cases treated in the theorem. We get
\begin{align}
\STf{D}_1(29)&=(\STf{D}_1(5)\cap\intintuptoex{8}+8\Z_2)\cap\Nz=\set{1,3,7}+8\Nz\\
\STf{D}_2(29)&=(\STf{D}_2(5)\cap\intintuptoex{8}+8\Z_2)\cap\Nz=\set{0,2}+8\Nz\text{ (note: $\abs{\set{0,2}}=2$)}\\
\STf{D}_3(29)&=(\STf{D}_3(5)\cap\intintuptoex{4}+4\Z_2)\cap\Nz=2+4\Nz
\end{align}
just as claimed.

\begin{table}[H]
\bgroup
\fontsize{8}{10}\selectfont
\begin{align*}
&
\begin{array}{r|rrr}
0&0&0&0\\[-3pt]
1&1&0&0\\[-3pt]
2&0&0&0\\[-3pt]
3&1&0&0\\[-3pt]
4&0&1&0\\[-3pt]
5&1&1&0\\[-3pt]
6&0&1&0\\[-3pt]
7&1&0&0\\[-3pt]
8&\smash{\mathrlap{\rule[6pt]{34.4pt}{0.4pt}}}0&0&0\\[-3pt]
9&1&0&0\\[-3pt]
\vdots&\vdots&\vdots&\vdots\\
\hline
\STf{D}_1&0&1&2
\end{array}
&
\begin{array}{r|rrr}
0&0&0&0\\[-3pt]
1&1&0&1\\[-3pt]
2&0&0&1\\[-3pt]
3&1&1&0\\[-3pt]
4&0&1&0\\[-3pt]
5&1&0&0\\[-3pt]
6&0&1&1\\[-3pt]
7&1&1&1\\[-3pt]
8&\smash{\mathrlap{\rule[6pt]{34.4pt}{0.4pt}}}0&0&0\\[-3pt]
9&1&0&1\\[-3pt]
\vdots&\vdots&\vdots&\vdots\\
\hline
\STf{D}_2&0&1&2
\end{array}
&&
\begin{array}{r|rrr}
0&0&0&0\\[-3pt]
1&1&0&0\\[-3pt]
2&0&1&0\\[-3pt]
3&1&1&0\\[-3pt]
4&0&0&1\\[-3pt]
5&1&0&1\\[-3pt]
6&0&1&1\\[-3pt]
7&1&1&1\\[-3pt]
8&\smash{\mathrlap{\rule[6pt]{34.4pt}{0.4pt}}}0&0&0\\[-3pt]
9&1&0&0\\[-3pt]
\vdots&\vdots&\vdots&\vdots\\
\hline
\STf{D}_3&0&1&2\\
\end{array}
\end{align*}
\egroup
\caption{Three $2$-digit tables of length $3$ with domain $\Nz$. The initial block of $8$ rows is supposed to repeat periodically in all cases. They satisfy $\DTPweakblockat{3}(\STf{D}_1)$, $\DTPblockat{3}(\STf{D}_2)$, and $\DTPblockat{3}(\STf{D}_3)\land\DTPweakblockat{2}(\STf{D}_3)$.}
\label{TDTEx}
\end{table}

\begin{proof}[Proof of Theorem~\ref{TDnStructure}]
$ $\\
\proofitem{(1)}%
\begin{align}
\label{EDnStrucCompA}
\STf{D}(n)&=\set{m\in\STFdomain(\STf{D})\mid\STf{D}[m][0,k-2]=\STf{D}[n][1,k-1]}\\
\label{EDnStrucCompB}
(\DTPweakblockat{k}(\STf{D}))\;&=\set{m\in\STFdomain(\STf{D})\mid\STf{D}[R(m)][0,k-2]=\STf{D}[R(n)][1,k-1]}\\
\label{EDnStrucCompC}
&=\left(\set{m\in R\mid\STf{D}[m][0,k-2]=\STf{D}[R(n)][1,k-1]}+p^k\Z_p\right)\cap\STFdomain(\STf{D})\\
\label{EDnStrucCompD}
&=\left(\STf{D}(R(n))\cap R+p^k\Z_p\right)\cap\STFdomain(\STf{D}).
\end{align}

If $m\in(n+p^k\Z_p)\cap\STFdomain(\STf{D})$ then, $R(m)=R(n)$ and
\begin{align}
\STf{D}(n)&=(\STf{D}(R(n))\cap R+p^k\Z_p)\cap\STFdomain(\STf{D})
=(\STf{D}(R(m))\cap R+p^k\Z_p)\cap\STFdomain(\STf{D})
=\STf{D}(m)
\end{align}
which proves (1) of the ``In particular'' part. Furthermore, since $R(n)\in(n+p^k\Z_p)\cap\STFdomain(\STf{D})$, we get $\STf{D}(n)=\STf{D}(R(n))$ and thus
\begin{align}
\STf{D}(n)&=(\STf{D}(R(n))\cap R+p^k\Z_p)\cap\STFdomain(\STf{D})
=(\STf{D}(n)\cap R+p^k\Z_p)\cap\STFdomain(\STf{D})\\
&=(\STf{D}(n)+p^k\Z_p)\cap\STFdomain(\STf{D})
\end{align}
proving (2) of the ``In particular'' part.

\noindent
\proofitem{(2)}%
Analogously to Eqn.~(\ref{EDnStrucCompA})~--~(\ref{EDnStrucCompD}) we get
\begin{align}
\STf{D}(n)&=\set{m\in\STFdomain(\STf{D})\mid\STf{D}[m][0,k-2]=\STf{D}[n][1,k-1]}\\
(\DTPweakblockat{\set{k,k-1}}(\STf{D}))\;&=\set{m\in\STFdomain(\STf{D})\mid\STf{D}[S(m)][0,k-2]=\STf{D}[R(n)][1,k-1]}\\
&=\left(\set{m\in S\mid\STf{D}[m][0,k\!-\!2]=\STf{D}[R(n)][1,k\!-\!1]}+p^{k-1}\Z_p\right)\cap\STFdomain(\STf{D})\\
&=(\STf{D}(R(n))\cap S+p^{k-1}\Z_p)\cap\STFdomain(\STf{D}).
\end{align}

For the ``In particular'' part we again observe that $\STf{D}(n)=\STf{D}(R(n))$, hence
\begin{align}
\STf{D}(n)&=(\STf{D}(R(n))\cap S+p^{k-1}\Z_p)\cap\STFdomain(\STf{D})
=(\STf{D}(n)\cap S+p^{k-1}\Z_p)\cap\STFdomain(\STf{D})\\
&=(\STf{D}(n)+p^{k-1}\Z_p)\cap\STFdomain(\STf{D}).
\end{align}

\noindent
\proofitem{(3)}%
\begin{align}
\STf{D}(n)\cap R&=\set{m\in R\mid\STf{D}[m][0,k-2]=\STf{D}[n][1,k-1]}\\
&=\set{m\in R\mid\ex d\in\intintuptoex{p}:\STf{D}[m]=\STf{D}[n][1,k-1]\cdot(d)}.
\end{align}
Thus, $\DTPblockat{k}(\STf{D})$ and Lemma~\ref{LFinBlDTCompl} imply that $\abs{\STf{D}(n)\cap R}=\abs{\intintuptoex{p}}=p$.

\noindent
\proofitem{(4)}%
Let $l,m\in\STf{D}(n)$. Then, $\STf{D}[l][0,k-2]=\STf{D}[m][0,k-2]$ and hence $l\equiv m\modulus{p^{k-1}}$ by $\DTPblockat{k}(\STf{D})$, $\DTPweakblockat{k-1}(\STf{D})$, and Lemma~\ref{LSBlDTBl} (which implies $\DTPblockat{k-1}(\STf{D})$). The ``In particular'' part then follows directly from (2).
\end{proof}

\noindent
Using Theorem~\ref{TDTFF} and Theorem~\ref{TDnStructure} we can completely characterize all $2$-fibred functions which generate the finite $2$-digit tables given in Table~\ref{TDTEx}. If $\FFf{F}\in\SoFibredFunctions{2}(\FFPdomain{\Nz},\FFPclosed)$ then
\begingroup
\allowdisplaybreaks
\begin{align}
\FFFDT(\FFf{F})\llbracket\intintuptoex{3}\rrbracket=\STf{D}_1&\Leftrightarrow\fa n\in\Nz:\FFf{F}(n)\in M_{n\modulo8}+8\Nz\text{ where}\\
\nonumber
&\phantom{\vphantom{a}\Leftrightarrow\vphantom{a}}M_0\ce\set{0,2},\;M_1\ce\set{0,2},\;M_2\ce\set{0,2},\;M_3\ce\set{0,2},\\
\nonumber
&\phantom{\vphantom{a}\Leftrightarrow\vphantom{a}}M_4\ce\set{1,3,7},\;M_5\ce\set{1,3,7},\;M_6\ce\set{1,3,7},\;M_7\ce\set{0,2}\\
\FFFDT(\FFf{F})\llbracket\intintuptoex{3}\rrbracket=\STf{D}_2&\Leftrightarrow\fa n\in\Nz:\FFf{F}(n)\in M_{n\modulo8}+8\Nz\text{ where}\\
\nonumber
&\phantom{\vphantom{a}\Leftrightarrow\vphantom{a}}M_0\ce\set{0,2},\;M_1\ce\set{4,6},\;M_2\ce\set{4,6},\;M_3\ce\set{1,5},\\
\nonumber
&\phantom{\vphantom{a}\Leftrightarrow\vphantom{a}}M_4\ce\set{1,5},\;M_5\ce\set{0,2},\;M_6\ce\set{3,7},\;M_7\ce\set{3,7}\\
\FFFDT(\FFf{F})\llbracket\intintuptoex{3}\rrbracket=\STf{D}_3&\Leftrightarrow\fa n\in\Nz:\FFf{F}(n)\in m_{n\modulo8}+4\Nz\text{ where}\\
\nonumber
&\phantom{\vphantom{a}\Leftrightarrow\vphantom{a}}m_0\ce0,\;m_1\ce0,\;m_2\ce1,\;m_3\ce1\\
\nonumber
&\phantom{\vphantom{a}\Leftrightarrow\vphantom{a}}m_4\ce2,\;m_5\ce2,\;m_6\ce3,\;m_7\ce3.
\end{align}
\endgroup

The rows of the $\FFf{F}$-digit table of a closed $p$-fibred function $\FFf{F}$ are computed by iterative application of $\FFf{F}$. Actually performing these iterations can be a very difficult task if $\FFf{F}$ is complicated or highly expansive. The following corollary provides a useful method to actually compute $p$-digit tables of $p$-fibred functions that at least have the weak block property.

\begin{corollary}
\label{CComputeDT}
Let $2\leq p\in\N$, $k\in\N$, $\FFf{F},\FFf{G}_1,\FFf{G}_2,\FFf{H}_1,\FFf{H}_2\in\SoFibredFunctions{p}(\FFPclosed)$ of equal domain, and $R,S,T\subseteq\Z_p$ CRSs modulo $p^{k+1}$, $p^k$, and $p^k$ respectively, such that
\begin{align}
\FFf{G}_1[r](n)&=R(\FFf{F}[r](n))\\
\FFf{G}_2[r](n)&=R(\FFf{F}[r](T(n)))\\
\FFf{H}_1[r](n)&=S(\FFf{F}[r](n))\\
\FFf{H}_2[r](n)&=S(\FFf{F}[r](T(n)))
\end{align}
for all $r\in\intintuptoex{p}$ and all $n\in(r+p\Z_p)\cap\FFFdomain(\FFf{F})$, and
\begin{align}
R'&\ce\set{\frac{r-r\modulo p}{p}\mid r\in R},&
S'&\ce\set{\frac{s-s\modulo p}{p}\mid s\in S}.
\end{align}
Then,

\begin{theoremtable}
\theoremitem{(1)}&$\FFPboundedby{R'}(\FFf{G}_1)$, $\FFPboundedby{R'}(\FFf{G}_2)$, $\FFPboundedby{S'}(\FFf{H}_1)$, $\FFPboundedby{S'}(\FFf{H}_2)$\tabularnewline
\theoremitem{(2)}&$\FFPweakblockat{k}(\FFf{F})\Rightarrow$\tabularnewline
&\quad$\FFFDT(\FFf{F})\llbracket\intintuptoex{k}\rrbracket=\FFFDT(\FFf{G}_1)\llbracket\intintuptoex{k}\rrbracket$\tabularnewline
&\quad$\FFFDT(\FFf{F})\llbracket\intintuptoex{k}\rrbracket=\FFFDT(\FFf{G}_2)\llbracket\intintuptoex{k}\rrbracket,\;\fa n\in\FFFdomain(\FFf{F}):\FFf{G}_2(n)=\FFf{G}_2(T(n))$\tabularnewline
&$\FFPweakblockat{\set{k,k-1}}(\FFf{F})\Rightarrow$\tabularnewline
&\quad$\FFFDT(\FFf{F})\llbracket\intintuptoex{k}\rrbracket=\FFFDT(\FFf{H}_1)\llbracket\intintuptoex{k}\rrbracket$\tabularnewline
&\quad$\FFFDT(\FFf{F})\llbracket\intintuptoex{k}\rrbracket=\FFFDT(\FFf{H}_2)\llbracket\intintuptoex{k}\rrbracket,\;\fa n\in\FFFdomain(\FFf{F}):\FFf{H}_2(n)=\FFf{H}_2(T(n))$\tabularnewline
&$\FFPblockat{k}(\FFf{F})\land\FFPweakblockat{k-1}(\FFf{F})$ and $S'$ CRS modulo $p^{k-1}$ $\Rightarrow$\tabularnewline
&\quad$\fa n\in\FFFdomain(\FFf{F}):\FFf{H}_1(n)=\FFf{H}_1(T(n))=\FFf{H}_2(n)$.
\end{theoremtable}
\end{corollary}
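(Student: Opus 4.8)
The plan is to reduce every assertion to a membership statement of the form $\FFf{G}(n)\in\STf{D}(n)$ for the auxiliary functions $\FFf{G}\in\set{\FFf{G}_1,\FFf{G}_2,\FFf{H}_1,\FFf{H}_2}$, and then to invoke Theorem~\ref{TDTFF} together with the structural description of $\STf{D}(n)$ in Theorem~\ref{TDnStructure}. Throughout I set $\STf{D}\ce\FFFDT(\FFf{F})\llbracket\intintuptoex{k}\rrbracket$, a $p$-digit table of length $k$ which inherits $\DTPweakblockat{k}(\STf{D})$ from $\FFf{F}$ (and $\DTPweakblockat{k-1}(\STf{D})$, resp.\ $\DTPblockat{k}(\STf{D})$, under the stronger hypotheses), and I note that Theorem~\ref{TDTFF} applied to $\FFf{F}$ itself already gives $\FFf{F}(n)\in\STf{D}(n)$ for every $n$. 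The one computation everything rests on is the following: if $Q\subseteq\Z_p$ is a CRS modulo $p^j$ and $\FFf{G}[r](n)=Q(\FFf{F}[r](n))$, then, writing $r=n\modulo p$ and using $Q(\FFf{F}[r](n))\equiv\FFf{F}[r](n)\modulus{p^j}$ in the ordinary-function formula~(\ref{EFibredFunction}), the subtracted residue modulo $p$ is unchanged, and dividing by $p$ yields both $\FFf{G}(n)\equiv\FFf{F}(n)\modulus{p^{j-1}}$ and $\FFf{G}(n)\in Q'$, where $Q'\ce\set{(q-q\modulo p)/p\mid q\in Q}$. With $\FFf{F}[r](T(n))$ in place of $\FFf{F}[r](n)$ the same computation gives $\FFf{G}(n)\equiv\FFf{F}(T(n))\modulus{p^{j-1}}$.

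First I would dispatch (1): the computation with $Q=R$ ($j=k+1$) and $Q=S$ ($j=k$) shows immediately that $\FFf{G}_1,\FFf{G}_2$ take values in $R'$ and $\FFf{H}_1,\FFf{H}_2$ take values in $S'$, which is exactly $\FFPboundedby{R'}$, resp.\ $\FFPboundedby{S'}$. For the first block of (2) the case $j=k+1$ gives $\FFf{G}_1(n)\equiv\FFf{F}(n)\modulus{p^k}$; since $\FFf{F}(n)\in\STf{D}(n)$, since $\FFf{G}_1(n)\in\STFdomain(\STf{D})$ by closedness, and since $\DTPweakblockat{k}(\STf{D})$ yields $\STf{D}(n)=(\STf{D}(n)+p^k\Z_p)\cap\STFdomain(\STf{D})$ by Theorem~\ref{TDnStructure}~(1), I conclude $\FFf{G}_1(n)\in\STf{D}(n)$, whence $\FFFDT(\FFf{G}_1)\llbracket\intintuptoex{k}\rrbracket=\STf{D}$ by the ``$\Leftarrow$'' direction of Theorem~\ref{TDTFF}. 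For $\FFf{G}_2$ the analogous congruence is $\FFf{G}_2(n)\equiv\FFf{F}(T(n))\modulus{p^k}$; as $T(n)\equiv n\modulus{p^k}$, the ``In particular~(1)'' part of Theorem~\ref{TDnStructure}~(1) gives $\STf{D}(T(n))=\STf{D}(n)$, and since $\FFf{F}(T(n))\in\STf{D}(T(n))$ the same closure argument places $\FFf{G}_2(n)$ in $\STf{D}(n)$. The identity $\FFf{G}_2(n)=\FFf{G}_2(T(n))$ is purely formal: $T$ is idempotent ($T(T(n))=T(n)$ since $T(n)\in T$) and $T(n)\equiv n\modulus p$, so $\FFf{G}_2$ uses the same branch $r$ at $n$ and at $T(n)$, and the defining value $R(\FFf{F}[r](T(n)))$ coincides.

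The $\FFf{H}$-statements run identically but with $j=k$, so the congruences weaken to $\FFf{H}_1(n)\equiv\FFf{F}(n)\modulus{p^{k-1}}$ and $\FFf{H}_2(n)\equiv\FFf{F}(T(n))\modulus{p^{k-1}}$; here I invoke Theorem~\ref{TDnStructure}~(2) (valid since $\DTPweakblockat{\set{k,k-1}}(\STf{D})$) to get $\STf{D}(n)=(\STf{D}(n)+p^{k-1}\Z_p)\cap\STFdomain(\STf{D})$, and the two arguments above again place $\FFf{H}_1(n),\FFf{H}_2(n)$ in $\STf{D}(n)$, with $\FFf{H}_2(n)=\FFf{H}_2(T(n))$ once more by idempotency of $T$. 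For the last implication I would use the sharper Theorem~\ref{TDnStructure}~(4): under $\DTPblockat{k}(\STf{D})\land\DTPweakblockat{k-1}(\STf{D})$ all elements of $\STf{D}(n)$ are mutually congruent modulo $p^{k-1}$. Since $\FFf{F}(n)$ and $\FFf{F}(T(n))$ both lie in $\STf{D}(n)=\STf{D}(T(n))$, they are congruent modulo $p^{k-1}$, hence $\FFf{H}_1(n)\equiv\FFf{H}_1(T(n))\modulus{p^{k-1}}$; because $S'$ is now assumed to be a CRS modulo $p^{k-1}$ and both values lie in $S'$ by~(1), they must be equal, and $\FFf{H}_1(T(n))=\FFf{H}_2(n)$ holds definitionally as before.

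The routine part is the ordinary-function computation of the second paragraph, which I expect to be the main source of bookkeeping friction—correctly tracking the subtracted residue and the drop from modulus $p^j$ to $p^{j-1}$ after dividing by $p$—together with the repeated need to check the implicit domain-membership hypotheses: that $T(n)\in\FFFdomain(\FFf{F})$ so the defining equations for $\FFf{G}_2,\FFf{H}_2$ make sense, and that closedness returns each image to $\STFdomain(\STf{D})$ so the ``$\cap\,\STFdomain(\STf{D})$'' in Theorem~\ref{TDnStructure} does not discard it. The genuine conceptual step, and the \emph{only} place where the full block property rather than merely the weak one is used, is the final implication, where Theorem~\ref{TDnStructure}~(4) collapses $\STf{D}(n)$ into a single residue class modulo $p^{k-1}$ and the CRS hypothesis on $S'$ converts congruence into equality.
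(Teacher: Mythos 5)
Your proposal is correct and follows essentially the same route as the paper's proof: establish $\FFf{G}(n)\equiv\FFf{F}(n)\modulus{p^{j-1}}$ (resp.\ $\equiv\FFf{F}(T(n))$) by the ordinary-function computation, place the values in $\FFFDT(\FFf{F})\llbracket\intintuptoex{k}\rrbracket(n)$ via the ``In particular'' parts of Theorem~\ref{TDnStructure}, conclude with Theorem~\ref{TDTFF}, and settle the final implication by combining Theorem~\ref{TDnStructure}~(4) with the fact that both values lie in the CRS $S'$. The only (immaterial) divergence is which sub-item of the ``In particular'' part of Theorem~\ref{TDnStructure}~(1) you invoke for $\FFf{G}_1$ versus $\FFf{G}_2$.
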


\noindent
We will prove a slightly stronger version of the last statement of (2) in a later part of the paper (Corollary~\ref{CComputeDTStrong}~(3)).

As an example we consider $p=3$, $k=5$, $R=\intintuptoex{3^6}$, $S=T=\intintuptoex{3^5}$, and
\begin{align}
\FFf{F}&=(7x^3-4x^2+x-6,3x^7-x+1,x^2+6x+2)
\end{align}
with $\FFFdomain(\FFf{F})=\Z_3$. Then, $\FFPblock(\FFf{F})$ (as we will prove later, Corollary~\ref{CPropPolyF}~(2)) and
\begin{align}
\FFf{H}_1(n)&=(\FFf{F}[0](n)\phantom{\modulo3^5}\modulo3^5,\FFf{F}[1](n)\phantom{\modulo3^5}\modulo3^5,\FFf{F}[2](n)\phantom{\modulo3^5}\modulo3^5)\\
\FFf{H}_2(n)&=(\FFf{F}[0](n\modulo3^5)\modulo3^5,\FFf{F}[1](n\modulo3^5)\modulo3^5,\FFf{F}[2](n\modulo3^5)\modulo3^5)
\end{align}
for all $n\in\Z_3$. As claimed by the corollary, we get $\FFf{H}_1(n)=\FFf{H}_1(n\modulo3^5)=\FFf{H}_2(n)\in\intintuptoex{3^4}$ for all $n\in\Z_3$ and
\begin{align}
\FFFDT(\FFf{F})[17][\intintuptoex{5}]=\FFFDT(\FFf{H}_1)[17][\intintuptoex{5}]=(2,2,1,2,0)
\end{align}
but $\FFFST(\FFf{H}_1)[17][4]=63$ while $\FFFST(\FFf{F})[17][4]=2.51041\ldots\cdot10^{52}$.

\begin{proof}[Proof of Corollary~\ref{CComputeDT}]
$ $\\
\proofitem{(1)}%
Let $n\in\FFFdomain(\FFf{F})$. Then,
\begin{align}
\FFf{G}_1(n)
&=\frac{\FFf{G}_1[n\modulo p](n)-\FFf{G}_1[n\modulo p](n)\modulo p}{p}
=\frac{R(\FFf{F}[n\modulo p](n))-R(\FFf{F}[n\modulo p](n))\modulo p}{p}\in R'.
\end{align}
The remaining statements can be proven analogously.

\noindent
\proofitem{(2)}%
We will prove
\begin{align}
\FFf{G}_1(n),\FFf{G}_2(n),\FFf{H}_1(n),\FFf{H}_2(n)\in\FFFDT(\FFf{F})\llbracket\intintuptoex{k}\rrbracket(n)
\end{align}
for all $n\in\FFFdomain(\FFf{F})$ from which it follows by Theorem~\ref{TDTFF} that
\begin{align}
\label{EEqDT}
\FFFDT(\FFf{F})\llbracket\intintuptoex{k}\rrbracket=\FFFDT(\FFf{G}_1)\llbracket\intintuptoex{k}\rrbracket=\FFFDT(\FFf{G}_2)\llbracket\intintuptoex{k}\rrbracket=\FFFDT(\FFf{H}_1)\llbracket\intintuptoex{k}\rrbracket=\FFFDT(\FFf{H}_2)\llbracket\intintuptoex{k}\rrbracket.
\end{align}
Let $n\in\FFFdomain(\FFf{F})$. Then,
\begin{align}
\FFf{G}_1(n)
&=\frac{\FFf{G}_1[n\modulo p](n)-\FFf{G}_1[n\modulo p](n)\modulo p}{p}
=\frac{R(\FFf{F}[n\modulo p](n))-R(\FFf{F}[n\modulo p](n))\modulo p}{p}\\
&\equiv\frac{\FFf{F}[n\modulo p](n)-\FFf{F}[n\modulo p](n)\modulo p}{p}=\FFf{F}(n)\modulus{p^k}.
\end{align}
In addition, $\FFf{F}(n)\in\FFFDT(\FFf{F})\llbracket\intintuptoex{k}\rrbracket(n)$ and thus $\FFf{G}_1(n)\in\FFFDT(\FFf{F})\llbracket\intintuptoex{k}\rrbracket(n)$ by $\FFPweakblockat{k}(\FFf{F})$ and by (1) of the ``In particular'' part of Theorem~\ref{TDnStructure}~(1).

Analogously,
\begin{align}
\FFf{G}_2(n)
&=\frac{\FFf{G}_2[n\modulo p](n)-\FFf{G}_2[n\modulo p](n)\modulo p}{p}
=\frac{R(\FFf{F}[n\modulo p](T(n)))-R(\FFf{F}[n\modulo p](T(n)))\modulo p}{p}\\
&\equiv\frac{\FFf{F}[n\modulo p](T(n))-\FFf{F}[n\modulo p](T(n))\modulo p}{p}=\FFf{F}(T(n))\modulus{p^k}.
\end{align}
In addition, $\FFf{F}(T(n))\in\FFFDT(\FFf{F})\llbracket\intintuptoex{k}\rrbracket(T(n))=\FFFDT(\FFf{F})\llbracket\intintuptoex{k}\rrbracket(n)$ by (2) of the ``In particular'' part of Theorem~\ref{TDnStructure}~(1) which again implies $\FFf{G}_2(n)\in\FFFDT(\FFf{F})\llbracket\intintuptoex{k}\rrbracket(n)$.

It can be shown in a completely analogous fashion that
\begin{align}
\FFf{H}_1(n)&\equiv\FFf{F}(n)\modulus{p^{k-1}}\\
\FFf{H}_2(n)&\equiv\FFf{F}(T(n))\modulus{p^{k-1}}
\end{align}
and hence $\FFf{H}_1(n),\FFf{H}_2(n)\in\FFFDT(\FFf{F})\llbracket\intintuptoex{k}\rrbracket(n)$  by $\FFPweakblockat{\set{k,k-1}}(\FFf{F})$ and by the ``In particular'' part of Theorem~\ref{TDnStructure}~(2) which completes the proof of Eqn.~(\ref{EEqDT}).

Clearly, $\FFf{G}_2(n)=\FFf{G}_2(T(n))$ and $\FFf{H}_1(T(n))=\FFf{H}_2(n)=\FFf{H}_2(T(n))$. We are thus left to show that $\FFf{H}_1(n)=\FFf{H}_1(T(n))$ if $\FFPblockat{k}(\FFf{F})$, $\FFPweakblockat{k-1}(\FFf{F})$, and $S'$ is a CRS modulo $p^{k-1}$. We observe that $\FFf{H}_1(T(n))\equiv\FFf{F}(T(n))\modulus{p^{k-1}}$ and thus $\FFf{H}_1(T(n))\in\FFFDT(\FFf{F})\llbracket\intintuptoex{k}\rrbracket(n)$. But then $\FFf{H}_1(T(n))\equiv\FFf{H}_1(n)\modulus{p^{k-1}}$ by Theorem~\ref{TDnStructure}~(4), and since both $\FFf{H}_1(T(n))$ and $\FFf{H}_1(n)$ are in $S'$ by (1), they must be equal.
\end{proof}

Now that we know exactly how to generate a finite $p$-digit table using a $p$-fibred function, we are left with dealing with the infinite case which will finally establish the relation between $p$-adic systems and infinite $p$-digit tables with block property and prove the claimed existence of a one-to-one correspondence. We continue with two basic lemmas on infinite $p$-digit tables with block property and one corollary on $p$-adic systems.

\begin{lemma}
\label{LInfBlDTEq}
Let $2\leq p\in\N$, $\STf{D}\in\SoDigitTables{p}(\neg\STPfinite,\DTPblock)$, and $m,n\in\STFdomain(\STf{D})$. Then, $m=n$ if and only if $\STf{D}[m]=\STf{D}[n]$.
\end{lemma}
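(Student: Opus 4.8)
The nontrivial content is the implication $\STf{D}[m]=\STf{D}[n]\Rightarrow m=n$, since the reverse direction holds simply because $\STf{D}$ assigns a single row to each element of its domain. The plan is to convert equality of the two infinite rows into congruence of $m$ and $n$ modulo every power of $p$, and then to invoke the separation property of the $p$-adic integers.

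First I would use that $\STf{D}$ is infinite: from $\neg\STPfinite(\STf{D})$ we have $\abs{\STf{D}}=\infty$, so for every $k\in\N$ the length-$k$ prefix $\STf{D}[m][\intintuptoex{k}]$ is defined, and the hypothesis $\STf{D}[m]=\STf{D}[n]$ immediately yields $\STf{D}[m][\intintuptoex{k}]=\STf{D}[n][\intintuptoex{k}]$ for all $k$. Next I would apply the block property in the direction ``equal prefixes $\Rightarrow$ congruence'': since $\DTPblock(\STf{D})$ holds, equality of the length-$k$ prefixes gives $m\equiv n\modulus{p^k}$, and this now holds for every $k\in\N$. (Note that it is genuinely the full block property, not merely the weak one, that supplies this direction of the implication.)

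Finally I would conclude $m=n$ from the fact that the only $p$-adic integer divisible by all powers of $p$ is $0$, i.e.\ $\bigcap_{k\in\N}p^k\Z_p=\set{0}$; concretely, writing $m-n=\sum_{i\geq0}c_ip^i$ with $c_i\in\intintuptoex{p}$, each congruence $m\equiv n\modulus{p^k}$ forces $c_0=\cdots=c_{k-1}=0$, so letting $k\to\infty$ annihilates every digit and $m-n=0$. There is no real obstacle in this argument; the only point that genuinely deserves attention is that infiniteness of $\STf{D}$ is essential — it is precisely what makes prefixes of arbitrarily large length available, and hence what upgrades ``identical rows'' into ``congruent modulo every $p^k$''. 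For a finite table the conclusion would fail, since then only prefixes up to a fixed length agree and congruence can be deduced only modulo a bounded power of $p$.
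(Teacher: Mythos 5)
Your argument is correct and is essentially the same as the paper's, which proves the lemma as a one-line chain of equivalences: $m=n\Leftrightarrow\fa k\in\N:m\equiv n\modulus{p^k}\Leftrightarrow\fa k\in\N:\STf{D}[m][\intintuptoex{k}]=\STf{D}[n][\intintuptoex{k}]\Leftrightarrow\STf{D}[m]=\STf{D}[n]$. Your elaboration of why infiniteness and the full (not weak) block property are needed matches the content of that chain exactly.
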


\begin{proof}
\begin{align}
m=n\Leftrightarrow\fa k\in\N:m\equiv n\modulus{p^k}\Leftrightarrow\fa k\in\N:\STf{D}[m][\intintuptoex{k}]=\STf{D}[n][\intintuptoex{k}]\Leftrightarrow\STf{D}[m]=\STf{D}[n].
\end{align}
\end{proof}

\noindent
From the previous lemma can be derived the following useful corollary on $p$-adic systems.

\begin{corollary}
\label{CInitPer}
Let $2\leq p\in\N$, $\FFf{F}\in\SoSystems{p}$, and $n\in\Z_p$. Then, $\abs{\SFinitial{\FFFST(\FFf{F})[n]}}=\abs{\SFinitial{\FFFDT(\FFf{F})[n]}}$ and $\abs{\SFperiodic{\FFFST(\FFf{F})[n]}}=\abs{\SFperiodic{\FFFDT(\FFf{F})[n]}}$. In particular, $\SPperiodic(\FFFST(\FFf{F})[n])\Leftrightarrow\SPperiodic(\FFFDT(\FFf{F})[n])$, $\SPultimatelyperiodic(\FFFST(\FFf{F})[n])\Leftrightarrow\SPultimatelyperiodic(\FFFDT(\FFf{F})[n])$, and $\SPaperiodic(\FFFST(\FFf{F})[n])\Leftrightarrow\SPaperiodic(\FFFDT(\FFf{F})[n])$.
\end{corollary}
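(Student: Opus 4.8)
The plan is to reduce everything to the injectivity of the digit-expansion map supplied by Lemma~\ref{LInfBlDTEq}, after which the claim becomes an elementary comparison of the pre-period and period lengths of two closely related infinite sequences. Fix $\FFf{F}\in\SoSystems{p}$ and $n\in\Z_p$, and abbreviate $\Sf{s}\ce\FFFST(\FFf{F})[n]$ (the orbit sequence) and $\Sf{d}\ce\FFFDT(\FFf{F})[n]$ (the digit expansion); both are infinite, with $\Sf{s}[k]=\FFf{F}^k(n)$ and $\Sf{d}[k]=\Sf{s}[k]\modulo p$ for every $k\in\Nz$. Two facts will do all the work. First, the shift-intertwining identities $\Sf{s}[k,\infty]=\FFFST(\FFf{F})[\FFf{F}^k(n)]$ and $\Sf{d}[k,\infty]=\FFFDT(\FFf{F})[\FFf{F}^k(n)]$, both immediate from $\FFf{F}^{j+k}(n)=\FFf{F}^j(\FFf{F}^k(n))$ (the digit version already surfaced in the proof of Theorem~\ref{TDTFF}). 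Second, since $\FFf{F}$ is a $p$-adic system, $\FFFDT(\FFf{F})\in\SoDigitTables{p}(\neg\STPfinite,\DTPblock)$, so Lemma~\ref{LInfBlDTEq} tells us that $m\mapsto\FFFDT(\FFf{F})[m]$ is injective on $\Z_p$.

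I would first record the easy inclusion, which needs no block property. If $\Sf{s}$ is ultimately periodic, write $i\ce\abs{\SFinitial{\Sf{s}}}$ for its minimal pre-period and $q\ce\abs{\SFperiodic{\Sf{s}}}$ for the fundamental period of its periodic tail, so that $\Sf{s}[k]=\Sf{s}[k+q]$ for all $k\geq i$. Applying $\modulo p$ entry-wise yields $\Sf{d}[k]=\Sf{d}[k+q]$ for the same $k$; hence $\Sf{d}$ is ultimately periodic with pre-period at most $i$ and with $q$ a period of its periodic tail. Using the standard facts that the fundamental period of a purely periodic sequence divides each of its periods and is unchanged when one passes to a further tail inside the periodic part, this gives $\abs{\SFinitial{\Sf{d}}}\leq\abs{\SFinitial{\Sf{s}}}$ and $\abs{\SFperiodic{\Sf{d}}}\mid\abs{\SFperiodic{\Sf{s}}}$.

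The reverse inclusion is exactly where injectivity enters. Suppose $\Sf{d}$ is ultimately periodic with minimal pre-period $i'\ce\abs{\SFinitial{\Sf{d}}}$ and fundamental period $q'\ce\abs{\SFperiodic{\Sf{d}}}$, so that $\Sf{d}[i',\infty]=\Sf{d}[i'+q',\infty]$. By the intertwining identity this is the assertion $\FFFDT(\FFf{F})[\FFf{F}^{i'}(n)]=\FFFDT(\FFf{F})[\FFf{F}^{i'+q'}(n)]$, and injectivity then forces $\FFf{F}^{i'}(n)=\FFf{F}^{i'+q'}(n)$. Propagating this equality by applying further powers of $\FFf{F}$ gives $\Sf{s}[k]=\Sf{s}[k+q']$ for all $k\geq i'$, so, exactly as above, $\abs{\SFinitial{\Sf{s}}}\leq\abs{\SFinitial{\Sf{d}}}$ and $\abs{\SFperiodic{\Sf{s}}}\mid\abs{\SFperiodic{\Sf{d}}}$. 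Combining the two inclusions yields $\abs{\SFinitial{\Sf{s}}}=\abs{\SFinitial{\Sf{d}}}$ and $\abs{\SFperiodic{\Sf{s}}}=\abs{\SFperiodic{\Sf{d}}}$ whenever either sequence is ultimately periodic; the same two directions show that $\Sf{s}$ is ultimately periodic if and only if $\Sf{d}$ is, so in the remaining case both initial parts are infinite and both periodic parts empty and the two equalities hold trivially. Finally, as $\Sf{s}$ and $\Sf{d}$ are infinite, Definitions~(\ref{DSPperiodic})--(\ref{DSPaperiodic}) turn $\SPperiodic$, $\SPultimatelyperiodic$, and $\SPaperiodic$ into the conditions $\abs{\SFinitial{\cdot}}=0$, $\abs{\SFinitial{\cdot}}<\infty$, and $\abs{\SFinitial{\cdot}}=\infty$ respectively, so the three ``In particular'' equivalences drop out of the equality $\abs{\SFinitial{\Sf{s}}}=\abs{\SFinitial{\Sf{d}}}$.

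I expect the only real obstacle to be bookkeeping rather than substance: one must take care to upgrade the two one-sided comparisons into equalities of the pre-period and period \emph{lengths} (not merely inequalities and divisibilities), which rests on the elementary periodicity facts quoted above, and to confirm that the ``shortest initial part, then shortest period'' convention defining $\SFinitial{\cdot}$ and $\SFperiodic{\cdot}$ really does select the minimal pre-period and fundamental period. The genuinely structural input---injectivity of the digit expansion---is already in hand as Lemma~\ref{LInfBlDTEq}, so no further analysis of $p$-adic systems is required.
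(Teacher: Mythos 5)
Your proposal is correct and follows essentially the same route as the paper: both reduce the claim to the equivalence of "tail at $k$ equals tail at $k+\ell$" for the sequence table and the digit table, with the non-trivial direction supplied by the injectivity of $m\mapsto\FFFDT(\FFf{F})[m]$ from Lemma~\ref{LInfBlDTEq} together with the shift identities $\FFFST(\FFf{F})[n][k,\infty]=\FFFST(\FFf{F})[\FFf{F}^k(n)]$ and its digit analogue. The paper packages this as a single chain of biconditionals and then specializes $k,\ell$ in both directions to get the two pairs of inequalities, while you split it into an "easy" mod-$p$ direction and an "injectivity" direction, but the substance is identical.
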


\begin{proof}
Let $k,\ell\in\Nz$. Then,
\begin{align}
\FFFST(\FFf{F})[n][k+\ell,\infty]=\FFFST(\FFf{F})[n][k,\infty]
&\Leftrightarrow\FFFST(\FFf{F})[\FFf{F}^{k+\ell}(n)]=\FFFST(\FFf{F})[\FFf{F}^k(n)]\\
&\Leftrightarrow\FFf{F}^{k+\ell}(n)=\FFf{F}^k(n)\\
\text{(by Lemma~\ref{LInfBlDTEq})}&\Leftrightarrow\FFFDT(\FFf{F})[\FFf{F}^{k+\ell}(n)]=\FFFDT(\FFf{F})[\FFf{F}^k(n)]\\
&\Leftrightarrow\FFFDT(\FFf{F})[n][k+\ell,\infty]=\FFFDT(\FFf{F})[n][k,\infty].
\end{align}
If $k\ce\abs{\SFinitial{\FFFST(\FFf{F})[n]}}$ and $\ell\ce\abs{\SFperiodic{\FFFST(\FFf{F})[n]}}$, then $\FFFST(\FFf{F})[n][k+\ell,\infty]=\FFFST(\FFf{F})[n][k,\infty]$ and hence $\FFFDT(\FFf{F})[n][k+\ell,\infty]=\FFFDT(\FFf{F})[n][k,\infty]$. Thus,
\begin{align}
\abs{\SFinitial{\FFFDT(\FFf{F})[n]}}&\leq\abs{\SFinitial{\FFFST(\FFf{F})[n]}}\\
\abs{\SFperiodic{\FFFDT(\FFf{F})[n]}}&\leq\abs{\SFperiodic{\FFFST(\FFf{F})[n]}}.
\end{align}
Analogously, if $k\ce\abs{\SFinitial{\FFFDT(\FFf{F})[n]}}$ and $\ell\ce\abs{\SFperiodic{\FFFDT(\FFf{F})[n]}}$, then $\FFFDT(\FFf{F})[n][k+\ell,\infty]=\FFFDT(\FFf{F})[n][k,\infty]$ and hence $\FFFST(\FFf{F})[n][k+\ell,\infty]=\FFFST(\FFf{F})[n][k,\infty]$. Thus,
\begin{align}
\abs{\SFinitial{\FFFST(\FFf{F})[n]}}&\leq\abs{\SFinitial{\FFFDT(\FFf{F})[n]}}\\
\abs{\SFperiodic{\FFFST(\FFf{F})[n]}}&\leq\abs{\SFperiodic{\FFFDT(\FFf{F})[n]}}.
\end{align}
\end{proof}

\noindent
The following lemma is an analogue to the finite case treated in Lemma~\ref{LFinBlDTCompl}.
 
\begin{lemma}
\label{LInfBlDTCompl}
Let $2\leq p\in\N$, $\STf{D}\in\SoTables{p}$, and $\Sf{D}\in\CoSequences(\SPboundedby{\intintuptoex{p}},\neg\SPfinite)$. Then, there is a unique $n\in\Z_p$ such that $\STf{D}[n]=\Sf{D}$. In particular, $\abs{\STf{D}(n)}=1$ for every $n\in\Z_p$.
\end{lemma}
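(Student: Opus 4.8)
The plan is to split the statement into existence and uniqueness of an index $n$ with $\STf{D}[n]=\Sf{D}$, and then obtain the ``in particular'' clause as an immediate consequence. Uniqueness is the easy direction: if $m,n\in\Z_p$ both satisfy $\STf{D}[m]=\Sf{D}=\STf{D}[n]$, then $\STf{D}[m]=\STf{D}[n]$ forces $m=n$ by Lemma~\ref{LInfBlDTEq}.

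For existence I would build $n$ as a $p$-adic limit of the finite approximations produced by Lemma~\ref{LFinBlDTCompl}. For each $k\in\N$, the truncation $\STf{D}\llbracket\intintuptoex{k}\rrbracket$ is a $p$-digit table of length $k$ with domain $\Z_p$, and it inherits the block property at $k$ from $\STf{D}$ (truncating to the first $k$ columns does not affect congruences modulo $p^k$), so $\STf{D}\llbracket\intintuptoex{k}\rrbracket\in\SoDigitTables{p}(\STPlength{k},\DTPblockat{k})$. Since $\intintuptoex{p^k}$ is a CRS modulo $p^k$ contained in $\Z_p$, applying Lemma~\ref{LFinBlDTCompl} to the finite string $\Sf{D}[\intintuptoex{k}]$ yields a unique $n_k\in\intintuptoex{p^k}$ with $\STf{D}[n_k][\intintuptoex{k}]=\Sf{D}[\intintuptoex{k}]$.

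The crux of the argument --- and the step I expect to be the main obstacle --- is to verify that these finite solutions cohere into a single $p$-adic integer. For consecutive levels one has $\STf{D}[n_{k+1}][\intintuptoex{k}]=\Sf{D}[\intintuptoex{k}]=\STf{D}[n_k][\intintuptoex{k}]$, so the block property of $\STf{D}$ gives $n_{k+1}\equiv n_k\modulus{p^k}$. Chaining these congruences shows that $(n_k)_k$ is coherent (equivalently, $p$-adically Cauchy), hence converges to a unique $n\in\Z_p$ with $n\equiv n_k\modulus{p^k}$ for all $k$. Invoking the block property a final time, $\STf{D}[n][\intintuptoex{k}]=\STf{D}[n_k][\intintuptoex{k}]=\Sf{D}[\intintuptoex{k}]$ for every $k$, and therefore $\STf{D}[n]=\Sf{D}$. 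The point requiring care here is to resist conflating $\Sf{D}$ with the standard $p$-adic expansion of $n$: the coherence of the $n_k$ rests solely on the block property translating equal digit-prefixes into congruences, not on reading digits of $n$ off directly.

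Finally, for an infinite $\STf{D}$ the defining formula for the multivalued map specializes to $\STf{D}(n)=\set{m\in\Z_p\mid\STf{D}[m]=\STf{D}[n][1,\infty]}$. As $\STf{D}[n][1,\infty]$ is again an infinite $\intintuptoex{p}$-bounded sequence, the existence-and-uniqueness statement just established applies to it and produces exactly one such $m$; hence $\abs{\STf{D}(n)}=1$, as claimed.
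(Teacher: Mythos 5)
Your proposal is correct and follows essentially the same route as the paper: both obtain the finite approximations from Lemma~\ref{LFinBlDTCompl}, glue them into a $p$-adic integer via the block property (the paper writes the limit explicitly as a digit sum $n=\sum_i n_ip^i$ with $n_i=(r_{i+1}-r_i)/p^i$, which is just your coherent-sequence limit in coordinates), and get uniqueness from Lemma~\ref{LInfBlDTEq}. Your explicit verification of the ``In particular'' clause via $\STf{D}(n)=\set{m\in\Z_p\mid\STf{D}[m]=\STf{D}[n][1,\infty]}$ is also correct; the paper leaves that step implicit.
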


\begin{proof}
For every $i\in\N$ let $r_i$ be the unique (by Lemma~\ref{LFinBlDTCompl}) element of $\intintuptoex{p^i}$ such that $\STf{D}[r_i][\intintuptoex{i}]=\Sf{D}[\intintuptoex{i}]$. Furthermore, let $n_0\ce r_1\in\intintuptoex{p}$, $n_i\ce(r_{i+1}-r_i)/p^i\in\intintuptoex{p}$ for all $i\in\N$, and $n\ce\sum_{i=0}^\infty n_ip^i\in\Z_p$. Then, $n\equiv\sum_{i=0}^{k-1} n_ip^i=s_k\modulus{p^k}$ and hence $\STf{D}[n][\intintuptoex{k}]=\STf{D}[s_k][\intintuptoex{k}]=\Sf{D}[\intintuptoex{k}]$ for all $k\in\N$. The uniqueness of $n$ follows directly from Lemma~\ref{LInfBlDTEq}.
\end{proof}

\noindent
Using the above lemmas we are finally able to establish the claimed relation between $p$-adic systems and infinite $p$-digit tables with block property.

\begin{theorem}
\label{TASeqDTbl}
Let $2\leq p\in\N$ and $\STf{D}\in\SoTables{p}$. Then, there is a unique $\FFf{F}_\STf{D}\in\SoFibredFunctions{p}(\FFPcanonicalform)$ such that $\FFFDT(\FFf{F}_\STf{D})=\STf{D}$. In particular, $\FFf{F}_\STf{D}$ is a $p$-adic system.
\end{theorem}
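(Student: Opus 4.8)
The plan is to combine the characterization of digit-table-generating $p$-fibred functions from Theorem~\ref{TDTFF} with the fact that the fibres $\STf{D}(n)$ of an infinite digit table with block property are singletons, which is precisely the content of Lemma~\ref{LInfBlDTCompl}. Since $\STf{D}\in\SoTables{p}$ is infinite, we have $\abs{\STf{D}}=\infty$ and $\intintuptoex{\abs{\STf{D}}}=\Nz$, so the operator $\llbracket\intintuptoex{\abs{\STf{D}}}\rrbracket$ appearing in Theorem~\ref{TDTFF} acts as the identity on infinite digit tables; thus the equation $\FFFDT(\FFf{F})\llbracket\intintuptoex{\abs{\STf{D}}}\rrbracket=\STf{D}$ there reduces to simply $\FFFDT(\FFf{F})=\STf{D}$.

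For existence, I would first invoke Lemma~\ref{LInfBlDTCompl} to conclude that $\STf{D}(n)\neq\emptyset$ (indeed $\abs{\STf{D}(n)}=1$) for every $n\in\Z_p$. Because $\STFdomain(\STf{D})=\Z_p$, every $p$-fibred function with this domain is automatically closed, so the existence clause of Theorem~\ref{TDTFF} yields some $\FFf{F}\in\SoFibredFunctions{p}(\FFPdomain{\Z_p},\FFPclosed)$ with $\FFFDT(\FFf{F})=\STf{D}$. I would then pass to its canonical form via Lemma~\ref{LFFCanForm}, obtaining $\FFf{F}_\STf{D}\in\SoFibredFunctions{p}(\FFPcanonicalform)$ with the same sequence table and hence the same digit table $\FFFDT(\FFf{F}_\STf{D})=\STf{D}$; note that the canonical-form construction preserves the domain $\Z_p$.

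Uniqueness is where the singleton property does the real work. If $\FFf{G}\in\SoFibredFunctions{p}(\FFPcanonicalform)$ also satisfies $\FFFDT(\FFf{G})=\STf{D}$, then the ``$\Rightarrow$'' direction of Theorem~\ref{TDTFF} forces $\FFf{G}(n)\in\STf{D}(n)$ for every $n\in\Z_p$, and likewise $\FFf{F}_\STf{D}(n)\in\STf{D}(n)$. Since $\abs{\STf{D}(n)}=1$, the two values must coincide, so the underlying ordinary functions agree and $\FFf{F}_\STf{D}\sim_p\FFf{G}$. Two canonical forms lying in a single $\sim_p$-class are equal by Lemma~\ref{LFFCanForm}, giving $\FFf{F}_\STf{D}=\FFf{G}$ and establishing uniqueness.

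Finally, the ``in particular'' clause is immediate from the definitions: $\FFf{F}_\STf{D}$ has domain $\Z_p$ and its digit table is $\STf{D}$, which enjoys the block property $\DTPblock$; since this predicate is transported to $p$-fibred functions through $\FFFDT$, we obtain $\FFPblock(\FFf{F}_\STf{D})$ and therefore $\FFf{F}_\STf{D}\in\SoSystems{p}$. I do not expect a serious obstacle here, as all the substantive content has already been packaged into Theorem~\ref{TDTFF} and Lemma~\ref{LInfBlDTCompl}; the only points that require care are verifying that the infinite-length reduction makes $\llbracket\intintuptoex{\abs{\STf{D}}}\rrbracket$ trivial and that passing to the canonical form alters neither the domain nor the digit table.
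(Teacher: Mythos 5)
Your proposal is correct and follows essentially the same route as the paper: both rest on Lemma~\ref{LInfBlDTCompl} to get $\abs{\STf{D}(n)}=1$, on Theorem~\ref{TDTFF} to characterize the admissible $p$-fibred functions, and on Lemma~\ref{LFFCanForm} to pin down the unique canonical representative. The only cosmetic difference is that the paper writes down the canonical form explicitly as $\FFf{F}_\STf{D}[r](n)=p\STf{D}(n)$ on $r+p\Z_p$ and $0$ elsewhere, whereas you obtain existence from the existence clause of Theorem~\ref{TDTFF} and then canonicalize; these amount to the same construction.
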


\begin{proof}
From Lemma~\ref{LInfBlDTCompl} we get $\abs{\STf{D}(n)}=1$ for every $n\in\Z_p$. Let $\FFf{F}_\STf{D}\in\SoFibredFunctions{p}$ with
\begin{align}
\label{DFD}
\FFf{F}_\STf{D}[r]:\Z_p&\to\Z_p\\
\nonumber
n&\mapsto
\begin{cases}
p\STf{D}(n)&\text{if }n\equiv r\modulus{p}\text{\quad(cf. Eqn.~(\ref{EIdentify}))}\\
0&\text{if }n\not\equiv r\modulus{p}
\end{cases}
\end{align}
for all $r\in\intintuptoex{p}$. Then, $\FFPcanonicalform(\FFf{F}_\STf{D})$ and $\FFFDT(\FFf{F}_\STf{D})=\STf{D}$ by Theorem~\ref{TDTFF}. Furthermore, $\FFf{F}_\STf{D}$ is uniquely defined by this property by Lemma~\ref{LFFCanForm} and Theorem~\ref{TDTFF}.
\end{proof}

\noindent
Theorem~\ref{TASeqDTbl} finally proves that there is a one-to-one correspondence between $p$-adic systems (modulo $\sim_p$) and infinite $p$-digit tables with block property given by
\begin{align}
\SoSystems{p}\slash_{\sim_p}&\leftrightarrow\SoTables{p}.\\
\nonumber
\FFf{F}&\mapsto\FFFDT(\FFf{F})\\
\nonumber
\FFf{F}_\STf{D}&\mapsfrom\STf{D}
\end{align}
In this sense we might as well define a $p$-adic system to be an infinite $p$-digit table with domain $\Z_p$ and block property and consider the corresponding $p$-fibred function to be its feature instead of interpreting it the other way around. The decision to choose the dynamical interpretation over the statical one in the definition (and thus fix a certain mindset) is somewhat arbitrary, but will be explained to some extent in the upcoming section. Nevertheless, we may choose to go back and forth between both interpretations if certain things are easier to see or prove in one setting or the other.

Considering the above theorem, a natural question to ask is whether all $p$-digit tables that only have the weak block property but not the the block property, can also be realized as the $p$-digit table of a closed $p$-fibred function. That this is not the case, is proven by the following counter-example.

\begin{example}
\label{EWBlockF}
Let $\STf{D}\ce((n\equiv0\modulus{2}\;?\;(0)^\infty:(1,0)\cdot(1)^\infty))_{n\in\Z_2}$. Then, $\DTPweakblock(\STf{D})$ but, since $\STf{D}(1)=\emptyset$, Theorem~\ref{TDTFF} implies that there is no closed $p$-fibred function $\FFf{F}$ such that $\FFFDT(\FFf{F})=\STf{D}$.
\end{example}

As a first example of how the relation between $p$-adic systems and $p$-digit tables can be exploited, we show that any infinite $p$-digit table with block property whose domain is dense in $\Z_p$, can be extended to $\Z_p$ in a unique way such that the block property still holds. This implies an equal statement for $p$-adic systems: any $p$-fibred function with block property whose domain is dense in $\Z_p$, has a unique extension to a $p$-adic system (i.e. its domain can be extended to $\Z_p$ such that the block property is preserved).

\begin{lemma}
\label{LEquDT}
Let $2\leq p\in\N$ and $\STf{D},\STf{E}\in\SoDigitTables{p}(\DTPweakblock)$ of equal domain such that $\STf{D}\vert_A=\STf{E}\vert_A$ for some set $A\subseteq\STFdomain(\STf{D})$ that is dense in $\Z_p$. Then, $\STf{D}=\STf{E}$.
\end{lemma}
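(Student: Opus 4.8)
The plan is to fix an arbitrary $n\in\STFdomain(\STf{D})=\STFdomain(\STf{E})$ and to show that the two rows agree, $\STf{D}[n]=\STf{E}[n]$; since $n$ is arbitrary this gives $\STf{D}=\STf{E}$. Because two digit expansions coincide precisely when all of their finite prefixes coincide, it suffices to prove $\STf{D}[n][\intintuptoex{k}]=\STf{E}[n][\intintuptoex{k}]$ for every $k\in\N$ (this reduction is valid whether the tables are finite or infinite, with $k$ ranging up to the common length).

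Fix such a $k$. The crucial input is the density of $A$: the ball $n+p^k\Z_p$ is a nonempty open subset of $\Z_p$, so it must meet the dense set $A$, and I pick a witness $a\in A\cap(n+p^k\Z_p)$. Note $a\in A\subseteq\STFdomain(\STf{D})$ and $a\equiv n\modulus{p^k}$.

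Now I chain three equalities on the length-$k$ prefixes. First, $\DTPweakblockat{k}(\STf{D})$ (which holds since $\DTPweakblock(\STf{D})$) applied to the congruence $a\equiv n\modulus{p^k}$ gives $\STf{D}[n][\intintuptoex{k}]=\STf{D}[a][\intintuptoex{k}]$. Second, since $a\in A$ and $\STf{D}\vert_A=\STf{E}\vert_A$, the full rows agree, $\STf{D}[a]=\STf{E}[a]$, so in particular their prefixes agree. Third, $\DTPweakblockat{k}(\STf{E})$ applied to the same congruence gives $\STf{E}[a][\intintuptoex{k}]=\STf{E}[n][\intintuptoex{k}]$. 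Composing the three yields $\STf{D}[n][\intintuptoex{k}]=\STf{E}[n][\intintuptoex{k}]$. As $k$ was arbitrary, $\STf{D}[n]=\STf{E}[n]$.

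I expect no real obstacle here: this is a clean density argument, and the weak block property is exactly the device that transports the first $k$ digits between domain elements congruent modulo $p^k$, while density supplies a single element of $A$ close enough to $n$ to serve for the given precision $k$. The only points needing a word of care are the prefix reduction at the start and verifying that the witness $a$ genuinely lies in the common domain; the latter is immediate from $A\subseteq\STFdomain(\STf{D})$. It is worth emphasizing that only the \emph{weak} block property is used, never the full block property, which is what makes both this statement and the resulting unique-extension property for $p$-adic systems correspondingly strong.
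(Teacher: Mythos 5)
Your argument is correct and is essentially the paper's own proof, merely phrased directly instead of by contradiction: both use density to produce a witness $N\in A$ congruent to $n$ modulo a suitable power of $p$ and then transport the length-$k$ prefix via the weak block property of $\STf{D}$ and $\STf{E}$ together with $\STf{D}\vert_A=\STf{E}\vert_A$. No gap.
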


\begin{proof}
Assume to the contrary that $\STf{D}\neq\STf{E}$. Then there is an $n\in\STFdomain(\STf{D})$ and a $k\in\Nz$ such that $\STf{D}[n][k]\neq\STf{E}[n][k]$. Since $A$ is dense in $\Z_p$, there is an $N\in A$ with $N\equiv n\modulus{p^{k+1}}$. Then, $\DTPweakblock(\STf{D})$ and $\DTPweakblock(\STf{E})$ implies
\begin{align}
\STf{D}[n][\intintuptoin{k}]&=\STf{D}[N][\intintuptoin{k}]=\STf{E}[N][\intintuptoin{k}]=\STf{E}[n][\intintuptoin{k}].
\end{align}
In particular, $\STf{D}[n][k]=\STf{E}[n][k]$, which is a contradiction.
\end{proof}

\begin{lemma}
\label{LDTExt}
Let $2\leq p\in\N$, $\STf{D}\in\SoDigitTables{p}(\DTPweakblock)$ such that $\STFdomain(\STf{D})$ is dense in $\Z_p$, and let $\STf{E}\in\SoDigitTables{p}(\STPdomain{\Z_p})$ be defined in the following way: for $n\in\Z_p$ and $k\in\Nz$ let $\STf{E}[n][k]\ce\STf{D}[N][k]$ where $N\in\STFdomain(\STf{D})$ such that $N\equiv n\modulus{p^{k+1}}$ (well-defined due to $\DTPweakblock(\STf{D})$). Then, $\DTPweakblock(\STf{E})$ and $\STf{E}\vert_{\STFdomain(\STf{D})}=\STf{D}$ and $\STf{E}$ is uniquely defined by this property. Furthermore, if $\DTPblock(\STf{D})$, then $\DTPblock(\STf{E})$.
\end{lemma}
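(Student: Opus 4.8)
The plan is to verify in turn that $\STf{E}$ is a well-defined $p$-digit table, that it extends $\STf{D}$, that it inherits the weak block property, that it is the unique such extension, and finally that it inherits the full block property when $\STf{D}$ does. The bookkeeping point to keep in mind throughout is the index shift built into the definition: to pin down the $k$-th digit $\STf{E}[n][k]$ one uses a witness $N\equiv n\modulus{p^{k+1}}$, and it is the weak block property at $k+1$ that controls the digits with indices in $\intintuptoex{k+1}=\intint{0}{k}$, and hence the digit at index $k$ in particular.

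First I would establish well-definedness, separating the two ingredients. Given $n\in\Z_p$ and $k\in\Nz$, density of $\STFdomain(\STf{D})$ in $\Z_p$ guarantees the existence of a witness $N\in\STFdomain(\STf{D})$ with $N\equiv n\modulus{p^{k+1}}$, while $\DTPweakblockat{k+1}(\STf{D})$ guarantees independence of the choice: any two such witnesses are congruent modulo $p^{k+1}$, so their first $k+1$ digits, and in particular their $k$-th digits, agree. That $\STf{E}\in\SoDigitTables{p}$ is then immediate, the digits being copied from $\STf{D}$ and hence lying in $\intintuptoex{p}$, and for the index-$0$ condition one takes $N\equiv n\modulus{p}$ to obtain $\STf{E}[n][0]=\STf{D}[N][0]=N\modulo p=n\modulo p$. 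The agreement $\STf{E}\vert_{\STFdomain(\STf{D})}=\STf{D}$ follows by simply taking $N=n$ for every $k$. Uniqueness of $\STf{E}$ among weak-block $p$-digit tables on $\Z_p$ extending $\STf{D}$ then follows directly from Lemma~\ref{LEquDT}, since any two such tables agree on the dense set $\STFdomain(\STf{D})$.

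Next I would check $\DTPweakblock(\STf{E})$. Suppose $m\equiv n\modulus{p^k}$ and fix an index $j\in\intintuptoex{k}$, so that $j+1\leq k$ and therefore $m\equiv n\modulus{p^{j+1}}$. Choosing a single witness $N\in\STFdomain(\STf{D})$ with $N\equiv m\modulus{p^{j+1}}$, which then also satisfies $N\equiv n\modulus{p^{j+1}}$, gives $\STf{E}[m][j]=\STf{D}[N][j]=\STf{E}[n][j]$. Letting $j$ range over $\intintuptoex{k}$ yields $\STf{E}[m][\intintuptoex{k}]=\STf{E}[n][\intintuptoex{k}]$, as required.

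The real content lies in the last claim, that $\DTPblock(\STf{D})$ implies $\DTPblock(\STf{E})$, and here I expect the only genuine obstacle: transporting the converse implication of the block property from the dense subdomain up to all of $\Z_p$. The forward implication is already subsumed by $\DTPweakblock(\STf{E})$, so it remains to show that $\STf{E}[m][\intintuptoex{k}]=\STf{E}[n][\intintuptoex{k}]$ forces $m\equiv n\modulus{p^k}$. By density I would pick $M,N\in\STFdomain(\STf{D})$ with $M\equiv m$ and $N\equiv n\modulus{p^k}$; the weak block property of $\STf{E}$ transfers the hypothesis to $\STf{E}[M][\intintuptoex{k}]=\STf{E}[N][\intintuptoex{k}]$, and since $M,N\in\STFdomain(\STf{D})$ these coincide with $\STf{D}[M][\intintuptoex{k}]$ and $\STf{D}[N][\intintuptoex{k}]$ respectively. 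The converse direction of $\DTPblock(\STf{D})$ then gives $M\equiv N\modulus{p^k}$, and combining this with $M\equiv m$ and $N\equiv n\modulus{p^k}$ yields $m\equiv n\modulus{p^k}$, completing the proof.
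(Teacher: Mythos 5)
Your proof is correct and follows essentially the same route as the paper's: reduce every claim to witnesses in the dense domain $\STFdomain(\STf{D})$, use the (weak) block property of $\STf{D}$ to transfer congruences, and invoke Lemma~\ref{LEquDT} for uniqueness. The only difference is cosmetic — you verify well-definedness and the index-$0$ digit condition explicitly, which the paper leaves implicit.
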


\begin{proof}
Uniqueness follows directly from Lemma~\ref{LEquDT}.

Let $n\in\Z_p$, $k\in\N$, and $N\in\STFdomain(\STf{D})$ such that $N\equiv n\modulus{p^k}$. Then $N\equiv n\modulus{p^{i+1}}$ and hence $\STf{E}[n][i]=\STf{D}[N][i]$ for all $i\in\intintuptoex{k}$ which implies $\STf{E}[n][\intintuptoex{k}]=\STf{D}[N][\intintuptoex{k}]$. If $n\in\STFdomain(\STf{D})$, this further implies that $\STf{E}[n][\intintuptoex{k}]=\STf{D}[N][\intintuptoex{k}]=\STf{D}[n][\intintuptoex{k}]$ and we conclude $\STf{E}\vert_{\STFdomain(\STf{D})}=\STf{D}$.

Now let $m,n\in\Z_p$, $k\in\N$ with $m\equiv n\modulus{p^k}$, and $M\in\STFdomain(\STf{D})$ such that $M\equiv m\modulus{p^k}$. Then, $M\equiv n\modulus{p^k}$ and $\STf{E}[m][\intintuptoex{k}]=\STf{D}[M][\intintuptoex{k}]=\STf{E}[n][\intintuptoex{k}]$. Therefore, $\DTPweakblock(\STf{E})$.

Finally, assume that $\DTPblock(\STf{D})$ and let $m,n\in\Z_p$, $k\in\N$ with $\STf{E}[m][\intintuptoex{k}]=\STf{E}[n][\intintuptoex{k}]$, and $M,N\in\STFdomain(\STf{D})$ such that $M\equiv m\modulus{p^k}$ and $N\equiv n\modulus{p^k}$. Then,
\begin{align}
\STf{D}[M][\intintuptoex{k}]&=\STf{E}[m][\intintuptoex{k}]=\STf{E}[n][\intintuptoex{k}]=\STf{D}[N][\intintuptoex{k}],
\end{align}
and since $\DTPblock(\STf{D})$, this implies that $m\equiv M\equiv N\equiv n\modulus{p^k}$. Therefore, $\DTPblock(\STf{E})$.
\end{proof}

\begin{corollary}
\label{CFFwbDTExt}
Let $\FFf{F}\in\SoFibredFunctions{p}(\FFPweakblock)$ with $\FFFdomain(\FFf{F})$ dense in $\Z_p$, $\FFf{G}\in\SoFibredFunctions{p}(\FFPdomain{\Z_p},\FFPweakblock)$ such that $\FFf{G}\vert_{\FFFdomain(\FFf{F})}=\FFf{F}$, and $\STf{E}\in\SoDigitTables{p}(\STPdomain{\Z_p},\DTPweakblock)$ such that $\STf{E}\vert_{\FFFdomain(\FFf{F})}=\FFFDT(\FFf{F})$ (cf. Lemma~\ref{LDTExt}). Then, $\STf{E}=\FFFDT(\FFf{G})$.
\end{corollary}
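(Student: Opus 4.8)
The plan is to recognize both $\STf{E}$ and $\FFFDT(\FFf{G})$ as $p$-digit tables with domain $\Z_p$ and the weak block property which restrict to $\FFFDT(\FFf{F})$ on the dense set $\FFFdomain(\FFf{F})$, and then to invoke the uniqueness statement contained in Lemma~\ref{LEquDT}.

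First I would show that $\FFf{F}$ and $\FFf{G}$ produce identical orbits on $\FFFdomain(\FFf{F})$. Since $\FFf{F}$ is closed (which is implicit in $\FFf{F}\in\SoFibredFunctions{p}(\FFPweakblock)$, as $\FFPweakblock$ is only defined on $\SoFibredFunctions{p}(\FFPclosed)$) and $\FFf{G}\vert_{\FFFdomain(\FFf{F})}=\FFf{F}$, a straightforward induction on $k$ gives $\FFf{G}^k(n)=\FFf{F}^k(n)$ for all $n\in\FFFdomain(\FFf{F})$ and all $k\in\Nz$: the base case $k=1$ is the hypothesis $\FFf{G}(n)=\FFf{F}(n)$, and in the inductive step closedness of $\FFf{F}$ guarantees $\FFf{F}^k(n)\in\FFFdomain(\FFf{F})$, so that $\FFf{G}(\FFf{F}^k(n))=\FFf{F}(\FFf{F}^k(n))=\FFf{F}^{k+1}(n)$. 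Reducing these coinciding orbits modulo $p$ yields $\FFFDT(\FFf{G})[n]=\FFFDT(\FFf{F})[n]$ for every $n\in\FFFdomain(\FFf{F})$; that is, $\FFFDT(\FFf{G})\vert_{\FFFdomain(\FFf{F})}=\FFFDT(\FFf{F})=\STf{E}\vert_{\FFFdomain(\FFf{F})}$, where the last equality is the defining property of $\STf{E}$ from Lemma~\ref{LDTExt}.

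Next I would assemble the hypotheses of Lemma~\ref{LEquDT}. The table $\FFFDT(\FFf{G})$ has domain $\Z_p$ (as $\FFFdomain(\FFf{G})=\Z_p$) and it inherits the weak block property from $\FFf{G}$, since $\FFPweakblock$ is by definition transported to closed $p$-fibred functions through $\FFFDT$. Hence both $\FFFDT(\FFf{G})$ and $\STf{E}$ lie in $\SoDigitTables{p}(\STPdomain{\Z_p},\DTPweakblock)$, they have equal domain $\Z_p$, and by the previous paragraph they coincide on $\FFFdomain(\FFf{F})$, which is dense in $\Z_p$ by assumption. Applying Lemma~\ref{LEquDT} with $A\ce\FFFdomain(\FFf{F})$ then forces $\FFFDT(\FFf{G})=\STf{E}$, which is exactly the assertion.

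The only mildly delicate point is the first step: one must exploit the closedness of $\FFf{F}$ to keep the whole orbit inside $\FFFdomain(\FFf{F})$, so that the branch-wise agreement $\FFf{G}\vert_{\FFFdomain(\FFf{F})}=\FFf{F}$ can be propagated through every iteration rather than only through the first. Once the orbits are seen to agree on the dense domain, the conclusion is immediate from the uniqueness already packaged in Lemma~\ref{LEquDT}, so no genuine obstacle remains.
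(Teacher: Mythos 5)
Your proof is correct and follows essentially the same route as the paper, which simply cites the uniqueness statement of Lemma~\ref{LDTExt} (itself derived from Lemma~\ref{LEquDT}); you have merely made explicit the orbit-agreement step $\FFf{G}^k(n)=\FFf{F}^k(n)$ that the paper leaves implicit. No gaps.
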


\begin{proof}
Follows directly from Lemma~\ref{LDTExt}.
\end{proof}

By Lemma~\ref{LEquDT} and Lemma~\ref{LDTExt} a $p$-digit table with weak block property is uniquely defined by its restriction to any dense subset of its domain. Because of the nature of the weak block property, we can even go one step further and drop more redundant information to gain a minimal representation of a given $p$-digit table with weak block property that still allows to recover the full table. Looking at the examples given in Table~\ref{TSTDTEx}, one can see that due to the repetition of blocks the full table can be reconstructed from the sequence gained by concatenating the rightmost rows in each block. Formally, if $2\leq p\in\N$ and $k\in\N\cup\set{\infty}$, then there is a bijection between the set of all $p$-digit tables of length $k$ that have domain $\Z_p$ and the weak block property and the set of all $\intintuptoex{p}$-bounded sequences of length $(p^{k+1}-1)/(p-1)-1$ with prefix $(0,\ldots,p-1)$ given by
\begin{align}
\label{EBijDTSeq}
\SoDigitTables{p}(\STPdomain{\Z_p},\STPlength{k},\DTPweakblock)&\leftrightarrow\CoSequences(\SPboundedby{\intintuptoex{p}},\SPlength{(p^{k+1}-1)/(p-1)-1},\SPprefix{(0,\ldots,p-1)})\\
\nonumber
\STf{D}&\mapsto\prod_{\ell=0}^{k-1}\prod_{n=0}^{p^{\ell+1}-1}(\STf{D}[n][\ell])\\
\nonumber
\left(\left(\Sf{D}\left[\frac{p^{\ell+1}-1}{p-1}-1+n\modulo p^{\ell+1}\right]\right)_{\ell\in\intintuptoex{k}}\right)_{n\in\Z_p}&\mapsfrom\Sf{D}
\end{align}
where the product denotes a product of one-element sequences (i.e. their concatenation). In this interpretation the $p$-digit tables with weak block property correspond exactly to the $\intintuptoex{p}$-bounded sequences of a certain length (and a specific prefix necessary for technical reasons). The question arises how the stronger block property of $p$-digit tables (which makes them $p$-adic systems after all) translates to the corresponding sequences. In order to answer this question, we define the following two predicates on $\CoSequences$ ($2\leq p\in\N$, $k\in\N\cup\set{\infty}$, $\Sf{S}\in\CoSequences$):
\begin{flalign}
\pushleft{\SPweakblock{p}{k}(\Sf{S})}&\Leftrightarrow\SPboundedby{\intintuptoex{p}}(\Sf{S})&\hspace{-15em}\text{$\Sf{S}$ has the \emph{weak $(p,k)$-block property}}\label{DSPweakblock}\\
\nonumber
&\phantom{\vphantom{a}\Leftrightarrow\vphantom{a}}\SPlength{(p^{k+1}-1)/(p-1)-1}(\Sf{S})\\
\nonumber
&\phantom{\vphantom{a}\Leftrightarrow\vphantom{a}}\SPprefix{(0,\ldots,p-1)}(\Sf{S})\\
\pushleft{\SPblock{p}{k}(\Sf{S})}&\Leftrightarrow\SPweakblock{p}{k}(\Sf{S})&\hspace{-13em}\text{$\Sf{S}$ has the \emph{$(p,k)$-block property}}\label{DSPblock}\\
\nonumber
&\phantom{\vphantom{a}\Leftrightarrow\vphantom{a}}\fa\ell\in\intintuptoex{k}:\fa n\in(p^{\ell+1}-1)/(p-1)-1+\intintuptoex{p^\ell}:\set{\Sf{S}[n+ip^\ell]\mid i\in\intintuptoex{p}}=\intintuptoex{p}
\end{flalign}

\noindent
A quick check of the definitions shows that a sequence with the $(p,k)$-block property corresponds to (by the above mapping) a $p$-digit table that has the block property and vice versa. Therefore, the restriction of the above mapping also defines a bijection between $\SoDigitTables{p}(\STPdomain{\Z_p},\STPlength{k},\DTPblock)$ and $\CoSequences(\SPblock{p}{k})$. This is true in particular for $k=\infty$, in which case we get a bijection between $\SoTables{p}$ and $\CoSequences(\SPblock{p}{\infty})$.

As an example we consider the well-known Thue-Morse sequence\label{DThueMorse}
\begin{align}
\Sf{T}\ce(0,1,1,0,1,0,0,1,1,0,0,1,0,1,1,0,1,0,0,1,0,1,1,0,0,1,1,0,1,0,0,1,\ldots)
\end{align}
which, according to its Wikipedia article, is ``obtained by starting with $0$ and successively appending the Boolean complement of the sequence obtained thus far''. By slight modification of its beginning it is actually possible to make it satisfy the $(2,\infty)$-block property and thus define a $2$-adic system. For that we set
\begin{align}
\Sf{S}&\ce(0,1)\cdot\Sf{T}[4,\infty]\\
&\phantom{\ce}\mathllap{=}\;\;(0,1,1,0,0,1,1,0,0,1,0,1,1,0,1,0,0,1,0,1,1,0,0,1,1,0,1,0,0,1,\ldots).
\end{align}
The corresponding $2$-adic system which generates this sequence can be found easily using the results of this subsection and is given by
\begin{align}
\FFf{F}&\ce(x+6-2(x\modulo8),x+3-2(x\modulo8)+2(x\modulo4)).
\end{align}
It can be readily verified that $\Sf{S}$ is indeed equal to the sequence obtained from $\Sf{D}=\FFFDT(\FFf{F})$ when using the bijection defined in Eqn.~(\ref{EBijDTSeq}). Despite best efforts, the author was unable to find any reference to this method for defining the Thue-Morse sequence in the literature. Furthermore, it appears worth noting that the corresponding sequence of the even simpler $2$-adic system
\begin{align}
\FFf{G}\ce(x,x+3-2(x\modulo4))
\end{align}
coincides with $1-T[2,\infty]$, the truncated Boolean complement of the Thue-Morse sequence.

\myparagraphtoc{Permutations of $p$-adic integers that respect congruence modulo powers of $p$.}
The last interpretation of $p$-adic systems we give in this section is that of permutations of $\Z_p$. Clearly, every $p$-adic system $\FFf{F}$ defines a bijection between $\Z_p$ and $\CoSequences(\SPboundedby{\intintuptoex{p}},\neg\SPfinite)$ by its $\FFf{F}$-digit table. We define
\begin{align}
\label{Dpsi}
\psi_\FFf{F}:\Z_p&\to\CoSequences(\SPboundedby{\intintuptoex{p}},\neg\SPfinite).\\
\nonumber
n&\mapsto\FFFDT(\FFf{F})[n]
\end{align}
If one interprets an infinite sequence with entries in $\intintuptoex{p}$ as the usual base $p$ expansion of a $p$-adic integer, then $\psi_\FFf{F}$ also defines a permutation of $\Z_p$. This is a special case of the following idea: for two $p$-adic systems $\FFf{F}$ and $\FFf{G}$ let
\begin{align}
\label{Dpi}
\pi_{\FFf{F},\FFf{G}}\ce{\psi_\FFf{G}}^{-1}\circ\psi_\FFf{F}:\Z_p&\to\Z_p.
\end{align}
Then $\pi_{\FFf{F},\FFf{G}}$ clearly defines a permutation of $\Z_p$. Interpreting the infinite sequences that $\psi_\FFf{F}$ yields as the usual base $p$ expansions, corresponds to the choice \label{DFp}$\FFf{G}=\FFf{F}_p=(x)^p=(x,\ldots,x)$ (cf. Eqn.~(\ref{EBinary})). As an example consider $\FFf{F}\ce\FFf{F}_C$ and $\FFf{G}\ce\FFf{F}_2$. Then we get $\FFFDT(\FFf{F})[1]=(1,0)^\infty=\FFFDT(\FFf{G})[-1/3]$ and hence $\pi_{\FFf{F},\FFf{G}}(1)=-1/3$.

Several properties of permutations of the form $\pi_{\FFf{F},\FFf{G}}$ are summarized in the following lemma.

\begin{lemma}
\label{LPermProp}
Let $2\leq p\in\N$, $\FFf{F}$ and $\FFf{G}$ two $p$-adic systems, $\pi\ce\pi_{\FFf{F},\FFf{G}}$, and $k\in\Nz$. Then,

\begin{theoremtable}
\theoremitem{(1)}&$\fa m,n\in\Z_p:m\equiv n\modulus{p^k}\Leftrightarrow\pi(m)\equiv\pi(n)\modulus{p^k}$\tabularnewline
&In particular: $\pi$ is measure preserving (and thus continuous) and it induces a permutation $\pi_k$ of $\Z_p\slash p^k\Z_p$ by $[n]\mapsto [\pi(n)]$\tabularnewline
\theoremitem{(2)}&$\fa m,n\in\Z_p$ with $n\equiv\pi(m)\modulus{p^k}:\fa M,N\in\Z_p$ with $N\equiv\pi(M)\modulus{p^{k+1}}:$\tabularnewline
&\quad$m\equiv M\modulus{p^k}\Rightarrow n\equiv N\modulus{p^k}$,\tabularnewline
&Note: $n\equiv\pi(m)\modulus{p^k}\Leftrightarrow n\in\pi_k([m])$ and $N\equiv\pi(M)\modulus{p^{k+1}}\Leftrightarrow N\in\pi_{k+1}([M])$\tabularnewline
\theoremitem{(3)}&$\pi^{-1}=\pi_{\FFf{G},\FFf{F}}$\tabularnewline
\theoremitem{(4)}&$\fa n\in\Z_p:\psi_\FFf{F}(\FFf{F}(n))=\psi_\FFf{F}(n)[1,\infty]$\tabularnewline
&In particular: $\fa\STf{D}\in\CoSequences(\STPboundedby{\intintuptoex{p}},\neg\STPfinite):\psi_\FFf{F}(\FFf{F}({\psi_\FFf{F}}^{-1}(\STf{D})))=\STf{D}[1,\infty]$\tabularnewline
\theoremitem{(5)}&$\fa n\in\Z_p:\pi(\FFf{F}(n))=\FFf{G}(\pi(n))$ and $\FFf{F}(\pi^{-1}(n))=\pi^{-1}(\FFf{G}(n))$.
\end{theoremtable}
\end{lemma}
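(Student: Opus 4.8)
The plan is to work entirely on the side of digit expansions, i.e.\ through the maps $\psi_\FFf{F}$ and $\psi_\FFf{G}$, and to reduce everything to the shift identity already recorded in part~(4). The guiding observation is that the definition $\pi={\psi_\FFf{G}}^{-1}\circ\psi_\FFf{F}$ immediately gives the intertwining relation $\psi_\FFf{G}\circ\pi=\psi_\FFf{F}$, that is, $\psi_\FFf{G}(\pi(n))=\psi_\FFf{F}(n)$ for every $n\in\Z_p$; and that part~(4), applied to both $\FFf{F}$ and $\FFf{G}$, says that applying a $p$-adic system to its argument amounts to chopping off the leading digit of the corresponding expansion.

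First I would fix $n\in\Z_p$ and compute $\psi_\FFf{G}(\pi(\FFf{F}(n)))$ along a short chain of equalities. By the definition of $\pi$ this equals $\psi_\FFf{F}(\FFf{F}(n))$; by part~(4) for $\FFf{F}$ it equals $\psi_\FFf{F}(n)[1,\infty]$; using $\psi_\FFf{F}(n)=\psi_\FFf{G}(\pi(n))$ once more it equals $\psi_\FFf{G}(\pi(n))[1,\infty]$; and finally, by part~(4) for $\FFf{G}$ applied at the point $\pi(n)$, it equals $\psi_\FFf{G}(\FFf{G}(\pi(n)))$. Since $\psi_\FFf{G}$ is injective (being a bijection between $\Z_p$ and $\CoSequences(\SPboundedby{\intintuptoex{p}},\neg\SPfinite)$, as noted before Eqn.~(\ref{Dpsi})), comparing the two ends of the chain yields $\pi(\FFf{F}(n))=\FFf{G}(\pi(n))$, which is the first asserted identity.

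The second identity $\FFf{F}(\pi^{-1}(n))=\pi^{-1}(\FFf{G}(n))$ is then purely formal: given $n\in\Z_p$, apply the first identity at the argument $\pi^{-1}(n)$ to obtain $\pi(\FFf{F}(\pi^{-1}(n)))=\FFf{G}(\pi(\pi^{-1}(n)))=\FFf{G}(n)$, and then compose with $\pi^{-1}$ on the left. I expect no real obstacle: the only genuine ingredient is the shift identity of part~(4), and the single point requiring care is to invoke injectivity of $\psi_\FFf{G}$ at the end of the chain rather than attempting to cancel the leading-digit truncations directly. It is worth remarking that the two identities together express that $\pi$ conjugates the dynamics of $\FFf{F}$ into that of $\FFf{G}$, so that $\pi$ is a (measure-preserving, by part~(1)) conjugacy between the two $p$-adic systems.
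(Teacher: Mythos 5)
Your argument for part (5) is correct and is essentially the paper's own proof: both run the chain $\psi_\FFf{G}(\pi(\FFf{F}(n)))=\psi_\FFf{F}(\FFf{F}(n))=\psi_\FFf{F}(n)[1,\infty]=\psi_\FFf{G}(\pi(n))[1,\infty]=\psi_\FFf{G}(\FFf{G}(\pi(n)))$ and then cancel the bijection $\psi_\FFf{G}$, and both obtain the second identity by substituting $\pi^{-1}(n)$ for $n$. Your explicit remark that one must invoke injectivity of $\psi_\FFf{G}$ at the end rather than ``cancel the truncations'' is a fair point of care.

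The genuine gap is that the statement to be proved is the entire lemma, with five claims, and you have only proved (5). Parts (1), (2) and (3) are not addressed at all, and (4) is treated as ``already recorded'' when it is itself one of the assertions. Part (1) is the substantive claim of the lemma and the only place where the block property of $\FFf{F}$ and $\FFf{G}$ actually enters: one needs $m\equiv n\modulus{p^k}\Leftrightarrow\psi_\FFf{F}(m)[\intintuptoex{k}]=\psi_\FFf{F}(n)[\intintuptoex{k}]$ by $\FFPblock(\FFf{F})$, then rewrite $\psi_\FFf{F}=\psi_\FFf{G}\circ\pi$, and then use $\FFPblock(\FFf{G})$ in the reverse direction to convert agreement of the first $k$ digits of $\psi_\FFf{G}(\pi(m))$ and $\psi_\FFf{G}(\pi(n))$ back into $\pi(m)\equiv\pi(n)\modulus{p^k}$; without this, nothing in your write-up uses the hypothesis that $\FFf{F}$ and $\FFf{G}$ are $p$-adic systems rather than arbitrary $p$-fibred functions. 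Part (2) is then a one-line consequence of (1), part (3) follows from the definition of $\pi_{\FFf{F},\FFf{G}}$ as ${\psi_\FFf{G}}^{-1}\circ\psi_\FFf{F}$, and part (4) is the short computation $\psi_\FFf{F}(\FFf{F}(n))=\FFFDT(\FFf{F})[\FFf{F}(n)]=\FFFDT(\FFf{F})[n][1,\infty]=\psi_\FFf{F}(n)[1,\infty]$ from the definition of the $\FFf{F}$-digit table; all of these need to be written out for the proof to be complete.
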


\begin{proof}
$ $\\
\proofitem{(1)}%
\begin{align}
m\equiv n\modulus{p^k}&\Leftrightarrow\psi_\FFf{F}(m)[\intintuptoex{k}]=\psi_\FFf{F}(n)[\intintuptoex{k}]\;\;(\FFPblock(\FFf{F}))\\
&\Leftrightarrow\psi_\FFf{G}({\psi_\FFf{G}}^{-1}(\psi_\FFf{F}(m)))[\intintuptoex{k}]=\psi_\FFf{G}({\psi_\FFf{G}}^{-1}(\psi_\FFf{F}(n)))[\intintuptoex{k}]\\
(\FFPblock(\FFf{G}))\;&\Leftrightarrow{\psi_\FFf{G}}^{-1}(\psi_\FFf{F}(m))\equiv{\psi_\FFf{G}}^{-1}(\psi_\FFf{F}(n))\modulus{p^k}\\
&\Leftrightarrow\pi(m)\equiv\pi(n)\modulus{p^k}.
\end{align}

\noindent
\proofitem{(2)}%
From $m\equiv M\modulus{p^k}$ and (1) it follows that $n\equiv\pi(m)\equiv\pi(M)\equiv N\modulus{p^k}$.

\noindent
\proofitem{(3)}%
Follows directly from the definitions.

\noindent
\proofitem{(4)}%
For every $n\in\Z_p$ we have
\begin{align}
\psi_\FFf{F}(\FFf{F}(n))
&=\FFFDT(\FFf{F})[\FFf{F}(n)]
=\FFFDT(\FFf{F})[n][1,\infty]
=\psi_\FFf{F}(n)[1,\infty].
\end{align}

\noindent
For the ``In particular'' part set $n\ce{\psi_\FFf{F}}^{-1}(\STf{D})$. Then,
\begin{align}
\psi_\FFf{F}(\FFf{F}({\psi_\FFf{F}}^{-1}(\STf{D})))
&=\psi_\FFf{F}({\psi_\FFf{F}}^{-1}(\STf{D}))[1,\infty]
=\STf{D}[1,\infty].
\end{align}

\noindent
\proofitem{(5)}%
From (4) it follows that
\begin{align}
\psi_\FFf{G}(\pi(\FFf{F}(n)))
&=\psi_\FFf{G}({\psi_\FFf{G}}^{-1}(\psi_\FFf{F}(\FFf{F}(n))))
=\psi_\FFf{F}(n)[1,\infty]
=\psi_\FFf{G}(\FFf{G}({\psi_\FFf{G}}^{-1}(\psi_\FFf{F}(n))))\\
&=\psi_\FFf{G}(\FFf{G}(\pi(n)))
\end{align}
and hence $\pi(\FFf{F}(n))=\FFf{G}(\pi(n))$. To see $\FFf{F}(\pi^{-1}(n))=\pi^{-1}(\FFf{G}(n))$ just substitute $n$ with $\pi^{-1}(n)$.
\end{proof}

In the previous paragraph we observed that $p$-digit tables coming from $p$-adic systems have a specific property (the block property). We then tried to answer the question if every $p$-digit table with block property can be recognized as the $\FFf{F}$-digit table of a $p$-adic system $\FFf{F}$ and we found that this is indeed the case. We will now try to pursue the same strategy for permutations of $p$-adic integers, where the special property which we demand to hold is given by (1) of the previous lemma. Indeed, we call a permutation $\pi:\Z_p\to\Z_p$ of the $p$-adic integers a \emph{$p$-adic permutation} if the following two properties hold:
\begin{align}
\label{EPermA}
&\fa n\in\Z_p:\pi(n)\equiv n\modulus{p}\\
\label{EPermB}
&\fa k\in\N:\fa m,n\in\Z_p:m\equiv n\modulus{p^k}\Leftrightarrow\pi(m)\equiv\pi(n)\modulus{p^k}.
\end{align}
The \emph{set of $p$-adic permutations} shall be denoted by \label{DSoPermutations}$\SoPermutations{p}$. For every $p$-adic permutation $\pi$ and every $k\in\Nz$ we define (cf. (1) of the previous lemma)
\begin{align}
\label{Dpik}
\pi_k:\Z_p\slash p^k\Z_p&\to\Z_p\slash p^k\Z_p.\\
\nonumber
[n]&\mapsto[\pi(n)]
\end{align}
Clearly, every permutation of the form $\pi=\pi_{\FFf{F},\FFf{G}}$ where $\FFf{F}$ and $\FFf{G}$ are $p$-adic systems is a $p$-adic permutation by the previous lemma (cf. also Eqn.~(\ref{EDT})). The question is if every $p$-adic permutation can be expressed in this way and the answer is given in the following theorem.

\begin{theorem}
\label{TPermEq}
Let $2\leq p\in\N$, $\pi$ a $p$-adic permutation, and $\FFf{G}$ a $p$-adic system. Then there is a $p$-adic system $\FFf{F}$ such that $\pi=\pi_{\FFf{F},\FFf{G}}$. $\FFf{F}$ is uniquely defined up to $\sim_p$.
\end{theorem}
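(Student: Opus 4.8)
The plan is to realize the desired $\FFf{F}$ through the digit-table interpretation established earlier rather than constructing a fibred function by hand. Since $\pi_{\FFf{F},\FFf{G}}=\psi_\FFf{G}^{-1}\circ\psi_\FFf{F}$ and $\psi_\FFf{G}$ is a bijection, the requirement $\pi=\pi_{\FFf{F},\FFf{G}}$ is equivalent to $\psi_\FFf{F}=\psi_\FFf{G}\circ\pi$, i.e. the $n$-th row of $\FFFDT(\FFf{F})$ must equal $\FFFDT(\FFf{G})[\pi(n)]$. So first I would define the candidate digit table $\STf{D}\in\SoDigitTables{p}(\STPdomain{\Z_p})$ by $\STf{D}[n]\ce\FFFDT(\FFf{G})[\pi(n)]$ for all $n\in\Z_p$, whereupon the whole task reduces to showing that $\STf{D}$ is an admissible infinite $p$-digit table with the block property.

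Next I would verify $\STf{D}\in\SoTables{p}$. The digit-table condition $\STf{D}[n][0]=n\modulo p$ holds because $\STf{D}[n][0]=\FFFDT(\FFf{G})[\pi(n)][0]=\pi(n)\modulo p$ and, by Eqn.~(\ref{EPermA}), $\pi(n)\equiv n\modulus{p}$. For the block property I would compute, for $m,n\in\Z_p$ and $k\in\N$,
\begin{align}
\STf{D}[m][\intintuptoex{k}]=\STf{D}[n][\intintuptoex{k}]
&\Leftrightarrow\FFFDT(\FFf{G})[\pi(m)][\intintuptoex{k}]=\FFFDT(\FFf{G})[\pi(n)][\intintuptoex{k}]\\
&\Leftrightarrow\pi(m)\equiv\pi(n)\modulus{p^k}\\
&\Leftrightarrow m\equiv n\modulus{p^k},
\end{align}
where the middle equivalence is the block property of the $p$-adic system $\FFf{G}$ and the last one is Eqn.~(\ref{EPermB}). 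Hence $\DTPblock(\STf{D})$, and since $\STf{D}$ is infinite with domain $\Z_p$ we obtain $\STf{D}\in\SoTables{p}$. Theorem~\ref{TASeqDTbl} then yields a unique $\FFf{F}\in\SoFibredFunctions{p}(\FFPcanonicalform)$ with $\FFFDT(\FFf{F})=\STf{D}$, and this $\FFf{F}$ is a $p$-adic system.

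It then remains to check the defining equation and uniqueness. By construction $\psi_\FFf{F}(n)=\FFFDT(\FFf{F})[n]=\STf{D}[n]=\FFFDT(\FFf{G})[\pi(n)]=\psi_\FFf{G}(\pi(n))$, so $\pi_{\FFf{F},\FFf{G}}(n)=\psi_\FFf{G}^{-1}(\psi_\FFf{F}(n))=\pi(n)$ for every $n\in\Z_p$, i.e. $\pi=\pi_{\FFf{F},\FFf{G}}$. For uniqueness, if $\FFf{F}'$ is any $p$-adic system with $\pi=\pi_{\FFf{F}',\FFf{G}}$, then $\psi_\FFf{G}^{-1}\circ\psi_{\FFf{F}'}=\psi_\FFf{G}^{-1}\circ\psi_\FFf{F}$, and applying the bijection $\psi_\FFf{G}$ gives $\psi_{\FFf{F}'}=\psi_\FFf{F}$, that is $\FFFDT(\FFf{F}')=\FFFDT(\FFf{F})$. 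By the correspondence of Theorem~\ref{TASeqDTbl} together with Lemma~\ref{LFFCanForm} (equal digit tables force equal canonical forms, hence $\sim_p$-equivalence), this gives $\FFf{F}'\sim_p\FFf{F}$.

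I do not expect a serious obstacle: the argument is essentially a transport of structure through the digit-table bijection of Theorem~\ref{TASeqDTbl}. The one genuinely load-bearing step is the verification that the composed table $\STf{D}$ has the block property, and it is precisely there that both defining conditions of a $p$-adic permutation are consumed — Eqn.~(\ref{EPermA}) secures the zeroth-digit normalization that makes $\STf{D}$ a legitimate $p$-digit table, while the full equivalence in Eqn.~(\ref{EPermB}) is what upgrades the weak block property to the block property.
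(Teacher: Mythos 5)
Your proposal is correct and follows essentially the same route as the paper: both define the candidate digit table $\STf{D}[n]=\psi_\FFf{G}(\pi(n))$, verify the digit normalization via Eqn.~(\ref{EPermA}) and the block property via the chain $m\equiv n\modulus{p^k}\Leftrightarrow\pi(m)\equiv\pi(n)\modulus{p^k}\Leftrightarrow\STf{D}[m][\intintuptoex{k}]=\STf{D}[n][\intintuptoex{k}]$, and then invoke Theorem~\ref{TASeqDTbl} for existence and uniqueness. Your uniqueness paragraph just spells out slightly more explicitly what the paper leaves implicit in the appeal to that theorem.
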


\begin{proof}
Let
\begin{align}
\STf{D}&\ce(\psi_{\FFf{G}}(\pi(n)))_{n\in\Z_p}.
\end{align}
We claim that $\STf{D}$ is a $p$-digit table. Then, Theorem~\ref{TASeqDTbl} implies that there is a unique (up to $\sim_p$) $p$-adic system $\FFf{F}$ such that $\STf{D}=\FFFDT(\FFf{F})$ and we thus get
\begin{align}
\psi_{\FFf{F}}(n)&=\STf{D}[n]=\psi_{\FFf{G}}(\pi(n))
\end{align}
for all $n\in\Z_p$, i.e. $\pi=\pi_{\FFf{F},\FFf{G}}$.

By Eqn.~(\ref{EPermA}) and $\FFPblock(\FFf{G})$ we get
\begin{align}
\STf{D}[n][0]&=\FFFDT(\FFf{G})[\pi(n)][0]=\FFFDT(\FFf{G})[n][0]=n\modulo p
\end{align}
for all $n\in\Z_p$. We are thus left to show $\DTPblock(\STf{D})$.

Let $k\in\N$ and $m,n\in\Z_p$. Then,
\begin{align}
m\equiv n\modulus{p^k}&\Leftrightarrow\pi(m)\equiv\pi(n)\modulus{p^k}\\
(\FFPblock(\FFf{G}))\;&\Leftrightarrow\psi_{\FFf{G}}(\pi(m))[\intintuptoex{k}]=\FFFDT(\FFf{G})[\pi(m)][\intintuptoex{k}]=\FFFDT(\FFf{G})[\pi(n)][\intintuptoex{k}]=\psi_{\FFf{G}}(\pi(n))[\intintuptoex{k}]\\
&\Leftrightarrow\STf{D}[m][\intintuptoex{k}]=\STf{D}[n][\intintuptoex{k}]
\end{align}
which completes the proof.
\end{proof}

Just as with $p$-digit tables with block property we established a one-to-one correspondence between $p$-adic permutations and $p$-adic systems. If $\FFf{G}$ is a fixed $p$-adic system then
\begin{align}
\label{Dgroupisom}
\Pi_{\FFf{G}}:\SoSystems{p}\slash_{\sim_p}&\to\SoPermutations{p}\\
\nonumber
\FFf{F}&\mapsto\pi_{\FFf{F},\FFf{G}}
\end{align}
defines a bijection.

As with $p$-digit tables this relation can also be used to see and prove certain properties of $p$-adic systems more easily as the following theorem shows.

\begin{theorem}
\label{TPermSubgroup}
Let $2\leq p\in\N$. Then the set of all $p$-adic permutations forms a subgroup of the set of all permutations of $\Z_p$ with respect to composition.
\end{theorem}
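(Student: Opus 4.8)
The plan is to verify the subgroup criterion directly. By definition $\SoPermutations{p}$ is already a subset of the symmetric group of $\Z_p$, and since the composition of two bijections is a bijection and the inverse of a bijection is a bijection, it suffices to check that $\SoPermutations{p}$ contains the identity and is closed under composition and under inversion. In each of these three cases the only thing to verify is that the two defining properties (\ref{EPermA}) and (\ref{EPermB}) are preserved, since membership in the ambient symmetric group is automatic.

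First I would dispatch the identity $\id:\Z_p\to\Z_p$, which satisfies both conditions trivially (indeed $\id(n)=n$ gives $\id(n)\equiv n\modulus{p}$, and $m\equiv n\modulus{p^k}\Leftrightarrow\id(m)\equiv\id(n)\modulus{p^k}$ is a tautology). Next, for closure under composition, let $\pi,\sigma\in\SoPermutations{p}$ and $k\in\N$. For (\ref{EPermA}) I would chain the two single-step congruences,
\begin{align}
(\pi\circ\sigma)(n)\equiv\sigma(n)\equiv n\modulus{p},
\end{align}
and for (\ref{EPermB}) I would chain the two biconditionals,
\begin{align}
m\equiv n\modulus{p^k}\Leftrightarrow\sigma(m)\equiv\sigma(n)\modulus{p^k}\Leftrightarrow\pi(\sigma(m))\equiv\pi(\sigma(n))\modulus{p^k},
\end{align}
so that $\pi\circ\sigma\in\SoPermutations{p}$.

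Finally, closure under inversion is the only step requiring a small observation. Let $\pi\in\SoPermutations{p}$. For (\ref{EPermA}) I would apply the corresponding property of $\pi$ to the preimage $\pi^{-1}(n)$: since $\pi(\pi^{-1}(n))\equiv\pi^{-1}(n)\modulus{p}$ and $\pi(\pi^{-1}(n))=n$, we obtain $\pi^{-1}(n)\equiv n\modulus{p}$. For (\ref{EPermB}), writing $a\ce\pi^{-1}(m)$ and $b\ce\pi^{-1}(n)$ and invoking (\ref{EPermB}) for $\pi$ gives
\begin{align}
\pi^{-1}(m)\equiv\pi^{-1}(n)\modulus{p^k}\Leftrightarrow\pi(a)\equiv\pi(b)\modulus{p^k}\Leftrightarrow m\equiv n\modulus{p^k}.
\end{align}

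The ``hard part'' here is essentially cosmetic: the biconditional form of (\ref{EPermB}) does all the work for the inverse, and the only thing one must not forget is that (\ref{EPermA}) for $\pi^{-1}$ must be deduced by substituting the preimage into (\ref{EPermA}) for $\pi$, rather than read off directly. (One could alternatively appeal to the bijection $\Pi_{\FFf{G}}$ of Eqn.~(\ref{Dgroupisom}) together with Lemma~\ref{LPermProp}~(3), but since the group structure on $\SoSystems{p}\slash_{\sim_p}$ is only being established via this very theorem, the self-contained direct verification is cleaner.)
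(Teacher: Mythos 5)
Your proposal is correct and follows the same route as the paper, which simply asserts that closure under composition and inversion "follows directly from the definition"; you have merely spelled out the routine verifications (including the small but necessary substitution of $\pi^{-1}(n)$ into (\ref{EPermA}) to handle the inverse). No gaps.
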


\begin{proof}
Let $\pi$, $\pi_1$ and $\pi_2$ be $p$-adic permutations. Then, it follows directly from the definition that both $\pi_2\circ\pi_1$ and $\pi^{-1}$ are also $p$-adic permutations.
\end{proof}

\noindent
We can now use $\Pi_\FFf{G}$ to transport this group structure to the set $\SoSystems{p}$ of $p$-adic systems. We define
\begin{align}
\label{Dgroupop}
\FFf{F}_1\circ_\FFf{G}\FFf{F}_2&\ce{\Pi_\FFf{G}}^{-1}(\Pi_\FFf{G}(\FFf{F}_1)\circ\Pi_\FFf{G}(\FFf{F}_2))
\end{align}
for every pair $\FFf{F}_1,\FFf{F}_2$ of $p$-adic systems which makes $\left(\SoSystems{p},\circ_\FFf{G}\right)$ a group with neutral element ${\Pi_\FFf{G}}^{-1}(\pi_{\FFf{G},\FFf{G}})=\FFf{G}$ and inverse element ${\Pi_\FFf{G}}^{-1}(\pi_{\FFf{G},\FFf{F}})$ of $\FFf{F}\in\SoSystems{p}$. Basic properties of these groups are summarized in the following lemma.

\begin{lemma}
\label{LGroupProp}
Let $2\leq p\in\N$, $\FFf{F}$, $\FFf{F}_1$, $\FFf{F}_2$, $\FFf{G}$, $\FFf{G}_1$, $\FFf{G}_2$ $p$-adic systems, $\pi$ a $p$-adic permutation, and $n\in\Z_p$. Then,

\begin{theoremtable}
\theoremitem{(1)}&$\Pi_\FFf{G}$ is an isomorphism and $\left(\SoSystems{p}\slash_{\sim_p},\circ_\FFf{G}\right)$ and $\left(\SoPermutations{p},\circ\right)$ are isomorphic\tabularnewline
&In particular: ${\Pi_{\FFf{G}_2}}^{-1}\circ\Pi_{\FFf{G}_1}$ is an isomorphism and\tabularnewline
&\leavevmode\phantom{In particular: }$\left(\SoSystems{p}\slash_{\sim_p},\circ_{\FFf{G}_1}\right)$ and $\left(\SoSystems{p}\slash_{\sim_p},\circ_{\FFf{G}_2}\right)$ are isomorphic\tabularnewline
\theoremitem{(2)}&$\Pi_\FFf{G}(\FFf{F})(n)=\pi_{\FFf{F},\FFf{G}}(n)={\psi_\FFf{G}}^{-1}(\psi_\FFf{F}(n))$\tabularnewline
\theoremitem{(3)}&${\Pi_\FFf{G}}^{-1}(\pi)(n)=\pi^{-1}(\FFf{G}(\pi(n)))$
\tabularnewline
\theoremitem{(4)}&$\FFf{F}_2\circ_\FFf{G}\FFf{F}_1(n)=\pi_{\FFf{G},\FFf{F}_1}(\FFf{F}_2(\pi_{\FFf{F}_1,\FFf{G}}(n)))$.
\end{theoremtable}
\end{lemma}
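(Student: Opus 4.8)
The plan is to treat the four claims in dependency order, since each reduces to unwinding definitions together with Lemma~\ref{LPermProp}. I would begin with (2), which is immediate: by the definition of $\Pi_\FFf{G}$ in Eqn.~(\ref{Dgroupisom}) we have $\Pi_\FFf{G}(\FFf{F})=\pi_{\FFf{F},\FFf{G}}$, and $\pi_{\FFf{F},\FFf{G}}={\psi_\FFf{G}}^{-1}\circ\psi_\FFf{F}$ by Eqn.~(\ref{Dpi}); evaluating at $n$ yields all three stated expressions at once.

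Next I would establish (3), which I expect to drop straight out of Lemma~\ref{LPermProp}~(5). Writing $\FFf{F}\ce{\Pi_\FFf{G}}^{-1}(\pi)$, the defining property of ${\Pi_\FFf{G}}^{-1}$ (meaningful because $\Pi_\FFf{G}$ is a bijection, by Theorem~\ref{TPermEq}) is precisely $\pi=\pi_{\FFf{F},\FFf{G}}$. Lemma~\ref{LPermProp}~(5) then gives $\pi(\FFf{F}(n))=\FFf{G}(\pi(n))$ for all $n\in\Z_p$, and applying $\pi^{-1}$ produces $\FFf{F}(n)=\pi^{-1}(\FFf{G}(\pi(n)))$, which is the claim.

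For (1) I would argue by transport of structure. $\Pi_\FFf{G}$ is a bijection by Theorem~\ref{TPermEq}, and the operation $\circ_\FFf{G}$ is defined in Eqn.~(\ref{Dgroupop}) so that $\Pi_\FFf{G}(\FFf{F}_1\circ_\FFf{G}\FFf{F}_2)=\Pi_\FFf{G}(\FFf{F}_1)\circ\Pi_\FFf{G}(\FFf{F}_2)$; hence $\Pi_\FFf{G}$ is a bijective homomorphism onto $(\SoPermutations{p},\circ)$, which is a group by Theorem~\ref{TPermSubgroup}, so $\Pi_\FFf{G}$ is an isomorphism and the transported structure $(\SoSystems{p}\slash_{\sim_p},\circ_\FFf{G})$ is a group isomorphic to it. The ``In particular'' part then follows because ${\Pi_{\FFf{G}_2}}^{-1}\circ\Pi_{\FFf{G}_1}$ is a composition of isomorphisms $(\SoSystems{p}\slash_{\sim_p},\circ_{\FFf{G}_1})\to(\SoPermutations{p},\circ)\to(\SoSystems{p}\slash_{\sim_p},\circ_{\FFf{G}_2})$.

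Finally I would derive (4) from (3) together with parts (3) and (5) of Lemma~\ref{LPermProp}; this is the one genuinely computational step and I expect it to be the main obstacle, since one must track the directions of several compositions and inverses carefully. By Eqn.~(\ref{Dgroupop}), $\FFf{F}_2\circ_\FFf{G}\FFf{F}_1={\Pi_\FFf{G}}^{-1}(\pi)$ with $\pi\ce\pi_{\FFf{F}_2,\FFf{G}}\circ\pi_{\FFf{F}_1,\FFf{G}}$, so part (3) gives $(\FFf{F}_2\circ_\FFf{G}\FFf{F}_1)(n)=\pi^{-1}(\FFf{G}(\pi(n)))$. Setting $m\ce\pi_{\FFf{F}_1,\FFf{G}}(n)$, Lemma~\ref{LPermProp}~(5) rewrites $\FFf{G}(\pi_{\FFf{F}_2,\FFf{G}}(m))$ as $\pi_{\FFf{F}_2,\FFf{G}}(\FFf{F}_2(m))$; the outer $\pi_{\FFf{F}_2,\FFf{G}}$ then cancels against the matching factor of $\pi^{-1}=\pi_{\FFf{F}_1,\FFf{G}}^{-1}\circ\pi_{\FFf{F}_2,\FFf{G}}^{-1}$, leaving $\pi_{\FFf{F}_1,\FFf{G}}^{-1}(\FFf{F}_2(m))$. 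Replacing $\pi_{\FFf{F}_1,\FFf{G}}^{-1}$ by $\pi_{\FFf{G},\FFf{F}_1}$ via Lemma~\ref{LPermProp}~(3) yields exactly $\pi_{\FFf{G},\FFf{F}_1}(\FFf{F}_2(\pi_{\FFf{F}_1,\FFf{G}}(n)))$, completing the plan.
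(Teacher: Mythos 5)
Your proposal is correct and follows essentially the same route as the paper: (1) and (2) by unwinding the definitions, (3) via the identity $\pi=\pi_{\FFf{F},\FFf{G}}$ together with Lemma~\ref{LPermProp}, and (4) by reducing to (3) and cancelling the $\pi_{\FFf{F}_2,\FFf{G}}$-factors using Lemma~\ref{LPermProp}~(3) and (5). The only cosmetic difference is in (3), where you invoke the conjugation relation of Lemma~\ref{LPermProp}~(5) directly rather than re-deriving it through $\psi_\FFf{F}$ and $\psi_\FFf{G}$ as the paper does; since (5) is itself a packaged consequence of (4), this is the same argument in slightly tidier form.
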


\begin{proof}
$ $\\
\proofitem{(1) and (2)}%
\qquad\quad Follow directly from the definitions.

\noindent
\proofitem{(3)}%
Let $\FFf{F}\ce{\Pi_\FFf{G}}^{-1}(\pi)$. Then, $\pi=\pi_{\FFf{F},\FFf{G}}$ and
\begin{align}
\pi^{-1}(\FFf{G}(\pi(n)))
&=\pi_{\FFf{G},\FFf{F}}(\FFf{G}(\pi_{\FFf{F},\FFf{G}}(n)))\;\text{(Lemma~\ref{LPermProp}~(3))}\\
&={\psi_\FFf{F}}^{-1}(\psi_\FFf{G}(\FFf{G}({\psi_\FFf{G}}^{-1}(\psi_\FFf{F}(n)))))\\
\text{(Lemma~\ref{LPermProp}~(4))}\;&={\psi_\FFf{F}}^{-1}(\psi_\FFf{F}(n)[1,\infty])\\
&={\psi_\FFf{F}}^{-1}(\FFFDT(\FFf{F})[n][1,\infty])
={\psi_\FFf{F}}^{-1}(\FFFDT(\FFf{F})[\FFf{F}(n)])
=\FFf{F}(n)
={\Pi_\FFf{G}}^{-1}(\pi)(n).
\end{align}

\noindent
\proofitem{(4)}%
\begin{align}
\FFf{F}_2\circ_\FFf{G}\FFf{F}_1(n)
&={\Pi_\FFf{G}}^{-1}(\Pi_\FFf{G}(\FFf{F}_2)\circ\Pi_\FFf{G}(\FFf{F}_1))(n)\\
(3)\;&=(\Pi_\FFf{G}(\FFf{F}_2)\circ\Pi_\FFf{G}(\FFf{F}_1))^{-1}(\FFf{G}(\Pi_\FFf{G}(\FFf{F}_2)\circ\Pi_\FFf{G}(\FFf{F}_1)(n)))\\
&={\Pi_\FFf{G}(\FFf{F}_1)}^{-1}({\Pi_\FFf{G}(\FFf{F}_2)}^{-1}(\FFf{G}(\Pi_\FFf{G}(\FFf{F}_2)(\Pi_\FFf{G}(\FFf{F}_1)(n)))))\\
(\text{Lemma~\ref{LPermProp}~(3)})\;&=\pi_{\FFf{G},\FFf{F}_1}(\pi_{\FFf{G},\FFf{F}_2}(\FFf{G}(\pi_{\FFf{F}_2,\FFf{G}}(\pi_{\FFf{F}_1,\FFf{G}}(n)))))\\
(\text{Lemma~\ref{LPermProp}~(5)})\;&=\pi_{\FFf{G},\FFf{F}_1}(\pi_{\FFf{G},\FFf{F}_2}(\pi_{\FFf{F}_2,\FFf{G}}(\FFf{F}_2(\pi_{\FFf{F}_1,\FFf{G}}(n)))))\\
(\text{Lemma~\ref{LPermProp}~(3)})\;&=\pi_{\FFf{G},\FFf{F}_1}(\FFf{F}_2(\pi_{\FFf{F}_1,\FFf{G}}(n))).
\end{align}
\end{proof}

\noindent
In order to see how $\circ_\FFf{G}$ operates on $\SoSystems{p}$, consider the following example.

\begin{example}
\label{EGroupProp}
Let $\FFf{G}\ce(x,x-1)$, $\FFf{F}_1\ce(x,3x+1)$, $\FFf{F}_2\ce\FFf{F}_C=(5x,x+1)$, and $n\ce5$. Then,
\begin{align}
&\psi_{\FFf{F}_1}(5)=(1,0,0)\cdot(0,1)^\infty=\psi_\FFf{G}(-13/3)\text{, hence }\pi_{\FFf{F}_1,\FFf{G}}(5)=-13/3,\\
&\FFf{F}_2(-13/3)=-5/3,\\
&\psi_\FFf{G}(-5/3)=(1,0)\cdot(0,1)^\infty=\psi_{\FFf{F}_1}(7/3)\text{, hence }\pi_{\FFf{G},\FFf{F}_1}(-5/3)=7/3.
\end{align}
Thus, Lemma~\ref{LGroupProp}~(4) implies $\FFf{F}_2\circ_\FFf{G}\FFf{F}_1(n)=7/3$.
\end{example}

We close this paragraph with an analysis of the cycle structure of the induced permutations $\pi_k$ of a $p$-adic permutation $\pi$ which leads to the definition of the \emph{tree of cycles} of $\pi$. In Section~\ref{SPermPoly} we will use this tree of cycles to prove that two classes of $p$-adic systems (polynomial $p$-adic systems and $p$-permutation polynomials) are indeed distinct. The method used there is quite general and should work for other classes that may be found in the future as well.

We define the \emph{cyclic shift function} on sequences by
\begin{align}
\label{ECycShift}
\sigma:\CoSequences\times\R&\to\CoSequences.\\
\nonumber
(\Sf{S},s)&\mapsto
\begin{cases}
\Sf{S}[\floor{s}\modulo\abs{\Sf{S}},\abs{\Sf{S}}-1]\cdot\Sf{S}[0,\floor{s}\modulo\abs{\Sf{S}}-1]&\text{if }\SPfinite(\Sf{S})\\
\Sf{S}[\floor{s},\infty]&\text{if }\neg\SPfinite(\Sf{S})
\end{cases}
\end{align}
For any permutation $\pi$ of a finite set we denote by \label{DSocycles}$\Sigma(\pi)\subseteq\CoSequences(\SPfinite)\slash_{\sim_\sigma}$ the \emph{set of cycles} of $\pi$, where
\begin{align}
\label{Dcyceq}
\Sf{S}\sim_\sigma\Sf{T}\Leftrightarrow\abs{\Sf{S}}=\abs{\Sf{T}}\land\ex s\in\Z:\sigma(\Sf{S},s)=\Sf{T}
\end{align}
for all $\Sf{S},\Sf{T}\in\CoSequences(\SPfinite)$, e.g. $(0,1,2,3)\sim_\sigma(3,0,1,2)$, since $\sigma((0,1,2,3),3)=(3,0,1,2)$. We set \label{Dcyclelength}$\abs{[\Sf{S}]_{\sim_\sigma}}\ce\abs{\Sf{S}}$ for all $[\Sf{S}]_{\sim_\sigma}\in\CoSequences(\SPfinite)\slash_{\sim_\sigma}$.

The following theorem is the basis of the definition of the tree of cycles of a $p$-adic permutation (and thus of $p$-adic systems).

\begin{theorem}
\label{TCycles}
Let $2\leq p\in\N$, $\pi\in\SoPermutations{p}$, $k\in\Nz$, and $\Sf{S}=[([a_0],\ldots,[a_{r-1}])]_{\sim_\sigma}\in\Sigma(\pi_k)$. Then there are $m\in\intint{1}{p}$, $s_0,\ldots,s_{m-1}\in\N$ with $s_n/r\in\intint{1}{p}$ for all $n\in\intintuptoex{m}$ and $\sum_{n=0}^{m-1}s_n/r=p$, and pairwise distinct $[([b_{0,0}],\ldots,[b_{0,s_0-1}])]_{\sim_\sigma},\ldots,[([b_{m-1,0}],\ldots,[b_{m-1,s_{m-1}-1}])]_{\sim_\sigma}\in\Sigma(\pi_{k+1})$ (the \emph{children of $\Sf{S}$}) such that
\begin{align}
&\fa n\in\intintuptoex{m}:\fa i\in\intintuptoex{r}:\fa j\in\intintuptoex{s_n/r}:a_i\equiv b_{n,jr+i}\modulus{p^k}.
\end{align}
In particular, $\Sigma(\pi_{k+1})$ is the disjoint union of the sets of children of all cycles in $\Sigma(\pi_k)$.
\end{theorem}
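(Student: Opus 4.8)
The plan is to realize the children of $\Sf{S}$ as the cycles of $\pi_{k+1}$ lying above $\Sf{S}$ under the natural projection $\rho\colon\Z_p/p^{k+1}\Z_p\to\Z_p/p^k\Z_p$. First I would record that $\rho$ is \emph{equivariant}, i.e. $\rho\circ\pi_{k+1}=\pi_k\circ\rho$: this is immediate from (\ref{EPermB}), which guarantees that $\pi(n)\bmod p^k$ depends only on $n\bmod p^k$, so that $\pi_k$ is well defined on projections and $\rho$ intertwines the two induced permutations. Consequently $\rho$ carries each cycle of $\pi_{k+1}$ onto a single cycle of $\pi_k$. Let $C\ce\rho^{-1}(\set{[a_0],\ldots,[a_{r-1}]})$ be the preimage of the support of $\Sf{S}$; since $\rho$ is $p$-to-$1$, $C$ has exactly $pr$ elements, and by equivariance $C$ is $\pi_{k+1}$-invariant. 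The children of $\Sf{S}$ are exactly the cycles of the restriction $\pi_{k+1}\vert_C$.

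Next I would run the standard ``covering of cycles'' argument. Fix a cycle $\mathcal{B}=([b_0],\ldots,[b_{s-1}])$ of $\pi_{k+1}\vert_C$, with $\pi_{k+1}([b_\ell])=[b_{\ell+1}]$ (indices modulo $s$). By equivariance $\rho([b_\ell])=\pi_k^\ell(\rho([b_0]))$, and since $\set{[a_0],\ldots,[a_{r-1}]}$ is a single $\pi_k$-cycle of length $r$, the sequence $\ell\mapsto\rho([b_\ell])$ has exact period $r$; closing up $\mathcal{B}$ after $s$ steps forces $r\mid s$, so $s=rt$ with $t\ce s/r\in\N$. Choosing the base point so that $\rho([b_0])=[a_0]$ yields $\rho([b_{jr+i}])=[a_i]$, i.e. $b_{jr+i}\equiv a_i\modulus{p^k}$, for all $i\in\intintuptoex{r}$ and $j\in\intintuptoex{t}$; this is precisely the asserted congruence. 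Writing the children as $\mathcal{B}_0,\ldots,\mathcal{B}_{m-1}$ of lengths $s_n=rt_n$, I would then count preimages: $C$ is the disjoint union of the $r$ fibres $\rho^{-1}([a_i])$, each of size $p$, and $\mathcal{B}_n$ meets each such fibre in exactly $t_n$ points; summing over $n$ for one fixed $i$ gives $\sum_{n=0}^{m-1}t_n=p$. Hence each $t_n=s_n/r\in\intint{1}{p}$, and $m\in\intint{1}{p}$ follows from $C\neq\emptyset$ together with $\sum_n t_n=p$ and $t_n\geq1$. The children are pairwise distinct because distinct cycles of a permutation are disjoint and therefore represent distinct $\sim_\sigma$-classes.

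Finally, for the ``in particular'' statement I would observe that the invariant sets $C_{\Sf{T}}\ce\rho^{-1}(\set{\text{underlying set of }\Sf{T}})$, ranging over $\Sf{T}\in\Sigma(\pi_k)$, partition $\Z_p/p^{k+1}\Z_p$: the supports of the cycles of $\pi_k$ partition $\Z_p/p^k\Z_p$, and $\rho^{-1}$ preserves disjoint unions. By equivariance every cycle of $\pi_{k+1}$ maps onto exactly one cycle of $\pi_k$ and so lies entirely in exactly one $C_{\Sf{T}}$, making it a child of exactly one $\Sf{T}$. This yields the claimed disjoint-union decomposition of $\Sigma(\pi_{k+1})$ into the sets of children.

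The main obstacle is the covering-of-cycles step: aligning the indexing of each child with $\Sf{S}$ so that the exact congruence $a_i\equiv b_{n,jr+i}\modulus{p^k}$ holds, while simultaneously extracting the divisibility $r\mid s_n$ and the fibre count $\sum_n s_n/r=p$. Everything else is bookkeeping resting on the equivariance of $\rho$, which is the one genuinely structural input.
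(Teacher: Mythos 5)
Your proposal is correct and follows essentially the same route as the paper: the equivariance of the reduction map $\rho$ is exactly the content of Lemma~\ref{LPermProp}~(2), which the paper uses to show each cycle of $\pi_{k+1}$ projects onto $\Sf{S}$ with length a multiple of $r$, and the identity $\sum_n s_n/r=p$ comes in both cases from counting the $p$ lifts of $[a_0]$. Your fibre-counting packaging is a slightly more systematic presentation of the paper's greedy enumeration of starting points $b_{n,0}\in\set{a_0+ip^k\mid i\in\intintuptoex{p}}$, but the structural input and all the key steps coincide.
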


\begin{proof}
Let $b_{0,0}\in\set{a_0+ip^k\mid i\in\intintuptoex{p}}$ and $[([b_{0,0}],\ldots,[b_{0,s_0-1}])]_{\sim_\sigma}$ be the cycle of $\pi_{k+1}$ that contains $[b_{0,0}]$. Then Lemma~\ref{LPermProp}~(2) implies that $a_{i\modulo r}\equiv b_{i\modulo s_0}\modulus{p^k}$ for all $i\in\Z$ and hence $r\mid s_0$. If $s_0=rp$ we are done. Otherwise we choose $b_{1,0}\in\set{a_0+ip^k\mid i\in\intintuptoex{p}}$ such that $[b_{1,0}]$ does not occur among the entries of $[([b_{0,0}],\ldots,[b_{0,s_0-1}])]_{\sim_\sigma}$ and we consider the cycle $[([b_{1,0}],\ldots,[b_{1,s_1-1}])]_{\sim_\sigma}$ of $\pi_{k+1}$ that contains $[b_{1,0}]$ which will again satisfy $a_{i\modulo r}\equiv b_{i\modulo s_1}\modulus{p^k}$ for all $i\in\Z$ and $r\mid s_1$. After $m\leq p$ steps we have found the cycles $[([b_{0,0}],\ldots,[b_{0,s_0-1}])]_{\sim_\sigma},\ldots,[([b_{m-1,0}],\ldots,[b_{m-1,s_{m-1}-1}])]_{\sim_\sigma})$ of $\pi_{k+1}$ that have all the claimed properties.
\end{proof}

\begin{corollary}
\label{CCycleLengths}
Let $2\leq p\in\N$, $\pi\in\SoPermutations{p}$, $k\in\Nz$, and $\sigma\in\Sigma(\pi_k)$. Then the prime factors of $\abs{\sigma}$ are contained in $\intintuptoin{p}$.
\end{corollary}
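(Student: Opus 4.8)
The plan is to prove the statement by induction on $k$, using Theorem~\ref{TCycles} as the inductive engine. The key observation is that Theorem~\ref{TCycles} records exactly how cycle lengths evolve when passing from $\pi_k$ to $\pi_{k+1}$: if $\Sf{S}\in\Sigma(\pi_k)$ has length $r$, then each of its children in $\Sigma(\pi_{k+1})$ has length $s_n=r\cdot(s_n/r)$ with the multiplier $s_n/r$ lying in $\intint{1}{p}$. Moreover, by the ``In particular'' part of that theorem, $\Sigma(\pi_{k+1})$ is the disjoint union of the sets of children of all cycles in $\Sigma(\pi_k)$, so every cycle of $\pi_{k+1}$ arises from a cycle of $\pi_k$ by multiplying its length by an integer in $\intint{1}{p}$. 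This is the only structural input I would need.

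For the base case $k=0$, I would note that $\Z_p\slash p^0\Z_p=\Z_p\slash\Z_p$ is trivial, so $\pi_0$ is a permutation of a one-element set and its unique cycle has length $1$; since $1$ has no prime factors, the assertion holds vacuously. For the inductive step I would assume that every $\sigma\in\Sigma(\pi_k)$ has all its prime factors in $\intintuptoin{p}$, and take an arbitrary $\tau\in\Sigma(\pi_{k+1})$. By the observation above, $\tau$ is a child of some $\Sf{S}\in\Sigma(\pi_k)$, so $\abs{\tau}=\abs{\Sf{S}}\cdot t$ for some $t\in\intint{1}{p}$. Every prime factor of $\abs{\tau}$ divides either $\abs{\Sf{S}}$ or $t$; the former has all its prime factors in $\intintuptoin{p}$ by the induction hypothesis, while $t\leq p$ forces every prime factor of $t$ to be bounded by $p$ as well. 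Hence every prime factor of $\abs{\tau}$ lies in $\intintuptoin{p}$, which closes the induction and gives the claim for all $k\in\Nz$.

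I expect the argument to be entirely routine once the content of Theorem~\ref{TCycles} is unwound; there is no genuine obstacle. The only point requiring a moment of care is the base case, where one must recognize that the root of the tree of cycles is the single length-$1$ cycle of the trivial group $\Z_p\slash\Z_p$ (carrying no prime factors), so nothing needs to be excluded at the bottom and the bound on prime factors is introduced entirely through the multipliers $t\in\intint{1}{p}$ supplied at each level.
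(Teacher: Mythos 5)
Your proof is correct and follows essentially the same route as the paper: the paper's own proof is a one-line appeal to Theorem~\ref{TCycles} together with the observation that $\Sigma(\pi_0)=\set{[([0])]_{\sim_\sigma}}$ consists of a single length-$1$ cycle, which is precisely the induction you spell out. Nothing is missing; you have simply made the implicit induction explicit.
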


\begin{proof}
Follows directly from Theorem~\ref{TCycles} and from the fact that $\pi_0:\set{\Z_p}\to\set{\Z_p}$, $[0]\mapsto[0]$ and thus $\Sigma(\pi_0)=\set{[([0])]_{\sim_\sigma}}$.
\end{proof}

As indicated above, Theorem~\ref{TCycles} implies the existence of a \emph{tree of cycles}: for any $2\leq p\in\N$ and $\pi\in\SoPermutations{p}$ let
\begin{align}
\label{Dtreeofcyclesvertices}
\mathcal{V}(\pi)&\ce\set{(k,\sigma)\mid k\in\Nz\land\sigma\in\Sigma(\pi_k)}\\
\label{Dtreeofcyclesedges}
\mathcal{E}(\pi)&\ce\setl{((k,[([a_0],\ldots,[a_r])]_{\sim_\sigma}),(\ell,[([b_0],\ldots,[b_s])]_{\sim_\sigma}))\in\mathcal{V}(\pi)^2\mid}\\
\nonumber
&\phantom{\vphantom{a}\ce\vphantom{a}}\qquad k+1=\ell\\[-1pt]
\nonumber
&\smash{\phantom{\vphantom{a}\ce\vphantom{a}}\qquad\setr{\ex i\in\Z:\fa j\in\Z:a_{j\modulo r}\equiv b_{(i+j)\modulo s}\modulus{p^k}}}\\
\label{Dtreeofcycles}
\mathcal{G}(\pi)&\ce(\mathcal{V}(\pi),\mathcal{E}(\pi))
\end{align}
and
\begin{align}
\label{Dtreeofcyclescolor}
c(\pi):\mathcal{E}(\pi)&\to\intint{1}{p}.\\
\nonumber
((k,\sigma),(k+1,\tau))&\mapsto\frac{\abs{\tau}}{\abs{\sigma}}
\end{align}

\begin{corollary}
\label{CCycles}
Let $2\leq p\in\N$, $\pi\in\SoPermutations{p}$, and $k\in\Nz$. Then $\mathcal{G}(\pi)$ is a directed, infinite, rooted tree with root $(0,[([0])]_{\sim_\sigma})$. The out-degrees of all vertices are contained in $\intint{1}{p}$ and the out-degree of the root is $p$. The cycle decomposition of $\pi_k$ is given by the $k$-th layer of $\mathcal{G}(\pi)$ (vertices of distance $k$ from the root). Furthermore, $c(\pi)$ defines an edge labeling of $\mathcal{G}(\pi)$. The labels of all outgoing edges of a given vertex sum up to $p$ and all edges going out of the root are labeled $1$. For every vertex of $\mathcal{G}(\pi)$ the length of the represented cycle coincides with the product of all edge labels along the unique path connecting the vertex with the root.
\end{corollary}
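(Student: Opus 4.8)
The plan is to read off every assertion of the corollary directly from Theorem~\ref{TCycles}, which has already done the combinatorial work; the corollary is essentially a repackaging of that theorem as statements about the graph $\mathcal{G}(\pi)$. I would organise $\mathcal{G}(\pi)$ by \emph{layers}: for each $k\in\Nz$ the vertices $(k,\sigma)$ with $\sigma\in\Sigma(\pi_k)$ form the $k$-th layer, and by Eqn.~(\ref{Dtreeofcyclesedges}) every edge joins two consecutive layers $k$ and $k+1$. The root sits in layer $0$: since $\pi_0$ is the (identity) permutation of the one-element set $\Z_p/p^0\Z_p=\set{\Z_p}$, we have $\Sigma(\pi_0)=\set{[([0])]_{\sim_\sigma}}$, a single cycle of length $1$, so layer $0$ consists solely of $(0,[([0])]_{\sim_\sigma})$.

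The first real step is to show $\mathcal{G}(\pi)$ is a directed rooted tree. The edge relation in Eqn.~(\ref{Dtreeofcyclesedges}) says precisely that, up to a cyclic shift, reducing the entries of $\tau$ modulo $p^k$ reproduces the entries of $\sigma$, which is exactly the child relation of Theorem~\ref{TCycles}; hence the parents of a layer-$(k+1)$ vertex are exactly the layer-$k$ cycles of which it is a child. The decisive input is the \emph{disjoint union} clause of Theorem~\ref{TCycles}: every cycle of $\pi_{k+1}$ is a child of exactly one cycle of $\pi_k$, so every non-root vertex has in-degree $1$. Repeatedly following the unique parent yields a unique directed path back to layer $0$, and since layer $0$ contains only the root, $\mathcal{G}(\pi)$ is a connected, directed, rooted tree in which the distance of a vertex from the root equals its layer index. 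Consequently the $k$-th layer is, by construction, the cycle decomposition $\Sigma(\pi_k)$ of $\pi_k$. Each $\pi_k$ permutes the non-empty finite set $\Z_p/p^k\Z_p$ and hence has at least one cycle, so every layer is non-empty and the tree is infinite.

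Next I would handle the out-degrees and the labelling $c(\pi)$. The out-degree of a vertex $(k,\sigma)$ is the number $m$ of its children, which Theorem~\ref{TCycles} bounds by $m\in\intint{1}{p}$. For the edge to a child $(k+1,\tau)$ of length $s_n$ we have $c(\pi)=\abs{\tau}/\abs{\sigma}=s_n/r$, which Theorem~\ref{TCycles} guarantees to be an integer in $\intint{1}{p}$; thus $c(\pi)$ is well defined with values in $\intint{1}{p}$, as in Eqn.~(\ref{Dtreeofcyclescolor}), and the same theorem gives $\sum_n s_n/r=p$, so the outgoing labels of any vertex sum to $p$. For the root I would use property~(\ref{EPermA}): $\pi(n)\equiv n\modulus{p}$ forces $\pi_1$ to be the identity of the $p$-element group $\Z_p/p\Z_p$, so $\pi_1$ has exactly $p$ fixed points, i.e. $p$ cycles of length $1$. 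Hence the root has out-degree $p$, and since the root cycle and every child cycle have length $1$, every edge leaving the root is labelled $1$.

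Finally, the product formula follows by a telescoping argument: for a vertex $(k,\sigma)$ with unique path $(0,\sigma_0),(1,\sigma_1),\ldots,(k,\sigma_k)=(k,\sigma)$ to the root,
\begin{align}
\abs{\sigma}=\abs{\sigma_k}=\abs{\sigma_0}\prod_{i=1}^{k}\frac{\abs{\sigma_i}}{\abs{\sigma_{i-1}}}=\prod_{i=1}^{k}c(\pi)\big((i-1,\sigma_{i-1}),(i,\sigma_i)\big),
\end{align}
using $\abs{\sigma_0}=1$ for the root cycle. I do not expect any serious obstacle; the one point that warrants care is matching the edge relation of Eqn.~(\ref{Dtreeofcyclesedges}) with the child relation of Theorem~\ref{TCycles} and invoking the disjointness clause correctly, since this is exactly what upgrades the layered graph to an honest tree with in-degree $1$ at every non-root vertex.
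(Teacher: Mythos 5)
Your proposal is correct and follows essentially the same route as the paper, which simply notes that the corollary follows directly from Theorem~\ref{TCycles} together with the fact that $\Sigma(\pi_1)=\set{[([0])]_{\sim_\sigma},\ldots,[([p-1])]_{\sim_\sigma}}$ by Eqn.~(\ref{EPermA}); you have merely spelled out the details (in-degree~$1$ from the disjoint-union clause, the layer structure, and the telescoping product) that the paper leaves implicit.
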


\begin{proof}
Follows directly from Theorem~\ref{TCycles} and from the fact that
\begin{align}
\Sigma(\pi_1)=\set{[([0])]_{\sim_\sigma},\ldots,[([p-1])]_{\sim_\sigma}}
\end{align}
by Eqn.~(\ref{EPermA}).
\end{proof}

\noindent
Figure~\ref{FTreeCyc} below gives two examples of trees of cycles of $p$-adic permutations. For comparison we give the lists of all cycles of $\pi_0,\ldots,\pi_5$ for $\pi\ce\pi_{(x,3x+1),(5x+18,x-7)}$, i.e. the first of the two examples given in Figure~\ref{FTreeCyc} (to improve readability we omit the square brackets indicating equivalence classes, i.e. we write $(0,2,4,6)$ for $[([0],[2],[4],[6])]_{\sim_\sigma}$):

\begin{align}
\Sigma(\pi_0):&\;\;(\mathbf{0})\\
\Sigma(\pi_1):&\;\;(\mathbf{0}),\;(\mathbf{1})\\
\Sigma(\pi_2):&\;\;(\mathbf{0},2),\;(\mathbf{1},3)\\
\Sigma(\pi_3):&\;\;(\mathbf{0},2,4,6),\;(\mathbf{1},7),\;(\mathbf{5},3)\\
\Sigma(\pi_4):&\;\;(\mathbf{0},10,4,14),\;(\mathbf{8},2,12,6),\;(\mathbf{1},15,9,7),\;(\mathbf{5},11),\;(\mathbf{13},3)\\
\Sigma(\pi_5):&\;\;(\mathbf{0},26,4,14,16,10,20,30),\;(\mathbf{8},2,28,22,24,18,12,6),\;(\mathbf{1},31,25,23),\;(\mathbf{17},15,9,7),\\
\nonumber
&\;\;(\mathbf{5},11),\;(\mathbf{21},27),\;(\mathbf{13},3,29,19).
\end{align}

\begin{figure}[H]
\centering
\includegraphics[height=4.66cm]{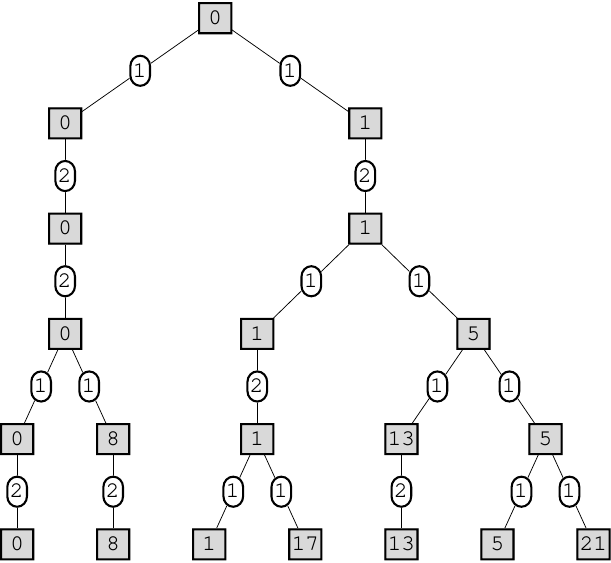}
\qquad
\includegraphics[height=4.66cm]{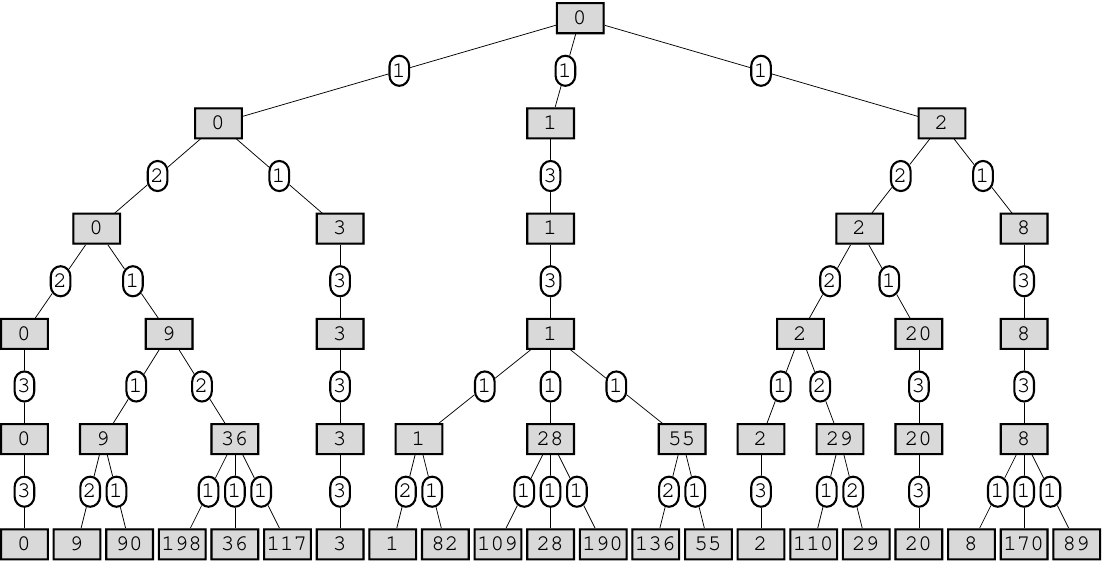}
\caption{Layers $0$ to $5$ of the trees of cycles of the $p$-adic permutations $\pi_{(x,3x+1),(5x+18,x-7)}$ (left) and $\pi_{(-5x-3,5x+1,-x+5),(-4x+3,-x+1,-2x+4)}$ (right). The vertex labels are class representatives of possible starting points of the respective cycles.}
\label{FTreeCyc}
\end{figure}

\myparagraphtoc{Summary.}
In this section we have discussed several interpretations of $p$-adic systems. The relations we have established are summarized below.
\begin{align}
\label{EEqSummary}
\SoSystems{p}\slash_{\sim_p}&\leftrightarrow\SoFunctions{p},&
\FFFDT:\SoSystems{p}\slash_{\sim_p}&\leftrightarrow\SoTables{p},&
\Pi_\FFf{G}(\FFf{F}):\SoSystems{p}\slash_{\sim_p}&\leftrightarrow\SoPermutations{p}.\\
\nonumber
\FFf{F}&\mapsto(\FFf{F}:\Z_p\to\Z_p)&
\FFf{F}&\mapsto\FFFDT(\FFf{F})&
\FFf{F}&\mapsto{\psi_\FFf{G}}^{-1}\circ\psi_\FFf{F}&\\
\nonumber
(pf,\ldots,pf)&\mapsfrom f&
\FFf{F}_\STf{D}&\mapsfrom\STf{D}&
\pi^{-1}\circ\FFf{G}\circ\pi&\mapsfrom\pi
\end{align}

\section{A useful characterization and many examples of $p$-adic systems}
\label{SCharacterization}
In the previous section we have discussed several interpretations of $p$-adic systems from which we ultimately chose ``$p$-fibred functions with block property'' to serve as their definition. As with any new mathematical object the obvious first question to ask is: do they exist? In the case of $p$-adic systems the question seems obsolete, as we already have given examples ($\FFf{F}_C$ and $\FFf{F}_2$ from the introduction) and have also proven that there is a one-one-one correspondence between $p$-fibred systems and, say, $p$-digit tables with block property, the latter of which clearly exist in abundance. The real question to ask in this case is, thus, a slightly different one: do many of them occur ``naturally'' in their $p$-fibred function form (which we favored over the other forms in the definition after all) or are most of them clumsy and ``artificial'' when interpreted as $p$-fibred functions? The answer to this question (which is: yes, many occur naturally) will be given by the following very useful characterization. But before we need to define the following predicates on the set $(\Z_p)^A$ of all mappings from $A$ to $\Z_p$, where $2\leq p\in\N$ and $A\subseteq\Z_p$ ($f\in(\Z_p)^A$, $r\in\intintuptoex{p}$, $K\subseteq\Nz$):
\begin{flalign}
\pushleft{\FPweaklysuitableat{p}{r}{K}(f)}&\Leftrightarrow\fa k\in K:\fa m,n\in(r+p\Z_p)\cap A:&\text{$f$ is \emph{weakly $(p,r)$-suitable at $K$}}\label{DFPweaklysuitableat}\\
\nonumber
&\phantom{\vphantom{a}\Leftrightarrow\vphantom{a}}\quad m\equiv n\modulus p^k\Rightarrow(f-f\modulo p)(m)\equiv(f-f\modulo p)(n)\modulus p^k\hspace{-11em}\\
\pushleft{\FPweaklysuitable{p}{r}(f)}&\Leftrightarrow\FPweaklysuitableat{p}{r}{\Nz}(f)&\text{$f$ is \emph{weakly $(p,r)$-suitable}}\label{DFPweaklysuitable}\\
\pushleft{\FPsuitableat{p}{r}{K}(f)}&\Leftrightarrow\fa k\in K:\fa m,n\in(r+p\Z_p)\cap A:&\text{$f$ is \emph{$(p,r)$-suitable at $K$}}\label{DFPsuitableat}\\
\nonumber
&\phantom{\vphantom{a}\Leftrightarrow\vphantom{a}}\quad m\equiv n\modulus p^k\Leftrightarrow(f-f\modulo p)(m)\equiv(f-f\modulo p)(n)\modulus p^k\hspace{-11em}\\
\pushleft{\FPsuitable{p}{r}(f)}&\Leftrightarrow\FPsuitableat{p}{r}{\Nz}(f)&\text{$f$ is \emph{$(p,r)$-suitable}}\label{DFPsuitable}
\end{flalign}

\noindent
Note that any function $f:A\to\Z_p$ satisfying $f((r+p\Z_p)\cap A)\subseteq p\Z_p$ (cf. the definition of $p$-fibred functions being in weak canonical form, Eqn.~(\ref{EPWCanF})) is weakly $(p,r)$-suitable if and only if $f\vert_{(r+p\Z_p)\cap A}$ satisfies the $1$-Lipschitz condition, and $(p,r)$-suitable if and only if $f\vert_{(r+p\Z_p)\cap A}$ is measure preserving (can be extended to a measure preserving function on $\Z_p$, to be exact) \cite{Yurova:2013}. In this case one clearly gets $f(n)\modulo p=0$ for all $n\in(r+p\Z_p)\cap A$ and thus the condition $(f-f\modulo p)(m)\equiv(f-f\modulo p)(n)\modulus p^k$ simplifies to $f(m)\equiv f(n)\modulus p^k$.

The following theorem summarizes how weakly $(p,r)$-suitable and $(p,r)$-suitable functions can be used to characterize closed $p$-fibred functions with weak block property and block property respectively.

\begin{theorem}
\label{TFuncSuit}
Let $2\leq p\in\N$, $\FFf{F}\in\SoFibredFunctions{p}(\FFPclosed)$, and $k\in\N$. Then,

\begin{theoremtable}
\theoremitem{(1)}&$\fa r\in\intintuptoex{p}:\FPweaklysuitableat{p}{r}{\intintuptoin{k}}(\FFf{F}[r])\Rightarrow\FFPweakblockat{\intintuptoin{k}}(\FFf{F})$\tabularnewline
&In particular: $\fa r\in\intintuptoex{p}:\FPweaklysuitable{p}{r}(\FFf{F}[r])\Rightarrow\FFPweakblock(\FFf{F})$\tabularnewline
\theoremitem{(2)}&$\fa r\in\intintuptoex{p}:\FPsuitableat{p}{r}{\intintuptoin{k}}(\FFf{F}[r])\Leftrightarrow\FFPblockat{\intintuptoin{k}}(\FFf{F})$\tabularnewline
&In particular: $\fa r\in\intintuptoex{p}:\FPsuitable{p}{r}(\FFf{F}[r])\Leftrightarrow\FFPblock(\FFf{F})$.
\end{theoremtable}
\end{theorem}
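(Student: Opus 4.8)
The plan is to recognize both parts as branch-wise reformulations of Lemma~\ref{LOrdFuncBlock}, which already characterizes the (weak) block property of a closed $p$-fibred function through a single application of its associated ordinary function. The only thing to supply is a dictionary between the suitability of an individual branch $\FFf{F}[r]$ and the congruence behavior of $\FFf{F}$ on the residue class $r+p\Z_p$.

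First I would record the basic identity. By Eqn.~(\ref{EFibredFunction}), for any $n\in\FFFdomain(\FFf{F})$ with $n\equiv r\modulus{p}$ we have $n\modulo p=r$, whence
\begin{align*}
p\,\FFf{F}(n)=\FFf{F}[r](n)-\FFf{F}[r](n)\modulo p=(\FFf{F}[r]-\FFf{F}[r]\modulo p)(n).
\end{align*}
Multiplication by $p$ shifts $p$-adic valuations by one, so for all $x,y\in\Z_p$ and all $\ell\in\N$ one has $x\equiv y\modulus{p^{\ell-1}}$ if and only if $px\equiv py\modulus{p^\ell}$. Applying this to $x=\FFf{F}(m)$ and $y=\FFf{F}(n)$ for $m,n\equiv r\modulus{p}$ gives the key equivalence
\begin{align*}
\FFf{F}(m)\equiv\FFf{F}(n)\modulus{p^{\ell-1}}\Leftrightarrow(\FFf{F}[r]-\FFf{F}[r]\modulo p)(m)\equiv(\FFf{F}[r]-\FFf{F}[r]\modulo p)(n)\modulus{p^\ell},
\end{align*}
valid for every $\ell\in\N$; for $\ell=0$ both sides hold trivially.

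With this dictionary in hand, the predicate $\FPweaklysuitableat{p}{r}{\intintuptoin{k}}(\FFf{F}[r])$ (Eqn.~(\ref{DFPweaklysuitableat})) says exactly that for every $\ell\in\intintuptoin{k}$ and all $m,n\in(r+p\Z_p)\cap\FFFdomain(\FFf{F})$ one has $m\equiv n\modulus{p^\ell}\Rightarrow\FFf{F}(m)\equiv\FFf{F}(n)\modulus{p^{\ell-1}}$. Quantifying over all $r\in\intintuptoex{p}$ exhausts precisely the pairs $m,n$ with $m\equiv n\modulus{p}$, so the hypothesis $\fa r\in\intintuptoex{p}:\FPweaklysuitableat{p}{r}{\intintuptoin{k}}(\FFf{F}[r])$ is literally the antecedent of Lemma~\ref{LOrdFuncBlock}~(1) (the contribution of $\ell=0$ being vacuous), and part (1) follows at once; the ``In particular'' statement is the case $k=\infty$, where $\intintuptoin{\infty}=\Nz$. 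For part (2) the identical translation turns the biconditional in $\FPsuitableat{p}{r}{\intintuptoin{k}}(\FFf{F}[r])$ into the two-sided condition of Lemma~\ref{LOrdFuncBlock}~(2); since that lemma is itself an ``if and only if'' with $\FFPblockat{\intintuptoin{k}}(\FFf{F})$, the claimed equivalence follows directly, and again the ``In particular'' part is the case $k=\infty$.

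I expect no genuine obstacle beyond bookkeeping. The single non-formal point is the valuation-shift identity $x\equiv y\modulus{p^{\ell-1}}\Leftrightarrow px\equiv py\modulus{p^\ell}$, which reconciles the index shift between the suitability predicates (which compare modulo the same power $p^k$ on both sides) and Lemma~\ref{LOrdFuncBlock} (which compares $p^\ell$ against $p^{\ell-1}$). One must also keep track of the fact that suitability subtracts $\FFf{F}[r]\modulo p$ before comparing, as this is exactly what makes $(\FFf{F}[r]-\FFf{F}[r]\modulo p)(n)$ equal $p\,\FFf{F}(n)$ and strips off the zeroth digit; once this alignment is in place, both parts reduce directly to the lemma.
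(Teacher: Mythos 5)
Your proposal is correct and follows essentially the same route as the paper's proof: both reduce the theorem to Lemma~\ref{LOrdFuncBlock} via the single observation that, on $(r+p\Z_p)\cap\FFFdomain(\FFf{F})$, the congruence $\FFf{F}(m)\equiv\FFf{F}(n)\modulus{p^{\ell-1}}$ is equivalent to $(\FFf{F}[r]-\FFf{F}[r]\modulo p)(m)\equiv(\FFf{F}[r]-\FFf{F}[r]\modulo p)(n)\modulus{p^\ell}$, which is exactly the suitability condition. Your extra remarks on the valuation shift and the vacuity of the $\ell=0$ level are just the bookkeeping the paper leaves implicit.
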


\begin{proof}
Let $\ell\in\intintuptoin{k}$, $r\in\intintuptoex{p}$, and $m,n\in(r+p\Z_p)\cap\FFFdomain(\FFf{F})$. Then,
\begin{align}
\FFf{F}(m)\equiv\FFf{F}(n)\modulus{p^{\ell-1}}&\Leftrightarrow\frac{\FFf{F}[r](m)-\FFf{F}[r](m)\modulo p}{p}\equiv\frac{\FFf{F}[r](n)-\FFf{F}[r](n)\modulo p}{p}\modulus{p^{\ell-1}}\\
&\Leftrightarrow\FFf{F}[r](m)-\FFf{F}[r](m)\modulo p\equiv\FFf{F}[r](n)-\FFf{F}[r](n)\modulo p\modulus{p^\ell}
\end{align}
and the statements follow from Lemma~\ref{LOrdFuncBlock}.
\end{proof}

\noindent
Despite the fact that the proof of this theorem is rather simple, it is quite remarkable. It states that the block property of a $p$-fibred function $\FFf{F}$ does not depend on the relation between the functions $\FFf{F}[0],\ldots,\FFf{F}[p-1]$, but is only a question of whether each $\FFf{F}[r]$ is ``suitable'' to be the $r$-th of the $p$ entries of $\FFf{F}$. The $p$ functions that define $\FFf{F}$ can be chosen completely independently from one another.

\myparagraphtoc{The weak block property revisited.}
A similar characterization is given for the weak block property, but in contrast to the characterization of the block property in (2), the condition in (1) is only sufficient but not necessary. A natural question to ask is whether the weak block property also permits a necessary and sufficient characterization that only considers the functions $\FFf{F}[0],\ldots,\FFf{F}[p-1]$ independently from one another. The following example shows that this is not the case.

\begin{example}
\label{ENoCharacWBlk}
Let $f,g,h:\Z_2\to\Z_2$ with $f(n)=(n=8\;?\;6:2)$, $g(n)=(n=3\;?-2:(n\equiv3\modulus{4}\;?\;6:0))$, and $h(n)=0$ for all $n\in\Z_2$. Furthermore, let $\FFf{F}\ce(f,h)$, $\FFf{G}\ce(h,g)$, and $\FFf{H}\ce(f,g)$. Then $\FFFDT(\FFf{H})[0][\intintuptoex{3}]=(0,1,0)\neq(0,1,1)=\FFFDT(\FFf{H})[2^3][\intintuptoex{3}]$ which implies that $\FFf{H}$ does not have the weak block property (at $3$). At the same time we get $\FFFDT(\FFf{F})=((n\equiv0\modulus{2}\;?\;(0,1)^\infty:(1,0)^\infty))_{n\in\Z_2}$ and $\FFFDT(\FFf{G})=((n\equiv0\modulus{2}\;?\;(0)^\infty:(n\equiv1\modulus{4}\;?\;(1)\cdot(0)^\infty:(1)^\infty))_{n\in\Z_2}$ which implies that both $\FFf{F}$ and $\FFf{G}$ have the weak block property. Note that $f(0)=2\not\equiv2+2^2=f(0+2^3)\modulus{2^3}$ and $g(3)=-2\not\equiv-2+2^3=g(3+2^4)\modulus{2^4}$ which means that neither $f$ nor $g$ are weakly $(2,0)$-, respectively weakly $(2,1)$-suitable.
\end{example}

\noindent
$f$ and $g$ of the previous example can both be part of $2$-fibred functions ($\FFf{F}$ and $\FFf{G}$ respectively) that have the weak block property, but the $2$-fibred function $\FFf{H}$, which contains both $f$ and $g$, does not have the weak block property. Thus, there cannot be a necessary and sufficient characterization of the weak block property which considers the entries of a $p$-fibred function independently.

A natural follow-up question is whether every $p$-digit table with weak block property that is the $p$-digit table of a $p$-fibred function can at least be realized as the $p$-digit table of a $p$-fibred function whose entries are weakly $(p,r)$-suitable functions. This is true for the $2$-digit tables $\FFFDT(\FFf{F})$ and $\FFFDT(\FFf{G})$ of the previous example: if $f_0,f_1,g_0,g_1:\Z_2\to\Z_2$ with $f_0(n)=2$, $f_1(n)=0$, $g_0(n)=0$, and $g_1(n)=(n\equiv1\modulus{4}\;?\;0:6)$ for all $n\in\Z_2$, then $\FFFDT(\FFf{F})=\FFFDT((f_0,f_1))$ and $\FFFDT(\FFf{G})=\FFFDT((g_0,g_1))$, and also $\FPweaklysuitable{2}{0}(f_0)$, $\FPweaklysuitable{2}{1}(f_1)$, $\FPweaklysuitable{2}{0}(g_0)$, and $\FPweaklysuitable{2}{1}(g_1)$. However, in general even this does not hold as the following example shows.

\begin{example}
\label{ENoCharacWBlkEx}
Let $f_0,f_1:\Z_2\to\Z_2$ with $f_0(n)=(n\modulo16=8\;?\;6:(n\modulo4=0\;?\;2:4))$ and $f_1(n)=(n\modulo4=1\;?\;4:0)$ for all $n\in\Z_2$. Furthermore, let $\FFf{F}\ce(f_0,f_1)$, and $\STf{D}\ce\FFFDT(\FFf{F})$. Then $\STf{D}=((n\equiv8\modulus{16}\;?\;(0,1,0,1):(n\equiv0\modulus{4}\;?\;(0,1):(n\equiv1\modulus{4}\;?\;(1):(n\equiv2\modulus{4}\;?\;():(1,0,1)))))\cdot(0)^\infty)_{n\in\Z_2}$ and hence $\STf{D}$ has the weak block property, but since $\STf{D}(0)=1+4\Z_2$ and $\STf{D}(8)=3+4\Z_2$, Theorem~\ref{TDTFF} implies that there cannot exist any $2$-fibred function $\FFf{G}=(g_0,g_1)$ with $g_0:\Z_2\to\Z_2$ weakly $(2,0)$-suitable and $g_1:\Z_2\to\Z_2$ weakly $(2,1)$-suitable such that $\STf{D}=\FFFDT(\FFf{G})$, because in this case $g_0(0)\in\set{2,3}+8\Z_2$ and $g_0(8)\in\set{6,7}+8\Z_2$ and hence $(g_0-g_0\modulo2)(0)\not\equiv(g_0-g_0\modulo2)(0+2^3)\modulus2^3$.
\end{example}

Example~\ref{EWBlockF}, Example~\ref{ENoCharacWBlk}, and Example~\ref{ENoCharacWBlkEx} indicate that there are actually three levels of the weak block property of $p$-digit tables of different generality. The weakest form is given by the weak block property itself, the stronger version (as proven by Example~\ref{EWBlockF}) requires the $p$-digit table to come from some $p$-fibred function, while the even stronger version (as proven by Example~\ref{ENoCharacWBlkEx}) requires the entries of this $p$-fibred function to be weakly $(p,r)$-suitable.

\noindent
We thus define the following predicates on $\SoDigitTables{p}$ ($\STf{D}\in\SoDigitTables{p}$, $K\subseteq\Nz$):
\begin{flalign}
\pushleft{\DTPweakblockFat{K}(\STf{D})}&\Leftrightarrow\ex\FFf{F}\in\SoFibredFunctions{p}(\FFPweakblockat{K}):&\hspace{-5em}\text{$\STf{D}$ has the \emph{weak block property F at $K$}}\label{DDTPweakblockFat}\\
\nonumber
&\phantom{\vphantom{a}\Leftrightarrow\vphantom{a}}\quad\STf{D}=\FFFDT(\FFf{F})\\
\pushleft{\DTPweakblockF(\STf{D})}&\Leftrightarrow\DTPweakblockFat{\Nz}(\STf{D})&\hspace{-2em}\text{$\STf{D}$ has the \emph{weak block property F}}\label{DDTPweakblockF}\\
\pushleft{\DTPweakblockSat{K}(\STf{D})}&\Leftrightarrow\ex\FFf{F}\in\SoFibredFunctions{p}(\FFPweakblockat{K}):&\hspace{-5em}\text{$\STf{D}$ has the \emph{weak block property S at $K$}}\label{DDTPweakblockSat}\\
\nonumber
&\phantom{\vphantom{a}\Leftrightarrow\vphantom{a}}\quad\STf{D}=\FFFDT(\FFf{F})\land\fa r\in\intintuptoex{p}:\FPweaklysuitableat{p}{r}{K}(\FFf{F}[r])\\
\pushleft{\mathrlap{\DTPweakblockS(\STf{D})}\hphantom{\DTPweakblockFat{K}(\STf{D})}}&\Leftrightarrow\DTPweakblockSat{\Nz}(\STf{D})&\hspace{-2em}\text{$\STf{D}$ has the \emph{weak block property S}}\label{DDTPweakblockS}
\end{flalign}

\noindent
Clearly,
\begin{align}
&\DTPblockat{K}(\STf{D})\Rightarrow\DTPweakblockSat{K}(\STf{D})\Rightarrow\DTPweakblockFat{K}(\STf{D})\Rightarrow\DTPweakblockat{K}(\STf{D})\\
&\mathrlap{\DTPblock(\STf{D})}\phantom{\DTPblockat{K}(\STf{D})}\Rightarrow\mathrlap{\DTPweakblockS(\STf{D})}\phantom{\DTPweakblockSat{K}(\STf{D})}\Rightarrow\mathrlap{\DTPweakblockF(\STf{D})}\phantom{\DTPweakblockFat{K}(\STf{D})}\Rightarrow\DTPweakblock(\STf{D})
\end{align}
for all $\STf{D}\in\SoDigitTables{p}$ and all $K\subseteq\Nz$ by definition but, as discussed above, neither of the converses of the four implications is true in general.

The predicates \label{DFFPweakblockFat}$\DTPweakblockFat{K}$ and \label{DFFPweakblockF}$\DTPweakblockF$ carry over to $\SoFibredFunctions{p}(\FFPclosed)$ by $\FFFDT(\FFf{F})$ but we define the following alternative predicates on $\SoFibredFunctions{p}(\FFPclosed)$ ($\FFf{F}\in\SoFibredFunctions{p}(\FFPclosed)$, $k\subseteq\Nz$):
\begin{flalign}
\pushleft{\FFPweakblockSat{K}(\FFf{F})}&\Leftrightarrow\fa r\in\intintuptoex{p}:\FPweaklysuitableat{p}{r}{K}(\FFf{F}[r])&\text{$\FFf{F}$ has the \emph{weak block property S at $K$}}\label{DFFPweakblockSat}\\
\pushleft{\FFPweakblockS(\FFf{F})}&\Leftrightarrow\FFPweakblockSat{\Nz}(\FFf{F})&\text{$\FFf{F}$ has the \emph{weak block property S}}\label{DFFPweakblockS}
\end{flalign}

\noindent
It follows from the definitions and from Theorem~\ref{TFuncSuit}~(2) that
\begin{align}
&\FFPblockat{K}(\FFf{F})\Rightarrow\FFPweakblockSat{K}(\FFf{F})\Rightarrow\DTPweakblockSat{K}(\FFFDT(\FFf{F}))\Rightarrow\FFPweakblockFat{K}(\FFf{F})\Leftrightarrow\FFPweakblockat{K}(\FFf{F})\\
&\mathrlap{\FFPblock(\FFf{F})}\phantom{\FFPblockat{K}(\FFf{F})}\Rightarrow\mathrlap{\FFPweakblockS(\FFf{F})}\phantom{\FFPweakblockSat{K}(\FFf{F})}\Rightarrow\mathrlap{\DTPweakblockS(\FFFDT(\FFf{F}))}\phantom{\DTPweakblockSat{K}(\FFFDT(\FFf{F}))}\Rightarrow\mathrlap{\FFPweakblockF(\FFf{F})}\phantom{\FFPweakblockFat{K}(\FFf{F})}\Leftrightarrow\FFPweakblock(\FFf{F})
\end{align}
for all $\FFf{F}\in\SoFibredFunctions{p}(\FFPclosed)$ and all $K\subseteq\Nz$ while, again, neither of the converses of the six implications is true in general.

In addition to being of general interest, this ``refinement'' of the weak block property has a first application as well in proving a stronger version of the last statement of Corollary~\ref{CComputeDT}~(2).

\begin{corollary}
\label{CComputeDTStrong}
Under the assumptions of Corollary~\ref{CComputeDT} we have

\begin{theoremtable}
\theoremitem{(3)}&$\FFPweakblockSat{k}(\FFf{F})\land\FFPweakblockat{k-1}(\FFf{F})$ and $S'$ CRS modulo $p^{k-1}$ $\Rightarrow$\tabularnewline
&\quad$\fa n\in\FFFdomain(\FFf{F}):\FFf{H}_1(n)=\FFf{H}_1(T(n))=\FFf{H}_2(n)$.
\end{theoremtable}
\end{corollary}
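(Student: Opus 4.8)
The plan is to note that the statement coincides verbatim with the last assertion of Corollary~\ref{CComputeDT}~(2), except that the hypothesis $\FFPblockat{k}(\FFf{F})$ there is relaxed to $\FFPweakblockSat{k}(\FFf{F})$ here, and then to rerun that proof while replacing the one step which genuinely used the full block property. First I would recall the facts established in the proof of Corollary~\ref{CComputeDT} that persist under the weaker hypothesis: for every $n\in\FFFdomain(\FFf{F})$ one has $\FFf{H}_1(n)\equiv\FFf{F}(n)\modulus{p^{k-1}}$, and applying the identical computation to $T(n)$ (which lies in the same residue class modulo $p$ as $n$) gives $\FFf{H}_1(T(n))\equiv\FFf{F}(T(n))\modulus{p^{k-1}}$; moreover $\FFf{H}_1(n),\FFf{H}_1(T(n))\in S'$ by Corollary~\ref{CComputeDT}~(1), and $\FFf{H}_1(T(n))=\FFf{H}_2(n)=\FFf{H}_2(T(n))$ directly from the defining equations. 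Since none of these invokes a block property, all remain available, and it suffices to prove $\FFf{H}_1(n)=\FFf{H}_1(T(n))$.

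In Corollary~\ref{CComputeDT}~(2) this equality was deduced from Theorem~\ref{TDnStructure}~(4), whose hypotheses include $\DTPblockat{k}(\FFFDT(\FFf{F}))$ --- exactly what is no longer available. Instead I would obtain the required congruence directly from weak suitability. Fix $n\in\FFFdomain(\FFf{F})$, put $r\ce n\modulo p$, and observe that $T(n)\equiv n\modulus{p^k}$ forces $T(n)\equiv n\equiv r\modulus{p}$, so both $n$ and $T(n)$ belong to $(r+p\Z_p)\cap\FFFdomain(\FFf{F})$. As $\FFPweakblockSat{k}(\FFf{F})$ unfolds to $\FPweaklysuitableat{p}{r}{k}(\FFf{F}[r])$ for each $r\in\intintuptoex{p}$, instantiating its defining implication at the pair $T(n),n$ yields
\begin{align}
(\FFf{F}[r]-\FFf{F}[r]\modulo p)(T(n))\equiv(\FFf{F}[r]-\FFf{F}[r]\modulo p)(n)\modulus{p^k}.
\end{align}
The crucial observation, read off from Eqn.~(\ref{EFibredFunction}), is that $(\FFf{F}[r]-\FFf{F}[r]\modulo p)(m)=p\,\FFf{F}(m)$ for every $m\in(r+p\Z_p)\cap\FFFdomain(\FFf{F})$; dividing the displayed congruence by $p$ therefore gives $\FFf{F}(T(n))\equiv\FFf{F}(n)\modulus{p^{k-1}}$.

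Finally I would chain the three congruences,
\begin{align}
\FFf{H}_1(n)\equiv\FFf{F}(n)\equiv\FFf{F}(T(n))\equiv\FFf{H}_1(T(n))\modulus{p^{k-1}},
\end{align}
and invoke the standing hypothesis that $S'$ is a CRS modulo $p^{k-1}$: since $\FFf{H}_1(n)$ and $\FFf{H}_1(T(n))$ both lie in $S'$ and are congruent modulo $p^{k-1}$, they must coincide. Together with $\FFf{H}_1(T(n))=\FFf{H}_2(n)$ this produces the full chain $\FFf{H}_1(n)=\FFf{H}_1(T(n))=\FFf{H}_2(n)$. The argument is routine bookkeeping apart from its one genuinely new ingredient in the middle paragraph, namely trading the block-property-dependent Theorem~\ref{TDnStructure}~(4) for a direct appeal to weak $(p,r)$-suitability at $k$; the only point demanding care is the elementary identity $(\FFf{F}[r]-\FFf{F}[r]\modulo p)(m)=p\,\FFf{F}(m)$ together with the attendant drop of the modulus from $p^k$ to $p^{k-1}$ upon division by $p$, which I expect to be the sole (minor) obstacle.
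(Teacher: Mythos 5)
Your proof is correct and follows essentially the same route as the paper: the key step in both is to use weak $(p,r)$-suitability at $k$ to obtain $\FFf{H}_1(n)\equiv\FFf{H}_1(T(n))\modulus{p^{k-1}}$ and then invoke Corollary~\ref{CComputeDT}~(1) together with $S'$ being a CRS modulo $p^{k-1}$ to upgrade the congruence to equality. The only (cosmetic) difference is that the paper first transfers weak suitability from $\FFf{F}[r]$ to $\FFf{H}_1[r]$ and applies it there, whereas you apply it directly to $\FFf{F}[r]$ and then pass to $\FFf{H}_1$ via the congruences $\FFf{H}_1(m)\equiv\FFf{F}(m)\modulus{p^{k-1}}$ already established in the proof of Corollary~\ref{CComputeDT}~(2).
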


\begin{proof}
It follows from the assumptions that $\FFf{H}_1[r](n)=S(\FFf{F}[r](n))\equiv\FFf{F}[r](n)\modulus{p^k}$ for all $r\in\intintuptoex{p}$ and all $n\in(r+p\Z_p)\cap\FFFdomain(\FFf{F})$. Thus, $\FFPweakblockSat{k}(\FFf{F})$ implies that also $\FFPweakblockSat{k}(\FFf{H}_1)$, i.e. $\FPweaklysuitableat{p}{r}{k}(\FFf{H}_1[r])$ for all $r\in\intintuptoex{p}$.

Now let $n\in\FFFdomain(\FFf{F})$. Clearly, $\FFf{H}_1(T(n))=\FFf{H}_2(n)$. In addition, $T(n)\equiv n\modulus{p^k}$ and hence
\begin{align}
p\FFf{H}_1(T(n))
&=\FFf{H}_1[T(n)\modulo p](T(n))-\FFf{H}_1[T(n)\modulo p](T(n))\modulo p\\
&=\FFf{H}_1[n\modulo p](T(n))-\FFf{H}_1[n\modulo p](T(n))\modulo p\\
(\FPweaklysuitableat{p}{n\modulo p}{k}(\FFf{H}_1[n\modulo p]))\;&\equiv\FFf{H}_1[n\modulo p](n)-\FFf{H}_1[n\modulo p](n)\modulo p\\
&=p\FFf{H}_1(n)\modulus{p^k}.
\end{align}
We thus get $\FFf{H}_1(T(n))\equiv\FFf{H}_1(n)\modulus{p^{k-1}}$ and since both $\FFf{H}_1(T(n))$ and $\FFf{H}_1(n)$ are in $S'$ by Corollary~\ref{CComputeDT}~(1), they must be equal.
\end{proof}

\myparagraphtoc{Properties of suitable functions.}
Since $(p,r)$-suitable functions are the building blocks of $p$-adic systems, we summarize some basic facts on them below.

\begin{lemma}
\label{LSuitFProp}
Let $2\leq p\in\N$, $A\subseteq\Z_p$, $r\in\intintuptoex{p}$, and $f:A\to\Z_p$ $(p,r)$-suitable. Then,

\begin{theoremtable}
\theoremitem{(1)}&$f\vert_{(r+p\Z_p)\cap A}$ is injective\tabularnewline
\theoremitem{(2)}&$r+p\Z_p\subseteq A\land f(r+p\Z_p)\subseteq p\Z_p\Rightarrow f\vert_{r+p\Z_p}(r+p\Z_p)=p\Z_p$\tabularnewline
&$\phantom{r+p\Z_p\subseteq A\land f(r+p\Z_p)\subseteq p\Z_p\Rightarrow\vphantom{a}}$In particular: $g:r+p\Z_p\to\Z_p$, $n\mapsto f(n)/p$ is bijective.
\end{theoremtable}
\end{lemma}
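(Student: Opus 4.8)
The plan is to extract everything directly from the defining biconditional of $(p,r)$-suitability: for all $k\in\Nz$ and all $m,n\in(r+p\Z_p)\cap A$,
\[
m\equiv n\modulus{p^k}\iff(f-f\modulo p)(m)\equiv(f-f\modulo p)(n)\modulus{p^k}.
\]
For (1) I would argue as follows. Suppose $m,n\in(r+p\Z_p)\cap A$ satisfy $f(m)=f(n)$. Then trivially $(f-f\modulo p)(m)=(f-f\modulo p)(n)$, so these agree modulo $p^k$ for every $k$, and the ``$\Leftarrow$'' direction of suitability forces $m\equiv n\modulus{p^k}$ for all $k\in\N$, whence $m=n$ in $\Z_p$. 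This step is immediate and I expect no difficulty.

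For (2) I would first simplify. Since $f(r+p\Z_p)\subseteq p\Z_p$, every value $f(n)$ is divisible by $p$, so $f(n)\modulo p=0$ and hence $(f-f\modulo p)(n)=f(n)$ on $r+p\Z_p$; suitability then reads $m\equiv n\modulus{p^k}\Leftrightarrow f(m)\equiv f(n)\modulus{p^k}$. Writing $f(n)=p\,g(n)$ with $g(n)\ce f(n)/p\in\Z_p$, and using $pg(m)\equiv pg(n)\modulus{p^k}\Leftrightarrow g(m)\equiv g(n)\modulus{p^{k-1}}$, the condition becomes, for every $k\geq1$,
\[
m\equiv n\modulus{p^k}\iff g(m)\equiv g(n)\modulus{p^{k-1}}.
\]
This biconditional says precisely that $g$ descends to a \emph{well-defined} (``$\Rightarrow$'') and \emph{injective} (``$\Leftarrow$'') map $\bar g_k$ from the residue classes of $r+p\Z_p$ modulo $p^k$ into $\Z_p/p^{k-1}\Z_p$. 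Now I would count: the residue classes modulo $p^k$ contained in $r+p\Z_p$ are exactly those congruent to $r$ modulo $p$, of which there are $p^{k-1}$, matching $\abs{\Z_p/p^{k-1}\Z_p}=p^{k-1}$. Hence each $\bar g_k$ is an injection between finite sets of equal cardinality, so it is a \emph{bijection}.

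To finish I would pass to the $p$-adic limit. The maps $\bar g_k$ are compatible with the reductions modulo $p^k\to p^{k-1}$ on the source and modulo $p^{k-1}\to p^{k-2}$ on the target, since $g$ is an honest function on elements. Given any target $y\in\Z_p$, choose for each $k$ the unique class $c_k$ with $\bar g_k(c_k)=[y]$ in $\Z_p/p^{k-1}\Z_p$; compatibility together with uniqueness (injectivity of each $\bar g_k$) shows that $c_{k+1}$ reduces to $c_k$, so $(c_k)_k$ is a coherent sequence and determines a unique $n\in r+p\Z_p$ with $g(n)\equiv y\modulus{p^{k-1}}$ for all $k$, i.e.\ $g(n)=y$. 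Thus $g$ is surjective, and being injective by (1) (as $g=f/p$ preserves and reflects equality), it is bijective; this is the ``In particular'' claim, and it gives $f(r+p\Z_p)=p\,g(r+p\Z_p)=p\Z_p$. The main obstacle is exactly this surjectivity: the biconditional yields only injectivity of each $\bar g_k$ for free, and the two nontrivial points are (a) the cardinality count showing the $p^{k-1}$ source classes match $\abs{\Z_p/p^{k-1}\Z_p}$, upgrading injectivity to bijectivity at each finite level, and (b) the coherence check letting the classes $c_k$ assemble into a single $p$-adic integer.
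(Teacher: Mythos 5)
Your proof of (1) is the same contrapositive argument the paper gives, so there is nothing to add there. For (2) your argument is correct but follows a genuinely different route. The paper embeds $f$ into a $p$-adic system $\FFf{F}\ce(\id_{\Z_p})^r\cdot(g)\cdot(\id_{\Z_p})^{p-r-1}$ (with $g$ an arbitrary extension of $f$ to $\Z_p$), invokes Theorem~\ref{TFuncSuit}~(2) to get the block property, and then reads surjectivity off Lemma~\ref{LInfBlDTCompl}: for each $n\in\Z_p$ the unique $m$ with $\FFFDT(\FFf{F})[m]=(r)\cdot\FFFDT(\FFf{F})[n]$ satisfies $f(m)=pn$. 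You instead work directly at the level of finite quotients: the biconditional $m\equiv n\modulus{p^k}\Leftrightarrow g(m)\equiv g(n)\modulus{p^{k-1}}$ gives well-defined injections $\bar g_k$ between sets of equal cardinality $p^{k-1}$, hence bijections, and a coherence argument assembles the preimage classes into a single $n\in r+p\Z_p$ with $g(n)=y$. This is essentially an inlined, self-contained version of the counting-plus-inverse-limit argument that the paper hides inside Lemmas~\ref{LFinBlDTCompl} and~\ref{LInfBlDTCompl}; what the paper's route buys is brevity given the already-developed digit-table machinery, while yours buys independence from that machinery and makes the finite-level bijectivity explicit. Two minor points worth making explicit in a polished write-up: the equivalence $pg(m)\equiv pg(n)\modulus{p^k}\Leftrightarrow g(m)\equiv g(n)\modulus{p^{k-1}}$ uses that $p$ is not a zero divisor in $\Z_p$ (which the paper verifies elsewhere, in the proof of Theorem~\ref{TPropPolyF}, and which is needed even when $p$ is not prime), and the hypothesis $r+p\Z_p\subseteq A$ is what guarantees every residue class modulo $p^k$ inside $r+p\Z_p$ is actually hit by the domain, so that each $\bar g_k$ is defined on all $p^{k-1}$ classes.
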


\begin{proof}
$ $\\
\proofitem{(1)}%
Assume to the contrary that $f\vert_{(r+p\Z_p)\cap A}$ is not injective and let $m,n\in (r+p\Z_p)\cap A$ with $m\neq n$ such that $f(m)=f(n)$. Then there is a $k\in\N$ such that $m\not\equiv n\modulus{p^k}$, but $(f-f\modulo p)(m)\equiv(f-f\modulo p)(n)\modulus p^k$ which is a contradiction.

\noindent
\proofitem{(2)}%
Let $g:\Z_p\to\Z_p$ be an arbitrary extension of $f$, $\id_{\Z_p}$ the identity function on $\Z_p$, and $\FFf{F}\ce(\id_{\Z_p})^r\cdot(g)\cdot(\id_{\Z_p})^{p-r-1}$. Then $\FFf{F}\in\SoSystems{p}$ by Theorem~\ref{TFuncSuit}~(2) and hence Lemma~\ref{LInfBlDTCompl} implies that for every $n\in\Z_p$ there is a unique $m\in\Z_p$ such that
\begin{align}
\FFFDT(\FFf{F})[m]=(r)\cdot\FFFDT(\FFf{F})[n].
\end{align}
It follows that
\begin{align}
\FFFDT(\FFf{F})[\FFf{F}(m)]=\FFFDT(\FFf{F})[n]
\end{align}
and hence $\FFf{F}(m)=n$ by Lemma~\ref{LInfBlDTEq}. Furthermore, $m\in r+pR$ and thus
\begin{align}
f(m)=\FFf{F}[r](m)=p\FFf{F}(m)=pn.
\end{align}
We conclude that $f\vert_{r+p\Z_p}(r+p\Z_p)=p\Z_p$. Thus $g$ is surjective and by (1) it is also injective.
\end{proof}

\begin{corollary}
\label{CSuitFProp}
Let $2\leq p\in\N$ and $\FFf{F}\in\SoSystems{p}$. Then,

\begin{theoremtable}
\theoremitem{(1)}&$\fa r\in\intintuptoex{p}:\FFf{F}\vert_{r+p\Z_p}:r+p\Z_p\to\Z_p$ is surjective and one-to-one (i.e. bijective)\tabularnewline
&In particular: $\fa n\in\Z_p:\fa r\in\intintuptoex{p}:\abs{\FFf{F}^{-1}(n)\cap(r+p\Z_p)}=1$\tabularnewline
\theoremitem{(2)}&$\FFf{F}:\Z_p\to\Z_p$ is surjective and $p$-to-one.\tabularnewline
&In particular: $\fa n\in\Z_p:\abs{\FFf{F}^{-1}(n)}=p$.
\end{theoremtable}
\end{corollary}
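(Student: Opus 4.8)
The plan is to reduce both statements to the structural facts already proved for a single suitable branch, namely Lemma~\ref{LSuitFProp}, after first reconciling the two readings of $\FFf{F}$: the $p$-fibred function and the ordinary function $\Z_p\to\Z_p$ it induces. Since $\FFf{F}\in\SoSystems{p}$ has the block property by definition, Theorem~\ref{TFuncSuit}~(2) tells me that every branch $\FFf{F}[r]$ is $(p,r)$-suitable. To feed this into Lemma~\ref{LSuitFProp}~(2) I need the branch to map $r+p\Z_p$ into $p\Z_p$, and this is the one genuine obstacle, since a general $p$-adic system need not be in (weak) canonical form. I would clear it by passing to the canonical form $\FFf{F}_c$ of $\FFf{F}$ via Lemma~\ref{LFFCanForm}: $\FFf{F}_c$ represents the same ordinary function and has the same digit table, hence is again a $p$-adic system whose branches are $(p,r)$-suitable, and now $\FFf{F}_c[r](r+p\Z_p)\subseteq p\Z_p$ holds by construction of the canonical form.

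With that in place, on $r+p\Z_p$ the induced ordinary function simplifies to $\FFf{F}(n)=\FFf{F}_c(n)=\FFf{F}_c[r](n)/p$ (the subtrahend $\FFf{F}_c[r](n)\modulo p$ vanishes because $\FFf{F}_c[r](n)\in p\Z_p$). This is precisely the map $g$ appearing in Lemma~\ref{LSuitFProp}~(2) for $f\ce\FFf{F}_c[r]$ on $A=\Z_p$, which that lemma declares bijective. Hence $\FFf{F}\vert_{r+p\Z_p}:r+p\Z_p\to\Z_p$ is surjective and one-to-one, establishing the first assertion of (1). The ``In particular'' then drops out immediately: bijectivity of $\FFf{F}\vert_{r+p\Z_p}$ means that every $n\in\Z_p$ has exactly one preimage lying in $r+p\Z_p$, i.e. $\abs{\FFf{F}^{-1}(n)\cap(r+p\Z_p)}=1$.

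For (2) I would exploit that $\Z_p=\bigsqcup_{r\in\intintuptoex{p}}(r+p\Z_p)$ is the partition of $\Z_p$ into residue classes modulo $p$. Summing the count from (1) over the $p$ classes gives $\abs{\FFf{F}^{-1}(n)}=\sum_{r\in\intintuptoex{p}}\abs{\FFf{F}^{-1}(n)\cap(r+p\Z_p)}=p$ for every $n\in\Z_p$; each fibre is therefore non-empty, so $\FFf{F}$ is surjective, and each fibre has exactly $p$ elements, so $\FFf{F}$ is $p$-to-one, which is exactly the ``In particular'' claim.

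The only delicate point is the reduction to canonical form that supplies the hypothesis $\FFf{F}[r](r+p\Z_p)\subseteq p\Z_p$; everything else is bookkeeping with the residue-class partition and the bijection handed over by Lemma~\ref{LSuitFProp}~(2). A cleaner alternative I would keep in reserve is to avoid invoking Lemma~\ref{LFFCanForm} altogether and instead apply Lemma~\ref{LSuitFProp}~(2) directly to the branch $\FFf{F}[r]-\FFf{F}[r]\modulo p$, which shares the $(p,r)$-suitability of $\FFf{F}[r]$ (the suitability predicate only depends on $f-f\modulo p$) and manifestly sends $r+p\Z_p$ into $p\Z_p$, yielding the same conclusion without changing representatives.
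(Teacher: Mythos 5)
Your argument is correct and follows the same route as the paper: reduce to canonical form via Lemma~\ref{LFFCanForm}, invoke Theorem~\ref{TFuncSuit}~(2) to get $(p,r)$-suitability of each branch, and then apply Lemma~\ref{LSuitFProp}~(2); the count for (2) by summing over the residue classes is the intended bookkeeping. The paper's proof is just a terser version of exactly this.
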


\begin{proof}
By Lemma~\ref{LFFCanForm} we may assume without loss of generality that $\FFf{F}$ is in canonical form. Then all statements follow directly from Theorem~\ref{TFuncSuit}~(2), and Lemma~\ref{LSuitFProp}.
\end{proof}

We may construct new $(p,r)$-suitable functions from existing ones by multiplying them with weakly $(p,r)$-suitable functions whose values are coprime to $p$. This fact will later be used to identify many rational functions that are $(p,r)$-suitable.

\begin{theorem}
\label{TSuitFProd}
Let $2\leq p\in\N$, $A\subseteq\Z_p$, $r\in\intintuptoex{p}$, $k\subseteq\Nz$, $f:A\to\Z_p$ with $f((r+p\Z_p)\cap A)\subseteq p\Z_p$, and $g:A\to\Z_p$ weakly $(p,r)$-suitable at $\intintuptoin{k}$ with $\gcd(p,g(n)\modulo p)=1$ for all $n\in(r+p\Z_p)\cap A$. Then,

\begin{theoremtable}
\theoremitem{(1)}&$\FPweaklysuitableat{p}{r}{\intintuptoin{k}}(f)\Leftrightarrow\FPweaklysuitableat{p}{r}{\intintuptoin{k}}(fg)$\tabularnewline
\theoremitem{(2)}&$\FPsuitableat{p}{r}{\intintuptoin{k}}(f)\Leftrightarrow\FPsuitableat{p}{r}{\intintuptoin{k}}(fg)$.\tabularnewline[0.5\baselineskip]
\end{theoremtable}
In particular, $1/g:A\to\Z_p$ exists and satisfies the same properties as $g$ and thus $f/g$ is (weakly) $(p,r)$-suitable at $\intintuptoin{k}$ as well if and only if $f$ is (weakly) $(p,r)$-suitable at $\intintuptoin{k}$.
\end{theorem}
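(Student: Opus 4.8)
The plan is to reduce both equivalences to a single pointwise computation on the fibre, and then to bootstrap from $g$ to $1/g$ to obtain the assertion about $f/g$. First I would restrict attention to $(r+p\Z_p)\cap A$, the only set on which the four predicates (\ref{DFPweaklysuitableat})--(\ref{DFPsuitable}) impose anything. There $f(n)\in p\Z_p$ by hypothesis, and since $\gcd(p,g(n)\modulo p)=1$ makes each $g(n)$ a unit of $\Z_p$, also $f(n)g(n)\in p\Z_p$; hence $f(n)\modulo p=(fg)(n)\modulo p=0$ on this set. Consequently the subtrahends $f\modulo p$ and $(fg)\modulo p$ drop out, and for every $\ell\in\intintuptoin{k}$ and all $m,n\in(r+p\Z_p)\cap A$ the predicates become plain congruences: weak $(p,r)$-suitability of $f$ reads ``$m\equiv n\modulus{p^\ell}\Rightarrow f(m)\equiv f(n)\modulus{p^\ell}$'' and that of $fg$ the same with $f$ replaced by $fg$, with the analogous biconditionals governing $(p,r)$-suitability. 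By the remark following the definition of suitability, on these $p\Z_p$-valued functions the two notions are exactly the $1$-Lipschitz and the measure-preserving conditions.

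The heart of the argument is the identity
\[
f(m)g(m)-f(n)g(n)=g(m)\big(f(m)-f(n)\big)+f(n)\big(g(m)-g(n)\big),
\]
in which $g(m)$ is a unit. For part (1) I would argue that if $m\equiv n\modulus{p^\ell}$ then both summands lie in $p^\ell\Z_p$: the first because $f(m)\equiv f(n)\modulus{p^\ell}$, and the second because the factor $f(n)\in p\Z_p$ combines with $g(m)\equiv g(n)\modulus{p^\ell}$ to land in $p^{\ell+1}\Z_p$. Thus $fg$ inherits weak suitability from $f$; the converse follows by writing $f=(fg)\cdot(1/g)$. For part (2) I would take $m\not\equiv n\modulus{p^\ell}$, let $j<\ell$ be maximal with $m\equiv n\modulus{p^j}$, and use $(p,r)$-suitability of $f$ to get $f(m)-f(n)\in p^j\Z_p\setminus p^{j+1}\Z_p$. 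Then the first summand lies in $p^j\Z_p\setminus p^{j+1}\Z_p$ (as $g(m)$ is a unit) while the second lies in $p^{j+1}\Z_p$, so by the ultrametric the sum lies in $p^j\Z_p\setminus p^{j+1}\Z_p$ and in particular $fg(m)\not\equiv fg(n)\modulus{p^\ell}$; together with part (1) this gives the separation needed for $(p,r)$-suitability of $fg$, and again the converse is symmetric.

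For the final assertion I would first observe that $1/g$ is well defined on $(r+p\Z_p)\cap A$ because each $g(n)$ is a unit, and that $(1/g)(n)$ is again a unit, so $\gcd(p,(1/g)(n)\modulo p)=1$. Next I would show that $1/g$ is again weakly $(p,r)$-suitable at $\intintuptoin{k}$, deducing this from the corresponding property of $g$ through the identity $g(m)^{-1}-g(n)^{-1}=-\,g(m)^{-1}g(n)^{-1}\big(g(m)-g(n)\big)$ and the unit property of $g(m)^{-1},g(n)^{-1}$. Thus $1/g$ satisfies exactly the hypotheses imposed on $g$, and since $f/g=f\cdot(1/g)$, applying parts (1) and (2) with $1/g$ in place of $g$ shows that $f/g$ is (weakly) $(p,r)$-suitable at $\intintuptoin{k}$ if and only if $f$ is.

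The step I expect to be the main obstacle is controlling the correction term $f(n)\big(g(m)-g(n)\big)$ to the full modulus $p^\ell$. The factor $f(n)\in p\Z_p$ supplies only one power of $p$, so the computations above genuinely require $g(m)\equiv g(n)\modulus{p^\ell}$ whenever $m\equiv n\modulus{p^\ell}$, i.e. that $g$ restricted to $r+p\Z_p$ is controlled to full precision and not merely through its tail $g-g\modulo p$, which is all that weak suitability delivers a priori. Securing this precision estimate—pinning down the interaction of the bottom digit $g\modulo p$ with the single factor of $p$ coming from $f$, and verifying the same for $1/g$—is the delicate point on which the whole proof turns; once it is in place, the remaining bookkeeping in parts (1), (2) and the final assertion is routine.
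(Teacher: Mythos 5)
Your skeleton coincides with the paper's: reduce all four predicates to plain congruences on $(r+p\Z_p)\cap A$ using $f(n),(fg)(n)\in p\Z_p$, prove the forward implications from the expansion $f(m)g(m)-f(n)g(n)=g(m)(f(m)-f(n))+f(n)(g(m)-g(n))$, show that $1/g$ inherits the hypotheses on $g$, and then get the converse implications and the claim about $f/g$ by writing $f=(fg)\cdot(1/g)$. The one place you genuinely diverge is the injectivity half of (2): you fix the maximal $j$ with $m\equiv n\modulus{p^j}$ and run an ultrametric argument (the term $g(m)(f(m)-f(n))$ has exact valuation $j$ because $g(m)$ is a unit, the cross term has strictly larger valuation, so the sum has valuation $j<\ell$), whereas the paper argues by induction on $\ell$: from $(fg)(m)\equiv(fg)(n)\modulus{p^{\ell+1}}$ it gets $m\equiv n\modulus{p^\ell}$ by the induction hypothesis, deduces $(f(m)-f(n))g(m)\equiv0\modulus{p^{\ell+1}}$, cancels the unit $g(m)$, and invokes $\FPsuitableat{p}{r}{\ell+1}(f)$. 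Your version is a little cleaner and avoids the induction; both rest on exactly the same two inputs.

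The "delicate point" you flag at the end is precisely where the paper's proof is thinnest, and your diagnosis is correct: every computation needs $g(m)\equiv g(n)\modulus{p^\ell}$ (or at least $\modulus{p^{\ell-1}}$), whereas $\FPweaklysuitableat{p}{r}{\intintuptoin{k}}(g)$ only controls $g-g\modulo p$, and for a unit-valued $g$ nothing in that hypothesis prevents the bottom digit $g(n)\modulo p$ from jumping between elements that are congruent to high order. The paper does not bridge this either; its first displayed implication simply asserts $m\equiv n\modulus{p^\ell}\Rightarrow g(m)\equiv g(n)\modulus{p^\ell}$ "from the assumptions". The gap is not closable from the stated hypotheses: taking $p=3$, $r=0$, $f\equiv3$ constant (weakly $(3,0)$-suitable, $f(3\Z_3)\subseteq3\Z_3$) and $g(0)=1$, $g(n)=2$ for $0\neq n\in3\Z_3$ (unit-valued and weakly $(3,0)$-suitable since $g-g\modulo3\equiv0$) gives $(fg)(0)=3\not\equiv6=(fg)(9)\modulus{3^2}$ although $0\equiv9\modulus{3^2}$, so (1) fails as literally stated. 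What is really being used — and what holds in the only application, Theorem~\ref{TPropRatF}, where $g$ is a polynomial (or its reciprocal) with $\gcd(p,g(r)\modulo p)=1$ and hence genuinely $1$-Lipschitz — is the full congruence $m\equiv n\modulus{p^\ell}\Rightarrow g(m)\equiv g(n)\modulus{p^\ell}$. So your proof is structurally sound and matches the paper, the step you could not close is one the paper silently skips, and the fix is to strengthen the hypothesis on $g$ to the $1$-Lipschitz condition on $r+p\Z_p$ rather than to look for a cleverer estimate.
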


\begin{proof}
To prove ``$\Rightarrow$'' of (2) we need to show that
\begin{align}
m\equiv n\modulus{p^\ell}\quad\Leftrightarrow\quad((fg)-(fg)\modulo p)(m)\equiv((fg)-(fg)\modulo p)(n)\modulus p^\ell
\end{align}
for all $\ell\in\intintuptoin{k}$ and $m,n\in(r+p\Z_p)\cap A$ which is equivalent to
\begin{align}
m\equiv n\modulus{p^\ell}\quad\Leftrightarrow\quad(fg)(m)\equiv(fg)(n)\modulus p^\ell
\end{align}
since $f((r+p\Z_p)\cap A)\subseteq p\Z_p$.

We begin by proving ``$\Rightarrow$'', which is the easier direction. It follows from the assumptions that
\begin{align}
m\equiv n\modulus{p^\ell}&\Rightarrow\mathrlap{f(m)\equiv f(n)\modulus p^\ell}\phantom{f(m)g(m)\equiv f(n)g(m)\modulus{p^\ell}}\quad\land\quad g(m)\equiv g(n)\modulus p^\ell\\
&\Rightarrow f(m)g(m)\equiv f(n)g(m)\modulus{p^\ell}\quad\land\quad \frac{g(n)-g(m)}{p^\ell}\in\Z_p\\
&\Rightarrow f(m)g(m)\equiv f(n)g(m)+f(n)\frac{g(n)-g(m)}{p^\ell}p^\ell\modulus{p^\ell}\\
&\Rightarrow(fg)(m)\equiv(fg)(n)\modulus{p^\ell}.
\end{align}
Note that in the above deduction we only needed that $f$ is weakly $(p,r)$-suitable at $\intintuptoin{k}$ (in the first implication), which implies that we have already proven ``$\Rightarrow$'' of (1).

For ``$\Leftarrow$'' we proceed by induction on $\ell$. If $\ell=0$, this is clearly true. Now assume that it is also true for some $\ell\in\intintuptoex{k}$ and that
\begin{align}
(fg)(m)\equiv(fg)(n)\modulus{p^{\ell+1}}.
\end{align}
Then,
\begin{align}
(fg)(m)\equiv(fg)(n)\modulus{p^\ell}
\end{align}
and thus
\begin{align}
m\equiv n\modulus{p^\ell}
\end{align}
by the induction hypothesis. Consequently, since $g$ is weakly $(p,r)$-suitable at $\ell$,
\begin{align}
&g(m)\equiv g(n)\modulus{p^\ell}
\end{align}
and hence
\begin{align}
&f(n)g(m)\equiv f(n)g(n)\modulus{p^{\ell+1}}
\end{align}
since $f(n)\in p\Z_p$. But then we get
\begin{align}
&(f(m)-f(n))g(m)=f(m)g(m)-f(n)g(m)\equiv f(n)g(n)-f(n)g(n)=0\modulus{p^{\ell+1}}.
\end{align}
Since $\gcd(p,g(m)\modulo p)=1$, we have $1/g(m)\in\Z_p$ and it follows that
\begin{align}
&f(m)-f(n)=(f(m)-f(n))g(m)\frac{1}{g(m)}\equiv0\modulus{p^{\ell+1}}.
\end{align}
Consequently,
\begin{align}
&f(m)\equiv f(n)\modulus{p^{\ell+1}}
\end{align}
and since $f$ is $(p,r)$-suitable at $\ell+1$, we finally get
\begin{align}
&m\equiv n\modulus{p^{\ell+1}}
\end{align}
which proves that $fg$ is $(p,r)$-suitable at $\ell+1$.

For the ``In particular'' part we need to show that $1/g$ has the same properties as $g$, i.e. that $1/g$ is weakly $(p,r)$-suitable at $\intintuptoin{k}$ and $\gcd(p,(1/g)(n)\modulo p)=1$ for all $n\in(r+p\Z_p)\cap A$. The latter part follows trivially from the fact that $\gcd(p,g(n)\modulo p)=1$ and $g(n)(1/g)(n)=1$ for all $n\in(r+p\Z_p)\cap A$. In order to prove that $1/g$ is weakly $(p,r)$-suitable at $\intintuptoin{k}$, let $\ell\in\intintuptoin{k}$ and $m,n\in(r+p\Z_p)\cap A$. Since $(g(m)g(n))\modulo p=((g(m)\modulo p)(g(n)\modulo p))\modulo p=1$, we have $1/(g(m)g(n))\in\Z_p$. Thus,
\begin{align}
m\equiv n\modulus{p^\ell}
&\Rightarrow g(m)\equiv g(n)\modulus{p^\ell}\\
&\Rightarrow g(m)\frac{1}{g(m)g(n)}\equiv g(n)\frac{1}{g(m)g(n)}\modulus{p^\ell}\\
&\Rightarrow(1/g)(m)\equiv(1/g)(n)\modulus{p^\ell}.
\end{align}

Using ``$\Rightarrow$'' of (1) and (2) and the ``In particular'' part, it is now straightforward to prove ``$\Leftarrow$'' of (1) and (2). As a result we have,
\begin{align}
&\FPweaklysuitableat{p}{r}{\intintuptoin{k}}(fg)\Rightarrow\FPweaklysuitableat{p}{r}{\intintuptoin{k}}(fg(1/g))\Rightarrow\FPweaklysuitableat{p}{r}{\intintuptoin{k}}(f)\\
&\FPsuitableat{p}{r}{\intintuptoin{k}}(fg)\Rightarrow\FPsuitableat{p}{r}{\intintuptoin{k}}(fg(1/g))\Rightarrow\FPsuitableat{p}{r}{\intintuptoin{k}}(f).
\end{align}
which completes the proof.
\end{proof}

\myparagraphtoc{Polynomial $p$-adic systems.}
In this paragraph we investigate the consequences of Theorem~\ref{TFuncSuit} for $p$-fibred systems defined by polynomials over $\Z_p$ and $\Q_p$. We will prove the surprising result that ``most'' polynomial $p$-fibred functions have the block property and thus provide a first big class of $p$-adic systems that have a rather natural representation as $p$-fibred functions.

\begin{theorem}
\label{TPropPolyF}
Let $2\leq p\in\N$, $r\in\intintuptoex{p}$, $f=\sum_{i=0}^da_ix^i\in\Z_p[x]$ (note that for the whole theorem we define $0^0\ce1$), and $k\in\Nz$. Then,

\begin{theoremtable}
\theoremitem{(1)}&$\FPweaklysuitableat{p}{r}{k}(f)$\tabularnewline
\theoremitem{(2)}&$k\leq1\Rightarrow\FPsuitableat{p}{r}{k}(f)$\tabularnewline
\theoremitem{(3)}&$k\geq2\Rightarrow$\tabularnewline
&\quad$\FPsuitableat{p}{r}{k}(f)\Leftrightarrow\Gcd{p,f'(r)\modulo p}=1$\quad(note: $f'(r)\modulo p=\left(\sum_{i=1}^d(a_i\modulo p)ir^{i-1}\right)\modulo p$)\tabularnewline
&\quad In particular: if $a_0,a_2,\ldots,a_d$ are given then the set of all $a_1$ that make $f$ $(p,r)$-suitable at $k$\tabularnewline
&\leavevmode\phantom{\quad In particular: }is given by $\set{\left(a-\textstyle\sum_{i=2}^d(a_i\modulo p)ir^{i-1}\right)\modulo p\mid a\in\intintuptoex{p}\land\Gcd{p,a}=1}+p\Z_p$.
\end{theoremtable}
\end{theorem}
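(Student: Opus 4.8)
The plan is to reduce all three claims to statements about the polynomial $f$ itself rather than the truncated function $f-f\modulo p$. The key elementary observation is that for $m,n\in(r+p\Z_p)$ one always has $m\equiv n\modulus{p}$, hence $f(m)\equiv f(n)\modulus{p}$ and therefore $f(m)\modulo p=f(n)\modulo p$; subtracting, $(f-f\modulo p)(m)-(f-f\modulo p)(n)=f(m)-f(n)$. Consequently, on each fibre $r+p\Z_p$ the defining conditions of (weak) $(p,r)$-suitability may be read with $f$ in place of $f-f\modulo p$. With this in hand I would dispatch (1) and (2) quickly. For (1), recall that every polynomial over $\Z_p$ is $1$-Lipschitz, i.e. $m\equiv n\modulus{p^k}\Rightarrow f(m)\equiv f(n)\modulus{p^k}$, which is immediate from $m^i-n^i=(m-n)\sum_t m^t n^{i-1-t}$; combined with the reduction this yields weak $(p,r)$-suitability at every $k$. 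For (2), for $k\leq1$ and $m,n\in r+p\Z_p$ both sides of the suitability biconditional hold automatically: the left because $m\equiv n\equiv r\modulus{p}$, the right because $m\equiv n\modulus{p}$ forces $f(m)\equiv f(n)\modulus{p}$, while the case $k=0$ is trivial as all congruences modulo $p^0=1$ hold.

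The substance is (3). Writing $m=r+ps$, $n=r+pt$ and expanding $f(r+pu)=\sum_{j=0}^d c_j p^j u^j$ with $c_j=\sum_{i=j}^d a_i\binom{i}{j}r^{i-j}\in\Z_p$ and $c_1=f'(r)$, I would factor out $s-t$ to obtain $f(m)-f(n)=p(s-t)U$ where $U=f'(r)+\sum_{j\geq2}c_j p^{j-1}(\ldots)\equiv f'(r)\modulus{p}$. Since the forward implication of suitability is already (1), and since $m\equiv n\modulus{p^k}\Leftrightarrow s\equiv t\modulus{p^{k-1}}$, suitability at $k$ reduces to the reverse implication $p(s-t)U\equiv0\modulus{p^k}\Rightarrow s\equiv t\modulus{p^{k-1}}$. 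If $\gcd(p,f'(r)\modulo p)=1$, then $U$ is a unit of $\Z_p$ (its residue modulo $p$ equals that of $f'(r)$), so one may cancel $U$ and the implication holds; thus $f$ is $(p,r)$-suitable at $k$ for every $k\geq2$.

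The harder, converse half of (3) — where the general, possibly composite, modulus $p$ makes things delicate — is to show that $d_0\ce\gcd(p,f'(r)\modulo p)>1$ destroys suitability at every $k\geq2$, and this is the step I expect to be the \emph{main obstacle}. Here I would exhibit an explicit counterexample: take $n=r$ and $m=r+p^k/d_0$, which is legitimate since $d_0\mid p\mid p^{k-1}$ for $k\geq2$, so $p^k/d_0\in p\Z_p$. Then $m-n=p^k/d_0$ is a positive rational integer strictly below $p^k$, hence $m\not\equiv n\modulus{p^k}$; while $d_0\mid f'(r)$ (because $d_0\mid p$ and $d_0\mid f'(r)\modulo p$) forces $d_0\mid U$, so $f(m)-f(n)=p(p^{k-1}/d_0)U=p^k(U/d_0)\equiv0\modulus{p^k}$. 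This violates the reverse implication, so $f$ is not $(p,r)$-suitable at $k$. Verifying membership in $r+p\Z_p$ and controlling divisibility by the composite factor $d_0$ through $U$ is exactly what requires care; for prime $p$ it collapses to the simpler choice $m=r+p^{k-1}$, $n=r$.

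Finally, the ``In particular'' statement follows by solving the unit condition for $a_1$. Since the $i=1$ term of $f'(r)=\sum_{i=1}^d a_i i r^{i-1}$ equals $a_1$ (using $0^0\ce1$), we have $f'(r)\modulo p=(a_1+\sum_{i=2}^d(a_i\modulo p)ir^{i-1})\modulo p$. By (3), $f$ is $(p,r)$-suitable at $k$ iff this residue is coprime to $p$, i.e. iff $a_1\equiv a-\sum_{i=2}^d(a_i\modulo p)ir^{i-1}\modulus{p}$ for some $a\in\intintuptoex{p}$ with $\gcd(p,a)=1$. The admissible values of $a_1$ are therefore exactly $(a-\sum_{i=2}^d(a_i\modulo p)ir^{i-1})\modulo p$ over all such $a$, plus an arbitrary multiple of $p$, which is precisely the stated set after adding $p\Z_p$.
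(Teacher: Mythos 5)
Your proof is correct and follows essentially the same route as the paper's: both factor $f(m)-f(n)=(m-n)\cdot V$ with $V\equiv f'(r)\modulus{p}$, deduce sufficiency in (3) by cancelling the unit $V$, and prove necessity via the same explicit witness $m-n=p^k/\gcd(p,f'(r)\modulo p)$. The only differences are presentational (your binomial expansion of $f(r+pu)$ versus the paper's direct factorization of $m^i-n^i$, and your unit-cancellation phrasing versus the paper's divisibility lemma), so there is nothing substantive to add.
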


\begin{proof}
First we note that $p$ is not a zero divisor of $\Z_p$ and therefore $a=b\Leftrightarrow pa=pb$ for all $a,b\in\Z_p$ (otherwise $a\neq b$ and $pa=pb$ for some $a,b\in\Z_p$ and hence $a-b\neq0$ and $p(a-b)=0$ which implies that $p$ is a zero divisor). In particular,
\begin{align}
p^k\mid pa\Leftrightarrow\ex b\in\Z_p:pp^{k-1}b=pa\Leftrightarrow\ex b\in\Z_p:p^{k-1}b=a\Leftrightarrow p^{k-1}\mid a
\end{align}
for all $a\in\Z_p$ and $k\in\N$. Let $k\in\Nz$, $m,n\in r+p\Z_p$, and $b_{m,n}\in\Z_p$ such that $m-n=pb_{m,n}$. If $k\leq1$, then clearly $\FPsuitableat{p}{r}{k}(f)$, which proves (2). Otherwise, we get
\begin{align}
m\equiv n\modulus{p^k}&\Leftrightarrow p^k\mid m-n\Leftrightarrow p^k\mid pb_{m,n}\Leftrightarrow p^{k-1}\mid b_{m,n}
\end{align}
and
\begin{align}
f(m)\equiv f(n)\modulus{p^k}
&\Leftrightarrow
p^k\mid\sum_{i=0}^da_im^i-\sum_{i=0}^da_in^i\\
&\Leftrightarrow
p^k\mid\sum_{i=1}^da_i\left(m^i-n^i\right)\\
&\Leftrightarrow
p^k\mid (m-n)\sum_{i=1}^da_i\sum_{j=0}^{i-1}m^jn^{i-1-j}\\
&\Leftrightarrow
p^k\mid pb_{m,n}\sum_{i=1}^da_i\sum_{j=0}^{i-1}m^jn^{i-1-j}\\
&\Leftrightarrow
p^{k-1}\mid b_{m,n}\sum_{i=1}^da_i\sum_{j=0}^{i-1}m^jn^{i-1-j}\\
&\Leftrightarrow
p^{k-1}\mid b_{m,n}\left(p\sum_{i=1}^da_i\sum_{j=0}^{i-1}\frac{m^jn^{i-1-j}-r^{i-1}}{p}+\sum_{i=1}^da_iir^{i-1}\right)\\
&\Leftrightarrow
p^{k-1}\mid b_{m,n}\left(pc_{m,n}+s\right)
\end{align}
where
\begin{align}
c_{m,n}&\ce\sum_{i=1}^da_i\sum_{j=0}^{i-1}(m^jn^{i-1-j}-r^{i-1})/p\\
s&\ce\sum_{i=1}^da_iir^{i-1}
\end{align}
and $c_{m,n}\in\Z_p$, since $(m^jn^{i-1-j}-r^{i-1})\modulo p=(r^jr^{i-1-j}-r^{i-1})\modulo p=0$ for all $i\in\intint{1}{d}$ and $j\in\intintuptoex{i}$. Therefore, we get
\begin{align}
\fa m,n\in r+p\Z:\left(m\equiv n\modulus{p^k}\Rightarrow f(m)\equiv f(n)\modulus{p^k}\right)
\end{align}
which proves (1).

In order to prove (3) we claim
\begin{align}
\left(\fa m,n\in r+p\Z_p:p^{k-1}\mid b_{m,n}(pc_{m,n}+s)\Rightarrow p^{k-1}\mid b_{m,n}\right)\Leftrightarrow\Gcd{p,s\modulo p}=1
\end{align}
for all $k\geq2$.

For ``$\Rightarrow$'' assume to the contrary that $g\ce\Gcd{p,s\modulo p}\neq1$. Let $m\ce r$ and $n\ce r-p^k/g$. Then, $b_{m,n}=p^{k-1}/g$ and thus $p^{k-1}\nmid b_{m,n}$ since $g\neq1$ (note: $p,g,b_{m,n}\in\N$). Next we claim that $g\mid s$. Then $b_{m,n}(pc_{m,n}+s)=p^{k-1}(p/gc_{m,n}+s/g)$ with $p/gc_{m,n}+s/g\in\Z_p$, and hence $p^{k-1}\mid b_{m,n}(pc_{m,n}+s)$ which is a contradiction. To prove the claim let $s=\sum_{i=0}^\infty s_i p^i$. Then
\begin{align}
s/g
&=(s\modulo p)/g+\sum_{i=1}^\infty s_i p/gp^{i-1}
=(s\modulo p)/g+\sum_{i=0}^\infty s_{i+1} p/gp^i
\in\Z_p
\end{align}
and hence $g\mid s$.

For ``$\Leftarrow$'' we first claim that if $x=\sum_{i=0}^\infty x_i p^i$, $y=\sum_{i=0}^\infty y_i p^i\in\Z_p$ with $\Gcd{p,y\modulo p}=1$, and $\ell\in\Nz$ then
\begin{align}
\label{EDiv}
p^\ell\mid xy&\Leftrightarrow p^\ell\mid x.
\end{align}
We have
\begin{align}
xy
&=\left(x\modulo p^\ell+\frac{x-x\modulo p^\ell}{p^\ell}p^\ell\right)\left(y\modulo p^\ell+\frac{y-y\modulo p^\ell}{p^\ell}p^\ell\right)
=\left(x\modulo p^\ell\right)\left(y\modulo p^\ell\right)+p^\ell d_{x,y,\ell}
\end{align}
where
\begin{align}
d_{x,y,\ell}\ce\left(x\modulo p^\ell\right)\frac{y-y\modulo p^\ell}{p^\ell}
+\frac{x-x\modulo p^\ell}{p^\ell}\left(y\modulo p^\ell\right)
+\frac{x-x\modulo p^\ell}{p^\ell}\frac{y-y\modulo p^\ell}{p^\ell}p^\ell\in\Z_p.
\end{align}
Also, $\Gcd{p^\ell,y\modulo p^\ell}=1$, since $\Gcd{p,y\modulo p}=1$ and thus
\begin{align}
p^\ell\mid xy&\Leftrightarrow p^\ell\mid\left(x\modulo p^\ell\right)\left(y\modulo p^\ell\right)\Leftrightarrow p^\ell\mid x\modulo p^\ell\Leftrightarrow x\modulo p^\ell=0\Leftrightarrow p^\ell\mid x.
\end{align}

Now let $m,n\in r+p\Z_p$ with $p^{k-1}\mid b_{m,n}(pc_{m,n}+s)$. Then
\begin{align}
\Gcd{p,(pc_{m,n}+s)\modulo p}=\Gcd{p,s\modulo p}=1
\end{align}
and thus $p^{k-1}\mid b_{m,n}$ by Eqn.~(\ref{EDiv}).

For the "In particular" part we set $t\ce\sum_{i=2}^d(a_i\modulo p)ir^{i-1}$ and we claim that
\begin{align}
\Gcd{p,(a_1\modulo p+t)\modulo p}=1\Leftrightarrow\ex a\in\intintuptoex{p}:\ex b\in\Z_p:\Gcd{p,a}=1\land a_1=a-t+pb
\end{align}
for all $a_1\in\Z_p$. For ``$\Rightarrow$'' we set $a\ce(a_1+t)\modulo p$ and $b=\frac{a_1+t-(a_1+t)\modulo p}{p}$. Then clearly $a_1=a-t+pb$ and
\begin{align}
\Gcd{p,a}=\Gcd{p,(a_1+t)\modulo p}=\Gcd{p,(a_1\modulo p+t)\modulo p}=1.
\end{align}
For ``$\Leftarrow$'' we compute
\begin{align}
\Gcd{p,(a_1\modulo p+t)\modulo p}
&=\Gcd{p,((a-t+pb)\modulo p+t)\modulo p}
=\Gcd{p,a}=1.
\end{align}
\end{proof}

It is possible to generalize the above result to functions defined by polynomials over $\Q_p$. Clearly, before we can ask the question of whether such a polynomial is (weakly) $(p,r)$-suitable, we first need to check if it even defines a function on $\Z_p$ in the first place. Consider the example where $p=2$, $r=1$, $f(x)=x/2+1/2\in\Q_2[x]$, and $g(x)=x/2+1\in\Q_2[x]$. In this situation we have $f(n)\in\Z_2$ for all $n\in1+2\Z_2$ but $g(1)=3/2\notin\Z_2$. We define the following predicate on $\Q_p[x]$ which characterizes the decisive property that $f$ has but $g$ has not ($2\leq p\in\N$, $r\in\intintuptoex{p}$, $f\in\Q_p[x]$):
\begin{flalign}
\pushleft{\FPintegral{p}{r}(f)}&\Leftrightarrow f(r+p\Z_p)\subseteq\Z_p&\text{$f$ is \emph{$(p,r)$-integral}}\label{DFPintegral}
\end{flalign}
The following lemma provides an easy characterization of $(p,r)$-integral polynomials. It involves a generalization of the $p$-adic valuation $\nu_p$ to the case where $p$ is not a prime, which is discussed in the appendix.

\begin{lemma}
\label{LRatPolyIntegral}
Let $2\leq p\in\N$, $r\in\intintuptoex{p}$, $f=\sum_{i=0}^da_ix^i\in\Q_p[x]$ (note that for the whole theorem we define $0^0\ce1$), and
\begin{align}
K\ce-\min(\set{\nu_p(a_i)\mid i\in\intintuptoin{d}}\cup\set{0}).
\end{align}
Then,
\begin{align}
W\ce\intintuptoex{p^K}\cap(r+p\Z_p)
\end{align}
is a finite witness set for $f$ being $(p,r)$-integral, i.e. $\FPintegral{p}{r}(f)$ if and only if $f(W)\subseteq\Z_p$.
\end{lemma}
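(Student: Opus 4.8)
The plan is to show that, modulo $\Z_p$, the value $f(n)$ depends on $n\in r+p\Z_p$ only through $n\modulo p^K$, so that the infinitely many membership tests comprising $\FPintegral{p}{r}(f)$ collapse to the finitely many tests encoded by $f(W)\subseteq\Z_p$. One implication is immediate: every element of $W$ is a natural number congruent to $r$ modulo $p$, hence lies in $r+p\Z_p$, so $\FPintegral{p}{r}(f)$ at once gives $f(W)\subseteq\Z_p$. For the reduction it is convenient to dispose of the degenerate case first: if every $a_i\in\Z_p$ then $K=0$, the polynomial $f\in\Z_p[x]$ is $(p,r)$-integral unconditionally, and $f(W)\subseteq\Z_p$ holds trivially, so the equivalence is clear. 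From now on I would assume $K\geq1$, which is exactly the situation in which some $a_i\notin\Z_p$.

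The heart of the argument is the claim that for $m,n\in r+p\Z_p$ with $m\equiv n\modulus{p^K}$ one has $f(m)-f(n)\in\Z_p$. To prove it I would factor $m^i-n^i=(m-n)\sum_{j=0}^{i-1}m^jn^{i-1-j}$ for each $i\in\intint{1}{d}$, so that $a_i(m^i-n^i)=(p^Ka_i)\cdot\frac{m-n}{p^K}\cdot\sum_{j=0}^{i-1}m^jn^{i-1-j}$. Here $\nu_p(a_i)\geq-K$ by the definition of $K$, which is precisely the assertion $p^Ka_i\in\Z_p$; the sum $\sum_{j}m^jn^{i-1-j}$ lies in $\Z_p$ since $m,n\in\Z_p$; and $\frac{m-n}{p^K}\in\Z_p$ because $m\equiv n\modulus{p^K}$ while $p$, and hence $p^K$, is not a zero divisor of $\Z_p$ (as noted in the proof of Theorem~\ref{TPropPolyF}), so the quotient is well defined. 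Each of the three factors therefore lies in $\Z_p$, and summing over $i$ yields $f(m)-f(n)=\sum_{i=1}^d a_i(m^i-n^i)\in\Z_p$.

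Granting the claim the proof finishes quickly. Given an arbitrary $n\in r+p\Z_p$, set $w\ce n\modulo p^K\in\intintuptoex{p^K}$. Since $K\geq1$ we have $w\equiv n\equiv r\modulus{p}$, so $w\in(r+p\Z_p)\cap\intintuptoex{p^K}=W$, and by construction $w\equiv n\modulus{p^K}$. Assuming $f(W)\subseteq\Z_p$ gives $f(w)\in\Z_p$, and the claim gives $f(n)-f(w)\in\Z_p$, whence $f(n)=f(w)+(f(n)-f(w))\in\Z_p$. As $n$ was arbitrary this establishes $f(r+p\Z_p)\subseteq\Z_p$, i.e.\ $\FPintegral{p}{r}(f)$, completing the equivalence.

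The only genuinely delicate point is the interaction with the generalized valuation for composite $p$: one must know that $\nu_p(a_i)\geq-K$ is equivalent to $p^Ka_i\in\Z_p$ and that division by $p^K$ in $\Z_p$ is unambiguous, both of which rest on $p$ not being a zero divisor. Everything else is the elementary factorization of $m^i-n^i$ together with the finiteness of $\intintuptoex{p^K}$, so I expect no further obstacle.
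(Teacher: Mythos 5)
Your proof is correct and follows essentially the same route as the paper: both arguments clear denominators by observing that $p^Kf\in\Z_p[x]$ and then use that an integer-coefficient polynomial respects congruence modulo $p^K$ (your explicit factorization of $m^i-n^i$ is exactly what underlies the paper's step $p^Kf(n)\equiv p^Kf(n\modulo p^K)\modulus{p^K}$). Your separate treatment of the case $K=0$ is a harmless extra precaution the paper omits.
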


\begin{proof}
Let $n\in r+p\Z_p$ and $N\ce n\modulo p^K\in W$. By definition of $K$ we have $p^Kf\in\Z_p[x]$. Thus,
\begin{align}
f(n)\in\Z_p
\Leftrightarrow p^Kf(n)\in p^K\Z_p
\Leftrightarrow p^Kf(N)\in p^K\Z_p
\Leftrightarrow f(N)\in\Z_p.
\end{align}
\end{proof}

Now that we have characterized possible candidates for (weakly) $(p,r)$-suitable polynomials over $\Q_p$, we are ready to formulate the following generalization of Theorem~\ref{TPropPolyF}.

\begin{theorem}
\label{TPropRatPolyF}
Let $2\leq p\in\N$, $r\in\intintuptoex{p}$, $f:\Z_p\to\Z_p$, $g=\sum_{i=0}^da_ix^i\in\Q_p[x]$ (note that for the whole theorem we define $0^0\ce1$) $(p,r)$-integral with $f(n)=g(n)$ for all $n\in r+p\Z_p$, $k\in\Nz$,
\begin{align}
W_\ell&\ce\intintuptoex{p^\ell}\cap(r+p\Z_p)
\end{align}
for all $\ell\in\Nz$, and
\begin{align}
K&\ce-\min(\set{\nu_p(a_i)\mid i\in\intintuptoin{d}}\cup\set{0}).
\end{align}
Then, $W_{K+k}$ is a finite witness set for $f$ being (weakly) $(p,r)$-suitable at $k$, i.e.

\begin{theoremtable}
\theoremitem{(1)}&$\FPweaklysuitableat{p}{r}{k}(f)\Leftrightarrow$\tabularnewline
&$\quad\fa m,n\in W_{K+k}:m\equiv n\modulus p^k\Rightarrow(f-f\modulo p)(m)\equiv(f-f\modulo p)(n)\modulus p^k$\tabularnewline
\theoremitem{(2)}&$\FPsuitableat{p}{r}{k}(f)\Leftrightarrow$\tabularnewline
&$\quad\fa m,n\in W_{K+k}:m\equiv n\modulus p^k\Leftrightarrow(f-f\modulo p)(m)\equiv(f-f\modulo p)(n)\modulus p^k$.\tabularnewline[0.5\baselineskip]
\end{theoremtable}
Furthermore,

\begin{theoremtable}
\theoremitem{(3)}&$\fa\ell\in\intint{K+1}{\infty}:\FPweaklysuitableat{p}{r}{\ell}(f)\Rightarrow\FPweaklysuitableat{p}{r}{\ell+1}(f)$\tabularnewline
&In particular: $\FPweaklysuitableat{p}{r}{\intint{K+1}{\infty}}(f)\Leftrightarrow\FPweaklysuitableat{p}{r}{K+1}(f)$\tabularnewline
\theoremitem{(4)}&$\fa\ell\in\intint{K+2}{\infty}:(\FPweaklysuitableat{p}{r}{\ell}(f)\land\FPsuitableat{p}{r}{\ell+1}(f))\Rightarrow\FPsuitableat{p}{r}{\ell+2}(f)$\tabularnewline
&In particular: $\FPsuitableat{p}{r}{\intint{K+2}{\infty}}(f)\Leftrightarrow\FPsuitableat{p}{r}{\set{K+2,K+3}}(f)$.\tabularnewline[0.5\baselineskip]
\end{theoremtable}
In particular,

\begin{theoremtable}
\theoremitem{(1)}&$\FPweaklysuitable{p}{r}(f)\Leftrightarrow\FPweaklysuitableat{p}{r}{\intintuptoin{K+1}}(f)\Leftrightarrow$\tabularnewline
&$\quad\fa\ell\in\intintuptoin{K+1}:\fa m,n\in W_{K+\ell}:m\equiv n\modulus p^\ell\Rightarrow(f-f\modulo p)(m)\equiv(f-f\modulo p)(n)\modulus p^\ell$\tabularnewline
\theoremitem{(2)}&$\FPsuitable{p}{r}(f)\Leftrightarrow\FPsuitableat{p}{r}{\intintuptoin{K+3}}(f)\Leftrightarrow$\tabularnewline
&$\quad\fa\ell\in\intintuptoin{K+3}:\fa m,n\in W_{K+\ell}:m\equiv n\modulus p^\ell\Leftrightarrow(f-f\modulo p)(m)\equiv(f-f\modulo p)(n)\modulus p^\ell$.
\end{theoremtable}
\end{theorem}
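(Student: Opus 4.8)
The plan is to exploit two structural facts that follow from $f=g$ on $r+p\Z_p$ together with the definition of $K$. By the choice of $K$ we have $p^Kg\in\Z_p[x]$ and, more generally, $p^KD^jg\in\Z_p[x]$ for every $j$, where $D^jg=\sum_i a_i\binom{i}{j}x^{i-j}$ is the $j$-th Hasse derivative (the binomial coefficients lie in $\Z$, so they do not worsen the denominators). This yields the exact expansion $g(n+h)-g(n)=\sum_{j\geq1}(D^jg)(n)h^j$ with $\nu_p((D^jg)(n))\geq-K$ for all $n\in\Z_p$. From it I would extract the two reductions on which everything rests: (a) if $m\equiv m'\modulus{p^{K+k}}$ with $m,m'\in r+p\Z_p$, then $(f-f\modulo p)(m)\equiv(f-f\modulo p)(m')\modulus{p^k}$; and (b) whenever $m\equiv n\modulus{p^\ell}$ with $\ell\geq K+1$, the difference $g(m)-g(n)$ lies in $p\Z_p$, so $g(m)\modulo p=g(n)\modulo p$ and hence $(f-f\modulo p)(m)-(f-f\modulo p)(n)=g(m)-g(n)$.

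For (1) and (2) I would use reduction (a). Since $p^Kg\in\Z_p[x]$ preserves congruences, $m\equiv m'\modulus{p^{K+k}}$ forces $p^K(g(m)-g(m'))\equiv0\modulus{p^{K+k}}$, hence $g(m)\equiv g(m')\modulus{p^k}$ (using that $p$ is not a zero divisor, exactly as in the proof of Theorem~\ref{TPropPolyF}); for $k\geq1$ this also equalises the last digit and gives (a). Consequently the residue of $(f-f\modulo p)(m)$ modulo $p^k$ depends only on the residue of $m$ modulo $p^{K+k}$ on $r+p\Z_p$, and since every element of $r+p\Z_p$ is congruent modulo $p^{K+k}$ to a unique element of $W_{K+k}$, the universally quantified (weak) suitability condition at level $k$ is equivalent to its restriction to $W_{K+k}$. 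The ``$\Leftrightarrow$'' in the suitability case needs both implications, but the same congruence-transport argument handles the reverse direction verbatim.

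The core of the theorem is (3) and (4), where reduction (b) turns both into statements about $g'$ on $r+p\Z_p$. Using (b) and the expansion, for $\ell\geq K+1$ and $m=n+p^\ell u$ one gets $g(m)-g(n)\equiv g'(n)p^\ell u\modulus{p^{\ell+1}}$, because the terms with $j\geq2$ have valuation at least $-K+2\ell\geq\ell+1$. Testing $u=1$ shows that weak suitability at $\ell$ forces $\nu_p(g'(n))\geq0$ for all $n\in r+p\Z_p$, and conversely $g'(r+p\Z_p)\subseteq\Z_p$ makes the $j=1$ term vanish modulo $p^\ell$; thus for $\ell\geq K+1$ weak suitability at $\ell$ is \emph{equivalent} to the level-independent condition $g'(r+p\Z_p)\subseteq\Z_p$, which is precisely (3). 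For (4) I would additionally read off from the reverse implication of full suitability at $\ell+1$ (applied to $m=n+p^\ell u$ with $u$ a unit) that $\nu_p(g'(n))=0$; combining the integrality from (3) with this unit property, the case analysis $u\in p\Z_p$ versus $u$ a unit for $m=n+p^{\ell+1}u$ yields both directions of suitability at $\ell+2$. The main obstacle is exactly this valuation bookkeeping: one must keep the $-K$ shift from the denominators, the $+j\ell$ gain from $(m-n)^j$, and the distinction between $g$ and $f-f\modulo p$ all aligned, and verify that the thresholds $K+1$ (weak) and $K+2$ (full) are precisely where the $j\geq2$ contributions become negligible.

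Finally I would assemble the displayed ``In particular'' parts from (1)--(4). The first equivalences follow from the stabilisation results: weak suitability on $\intintuptoin{K+1}$ already implies it at $K+1$, which by the ``In particular'' of (3) propagates to all of $\intint{K+1}{\infty}$, giving full weak suitability; dually, full suitability on $\intintuptoin{K+3}$ yields it at $\set{K+2,K+3}$, which by the ``In particular'' of (4) propagates to $\intint{K+2}{\infty}$. The second equivalences are then just parts (1) and (2) applied at each level $\ell\in\intintuptoin{K+1}$ respectively $\ell\in\intintuptoin{K+3}$ (the level $\ell=0$ being vacuous), rewriting each finite-witness condition in the explicit quantified form over $W_{K+\ell}$.
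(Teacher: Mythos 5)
Your reductions (a) and (b), the witness-set argument for (1)--(2), and the derivative-based treatment of (3) are sound, and (3) in particular takes a genuinely different and shorter route than the paper: where you expand $g(n+h)-g(n)=\sum_{j\geq1}(D^jg)(n)h^j$ via Hasse derivatives and reduce weak suitability at every level $\ell\geq K+1$ to the single level-independent condition $g'(r+p\Z_p)\subseteq\Z_p$, the paper never touches a derivative and instead telescopes $f(m)-f(n)$ into divided differences $\frac{f(m+ip^\ell)-f(m+(i+1)p^\ell)}{-p^\ell}$, proves these are pairwise congruent modulo $p$, and runs an iterative descent that factors common divisors of $p$ out of the step count. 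Your version buys a cleaner invariant; the paper's buys uniform control over composite $p$ without ever dividing by a potentially non-invertible integer. Your characterization of (3) checks out against the paper's examples $f,g,h$ following Theorem~\ref{TPropRatPolyF}.

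However, your step for (4) has a genuine gap, precisely at composite $p$. The dichotomy ``$u\in p\Z_p$ versus $u$ a unit'' is not exhaustive in $\Z_p$ when $p$ is composite (e.g.\ $2$ and $3$ in $\Z_6$ are neither units nor elements of $6\Z_6$), and the invariant you extract, $\nu_p(g'(n))=0$, i.e.\ $g'(n)\in\Z_p\setminus p\Z_p$, is strictly weaker than what the ``$\Leftarrow$'' direction of suitability at $\ell+2$ requires: having reduced to $m=n+p^{\ell+1}u$, you must deduce $u\in p\Z_p$ from $g'(n)u\in p\Z_p$, and that needs $\Gcd{p,g'(n)\modulo p}=1$ (Eqn.~(\ref{EDiv})). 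With $p=6$ and $g'(n)=3$ one has $3\notin6\Z_6$ yet $3\cdot2\in6\Z_6$ while $2\notin6\Z_6$, so the implication fails. Testing the reverse implication of suitability at $\ell+1$ only against units $u$ cannot detect this: it yields $g'(n)\notin p\Z_p$ and nothing more. The fix is to also test with $u=p/q$ for every divisor $q>1$ of $p$ (such $u$ satisfy $u\notin p\Z_p$ but are not units); this upgrades the conclusion to $\Gcd{p,g'(n)\modulo p}=1$, after which your case analysis goes through. This is exactly the point where the paper's seemingly heavy-handed descent over common divisors $q_0,q_1,\ldots$ of $p$, together with the averaging fact in Eqn.~(\ref{ERatPolyAssD}), is doing real work.
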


\hfill

\begin{proof}
$ $\\
\proofitem[\qquad\qquad\qquad]{(1) and (2)}%
Let $m,n\in r+p\Z_p$, $M\ce m\modulo p^{K+k}\in W_{K+k}$, and $N\ce n\modulo p^{K+k}\in W_{K+k}$. Then,
\begin{align}
m\equiv n\modulus p^k
&\Leftrightarrow
M\equiv N\modulus p^k.
\end{align}
By definition of $K$ we have $p^Kg\in\Z_p[x]$. Thus,
\begin{align}
&(f-f\modulo p)(m)\equiv(f-f\modulo p)(n)\modulus p^k\\
&\quad\Leftrightarrow g(m)-g(m)\modulo p-(g(n)-g(n)\modulo p)\in p^k\Z_p\\
&\quad\Leftrightarrow p^Kg(m)-\left(p^Kg(m)\right)\modulo p^{K+1}-\left(p^Kg(n)-\left(p^Kg(n)\right)\modulo p^{K+1}\right)\in p^{K+k}\Z_p\\
&\quad\Leftrightarrow p^Kg(M)-\left(p^Kg(M)\right)\modulo p^{K+1}-\left(p^Kg(N)-\left(p^Kg(N)\right)\modulo p^{K+1}\right)\in p^{K+k}\Z_p\\
&\quad\Leftrightarrow g(M)-g(M)\modulo p-\left(g(N)-g(N)\modulo p\right)\in p^k\Z_p\\
&\quad\Leftrightarrow(f-f\modulo p)(M)\equiv(f-f\modulo p)(N)\modulus p^k
\end{align}
which completes the proof of (1) and (2).

\noindent
\proofitem{(3)}%
First we will prove
\begin{align}
\label{ERatPolyWCan}
m\equiv n\modulus{p^{K+1}}\Rightarrow f(m)\equiv f(n)\modulus{p}
\end{align}
for all $m,n\in r+p\Z_p$. For that let $a\in\Z_p$, such that $n=m+ap^{K+1}$. Then,
\begin{align}
f(m)-f(n)
&=\sum_{i=1}^da_i\left(m^i-\left(m+ap^{K+1}\right)^i\right)\\
&=\sum_{i=1}^da_i\left(m^i-\sum_{j=0}^i\binom{i}{j}m^j\left(ap^{K+1}\right)^{i-j}\right)\\
&=\sum_{i=1}^da_i\sum_{j=0}^{i-1}\binom{i}{j}m^ja^{i-j}p^{(K+1)(i-j)}\\
&=p\sum_{i=1}^dp^Ka_i\sum_{j=0}^{i-1}\binom{i}{j}m^ja^{i-j}p^{(K+1)(i-j-1)}\in p\Z_p
\end{align}
and hence $f(m)\equiv f(n)\modulus{p}$ as claimed.

Next we will show that
\begin{align}
\label{ERatPolyDiffConA}
&\frac{f\left(a+bvp^\ell\right)
-f\left(a+(b+1)vp^\ell\right)}{-vp^\ell}
-\frac{f\left(a+cvp^\ell\right)
-f\left(a+(c+1)vp^\ell\right)}{-vp^\ell}\in p\Z_p
\end{align}
for all $a,b,c,v\in\Z_p$. We have,
\bgroup
\allowdisplaybreaks
\begin{align}
&f\left(a+bvp^\ell\right)
-f\left(a+(b+1)vp^\ell\right)
-\left(f\left(a+cvp^\ell\right)
-f\left(a+(c+1)vp^\ell\right)\right)\\
&\quad
=\!\vphantom{a}\sum_{i=0}^da_i\left(a+bvp^\ell\right)^i
-\sum_{i=0}^da_i\left(a+(b+1)vp^\ell\right)^i-\vphantom{a}\\
&\nonumber
\quad
\phantom{\vphantom{a}=\!\vphantom{a}}
\sum_{i=0}^da_i\left(a+cvp^\ell\right)^i
+\sum_{i=0}^da_i\left(a+(c+1)vp^\ell\right)^i\\
&\quad
=\!\vphantom{a}\sum_{i=0}^da_i\sum_{j=0}^i\binom{i}{j}a^{i-j}b^jv^jp^{\ell j}
-\sum_{i=0}^da_i\sum_{j=0}^i\binom{i}{j}a^{i-j}(b+1)^jv^jp^{\ell j}-\vphantom{a}\\
&\nonumber
\quad
\phantom{\vphantom{a}=\!\vphantom{a}}
\sum_{i=0}^da_i\sum_{j=0}^i\binom{i}{j}a^{i-j}c^jv^jp^{\ell j}
+\sum_{i=0}^da_i\sum_{j=0}^i\binom{i}{j}a^{i-j}(c+1)^jv^jp^{\ell j}\\
&\quad
=\!\vphantom{a}\sum_{i=0}^da_i\sum_{j=0}^i\binom{i}{j}a^{i-j}\left(b^j-(b+1)^j-c^j+(c+1)^j\right)v^jp^{\ell j}\\
&\quad
=\!\vphantom{a}\sum_{i=2}^da_i\sum_{j=2}^i\binom{i}{j}a^{i-j}\left(b^j-(b+1)^j-c^j+(c+1)^j\right)v^jp^{\ell j}\\
&\quad
=v^2p^{2\ell-K}\sum_{i=2}^da_ip^K\sum_{j=2}^i\binom{i}{j}a^{i-j}\left(b^j-(b+1)^j-c^j+(c+1)^j\right)v^{j-2}p^{\ell(j-2)}\\
&\quad
\in v^2p^{2\ell-K}\Z_p
\end{align}
\egroup
and hence
\begin{align}
\label{ERatPolyDiffConB}
&\frac{f\left(a+bvp^\ell\right)
-f\left(a+(b+1)vp^\ell\right)}{-vp^\ell}
-\frac{f\left(a+cvp^\ell\right)
-f\left(a+(c+1)vp^\ell\right)}{-vp^\ell}\in vp^{\ell-K}\Z_p.
\end{align}
Since $\ell\geq K+1$, it follows that $vp^{\ell-K}\Z_p\subseteq p\Z_p$.

Using the preliminary results above we will now prove (3). From Eqn.~(\ref{ERatPolyWCan}) it follows that
\begin{align}
\label{ERatPolyAssA}
m\equiv n\modulus{p^\ell}&\Rightarrow f(m)\equiv f(n)\modulus{p^\ell}
\intertext{%
for all $m,n\in r+p\Z_p$ and by (1) and Eqn.~(\ref{ERatPolyWCan}) again we need to show that
}
m\equiv n\modulus{p^{\ell+1}}&\Rightarrow f(m)\equiv f(n)\modulus{p^{\ell+1}}
\end{align}
for all $m,n\in W_{K+\ell+1}$. Let $m,n\in W_{K+\ell+1}$ such that $m\equiv n\modulus{p^\ell}$. Our goal is to show that
\begin{align}
\label{ERatPolyGoalA}
\frac{f(m)-f(n)}{m-n}\in\Z_p,
\end{align}
because then $p^{\ell+1}$ divides $f(m)-f(n)=(m-n)\frac{f(m)-f(n)}{m-n}$ if $p^{\ell+1}$ divides $m-n$ (i.e. if $m\equiv n\modulus{p^{\ell+1}}$). Assume without loss of generality that $m<n$ (clearly, if $m=n$, then $f(m)\equiv f(n)\modulus{p^{\ell+1}}$) and let $a\in\N$ such that $n=m+ap^\ell$. We compute,
\begin{align}
\frac{f(m)-f(n)}{m-n}
&=\frac{f(m)-f\left(m+ap^\ell\right)}{-ap^\ell}\\
\label{ERatPolyAssB}
&=\frac{1}{a}\sum_{i=0}^{a-1}\frac{f\left(m+ip^\ell\right)-f\left(m+(i+1)p^\ell\right)}{-p^\ell}.
\intertext{%
Since
}
&m+ip^\ell\equiv m+(i+1)p^\ell\modulus{p^\ell}
\intertext{%
for all $i\in\intintuptoex{a}$, it follows from Eqn.~(\ref{ERatPolyAssA}) that
}
&f\left(m+ip^\ell\right)\equiv f\left(m+(i+1)p^\ell\right)\modulus{p^\ell}
\intertext{%
and hence
}
&\frac{f\left(m+ip^\ell\right)-f\left(m+(i+1)p^\ell\right)}{-p^\ell}\in\Z_p
\intertext{%
for all $i\in\intintuptoex{a}$. Thus all of the $a$ summands of Eqn.~(\ref{ERatPolyAssB}) are $p$-adic integers and by Eqn.~(\ref{ERatPolyDiffConA}) they are pairwise congruent modulo $p$. If $\gcd(p,a)=1$, it would thus follow that $\frac{f(m)-f(n)}{m-n}\in\Z_p$ and we are done. If $p$ and $a$ are not coprime, let $q_0\in\intintuptoin{p}$ be a common divisor of $p$ and $a$ and let $b_0\in\N$ such that $a=b_0q_0$. Then,
}
\frac{f(m)-f(n)}{m-n}
&=\frac{1}{b_0q_0}\sum_{i=0}^{b_0q_0-1}\frac{f\left(m+ip^\ell\right)-f\left(m+(i+1)p^\ell\right)}{-p^\ell}\\
&=\frac{1}{b_0}\sum_{j=0}^{b_0-1}\frac{1}{q_0}\sum_{i=0}^{q_0-1}\frac{f\left(m+(jq_0+i)p^\ell\right)-f\left(m+(jq_0+i+1)p^\ell\right)}{-p^\ell}
\intertext{%
where all summands of the inner sum are $p$-adic integers that are pairwise congruent modulo $p$ by Eqn.~(\ref{ERatPolyDiffConA}). Thus, the whole inner sum is divisible by $q_0$ and we get
}
\frac{f(m)-f(n)}{m-n}
&=\frac{1}{b_0}\sum_{i=0}^{b_0-1}\frac{f\left(m+iq_0p^\ell\right)-f\left(m+(i+1)q_0p^\ell\right)}{-q_0p^\ell}
\intertext{%
where all of the summands are again $p$-adic integers. As before, if $\gcd(p,b_0)=1$, we are done, and if $p$ and $b_0$ are not coprime, we let $q_1\in\intintuptoin{p}$ be a common divisor of $p$ and $b_0$, and $b_1\in\N$ such that $b_0=b_1q_1$. Then,
}
\frac{f(m)-f(n)}{m-n}
&=\frac{1}{b_1q_1}\sum_{i=0}^{b_1q_1-1}\frac{f\left(m+iq_0p^\ell\right)-f\left(m+(i+1)q_0p^\ell\right)}{-q_0p^\ell}\\
&=\frac{1}{b_1}\sum_{j=0}^{b_1-1}\frac{1}{q_1}\sum_{i=0}^{q_1-1}\frac{f\left(m+(jq_1+i)q_0p^\ell\right)-f\left(m+(jq_1+i+1)q_0p^\ell\right)}{-q_0p^\ell}.
\intertext{%
where, again, all summands of the inner sum are $p$-adic integers that are pairwise congruent modulo $p$ by Eqn.~(\ref{ERatPolyDiffConA}). Thus, the whole inner sum is  divisible by $q_1$ and we get
}
\frac{f(m)-f(n)}{m-n}
&=\frac{1}{b_1}\sum_{i=0}^{b_1-1}\frac{f\left(m+iq_1q_0p^\ell\right)-f\left(m+(i+1)q_1q_0p^\ell\right)}{-q_1q_0p^\ell}
\intertext{%
where all of the summands are again $p$-adic integers. Continuing iteratively we find $u\in\Nz$ (since $a\in\N$ cannot be a zero divisor), $q_0,\ldots,q_{u-1}\in\intintuptoin{p}$, and $b_0,\ldots,b_{u-1}\in\N$ such that $a=b_0q_0$, $b_i=b_{i+1}q_{i+1}$ for all $i\in\intintuptoex{u-1}$, $q_0$ is a common divisor of $p$ and $a$, $q_{i+1}$ is a common divisor of $p$ and $b_i$ for all $i\in\intintuptoex{u-1}$. Furthermore,
}
\frac{f(m)-f(n)}{m-n}
&=\frac{1}{b_{u-1}}\sum_{i=0}^{b_{u-1}-1}\frac{f\left(m+iq_{u-1}\cdots q_0p^\ell\right)-f\left(m+(i+1)q_{u-1}\cdots q_0p^\ell\right)}{-q_{u-1}\cdots q_0p^\ell},
\end{align}
with all summands being $p$-adic integers, and $\gcd(p,b_{u-1})=1$. But then $\frac{f(m)-f(n)}{m-n}\in\Z_p$, which completes the proof of (3).

\noindent
\proofitem{(4)}%
From Eqn.~(\ref{ERatPolyWCan}) it follows that
\begin{align}
m\equiv n\modulus{p^\ell}&\Rightarrow f(m)\equiv f(n)\modulus{p^\ell}\\
m\equiv n\modulus{p^{\ell+1}}&\Leftrightarrow f(m)\equiv f(n)\modulus{p^{\ell+1}}
\intertext{%
for all $m,n\in r+p\Z_p$ and by (2), (3), and Eqn.~(\ref{ERatPolyWCan}) we need to show that
}
f(m)\equiv f(n)\modulus{p^{\ell+2}}&\Rightarrow m\equiv n\modulus{p^{\ell+2}}
\end{align}
for all $m,n\in W_{K+\ell+2}$. Let $m,n\in W_{K+\ell+2}$ such that $f(m)\equiv f(n)\modulus{p^{\ell+1}}$. Then $m\equiv n\modulus{p^{\ell+1}}$ and we have $\frac{f(m)-f(n)}{m-n}\in\Z_p$ by Eqn.~(\ref{ERatPolyGoalA}). Our goal is to show that
\begin{align}
\label{ERatPolyGoalB}
\gcd\left(p,\frac{f(m)-f(n)}{m-n}\modulo p\right)=1
\end{align}
because then (cf. Eqn.~(\ref{EDiv})) $p^{\ell+2}$ divides $m-n$ if $p^{\ell+2}$ divides $f(m)-f(n)=(m-n)\frac{f(m)-f(n)}{m-n}$ (i.e. if $f(m)\equiv f(n)\modulus{p^{\ell+2}}$). Assume without loss of generality that $m<n$ (clearly, if $m=n$, then $m\equiv n\modulus{p^{\ell+2}}$) and let $a\in\N$ such that $n=m+ap^{\ell+1}$.

As a preliminary step we will prove that
\begin{align}
\label{ERatPolyAssC}
\gcd\left(p,b_i\modulo p\right)=1,
\end{align}
for all $i\in\Z$, where
\begin{align}
b_i\ce\frac{f\left(m+ip^\ell\right)-f\left(m+(i+1)p^\ell\right)}{-p^\ell}
\end{align}
which is in $\Z_p$ by Eqn.~(\ref{ERatPolyGoalA}) (here we need $\FPweaklysuitableat{p}{r}{\ell}(f)$). For that assume to the contrary that there is a divisor $q\in\intint{2}{p}$ of $p$ such that
\begin{align}
\ex i\in\Z:\gcd\left(p,b_i\modulo p\right)=q.
\end{align}
By Eqn.~(\ref{ERatPolyDiffConA}) the least significant digits of all $b_i$ coincide, say  $b_i\modulo p=s\in\intintuptoex{p}$ for all $i\in\Z$, which implies,
\begin{align}
\fa i\in\Z:\gcd\left(p,b_i\modulo p\right)=\gcd\left(p,s\right)=q.
\end{align}
We compute
\begin{align}
\frac{f\left(m\right)-f\left(m+p/qp^\ell\right)}{-p^\ell}
&=\sum_{i=0}^{p/q-1}b_i\\
&=p\sum_{i=0}^{p/q-1}\frac{b_i-s+s}{p}\\
&=p\left(\frac{s}{q}+\sum_{i=0}^{p/q-1}\frac{b_i-b_i\modulo p}{p}\right)\in p\Z_p
\end{align}
which implies that $f\left(m\right)\equiv f\left(m+p/qp^\ell\right)\modulus{p^{\ell+1}}$. But then $m\equiv m+p/qp^\ell\modulus{p^{\ell+1}}$ by $\FPsuitableat{p}{r}{\ell+1}(f)$ which is a contradiction, since $q$ divides $p$ and $q\geq 2$. This completes the proof of Eqn.~(\ref{ERatPolyAssC}).

Furthermore, we need the following general fact:
\begin{align}
\label{ERatPolyAssD}
\fa r\in\intintuptoex{p^2}:\fa a\in\N\text{, }a\mid p\text{ (in $\Z_p$) }:\fa b_0,\ldots,b_{a-1}\in r+p^2\Z_p:\frac{1}{a}\sum_{i=0}^{a-1}b_i\equiv r\modulus{p}.
\end{align}
In order to prove it let $c_i\in\Z_p$ such that $b_i=r+p^2c_i$ for all $i\in\intintuptoex{a}$. Then,
\begin{align}
\frac{1}{a}\sum_{i=0}^{a-1}b_i
&=\frac{1}{a}\sum_{i=0}^{a-1}\left(r+p^2c_i\right)
=r+p\frac{p}{a}\sum_{i=0}^{a-1}c_i
\equiv r\modulus{p}.
\end{align}

We continue with the proof of Eqn.~(\ref{ERatPolyGoalB}) and compute
\begin{align}
\frac{f(m)-f(n)}{m-n}
&=\frac{f(m)-f\left(m+ap^{\ell+1}\right)}{-ap^{\ell+1}}\\
\label{ERatPolyAssE}
&=\frac{1}{a}\sum_{i=0}^{a-1}\frac{f\left(m+ip^\ell\right)-f\left(m+(i+1)p^\ell\right)}{-p^\ell}.\\
\intertext{%
By Eqn.~(\ref{ERatPolyAssC}) it follows that the least significant digits of all summands of Eqn.~(\ref{ERatPolyAssE}) are coprime to $p$. Furthermore, they are all congruent modulo $p^2$ by Eqn.~(\ref{ERatPolyDiffConB}), since $\ell\geq K+2$. If $\gcd(p,a)=1$ (in which case $a$ divides $p$ in $\Z_p$), it would thus follow that $\gcd\left(p,\frac{f(m)-f(n)}{m-n}\modulo p\right)=1$ due to Eqn.~(\ref{ERatPolyAssD}). If $p$ and $a$ are not coprime, let $q_0\in\intintuptoin{p}$ be a common divisor of $p$ and $a$ and let $b_0\in\N$ such that $a=b_0q_0$. Then,
}
\frac{f(m)-f(n)}{m-n}
&=\frac{1}{b_0q_0}\sum_{i=0}^{b_0q_0-1}\frac{f\left(m+ip^\ell\right)-f\left(m+(i+1)p^\ell\right)}{-p^\ell}\\
&=\frac{1}{b_0}\sum_{j=0}^{b_0-1}\frac{1}{q_0}\sum_{i=0}^{q_0-1}\frac{f\left(m+(jq_0+i)p^\ell\right)-f\left(m+(jq_0+i+1)p^\ell\right)}{-p^\ell}
\intertext{%
where all summands of the inner sum are $p$-adic integers that are pairwise congruent modulo $p^2$ by Eqn.~(\ref{ERatPolyDiffConB}), while their least significant digits are coprime to $p$. Thus, the least significant digits of the outer summands (i.e. the inner sum divided by $q_0$) are coprime to $p$ by Eqn.~(\ref{ERatPolyAssD}), since $q_0$ divides $p$ and we get
}
\frac{f(m)-f(n)}{m-n}
&=\frac{1}{b_0}\sum_{i=0}^{b_0-1}\frac{f\left(m+iq_0p^\ell\right)-f\left(m+(i+1)q_0p^\ell\right)}{-q_0p^\ell}
\intertext{%
where the least significant digits of the summands are coprime to $p$. As before, if $\gcd(p,b_0)=1$, we are done by Eqn.~(\ref{ERatPolyAssD}), and if $p$ and $b_0$ are not coprime, we let $q_1\in\intintuptoin{p}$ be a common divisor of $p$ and $b_0$, and $b_1\in\N$ such that $b_0=b_1q_1$. Then,
}
\frac{f(m)-f(n)}{m-n}
&=\frac{1}{b_1q_1}\sum_{i=0}^{b_1q_1-1}\frac{f\left(m+iq_0p^\ell\right)-f\left(m+(i+1)q_0p^\ell\right)}{-q_0p^\ell}\\
&=\frac{1}{b_1}\sum_{j=0}^{b_1-1}\frac{1}{q_1}\sum_{i=0}^{q_1-1}\frac{f\left(m+(jq_1+i)q_0p^\ell\right)-f\left(m+(jq_1+i+1)q_0p^\ell\right)}{-q_0p^\ell}.
\intertext{%
where, again, all summands of the inner sum are $p$-adic integers that are pairwise congruent modulo $p^2$ by Eqn.~(\ref{ERatPolyDiffConB}), and their least significant digits are coprime to $p$. Thus, the least significant digits of the outer summands (i.e. the inner sum divided by $q_1$) are coprime to $p$ by Eqn.~(\ref{ERatPolyAssD}), since $q_1$ divides $p$ and we get
}
\frac{f(m)-f(n)}{m-n}
&=\frac{1}{b_1}\sum_{i=0}^{b_1-1}\frac{f\left(m+iq_1q_0p^\ell\right)-f\left(m+(i+1)q_1q_0p^\ell\right)}{-q_1q_0p^\ell}
\intertext{%
where the least significant digits of the summands are coprime to $p$. Continuing iteratively we find $u\in\Nz$ (since $a\in\N$ cannot be a zero divisor), $q_0,\ldots,q_{u-1}\in\intintuptoin{p}$, and $b_0,\ldots,b_{u-1}\in\N$ such that $a=b_0q_0$, $b_i=b_{i+1}q_{i+1}$ for all $i\in\intintuptoex{u-1}$, $q_0$ is a common divisor of $p$ and $a$, $q_{i+1}$ is a common divisor of $p$ and $b_i$ for all $i\in\intintuptoex{u-1}$,
}
\frac{f(m)-f(n)}{m-n}
&=\frac{1}{b_{u-1}}\sum_{i=0}^{b_{u-1}-1}\frac{f\left(m+iq_{u-1}\cdots q_0p^\ell\right)-f\left(m+(i+1)q_{u-1}\cdots q_0p^\ell\right)}{-q_{u-1}\cdots q_0p^\ell}
\end{align}
with the least significant digits of the summands being coprime to $p$, and $\gcd(p,b_{u-1})=1$. But then $\gcd\left(p,\frac{f(m)-f(n)}{m-n}\modulo p\right)=1$ by Eqn.~(\ref{ERatPolyAssD}) which completes the proof of (4).
\end{proof}

The difference it makes to go from polynomials in $\Z_p[x]$ to $(p,r)$-integral polynomials in $\Q_p[x]$ in the context of $p$-adic systems is quite remarkable. While all polynomials in $\Z_p[x]$ are $(p,r)$-integral (trivially) and weakly $(p,r)$-suitable (Theorem~\ref{TPropPolyF}) for all $2\leq p\in\N$ and all $r\in\intintuptoex{p}$, both need to be checked algorithmically for polynomials in $\Q_p[x]$ (Lemma~\ref{LRatPolyIntegral} and Theorem~\ref{TPropRatPolyF}). $f(x)=1/2x$ is an easy example of a polynomial that is $(2,0)$-integral but not weakly $(2,0)$-suitable at $2$ ($f(0)=0\not\equiv2=f(2^2)\modulus{2^2}$). Furthermore, it is very easy to check whether  a polynomial $f\in\Z_p[x]$ is $(p,r)$-suitable, as this only depends on the derivative of $f$ in $r$ (Theorem~\ref{TPropPolyF}). For polynomials in $\Q_p[x]$ however, $(p,r)$-suitability needs to be checked algorithmically (Theorem~\ref{TPropRatPolyF}). As an example consider $f(x)=1/4x^3+x^2+x$ which is $(2,0)$-integral, weakly $(2,0)$-suitable, but not $(2,0)$-suitable at $2$ ($f(0)=0\equiv2^3=f(2)\modulus{2^2}$) despite the fact that $f'(0)=1$ is coprime to $2$. $f(x)=1/8x^3+x^2+x$ defines a polynomial that is not even weakly $(2,0)$-suitable at $2$ ($f(2)=7\not\equiv69=f(2+2^2)\modulus{2^2}$) and yet again $f'(0)=1$. Another difference is that for any $p$-adic system $\FFf{F}$ defined by polynomials in $\Z_p[x]$ there exits a $p$-adic system $\FFf{G}$ defined by polynomials in $\Z_p[x]$ that is a weak canonical form of $\FFf{F}$ (e.g. $(x,3x+1)$ which is a weak canonical form of $(x+1,3x+2)$). For $p$-adic systems defined by polynomials in $\Q_p[x]$ this is no longer the case. Indeed, if $f(x)=1/16x^4+x^3+1/2x^2+x+1$, then $f$ is $(2,0)$-integral (even $(2,0)$-suitable) but $f(0)=1\not\equiv14=f(2)\modulus{2}$, so the $2$-adic system $\FFf{F}\ce(f(x),x-1)$ does not have a weak canonical form that can be expressed with polynomials (a weak canonical form of $\FFf{F}$ is given by $((f-f\modulo2)(x),x-1)$ but $f-f\modulo2$ is not a polynomial function). Further indications of the much more erratic behavior of polynomials in $Q_p[x]$ are given by the following examples. Let
\begin{align}
f(x)&=-3/512x^7+1/128x^5+x^4+1/8x^2+1\\
g(x)&=1/512x^7+1/128x^5+x^4+1/8x^2+1\\
h(x)&=1/32x^{11}+1/2x^9+1/16x^8+1/16x^7+\vphantom{a}\\
\nonumber
&\phantom{\vphantom{a}=\vphantom{a}}1/8x^6+1/16x^5+1/32x^4+1/8x^3-1/8x^2+x+1
\end{align}
and $k\in\N$. Then $f$, $g$, and $h$ are $(2,0)$-integral and we have
\begin{align}
\FPweaklysuitableat{2}{0}{k}(f)&\Leftrightarrow k\in\set{1,3}\\
\FPsuitableat{2}{0}{k}(f)&\Leftrightarrow k\in\set{1}\\
\FPweaklysuitableat{2}{0}{k}(g)&\Leftrightarrow k\in\set{1}\cup\intint{4}{\infty}\\
\FPsuitableat{2}{0}{k}(g)&\Leftrightarrow k\in\set{1}\\
\FPweaklysuitableat{2}{0}{k}(h)&\Leftrightarrow k\in\set{1}\cup\intint{3}{\infty}\\
\FPsuitableat{2}{0}{k}(h)&\Leftrightarrow k\in\set{1,3}.
\end{align}

Next, we will fix our notions in relation to $p$-adic systems defined by polynomials. We define the following predicates on $\SoFibredFunctions{p}$ ($2\leq p\in\N$, $\FFf{F}\in\SoFibredFunctions{p}$, $A\subseteq\Q_p$, $D\subseteq\Nz\cup\set{-\infty}$):
\begin{flalign}
\pushleft{\FFPpolynomialcoefficientsdegree{A}{D}(\FFf{F})}&\Leftrightarrow\FFPdomain{\Z_p}(\FFf{F})&\hspace{-10em}\text{$\FFf{F}$ is \emph{$A$-polynomial with degree in $D$ or, if $D=\set{d}$,}}\label{DFFPpolynomialcoefficientsdegree}\\
\nonumber
&\phantom{\vphantom{a}\Leftrightarrow\vphantom{a}}\fa r\in\intintuptoex{p}:\FFf{F}[r]\vert_{r+p\Z_p}\in A[x]&\text{$\FFf{F}$ is \emph{$A$-polynomial of degree $d$}}\\
\nonumber
&\phantom{\vphantom{a}\Leftrightarrow\fa r\in\intintuptoex{p}:\vphantom{a}}\deg(\FFf{F}[r]\vert_{r+p\Z_p})\in D\\
\pushleft{\FFPpolynomialcoefficients{A}(\FFf{F})}&\Leftrightarrow\FFPpolynomialcoefficientsdegree{A}{\Nz\cup\set{-\infty}}(\FFf{F})&\text{$\FFf{F}$ is \emph{$A$-polynomial}}\label{DFFPpolynomialcoefficients}\\
\pushleft{\FFPpolynomial(\FFf{F})}&\Leftrightarrow\FFPpolynomialcoefficientsdegree{\Q_p}{\Nz\cup\set{-\infty}}(\FFf{F})&\text{$\FFf{F}$ is \emph{polynomial}}\label{DFFPpolynomial}\\
\pushleft{\FFPlinearpolynomialcoefficients{A}(\FFf{F})}&\Leftrightarrow\FFPpolynomialcoefficientsdegree{A}{\set{-\infty,0,1}}(\FFf{F})&\text{$\FFf{F}$ is \emph{$A$-linear-polynomial}}\label{DFFPlinearpolynomialcoefficients}\\
\pushleft{\FFPlinearpolynomial(\FFf{F})}&\Leftrightarrow\FFPpolynomialcoefficientsdegree{\Q_p}{\set{-\infty,0,1}}(\FFf{F})&\text{$\FFf{F}$ is \emph{linear-polynomial}}\label{DFFPlinearpolynomial}
\end{flalign}

\noindent
Using our new predicates it is easy to formulate the following corollary to Theorem~\ref{TPropPolyF}.

\begin{corollary}
\label{CPropPolyF}
Let $2\leq p\in\N$ and $\FFf{F}\in\SoFibredFunctions{p}(\FFPpolynomialcoefficients{\Z_p})$. Then,

\begin{theoremtable}
\theoremitem{(1)}&$\FFPweakblockS(\FFf{F})$\tabularnewline
\theoremitem{(2)}&$\FFPblock(\FFf{F})\Leftrightarrow\ex k\in\intint{2}{\infty}:\FFPblockat{k}(\FFf{F})$.
\end{theoremtable}
\end{corollary}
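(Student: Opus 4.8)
The plan is to reduce both claims to the single-branch suitability criteria of Theorem~\ref{TPropPolyF}, using the characterization of the (weak) block property in terms of suitable branches provided by Theorem~\ref{TFuncSuit}. Note first that $\FFPpolynomialcoefficients{\Z_p}(\FFf{F})$ includes $\FFPdomain{\Z_p}(\FFf{F})$, so $\FFf{F}$ is closed and all of these results apply.

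For (1), I would unwind the definition: $\FFPweakblockS(\FFf{F})$ asserts exactly that $\FPweaklysuitable{p}{r}(\FFf{F}[r])$ for every $r\in\intintuptoex{p}$. This predicate constrains $\FFf{F}[r]$ only on $(r+p\Z_p)\cap\FFFdomain(\FFf{F})=r+p\Z_p$, and there $\FFf{F}[r]$ agrees with a polynomial $f_r\ce\FFf{F}[r]\vert_{r+p\Z_p}\in\Z_p[x]$ (this is the meaning of $\FFPpolynomialcoefficients{\Z_p}$). Hence Theorem~\ref{TPropPolyF}~(1) applies branchwise and gives $\FPweaklysuitableat{p}{r}{k}(\FFf{F}[r])$ for every $k\in\Nz$, i.e. $\FPweaklysuitable{p}{r}(\FFf{F}[r])$; this is (1).

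For (2) the implication ``$\Rightarrow$'' is immediate, since $\FFPblock(\FFf{F})=\FFPblockat{\Nz}(\FFf{F})$ entails $\FFPblockat{k}(\FFf{F})$ for every $k$, in particular for some $k\in\intint{2}{\infty}$. The content is the converse. Fix $k\in\intint{2}{\infty}$ with $\FFPblockat{k}(\FFf{F})$; the pivotal step is to upgrade this single-index block property to branchwise suitability at $k$, i.e. $\FPsuitableat{p}{r}{k}(\FFf{F}[r])$ for all $r$. The forward implication of suitability at $k$ is just weak suitability at $k$, already available from (1). For the reverse implication I would take $m,n\in r+p\Z_p$ with $(\FFf{F}[r]-\FFf{F}[r]\modulo p)(m)\equiv(\FFf{F}[r]-\FFf{F}[r]\modulo p)(n)\modulus{p^k}$ and invoke the elementary equivalence established inside the proof of Theorem~\ref{TFuncSuit},
\[
(\FFf{F}[r]-\FFf{F}[r]\modulo p)(m)\equiv(\FFf{F}[r]-\FFf{F}[r]\modulo p)(n)\modulus{p^k}\Leftrightarrow\FFf{F}(m)\equiv\FFf{F}(n)\modulus{p^{k-1}},
\]
to get $\FFf{F}(m)\equiv\FFf{F}(n)\modulus{p^{k-1}}$. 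Applying the weak block property from (1) at index $k-1$ shows $\FFFDT(\FFf{F})[\FFf{F}(m)][\intintuptoex{k-1}]=\FFFDT(\FFf{F})[\FFf{F}(n)][\intintuptoex{k-1}]$; since $\FFFDT(\FFf{F})[m][1,\infty]=\FFFDT(\FFf{F})[\FFf{F}(m)]$ (and likewise for $n$), this says digits $1,\ldots,k-1$ of $m$ and $n$ coincide, while their $0$-th digits coincide because $m\equiv n\equiv r\modulus{p}$. Thus $\FFFDT(\FFf{F})[m][\intintuptoex{k}]=\FFFDT(\FFf{F})[n][\intintuptoex{k}]$, and $\FFPblockat{k}(\FFf{F})$ forces $m\equiv n\modulus{p^k}$, which is the reverse implication. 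Hence $\FPsuitableat{p}{r}{k}(\FFf{F}[r])$.

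It then remains to bootstrap from index $k$ to all indices. Here I would exploit that for $f_r\in\Z_p[x]$ and $k\geq2$, Theorem~\ref{TPropPolyF}~(3) equates suitability at $k$ with the condition $\Gcd{p,f_r'(r)\modulo p}=1$, which makes no reference to $k$; combined with part~(2) of that theorem (suitability at indices $\leq1$) this yields $\FPsuitableat{p}{r}{k'}(\FFf{F}[r])$ for every $k'\in\Nz$, i.e. $\FPsuitable{p}{r}(\FFf{F}[r])$. As this holds for all $r$, the ``in particular'' part of Theorem~\ref{TFuncSuit}~(2) gives $\FFPblock(\FFf{F})$, completing the converse. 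The main obstacle is precisely the middle step: block property at a single index $k$ does not obviously control the individual branches, and closing the reverse implication genuinely requires both the weak block property at $k-1$ (from (1)) and the shift relation for digit expansions. Once single-index suitability is secured, the slightly surprising engine of the argument is the $k$-independence in Theorem~\ref{TPropPolyF}~(3), which lets a hypothesis tested at one $k\geq2$ propagate to the full block property.
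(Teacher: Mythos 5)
Your argument is correct and is essentially the intended one: the paper's proof simply cites Theorem~\ref{TPropPolyF}, and your write-up fills in exactly the expected details, namely Theorem~\ref{TFuncSuit} as the bridge between branchwise (weak) $(p,r)$-suitability and the (weak) block property, Theorem~\ref{TPropPolyF}~(1) for part (1), and the $k$-independence of the criterion in Theorem~\ref{TPropPolyF}~(3) to propagate block property at a single $k\geq2$ to all indices. The one step you work out by hand --- upgrading $\FFPblockat{k}(\FFf{F})$ plus the weak block property to branchwise suitability at $k$ --- could alternatively be obtained by applying Lemma~\ref{LSBlDTBl} to $\FFFDT(\FFf{F})\llbracket\intintuptoex{k}\rrbracket$ and then Theorem~\ref{TFuncSuit}~(2), but your direct digit-expansion argument is equally valid.
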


\begin{proof}
Follows directly from Theorem~\ref{TPropPolyF}.
\end{proof}

\noindent
A similar corollary to Theorem~\ref{TPropRatPolyF} can also be formulated.

\begin{corollary}
\label{CPropRatPolyF}
Let $2\leq p\in\N$, $\FFf{F}\in\SoFibredFunctions{p}(\FFPpolynomial)$, and $K\in\Nz$ such that $-K$ is the minimum of $0$ and the $p$-adic valuations of all coefficients of the polynomials $\FFf{F}[r]\vert_{r+p\Z_p}$, $r\in\intintuptoex{p}$. Then,

\begin{theoremtable}
\theoremitem{(1)}&$\FFPweakblockS(\FFf{F})\Leftrightarrow\FFPweakblockSat{\intintuptoin{K+1}}(\FFf{F})$\tabularnewline
\theoremitem{(2)}&$\FFPblock(\FFf{F})\Leftrightarrow\FFPblockat{\intintuptoin{K+3}}(\FFf{F})$.
\end{theoremtable}
\end{corollary}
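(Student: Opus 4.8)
The plan is to reduce the corollary to a branchwise statement and then apply Theorem~\ref{TPropRatPolyF} to each branch separately. For each $r\in\intintuptoex{p}$ let $g_r\ce\FFf{F}[r]\vert_{r+p\Z_p}\in\Q_p[x]$ and let $K_r$ be the constant associated with $g_r$ in Theorem~\ref{TPropRatPolyF}, i.e. $-K_r$ is the minimum of $0$ and the $p$-adic valuations of the coefficients of $g_r$. Because $\FFPpolynomial(\FFf{F})$ entails $\FFPdomain{\Z_p}(\FFf{F})$, the branch $\FFf{F}[r]$ maps $\Z_p$ into $\Z_p$; in particular $g_r(r+p\Z_p)\subseteq\Z_p$, so $\FPintegral{p}{r}(g_r)$ holds automatically and Theorem~\ref{TPropRatPolyF} applies to $\FFf{F}[r]$ (with representing polynomial $g_r$) for every $r$. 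By the defining choice of $K$ we have $K_r\leq K$ for all $r$, whence $\intintuptoin{K_r+1}\subseteq\intintuptoin{K+1}$ and $\intintuptoin{K_r+3}\subseteq\intintuptoin{K+3}$; these two inclusions carry all the weight of replacing the branch-dependent thresholds $K_r$ by the single global threshold $K$.

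For (1) I would unfold the definitions $\FFPweakblockS(\FFf{F})\Leftrightarrow\fa r:\FPweaklysuitable{p}{r}(\FFf{F}[r])$ and $\FFPweakblockSat{\intintuptoin{K+1}}(\FFf{F})\Leftrightarrow\fa r:\FPweaklysuitableat{p}{r}{\intintuptoin{K+1}}(\FFf{F}[r])$, so that it suffices to prove the equivalence for a fixed $r$. The forward implication is pure monotonicity of $\FPweaklysuitableat{p}{r}{\cdot}$ in its index set, since $\intintuptoin{K+1}\subseteq\Nz$. For the converse I would invoke the final ``In particular'' of Theorem~\ref{TPropRatPolyF}, namely $\FPweaklysuitable{p}{r}(\FFf{F}[r])\Leftrightarrow\FPweaklysuitableat{p}{r}{\intintuptoin{K_r+1}}(\FFf{F}[r])$: from $\FPweaklysuitableat{p}{r}{\intintuptoin{K+1}}(\FFf{F}[r])$ and $\intintuptoin{K_r+1}\subseteq\intintuptoin{K+1}$ one first gets $\FPweaklysuitableat{p}{r}{\intintuptoin{K_r+1}}(\FFf{F}[r])$ (again by monotonicity) and hence $\FPweaklysuitable{p}{r}(\FFf{F}[r])$.

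Part (2) runs along the same lines, with Theorem~\ref{TFuncSuit}~(2) providing the dictionary between the block property and suitability of the branches: $\FFPblock(\FFf{F})\Leftrightarrow\fa r:\FPsuitable{p}{r}(\FFf{F}[r])$ and $\FFPblockat{\intintuptoin{K+3}}(\FFf{F})\Leftrightarrow\fa r:\FPsuitableat{p}{r}{\intintuptoin{K+3}}(\FFf{F}[r])$. Fixing $r$, the forward direction is once more monotonicity, and for the converse I would feed $\FPsuitableat{p}{r}{\intintuptoin{K+3}}(\FFf{F}[r])$ together with $\intintuptoin{K_r+3}\subseteq\intintuptoin{K+3}$ into the final ``In particular'' of Theorem~\ref{TPropRatPolyF}, $\FPsuitable{p}{r}(\FFf{F}[r])\Leftrightarrow\FPsuitableat{p}{r}{\intintuptoin{K_r+3}}(\FFf{F}[r])$, to recover full $(p,r)$-suitability; quantifying over $r$ then yields the claim.

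The substantive work all lives in Theorem~\ref{TPropRatPolyF} (the stabilization results (3) and (4) and their consequences), so in this corollary the only point demanding care — and thus the natural candidate for the ``main obstacle'' — is the uniformization: verifying that the single threshold $K$ dominates every branch threshold $K_r$ and checking that suitability at the larger truncated index interval correctly restricts to the smaller one. Once the inequalities $K_r\leq K$ and the resulting interval inclusions are recorded, the statement is a routine packaging of Theorem~\ref{TPropRatPolyF}, Theorem~\ref{TFuncSuit}, and the definitions of $\FFPweakblockS$ and $\FFPblock$.
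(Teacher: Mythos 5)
Your proposal is correct and is essentially the paper's own argument: the paper simply states that the corollary ``follows directly from Theorem~\ref{TPropRatPolyF}'', and your write-up spells out exactly what that reduction consists of (branchwise application of the theorem's ``In particular'' parts, Theorem~\ref{TFuncSuit}~(2) as the dictionary for part (2), and the uniformization $K_r\leq K$ with monotonicity in the index set). Nothing is missing.
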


\begin{proof}
Follows directly from Theorem~\ref{TPropRatPolyF}.
\end{proof}

Theorem~\ref{TPropPolyF} and Corollary~\ref{CPropPolyF} have a remarkable consequence: every $\Z_p$-polynomial $p$-fibred function has the weak block property and ``almost all'' of them also have the block property in the sense that if $P_0,\ldots,P_{p-1}\in\Z_p[x]$ are arbitrary, then there are $a_0,\ldots,a_{p-1}\in\intintuptoex{p}$ such that $(P_0(x)+a_0x,\ldots,P_{p-1}(x)+a_{p-1}x)$ has the block property. In other words: every $p$-fibred function defined by polynomials over $\Z_p$ can be turned into a $p$-adic system by only modifying the linear coefficients! By Theorem~\ref{TPropPolyF} all of the following $p$-fibred functions are examples of $p$-adic systems:

\begin{theoremtable}
$\bullet$&$(x)^p=(x,\ldots,x)\in\SoSystems{p}$ where $2\leq p\in\N$ (standard base $p$)\tabularnewline
$\bullet$&$(x,3x+1)\in\SoSystems{2}$ (Collatz)\tabularnewline
$\bullet$&$(7x^3-4x^2+x-6,3x^7-x+1,x^2+6x+2)\in\SoSystems{3}$\tabularnewline
$\bullet$&$(\frac{32}{7}x^2+\frac{5}{3}x-4,\frac{13}{11}x+5,\frac{1}{17}x+2,3x^2+\frac{7}{19}x-\frac{14}{5})\in\SoSystems{4}$\tabularnewline
$\bullet$&$(\I x^2+x,5\I x^4-2+7,x+3,-9x^3+12x+7,-5\I x^2+x+1)\in\SoSystems{5}$\tabularnewline
&where $\I^2=-1$, i.e. $\I\in\set{\ldots2431212,\ldots2013233}\subseteq\Z_5$\tabularnewline
$\bullet$&$\left(\prod_{i=0}^{p-1}(x-i)\right)^p\in\SoSystems{p}$ where $p\in\P$.\tabularnewline[0.5\baselineskip]
\end{theoremtable}

\noindent
Theorem~\ref{TPropRatPolyF} provides the following additional examples of $p$-adic systems:

\begin{theoremtable}
$\bullet$&$(\frac{17}{4}x^6+\frac{37}{16}x^5-107x^4-\frac{15}{4}x^3+78x^2+3x-2,-25x^6+\frac{7}{4}x^5-\frac{49}{2}x^4-\frac{21}{2}x^3+5x^2+\frac{79}{4}x+\frac{19}{2})\in\SoSystems{2}$\tabularnewline
$\bullet$&$(-\frac{23}{27}x^3-11x^2+x-20,-28x^4+\frac{7}{9}x^3+\frac{29}{3}x^2+\frac{4}{3}x+\frac{11}{9},-2x^4-\frac{29}{9}x^3-\frac{5}{3}x^2-\frac{11}{3}x-\frac{2}{9})\in\SoSystems{3}$.\tabularnewline[0.5\baselineskip]
\end{theoremtable}

A direct consequence of the fact that every $\Z_p$-polynomial $p$-fibred function has the weak block property is, that whenever we extend a $p$-fibred function $\FFf{F}$ with domain $\Z$ that is defined by polynomial functions with integer coefficients (such as $\FFf{F}_p$ or $\FFf{F}_C$) to $\Z_p$ by simply changing the domain from $\Z$ to $\Z_p$ while keeping the polynomials fixed (cf. Eqn.~(\ref{ECollatzZ}) and Eqn.~(\ref{ECollatzZtwo})), the $p$-digit table of the new extended $p$-fibred function will coincide with the unique extension of the $p$-digit table of $\FFf{F}$ as given by Lemma~\ref{LDTExt}.

\begin{corollary}
\label{CPolySDTExt}
Let $2\leq p\in\N$, $\FFf{G}\in\SoFibredFunctions{p}(\FFPpolynomialcoefficients{\Z})$, $\FFf{F}\ce\FFf{G}\vert_\Z$, and $\STf{E}\in\SoDigitTables{p}(\STPdomain{\Z_p},\DTPweakblock)$ such that $\STf{E}\vert_\Z=\FFFDT(\FFf{F})$ (cf. Lemma~\ref{LDTExt} and note that $\FFPweakblock{\FFf(F)}$ by Corollary~\ref{CPropPolyF}~(1)). Then, $\STf{E}=\FFFDT(\FFf{G})$.
\end{corollary}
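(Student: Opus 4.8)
The plan is to recognize Corollary~\ref{CPolySDTExt} as an immediate specialization of Corollary~\ref{CFFwbDTExt}, with $\Z$ playing the role of the dense subdomain. The genuine mathematical content has already been supplied by Lemma~\ref{LDTExt}, Lemma~\ref{LEquDT}, and Corollary~\ref{CPropPolyF}; the only work is to verify that the relevant hypotheses hold and to identify the correct digit tables.

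First I would establish that $\FFf{G}$ has the weak block property. Since every coefficient of its branches lies in $\Z\subseteq\Z_p$, $\FFf{G}$ is in particular $\Z_p$-polynomial, i.e.\ $\FFPpolynomialcoefficients{\Z_p}(\FFf{G})$ holds, so Corollary~\ref{CPropPolyF}~(1) yields $\FFPweakblockS(\FFf{G})$ and hence $\FFPweakblock(\FFf{G})$. Next I would check that $\FFf{F}=\FFf{G}\vert_\Z$ is closed: for $n\in\Z$ each branch value $\FFf{F}[r](n)=\FFf{G}[r](n)$ is an integer (an integer-coefficient polynomial evaluated at an integer), and subtracting its residue modulo $p$ before dividing by $p$ again returns an integer, so $\FFf{F}(\Z)\subseteq\Z$. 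Because the $\FFf{F}$-orbit of any $n\in\Z$ therefore stays in $\Z$ and literally coincides with its $\FFf{G}$-orbit, we get $\FFf{F}^k(n)=\FFf{G}^k(n)$ for all $k$, whence $\FFFDT(\FFf{F})=\FFFDT(\FFf{G})\vert_\Z$; in particular $\FFf{F}$ inherits the weak block property from $\FFf{G}$, which is what makes the extension $\STf{E}$ well-defined via Lemma~\ref{LDTExt} in the first place.

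With these facts in hand the conclusion is almost immediate. I would observe that $\FFFDT(\FFf{G})$ is a $p$-digit table with domain $\Z_p$ and the weak block property whose restriction to $\Z$ equals $\FFFDT(\FFf{F})$, while $\STf{E}$ is, by hypothesis, a $p$-digit table with the same domain and the weak block property whose restriction to $\Z$ also equals $\FFFDT(\FFf{F})$. Since $\Z$ is dense in $\Z_p$, Lemma~\ref{LEquDT} forces $\STf{E}=\FFFDT(\FFf{G})$. Equivalently, one may simply invoke Corollary~\ref{CFFwbDTExt} directly with $\FFf{F}$, $\FFf{G}$, and $\STf{E}$ in exactly their current roles, since all four of its hypotheses have just been verified.

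There is no serious obstacle here. The one point deserving explicit care is the identity $\FFFDT(\FFf{F})=\FFFDT(\FFf{G})\vert_\Z$, i.e.\ that shrinking the domain of a $\Z$-polynomial $p$-fibred function from $\Z_p$ to $\Z$ does not alter the digit expansions of integers. This rests precisely on the closedness of $\FFf{F}$ over $\Z$, which in turn uses that the defining polynomials have integer coefficients; were the coefficients merely in $\Z_p$ (or $\Q_p$), the integer orbits need not remain in $\Z$ and the clean alignment with $\FFf{G}$ would break down.
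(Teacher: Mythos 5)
Your proposal is correct and follows exactly the paper's route: the paper's own proof is the one-line "Follows directly from Corollary~\ref{CFFwbDTExt} and Corollary~\ref{CPropPolyF}~(1)", and you have simply made explicit the hypotheses verification (weak block property of $\FFf{G}$ via Corollary~\ref{CPropPolyF}~(1), closedness of $\FFf{F}=\FFf{G}\vert_\Z$, and the identity $\FFFDT(\FFf{F})=\FFFDT(\FFf{G})\vert_\Z$) that the paper leaves implicit. No gaps; the extra care you take with the closedness of the integer restriction is a worthwhile elaboration of the same argument.
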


\begin{proof}
Follows directly from Corollary~\ref{CFFwbDTExt} and Corollary~\ref{CPropPolyF}~(1).
\end{proof}

It is possible to reduce the degrees of the polynomials defining a $\Z_p$-polynomial $p$-fibred function $\FFf{F}$ to $k-1$ while keeping $\FFFDT(\FFf{F})\llbracket\intintuptoex{k}\rrbracket$ constant. In order to prove this we need the following lemma which utilizes the well-known notion of Vandermonde matrices.

\begin{lemma}
\label{LReduceDegree}
Let $2\leq p\in\N$, $r\in\intintuptoex{p}$, $f\in\Z_p[x]$, and $k\in\N$. Furthermore, let
\begin{align}
A&\ce\left((r+ip)^j\right)_{i,j\in\intintuptoex{k}}=
\left(\begin{matrix}
(r+(\mathrlap{0}\phantom{k-1})p)^0&\ldots&(r+(\mathrlap{0}\phantom{k-1})p)^{k-1}\\
\vdots&&\vdots\\
(r+(k-1)p)^0&\ldots&(r+(k-1)p)^{k-1}
\end{matrix}\right)
\in\N^{k\times k}\\
b&\ce
\left(\begin{matrix}
f(r+(\mathrlap{0}\phantom{k-1})p)\\
\vdots\\
f(r+(k-1)p)
\end{matrix}\right)\in\Z_p^k
\end{align}
(note that for the whole theorem we define $0^0\ce1$). Then $A$ is invertible, $A^{-1}\cdot b\in\Z_p^k$, and
\begin{align}
g(x)\ce\left((A^{-1}\cdot b)\modulo p^k\right)\cdot(1,x,\ldots,x^{k-1})
\end{align}
satisfies $g(x)\in\intintuptoex{p^k}[x]$, $\deg(g)\leq k-1$, and $f(n)\equiv g(n)\modulus{p^k}$ for all $n\in r+p\Z_p$.
\end{lemma}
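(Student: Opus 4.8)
The plan is to recognize $A$ as the Vandermonde matrix at the $k$ nodes $x_i\ce r+ip$ ($i\in\intintuptoex{k}$) and to identify $c\ce A^{-1}\cdot b$ with the coefficient vector of the unique degree-$\le k-1$ interpolation polynomial through the points $(x_i,f(x_i))$. First I would observe that the nodes $x_0,\ldots,x_{k-1}$ are pairwise distinct (since $p\ge2$), so $\det A=\prod_{0\le i<i'\le k-1}(x_{i'}-x_i)=p^{\binom{k}{2}}\prod_{0\le i<i'\le k-1}(i'-i)$ is a nonzero integer and hence invertible in $\Q_p$; therefore $A$ is invertible over $\Q_p$ and the linear system $A\cdot z=b$ has the unique solution $z=A^{-1}\cdot b=c\in\Q_p^k$. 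Writing $h(x)\ce\sum_{j=0}^{k-1}c_jx^j$, the identity $A\cdot c=b$ says precisely that $h(x_i)=f(x_i)$ for all $i\in\intintuptoex{k}$, so $c$ really is the interpolation coefficient vector.

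The crux is to show $c\in\Z_p^k$, equivalently $h\in\Z_p[x]$. I would not argue this from Cramer's rule or the Lagrange form, whose denominators are visibly divisible by $p$, but instead divide $f$ by the monic polynomial $N(x)\ce\prod_{i=0}^{k-1}(x-r-ip)\in\Z_p[x]$. Because $N$ is monic, division with remainder stays inside $\Z_p[x]$, giving $f=N\cdot Q+R$ with $Q,R\in\Z_p[x]$ and $\deg(R)\le k-1$. Since $N(x_i)=0$, we get $R(x_i)=f(x_i)=b_i$ for every $i$, so the coefficient vector of $R$ is also a solution $z$ of $A\cdot z=b$; by the uniqueness coming from the invertibility of $A$ it must equal $c$. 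Hence $h=R\in\Z_p[x]$, which establishes both that $A$ is invertible and that $A^{-1}\cdot b\in\Z_p^k$. This division step is the main obstacle: it is the mechanism that explains why the potential $p$-denominators cancel, and it replaces an otherwise delicate valuation estimate on $\det A$ versus the numerators.

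Finally, $g$ is by definition $h$ with every coordinate of its coefficient vector reduced modulo $p^k$, so $g\in\intintuptoex{p^k}[x]$ and $\deg(g)\le k-1$ hold by construction, and coordinatewise congruence gives $h(n)\equiv g(n)\modulus{p^k}$ for all $n\in\Z_p$. It then remains to compare $f$ and $h$ on $r+p\Z_p$: for $n\equiv r\modulus{p}$ each factor $n-r-ip$ lies in $p\Z_p$, so $N(n)=\prod_{i=0}^{k-1}(n-r-ip)\in p^k\Z_p$, and therefore $f(n)=N(n)Q(n)+h(n)\equiv h(n)\equiv g(n)\modulus{p^k}$, which completes the proof.
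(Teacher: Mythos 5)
Your proof is correct and follows essentially the same route as the paper's: establish invertibility of $A$ via the (Vandermonde) determinant, obtain integrality of $A^{-1}\cdot b$ by dividing $f$ by the monic polynomial $\prod_{i=0}^{k-1}(x-(r+ip))$ and identifying the remainder with the interpolation polynomial, and deduce the congruence from the fact that this product lies in $p^k\Z_p$ on $r+p\Z_p$. The only cosmetic difference is that you write $\det A$ in Vandermonde product form while the paper states the equal value $\prod_{i=0}^{k-1}i!\,p^i$.
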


\begin{proof}
We consider the following system of $k$ equations in $a_0,\ldots,a_{k-1}\in\Q_p$
\begin{align}
\begin{matrix*}[l]
a_0(r+(\mathrlap{0}\phantom{k-1})p)^0&+\cdots+&a_{k-1}(r+(\mathrlap{0}\phantom{k-1})p)^{k-1}&=&f(r+(\mathrlap{0}\phantom{k-1})p)\\
&&\qquad\vdots\\
a_0(r+(k-1)p)^0&+\cdots+&a_{k-1}(r+(k-1)p)^{k-1}&=&f(r+(k-1)p)
\end{matrix*}
\end{align}
which is solved by $(a_0,\ldots,a_{k-1})=A^{-1}\cdot b\in\Q_p^k$. It can be easily verified by induction on $k$ that
\begin{align}
\det(A)=\prod_{i=0}^{k-1}i!p^i\neq0
\end{align}
which implies that $A^{-1}$ does indeed exist. To prove that $A^{-1}\cdot b\in\Z_p^k$ let $u,v\in\Z_p[x]$ with $\deg(v)<k$ such that
\begin{align}
f(x)=\left(\prod_{i=0}^{k-1}(x-(r+ip))\right)u(x)+v(x).
\end{align}
Then $v(r+ip)=f(r+ip)$ for all $i\in\intintuptoex{k}$, which implies that the polynomial $v$ of degree less than $k$ interpolates the $k$ points $(r+ip,f(r+ip))$, $i\in\intintuptoex{k}$ and thus is uniquely defined (within $\Q_p[x]$). In the same way the polynomial $(A^{-1}\cdot b)\cdot(1,x,\ldots,x^{k-1})$ is of degree less than $k$ and interpolates the same points, which implies that $(A^{-1}\cdot b)\cdot(1,x,\ldots,x^{k-1})=v(x)\in\Z_p[x]$ and hence $A^{-1}\cdot b\in\Z_p^k$.

We are left to show that $f(n)\equiv g(n)\modulus{p^k}$ for all $n\in r+p\Z_p$ and compute
\begin{align}
f(r+ap)-v(r+ap)
&=\left(\prod_{i=0}^{k-1}(r+ap-(r+ip))\right)u(r+ap)\\
&=p^k\left(\prod_{i=0}^{k-1}(a-i)\right)u(r+ap)\\
&\in p^k\Z_p
\end{align}
for all $a\in\Z_p$. Hence $f(n)\equiv v(n)\equiv g(n)\modulus{p^k}$ for all $n\in r+p\Z_p$.
\end{proof}

\ignore{
\begin{lemma}
\label{LReduceDegreeAll}
Let $2\leq p\in\N$, $f\in\Z_p[x]$, and $k\in\N$. Furthermore, let
\begin{align}
A&\ce\left(i^j\right)_{i,j\in\intintuptoex{pk}}=
\left(\begin{matrix}
0^0&\ldots&0^{pk-1}\\
\vdots&&\vdots\\
(pk-1)^0&\ldots&(pk-1)^{pk-1}
\end{matrix}\right)
\in\N^{(pk)\times(pk)}\\
b&\ce
\left(\begin{matrix}
f(0)\\
\vdots\\
f(pk-1)
\end{matrix}\right)\in\Z_p^{pk}
\end{align}
(note that for the whole theorem we define $0^0\ce1$). Then $A$ is invertible, $A^{-1}\cdot b\in\Z_p^{pk}$, and
\begin{align}
g(x)\ce\left((A^{-1}\cdot b)\modulo p^k\right)\cdot(1,x,\ldots,x^{pk})
\end{align}
satisfies $g(x)\in\intintuptoex{p^k}[x]$, $\deg(g)\leq pk-1$, and $f(n)\equiv g(n)\modulus{p^k}$ for all $n\in\Z_p$.
\end{lemma}
}

\begin{theorem}
\label{TReduceDegree}
For arbitrary $2\leq p\in\N$, $r\in\intintuptoex{p}$, $f\in\Z_p[x]$, and $k\in\N$ let $g_{p,r,f,k}\in\intintuptoex{p^k}[x]$ denote the polynomial $g(x)$ in Lemma~\ref{LReduceDegree} for the given parameters $p,r,f,k$. Let $2\leq p\in\N$, $\FFf{F}\in\SoFibredFunctions{p}(\FFPdomain{\Z_p},\FFPpolynomialcoefficients{\Z_p})$, and $k\in\N$. Furthermore, let $\FFf{G}\in\SoFibredFunctions{p}(\FFPdomain{\Z_p},\FFPpolynomial)$ such that $\FFf{G}[r](x)=g_{p,r,F[r],k}(x)$ for all $r\in\intintuptoex{p}$. Then $\FFPpolynomialcoefficientsdegree{\intintuptoex{p^k}}{\intintuptoex{k}}(\FFf{G})$ and $\FFFDT(\FFf{F})\llbracket\intintuptoex{k}\rrbracket=\FFFDT(\FFf{G})\llbracket\intintuptoex{k}\rrbracket$.
\end{theorem}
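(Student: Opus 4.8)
The plan is to treat the two assertions separately. The first, $\FFPpolynomialcoefficientsdegree{\intintuptoex{p^k}}{\intintuptoex{k}}(\FFf{G})$, I would read off directly from Lemma~\ref{LReduceDegree}: for each $r\in\intintuptoex{p}$ the polynomial $g=g_{p,r,\FFf{F}[r],k}$ satisfies $g\in\intintuptoex{p^k}[x]$ (the coefficient condition) and $\deg(g)\leq k-1$ (giving $\deg(\FFf{G}[r]\vert_{r+p\Z_p})\in\intintuptoex{k}$), while $\FFPdomain{\Z_p}(\FFf{G})$ holds by hypothesis; so this part needs no further work. The real content is the second assertion, and for it I would mimic the ``$\FFf{H}_1$''-step already carried out in the proof of Corollary~\ref{CComputeDT}~(2).

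The first step is to pass from the branch-level congruence supplied by Lemma~\ref{LReduceDegree} to a congruence between the associated ordinary functions. Lemma~\ref{LReduceDegree} gives $\FFf{G}[r](n)\equiv\FFf{F}[r](n)\modulus{p^k}$ for every $r\in\intintuptoex{p}$ and every $n\in r+p\Z_p$; since $k\geq1$ this also forces $\FFf{G}[r](n)\modulo p=\FFf{F}[r](n)\modulo p$, so that with $r\ce n\modulo p$ and Eqn.~(\ref{EFibredFunction}) one obtains
\begin{align}
p\FFf{G}(n)&=\FFf{G}[r](n)-\FFf{G}[r](n)\modulo p\equiv\FFf{F}[r](n)-\FFf{F}[r](n)\modulo p=p\FFf{F}(n)\modulus{p^k}.
\end{align}
Because $p$ is not a zero divisor in $\Z_p$ (cf. the opening lines of the proof of Theorem~\ref{TPropPolyF}), I may cancel one factor of $p$ to conclude $\FFf{G}(n)\equiv\FFf{F}(n)\modulus{p^{k-1}}$ for all $n\in\Z_p$.

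The second step invokes the structure theorem for digit tables. As $\FFf{F}$ is $\Z_p$-polynomial, Corollary~\ref{CPropPolyF}~(1) yields $\FFPweakblock(\FFf{F})$, hence $\DTPweakblockat{\set{k,k-1}}(\STf{D})$ for $\STf{D}\ce\FFFDT(\FFf{F})\llbracket\intintuptoex{k}\rrbracket$. The ``In particular'' part of Theorem~\ref{TDnStructure}~(2) then gives $\STf{D}(n)=(\STf{D}(n)+p^{k-1}\Z_p)\cap\Z_p$ for every $n$. Since $\FFf{F}(n)\in\STf{D}(n)$ (the ``$\Rightarrow$'' direction of Theorem~\ref{TDTFF} applied to $\FFf{F}$ itself, with $\STf{D}=\FFFDT(\FFf{F})\llbracket\intintuptoex{k}\rrbracket$) and $\FFf{G}(n)\equiv\FFf{F}(n)\modulus{p^{k-1}}$, this places $\FFf{G}(n)\in\STf{D}(n)$ for all $n\in\Z_p$. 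As $\FFf{G}$ has domain $\Z_p$ and is therefore closed, the ``$\Leftarrow$'' direction of Theorem~\ref{TDTFF} finally gives $\FFFDT(\FFf{G})\llbracket\intintuptoex{k}\rrbracket=\STf{D}=\FFFDT(\FFf{F})\llbracket\intintuptoex{k}\rrbracket$, as claimed.

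I expect the only delicate points to be bookkeeping rather than conceptual: one must keep straight that a congruence modulo $p^k$ at the level of the branches $\FFf{F}[r]$ produces only a congruence modulo $p^{k-1}$ at the level of $\FFf{F}$, which is exactly why $\DTPweakblockat{\set{k,k-1}}$ (and not merely $\DTPweakblockat{k}$) is the hypothesis fed into Theorem~\ref{TDnStructure}~(2), and that the cancellation of $p$ is licit only because $p$ is not a zero divisor — a subtlety that matters precisely because composite $p$ is permitted. A fully self-contained route avoiding Theorem~\ref{TDnStructure} would instead prove by induction that $\FFf{F}^j(n)\equiv\FFf{G}^j(n)\modulus{p^{k-j}}$ for $0\leq j\leq k$, feeding $\FFf{G}(n)\equiv\FFf{F}(n)\modulus{p^{k-1}}$ together with the weak-$(p,r)$-suitability estimate of Theorem~\ref{TPropPolyF}~(1) (namely $a\equiv b\modulus{p^{k-j}}\Rightarrow\FFf{G}(a)\equiv\FFf{G}(b)\modulus{p^{k-j-1}}$) into the inductive step, and then reading off $\FFf{F}^j(n)\modulo p=\FFf{G}^j(n)\modulo p$ for $j\leq k-1$ to conclude equality of the first $k$ columns.
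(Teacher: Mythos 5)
Your proof is correct and follows essentially the same route as the paper's: the paper's one-line proof chains Theorem~\ref{TPropPolyF}~(1), Theorem~\ref{TFuncSuit}~(1), Corollary~\ref{CComputeDT}~(2) and Lemma~\ref{LReduceDegree}, and your argument simply inlines the relevant case of Corollary~\ref{CComputeDT}~(2) --- deriving $\FFf{G}(n)\equiv\FFf{F}(n)\modulus{p^{k-1}}$ from the branch-level congruence and then feeding this into Theorem~\ref{TDTFF} via the ``In particular'' part of Theorem~\ref{TDnStructure}~(2), exactly as in that corollary's proof. If anything your version is slightly more careful, since the paper's citation chain passes through the intermediate $p$-fibred function $(\FFf{F}[0]\modulo p^k,\ldots,\FFf{F}[p-1]\modulo p^k)$ and tacitly needs a second application of the corollary to identify its $p$-digit table with that of $\FFf{G}$ (one only has $\FFf{G}[r](n)\equiv\FFf{F}[r](n)\modulus{p^k}$, not equality), a step you avoid by comparing $\FFf{F}$ and $\FFf{G}$ directly.
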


\begin{proof}
Follows directly from Theorem~\ref{TPropPolyF}~(1) ($\FFf{F}[r]$ is weakly $(p,r)$-suitable for all $r\in\intintuptoex{p}$), Theorem~\ref{TFuncSuit}~(1) (therefore $\FFf{F}$ has the weak block property), Corollary~\ref{CComputeDT}~(2) (thus $\FFFDT(\FFf{F})\llbracket\intintuptoex{k}\rrbracket=\FFFDT((\FFf{F}[0]\modulo p^k,\ldots,\FFf{F}[p-1]\modulo p^k))\llbracket\intintuptoex{k}\rrbracket$), and Lemma~\ref{LReduceDegree} (thus $\FFFDT(\FFf{F})\llbracket\intintuptoex{k}\rrbracket=\FFFDT(\FFf{G})\llbracket\intintuptoex{k}\rrbracket$).
\end{proof}

\noindent
As an example consider the $2$-adic system $\FFf{F}\ce(-36x^9-67x^8-47x^7-35x^6-13x^5+79x^4-40x^3+95x^2+75x+4,15x^9-x^8-58x^7-92x^6-68x^5+10x^4-54x^3+98x^2-10x+48)$ and $k\ce4$. By Theorem~\ref{TReduceDegree} the $2$-adic system $\FFf{G}\ce(8x^3+11x^2+11x+4,14x^2+x+1)$ satisfies  $\FFFDT(\FFf{F})\llbracket\intintuptoex{4}\rrbracket=\FFFDT(\FFf{G})\llbracket\intintuptoex{4}\rrbracket$ and this is indeed the case.

\myparagraphtoc{$p$-adic systems defined by rational functions.}
Using the results from the previous subsection and Theorem~\ref{TSuitFProd}, we can identify many rational functions that are $(p,r)$-suitable.

\begin{theorem}
\label{TPropRatF}
Let $2\leq p\in\N$, $r\in\intintuptoex{p}$, $f:\Z_p\to\Z_p$, $g,h\in\Z_p[x]$ such that $\gcd(p,h(r)\modulo p)=1$ and $f(n)=g(n)/h(n)$ for all $n\in r+p\Z_p$, and $k\in\Nz$. Then,

\begin{theoremtable}
\theoremitem{(1)}&$\FPweaklysuitableat{p}{r}{k}(f)$\tabularnewline
\theoremitem{(2)}&$k\leq1\Rightarrow\FPsuitableat{p}{r}{k}(f)$\tabularnewline
\theoremitem{(3)}&$k\geq2\Rightarrow(\FPsuitableat{p}{r}{k}(f)\Leftrightarrow\FPsuitableat{p}{r}{k}(g-(f(r)\modulo p)h))$.
\end{theoremtable}
\end{theorem}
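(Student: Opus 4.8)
The plan is to transport the whole question from the rational function $f$ on the class $r+p\Z_p$ to a single auxiliary \emph{polynomial} $\phi\in\Z_p[x]$, and then to invoke the results already available for polynomials over $\Z_p$. First I would record the elementary facts about the denominator: since $h\in\Z_p[x]$ and every polynomial preserves congruences, $h(n)\equiv h(r)\modulus{p}$ for all $n\in r+p\Z_p$, so $\gcd(p,h(n)\modulo p)=1$ and $h(n)$ is a unit for every such $n$; in particular $f(n)=g(n)/h(n)$ is well defined and $f(n)\equiv g(r)/h(r)=f(r)\modulus{p}$. Hence $f(n)\modulo p$ is constant on $r+p\Z_p$, equal to $c\ce f(r)\modulo p$, and therefore $(f-f\modulo p)(n)=f(n)-c$ there. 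Setting $\phi\ce g-ch=g-(f(r)\modulo p)h\in\Z_p[x]$, one has $\phi(n)=h(n)(f(n)-c)$, so that $(f-f\modulo p)(n)=\phi(n)/h(n)$ on $r+p\Z_p$; moreover $\phi(n)\equiv\phi(r)\equiv 0\modulus{p}$, i.e. $\phi(r+p\Z_p)\subseteq p\Z_p$. Thus everything is governed by the map $n\mapsto\phi(n)/h(n)$ built from the two polynomials $\phi$ and $h$.

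For (1) and (2) this reduction finishes things quickly. For (2), if $k\leq 1$ both sides of the suitability biconditional are automatic, since any $m,n\in r+p\Z_p$ already satisfy $m\equiv n\modulus{p}$ and $(f-f\modulo p)(m)\equiv 0\equiv(f-f\modulo p)(n)\modulus{p}$. For (1) I would argue directly: if $m\equiv n\modulus{p^k}$ then $\phi(m)\equiv\phi(n)$ and $h(m)\equiv h(n)\modulus{p^k}$, and since
\[
\phi(m)h(n)-\phi(n)h(m)=h(n)\bigl(\phi(m)-\phi(n)\bigr)+\phi(n)\bigl(h(n)-h(m)\bigr)\equiv 0\modulus{p^k},
\]
dividing by the unit $h(m)h(n)$ yields $(f-f\modulo p)(m)\equiv(f-f\modulo p)(n)\modulus{p^k}$. (Equivalently, this is Theorem~\ref{TSuitFProd}~(1) applied to the weakly $(p,r)$-suitable polynomials $\phi$ and $1/h$, the latter supplied by Theorem~\ref{TPropPolyF}~(1) together with the ``In particular'' part of Theorem~\ref{TSuitFProd}.)

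The heart of the matter is (3), and here the main obstacle is that suitability is asserted \emph{at the single level $k$}, whereas Theorem~\ref{TSuitFProd} transfers suitability only across the whole range $\intintuptoin{k}$: a priori $\phi/h$ could be injective modulo $p^k$ yet fail at a lower level, so the clean ``$\intintuptoin{k}$-to-single-level'' collapse that holds for $\Z_p$-polynomials via Theorem~\ref{TPropPolyF}~(3) must be reproved for $\phi/h$. I would do this by mimicking the computation in the proof of Theorem~\ref{TPropPolyF}~(3) directly. Since $\phi(m)h(n)-\phi(n)h(m)$ vanishes on the diagonal $m=n$, it factors as $(m-n)Q(m,n)$ with $Q\in\Z_p[m,n]$, and differentiating in $m$ at $m=n$ gives $Q(n,n)=\phi'(n)h(n)-\phi(n)h'(n)$. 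As $h(m)h(n)$ is a unit, for $m,n\in r+p\Z_p$ with $m-n=pb_{m,n}$ we obtain
\[
(f-f\modulo p)(m)\equiv(f-f\modulo p)(n)\modulus{p^k}\iff p^{k}\mid (m-n)Q(m,n)\iff p^{k-1}\mid b_{m,n}Q(m,n),
\]
whereas $m\equiv n\modulus{p^k}\iff p^{k-1}\mid b_{m,n}$.

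Because $Q(m,n)\equiv Q(r,r)\equiv\phi'(r)h(r)\modulus{p}$ and $h(r)$ is a unit, one has $\gcd(p,Q(m,n)\modulo p)=\gcd(p,\phi'(r)\modulo p)$ for all $m,n\in r+p\Z_p$. If this gcd equals $1$, then \eqref{EDiv} gives $p^{k-1}\mid b_{m,n}Q(m,n)\Leftrightarrow p^{k-1}\mid b_{m,n}$, so (together with (1)) $f$ is $(p,r)$-suitable at $k$; conversely, if the gcd equals some $g\geq 2$, then taking $m=r$ and $n=r-p^{k}/g\in r+p\Z_p$ (here $k\geq 2$ is used) gives $b_{m,n}=p^{k-1}/g$ with $p^{k-1}\nmid b_{m,n}$ but $p^{k-1}\mid b_{m,n}Q(m,n)$ (since $g\mid Q(r,n)$ in $\Z_p$), exhibiting a failure of suitability at $k$. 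Hence $\FPsuitableat{p}{r}{k}(f)\Leftrightarrow\gcd(p,\phi'(r)\modulo p)=1$, and since Theorem~\ref{TPropPolyF}~(3) yields exactly the same criterion for the polynomial $\phi=g-(f(r)\modulo p)h$, the two suitability statements coincide, which proves (3).
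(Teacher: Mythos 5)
Your proof is correct. The opening reduction is exactly the paper's: both you and the paper rewrite $(f-f\modulo p)$ on $r+p\Z_p$ as $\phi/h$ with $\phi\ce g-(f(r)\modulo p)h\in\Z_p[x]$ and $\phi(r+p\Z_p)\subseteq p\Z_p$, and both dispose of (1) and (2) essentially via Theorem~\ref{TSuitFProd}~(1) and Theorem~\ref{TPropPolyF}. Where you diverge is (3): the paper simply cites the ``In particular'' part of Theorem~\ref{TSuitFProd} together with Theorem~\ref{TPropPolyF}, using that for $\Z_p$-polynomials suitability at a single level $k\geq2$ collapses to suitability at all of $\intintuptoin{k}$ so that the $\intintuptoin{k}$-formulation of Theorem~\ref{TSuitFProd} applies. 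You instead re-run the difference-quotient computation of Theorem~\ref{TPropPolyF}~(3) directly on the quotient: factoring $\phi(m)h(n)-\phi(n)h(m)=(m-n)Q(m,n)$ with $Q(r,r)\equiv\phi'(r)h(r)\modulus{p}$, you obtain the explicit single-level criterion $\FPsuitableat{p}{r}{k}(f)\Leftrightarrow\Gcd{p,\phi'(r)\modulo p}=1$ for every $k\geq2$, which matches the criterion Theorem~\ref{TPropPolyF}~(3) gives for $\phi$ itself. This buys you something real: you correctly identify that Theorem~\ref{TSuitFProd} by itself only transfers suitability across the whole range $\intintuptoin{k}$, so the implication ``$\phi/h$ suitable at the single level $k$ $\Rightarrow$ $\phi$ suitable at $k$'' needs an extra argument, and your direct computation supplies it; the cost is a couple of extra lines of algebra (the two-variable factorization and the explicit counterexample $n=r-p^k/g$ in the degenerate case), all of which check out, including the $\Z_p$-divisibility argument $g\mid Q(m,n)$ borrowed from the paper's proof of Theorem~\ref{TPropPolyF}~(3).
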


\noindent
Note that $g-(f(r)\modulo p)h$ in (3) is a polynomial in $\Z_p[x]$, which implies that we can use Theorem~\ref{TPropPolyF}~(3) to check whether it is $(p,r)$-suitable at $k$.

\begin{proof}[Proof of Theorem~\ref{TPropRatF}]
(2) is clearly true for any function $f$. In order to prove (1) and (3) we first observe that
\begin{align}
f(n)
&=\frac{g(n)}{h(n)}
=\frac{g(n)-(f(r)\modulo p)h(n)}{h(n)}+f(r)\modulo p
\end{align}
for all $n\in r+p\Z_p$. Thus, $f$ is (weakly) $(p,r)$-suitable if and only if
\begin{align}
\hat{f}:\Z_p&\to\Z_p\\
\nonumber
n&\mapsto
\begin{cases}
\frac{g(n)-(f(r)\modulo p)h(n)}{h(n)}&\text{if }n\in r+p\Z_p\\
f(n)&\text{if }n\notin r+p\Z_p
\end{cases}
\end{align}
is (weakly) $(p,r)$-suitable. Furthermore,
\begin{align}
(g(n)h(r)-g(r)h(n))\modulo p
&=0
\end{align}
for all $n\in r+p\Z_p$, and hence
\begin{align}
\hat{f}(n)\modulo p
&=(f(n)-f(r))\modulo p
=\frac{g(n)h(r)-g(r)h(n)}{h(n)h(r)}\modulo p
=0
\end{align}
for all $n\in r+p\Z_p$, since $\gcd(p,h(r)\modulo p)=1$. Thus $\hat{f}$ (a quotient of two functions) satisfies all conditions of Theorem~\ref{TSuitFProd} and (1) and (3) follow from the ``In particular'' part and from Theorem~\ref{TPropPolyF} (in particular one needs that all polynomials are weakly $(p,r)$-suitable and also, for $k\geq2$, that a polynomial is $(p,r)$-suitable at $k$ if and only if it is $(p,r)$-suitable at $\intintuptoin{k}$).
\end{proof}

\noindent
Since the question of (weak) suitability of the rational functions treated in Theorem~\ref{TPropRatF} reduces to a questions of (weak) suitability of polynomial functions, as a result we get the following analogue of Corollary~\ref{CPropPolyF}.

\begin{corollary}
\label{CPropRatF}
Let $2\leq p\in\N$ and $\FFf{F}\in\SoFibredFunctions{p}(\FFPdomain{\Z_p})$ such that for all $r\in\intintuptoex{p}$ there are $g_r,h_r\in\Z_p[x]$ with $\gcd(p,h_r(r)\modulo p)=1$ and $\FFf{F}[r](n)=g_r(n)/h_r(n)$ for all $n\in r+p\Z_p$ (cf. the assumptions of Theorem~\ref{TPropRatF}). Then,

\begin{theoremtable}
\theoremitem{(1)}&$\FFPweakblockS(\FFf{F})$\tabularnewline
\theoremitem{(2)}&$\FFPblock(\FFf{F})\Leftrightarrow\ex k\in\intint{2}{\infty}:\FFPblockat{k}(\FFf{F})$.
\end{theoremtable}
\end{corollary}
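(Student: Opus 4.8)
The plan is to reproduce, for rational functions, the route by which Corollary~\ref{CPropPolyF} follows from Theorem~\ref{TPropPolyF}, now channeling everything through Theorem~\ref{TPropRatF} and the polynomial reduction it provides. Throughout I fix $r\in\intintuptoex{p}$ and abbreviate $f\ce\FFf{F}[r]$, $g\ce g_r$, $h\ce h_r$, so that $f(n)=g(n)/h(n)$ on $r+p\Z_p$ with $\Gcd{p,h(r)\modulo p}=1$; since the suitability predicates only inspect $f$ on $(r+p\Z_p)\cap\Z_p=r+p\Z_p$, this matches the hypotheses of Theorem~\ref{TPropRatF} exactly. For part (1) I would simply invoke Theorem~\ref{TPropRatF}~(1), which gives $\FPweaklysuitableat{p}{r}{k}(\FFf{F}[r])$ for every $k\in\Nz$ and every $r$, hence $\FPweaklysuitable{p}{r}(\FFf{F}[r])$ for all $r$; by definition this is precisely $\FFPweakblockS(\FFf{F})$.

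For part (2) the implication $\FFPblock(\FFf{F})\Rightarrow\ex k\in\intint{2}{\infty}:\FFPblockat{k}(\FFf{F})$ is immediate (take $k=2$). For the converse, assume $\FFPblockat{k}(\FFf{F})$ for some $k\geq2$. The first step is to promote this single-level hypothesis to all lower levels: by part (1) the function $\FFf{F}$ has the weak block property, so applying Lemma~\ref{LSBlDTBl} to the length-$k$ table $\FFFDT(\FFf{F})\llbracket\intintuptoex{k}\rrbracket$ (whose domain $\Z_p$ contains a CRS modulo $p^k$, has block property at its length $k$, and weak block property at each $\ell<k$) yields $\FFPblockat{\ell}(\FFf{F})$ for every $\ell<k$, i.e. $\FFPblockat{\intintuptoin{k}}(\FFf{F})$. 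Now Theorem~\ref{TFuncSuit}~(2) converts this interval-indexed block property into $\FPsuitableat{p}{r}{\intintuptoin{k}}(\FFf{F}[r])$ for all $r$; in particular each $\FFf{F}[r]$ is $(p,r)$-suitable at the single level $k$.

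The heart of the argument is the $k$-independence supplied by the reduction to polynomials. For $k\geq2$, Theorem~\ref{TPropRatF}~(3) gives $\FPsuitableat{p}{r}{k}(\FFf{F}[r])\Leftrightarrow\FPsuitableat{p}{r}{k}(P)$, where $P\ce g-(f(r)\modulo p)h\in\Z_p[x]$, and Theorem~\ref{TPropPolyF}~(3) identifies the latter with the condition $\Gcd{p,P'(r)\modulo p}=1$, which does not involve $k$. Hence $(p,r)$-suitability of $\FFf{F}[r]$ at one level $\geq2$ forces it at every level $\geq2$; together with $\FPsuitableat{p}{r}{k}(\FFf{F}[r])$ for $k\leq1$ (Theorem~\ref{TPropRatF}~(2)) this gives $\FPsuitable{p}{r}(\FFf{F}[r])$ for each $r$, and the ``In particular'' part of Theorem~\ref{TFuncSuit}~(2) delivers $\FFPblock(\FFf{F})$.

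I expect the only genuinely non-formal point to be the downward propagation of the block property in the second paragraph: the hypothesis $\FFPblockat{k}(\FFf{F})$ is stated at a single level, whereas the characterization in Theorem~\ref{TFuncSuit}~(2) is interval-indexed (its proof, via Lemma~\ref{LOrdFuncBlock}~(2), needs block property at level $k-1$ to extract the single-application condition at level $k$). Bridging this gap is exactly what the weak block property from part (1) plus Lemma~\ref{LSBlDTBl} accomplishes. Everything after that bridge is the same $k$-independence phenomenon that makes Corollary~\ref{CPropPolyF} a one-line consequence of Theorem~\ref{TPropPolyF}, here transported to rational functions by the polynomial reduction of Theorem~\ref{TPropRatF}~(3).
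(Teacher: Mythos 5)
Your proof is correct and follows essentially the same route as the paper, which simply states that the corollary ``follows directly from Corollary~\ref{CPropPolyF} and Theorem~\ref{TPropRatF}''. Your elaboration of the downward propagation step (promoting $\FFPblockat{k}$ to $\FFPblockat{\intintuptoin{k}}$ via part (1) and Lemma~\ref{LSBlDTBl} before invoking Theorem~\ref{TFuncSuit}~(2)) is exactly the detail the paper leaves implicit, and the $k$-independence via Theorem~\ref{TPropRatF}~(3) combined with Theorem~\ref{TPropPolyF}~(3) is the intended mechanism.
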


\begin{proof}
Follows directly from Corollary~\ref{CPropPolyF} and Theorem~\ref{TPropRatF}.
\end{proof}

Given Theorem~\ref{TPropRatF} all of the following $p$-fibred functions are examples of $p$-adic systems:

\begin{theoremtable}
$\bullet$&$\left(\frac{1}{3x+1},\frac{1}{x}\right)\in\SoSystems{2}$ (inverse Collatz)\tabularnewline
$\bullet$&$\left(\frac{x^3-3x^2+7x-1}{5x^2-3},\frac{6x^5+13x^4-9x+4}{19x^3-3x^2+1}\right)\in\SoSystems{2}$\tabularnewline
$\bullet$&$\left(\frac{3x^3-x}{x-5},\frac{1}{7x-2},\frac{2x-9}{x-1},\frac{5x-7}{x^2-2},\frac{-2x+1}{x-3},\frac{5x^2+x}{x^2-6}\right)\in\SoSystems{6}$.\tabularnewline[0.5\baselineskip]
\end{theoremtable}

One open problem is the characterization of all rational functions on $\Z_p$ that are (weakly) $(p,r)$ suitable, i.e. dropping the assumption $\gcd(p,h(r)\modulo p)=1$ in Theorem~\ref{TPropRatF}. This would also be a generalization of Theorem~\ref{TPropRatPolyF} which treats such rational functions requiring in return the function in the denominator to be constant. It appears likely that there is an analogue of Theorem~\ref{TPropRatPolyF} for this general situation, i.e. that one can find a finite witness set to check (weak) $(p,r)$-suitability at $k$ and that it suffices to check (weak) $(p,r)$-suitability at $k$ for finitely many $k$ to get full (weak) $(p,r)$-suitability. An educated guess for the general situation might be given by the following conjecture (cf. Theorem~\ref{TPropRatPolyF}).

\begin{conjecture}
\label{ConPropRatPolyF}
Let $2\leq p\in\N$, $r\in\intintuptoex{p}$, $f:\Z_p\to\Z_p$, $g,h\in\Z_p[x]$ such that $f(n)=g(n)/h(n)$ for all $n\in r+p\Z_p$, $k\in\Nz$,
\begin{align}
W_\ell&\ce\intintuptoex{p^\ell}\cap(r+p\Z_p)
\end{align}
for all $\ell\in\Nz$, and
\begin{align}
K&\ce\max(\set{\nu_p(h(n))\mid n\in r+p\Z_p})\in\Nz
\end{align}
(note that this maximum exists and is in $\Nz$ because otherwise $h$ would have a root in $r+p\Z_p$ which contradicts the assumption $f(n)=g(n)/h(n)$ for $n\in r+p\Z_p$).

\noindent
Then, $W_{K+k}$ is a finite witness set for $f$ being (weakly) $(p,r)$-suitable at $k$, i.e.

\begin{theoremtable}
\theoremitem{(1)}&$\FPweaklysuitableat{p}{r}{k}(f)\Leftrightarrow$\tabularnewline
&$\quad\fa m,n\in W_{K+k}:m\equiv n\modulus p^k\Rightarrow(f-f\modulo p)(m)\equiv(f-f\modulo p)(n)\modulus p^k$\tabularnewline
\theoremitem{(2)}&$\FPsuitableat{p}{r}{k}(f)\Leftrightarrow$\tabularnewline
&$\quad\fa m,n\in W_{K+k}:m\equiv n\modulus p^k\Leftrightarrow(f-f\modulo p)(m)\equiv(f-f\modulo p)(n)\modulus p^k$.\tabularnewline[0.5\baselineskip]
\end{theoremtable}
Furthermore,

\begin{theoremtable}
\theoremitem{(3)}&$\fa\ell\in\intint{K+1}{\infty}:\FPweaklysuitableat{p}{r}{\ell}(f)\Rightarrow\FPweaklysuitableat{p}{r}{\ell+1}(f)$\tabularnewline
&In particular: $\FPweaklysuitableat{p}{r}{\intint{K+1}{\infty}}(f)\Leftrightarrow\FPweaklysuitableat{p}{r}{K+1}(f)$\tabularnewline
\theoremitem{(4)}&$\fa\ell\in\intint{K+2}{\infty}:(\FPweaklysuitableat{p}{r}{\ell}(f)\land\FPsuitableat{p}{r}{\ell+1}(f))\Rightarrow\FPsuitableat{p}{r}{\ell+2}(f)$\tabularnewline
&In particular: $\FPsuitableat{p}{r}{\intint{K+2}{\infty}}(f)\Leftrightarrow\FPsuitableat{p}{r}{\set{K+2,K+3}}(f)$.\tabularnewline[0.5\baselineskip]
\end{theoremtable}
In particular,

\hfill

\begin{theoremtable}
\theoremitem{(1)}&$\FPweaklysuitable{p}{r}(f)\Leftrightarrow\FPweaklysuitableat{p}{r}{\intintuptoin{K+1}}(f)\Leftrightarrow$\tabularnewline
&$\quad\fa\ell\in\intintuptoin{K+1}:\fa m,n\in W_{K+\ell}:m\equiv n\modulus p^\ell\Rightarrow(f-f\modulo p)(m)\equiv(f-f\modulo p)(n)\modulus p^\ell$\tabularnewline
\theoremitem{(2)}&$\FPsuitable{p}{r}(f)\Leftrightarrow\FPsuitableat{p}{r}{\intintuptoin{K+3}}(f)\Leftrightarrow$\tabularnewline
&$\quad\fa\ell\in\intintuptoin{K+3}:\fa m,n\in W_{K+\ell}:m\equiv n\modulus p^\ell\Leftrightarrow(f-f\modulo p)(m)\equiv(f-f\modulo p)(n)\modulus p^\ell$.
\end{theoremtable}
\end{conjecture}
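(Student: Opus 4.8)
The plan is to mirror the proof of Theorem~\ref{TPropRatPolyF} as closely as possible, isolating the two local estimates on which that argument rests and re-establishing them for a genuine rational function $f=g/h$. First I would record that $K$ is well defined and finite: since $h$ has no root in the compact set $r+p\Z_p$, the continuous function $n\mapsto\abs{h(n)}_p$ is bounded away from $0$, so $\nu_p\circ h$ is bounded above on $r+p\Z_p$ and attains its maximum $K$. From $\nu_p(h(n))\leq K$ and $f(n)\in\Z_p$ one gets $\nu_p(g(n))\geq\nu_p(h(n))$ for all $n\in r+p\Z_p$, and hence $p^Kf$ is $\Z_p$-valued on the class.

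The witness-set statements (1) and (2) reduce to a single Lipschitz estimate: for $m\equiv n\modulus{p^{K+k}}$ in $r+p\Z_p$ one has $f(m)\equiv f(n)\modulus{p^k}$. I would prove this by writing $f(m)-f(n)=(g(m)h(n)-g(n)h(m))/(h(m)h(n))$, noting that $m\equiv n\modulus{p^{K+k}}$ forces $\nu_p(h(m))=\nu_p(h(n))=:j\leq K$ as well as $g(m)\equiv g(n)\modulus{p^{K+k}}$ and $h(m)\equiv h(n)\modulus{p^{K+k}}$; then $g(m)h(n)-g(n)h(m)=(g(m)-g(n))h(n)-g(n)(h(m)-h(n))$ has valuation at least $(K+k)+j$ (using $\nu_p(g(n))\geq j$), while the denominator has valuation $2j$, so the quotient has valuation at least $K+k-j\geq k$. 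Exactly as in the polynomial case this shows that reducing $m,n$ modulo $p^{K+k}$ changes neither side of the (weak) suitability condition at $k$, giving (1) and (2) and, in the case $k=1$, the weak-canonical-form estimate $m\equiv n\modulus{p^{K+1}}\Rightarrow f(m)\equiv f(n)\modulus p$.

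For the stabilization statements (3) and (4) I would re-use the \emph{entire} combinatorial core of the proof of Theorem~\ref{TPropRatPolyF} --- the decomposition of a difference quotient over a progression $m,m+p^\ell,\dots,n$ into consecutive difference quotients and the iterative ``averaging over divisors of the step count'' (Eqn.~(\ref{ERatPolyAssD})) --- since that part is purely about divisibility in $\Z_p$ and never uses that $f$ is a polynomial. The only inputs it needs are: (A) that the consecutive difference quotients $(f(m+ip^\ell)-f(m+(i+1)p^\ell))/(-p^\ell)$ are $p$-adic integers, which follows for $\ell\geq K+1$ from the Lipschitz estimate above together with the suitability hypothesis; and (B) the second-difference congruence, i.e. the analogue of Eqn.~(\ref{ERatPolyDiffConA}) stating that these consecutive difference quotients are pairwise congruent modulo $p$. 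For (B) I would use the factorization $\frac{f(x)-f(y)}{x-y}=\frac{P(x,y)}{h(x)h(y)}$ with $P\in\Z_p[x,y]$ (coming from $(x-y)\mid g(x)h(y)-g(y)h(x)$ and satisfying $P(a,a)=h(a)^2f'(a)$), iterate it to express the second divided difference as a ratio with $\Z_p$-numerator and denominator a product of values of $h$, and bound valuations using $\nu_p(h)\leq K$ and the integrality supplied by (A).

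Establishing (B) at the \emph{sharp} threshold is where I expect the real difficulty to lie. A direct valuation count on $\frac{P(x,y)}{h(x)h(y)}$ (equivalently, on a local expansion $f(a+s)=\sum_i c_is^i$) only controls the second difference once $\ell$ exceeds roughly $2K$, because each division by a value of $h$ of valuation up to $K$ degrades the estimate; the same loss appears when one tries to bound the Taylor (or Mahler) coefficients $c_i$, whose naive bound is $\nu_p(c_i)\geq-iK$ rather than the $\nu_p(c_i)\geq-K$ that small examples suggest is actually true. Closing this gap --- matching the conjectured constants $K+1$ in (3) and $K+2$ in (4) rather than a cruder multiple of $K$ --- will require genuinely exploiting that $f$ is $\Z_p$-valued on the \emph{whole} class $r+p\Z_p$ (not merely that $g/h$ is a formal ratio), presumably through the cancellations encoded in $P(a,a)=h(a)^2f'(a)$ and its higher analogues, or by passing to Mahler coefficients where the $\Z_p$-valuedness is built in. Once the sharp form of (B) is in hand, the final ``In particular'' equivalences follow from (3) and (4) by the same monotonicity bookkeeping as in Theorem~\ref{TPropRatPolyF}, and the reduction of full (weak) suitability to the finite witness set $W_{K+k}$ follows from (1) and (2).
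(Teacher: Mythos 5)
The statement you are attempting is Conjecture~\ref{ConPropRatPolyF}: the paper offers no proof of it and explicitly remarks that ``a general proof could not be given by now and will be subject of future work,'' so there is no proof of record to compare against — the only question is whether your proposal settles the problem, and by your own account it does not. You correctly identify the natural strategy (transplant the proof of Theorem~\ref{TPropRatPolyF}, whose iterative averaging over divisors of the step count is indeed indifferent to $f$ being polynomial) and you correctly isolate the two analytic inputs it needs: integrality of the consecutive difference quotients, and the second-difference congruence analogous to Eqn.~(\ref{ERatPolyDiffConA}). But you then concede that you can only establish that congruence for $\ell$ on the order of $2K$ rather than at the conjectured thresholds $K+1$ and $K+2$, because each division by a value of $h$ costs up to $K$ in valuation. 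That concession is exactly the open problem: without the sharp second-difference bound, parts (3) and (4) and hence the two final ``In particular'' equivalences remain unproven. A sketch that names the missing lemma is a useful research plan, not a proof.

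Two further cautions on the parts you do claim. First, your Lipschitz estimate for (1) and (2) silently uses multiplicativity of $\nu_p$ (``the denominator has valuation $2j$''), which fails when $p$ is not prime ($\nu_4(2\cdot2)=1\neq\nu_4(2)+\nu_4(2)$); the conjecture is stated for arbitrary $2\leq p\in\N$, so the count must be rerun with the prime-component valuations $\nu_{q_1},\ldots,\nu_{q_s}$ of the appendix, or with an explicit cofactor argument, before even (1) and (2) are secure. Second, for composite $p$ the ring $\Z_p$ has zero divisors, so the step ``$h$ has no root in $r+p\Z_p$, hence $\abs{h(n)}_p$ is bounded away from $0$'' needs more care than a compactness appeal; the conjecture's parenthetical argument for $K\in\Nz$ is itself only a sketch. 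Neither issue is fatal to the plan, but both need attention, and the central gap — a sharp valuation bound on the second divided difference of a $\Z_p$-valued rational function, exploiting integrality on all of $r+p\Z_p$ rather than the formal ratio $g/h$ — is the entire content of the conjecture and is left open here.
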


\noindent
Unfortunately, a general proof could not be given by now and will be subject of future work.

\section{Generalizing Hensel's Lemma using $p$-adic systems}
\label{SHensel}
In this section we use $p$-adic systems to give a very easy and accessible proof of a surprisingly radical generalization of a famous lemma due to Kurt Hensel which has several equivalent formulations one of which reads as follows \cite{Hensel:1904}.

\begin{lemma}[Hensel's Lemma]
\label{LHensel}
Let $p$ be a prime, $r\in\intintuptoex{p}$, and $f\in\Z_p[x]$ such that $f(r)\modulo p=0$ and $f'(r)\modulo p\neq0$. Then, $f$ has a unique root in $r+p\Z_p$.
\end{lemma}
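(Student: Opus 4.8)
The plan is to recognize Hensel's Lemma as an immediate consequence of the theory of $(p,r)$-suitable functions: the two hypotheses on $f$ are exactly what is needed to make $f$ a $(p,r)$-suitable branch, and then existence and uniqueness of the root can be read off from the bijectivity statement in Lemma~\ref{LSuitFProp}~(2). First I would observe that, since $p$ is prime, the assumption $f'(r)\modulo p\neq0$ is equivalent to $\Gcd{p,f'(r)\modulo p}=1$. This is the single place where primality of $p$ enters, since for composite $p$ a nonzero residue need not be a unit. By Theorem~\ref{TPropPolyF}~(2) every polynomial in $\Z_p[x]$ is $(p,r)$-suitable at $k$ for $k\leq1$, and by Theorem~\ref{TPropPolyF}~(3) it is $(p,r)$-suitable at every $k\geq2$ precisely when $\Gcd{p,f'(r)\modulo p}=1$. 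Combining these two facts, $f$ is $(p,r)$-suitable.

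Next I would verify the two side conditions needed to invoke Lemma~\ref{LSuitFProp}~(2) with $A=\Z_p$. The inclusion $r+p\Z_p\subseteq\Z_p$ is trivial, and $f(r+p\Z_p)\subseteq p\Z_p$ holds because $f$ has coefficients in $\Z_p$, so $n\equiv r\modulus{p}$ forces $f(n)\equiv f(r)\equiv0\modulus{p}$ by the hypothesis $f(r)\modulo p=0$. Lemma~\ref{LSuitFProp}~(2) then yields that $f\vert_{r+p\Z_p}$ surjects onto $p\Z_p$ and that $n\mapsto f(n)/p$ is a bijection from $r+p\Z_p$ onto $\Z_p$; in particular $f\vert_{r+p\Z_p}\colon r+p\Z_p\to p\Z_p$ is itself a bijection. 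Since $0\in p\Z_p$, there is exactly one $n\in r+p\Z_p$ with $f(n)=0$, which is the desired unique root.

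There is essentially no computational step left once this dictionary is set up; the only real content is the identification of the Hensel hypotheses with $(p,r)$-suitability, and that identification is handed to us directly by Theorem~\ref{TPropPolyF}. So in this framework the usual successive-approximation (Newton iteration) argument is entirely absorbed into the general fact that a suitable branch of a $p$-adic system restricts to a bijection onto $p\Z_p$ after dividing by $p$.

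The point I would emphasize is conceptual rather than technical: a $(p,r)$-suitable polynomial is one whose restriction to $r+p\Z_p$ becomes a measure-preserving bijection onto $p\Z_p$, and a root in $r+p\Z_p$ is nothing but the preimage of $0$ under that bijection. Thus Hensel's Lemma is a special case of the invertibility of suitable branches, and the very same argument gives the stronger conclusion that $f$ attains \emph{every} value of $p\Z_p$ exactly once on $r+p\Z_p$, of which the root is merely the instance corresponding to $0$.
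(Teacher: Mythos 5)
Your proof is correct. Note, though, that the paper does not actually prove Lemma~\ref{LHensel} at all: it is stated as a classical result with a citation, and the paper's real content is the generalization Theorem~\ref{TSuitFUnRoot}, from which Hensel's Lemma follows because the hypotheses $f(r)\modulo p=0$ and $f'(r)\modulo p\neq0$ are equivalent to $f(r+p\Z_p)\subseteq p\Z_p$ and $\FPsuitable{p}{r}(f)$ when $p\in\P$ and $f\in\Z_p[x]$ (exactly the translation you carry out via Theorem~\ref{TPropPolyF}). Where your route genuinely differs is in the final step: the paper's proof of Theorem~\ref{TSuitFUnRoot} first replaces $f$ by $g(n)=f(n)+pn$ (using Lemma~\ref{LShiftedSuitF} to preserve suitability) so that roots of $f$ become fixed points of the branch $n\mapsto g(n)/p$, and then locates the unique fixed point as the unique $n$ with digit expansion $(r)^\infty$ via Lemma~\ref{LInfBlDTCompl} and Lemma~\ref{LInfBlDTEq}. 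You instead invoke Lemma~\ref{LSuitFProp}~(2) directly to get that $f\vert_{r+p\Z_p}$ is a bijection onto $p\Z_p$ and read off the root as the preimage of $0$. This is slightly more economical for the root-finding formulation (no shift needed), and it makes the conceptual point — root $=$ preimage of $0$ under a suitable branch — more transparent; the trade-off is only apparent, since the proof of Lemma~\ref{LSuitFProp}~(2) itself runs through the same digit-table machinery, so the two derivations rest on identical foundations. Your observation that primality of $p$ enters only in converting $f'(r)\modulo p\neq0$ into $\Gcd{p,f'(r)\modulo p}=1$ matches the paper's own comparison table following Theorem~\ref{TSuitFUnRoot}.
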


\noindent
The generalization we will prove below is given by the following theorem.

\begin{theorem}
\label{TSuitFUnRoot}
Let $2\leq p\in\N$, $r\in\intintuptoex{p}$, and $f\in(\Z_p)^{\Z_p}$ such that $f(r+p\Z_p)\subseteq p\Z_p$ and $\FPsuitable{p}{r}(f)$. Then, $f$ has a unique root in $r+p\Z_p$.
\end{theorem}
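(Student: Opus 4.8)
The plan is to derive the theorem almost immediately from Lemma~\ref{LSuitFProp}, into which the combinatorial substance has already been packaged. First I would check that the hypotheses of Theorem~\ref{TSuitFUnRoot} are exactly those required to invoke Lemma~\ref{LSuitFProp}~(2) with $A=\Z_p$: the inclusion $r+p\Z_p\subseteq\Z_p=A$ holds trivially since $f$ is defined on all of $\Z_p$, the assumption $f(r+p\Z_p)\subseteq p\Z_p$ is given verbatim, and $\FPsuitable{p}{r}(f)$ is precisely $(p,r)$-suitability (at all of $\Nz$, hence in particular what the lemma demands). Applying the lemma yields $f\vert_{r+p\Z_p}(r+p\Z_p)=p\Z_p$, i.e. the single branch $f\vert_{r+p\Z_p}$ maps $r+p\Z_p$ \emph{onto} $p\Z_p$.

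Next I would read off both halves of the conclusion. Since $0\in p\Z_p$, surjectivity supplies at least one $n\in r+p\Z_p$ with $f(n)=0$, giving existence of a root. For uniqueness I would appeal to Lemma~\ref{LSuitFProp}~(1), which states that $f\vert_{(r+p\Z_p)\cap A}=f\vert_{r+p\Z_p}$ is injective, so the preimage of $0$ consists of a single point. Equivalently, the ``In particular'' part of Lemma~\ref{LSuitFProp}~(2) asserts that $n\mapsto f(n)/p$ is a bijection $r+p\Z_p\to\Z_p$, whence $0$ has exactly one preimage, which is the unique root of $f$ in $r+p\Z_p$.

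I expect essentially no genuine obstacle at this level: the whole difficulty has been absorbed into Lemma~\ref{LSuitFProp}, whose proof in turn rests on the correspondence between $(p,r)$-suitable functions and $p$-adic systems via Theorem~\ref{TFuncSuit}~(2), together with the bijectivity statement Lemma~\ref{LInfBlDTCompl}. Conceptually, the ``Hensel'' phenomenon here is just the observation that a single $(p,r)$-suitable branch sending its residue class into $p\Z_p$ realizes every value of $p\Z_p$ exactly once, so in particular the value $0$ is attained exactly once. The only points to state with care are that $\FPsuitable{p}{r}(f)$ (suitability on all of $\Nz$) is indeed strong enough to feed Lemma~\ref{LSuitFProp}, and that the full domain $\Z_p$ furnishes $r+p\Z_p\subseteq A$ automatically; once these are noted, the proof is a two-line deduction and can be written as ``Follows directly from Lemma~\ref{LSuitFProp}.''
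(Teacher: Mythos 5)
Your proof is correct, and it is genuinely shorter than the one in the paper. You reduce everything to Lemma~\ref{LSuitFProp}: part~(2) gives $f\vert_{r+p\Z_p}(r+p\Z_p)=p\Z_p$, so $0$ is attained, and part~(1) gives injectivity on $r+p\Z_p$, so it is attained exactly once. The hypotheses match exactly (with $A=\Z_p$ the condition $r+p\Z_p\subseteq A$ is automatic), the lemma appears earlier in the paper, and its proof does not depend on Theorem~\ref{TSuitFUnRoot}, so there is no circularity. The paper instead argues via the shift $g(n)\ce f(n)+pn$ (justified by Lemma~\ref{LShiftedSuitF}~(2)), builds the $p$-adic system $\FFf{F}=(\id_{\Z_p})^r\cdot(g)\cdot(\id_{\Z_p})^{p-r-1}$, and identifies the root of $f$ as the unique $p$-adic integer with $\FFf{F}$-digit expansion $(r)^\infty$, i.e.\ as the unique fixed point of $\FFf{F}$ in $r+p\Z_p$. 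Both arguments ultimately rest on the same machinery (Theorem~\ref{TFuncSuit}~(2) together with Lemmas~\ref{LInfBlDTEq} and~\ref{LInfBlDTCompl}); your route simply reuses the instance of that machinery already packaged in Lemma~\ref{LSuitFProp}, while the paper's route re-runs it with the shifted function. What the paper's detour buys is the explicit fixed-point description of the root, which is exactly the formulation that is generalized afterwards in Theorem~\ref{TAvoidFRFUnFP} (cf.\ the remark there relating the unique root to the unique fixed point via Lemma~\ref{LShiftedSuitF}); your version is the cleaner proof of the theorem as stated.
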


\noindent
A comparison of the differing assumptions of Hensel's Lemma and the above theorem shows:

\begin{theoremtable}[rlX]
\theoremitem{(1)}&$p\in\P$&$\;\rightarrow\;\;2\leq p\in\N$\tabularnewline
\theoremitem{(2)}&$f\in\Z_p[x]$&$\;\rightarrow\;\; f\in(\Z_p)^{\Z_p}$\tabularnewline
\theoremitem{(3)}&$f(r)\modulo p=0$&$\;\rightarrow\;\; f(r+p\Z_p)\subseteq p\Z_p$\quad($\Leftrightarrow f(r)\modulo p=0$ if $f\in\Z_p[x]$)\tabularnewline
\theoremitem{(4)}&$f'(r)\modulo p\neq0$&$\;\rightarrow\;\;\FPsuitable{p}{r}(f)$\quad($\Leftrightarrow f'(r)\modulo p\neq0$ if $p\in\P$ and $f\in\Z_p[x]$ by Theorem~\ref{TPropPolyF}~(3)).\tabularnewline[0.5\baselineskip]
\end{theoremtable}

\noindent
The generalizing aspect of Theorem~\ref{TSuitFUnRoot} is entailed by (1) and (2), while (3) and (4) are necessary adaptations of the conditions on $f$. It can be seen that the theorem's conditions in (3) and (4) are equivalent to their corresponding versions in Hensel's Lemma if one assumes that $p$ is prime and $f$ is a polynomial function with coefficients in $\Z_p$. The most remarkable difference is probably given by (2), where the assumption that $f$ is a polynomial function is dropped and arbitrary functions on $\Z_p$ are allowed. It should be noted that the idea of generalizing Hensel's Lemma by dropping the assumption that $f$ is a polynomial function, is not new. A related result can be found in \cite{YurovaAxelssonKhrennikov:2016} which considers general functions on $\Z_p$, but only considers the case where $p$ is prime. The further generalization which is given in the following subsection appears to be completely new, however. In order to prove Theorem~\ref{TSuitFUnRoot} we need the following lemma.

\begin{lemma}
\label{LShiftedSuitF}
Let $2\leq p\in\N$, $r\in\intintuptoex{p}$, $A\subseteq\Z_p$, $a,b\in\Z_p$, $f,g,h:A\to\Z_p$ such that
\begin{align}
g(n)&=f(n)+an+b\\
h(n)&=f(n)+apn+b
\end{align}
for all $n\in A$, and $k\in\Nz$. Then,

\begin{theoremtable}
\theoremitem{(1)}&$\FPweaklysuitableat{p}{r}{k}(f)\Rightarrow\FPweaklysuitableat{p}{r}{k}(g)$\tabularnewline
\theoremitem{(2)}&$\FPsuitableat{p}{r}{\intintuptoin{k}}(f)\Rightarrow\FPsuitableat{p}{r}{\intintuptoin{k}}(h)$.
\end{theoremtable}
\end{lemma}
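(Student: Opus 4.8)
The plan is to prove both parts by a single pointwise computation that expresses the ``floored'' functions $g-g\modulo p$ and $h-h\modulo p$ in terms of $f-f\modulo p$ and the linear shift. The computation is clean provided $f$ sends $(r+p\Z_p)\cap A$ into $p\Z_p$, i.e. $f(n)\modulo p=0$ there; this is exactly the situation in which the lemma is used (in Theorem~\ref{TSuitFUnRoot} one assumes $f(r+p\Z_p)\subseteq p\Z_p$). Under this condition the reduction modulo $p$ of the shifted functions is governed entirely by their linear parts, since $apn\equiv0\modulus p$: for $n\in(r+p\Z_p)\cap A$ one gets $g(n)\modulo p=(an+b)\modulo p$ and $h(n)\modulo p=b\modulo p$, hence the two structural identities
\[
(g-g\modulo p)(n)=(f-f\modulo p)(n)+an+b-(an+b)\modulo p,
\]
\[
(h-h\modulo p)(n)=(f-f\modulo p)(n)+apn+b-b\modulo p .
\]
The one delicate point — and the main obstacle to a version with completely unrestricted $f$ — is that reducing a sum modulo $p$ does not distribute over $f(n)+(\text{linear})$: a carry of size $p$ can appear. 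This carry is precisely what $f(n)\modulo p=0$ eliminates, so I would establish the two identities first and flag this hypothesis as the crux.

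For part~(1), fix $m,n\in(r+p\Z_p)\cap A$ with $m\equiv n\modulus{p^k}$. Since $m\equiv n\modulus p$ (both lie in $r+p\Z_p$), the quantities $(am+b)\modulo p$ and $(an+b)\modulo p$ coincide, so subtracting the two instances of the first identity gives
\[
(g-g\modulo p)(m)-(g-g\modulo p)(n)=\big((f-f\modulo p)(m)-(f-f\modulo p)(n)\big)+a(m-n).
\]
The first summand is $\equiv0\modulus{p^k}$ by $\FPweaklysuitableat{p}{r}{k}(f)$, and $a(m-n)\equiv0\modulus{p^k}$ because $p^k\mid m-n$; hence $\FPweaklysuitableat{p}{r}{k}(g)$. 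This direction uses nothing beyond the single level $k$.

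For part~(2) the analogous subtraction yields
\[
(h-h\modulo p)(m)-(h-h\modulo p)(n)=\big((f-f\modulo p)(m)-(f-f\modulo p)(n)\big)+ap(m-n),
\]
and here the extra factor $p$ in the shift $apn$ is essential: if $m\equiv n\modulus{p^\ell}$ then $ap(m-n)\equiv0\modulus{p^{\ell+1}}$, so the perturbation gains one power of $p$. I would prove the equivalence defining $\FPsuitableat{p}{r}{\ell}(h)$ for every $\ell\in\intintuptoin{k}$ by induction on $\ell$, the case $\ell=0$ being vacuous. The ``$\Rightarrow$'' implication is immediate from the displayed identity exactly as in part~(1). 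The hard part is the converse, and this is where I expect the real work: assuming $(h-h\modulo p)(m)\equiv(h-h\modulo p)(n)\modulus{p^{\ell+1}}$, reduction modulo $p^\ell$ combined with the induction hypothesis gives $m\equiv n\modulus{p^\ell}$, whence $ap(m-n)\equiv0\modulus{p^{\ell+1}}$; feeding this back into the identity forces $(f-f\modulo p)(m)\equiv(f-f\modulo p)(n)\modulus{p^{\ell+1}}$, and $\FPsuitableat{p}{r}{\ell+1}(f)$ then delivers $m\equiv n\modulus{p^{\ell+1}}$.

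The interplay between the gained power of $p$ and the cumulative hypothesis $\FPsuitableat{p}{r}{\intintuptoin{k}}(f)$ is precisely what makes the induction close: without the factor $p$ one would only know $a(m-n)\equiv0\modulus{p^\ell}$, which is too weak to recover congruence of the $f$-values modulo $p^{\ell+1}$. This explains why part~(2) is stated for the whole range $\intintuptoin{k}$ and for the shift $apn$ rather than $an$, and why — unlike part~(1) — it cannot be reduced to a single level.
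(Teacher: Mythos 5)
Your argument is, in substance, the paper's own. Part (1) is the same one-line congruence computation, and your induction on $\ell$ in part (2) is just a repackaging of the paper's argument, which takes the largest $\ell$ with $m\equiv n\modulus{p^\ell}$, observes (using $\FPsuitableat{p}{r}{\intintuptoin{k}}(f)$) that this is also the largest $\ell$ with $f(m)\equiv f(n)\modulus{p^\ell}$, and then uses the extra factor $p$ in $apn$ to force agreement of the $f$-values one level higher --- exactly the mechanism your inductive step isolates. The only point of divergence is the normalization: the paper opens with ``the statements are clearly true if $a=0$, hence we may assume without loss of generality that $b=0$ and $f((r+p\Z_p)\cap A)\subseteq p\Z_p$'' and thereafter works precisely in the regime you restrict to, so your explicit hypothesis is not a deviation from the paper's proof but a more candid statement of it.

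Your worry about the carry term is in fact well founded, and it marks the one step that neither write-up settles. Replacing $f$ by $f-f\modulo p$ leaves the suitability predicates for $f$ unchanged but alters $g$ by the generally non-constant function $f\modulo p$, and for an $f$ whose residues $f(n)\modulo p$ vary over $(r+p\Z_p)\cap A$ the difference $(g-g\modulo p)(m)-(g-g\modulo p)(n)$ can pick up a stray $\pm p$: already for $p=2$, $r=0$, $k=2$, $a=b=1$, taking $f(n)=(n\equiv0\modulus{8}\;?\;1:n)$ on $2\Z_2$ gives a function that is weakly $(2,0)$-suitable at $2$ while $g(n)=f(n)+n+1$ satisfies $(g-g\modulo2)(0)=2\not\equiv8=(g-g\modulo2)(4)\modulus{4}$. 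So the reduction to the normalized case is not automatic, and the lemma should be read (and is only ever applied, e.g.\ in Theorem~\ref{TSuitFUnRoot}) with $f((r+p\Z_p)\cap A)\subseteq p\Z_p$. Within that regime your proof is complete and correct; if you want it to match the stated generality you would have to either add that hypothesis to the statement or supply the missing reduction, which is exactly the obstacle you flagged.
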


\begin{proof}
The statements are clearly true if $a=0$, hence we may assume without loss of generality that $b=0$ and $f((r+p\Z_p)\cap A)\subseteq p\Z_p$.

\noindent
\proofitem{(1)}%
Let $m,n\in(r+p\Z_p)\cap A$. Then,
\begin{align}
m\equiv n\modulus p^k
&\Rightarrow m\equiv n\modulus p^k\land f(m)\equiv f(n)\modulus p^k\\
&\Rightarrow f(m)+am\equiv f(n)+an\modulus p^k\\
&\Rightarrow g(m)\equiv g(n)\modulus p^k.
\end{align}

\noindent
\proofitem{(2)}%
Let $m,n\in(r+p\Z_p)\cap A$. Then,
\begin{align}
m\equiv n\modulus p^k
&\Rightarrow h(m)\equiv h(n)\modulus p^k
\end{align}
by (1) and we are left to show that
\begin{align}
h(m)\equiv h(n)\modulus p^k
&\Rightarrow m\equiv n\modulus p^k.
\end{align}
Assume to the contrary that $h(m)\equiv h(n)\modulus{p^k}$ and $m\not\equiv n\modulus{p^k}$. Let
\begin{align}
\ell&\ce\max\set{i\in\Nz:m\equiv n\modulus{p^i}}.
\intertext{Then $\ell<k$ and}
\ell&=\max\set{i\in\Nz:f(m)\equiv f(n)\modulus{p^i}}
\end{align}
since $\FPsuitableat{p}{r}{\intintuptoin{k}}(f)$.
Let $c,d,e\in\Z_p$ such that $h(m)=h(n)+cp^k$, $m=n+dp^\ell$ and $f(m)=f(n)+ep^\ell$.
Then,
\begin{align}
f(m)+apm=f(n)+apn+cp^k
&\Leftrightarrow f(n)+ep^\ell+ap(n+dp^\ell)=f(n)+apn+cp^k\\
&\Leftrightarrow ep^\ell+adp^{\ell+1}=cp^k\\
&\Leftrightarrow e+adp=cp^{k-\ell}\\
&\Leftrightarrow e=p(cp^{k-\ell-1}-ad)
\end{align}
where $cp^{k-\ell-1}-ad\in\Z_p$. Hence, $f(m)\equiv f(n)\modulus{p^{\ell+1}}$ which is a contradiction.
\end{proof}

\noindent
Note that the stronger version
\begin{align}
\FPsuitableat{p}{r}{k}(f)\Rightarrow\FPsuitableat{p}{r}{k}(h)
\end{align}
of (2) is not true in general as the following example shows.

\begin{example}
\label{EShiftedSuitF}
Let $f:\Z_2\to\Z_2$ such that $f(n)=(n\modulo8=2\;?\;4:(n\modulo8=4\;?\;2:n\modulo8))$, $a=1$, $b=0$, and $h:\Z_2\to\Z_2$ with $h(n)=f(n)+2an+b=f(n)+2n$ for all $n\in\N_2$. Then, $\FPsuitableat{2}{0}{3}(f)$ but $\neg\FPsuitableat{2}{0}{3}(h)$, since $0\not\equiv2\modulus{2^3}$ but $h(0)=0\equiv8=h(2)\modulus{2^3}$. Note that $\neg\FPsuitableat{2}{0}{2}(f)$ since $0\not\equiv2\modulus{2^2}$ but $f(0)=0\equiv4=f(2)\modulus{2^2}$, which is what makes the counter-example possible.
\end{example}

\begin{proof}[Proof of Theorem~\ref{TSuitFUnRoot}]
Let $g\in(\Z_p)^{\Z_p}$ with $g(n)=f(n)+pn$ for all $n\in\Z_p$. Then, $\FPsuitable{p}{r}(g)$ by Lemma~\ref{LShiftedSuitF}~(2) and also $g(r+p\Z_p)\subseteq p\Z_p$. Let $\id_{\Z_p}$ denote the identity function on $\Z_p$ and $\FFf{F}\ce(\id_{\Z_p})^r\cdot(g)\cdot(\id_{\Z_p})^{p-r-1}$. Then, $\FFf{F}\in\SoSystems{p}$ by Theorem~\ref{TFuncSuit}~(2) and thus there is a unique $n\in\Z_p$ such that $\FFFDT(\FFf{F})[n]=(r)^\infty$ by Lemma~\ref{LInfBlDTCompl}. It follows that $n\in r+p\Z_p$ and $\FFFDT(\FFf{F})[n]=\FFFDT(\FFf{F})[\FFf{F}(n)]$. Thus, $n=\FFf{F}(n)=(g(n)-g(n)\modulo p)/p=g(n)/p$ by Lemma~\ref{LInfBlDTEq}, i.e. $f(n)=g(n)-pn=0$. If $m\in r+p\Z_p$ and $f(m)=0$, then $\FFf{F}(m)=g(m)/p=m$ and hence $\FFFDT(\FFf{F})[m]=(r)^\infty$. Thus $m=n$, again by Lemma~\ref{LInfBlDTEq}.
\end{proof}

An important application of Hensel's Lemma lies in proving that certain real or complex numbers that are defined by polynomial equations (such as $\sqrt{2}$ or $\I$) have counterparts within $\Z_p$ for some $2\leq p\in\N$. Examples are given below. Note that the polynomial function $f$ always satisfies the conditions of Hensel's Lemma (respectively Theorem~\ref{TSuitFUnRoot}).

\begin{example}
\label{EHensel}
$ $

\begin{theoremtable}
\theoremitem{1)}&$p=2$, $r=1$, $f(x)=x^2-x-4\;\;\rightarrow\;\;\pm\sqrt{17}\in\Z_2$\tabularnewline
\theoremitem{2)}&$p=3$, $r\in\set{1,2}$, $f(x)=x^2+2\;\;\rightarrow\;\;\pm\sqrt{-2}\in\Z_3$\tabularnewline
\theoremitem{3)}&$p=5$, $r\in\set{2,3}$, $f(x)=x^2+1\;\;\rightarrow\;\;\pm\I\in\Z_5$\tabularnewline
\theoremitem{4)}&$p=7$, $r\in\set{3,4}$, $f(x)=x^2-2\;\;\rightarrow\;\;\pm\sqrt{2}\in\Z_7$\tabularnewline
\theoremitem{5)}&$p\in\P$, $r\in\intint{1}{p-1}$, $f(x)=x^{p-1}-1\;\;\rightarrow\;\;\abs{\set{x\in\Z_p\mid x^{p-1}=1}}=p-1$.
\end{theoremtable}
\end{example}

\noindent
It is conceivable that the generalization of Hensel's Lemma given by Theorem~\ref{TSuitFUnRoot} allows for similar deductions when applied to other classes of functions. Exploring such possibilities could be the subject of future research.

As we have seen above, $(p,r)$-suitable functions that map $r+p\Z_p$ to $p\Z_p$ have a unique root in $r+p\Z_p$. The following theorem examines the converse direction for polynomial functions.

\begin{theorem}
\label{TSuitFUnRootEq}
Let $2\leq p\in\N$, $r\in\intintuptoex{p}$, and $f\in\Z_p[x]$ such that $f(r)\modulo p=0$. Then, $\FPsuitable{p}{r}(f)$ if and only if $f$ has a unique root $a$ in $r+p\Z_p$ and $\Gcd{p,g(r)\modulo p}=1$, where $g\in\Z_p[x]$ such that $f(x)=(x-a)g(x)$ for all $x\in\Z_p$.
\end{theorem}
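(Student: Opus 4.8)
The plan is to reduce the entire statement to the numerical criterion already established in Theorem~\ref{TPropPolyF}. Combining parts (2) and (3) of that theorem, a polynomial $f\in\Z_p[x]$ satisfies $\FPsuitable{p}{r}(f)$ if and only if it is $(p,r)$-suitable at every $k\geq2$, and this in turn holds if and only if $\Gcd{p,f'(r)\modulo p}=1$: suitability at $k\leq1$ is automatic by (2), and for $k\geq2$ the equivalent condition $\Gcd{p,f'(r)\modulo p}=1$ from (3) does not depend on $k$. Thus the content of the theorem is precisely that $\Gcd{p,f'(r)\modulo p}=1$ holds exactly when $f$ has a unique root $a\in r+p\Z_p$ with $\Gcd{p,g(r)\modulo p}=1$, where $f(x)=(x-a)g(x)$.

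For the forward direction I would first record that $f(r)\modulo p=0$ forces $f(r+p\Z_p)\subseteq p\Z_p$, since every $n\in r+p\Z_p$ satisfies $n\equiv r\modulus{p}$ and hence $f(n)\equiv f(r)\equiv0\modulus{p}$. Assuming $\FPsuitable{p}{r}(f)$, Theorem~\ref{TSuitFUnRoot} then supplies a unique root $a\in r+p\Z_p$. Because $x-a$ is monic, the division algorithm in $\Z_p[x]$ yields $g\in\Z_p[x]$ with $f(x)=(x-a)g(x)$, the remainder vanishing as $f(a)=0$. Differentiating gives $f'(x)=g(x)+(x-a)g'(x)$, so evaluating at $r$ and using $r-a\in p\Z_p$ produces $f'(r)\equiv g(r)\modulus{p}$; hence $\Gcd{p,g(r)\modulo p}=\Gcd{p,f'(r)\modulo p}=1$ by the reduction above.

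The backward direction runs through the very same identity. Given a root $a\in r+p\Z_p$ with $f(x)=(x-a)g(x)$ and $\Gcd{p,g(r)\modulo p}=1$, I read off $f'(r)=g(r)+(r-a)g'(r)\equiv g(r)\modulus{p}$, whence $\Gcd{p,f'(r)\modulo p}=1$ and $f$ is $(p,r)$-suitable by the reduction. It is worth noting that the uniqueness of the root is \emph{produced} in the forward direction (via Theorem~\ref{TSuitFUnRoot}) but is not actually \emph{used} in the backward direction, where existence of one root suffices to write the factorization.

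I do not expect a genuine obstacle here: once the equivalence with $\Gcd{p,f'(r)\modulo p}=1$ is in hand, the whole theorem collapses to the one-line factor-theorem computation $f'(r)\equiv g(r)\modulus{p}$. The only points requiring mild care are justifying that $g$ lies in $\Z_p[x]$ rather than merely $\Q_p[x]$, which follows from monic polynomial division over the ring $\Z_p$, and observing that passing to congruences modulo $p$ lets one freely replace the argument $a$ by $r$, legitimate because $a\equiv r\modulus{p}$.
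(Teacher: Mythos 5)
Your proposal is correct and follows essentially the same route as the paper: reduce $(p,r)$-suitability to $\Gcd{p,f'(r)\modulo p}=1$ via Theorem~\ref{TPropPolyF}, obtain the unique root from Theorem~\ref{TSuitFUnRoot}, and conclude both directions from the identity $f'(r)=g(r)+(r-a)g'(r)\equiv g(r)\modulus{p}$. Your additional remarks (that $g\in\Z_p[x]$ by monic division and that uniqueness of the root is unused in the backward direction) are accurate refinements of the same argument.
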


\begin{proof}
First we note that $\FPsuitable{p}{r}(f)\Leftrightarrow\Gcd{p,f'(r)\modulo p}=1$ by Theorem~\ref{TPropPolyF}~(3).

Next we prove ``$\Rightarrow$''. $f$ has a unique root $a$ in $r+p\Z_p$ by Corollary~\ref{TSuitFUnRoot}. Let $g\in\Z_p[x]$ such that $f(x)=(x-a)g(x)$ for all $x\in\Z_p$. Then,
\begin{align}
f'(x)=g(x)+(x-a)g'(x)
\end{align}
and hence
\begin{align}
1=\Gcd{p,f'(r)\modulo p}=\Gcd{p,(g(r)+(r-a)g'(r))\modulo p}=\Gcd{p,g(r)\modulo p}.
\end{align}
To prove ``$\Leftarrow$'' we analogously compute
\begin{align}
1=\Gcd{p,g(r)\modulo p}=\Gcd{p,(g(r)+(r-a)g'(r))\modulo p}=\Gcd{p,f'(r)\modulo p}.
\end{align}
\end{proof}

\noindent
Even if $p\in\P$, the condition $\Gcd{p,g(r)\modulo p}=1$ in the above theorem cannot be dropped which is shown by the following example.

\begin{example}
\label{ESuitFUnRootEq}
If $f(x)\ce(x-1)(x^2+1)$, then $\set{x\in1+2\Z_2\mid f(x)=0}=\set{1}$ but $\neg\FPsuitable{2}{1}(f)$ (note that $g(x)=x^2+1$ here).
\end{example}

\myparagraphtoc{$p$-fibred rational functions and a further generalization of Hensel's Lemma.}
In the remaining part of this section we investigate another generalization of Hensel's Lemma which involves several $(p,r)$-suitable functions at once and allows for arbitrary compositions while keeping a unique root in a given residue class. Before we can formulate the results, we need to extend our framework by a new notion closely related to $p$-fibred functions which we formally defined in Section~\ref{SNotDef}. Let $2\leq p\in\N$. The elements of any set $\CoSequences(\SPboundedby{(\Q_p)^A},\SPlength{p})$ where $A\subseteq\Q_p$, are called \emph{$p$-fibred rational functions}, i.e. a $p$-fibred rational function $\FRFf{R}=(\FRFf{R}[0],\ldots,\FRFf{R}[p-1])$ is a $p$-tuple of functions $\FRFf{R}[r]:A\to\Q_p$, $r\in\intintuptoex{p}$, on some fixed subset $A$ of the $p$-adic numbers. The \emph{set of all $p$-fibred rational functions} shall be denoted by \label{DSoFibredRationalFunctions}$\SoFibredRationalFunctions{p}$. For any $p$-fibred rational function $\FRFf{R}\in\CoSequences(\SPboundedby{(\Q_p)^A},\SPlength{p})$ we set \label{DFRFFdomain}$\FRFFdomain(\FRFf{R})\ce A$, the \emph{domain of $\FRFf{R}$}. For any subset $A$ of $\FRFFdomain(\FRFf{R})$ we define \label{DFRFrestriction}$\FRFf{R}\vert_A\ce(\FRFf{R}[0]\vert_A,\ldots,\FRFf{R}[p-1]\vert_A)$, the \emph{restriction of $\FRFf{R}$ to $A$}.

\noindent
We define the following predicates on $\SoFibredRationalFunctions{p}$ ($\FRFf{R}\in\SoFibredRationalFunctions{p}$, $A$ set):
\begin{flalign}
\pushleft{\FRFPdomain{A}(\FRFf{R})}&\Leftrightarrow\FRFFdomain(\FRFf{R})=A&\text{$\FRFf{R}$ has \emph{domain $A$}}\label{DFRFPdomain}\\
\pushleft{\FRFPboundedby{A}(\FRFf{R})}&\Leftrightarrow\fa r\in\intintuptoex{p}:\FRFf{R}[r](\FRFFdomain(\FRFf{R}))/p\subseteq A&\text{$\FRFf{R}$ is \emph{$A$-bounded}}\label{DFRFPboundedby}\\
\pushleft{\FRFPclosed(\FRFf{R})}&\Leftrightarrow\FRFPboundedby{\FRFFdomain(\FRFf{R})}(\FRFf{R})&\text{$\FRFf{R}$ is \emph{closed}}\label{DFRFPclosed}\\
\pushleft{\FRFPintegral(\FRFf{R})}&\Leftrightarrow\FRFFdomain(\FRFf{R})\subseteq\Z_p\land\FRFf{R}[r]((r+p\Z_p)\cap\FRFFdomain(\FRFf{R}))\subseteq\Z_p&\text{$\FRFf{R}$ is \emph{integral}}\label{DFRFPintegral}
\end{flalign}

\noindent
For any closed $p$-fibred rational function $\FRFf{R}$ and for any $\Sf{D}\in\CoSequences(\SPboundedby{\intintuptoex{p}},\SPfinite)$ we define
\begin{align}
\label{EFibredRFunction}
\FRFF{\FRFf{R}}{\Sf{D}}:\FRFFdomain(\FRFf{R})&\to\Q_p\\
\nonumber
x&\mapsto
\begin{cases}
x&\text{if }\abs{\Sf{D}}=0\\
\frac{\FRFf{R}[\Sf{D}[\abs{\Sf{D}}-1]]\left(\FRFF{\FRFf{R}}{\Sf{D}[\intintuptoex{\abs{\Sf{D}}-1}]}(x)\right)}{p}&\text{if }\abs{\Sf{D}}>0
\end{cases}
\end{align}
and for any $\Sf{D}\in\CoSequences(\SPboundedby{\intintuptoex{p}})$ we call$\FRFFST{\Sf{D}}(\FRFf{R})\ce\big(\big(\FRFF{\FRFf{R}}{\Sf{D[\intintuptoex{k}]}}(n)\big)_{k\in\intintuptoin{\abs{\Sf{D}}}}\big)_{n\in\FRFFdomain(\FRFf{R})}\in\CoSequenceTables(\STPdomain{\FRFFdomain(\FRFf{R})},\STPlength{\abs{\Sf{D}}+1})$ the  \label{DFRFFST}\emph{$\FRFf{R}$-sequence table with respect to $\Sf{D}$}. For $n\in\FRFFdomain(\FRFf{R})$ the \label{DFRFFSTentry}\emph{$\FRFf{R}$-sequence of $n$ with respect to $\Sf{D}$} is given by the $\FRFFST{\Sf{D}}(\FRFf{R})$-sequence of $n$.

\noindent
We define the following additional predicates on $\SoFibredRationalFunctions{p}$ ($\FRFf{R}\in\SoFibredRationalFunctions{p}$, $A\subseteq\Q_p$, $D\subseteq\Nz\cup\set{-\infty}$):
\begin{flalign}
\pushleft{\FRFPpolynomialcoefficientsdegree{A}{D}(\FRFf{R})}&\Leftrightarrow\FRFPdomain{\Q_p}(\FRFf{R})&\hspace{-10em}\text{$\FRFf{R}$ is \emph{$A$-polynomial with degree in $D$ or, if $D=\set{d}$,}}\label{DFRFPpolynomialcoefficientsdegree}\\
\nonumber
&\phantom{\vphantom{a}\Leftrightarrow\vphantom{a}}\fa r\in\intintuptoex{p}:\FRFf{R}[r](x)\in A[x]&\text{$\FRFf{R}$ is \emph{$A$-polynomial of degree $d$}}\\
\nonumber
&\phantom{\vphantom{a}\Leftrightarrow\fa r\in\intintuptoex{p}:\vphantom{a}}\deg(\FRFf{R}[r](x))\in D\\
\pushleft{\FRFPpolynomialcoefficients{A}(\FRFf{R})}&\Leftrightarrow\FRFPpolynomialcoefficientsdegree{A}{\Nz\cup\set{-\infty}}(\FRFf{R})&\text{$\FRFf{R}$ is \emph{$A$-polynomial}}\label{DFRFPpolynomialcoefficients}\\
\pushleft{\FRFPpolynomial(\FRFf{R})}&\Leftrightarrow\FRFPpolynomialcoefficientsdegree{\Q_p}{\Nz\cup\set{-\infty}}(\FRFf{R})&\text{$\FRFf{R}$ is \emph{polynomial}}\label{DFRFPpolynomial}\\
\pushleft{\FRFPlinearpolynomialcoefficients{A}(\FRFf{R})}&\Leftrightarrow\FRFPpolynomialcoefficientsdegree{A}{\set{-\infty,0,1}}(\FRFf{R})&\text{$\FRFf{R}$ is \emph{$A$-linear-polynomial}}\label{DFRFPlinearpolynomialcoefficients}\\
\pushleft{\FRFPlinearpolynomial(\FRFf{R})}&\Leftrightarrow\FRFPpolynomialcoefficientsdegree{\Q_p}{\set{-\infty,0,1}}(\FRFf{R})&\text{$\FRFf{R}$ is \emph{linear-polynomial}}\label{DFRFPlinearpolynomial}
\end{flalign}

\noindent
Furthermore, we establish a relation between $\SoFibredRationalFunctions{p}(\FRFPintegral)$ and $\SoFibredFunctions{p}$ by the bijection:
\begin{align}
\label{DFRFFint}
\FRFFint:\SoFibredRationalFunctions{p}(\FRFPintegral)&\to\SoFibredFunctions{p}\\
\nonumber
\FRFf{R}&\mapsto(\FRFf{R}[0]:\FRFFdomain(\FRFf{R})\to\Z_p,\ldots,\FRFf{R}[p-1]:\FRFFdomain(\FRFf{R})\to\Z_p)
\end{align}
i.e. ``$\FRFFint$'' changes the codomains of all $\FRFf{R}[r]$, $r\in\intintuptoex{p}$, from $\Q_p$ to $\Z_p$. Note that we cannot simply identify elements of $\SoFibredRationalFunctions{p}(\FRFPintegral)$ and $\SoFibredFunctions{p}$ by $\FRFFint$ since this would introduce ambiguities for the predicates ``$\FRFPboundedby{A}$'' and ``$\FRFPclosed$'' which are defined differently for integral $p$-fibred rational functions and $p$-fibred functions and do not coincide in general.

The essential difference between $p$-fibred functions and $p$-fibred rational functions lies in the way they are considered to be functions on their respective domains (cf. Eqn.~(\ref{EFibredFunction}) and Eqn.~(\ref{EFibredRFunction})). While a $p$-fibred function $\FFf{F}$ gets the information on which of the functions $\FFf{F}[0],\ldots,\FFf{F}[p-1]$ to apply to some $n\in\FFFdomain(\FFf{F})$ from the residue class of $n$ modulo $p$, a $p$-fibred rational function gets this information from the supplied ``$p$-digit sequence'' $\Sf{D}$. This difference also influences the definitions of the respective ``$\FRFPboundedby{A}$'' and ``$\FRFPclosed$'' predicates. In both cases, the purpose of the ``$\FRFPclosed$'' predicate is to guarantee that application of a $p$-fibred function or $p$-fibred rational function in the sense of Eqn.~(\ref{EFibredFunction}) or Eqn.~(\ref{EFibredRFunction}) respectively, yields elements of the domain again, rendering iterative application possible. Whenever a closed $p$-fibred rational function $\FRFf{R}$ ``extends'' a closed $p$-fibred function $\FFf{F}$ in weak canonical form (which will be the main purpose of $p$-fibred rational functions), the different ways of applying them as functions coincide, however, if the supplied $p$-digit sequence $\Sf{D}$ is the $\FFf{F}$-digit expansion of the argument $n$, as the following lemma shows.

\begin{lemma}
\label{LFRF}
Let $2\leq p\in\N$, $\FRFf{R}\in\SoFibredRationalFunctions{p}(\FRFPclosed)$ such that $\FRFPintegral(\FRFf{R}\vert_{\Z_p\cap\FRFFdomain(\FRFf{R})})$ and such that $\FFf{F}\ce\FRFFint(\FRFf{R}\vert_{\Z_p\cap\FRFFdomain(\FRFf{R})})\in\SoFibredFunctions{p}(\FFPclosed,\FFPweakcanonicalform)$. Furthermore, let $n\in\Z_p\cap\FRFFdomain(\FRFf{R})$, $\Sf{D}\ce\FFFDT(\FFf{F})[n]$, and $k\in\Nz$. Then,
\begin{align}
\FFf{F}^k(n)=\FRFF{\FRFf{R}}{\Sf{D}[\intintuptoex{k}]}(n).
\end{align}
In particular: $\FFFST(\FFf{F})[n]=\FRFFST{\Sf{D}}(\FRFf{R})[n]$.
\end{lemma}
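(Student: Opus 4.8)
The plan is to prove the displayed identity $\FFf{F}^k(n)=\FRFF{\FRFf{R}}{\Sf{D}[\intintuptoex{k}]}(n)$ by induction on $k$, and then read off the ``In particular'' statement entry by entry. For the base case $k=0$ both sides equal $n$: the iterate $\FFf{F}^0$ is the identity, while $\Sf{D}[\intintuptoex{0}]$ is the empty sequence, so the first branch of Eqn.~(\ref{EFibredRFunction}) gives $\FRFF{\FRFf{R}}{\Sf{D}[\intintuptoex{0}]}(n)=n$.

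For the inductive step, assume $\FFf{F}^k(n)=\FRFF{\FRFf{R}}{\Sf{D}[\intintuptoex{k}]}(n)$. Since $\Sf{D}[\intintuptoex{k+1}]$ has length $k+1$, last entry $\Sf{D}[k]$, and truncation $\Sf{D}[\intintuptoex{k+1}][\intintuptoex{k}]=\Sf{D}[\intintuptoex{k}]$, the second branch of Eqn.~(\ref{EFibredRFunction}) together with the induction hypothesis gives
\begin{align}
\FRFF{\FRFf{R}}{\Sf{D}[\intintuptoex{k+1}]}(n)=\frac{\FRFf{R}[\Sf{D}[k]]\big(\FRFF{\FRFf{R}}{\Sf{D}[\intintuptoex{k}]}(n)\big)}{p}=\frac{\FRFf{R}[\Sf{D}[k]]\big(\FFf{F}^k(n)\big)}{p}.
\end{align}
On the other side, because $\FFf{F}$ is in weak canonical form its application loses the subtracted term, so
\begin{align}
\FFf{F}^{k+1}(n)=\FFf{F}(\FFf{F}^k(n))=\frac{\FFf{F}[\FFf{F}^k(n)\modulo p](\FFf{F}^k(n))}{p}.
\end{align}
It therefore remains to match the two numerators, and this is exactly where the weak canonical form assumption is essential: for a general $p$-fibred function the numerator would carry an extra $-\FFf{F}[\,\cdot\,](\FFf{F}^k(n))\modulo p$ correction, which vanishes precisely because weak canonical form forces $\FFf{F}[r]((r+p\Z_p)\cap\FFFdomain(\FFf{F}))\subseteq p\Z_p$.

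The numerator match rests on three bookkeeping facts. First, the digit sequence $\Sf{D}=\FFFDT(\FFf{F})[n]$ records exactly the residues used along the orbit: from $\FFFDT(\FFf{F})=\FFFST(\FFf{F})\modulo p$ and $\FFFST(\FFf{F})[n]=(\FFf{F}^k(n))_{k\in\Nz}$ we get $\Sf{D}[k]=\FFf{F}^k(n)\modulo p$, so the branch index $\Sf{D}[k]$ selected by $\FRFf{R}$ coincides with the residue $\FFf{F}^k(n)\modulo p$ selected by $\FFf{F}$. Second, $\FFf{F}=\FRFFint(\FRFf{R}\vert_{\Z_p\cap\FRFFdomain(\FRFf{R})})$ only changes codomains, so $\FFf{F}[r]$ and $\FRFf{R}[r]$ agree as functions on $\Z_p\cap\FRFFdomain(\FRFf{R})$ for every $r\in\intintuptoex{p}$. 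Third, $\FFf{F}$ is closed, which guarantees $\FFf{F}^k(n)\in\FFFdomain(\FFf{F})=\Z_p\cap\FRFFdomain(\FRFf{R})$, so the previous two facts apply at the point $\FFf{F}^k(n)$. Combining them yields $\FRFf{R}[\Sf{D}[k]](\FFf{F}^k(n))=\FFf{F}[\FFf{F}^k(n)\modulo p](\FFf{F}^k(n))$, which makes the two numerators equal and closes the induction.

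Finally, for the ``In particular'' part I would compare the two sequence tables row by row. Because $\FFf{F}$ is closed, $\Sf{D}=\FFFDT(\FFf{F})[n]$ is infinite, so the index set $\intintuptoin{\abs{\Sf{D}}}$ in the definition of $\FRFFST{\Sf{D}}(\FRFf{R})$ is all of $\Nz$; its $k$-th entry is $\FRFF{\FRFf{R}}{\Sf{D}[\intintuptoex{k}]}(n)$, which by the identity just established equals $\FFf{F}^k(n)$, the $k$-th entry of $\FFFST(\FFf{F})[n]$. Hence the rows agree for every $k$ and $\FFFST(\FFf{F})[n]=\FRFFST{\Sf{D}}(\FRFf{R})[n]$. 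I do not expect a serious obstacle here; the entire argument is a routine induction, and the only point requiring care is that the recursion of Eqn.~(\ref{EFibredRFunction}) consumes the last digit $\Sf{D}[k]$ of the truncation exactly when $\FFf{F}$ performs its $(k+1)$-th step, which is reconciled by the identity $\Sf{D}[k]=\FFf{F}^k(n)\modulo p$.
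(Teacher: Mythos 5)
Your proof is correct and follows essentially the same route as the paper's: induction on $k$, using the weak canonical form to simplify the application of $\FFf{F}$, the identity $\Sf{D}[k]=\FFf{F}^k(n)\modulo p$, and the fact that $\FRFFint$ only changes codomains so that $\FFf{F}[r]$ and $\FRFf{R}[r]$ agree on $\Z_p\cap\FRFFdomain(\FRFf{R})$. Your explicit justification of the "In particular" part and the role of closedness is slightly more detailed than the paper's, but the argument is the same.
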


\noindent
As an example consider the $2$-fibred rational function $\FRFf{R}\ce(x,3x+1)\in\SoFibredRationalFunctions{2}(\FRFPdomain{\Q_2},\FRFPclosed)$ and the corresponding $2$-fibred function $\FFf{F}\ce\FRFFint(\FRFf{R}\vert_{\Z_p})\in\SoFibredFunctions{2}(\FFPdomain{\Z_2},\FFPclosed,\FFPweakcanonicalform)$ (note that every $p$-tuple of polynomials in $\Z_p[x]$ may be regarded as an element of both $\SoFibredRationalFunctions{p}(\FRFPdomain{\Q_p},\FRFPclosed)$ and $\SoFibredFunctions{p}(\FFPdomain{\Z_p},\FFPclosed)$, as the restriction to $\Z_p$ of every $p$-fibred rational function defined by polynomials in $\Z_p[x]$ is always integral). Then, $\FFFST(\FFf{F})[17]=(17,26,13,20,10,5,8,4)\cdot(2,1)^\infty$ and $\Sf{D}\ce\FFFDT(\FFf{F})[17]=(1,0,1,0,0,1,0,0)\cdot(0,1)^\infty$. Furthermore, $\FFf{F}^6(17)=8=\FRFF{\FRFf{R}}{\Sf{D}[\intintuptoex{6}]}(17)$ and $\FRFFST{\Sf{D}}(\FRFf{R})[17]=(17,26,13,20,10,5,8,4)\cdot(2,1)^\infty$.

\begin{proof}[Proof of Lemma~\ref{LFRF}]
We prove the statement by induction on $k$. If $k=0$, then
\begin{align}
\FFf{F}^0(n)=n=\FRFF{\FRFf{R}}{()}(n)=\FRFF{\FRFf{R}}{\Sf{D}[\intintuptoex{0}]}(n).
\end{align}
Now we assume $\FFf{F}^k(n)=\FRFF{\FRFf{R}}{\Sf{D}[\intintuptoex{k}]}(n)$ and compute
\begin{align}
\FFf{F}^{k+1}(n)
&=\FFf{F}(\FFf{F}^k(n))
=\frac{\FFf{F}[\FFf{F}^k(n)\modulo p](\FFf{F}^k(n))}{p}\;(\Leftarrow\FFPweakcanonicalform(\FFf{F}))\\
(\FFf{F}^k(n)\in\Z_p)\;&=\frac{\FRFf{R}[\FFf{F}^k(n)\modulo p](\FFf{F}^k(n))}{p}
=\frac{\FFf{R}[\Sf{D}[k]](\FRFF{\FRFf{R}}{\Sf{D}[\intintuptoex{k}]}(n))}{p}
=\FRFF{\FRFf{R}}{\Sf{D}[\intintuptoex{k+1}]}(n).
\end{align}
\end{proof}

\noindent
The above lemma is basically ``built into the DNA'' of $p$-fibred rational functions: the way a $p$-fibred rational function is applied as a regular function is defined in the very way that would make the lemma true. However, under additional assumptions (namely that $\FFf{F}$ has the block property and that for all $r\in\intintuptoex{p}$ the only elements of $\Q_p$ that are mapped into $p\Z_p$ by $\FRFf{R}[r]$ are those in $r+p\Z_p$), a much stronger version of the above lemma can be proven that is not built in by design. Before we state the new lemma, we reconsider the above example which happens to satisfy the additional assumptions: if $\FRFf{R}\ce(x,3x+1)\in\SoFibredRationalFunctions{2}(\FRFPdomain{\Q_2},\FRFPclosed)$ and $\FFf{F}\ce\FRFFint(\FRFf{R}\vert_{\Z_p})\in\SoFibredFunctions{2}(\FFPdomain{\Z_2},\FFPweakcanonicalform)$ then $\FFPblock(\FFf{F})$ and $\FRFf{R}[r](\Q_2\setminus(r+2\Z_2))\cap2\Z_2=\emptyset$ for all $r\in\set{0,1}$. Now, if $n$ is any $2$-adic integer, say $n\ce17$, and $\Sf{D}\ce\FFFDT(\FFf{F})[n][\intintuptoex{k}]$ is the initial part of length $k\in\Nz$ of the $\FFf{F}$-digit expansion of $n$, say $k=6$ and hence $\Sf{D}=(1,0,1,0,0,1)$, we know from the above lemma that $\FRFF{\FRFf{R}}{\Sf{D}}[n]$, the result of applying the functions $\frac{x}{2}$ and $\frac{3x+1}{2}$ iteratively to $n$ in the order given by $\Sf{D}$, is equal to $\FFf{F}^k(n)$, which is $8$ in our case. In principle, the order given by $\Sf{D}$ might not be the only one that would result in $17$ being mapped to $8$: could there be another $\Sf{E}\in\CoSequences(\SPboundedby{\intintuptoex{2}},\SPfinite)$ such that $\FRFF{\FRFf{R}}{\Sf{E}}[n]$ is also equal to $\FFf{F}^k(n)$, i.e. $\FRFF{\FRFf{R}}{\Sf{E}}[17]=8$? The answer, which is ``no'', is given by the following lemma, which basically states that if the mentioned additional assumptions hold, the only way a $p$-adic integer $n$ can be mapped to another $p$-adic integer $\FRFF{\FRFf{R}}{\Sf{D}}(n)$ by $\FRFF{\FRFf{R}}{\Sf{D}}$, is that $\FRFF{\FRFf{R}}{\Sf{D}}(n)$ is contained in the $\FFf{F}$-sequence of $n$, say the entry with index $k\in\Nz$, and $\Sf{D}$ is equal to the initial part of length $k$ of the $\FFf{F}$-digit expansion of $n$. Before we state the lemma, we define the following predicate on the set $(\Q_p)^A$ of all mappings from $A$ to $\Q_p$, where $2\leq p\in\N$ and $A\subseteq\Q_p$ ($f\in(\Q_p)^A$, $r\in\intintuptoex{p}$):
\begin{flalign}
\pushleft{\FPavoiding{p}{r}(f)}&\Leftrightarrow f((\Q_p\setminus(r+p\Z_p))\cap A)\cap p\Z_p=\emptyset&\text{$f$ is \emph{$(p,r)$-avoiding}}\label{DFPavoiding}
\end{flalign}

\noindent
Note that if $r+p\Z_p\subseteq A$, $f(r+p\Z_p)\subseteq p\Z_p$, and $\FPsuitable{p}{r}(f)$ (e.g. if $f\vert_{\Z_p}$ is the entry with index $r$ of a $p$-adic system in weak canonical form, cf. Theorem~\ref{TFuncSuit}~(2)), then $f(r+p\Z_p)=p\Z_p$ by Lemma~\ref{LSuitFProp}~(2). In this case the condition for $f$ to be $(p,r)$-avoiding can also be written as $f((\Q_p\setminus(r+p\Z_p))\cap A)\cap f(r+p\Z_p)=\emptyset$ or even as $f(\Q_p\setminus(r+p\Z_p))\cap f(r+p\Z_p)=\emptyset$ if $A=\Q_p$. Additionally, we define the following predicate on $\SoFibredRationalFunctions{p}$ ($\FRFf{R}\in\SoFibredRationalFunctions{p}$):
\begin{flalign}
\pushleft{\FRFPavoiding(\FRFf{R})}&\Leftrightarrow\fa r\in\intintuptoex{p}:\FPavoiding{p}{r}(\FRFf{R}[r])&\text{$\FRFf{R}$ is \emph{avoiding}}\label{DFRFPavoiding}
\end{flalign}
A $p$-fibred function is said to be \emph{avoiding}\label{DFFPavoiding} if the correspondig $p$-fibred rational function is avoiding. Using the new predicates we now formulate the lemma.

\begin{lemma}
\label{LEqSEqD}
\quad Let $2\leq p\in\N$ and $\FRFf{R}\in\SoFibredRationalFunctions{p}(\FRFPclosed,\FRFPavoiding(\FRFf{R}))$ with $\Z_p\subseteq\FRFFdomain(\FRFf{R})$ such that $\FRFPintegral(\FRFf{R}\vert_{\Z_p})$, and $\FFf{F}\ce\FRFFint(\FRFf{R}\vert_{\Z_p})\in\SoSystems{p}(\FFPweakcanonicalform)$. Furthermore, let $n\in\Z_p$, $k\in\Nz$, and $\Sf{D}\in\CoSequences(\SPboundedby{\intintuptoex{p}},\SPlength{k})$ such that $\FRFF{\FRFf{R}}{\Sf{D}}(n)\in\Z_p$. Then, $\FFFST(\FFf{F})[n][\intintuptoin{k}]=\FRFFST{\Sf{D}}(\FRFf{R})[n]$ and $\FFFDT(\FFf{F})[n][\intintuptoex{k}]=\Sf{D}$. In particular. $\FFf{F}^k(n)=\FRFF{\FRFf{R}}{\Sf{D}}(n)$.
\end{lemma}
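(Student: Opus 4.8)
The plan is to prove by induction on $k$ the single sharp statement $\FFFDT(\FFf{F})[n][\intintuptoex{k}]=\Sf{D}$, together with the fact that every intermediate value of the recursion defining $\FRFF{\FRFf{R}}{\Sf{D}}(n)$ (Eqn.~(\ref{EFibredRFunction})) already lies in $\Z_p$; once this is established, all three displayed conclusions follow mechanically from Lemma~\ref{LFRF}. The heart of the matter, and the only place where the hypothesis $\FRFPavoiding(\FRFf{R})$ is used, is that avoidance lets one run the defining recursion \emph{backwards}: if $\FRFf{R}[r]$ sends some $x\in\FRFFdomain(\FRFf{R})$ into $p\Z_p$, then necessarily $x\equiv r\modulus{p}$, by the very definition of $\FPavoiding{p}{r}$ (Eqn.~(\ref{DFPavoiding})).

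For the base case $k=0$ one has $\Sf{D}=()$ and $\FFFDT(\FFf{F})[n][\intintuptoex{0}]=()$, so there is nothing to check. For the inductive step I would set $d\ce\Sf{D}[k-1]$ and $x\ce\FRFF{\FRFf{R}}{\Sf{D}[\intintuptoex{k-1}]}(n)$, so that $\FRFF{\FRFf{R}}{\Sf{D}}(n)=\FRFf{R}[d](x)/p$. Since $\FRFf{R}$ is closed and $n\in\Z_p\subseteq\FRFFdomain(\FRFf{R})$, every partial application stays inside $\FRFFdomain(\FRFf{R})$, so $x\in\FRFFdomain(\FRFf{R})$; and by hypothesis $\FRFF{\FRFf{R}}{\Sf{D}}(n)\in\Z_p$, i.e. $\FRFf{R}[d](x)\in p\Z_p$. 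The avoiding property then forces $x\in d+p\Z_p\subseteq\Z_p$. In particular $\FRFF{\FRFf{R}}{\Sf{D}[\intintuptoex{k-1}]}(n)=x\in\Z_p$, so the induction hypothesis applies to $n$ and $\Sf{D}[\intintuptoex{k-1}]$ and yields $\FFFDT(\FFf{F})[n][\intintuptoex{k-1}]=\Sf{D}[\intintuptoex{k-1}]$; combining this with Lemma~\ref{LFRF} also gives $\FFf{F}^{k-1}(n)=x$. Reading off the $(k-1)$-th digit then produces $\FFFDT(\FFf{F})[n][k-1]=\FFf{F}^{k-1}(n)\modulo p=x\modulo p=d$, because $x\equiv d\modulus{p}$, and concatenating with the prefix delivers $\FFFDT(\FFf{F})[n][\intintuptoex{k}]=\Sf{D}$.

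Having identified $\Sf{D}$ as the length-$k$ prefix of the $\FFf{F}$-digit expansion of $n$, I would finish by invoking Lemma~\ref{LFRF} with the full sequence $\FFFDT(\FFf{F})[n]$ (whose hypotheses hold since a $p$-adic system in weak canonical form is closed): it gives $\FFf{F}^j(n)=\FRFF{\FRFf{R}}{\FFFDT(\FFf{F})[n][\intintuptoex{j}]}(n)$ for every $j$, whence $\FFFST(\FFf{F})[n][\intintuptoin{k}]=\FRFFST{\Sf{D}}(\FRFf{R})[n]$ upon restricting to $j\in\intintuptoin{k}$, and in particular $\FFf{F}^k(n)=\FRFF{\FRFf{R}}{\Sf{D}}(n)$. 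The main obstacle is conceptual rather than computational: one must recognize that avoidance is exactly the condition making the digit sequence producing an integer output \emph{unique}, so that the single backward step $\FRFf{R}[d](x)\in p\Z_p\Rightarrow x\equiv d\modulus{p}$ simultaneously pins down both the integrality of $x$ and the identity of the digit $d$. The only subtlety to watch is ensuring that closedness keeps all intermediate values inside $\FRFFdomain(\FRFf{R})$, so that the avoiding hypothesis is genuinely applicable at each stage of the recursion.
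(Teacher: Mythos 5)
Your proof is correct, but it takes a genuinely different route from the paper's. You run a forward induction on the length $k$ of $\Sf{D}$: avoidance lets you invert the last step of the recursion, so the penultimate value $x=\FRFF{\FRFf{R}}{\Sf{D}[\intintuptoex{k-1}]}(n)$ is forced into $\Sf{D}[k-1]+p\Z_p\subseteq\Z_p$, the induction hypothesis applies to the truncated sequence, Lemma~\ref{LFRF} identifies $x$ with $\FFf{F}^{k-1}(n)$, and the next digit is read off as $x\modulo p$. The paper instead introduces an auxiliary element $m\in\Z_p$ whose $\FFf{F}$-digit expansion is $\Sf{D}\cdot\FFFDT(\FFf{F})[\FRFF{\FRFf{R}}{\Sf{D}}(n)]$ (via Lemma~\ref{LInfBlDTCompl} and Lemma~\ref{LInfBlDTEq}) and proves by a backward induction along the orbit that $\FFFST(\FFf{F})[m]$ and $\FRFFST{\Sf{D}}(\FRFf{R})[n]$ agree at indices $k,k-1,\ldots,0$, concluding $m=n$; besides avoidance, that argument also needs the injectivity of each $\FFf{F}[r]$ on $r+p\Z_p$ (Theorem~\ref{TFuncSuit}~(2) together with Lemma~\ref{LSuitFProp}~(1)) to match the two candidate preimages at each backward step. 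Your route is leaner: it uses only Lemma~\ref{LFRF} and the definition of avoidance, dispenses with the auxiliary element and the injectivity input, and makes transparent that avoidance is exactly the condition making the integral digit path unique. The one point you rightly flag --- that closedness keeps every intermediate value inside $\FRFFdomain(\FRFf{R})$ so that the avoiding hypothesis is actually applicable --- is indeed the only hypothesis check required, and you handle it correctly.
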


\begin{proof}
Let $m$ be the unique element of $\Z_p$ such that $\FFFDT(\FFf{F})[m]=\Sf{D}\cdot\FFFDT(\FFf{F})[\FRFF{\FRFf{R}}{\Sf{D}}(n)]$ (cf. Lemma~\ref{LInfBlDTCompl}). Then,
\begin{align}
\FFFDT(\FFf{F})[\FFf{F}^{k}(m)]
=\FFFDT(\FFf{F})[m][k,\infty]
=\FFFDT(\FFf{F})[\FRFF{\FRFf{R}}{\Sf{D}}(n)]
\end{align}
and hence
\begin{align}
\FFf{F}^{k}(m)=\FRFF{\FRFf{R}}{\Sf{D}}(n)
\end{align}
by Lemma~\ref{LInfBlDTEq}. Also,
\begin{align}
\FFFDT(\FFf{F})[m][\intintuptoex{k}]=\Sf{D}.
\end{align}

We claim that $\FFFST(\FFf{F})[m][k-\ell]=\FRFFST{\Sf{D}}(\FRFf{R})[n][k-\ell]$ for all $\ell\in\intintuptoin{k}$ (which in particular implies that $m=n$) and proceed by induction on $\ell$. If $\ell=0$, then
\begin{align}
\FFFST(\FFf{F})[m][k-\ell]
&=\FFFST(\FFf{F})[m][k]
=\FFf{F}^{k}(m)
=\FRFF{\FRFf{R}}{\Sf{D}}(n)
=\FRFFST{\Sf{D}}(\FRFf{R})[n][k]
=\FRFFST{\Sf{D}}(\FRFf{R})[n][k-\ell].
\end{align}
Now assume that $\FFFST(\FFf{F})[m][k-\ell]=\FRFFST{\Sf{D}}(\FRFf{R})[n][k-\ell]$ for some $\ell\in\intintuptoex{k}$. Then,
\begin{align}
\FRFf{R}[\Sf{D}[k-(\ell+1)]](\FRFFST{\Sf{D}}(\FRFf{R})[n][k-(\ell+1)])
&=\FRFf{R}[\Sf{D}[k-(\ell+1)]](\FRFF{\FRFf{R}}{\Sf{D[\intintuptoex{k-(\ell+1)}}}(n))\\
&=p\FRFF{\FRFf{R}}{\Sf{D}[\intintuptoex{k-\ell}]}(n)\\
&=p\FRFFST{\Sf{D}}(\FRFf{R})[n][k-\ell]\\
&=p\FFFST(\FFf{F})[m][k-\ell]\in p\Z_p.
\end{align}
Thus, it follows from $\FRFf{R}[\Sf{D}[k-(\ell+1)]](\Q_p\setminus(\Sf{D}[k-(\ell+1)]+p\Z_p))\cap p\Z_p=\emptyset$ that
\begin{align}
\FRFFST{\Sf{D}}(\FRFf{R})[n][k-(\ell+1)]\in\Sf{D}[k-(\ell+1)]+p\Z_p.
\end{align}
In addition, 
\begin{align}
\FFFST(\FFf{F})[m][k-(\ell+1)]\modulo p&=\FFFDT(\FFf{F})[m][k-(\ell+1)]=\FFFDT(\FFf{F})[m][\intintuptoex{k}][k-(\ell+1)]=\Sf{D}[k-(\ell+1)]
\end{align}
and hence
\begin{align}
\FFFST(\FFf{F})[m][k-(\ell+1)]\in\Sf{D}[k-(\ell+1)]+p\Z_p.
\end{align}
But then,
\begin{align}
\FFf{F}[\Sf{D}[k-(\ell+1)]](\FRFFST{\Sf{D}}(\FRFf{R})[n][k-(\ell+1)])
&=\FRFf{R}[\Sf{D}[k-(\ell+1)]](\FRFFST{\Sf{D}}(\FRFf{R})[n][k-(\ell+1)])\\
&=p\FFFST(\FFf{F})[m][k-\ell]\\
&=p\FFf{F}^{k-\ell}(m)\\
&=p\FFf{F}(\FFf{F}^{k-(\ell+1)}(m))\\
&=p\FFf{F}(\FFFST(\FFf{F})[m][k-(\ell+1)])\\
&=\FFf{F}[\Sf{D}[k-(\ell+1)]](\FFFST(\FFf{F})[m][k-(\ell+1)])
\end{align}
and hence Theorem~\ref{TFuncSuit}~(2) and Lemma~\ref{LSuitFProp}~(1) imply that
\begin{align}
\FFFST(\FFf{F})[m][k-(\ell+1)]
&=\FRFFST{\Sf{D}}(\FRFf{R})[n][k-(\ell+1)].
\end{align}
\end{proof}

One could assume that if the entries of $\FRFf{R}$ are polynomials, the assumptions of the lemma (i.e. that $\FRFf{R}$ is avoiding) could be loosened. The following examples show that this is not the case.

\begin{example}
\label{EEqSEqD}
$ $

\begin{theoremtable}
$\bullet$&Let $\FRFf{R}\ce(2x^2-5x+2,x+3)\in\SoFibredRationalFunctions{2}(\FRFPdomain{\Q_2},\FRFPclosed)$ and $\FFf{F}\ce\FRFFint(\FRFf{R}\vert_{\Z_p})\in\SoFibredFunctions{2}(\FFPdomain{\Z_2},\FFPweakcanonicalform,\FFPblock)$. Then, $\FRFFST{(1,0)}(\FRFf{R})[-2]=(-2,\frac{1}{2},0)$, but $\FFFST(\FFf{F})[-2][\intintuptoin{2}]=(-2,10,76)$ and $\FFFDT(\FFf{F})[-2][\intintuptoex{2}]=(0,0)$ (note: $\FFFST(\FFf{F})[1][\intintuptoin{2}]=(1,2,0)$ and $\FFFDT(\FFf{F})[1][\intintuptoex{2}]=(1,0)$).\tabularnewline
$\bullet$&Let $\FRFf{R}\ce(x^2+x,x^2-x)\in\SoFibredRationalFunctions{2}(\FRFPdomain{\Q_2},\FRFPclosed)$ and $\FFf{F}\ce\FRFFint(\FRFf{R}\vert_{\Z_p})\in\SoFibredFunctions{2}(\FFPdomain{\Z_2},\FFPweakcanonicalform,\FFPblock)$. Then, $\FRFFST{(0,1)}(\FRFf{R})[1]=(1,1,0)$, but $\FFFST(\FFf{F})[1][\intintuptoin{2}]=(1,0,0)$ and $\FFFDT(\FFf{F})[1][\intintuptoex{2}]=(1,0)$. Therefore, even for polynomials the condition $\FRFf{R}[r](\Q_p\setminus(r+p\Z_p))\cap p\Z_p=\emptyset$ in Lemma~\ref{LEqSEqD} is necessary and cannot even be reduced to $\FRFf{R}[r](\Q_p\setminus\Z_p)\cap p\Z_p=\emptyset$.
\end{theoremtable}
\end{example}

Using the above lemma we are able to prove the promised generalization of Theorem~\ref{TSuitFUnRoot} which is a further generalization of Hensel's Lemma.

\begin{theorem}
\label{TAvoidFRFUnFP}
Let $2\leq p\in\N$ and $\FRFf{R}\in\SoFibredRationalFunctions{p}(\FRFPclosed,\FRFPavoiding(\FRFf{R}))$ with $\Z_p\subseteq\FRFFdomain(\FRFf{R})$ such that $\FRFPintegral(\FRFf{R}\vert_{\Z_p})$, and $\FFf{F}\ce\FRFFint(\FRFf{R}\vert_{\Z_p})\in\SoSystems{p}(\FFPweakcanonicalform)$. Furthermore, let $\Sf{D}\in\CoSequences(\SPboundedby{\intintuptoex{p}},\SPlength{\N})$. Then, $\FRFF{\FRFf{R}}{\Sf{D}}$ has a unique fixed point in $\Z_p$.
\end{theorem}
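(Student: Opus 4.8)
The plan is to reduce the entire statement to Lemma~\ref{LFRF} and Lemma~\ref{LEqSEqD}, which together show that the action of $\FRFF{\FRFf{R}}{\Sf{D}}$ on $\Z_p$ is completely governed by the dynamics of the $p$-adic system $\FFf{F}\ce\FRFFint(\FRFf{R}\vert_{\Z_p})$. Write $k\ce\abs{\Sf{D}}\in\N$ (so $k\geq1$ and $\Sf{D}\neq()$, which makes $\Sf{D}^\infty$ a genuine infinite $\intintuptoex{p}$-bounded sequence). The key observation I would isolate is that an $n\in\Z_p$ is a fixed point of $\FRFF{\FRFf{R}}{\Sf{D}}$ if and only if $\FFf{F}^k(n)=n$ and the first $k$ digits of the $\FFf{F}$-digit expansion of $n$ are $\Sf{D}$, i.e.\ if and only if $\FFFDT(\FFf{F})[n]=\Sf{D}^\infty$. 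Once this equivalence is in place, both existence and uniqueness of the fixed point follow from Lemma~\ref{LInfBlDTCompl}, which guarantees that $\FFFDT(\FFf{F})\in\SoTables{p}$ realizes every infinite $\intintuptoex{p}$-bounded sequence exactly once as a row.

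For existence, I would first invoke Lemma~\ref{LInfBlDTCompl} to obtain the unique $n\in\Z_p$ with $\FFFDT(\FFf{F})[n]=\Sf{D}^\infty$. Since $\Sf{D}^\infty$ is purely periodic with period length $k$, we have $\FFFDT(\FFf{F})[n][k,\infty]=\Sf{D}^\infty=\FFFDT(\FFf{F})[n]$; combining this with the shift identity $\FFFDT(\FFf{F})[\FFf{F}^k(n)]=\FFFDT(\FFf{F})[n][k,\infty]$ (immediate from the definition of $\FFFDT$, cf.\ Lemma~\ref{LPermProp}~(4)) and the injectivity of rows (Lemma~\ref{LInfBlDTEq}) yields $\FFf{F}^k(n)=n$. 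Finally, applying Lemma~\ref{LFRF} to the expansion $\Sf{D}^\infty=\FFFDT(\FFf{F})[n]$ at index $k$ gives $\FRFF{\FRFf{R}}{\Sf{D}}(n)=\FRFF{\FRFf{R}}{(\Sf{D}^\infty)[\intintuptoex{k}]}(n)=\FFf{F}^k(n)=n$, so $n$ is a fixed point.

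For uniqueness, let $m\in\Z_p$ be any fixed point, so $\FRFF{\FRFf{R}}{\Sf{D}}(m)=m\in\Z_p$. This is exactly the hypothesis of Lemma~\ref{LEqSEqD}, whose conclusion furnishes $\FFFDT(\FFf{F})[m][\intintuptoex{k}]=\Sf{D}$ together with $\FFf{F}^k(m)=\FRFF{\FRFf{R}}{\Sf{D}}(m)=m$. Using the shift identity once more, $\FFf{F}^k(m)=m$ forces $\FFFDT(\FFf{F})[m]=\FFFDT(\FFf{F})[m][\intintuptoex{k}]\cdot\FFFDT(\FFf{F})[m][k,\infty]=\Sf{D}\cdot\FFFDT(\FFf{F})[m]$, and unwinding this self-referential identity (every length-$jk$ prefix equals $\Sf{D}^j$) gives $\FFFDT(\FFf{F})[m]=\Sf{D}^\infty$. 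By the uniqueness clause of Lemma~\ref{LInfBlDTCompl} (equivalently Lemma~\ref{LInfBlDTEq}), $m=n$.

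The proof is thus short, the real content having been packaged into Lemma~\ref{LEqSEqD}. I expect the only delicate point to be the bookkeeping that converts ``$\FFf{F}^k(n)=n$ with prescribed first $k$ digits'' into the clean statement $\FFFDT(\FFf{F})[n]=\Sf{D}^\infty$ and back; this is precisely where the avoidingness hypothesis $\FRFPavoiding(\FRFf{R})$ is doing its work, since without it Lemma~\ref{LEqSEqD}, and hence the uniqueness direction, can fail, as Example~\ref{EEqSEqD} demonstrates.
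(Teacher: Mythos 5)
Your proposal is correct and follows essentially the same route as the paper's own proof: both obtain the candidate $n$ from Lemma~\ref{LInfBlDTCompl} via $\FFFDT(\FFf{F})[n]=\Sf{D}^\infty$, verify fixedness using Lemma~\ref{LFRF} together with the shift identity and Lemma~\ref{LInfBlDTEq}, and derive uniqueness from Lemma~\ref{LEqSEqD}. The only difference is an immaterial reordering of the existence steps.
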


\noindent
A comparison of the assumptions of Theorem~\ref{TSuitFUnRoot} and Theorem~\ref{TAvoidFRFUnFP} shows that the latter is not a pure generalization of the first, as the $p$-fibred rational function $\FRFf{R}$ in Theorem~\ref{TAvoidFRFUnFP} is required to be avoiding, whereas the function $f$ in Theorem~\ref{TSuitFUnRoot} is not required to be $(p,r)$-avoiding. However, if this is the case, the unique root of Theorem~\ref{TSuitFUnRoot} corresponds to the unique fixed point in Theorem~\ref{TAvoidFRFUnFP} if $\abs{\Sf{D}}=1$ (cf. Lemma~\ref{LShiftedSuitF}). What Theorem~\ref{TAvoidFRFUnFP} generalizes is thus the length of $\Sf{D}$.

\begin{proof}[Proof of Theorem~\ref{TAvoidFRFUnFP}]
Let $k\ce\abs{\Sf{D}}$ and $n$ be the unique element of $\Z_p$ such that $\FFFDT(\FFf{F})[n]=\Sf{D}^\infty$ (cf. Lemma~\ref{LInfBlDTCompl}). Then, $\FRFF{\FRFf{R}}{\Sf{D}}(n)=\FFf{F}^{k}(n)$ by Lemma~\ref{LFRF}. In addition,
\begin{align}
\FFFDT(\FFf{F})[\FFf{F}^{k}(n)]=\FFFDT(\FFf{F})[n][k,\infty]=\Sf{D}^\infty[k,\infty]=\Sf{D}^\infty=\FFFDT(\FFf{F})[n]
\end{align}
and hence $\FFf{F}^{k}(n)=n$ by Lemma~\ref{LInfBlDTEq}. Therefore, $n$ is a fixed point of $\FRFF{\FRFf{R}}{\Sf{D}}$.

If $m\in\Z_p$ is a fixed point of $\FRFF{\FRFf{R}}{\Sf{D}}$, then $\FRFF{\FRFf{R}}{\Sf{D}}(m)=m\in\Z_p$ and Lemma~\ref{LEqSEqD} implies that $\FFFST(\FFf{F})[m][\intintuptoin{k}]=\FRFFST{\Sf{D}}(\FRFf{R})[m]$ and $\FFFDT(\FFf{F})[m][\intintuptoex{k}]=\Sf{D}$. In particular, $\FFf{F}^{k}(m)=\FRFF{\FRFf{R}}{\Sf{D}}(m)=m$ and therefore $\FFFDT(\FFf{F})[m]=\Sf{D}^\infty$. But then Lemma~\ref{LInfBlDTEq} implies that $m=n$.
\end{proof}

Just as Theorem~\ref{TPropPolyF} characterizes polynomials in $\Z_p[x]$ that are $(p,r)$-suitable (i.e. suitable to be the building blocks of $p$-adic systems), we will now try to characterize polynomials that are $(p,r)$-avoiding (i.e. suitable to be the building blocks of avoiding $p$-fibred rational functions). For this purpose we need an analogue of the $p$-adic valuation $\nu_p$ for the case where $p$ is not a prime. These and other technicalities of $p$-adic numbers are discussed in the appendix. Using the definitions and facts presented there, we are able to characterize $(p,r)$-avoiding polynomials.

\begin{theorem}
\label{TCharacAvPolyF}
Let $2\leq p\in\N$, $f=\sum_{i=0}^da_ix^i\in\Z_p[x]$ (note that for the whole theorem we define $0^0\ce1$ and assume $d=0$, $a_0=0$ if $f=0$), and $r\in\intintuptoex{p}$. Furthermore, let $s\in\N$, $q_1,\ldots,q_s\in\P$ the distinct prime factors of $p$, and for all $q\in\set{q_1,\ldots,q_s}$ let $t_q\in\intintuptoin{d+1}$ and $i_{q,1},\ldots,i_{q,t_q}\in\intintuptoin{d}$ be the longest possible strictly increasing sequence of indices satisfying $\nu_q(a_{i_{q,j}})\neq\infty$ for all $j\in\intint{1}{t_q}$. We set
\begin{align}
K&\ce\max\bigg(\bigg\{\frac{\nu_q(a_{i_{q,j+1}})-\nu_q(a_{i_{q,j}})+i_{q,j+1}(\nu_q(p)-1)}{(i_{q,j+1}-i_{q,j})\nu_q(p)}\;\bigg\vert\;q\in\set{q_1,\ldots,q_s}\land j\in\intint{1}{t_q-1}\bigg\}\cup\\
\nonumber
&\phantom{\vphantom{a}\ce\max\bigg(}\bigg\{\frac{\nu_q(a_{i_{q,t_q}})-\nu_q(p)+i_{q,t_q}(\nu_q(p)-1)}{i_{q,t_q}\nu_q(p)}\;\bigg\vert\;q\in\set{q_1,\ldots,q_s}\land t_q\geq1\land i_{q,t_q}\geq1\bigg\}\cup\set{1}\bigg).
\end{align}
Then,
\begin{align}
W\ce\left(\intintuptoex{p}\setminus\set{r}\right)\cup\set{a/p^k\mid k\in\intint{1}{K}\land a\in\intintuptoex{p^{dk+1}}}
\end{align}
is a finite witness set for $f$ being $(p,r)$-avoiding, i.e. $\FPavoiding{p}{r}(f)$ if and only if $f(W)\cap p\Z_p=\emptyset$.
\end{theorem}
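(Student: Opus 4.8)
The plan is to split the set $\Q_p\setminus(r+p\Z_p)$ on which avoidance is tested into its integral part $\Z_p\setminus(r+p\Z_p)$ and its non-integral part $\Q_p\setminus\Z_p$ (using $r+p\Z_p\subseteq\Z_p$), and to show that the first family $\intintuptoex{p}\setminus\set{r}$ in $W$ witnesses the former while the second family $\set{a/p^k\mid k\in\intint{1}{K}\land a\in\intintuptoex{p^{dk+1}}}$ witnesses the latter. The direction ``$\FPavoiding{p}{r}(f)\Rightarrow f(W)\cap p\Z_p=\emptyset$'' should be routine: the non-integral members of $W$ already lie outside $\Z_p\supseteq r+p\Z_p$, and the integral members reduce modulo $p$ to residues in $\intintuptoex{p}\setminus\set{r}$, so $(p,r)$-avoidance directly forbids their images from meeting $p\Z_p$. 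The substance of the theorem is therefore the converse, together with the claim that the single exponent bound $K$ and the single modulus $p^{dk+1}$ really do capture everything.

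For the integral part of the converse I would first dispatch $x\in\Z_p$ with $x\not\equiv r\modulus p$. Since $f\in\Z_p[x]$, Theorem~\ref{TPropPolyF}~(1) (weak $(p,r)$-suitability, equivalently the $1$-Lipschitz property of polynomials over $\Z_p$) gives $f(x)\equiv f(x\modulo p)\modulus p$, and $s\ce x\modulo p\in\intintuptoex{p}\setminus\set{r}\subseteq W$. The hypothesis $f(W)\cap p\Z_p=\emptyset$ then yields $f(s)\notin p\Z_p$, whence $f(x)\notin p\Z_p$; this already establishes avoidance on all of $\Z_p$.

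The heart of the proof is the non-integral part: for every $x\in\Q_p\setminus\Z_p$ one must show $f(x)\notin p\Z_p$. I would pass to the decomposition $\Q_p\cong\prod_i\Q_{q_i}$ induced by $p=\prod_i q_i^{e_i}$ and argue one prime $q\in\set{q_1,\ldots,q_s}$ at a time, recalling that $f(x)\in p\Z_p$ is equivalent to $\nu_q(f(x))\geq\nu_q(p)$ for all $q$. Writing $x$ in the normalized form $x=a/p^k$ with $a\in\Z_p$ (so $\nu_q(x)\geq-k\nu_q(p)$), I expect two regimes. When $k>K$, the defining inequalities for $K$ are exactly what guarantees that in the Newton-polygon-type competition $\min_i\bigl(\nu_q(a_i)+i\nu_q(x)\bigr)$ the contribution of the top index $i_{q,t_q}$ strictly dominates and already falls below $\nu_q(p)$, so $\nu_q(f(x))<\nu_q(p)$ and $f(x)\notin p\Z_p$; here the first family in the definition of $K$ controls when a higher-degree term overtakes a lower one, while the second family controls when the top term alone escapes $p\Z_p$. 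When $k\le K$, I would instead run a congruence argument: for $x'=a'/p^k$ with $a'\in\intintuptoex{p^{dk+1}}$ and $a'\equiv a\modulus{p^{dk+1}}$, each difference $x^i-x'^i$ carries a factor $x-x'$ of large enough valuation to absorb the at most $d$-fold denominator $p^k$, giving $f(x)-f(x')\in p\Z_p$; hence $f(x)\in p\Z_p\Leftrightarrow f(x')\in p\Z_p$, and since $x'\in W$ the hypothesis closes the case.

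The main obstacle will be the valuation bookkeeping behind $K$: one must verify, simultaneously for every prime divisor $q$ of $p$, that the two bulleted expressions are the correct crossover thresholds, translating the denominator exponent $k$ into the valuations $\nu_q(x)$ and comparing the terms $\nu_q(a_i)+i\nu_q(x)$ pairwise. Two further technical points need care: confirming that the modulus $p^{dk+1}$ — in particular the factor $d$ and the ``$+1$'' — is large enough for the $k\le K$ congruence argument against the worst-case valuation loss of a polynomial of degree at most $d$ on denominators $p^k$; and handling the composite case, where $\Z_p$ carries zero divisors, purely through the per-prime statements, recovering the global condition $f(x)\in p\Z_p$ from the conjunction over $q_1,\ldots,q_s$ by means of the appendix's description of $\nu_q$ and of $p\Z_p$.
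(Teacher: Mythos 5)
Your reduction of the integral case to $\intintuptoex{p}\setminus\set{r}$ via Theorem~\ref{TPropPolyF}~(1), and your congruence reduction $a/p^k\mapsto(a\modulo p^{dk+1})/p^k$ for denominator exponents $k\le K$, are both correct and are exactly the paper's moves (the valuation count showing that $p^{dk+1}$ absorbs the worst-case loss of a degree-$d$ polynomial on denominators $p^k$ checks out). Two caveats on the rest. A minor one: your justification of the ``avoiding $\Rightarrow$ no witness'' direction is not right as stated, because the points $a/p^k$ with $p^k\mid a$ are integral and can even lie in $r+p\Z_p$; the paper sidesteps this by only proving the substantive implication ``some $n\in\Q_p\setminus(r+p\Z_p)$ with $f(n)\in p\Z_p$ $\Rightarrow$ some $w\in W$ with $f(w)\in p\Z_p$''.

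The serious gap is your $k>K$ regime. The claim that $-\nu_p(x)>K$ forces $f(x)\notin p\Z_p$ is false for composite $p$. The prime $q$ that realizes the denominator exponent $k=-\nu_p(x)$ need not be a prime at which your Newton-polygon comparison has any content: at that $q$ one may have $t_q=0$ or $i_{q,t_q}=0$, so no term $\nu_q(a_i)+i\nu_q(x)$ ever drops below $\nu_q(p)$, while at the primes where $f$ does have high-order coefficients the corresponding component of $x$ may be integral or zero. Concretely, take $p=6$, $e_2\ce\varphi_6^{-1}((1,0))$, $e_3\ce\varphi_6^{-1}((0,1))$, $f(x)=e_2x+3e_3\in\Z_6[x]$, and $x=\varphi_6^{-1}((0,1/9))$. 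The formula gives $K=1$ (only $q=2$ contributes to the second set, with value $-1$), yet $-\nu_6(x)=2>K$ and
\begin{align}
f(x)=\varphi_6^{-1}((0,3))\in6\Z_6.
\end{align}
So non-integral points with $f$-value in $p\Z_p$ and denominator exponent exceeding $K$ do exist; the witness in $W$ (here at exponent $1$) is not obtained by evaluating $f$ at $x$ or at a truncation of $x$. The paper's proof instead shows that from any non-integral $n$ with $f(n)\in p\Z_p$ one can \emph{manufacture} some $v$ with $-\nu_p(v)\le K$ and $f(v)\in p\Z_p$, via a three-case analysis: if some $t_q=0$, take a pure idempotent point with a single $1/p$ entry; if every surviving component of $f$ along $n$ is effectively constant, take $v=m/p$; and in the general case take $v=bn$ for an idempotent $b\in\Z_p$ that kills the degenerate components, after which the pairwise-distinct-valuation argument at a well-chosen $q$ with $i_{q,t_q}\ge1$ bounds $-\nu_p(v)$ by $K$. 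Your proof needs this mechanism (or a restriction to prime $p$, where the only degenerate case is $f$ constant and is trivially handled).
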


\begin{proof}
We need to show
\begin{align}
\ex n\in\Q_p\setminus(r+p\Z_p):f(n)\in p\Z_p\quad\Rightarrow\quad\ex w\in W:f(w)\in p\Z_p.
\end{align}
Clearly, if $f(n)\in p\Z_p$ for some $n\in\Z_p\setminus(r+p\Z_p)$, then $w=n\modulo p\in W$ and $f(w)\in p\Z_p$. We are thus left to show that
\begin{align}
\ex n\in\Q_p\setminus\Z_p:f(n)\in p\Z_p\quad\Rightarrow\quad\ex w\in W:f(w)\in p\Z_p.
\end{align}
For that let $n\in\Q_p\setminus\Z_p$, $\ell\ce-\nu_p(n)\in\N$, and $m\ce np^\ell\in\Z_p$ and assume that $f(n)\in p\Z_p$. Then,
\begin{align}
f(n)
&=f(m/p^\ell)
=\sum_{i=0}^da_i\left(m/p^\ell\right)^i
=\sum_{i=0}^da_im^i/p^{i\ell}
=1/p^{d\ell}\sum_{i=0}^da_im^ip^{(d-i)\ell}
\end{align}
and thus
\begin{align}
f(n)p^{d\ell}=\sum_{i=0}^da_im^ip^{(d-i)\ell}\in p^{d\ell+1}\Z_p.
\end{align}
We may thus reformulate our goal as
\begin{align}
\ex v\in\Q_p\setminus\Z_p:-\nu_p(v)\leq K\land f(v)\in p\Z_p,
\end{align}
because for any such $v$ it follows analogously that
\begin{align}
f(v)p^{dk}=\sum_{i=0}^da_iu^ip^{(d-i)k}\in p^{dk+1}\Z_p,
\end{align}
where $k\ce-\nu_p(v)$ and $u\ce vp^k$, and $w\ce(u\modulo p^{dk+1})/p^k\in W$ thus satisfies $f(w)\in p\Z_p$. To prove this new goal we distinguish three cases:\\[\baselineskip]
\textbf{Case 1:} $\ex q\in\set{q_1,\ldots,q_s}:t_q=0$.\\
Let $v\ce\varphi_p^{-1}((0,\ldots,0,1/p,0,\ldots,0))\in\Q_p\setminus\Z_p$ where the ``$1/p$'' is at the $j$-th position if $q=q_j$. Then $k=1\leq K$ and $f(v)=0\in p\Z_p$.\\[\baselineskip]
\textbf{Case 2:} $\fa q\in\set{q_1,\ldots,q_s}:t_q\geq1$\\
\phantom{\textbf{Case 2:} }$\fa q\in\set{q_1,\ldots,q_s}:\nu_q(m)\neq\infty\Rightarrow i_{q,t_q}=0$.\\
In this case as a result we get
\begin{align}
\nu_q\left(a_im^ip^{(d-i)\ell}\right)
=\nu_q(a_i)+i\nu_q(m)+(d-i)\ell\nu_q(p)
=\infty
\end{align}
for all $q\in\set{q_1,\ldots, q_s}$ and $i\in\intint{1}{d}$, and hence $f(mp^c)=a_0\in p\Z_p$ for all $c\in\Z$. Setting $v\ce m/p$ thus implies $k=1\leq K$ and $f(v)\in p\Z_p$.\\[\baselineskip]
\textbf{Case 3:} $\fa q\in\set{q_1,\ldots,q_s}:t_q\geq1$\\
\phantom{\textbf{Case 3:} }$\ex q\in\set{q_1,\ldots,q_s}:\nu_q(m)\neq\infty\land i_{q,t_q}\geq1$.\\
We may assume without loss of generality that there is no $a\in\Z_p\setminus(r+p\Z_p)$ such that $f(a)\in p\Z_p$, because we already treated this case at the beginning of the proof. But then it follows that
\begin{align}
\ex q\in\set{q_1,\ldots,q_s}:\nu_q(m)<\ell\nu_q(p)\land i_{q,t_q}\geq1,
\end{align}
because otherwise we could find an $a\in\Z_p\setminus(r+p\Z_p)$ with $f(a)\in p\Z_p$ in the following way: set $b\ce\varphi_p^{-1}((b_1,\ldots,b_s))$ and $c\ce\varphi_p^{-1}((1-b_1,\ldots,1-b_s))$ with $b_j=(i_{q_j,t_{q_j}}=0\;?\;0,1)$ for all $j\in\intint{1}{s}$. Then $c\neq0$ (otherwise $\nu_q(m)\geq\ell\nu_q(p)\geq\nu_q(p)$ for all $q$ in $\set{q_1,\ldots,q_s}$, hence $m\in p\Z_p$ which is a contradiction), $bn$ and $bn+c$ are in $\Z_p$; they are not congruent modulo $p$, and both satisfy $f(bn)=f(bn+c)=f(n)\in p\Z_p$. Thus $a=bn$ or $a=bn+c$ contradicts our assumption that no such $a$ exists.

Now let $v\ce bn\in\Q_p\setminus\Z_p$, $k\ce-\nu_p(v)$, and $u\ce vp^k$. Then $f(v)=f(n)\in p\Z_p$ and we claim that $k\leq K$ which would complete the proof. Assume to the contrary that $k>K$ and let $q\in\set{q_1,\ldots,q_s}$ such that $\nu_q(u)\in\intintuptoex{\nu_q(p)}$ (such a $q$ exists because $\nu_p(u)=0$). Then $i_{q_,t_q}\geq1$ (otherwise $\nu_q(u)=\infty$ by $u=bnp^k$ and by the definition of $b$) and
\begin{align}
\nu_q\left(a_iu^ip^{(d-i)k}\right)
&=\nu_q(a_i)+i\nu_q(k)+(d-i)k\nu_q(p)\\
&\in\nu_q(a_i)+(d-i)k\nu_q(p)+i\intintuptoex{\nu_q(p)}
\end{align}
for all $i\in\intintuptoin{d}$. By assumption we have
\begin{align}
\frac{\nu_q(a_{i_{q,j+1}})-\nu_q(a_{i_{q,j}})+i_{q,j+1}(\nu_q(p)-1)}{(i_{q,j+1}-i_{q,j})\nu_q(p)}\leq K<k
\end{align}
or equivalently
\begin{align}
\nu_q(a_{i_{q,j+1}})+(d-i_{q,j+1})k\nu_q(p)+i_{q,j+1}(\nu_q(p)-1)<\nu_q(a_{i_{q,j}})+(d-i_{q,j})k\nu_q(p)
\end{align}
for all $j\in\intint{1}{t_q-1}$ and hence
\begin{align}
\nu_q\left(a_{i_{q,j_1}}u^{i_{q,j_1}}p^{(d-i_{q,j_1})k}\right)\neq\nu_q\left(a_{i_{q,j_2}}u^{i_{q,j_2}}p^{(d-i_{q,j_2})k}\right)
\end{align}
for all distinct $j_1,j_2\in\intint{1}{t_q}$. Furthermore, we have, again by assumption and by $t_q\geq 1$ and $i_{q_,t_q}\geq1$,
\begin{align}
\frac{\nu_q(a_{i_{q,t_q}})-\nu_q(p)+i_{q,t_q}(\nu_q(p)-1)}{i_{q,t_q}\nu_q(p)}\leq K<k
\end{align}
or equivalently
\begin{align}
\nu_q(a_{i_{q,t_q}})+(d-i_{q,t_q})k\nu_q(p)+i_{q,t_q}(\nu_q(p)-1)<(dk+1)\nu_q(p).
\end{align}
Thus we get
\begin{align}
\nu_q\left(f(v)p^{dk}\right)&=\inf\Big\{\nu_q\left(a_{i_{q,j}}u^{i_{q,j}}p^{(d-i_{q,j})k}\right)\;\Big\vert\;j\in\intint{1}{t_q}\Big\}\\
&=\nu_q\left(a_{i_{q,t_q}}u^{i_{q,t_q}}p^{(d-i_{q,t_q})k}\right)\\
&\leq\nu_q(a_{i_{q,t_q}})+(d-i_{q,t_q})k\nu_q(p)+i_{q,t_q}(\nu_q(p)-1)\\
&<(dk+1)\nu_q(p)
\end{align}
which contradicts $f(v)p^{dk}\in p^{dk+1}\Z_p$.
\end{proof}

\noindent
As an example consider the following polynomials: $f_0(x)\ce7x^3-4x^2+x-6$, $f_1(x)\ce3x^7-x+1$, $f_2(x)\ce5x^4+4x-1$. By Theorem~\ref{TCharacAvPolyF} all of the $f_r$ are $(3,r)$-avoiding, and by Theorem~\ref{TPropPolyF} they are $(3,r)$-suitable. Thus $\FRFf{R}\ce(f_0,f_1,f_2)\in\SoFibredRationalFunctions{3}$ is avoiding by definition and $\FFf{F}\ce\FRFFint(\FRFf{R}\vert_{\Z_p})=(f_0,f_1,f_2)\in\SoFibredFunctions{3}$ has the block property by Theorem~\ref{TFuncSuit}~(2). In addition, $\FRFf{R}$ is clearly closed and $\FFf{F}$ is in weak canonical form. Thus the conditions of Theorem~\ref{TAvoidFRFUnFP} are met and $\FRFF{\FRFf{R}}{\Sf{D}}$ has a unique fixed point in $\Z_p$ for every $\Sf{D}\in\CoSequences(\SPboundedby{\intintuptoex{p}},\SPlength{\N})$. Stripping off the machinery of $p$-fibred functions this implies that for every polynomial of the form $g_{r_1}\circ\ldots\circ g_{r_\ell}$, where $g_r\ce f_r/3$ for every $r\in\set{0,1,2}$, $\ell\in\N$, and $r_1,\ldots,r_\ell\in\set{0,1,2}$, there is a unique $n\in\Z_p$ satisfying $g_{r_1}\circ\ldots\circ g_{r_\ell}(n)=n$.

\section{Periodic and ultimately periodic digit expansions}
\label{SPeriodicExpansions}
One problem about $p$-adic systems we are particularly interested in concerns the characterization of the sets of all $p$-adic integers which have periodic or ultimately periodic digit expansions with respect to a given $p$-adic system. It is a natural generalization of the very specific question asked in the Collatz conjecture which claims that all natural numbers have periodic digit expansions with period $(1,0)$. If one could characterize the set of all $2$-adic integers having ultimately periodic $\FFf{F}_C$-digit expansions, one would probably find what numerous experiments suggest, namely that it is exactly the set $\Q\cap\Z_2$ of rational numbers with odd denominators. By Corollary~\ref{CInitPer} this would automatically prove that orbits of natural numbers under $\FFf{F}_C$ cannot diverge, which would be a significant step in proving the Collatz conjecture.

For $2\leq p\in\N$, $\FFf{F}\in\SoSystems{p}$, and $n\in\Z_p$ we say that $n$ is a \emph{periodic}, \emph{ultimately periodic}, or \emph{aperiodic point} of $\FFf{F}$ if $\FFFDT(\FFf{F})[n]$ is periodic, ultimately periodic, or aperiodic, respectively. Note that by Corollary~\ref{CInitPer} we could replace $\FFFDT(\FFf{F})[n]$ by $\FFFST(\FFf{F})[n]$ to get equivalent definitions. We define the sets
\begin{align}
\label{DFFFSoPP}
\FFFSoPP(\FFf{F})&\ce\set{n\in\Z_p\mid\SPperiodic(\FFFDT(\FFf{F})[n])}\\
\label{DFFFSoUPP}
\FFFSoUPP(\FFf{F})&\ce\set{n\in\Z_p\mid\SPultimatelyperiodic(\FFFDT(\FFf{F})[n])}\\
\label{DFFFSoAPP}
\FFFSoAPP(\FFf{F})&\ce\set{n\in\Z_p\mid\SPaperiodic(\FFFDT(\FFf{F})[n])}
\end{align}
of periodic, ultimately periodic, and aperiodic points of $\FFf{F}$. Furthermore, for every logical expression $E$ in the three unknown sets $\FFFSoPP$, $\FFFSoUPP$, and $\FFFSoAPP$ we define the following predicate on $\SoSystems{p}$ ($2\leq p\in\N$, $\FFf{F}\in\SoSystems{p}$):
\begin{flalign}
\pushleft{\FFPsatisfies{E}(\FFf{F})}&\Leftrightarrow E[\FFFSoPP\to\FFFSoPP(\FFf{F}),\FFFSoUPP\to\FFFSoUPP(\FFf{F}),\FFFSoAPP\to\FFFSoAPP(\FFf{F})]&\text{$\FFf{F}$ \emph{satisfies $E$}}\label{DFFPsatisfies}
\end{flalign}
where $E[\FFFSoPP\to\FFFSoPP(\FFf{F}),\FFFSoUPP\to\FFFSoUPP(\FFf{F}),\FFFSoAPP\to\FFFSoAPP(\FFf{F})]$ is the logical expression obtained by substituting $\FFFSoPP$ with $\FFFSoPP(\FFf{F})$, $\FFFSoUPP$ with $\FFFSoUPP(\FFf{F})$, and $\FFFSoAPP$ with $\FFFSoAPP(\FFf{F})$ in $E$, e.g. $[\FFFSoUPP=\Q\cap\Z_2](\FFf{F}_C)\Leftrightarrow\FFFSoUPP(\FFf{F}_C)=\Q\cap\Z_2$.

\noindent
Additionally, we define the following predicates on $\SoSystems{p}$ ($2\leq p\in\N$, $\FFf{F}\in\SoSystems{p}$, $A$ set):
\begin{flalign}
\pushleft{\FFPperiodicon{A}(\FFf{F})}&\Leftrightarrow[\FFFSoPP=A](\FFf{F})&\text{$\FFf{F}$ is \emph{periodic on $A$}}\label{DFFPperiodicon}\\
\pushleft{\FFPultimatelyperiodicon{A}(\FFf{F})}&\Leftrightarrow[\FFFSoUPP=A](\FFf{F})&\text{$\FFf{F}$ is \emph{ultimately periodic on $A$}}\label{DFFPultimatelyperiodicon}\\
\pushleft{\FFPaperiodicon{A}(\FFf{F})}&\Leftrightarrow[\FFFSoAPP=A](\FFf{F})&\text{$\FFf{F}$ is \emph{aperiodic on $A$}}\label{DFFPaperiodicon}
\end{flalign}

Using $\pi_{\FFf{F},\FFf{G}}$ we can characterize when two $p$-adic systems $\FFf{F}$ and $\FFf{G}$ are periodic, ultimately periodic, or aperiodic on the same sets.

\begin{lemma}
\label{LPeriodicOn}
Let $2\leq p\in\N$, $\FFf{F},\FFf{G}\in\SoSystems{p}$, $B\subseteq\CoSequences(\SPboundedby{\intintuptoex{p}},\neg\SPfinite)$, $A_{\FFf{F}}\ce\set{n\in\Z_p\mid\FFFDT(\FFf{F})[n]\in B}$, and $A_{\FFf{G}}\ce\set{n\in\Z_p\mid\FFFDT(\FFf{G})[n]\in B}$. Then,
\begin{align}
A_{\FFf{F}}=A_{\FFf{G}}\Leftrightarrow\pi_{\FFf{F},\FFf{G}}(A_\FFf{F})=A_\FFf{F}.
\end{align}
In particular,

\begin{theoremtable}
\theoremitem{(1)}&$\FFFSoPP(\FFf{F})=\FFFSoPP(\FFf{G})\Leftrightarrow\pi_{\FFf{F},\FFf{G}}(\FFFSoPP(\FFf{F}))=\FFFSoPP(\FFf{F})$\tabularnewline
\theoremitem{(2)}&$\FFFSoUPP(\FFf{F})=\FFFSoUPP(\FFf{G})\Leftrightarrow\pi_{\FFf{F},\FFf{G}}(\FFFSoUPP(\FFf{F}))=\FFFSoUPP(\FFf{F})$\tabularnewline
\theoremitem{(3)}&$\FFFSoAPP(\FFf{F})=\FFFSoAPP(\FFf{G})\Leftrightarrow\pi_{\FFf{F},\FFf{G}}(\FFFSoAPP(\FFf{F}))=\FFFSoAPP(\FFf{F})$.
\end{theoremtable}
\end{lemma}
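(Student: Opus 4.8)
The plan is to unwind the definitions of $A_{\FFf{F}}$ and $A_{\FFf{G}}$ in terms of the digit-expansion maps $\psi_\FFf{F}$ and $\psi_\FFf{G}$ and then to observe that the permutation $\pi_{\FFf{F},\FFf{G}}$ sends $A_\FFf{F}$ exactly onto $A_\FFf{G}$, after which the stated equivalence is immediate. Recall that $\psi_\FFf{F}(n)=\FFFDT(\FFf{F})[n]$ is a bijection from $\Z_p$ onto $\CoSequences(\SPboundedby{\intintuptoex{p}},\neg\SPfinite)$, and likewise for $\FFf{G}$ (this is the bijectivity established via Lemma~\ref{LInfBlDTCompl} and used in the definition of $\psi_\FFf{F}$). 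By the very definitions of $A_\FFf{F}$ and $A_\FFf{G}$ we then have $A_\FFf{F}={\psi_\FFf{F}}^{-1}(B)$ and $A_\FFf{G}={\psi_\FFf{G}}^{-1}(B)$.

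First I would compute the image of $A_\FFf{F}$ under $\pi_{\FFf{F},\FFf{G}}={\psi_\FFf{G}}^{-1}\circ\psi_\FFf{F}$. Applying $\psi_\FFf{F}$ to $A_\FFf{F}={\psi_\FFf{F}}^{-1}(B)$ returns exactly $B$, since $\psi_\FFf{F}$ is surjective onto its codomain and $B$ is a subset of that codomain; hence $\pi_{\FFf{F},\FFf{G}}(A_\FFf{F})={\psi_\FFf{G}}^{-1}\!\left(\psi_\FFf{F}({\psi_\FFf{F}}^{-1}(B))\right)={\psi_\FFf{G}}^{-1}(B)=A_\FFf{G}$. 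With the bookkeeping identity $\pi_{\FFf{F},\FFf{G}}(A_\FFf{F})=A_\FFf{G}$ in hand, the main equivalence follows at once: $\pi_{\FFf{F},\FFf{G}}(A_\FFf{F})=A_\FFf{F}$ holds if and only if $A_\FFf{G}=A_\FFf{F}$.

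For the three ``In particular'' statements I would simply instantiate $B$ as the set of periodic, ultimately periodic, respectively aperiodic sequences inside $\CoSequences(\SPboundedby{\intintuptoex{p}},\neg\SPfinite)$. Since every $\FFFDT(\FFf{F})[n]$ lies in this codomain, choosing $B\ce\set{\Sf{S}\mid\SPperiodic(\Sf{S})}$ makes $A_\FFf{F}$ equal to $\FFFSoPP(\FFf{F})$ and $A_\FFf{G}$ equal to $\FFFSoPP(\FFf{G})$, giving (1); the choices $\SPultimatelyperiodic$ and $\SPaperiodic$ give (2) and (3) analogously. Thus all three are instances of the displayed equivalence.

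I expect no genuine obstacle here: the entire content reduces to the identity $\pi_{\FFf{F},\FFf{G}}(A_\FFf{F})=A_\FFf{G}$. The only point requiring a little care is the surjectivity of $\psi_\FFf{F}$ onto its codomain, which is precisely what guarantees $\psi_\FFf{F}({\psi_\FFf{F}}^{-1}(B))=B$ (an equality that could fail for an arbitrary map but holds because $\psi_\FFf{F}$ is a bijection), together with the harmless verification that the three chosen sets $B$ reproduce $\FFFSoPP$, $\FFFSoUPP$, and $\FFFSoAPP$.
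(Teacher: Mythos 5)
Your proposal is correct and follows essentially the same route as the paper: both reduce everything to the identities $A_\FFf{F}={\psi_\FFf{F}}^{-1}(B)$, $A_\FFf{G}={\psi_\FFf{G}}^{-1}(B)$ and the bijectivity of the $\psi$-maps, yielding $\pi_{\FFf{F},\FFf{G}}(A_\FFf{F})=A_\FFf{G}$, after which the equivalence and the three specializations of $B$ are immediate. No gaps.
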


\begin{proof}
First we observe that $A_{\FFf{F}}=\psi_{\FFf{F}}^{-1}(B)$ and $A_{\FFf{G}}=\psi_{\FFf{G}}^{-1}(B)$ by definition of $\psi_{\FFf{F}}$ and $\psi_{\FFf{G}}$. Since $\psi_{\FFf{F}}$ and $\psi_{\FFf{G}}$ are bijective it follows that
\begin{align}
A_{\FFf{F}}=A_{\FFf{G}}
&\Leftrightarrow
\psi_{\FFf{G}}(A_{\FFf{F}})=\psi_{\FFf{G}}(A_{\FFf{G}})=\psi_{\FFf{G}}(\psi_{\FFf{G}}^{-1}(B))=B=\psi_{\FFf{F}}(\psi_{\FFf{F}}^{-1}(B))=\psi_{\FFf{F}}(A_{\FFf{F}})\\
&\Leftrightarrow
A_{\FFf{F}}=\psi_{\FFf{G}}^{-1}(\psi_{\FFf{G}}(A_{\FFf{F}}))=\psi_{\FFf{G}}^{-1}(\psi_{\FFf{F}}(A_{\FFf{F}}))=\pi_{\FFf{F},\FFf{G}}(A_\FFf{F}).
\end{align}

The ''In particular'' part follows directly if one sets $B\ce\set{\Sf{S}\in\CoSequences(\SPboundedby{\intintuptoex{p}},\neg\SPfinite)\mid\SPperiodic(\Sf{S})}$, $B\ce\set{\Sf{S}\in\CoSequences(\SPboundedby{\intintuptoex{p}},\neg\SPfinite)\mid\SPultimatelyperiodic(\Sf{S})}$, or $B\ce\set{\Sf{S}\in\CoSequences(\SPboundedby{\intintuptoex{p}},\neg\SPfinite)\mid\SPaperiodic(\Sf{S})}$ respectively.
\end{proof}

\noindent
In the upcoming section we will prove direct formulas for $\pi_{\FFf{F},\FFf{G}}(n)$, $n\in\Z_p$ for several combinations of $p$-adic systems $\FFf{F}$ and $\FFf{G}$. For example, Theorem~\ref{TConstIrr} implies that $\pi_{(x,3x+1),(x,3x+c)}(n)=cn$ for all odd integers $c$. Since the mapping $n\mapsto cn$ maps $\Q\cap\Z_2$ to itself, it follows from the ``In particular'' part of Lemma~\ref{LPeriodicOn} that $(x,3x+1)$ is ultimately periodic on $\Q\cap\Z_2$ if and only if $(x,3x+c)$ is ultimately periodic on $\Q\cap\Z_2$. It thus suffices to prove that $(x,3x+c)$ is ultimately periodic on $\Q\cap\Z_2$ for any $c$ to automatically prove it for all $c$. Furthermore, if a $2$-adic system $\FFf{G}$ could be found which can be proven to be ultimately periodic on $\Q\cap\Z_2$ and for which $\pi_{(x,3x+1),\FFf{G}}(n)$ can be directly computed by a formula, such that $\pi_{(x,3x+1),\FFf{G}}(\Q\cap\Z_2)=\Q\cap\Z_2$ can be demonstrated, it would again follow from the ``In particular'' part of Lemma~\ref{LPeriodicOn} that $(x,3x+1)$ is ultimately periodic on $\Q\cap\Z_2$. Unfortunately, such a $\FFf{G}$ could not be found so far, but further investigations in this direction are made in the upcoming section.

\myparagraphtoc{Contractive and expansive $p$-adic systems.}
In the special cases where a $p$-adic system $\FFf{F}$ is either \emph{contractive} or \emph{expansive} we can easily characterize $\FFFSoPP(\FFf{F})\cap\Q$ and $\FFFSoUPP(\FFf{F})\cap\Q$ respectively.

\noindent
We define the following predicates on $\SoSystems{p}$ ($2\leq p\in\N$, $\FFf{F}\in\SoSystems{p}$):
\bgroup
\allowdisplaybreaks
\begin{flalign}
\pushleft{\FFPcontractive(\FFf{F})}&\Leftrightarrow\FFf{F}(\Q\cap\Z_p)\subseteq\Q\cap\Z_p&\hspace{-3em}\text{$\FFf{F}$ is \emph{contractive}}\label{DFFPcontractive}\\
\nonumber
&\phantom{\vphantom{a}\Leftrightarrow\vphantom{a}}\ex 0\leq M\in\R:\fa n\in\Q\cap\Z_p:\abs{n}>M\Rightarrow\abs{\FFf{F}(n)}<\abs{n}\\
\pushleft{\FFPexpansive(\FFf{F})}&\Leftrightarrow\FFf{F}(\Q\cap\Z_p)\subseteq\Q\cap\Z_p&\hspace{-2em}\text{$\FFf{F}$ is \emph{expansive}}\label{DFFPexpansive}\\
\nonumber
&\phantom{\vphantom{a}\Leftrightarrow\vphantom{a}}\ex 0\leq M\in\R:\fa n\in\Q\cap\Z_p:\abs{n}>M\Rightarrow\abs{\FFf{F}(n)}>\abs{n}\\
\pushleft{\mathrlap{\FFPmixed(\FFf{F})}}&\Leftrightarrow\FFf{F}(\Q\cap\Z_p)\subseteq\Q\cap\Z_p&\hspace{-4em}\text{$\FFf{F}$ is \emph{of mixed type}}\label{DFFPmixed}\\
\nonumber
&\phantom{\vphantom{a}\Leftrightarrow\vphantom{a}}\neg\FFPcontractive(\FFf{F})\land\neg\FFPexpansive(\FFf{F})\\
\pushleft{\FFPcontractsdenominators(\FFf{F})}&\Leftrightarrow\fa a/b\in\Q\cap\Z_p\text{ with }(a,b)\in\Z\times\N\text{ coprime}:&\hspace{-7em}\text{$\FFf{F}$ \emph{contracts denominators}}\label{DFFPcontractsdenominators}\\
\nonumber
&\phantom{\vphantom{a}\Leftrightarrow\vphantom{a}}\quad\ex(c,d)\in\Z\times\N\text{ coprime}:\FFf{F}(a/b)=c/d\land d\leq b\\
\pushleft{\FFPexpandsdenominators(\FFf{F})}&\Leftrightarrow\fa a/b\in\Q\cap\Z_p\text{ with }(a,b)\in\Z\times\N\text{ coprime}:&\hspace{-7em}\text{$\FFf{F}$ \emph{expands denominators}}\label{DFFPexpandsdenominators}\\
\nonumber
&\phantom{\vphantom{a}\Leftrightarrow\vphantom{a}}\quad\ex(c,d)\in\Z\times\N\text{ coprime}:\FFf{F}(a/b)=c/d\land d>b\\
\pushleft{\FFPmixesdenominators(\FFf{F})}&\Leftrightarrow\FFf{F}(\Q\cap\Z_p)\subseteq\Q\cap\Z_p&\hspace{-7em}\text{$\FFf{F}$ \emph{mixes denominators}}\label{DFFPmixesdenominators}\\
\nonumber
&\phantom{\vphantom{a}\Leftrightarrow\vphantom{a}}\neg\FFPcontractsdenominators(\FFf{F})\land\neg\FFPexpandsdenominators(\FFf{F})
\end{flalign}
\egroup

\noindent
Note that $\FFPcontractsdenominators(\FFf{F})$ and $\FFPexpandsdenominators(\FFf{F})$ both imply $\FFf{F}(\Q\cap\Z_p)\subseteq\Q\cap\Z_p$.

If a $p$-fibred function is contractive or expansive, this has the following consequences for periodic and ultimately periodic digit expansions.

\begin{lemma}
\label{LContrExp}
Let $2\leq p\in\N$, $\FFf{F}\in\SoSystems{p}$, $0\leq M\in\R$ as in the definition above, and $A\ce\set{n\in\Q\cap\Z_p\mid\abs{n}\leq M}$. Then,

\begin{theoremtable}[rlX]
\theoremitem{(1)}&$\FFPcontractive(\FFf{F})$&$\hspace{-3.pt}\quad\Rightarrow\quad\FFFSoPP(\FFf{F})\cap\Q\subseteq\bigcup_{k=0}^\infty\FFf{F}^k(A)$\tabularnewline
&&\leavevmode\phantom{$\hspace{-3.pt}\quad\Rightarrow\quad\vphantom{a}$}In particular: Every period of $\FFf{F}$ that contains a rational number\tabularnewline
&&\leavevmode\phantom{$\hspace{-3.pt}\quad\Rightarrow\quad\vphantom{a}$In particular: }also contains an element of $A$\tabularnewline
\theoremitem{(2)}&$\FFPcontractive(\FFf{F})\land\FFPcontractsdenominators(\FFf{F})$&$\hspace{-3.pt}\quad\Rightarrow\quad\FFFSoUPP(\FFf{F})\cap\Q=\Q\cap\Z_p$\tabularnewline
&&\leavevmode\phantom{$\hspace{-3.pt}\quad\Rightarrow\quad\vphantom{a}$}In particular: $\FFFSoUPP(\FFf{F})\subseteq\Q\Rightarrow\FFPultimatelyperiodicon{\Q\cap\Z_p}(\FFf{F})$\tabularnewline
\theoremitem{(3)}&$\FFPexpansive(\FFf{F})$&$\hspace{-3.pt}\quad\Rightarrow\quad\FFFSoUPP(\FFf{F})\cap\Q\subseteq A$\tabularnewline
&&\leavevmode\phantom{$\hspace{-3.pt}\quad\Rightarrow\quad\vphantom{a}$}In particular: $\neg\FFPultimatelyperiodicon{\Q\cap\Z_p}(\FFf{F})$\tabularnewline
\theoremitem{(4)}&$\FFPexpandsdenominators(\FFf{F})$&$\hspace{-3.pt}\quad\Rightarrow\quad\FFFSoUPP(\FFf{F})\cap\Q=\emptyset$\tabularnewline
&&\leavevmode\phantom{$\hspace{-3.pt}\quad\Rightarrow\quad\vphantom{a}$}In particular: $\neg\FFPultimatelyperiodicon{\Q\cap\Z_p}(\FFf{F})$.
\end{theoremtable}
\end{lemma}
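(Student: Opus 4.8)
The plan is to reduce every claim about digit expansions to a statement about the orbit sequence $(\FFf{F}^k(n))_{k\in\Nz}$ by way of Corollary~\ref{CInitPer}, which guarantees that $\FFFDT(\FFf{F})[n]$ is periodic (resp. ultimately periodic) exactly when $\FFFST(\FFf{F})[n]$ is. The two elementary facts I will invoke repeatedly are that the orbit of a point under $\FFf{F}$ is purely periodic iff $\FFf{F}^L(n)=n$ for some $L\in\N$, and that it is ultimately periodic iff $\set{\FFf{F}^k(n)\mid k\in\Nz}$ is finite. Here $\abs{\cdot}$ is the archimedean absolute value, so ``bounded denominator together with bounded absolute value'' will mean ``finite set''. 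Since each of $\FFPcontractive$, $\FFPexpansive$, $\FFPcontractsdenominators$, and $\FFPexpandsdenominators$ bundles in the condition $\FFf{F}(\Q\cap\Z_p)\subseteq\Q\cap\Z_p$, the orbit of a rational point stays inside $\Q\cap\Z_p$ in every case, so the respective monotonicity hypotheses can be applied at every step along an orbit.

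For (1) I would take a rational periodic point $n$, whose orbit is a finite cycle, and pick the cycle element $m$ of least absolute value. If $\abs{m}>M$, then contractivity gives $\abs{\FFf{F}(m)}<\abs{m}$ with $\FFf{F}(m)$ again on the cycle, contradicting minimality; hence $\abs{m}\leq M$, i.e. $m\in A$, and $n$ lies in the forward orbit of $m$, so $n\in\bigcup_{k=0}^{\infty}\FFf{F}^k(A)$. The ``In particular'' part is then just the observation that $m\in A$ lies on the same cycle as $n$.

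For (3) and (4) the argument is a one-line monotonicity contradiction. In (3), if $n\in\Q$ is ultimately periodic with $\abs{n}>M$, expansiveness forces $\abs{\FFf{F}^k(n)}$ to be strictly increasing (every iterate still exceeds $M$), so the orbit takes infinitely many values, contradicting ultimate periodicity; hence $n\in A$. In (4), writing each rational iterate in lowest terms, ``expands denominators'' makes the denominators strictly increasing along the orbit, so the iterates are pairwise distinct and the orbit is infinite, again ruling out ultimate periodicity; thus no rational point is ultimately periodic. In both cases the ``In particular'' follows by exhibiting a rational point of $\Z_p$ that is therefore excluded from $\FFFSoUPP(\FFf{F})$ (a large integer $>M$ for (3), any point such as $0$ for (4)), so that $\FFFSoUPP(\FFf{F})\neq\Q\cap\Z_p$, i.e. $\neg\FFPultimatelyperiodicon{\Q\cap\Z_p}(\FFf{F})$.

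The only genuinely non-trivial step, and where I expect the real work to lie, is (2). Here I must show that every $n=a/b\in\Q\cap\Z_p$ in lowest terms is ultimately periodic, which amounts to proving its orbit is finite, and the difficulty is that contractivity helps only while $\abs{\cdot}>M$, so a small iterate could in principle be mapped to a large one. Because $\FFf{F}$ contracts denominators, denominators are non-increasing along the orbit and hence all $\leq b$, so the set $B$ of rationals in $\Z_p$ with denominator $\leq b$ and absolute value $\leq M$ is \emph{finite} (finitely many admissible denominators, each with a bounded numerator). Every excursion to large absolute value must therefore start from a point of $B$, so $M''\ce\max\set{\abs{\FFf{F}(y)}\mid y\in B}$ is finite, and a short induction yields $\abs{\FFf{F}^k(n)}\leq\max(\abs{n},M,M'')$ for all $k$: at each step either the previous iterate already lies in $B$ (so the next is bounded by $M''$), or it has absolute value $>M$ (so contractivity makes the next strictly smaller). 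Thus the orbit has bounded denominator and bounded absolute value, hence lies in a finite set and is ultimately periodic, giving $\Q\cap\Z_p\subseteq\FFFSoUPP(\FFf{F})$; the reverse inclusion $\FFFSoUPP(\FFf{F})\cap\Q\subseteq\Q\cap\Z_p$ is immediate since $\FFFSoUPP(\FFf{F})\subseteq\Z_p$, and the ``In particular'' then follows because $\FFFSoUPP(\FFf{F})\subseteq\Q$ forces $\FFFSoUPP(\FFf{F})=\FFFSoUPP(\FFf{F})\cap\Q=\Q\cap\Z_p$. The main obstacle is precisely the bookkeeping in this finiteness argument, namely securing the finiteness of $B$ and the uniform bound $M''$ while correctly combining the two distinct ``contraction'' hypotheses.
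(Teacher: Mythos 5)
Your proposal is correct and follows essentially the same route as the paper: reduce to the orbit via Corollary~\ref{CInitPer}, locate a cycle element of absolute value at most $M$ for (1), combine bounded denominators with contractivity to trap the orbit in a finite set for (2), and read (3) and (4) off the monotonicity of absolute values, respectively denominators. Your write-up of (2) is in fact somewhat more explicit than the paper's, which merely asserts that $\bigcup_{k=0}^\infty\FFf{F}^k(A\cap B)$ is finite, whereas you supply the inductive bound $\abs{\FFf{F}^k(n)}\leq\max(\abs{n},M,M'')$ that justifies it.
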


\begin{proof}
$ $\\
\proofitem{(1)}%
Let $n\in\FFFSoPP(\FFf{F})\cap\Q$. Then, $\abs{\FFf{F}^{\ell+1}(n)}\geq\abs{\FFf{F}^\ell(n)}$ for some $\ell\in\Nz$ (otherwise $n$ cannot be a periodic point of $\FFf{F}$) and thus $\abs{\FFf{F}^\ell(n)}\leq M$ by definition of $M$ which proves the ``In particular'' part. Also, $n=\FFf{F}^k(\FFf{F}^\ell(n))$ for some $k\in\Nz$ and thus we get $n\in\bigcup_{k=0}^\infty\FFf{F}^k(A)$ as claimed.

\noindent
\proofitem{(2)}%
Let $n=a/b\in\Q\cap\Z_p$ with $(a,b)\in\Z\times\N$ coprime and
\begin{align}
B\ce\set{c/d\in\Q\mid(c,d)\in\Z\times\N\land d\leq b}.
\end{align}
Then, $\FFPcontractive(\FFf{F})$ and $\FFPcontractsdenominators(\FFf{F})$ imply that $\set{\FFf{F}^k(n)\mid k\in\Nz}$ is contained in $\bigcup_{k=0}^\infty\FFf{F}^k\left(A\cap B\right)$ which is a finite set. Thus $n$ is an ultimately periodic point of $\FFf{F}$.

\noindent
(3) and (4) follow directly from the definitions.
\end{proof}

\noindent
An important class of $p$-adic systems that are expansive (and thus cannot be ultimately periodic on $\Q\cap\Z_p$) is given by $(\Q\cap\Z_p)$-polynomial $p$-adic systems where each polynomial is either of degree $2$ or higher or has a linear coefficient greater than $p$ in absolute value.

\begin{theorem}
\label{TPolyExp}
Let $2\leq p\in\N$ and $\FFf{F},\FFf{G}\in\SoSystems{p}(\FRFPpolynomialcoefficients{\Q\cap\Z_p})$ such that $\FFf{F}[r]$ either is of degree $2$ or higher or $\FFf{F}[r]=a_r+b_rx$ with $\abs{b_r}>p$ for all $r\in\intintuptoex{p}$. Then,

\begin{theoremtable}
\theoremitem{(1)}&$\FFPexpansive(\FFf{F})$\tabularnewline
&In particular: $\neg\FFPultimatelyperiodicon{\Q\cap\Z_p}(\FFf{F})$\tabularnewline
\theoremitem{(2)}&$\FFPcontractive(\FFf{G})\Rightarrow\FRFPlinearpolynomial(\FFf{G})$.
\end{theoremtable}
\end{theorem}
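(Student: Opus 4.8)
The plan is to reduce both parts to elementary estimates on the archimedean absolute value of the action of a polynomial $p$-adic system, exploiting that for $n\in r+p\Z_p$ the image $\FFf{F}(n)=(\FFf{F}[r](n)-\FFf{F}[r](n)\modulo p)/p$ (cf.\ Eqn.~(\ref{EFibredFunction})) differs from $\FFf{F}[r](n)/p$ only by the bounded additive defect $\FFf{F}[r](n)\modulo p\in\intintuptoex{p}$. First I would record that $\FFf{F}(\Q\cap\Z_p)\subseteq\Q\cap\Z_p$: for rational $n$ each $\FFf{F}[r]$ takes a rational value, $\FFf{F}[r](n)\modulo p$ is an integer, and closedness of $\FFf{F}$ keeps the quotient in $\Z_p$, so the quotient lies in $\Q\cap\Z_p$; the same holds for $\FFf{G}$. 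This disposes of the defining inclusion in $\FFPexpansive$, $\FFPcontractive$, and leaves only the growth inequality to verify.

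For (1) I would fix $r\in\intintuptoex{p}$ and write $\FFf{F}[r]\vert_{r+p\Z_p}=\sum_{i=0}^{d_r}a_{r,i}x^i$. If $d_r\geq2$ then $a_{r,d_r}\neq0$ and standard polynomial growth gives a threshold $M_r$ beyond which $\abs{\FFf{F}[r](n)}\geq\tfrac12\abs{a_{r,d_r}}\abs{n}^{d_r}$, whence $\abs{\FFf{F}(n)}\geq(\tfrac12\abs{a_{r,d_r}}\abs{n}^{d_r}-(p-1))/p>\abs{n}$ once $\abs{n}$ is large, because $d_r\geq2$. If instead $\FFf{F}[r]=a_r+b_rx$ with $\abs{b_r}>p$, then $\abs{\FFf{F}(n)}\geq(\abs{b_r}\abs{n}-\abs{a_r}-(p-1))/p$, which exceeds $\abs{n}$ as soon as $\abs{n}>(\abs{a_r}+p-1)/(\abs{b_r}-p)$, using $\abs{b_r}-p>0$. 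Taking $M$ to be the maximum of the finitely many thresholds $M_r$ yields a single bound with $\abs{\FFf{F}(n)}>\abs{n}$ for all rational $n$ with $\abs{n}>M$, i.e.\ $\FFPexpansive(\FFf{F})$. The ``In particular'' statement $\neg\FFPultimatelyperiodicon{\Q\cap\Z_p}(\FFf{F})$ is then immediate from Lemma~\ref{LContrExp}~(3).

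For (2) I would argue by contraposition: assume $\FFf{G}$ is $(\Q\cap\Z_p)$-polynomial but not linear-polynomial, so some branch $\FFf{G}[r]\vert_{r+p\Z_p}$ has degree $d\geq2$ with leading coefficient $a\neq0$. Along the integer points $n=r+pk$, $k\in\N$, which all lie in $(r+p\Z_p)\cap\Q$, one has $\abs{n}\to\infty$ and $\abs{\FFf{G}(n)}\geq(\abs{\FFf{G}[r](n)}-(p-1))/p$ grows like $\abs{a}\abs{n}^d/p$, so $\abs{\FFf{G}(n)}/\abs{n}\to\infty$ and in particular $\abs{\FFf{G}(n)}>\abs{n}$ for points $n$ of arbitrarily large absolute value. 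This contradicts $\FFPcontractive(\FFf{G})$, which demands $\abs{\FFf{G}(n)}<\abs{n}$ for every rational $n$ with $\abs{n}>M$. Hence every branch has degree at most $1$, which is exactly $\FRFPlinearpolynomial(\FFf{G})$.

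The substance of the argument lies entirely in the uniform archimedean growth estimates, so the main point requiring care is the interplay between the fixed, but possibly small, archimedean sizes $\abs{a_{r,i}}$ of the rational coefficients and the bounded additive defect $\FFf{F}[r](n)\modulo p$: one must make each threshold depend only on the coefficients of its single branch and then amalgamate the finitely many branches into one $M$. A secondary point is to confirm that the test points used in (2) genuinely lie in the intended residue class $r+p\Z_p$ and escape to infinity in the real absolute value; the choice $n=r+pk$ settles this. No deeper obstacle is expected, since degree $\geq2$ forces super-linear growth and the linearity threshold $\abs{b_r}>p$ was built precisely to beat the division by $p$.
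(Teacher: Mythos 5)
Your proposal is correct and follows essentially the same route as the paper: both reduce everything to archimedean growth estimates for each branch $\FFf{F}[r]$ (the paper asserts the existence of the thresholds $M_r$ with $\abs{\FFf{F}[r](n)-\FFf{F}[r](n)\modulo p}>p\abs{n}$ in one line where you spell out the degree-$\geq2$ and linear cases separately), takes the maximum over the finitely many branches, and handles (2) by noting that a degree-$\geq2$ branch forces super-linear growth incompatible with contractiveness. The only difference is that you supply more detail than the paper does, including the explicit reference to Lemma~\ref{LContrExp} for the ``In particular'' part, which the paper leaves implicit.
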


\noindent
Note that by Theorem~\ref{TFuncSuit}~(2) and Theorem~\ref{TPropPolyF}~(3) we get $\SoSystems{p}(\FRFPpolynomialcoefficients{\Q\cap\Z_p})=\SoSystems{p}(\FRFPpolynomialcoefficientsdegree{\Q\cap\Z_p}{\intint{1}{\infty}})$.

\begin{proof}[Proof of Theorem~\ref{TPolyExp}]
$ $\\
\proofitem{(1)}%
For every $r\in\intintuptoex{p}$, $\FFf{F}[r]$ is a polynomial function (on $r+p\Z_p$) with coefficients in $\Q\cap\Z_p$. Thus we clearly have $\FFf{F}(\Q\cap\Z_p)\subseteq\Q\cap\Z_p$ and for all $r\in\intintuptoex{p}$ there is an $0\leq M_r\in\R$ such that
\begin{align}
\abs{\FFf{F}[r](n)-\FFf{F}[r](n)\modulo p}>p\abs{n}
\end{align}
for all $n\in\Q\cap(r+p\Z_p)$ with $\abs{n}>M_r$ (note that $\FFf{F}[r](n)\modulo p\in\intintuptoex{p}$) by the additional assumptions. Let $M\ce\max\set{M_0,\ldots,M_{p-1}}$, $n\in\Q\cap\Z_p$ with $\abs{n}>M$, and $r\ce n\modulo p$. Then,
\begin{align}
\abs{\FFf{F}(n)}&=\abs{\frac{\FFf{F}[r](n)-\FFf{F}[r](n)\modulo p}{p}}>\abs{n}
\end{align}
which implies that $\FFf{F}$ is expansive.

\noindent
\proofitem{(2)}%
Clearly, if $\FFf{G}[r]$ is of degree $2$ or higher for at least one $r\in\intintuptoex{p}$, then $\FFf{G}$ cannot be contractive.
\end{proof}

\section{Linear-polynomial $p$-adic systems and the question of ultimate periodicity}
\label{SLinPoly}
In the following we investigate linear-polynomial $p$-adic systems such as $\FFf{F}_p=(x)^p$ (standard base $p$) or $\FFf{F}_C=(x,3x+1)$ (Collatz).

\myparagraphtoc{Basic facts.}
We begin by applying results from previous sections to characterize linear polynomials with special properties.

\begin{lemma}
\label{LLinPolySuit}
Let $2\leq p\in\N$, $r\in\intintuptoex{p}$, $f=a+bx\in\Z_p[x]$, and $k\in\Nz$. Then,

\begin{theoremtable}
\theoremitem{(1)}&$\FPweaklysuitable{p}{r}(f)$\tabularnewline
\theoremitem{(2)}&$\FPsuitable{p}{r}(f)\Leftrightarrow\Gcd{p,b\modulo p}=1$\tabularnewline
\theoremitem{(3)}&$\FPsuitable{p}{r}(f)\Rightarrow(\FPavoiding{p}{r}(f)\Leftrightarrow f(r)\in p\Z_p)$.
\end{theoremtable}
\end{lemma}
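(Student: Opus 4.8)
Parts (1) and (2) are essentially immediate from the general polynomial characterization in Theorem~\ref{TPropPolyF}. For (1) I would invoke Theorem~\ref{TPropPolyF}~(1), which gives $\FPweaklysuitableat{p}{r}{k}(f)$ for every $k\in\Nz$; since weak $(p,r)$-suitability is by definition weak suitability at all of $\Nz$, this yields $\FPweaklysuitable{p}{r}(f)$ directly. For (2) I would observe that $f'(x)=b$ is constant, so $f'(r)\modulo p=b\modulo p$; Theorem~\ref{TPropPolyF}~(2) handles $k\leq1$ unconditionally, while Theorem~\ref{TPropPolyF}~(3) shows that for each $k\geq2$ the property $\FPsuitableat{p}{r}{k}(f)$ is equivalent to $\Gcd{p,b\modulo p}=1$. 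As this criterion does not depend on $k$, suitability at a single $k\geq2$ already forces suitability at every $k\in\Nz$, which gives the stated equivalence $\FPsuitable{p}{r}(f)\Leftrightarrow\Gcd{p,b\modulo p}=1$.

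The substance is in (3), where the plan is to turn the avoidance condition into an explicit computation in $\Q_p$. The hypothesis $\FPsuitable{p}{r}(f)$ gives, via (2), that $\Gcd{p,b\modulo p}=1$, so $b$ is a unit of $\Z_p$ and $b^{-1}\in\Z_p$. The affine map $n\mapsto a+bn$ is then a bijection of $\Q_p$, and the key step is to determine the full preimage of $p\Z_p$:
\begin{align}
\set{n\in\Q_p\mid a+bn\in p\Z_p}=-b^{-1}a+b^{-1}p\Z_p=-b^{-1}a+p\Z_p,
\end{align}
where the last equality uses that $b^{-1}$ is a unit, so $b^{-1}p\Z_p=p\Z_p$. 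The decisive observation is that this preimage is a single residue class modulo $p$, lying entirely inside $\Z_p$; in particular $f$ never maps an element of $\Q_p\setminus\Z_p$ into $p\Z_p$.

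With the preimage in hand, the definition of $(p,r)$-avoiding (Eqn.~(\ref{DFPavoiding})) reduces to the inclusion $\set{n\in\Q_p\mid f(n)\in p\Z_p}\subseteq r+p\Z_p$. Since the left-hand side is exactly one residue class modulo $p$, the inclusion holds if and only if $-b^{-1}a\equiv r\modulus p$; multiplying through by $b$ this rearranges to $a+br\equiv0\modulus p$, i.e.\ $f(r)\in p\Z_p$. Reading this chain in both directions establishes $\FPavoiding{p}{r}(f)\Leftrightarrow f(r)\in p\Z_p$ under the assumption $\FPsuitable{p}{r}(f)$. I do not expect a genuine obstacle; the only point demanding care is that avoidance is defined over $\Q_p$ rather than $\Z_p$, so the argument must be carried out in the field $\Q_p$ and must genuinely use the invertibility of $b$ both to collapse the preimage to a single class and to exclude any contribution from $\Q_p\setminus\Z_p$.
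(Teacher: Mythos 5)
Your proposal is correct. Parts (1) and (2) are handled exactly as in the paper, by specializing Theorem~\ref{TPropPolyF} and noting $f'(r)=b$. For part (3) your route differs slightly from the paper's: the paper invokes the general finite-witness-set characterization of $(p,r)$-avoiding polynomials (Theorem~\ref{TCharacAvPolyF}), first observing $f(\Q_p\setminus\Z_p)\cap p\Z_p=\emptyset$ and that $f$ permutes $\intintuptoex{p}$ modulo $p$, so that avoidance reduces to $f(\intintuptoex{p}\setminus\set{r})\cap p\Z_p=\emptyset$, which holds iff $f(r)\in p\Z_p$. You instead compute the full preimage $f^{-1}(p\Z_p)=-b^{-1}a+p\Z_p$ directly in $\Q_p$ and observe that it is a single residue class contained in $\Z_p$, so avoidance is exactly the inclusion of that class in $r+p\Z_p$, i.e.\ $a+br\equiv0\modulus{p}$. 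The two arguments rest on the same fact (invertibility of $b$ forces exactly one residue class, and no non-integral element, to land in $p\Z_p$), but yours is self-contained and avoids the machinery of Theorem~\ref{TCharacAvPolyF}, which is a small gain in transparency for this special case; the paper's version has the advantage of illustrating how the general witness-set theorem specializes. Your closing remark about having to work over $\Q_p$ rather than $\Z_p$ is exactly the point of care, and your computation $b^{-1}p\Z_p=p\Z_p$ handles it correctly.
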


\noindent
Note that by (3) and Theorem~\ref{TFuncSuit}~(2) it follows that every linear-polynomial $p$-adic system in weak canonical form is avoiding when interpreted as the $p$-fibred rational function defined by the same polynomials.

\begin{proof}[Proof of Lemma~\ref{LLinPolySuit}]
(1) and (2) follow directly from Theorem~\ref{TPropPolyF}. For the proof of (3) we assume  $\FPsuitable{p}{r}(f)$ which is equivalent to $\Gcd{p,b\modulo p}=1$ by (2). Thus, $f(\Q_p\setminus\Z_p)\cap p\Z_p=\emptyset$ and $f(\intintuptoex{p})\modulo p=\intintuptoex{p}$. But then
\begin{align}
\FPavoiding{p}{r}(f)\Leftrightarrow f(\intintuptoex{p}\setminus\set{r})\cap p\Z_p=\emptyset\Leftrightarrow f(r)\in p\Z_p
\end{align}
by Theorem~\ref{TCharacAvPolyF}.
\end{proof}

\myparagraphtoc{A formula for $\FRFF{\FRFf{R}}{\Sf{D}}$.}
Next we give a direct formula for $\FRFF{\FRFf{R}}{\Sf{D}}$ if $\FRFf{R}$ is a linear-polynomial $p$-fibred rational function. Note that for the special case $\FRFf{R}=(x,3x+1)$ this formula can be found in many publications on the original Collatz conjecture, such as \cite{BoehmSontacchi:1978,Seifert:1988,Amigo:2000,Amigo:2006}. For any sequence $\Sf{S}$ and any $a$ we denote by $\SFposition{\Sf{S}}{a}$ the increasing sequence of all indices $i\in\intintuptoex{\abs{\Sf{S}}}$ for which $\Sf{S}[i]=a$ and set $\SFcount{\Sf{S}}{a}\ce\abs{\SFposition{\Sf{S}}{a}}$. Furthermore, for $\FFf{F}\ce(a_0+b_0x,\ldots,a_{p-1}+b_{p-1}x)\in\SoFibredFunctions{p}(\FFPlinearpolynomial)\cup\SoFibredRationalFunctions{p}(\FFPlinearpolynomial)$ and $\Sf{D}\in\CoSequences(\SPboundedby{\intintuptoex{p}},\SPfinite)$ we define
\begin{align}
\label{DFRFlinA}
\FRFlinA{\FFf{F}}{\Sf{D}}
&\ce\sum_{i=0}^{\abs{\Sf{D}}-1}a_{\Sf{D}[i]}p^i\prod_{j=i+1}^{\abs{\Sf{D}}-1}b_{\Sf{D}[j]}\\
\label{DFRFlinB}
\FRFlinB{\FFf{F}}{\Sf{D}}
&\ce\prod_{i=0}^{\abs{\Sf{D}}-1}b_{\Sf{D}[i]}.
\intertext{By collecting the ``$a_i$''s and ``$b_i$''s it follows that}
\FRFlinA{\FFf{F}}{\Sf{D}}
&=\sum_{r=0}^{p-1}a_r\sum_{i=0}^{\SFcount{\Sf{D}}{r}-1}p^{\SFposition{\Sf{D}}{r}[i]}\prod_{s=0}^{p-1}b_s^{\SFcount{\Sf{D}[\SFposition{\Sf{D}}{r}[i]+1,\abs{\Sf{D}}-1]}{s}}\\
\FRFlinB{\FFf{F}}{\Sf{D}}
&=\prod_{r=0}^{p-1}b_r^{\SFcount{\Sf{D}}{r}}.
\end{align}

\begin{theorem}
\label{TLinDirForm}
Let $2\leq p\in\N$, $\FRFf{R}\in\SoFibredRationalFunctions{p}(\FRFPlinearpolynomial)$, $\Sf{D}\in\CoSequences(\SPboundedby{\intintuptoex{p}},\SPfinite)$, and $n\in\Q_p$. Then,
\begin{align}
\FRFF{\FRFf{R}}{\Sf{D}}(n)&=\frac{\FRFlinA{\FRFf{R}}{\Sf{D}}+n\FRFlinB{\FRFf{R}}{\Sf{D}}}{p^{\abs{\Sf{D}}}}.
\end{align}
In particular: $\FRFF{\FRFf{R}}{\Sf{D}}(x)\in\Q_p[x]$ is a linear polynomial.
\end{theorem}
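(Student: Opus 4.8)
The plan is to prove the identity by induction on the length $k\ce\abs{\Sf{D}}$, feeding the recursive definition of $\FRFF{\FRFf{R}}{\Sf{D}}$ from Eqn.~(\ref{EFibredRFunction}) into the inductive hypothesis. Since $\FRFPlinearpolynomial(\FRFf{R})$, we may write $\FRFf{R}[r](x)=a_r+b_rx$ with $a_r,b_r\in\Q_p$ for every $r\in\intintuptoex{p}$, so that $\FRFF{\FRFf{R}}{\Sf{D}}$ is built up by alternately applying maps of the form $x\mapsto(a_r+b_rx)/p$.

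For the base case $k=0$ the sequence $\Sf{D}$ is empty, so $\FRFF{\FRFf{R}}{\Sf{D}}(n)=n$ by Eqn.~(\ref{EFibredRFunction}); on the right-hand side the empty sum gives $\FRFlinA{\FRFf{R}}{\Sf{D}}=0$, the empty product gives $\FRFlinB{\FRFf{R}}{\Sf{D}}=1$, and $p^0=1$, so both sides equal $n$. For the inductive step I would set $r\ce\Sf{D}[k]$ and let $\Sf{D}[\intintuptoex{k}]$ denote the length-$k$ prefix of a sequence $\Sf{D}$ of length $k+1$. Then Eqn.~(\ref{EFibredRFunction}) gives $\FRFF{\FRFf{R}}{\Sf{D}}(n)=(a_r+b_r\FRFF{\FRFf{R}}{\Sf{D}[\intintuptoex{k}]}(n))/p$; substituting the inductive hypothesis for the prefix and clearing denominators yields
\begin{align}
\FRFF{\FRFf{R}}{\Sf{D}}(n)=\frac{a_rp^k+b_r\FRFlinA{\FRFf{R}}{\Sf{D}[\intintuptoex{k}]}+n\,b_r\FRFlinB{\FRFf{R}}{\Sf{D}[\intintuptoex{k}]}}{p^{k+1}}.
\end{align}

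It then remains to match this against the claimed formula for $\Sf{D}$, which reduces to the two ``append a digit'' recursions $\FRFlinB{\FRFf{R}}{\Sf{D}}=b_r\FRFlinB{\FRFf{R}}{\Sf{D}[\intintuptoex{k}]}$ and $\FRFlinA{\FRFf{R}}{\Sf{D}}=a_rp^k+b_r\FRFlinA{\FRFf{R}}{\Sf{D}[\intintuptoex{k}]}$. The first is immediate from Eqn.~(\ref{DFRFlinB}) by splitting off the factor $b_{\Sf{D}[k]}$. The second, obtained from Eqn.~(\ref{DFRFlinA}), is the only place requiring care: one separates the top summand $i=k$, whose trailing product $\prod_{j=k+1}^{k}b_{\Sf{D}[j]}$ is empty and thus contributes exactly $a_rp^k$, and from each remaining summand (those with $0\leq i\leq k-1$) one factors the common terminal factor $b_{\Sf{D}[k]}=b_r$ out of $\prod_{j=i+1}^{k}b_{\Sf{D}[j]}$, leaving precisely $b_r\FRFlinA{\FRFf{R}}{\Sf{D}[\intintuptoex{k}]}$, using that the prefix $\Sf{D}[\intintuptoex{k}]$ agrees with $\Sf{D}$ in all positions $i\in\intintuptoex{k}$. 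This index bookkeeping is the main (and essentially only) obstacle; everything else is formal. The ``In particular'' claim is then read off directly: the numerator $\FRFlinA{\FRFf{R}}{\Sf{D}}+n\FRFlinB{\FRFf{R}}{\Sf{D}}$ is affine in $n$ with coefficients in $\Q_p$ and the denominator $p^{\abs{\Sf{D}}}$ is a nonzero constant, so $\FRFF{\FRFf{R}}{\Sf{D}}(x)\in\Q_p[x]$ has degree at most one.
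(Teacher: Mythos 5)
Your proof is correct and follows essentially the same route as the paper's: induction on $\abs{\Sf{D}}$, with the inductive step reducing to the two append-a-digit recursions $\FRFlinB{\FRFf{R}}{\Sf{D}}=b_r\FRFlinB{\FRFf{R}}{\Sf{D}[\intintuptoex{k}]}$ and $\FRFlinA{\FRFf{R}}{\Sf{D}}=a_rp^k+b_r\FRFlinA{\FRFf{R}}{\Sf{D}[\intintuptoex{k}]}$, which the paper verifies by the same index manipulation carried out inline. The bookkeeping you identify as the main obstacle is handled exactly as you describe, so nothing is missing.
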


\begin{proof}
Let $a_0,b_0,\ldots,a_{p-1},b_{p-1}\in\Q_p$ such that $\FRFf{R}=(a_0+b_0x,\ldots,a_{p-1}+b_{p-1}x)$. We prove the formula by induction on the length of $\Sf{D}$. If $\abs{\Sf{D}}=0$, then $\FRFF{\FRFf{R}}{\Sf{D}}(n)=n$ as claimed. Now assume that the formula is true for $\Sf{D}$ and let $\Sf{E}\ce\Sf{D}\cdot(e)\in\CoSequences(\SPboundedby{\intintuptoex{p}},\SPfinite)$ where $e\in\intintuptoex{p}$. We compute
\bgroup
\allowdisplaybreaks
\begin{align}
\FRFF{\FRFf{R}}{\Sf{E}}(n)
&=\frac{\FRFf{R}[\Sf{E}[\abs{\Sf{E}}-1]]\left(\FRFF{\FRFf{R}}{\Sf{E}[\intintuptoex{\abs{\Sf{E}}-1}]}(n)\right)}{p}
=\frac{\FRFf{R}[e]\left(\FRFF{\FRFf{R}}{\Sf{D}}(n)\right)}{p}\\
&=\left(a_e+b_e\left(n\cdot\prod_{i=0}^{\abs{\Sf{D}}-1}b_{\Sf{D}[i]}+\sum_{i=0}^{\abs{\Sf{D}}-1}a_{\Sf{D}[i]}p^i\prod_{j=i+1}^{\abs{\Sf{D}}-1}b_{\Sf{D}[j]}\right)\frac{1}{p^{\abs{\Sf{D}}}}\right)\frac{1}{p}\\
&=\left(a_{\Sf{E}[\abs{\Sf{E}}-1]}+b_{\Sf{E}[\abs{\Sf{E}}-1]}\left(n\cdot\prod_{i=0}^{\abs{\Sf{E}}-2}b_{\Sf{E}[i]}+\sum_{i=0}^{\abs{\Sf{E}}-2}a_{\Sf{E}[i]}p^i\prod_{j=i+1}^{\abs{\Sf{E}}-2}b_{\Sf{E}[j]}\right)\frac{1}{p^{\abs{\Sf{E}}-1}}\right)\frac{1}{p}\\
&=\left(n\cdot\prod_{i=0}^{\abs{\Sf{E}}-1}b_{\Sf{E}[i]}+a_{\Sf{E}[\abs{\Sf{E}}-1]}p^{\abs{\Sf{E}}-1}+\sum_{i=0}^{\abs{\Sf{E}}-2}a_{\Sf{E}[i]}p^i\prod_{j=i+1}^{\abs{\Sf{E}}-1}b_{\Sf{E}[j]}\right)\frac{1}{p^{\abs{\Sf{E}}}}\\
&=\left(n\cdot\prod_{i=0}^{\abs{\Sf{E}}-1}b_{\Sf{E}[i]}+\sum_{i=0}^{\abs{\Sf{E}}-1}a_{\Sf{E}[i]}p^i\prod_{j=i+1}^{\abs{\Sf{E}}-1}b_{\Sf{E}[j]}\right)\frac{1}{p^{\abs{\Sf{E}}}}.
\end{align}
\egroup
\end{proof}

\myparagraphtoc{Find the number that has a given expansion.}
Theorem~\ref{TLinDirForm} has several consequences, one of them being that all ultimately periodic points of $(\Q\cap\Z_p)$-linear-polynomial $p$-adic systems are rational numbers. This follows from a general formula for the unique $p$-adic integer that has a given ultimately periodic digit expansion with respect to a given linear-polynomial $p$-adic system.

\begin{corollary}
\label{CPerFormula}
Let $2\leq p\in\N$, $\FFf{F}\in\SoSystems{p}(\FFPweakcanonicalform,\FFPlinearpolynomial)$, $\Sf{D}\in\CoSequences(\SPboundedby{\intintuptoex{p}},\SPultimatelyperiodic)$, and $n$ the unique element of $\Z_p$ satisfying $\FFFDT(\FFf{F})[n]=\Sf{D}$ (cf. Lemma~\ref{LInfBlDTCompl}). Then,
\begin{align}
n&=\left(\frac{\FRFlinA{\FFf{F}}{\SFperiodic{\Sf{D}}}}{p^{\abs{\SFperiodic{\Sf{D}}}}-\FRFlinB{\FFf{F}}{\SFperiodic{\Sf{D}}}}p^{\abs{\SFinitial{\Sf{D}}}}-\FRFlinA{\FFf{F}}{\SFinitial{\Sf{D}}}\right)\frac{1}{\FRFlinB{\FFf{F}}{\SFinitial{\Sf{D}}}}.
\end{align}
In particular,
\begin{align}
\label{CPerFormulaIPa}
\FFFSoPP(\FFf{F})&=\set{\frac{\FRFlinA{\FFf{F}}{\Sf{D}_P}}{p^{\abs{\Sf{D}_P}}-\FRFlinB{\FFf{F}}{\Sf{D}_P}}\mid\Sf{D}_P\in\CoSequences(\SPboundedby{\intintuptoex{p}},\SPfinite)}\\
\label{CPerFormulaIPb}
\FFFSoUPP(\FFf{F})&=\set{\left(\frac{\FRFlinA{\FFf{F}}{\Sf{D}_P}}{p^{\abs{\Sf{D}_P}}-\FRFlinB{\FFf{F}}{\Sf{D}_P}}p^{\abs{\Sf{D}_I}}-\FRFlinA{\FFf{F}}{\Sf{D}_I}\right)\frac{1}{\FRFlinB{\FFf{F}}{\Sf{D}_I}}\mid\Sf{D}_I,\Sf{D}_P\in\CoSequences(\SPboundedby{\intintuptoex{p}},\SPfinite)}
\end{align}
and if $\FFPlinearpolynomialcoefficients{\Q\cap\Z_p}(\FFf{F})$, then $[\FFFSoUPP\subseteq\Q\cap\Z_p](\FFf{F})$.
\end{corollary}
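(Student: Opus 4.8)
The plan is to interpret $\FFf{F}$ as the linear-polynomial $p$-fibred rational function $\FRFf{R}\in\SoFibredRationalFunctions{p}(\FRFPdomain{\Q_p},\FRFPclosed)$ defined by the same linear polynomials, so that $\FFf{F}=\FRFFint(\FRFf{R}\vert_{\Z_p})$ is integral and in weak canonical form; by the remark following Lemma~\ref{LLinPolySuit}, $\FRFf{R}$ is moreover avoiding. This brings the direct formula of Theorem~\ref{TLinDirForm} and the rigidity results Lemma~\ref{LFRF} and Lemma~\ref{LEqSEqD} into play. Writing $\Sf{D}_I\ce\SFinitial{\Sf{D}}$ and $\Sf{D}_P\ce\SFperiodic{\Sf{D}}$ (with $\Sf{D}_P$ nonempty since $\Sf{D}$ is ultimately periodic), I would first peel off the initial part: set $m\ce\FFf{F}^{\abs{\Sf{D}_I}}(n)$, which lies in $\Z_p$ and, because applying $\FFf{F}$ shifts the digit expansion (Lemma~\ref{LPermProp}~(4)), satisfies $\FFFDT(\FFf{F})[m]=\Sf{D}[\abs{\Sf{D}_I},\infty]=\Sf{D}_P^\infty$.

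Next I would exploit that $m$ is a purely periodic point. Since $\FFFDT(\FFf{F})[\FFf{F}^{\abs{\Sf{D}_P}}(m)]=\Sf{D}_P^\infty[\abs{\Sf{D}_P},\infty]=\Sf{D}_P^\infty=\FFFDT(\FFf{F})[m]$, Lemma~\ref{LInfBlDTEq} gives $\FFf{F}^{\abs{\Sf{D}_P}}(m)=m$. Applying Lemma~\ref{LFRF} (legitimate because $\Sf{D}_P$ is the length-$\abs{\Sf{D}_P}$ prefix of $\FFFDT(\FFf{F})[m]$) and then Theorem~\ref{TLinDirForm} turns this fixed-point equation into
\begin{align}
m=\FFf{F}^{\abs{\Sf{D}_P}}(m)=\FRFF{\FRFf{R}}{\Sf{D}_P}(m)=\frac{\FRFlinA{\FFf{F}}{\Sf{D}_P}+m\FRFlinB{\FFf{F}}{\Sf{D}_P}}{p^{\abs{\Sf{D}_P}}}.
\end{align}
Here $\FRFlinB{\FFf{F}}{\Sf{D}_P}$ is a product of the leading coefficients $b_r$, each a unit by Lemma~\ref{LLinPolySuit}~(2), so $p^{\abs{\Sf{D}_P}}-\FRFlinB{\FFf{F}}{\Sf{D}_P}$ is a unit of $\Z_p$ and I may solve for $m=\FRFlinA{\FFf{F}}{\Sf{D}_P}/(p^{\abs{\Sf{D}_P}}-\FRFlinB{\FFf{F}}{\Sf{D}_P})$. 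Feeding this back through $m=\FFf{F}^{\abs{\Sf{D}_I}}(n)=\FRFF{\FRFf{R}}{\Sf{D}_I}(n)=(\FRFlinA{\FFf{F}}{\Sf{D}_I}+n\FRFlinB{\FFf{F}}{\Sf{D}_I})/p^{\abs{\Sf{D}_I}}$ (again Lemma~\ref{LFRF} and Theorem~\ref{TLinDirForm}) and solving for $n$, using that $\FRFlinB{\FFf{F}}{\Sf{D}_I}$ is likewise a unit, yields exactly the displayed formula.

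For the two set identities I would observe that the derivation above goes through verbatim for \emph{any} decomposition $\Sf{D}=\Sf{D}_I\cdot\Sf{D}_P^\infty$ into a finite prefix and a nonempty finite period, not only the canonical one, so that the formula always returns the unique preimage of $\Sf{D}_I\cdot\Sf{D}_P^\infty$ under $\FFFDT(\FFf{F})$. The inclusion ``$\subseteq$'' in Eqn.~(\ref{CPerFormulaIPa}) and Eqn.~(\ref{CPerFormulaIPb}) then follows by taking canonical parts (with $\Sf{D}_I=()$ in the purely periodic case, where $\FRFlinA{\FFf{F}}{()}=0$ and $\FRFlinB{\FFf{F}}{()}=1$), and the reverse inclusion ``$\supseteq$'' follows because Lemma~\ref{LInfBlDTCompl} produces, for every choice of $\Sf{D}_I,\Sf{D}_P$, a genuine point with the ultimately periodic expansion $\Sf{D}_I\cdot\Sf{D}_P^\infty$ that the formula computes. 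Finally, if $\FFPlinearpolynomialcoefficients{\Q\cap\Z_p}(\FFf{F})$, then all quantities $\FRFlinA{\FFf{F}}{\cdot}$, $\FRFlinB{\FFf{F}}{\cdot}$ and the powers $p^{\abs{\cdot}}$ appearing in the formula are rational, whence $n\in\Q$; combined with $n\in\Z_p$ this gives $\FFFSoUPP(\FFf{F})\subseteq\Q\cap\Z_p$, i.e. $[\FFFSoUPP\subseteq\Q\cap\Z_p](\FFf{F})$.

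I expect the main obstacle to be the bookkeeping around non-canonical decompositions in the ``$\supseteq$'' inclusions: one must verify that feeding an arbitrary pair $(\Sf{D}_I,\Sf{D}_P)$ into the formula is consistent with the unique-point characterization, i.e. that passing from a chosen period to its primitive period or lengthening the initial part leaves the output unchanged, which is precisely what the uniqueness in Lemma~\ref{LInfBlDTCompl} guarantees but which should be stated carefully. The purely algebraic solving steps are routine once the unit property of the $b_r$ from Lemma~\ref{LLinPolySuit}~(2) secures invertibility of both denominators.
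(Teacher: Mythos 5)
Your derivation of the main formula is correct and coincides with the paper's: both interpret $\FFf{F}$ as the linear-polynomial $p$-fibred rational function $\FRFf{R}$, pass to $m\ce\FFf{F}^{\abs{\SFinitial{\Sf{D}}}}(n)$, and solve the two linear equations supplied by Lemma~\ref{LFRF} and Theorem~\ref{TLinDirForm}; your explicit appeal to Lemma~\ref{LLinPolySuit}~(2) for the invertibility of $p^{\abs{\SFperiodic{\Sf{D}}}}-\FRFlinB{\FFf{F}}{\SFperiodic{\Sf{D}}}$ and $\FRFlinB{\FFf{F}}{\SFinitial{\Sf{D}}}$ is a welcome piece of care the paper leaves implicit. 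The one place where you genuinely diverge is the inclusion ``$\supseteq$'' in Eqn.~(\ref{CPerFormulaIPa}) and Eqn.~(\ref{CPerFormulaIPb}): the paper starts from the candidate values $m,n$ produced by the formulas, checks that $\FRFF{\FRFf{R}}{\Sf{D}_I}(n)$ and $\FRFF{\FRFf{R}}{\Sf{D}_P}(m)$ land in $\Z_p$, and then invokes the avoiding property of $\FRFf{R}$ together with Lemma~\ref{LEqSEqD} to conclude that the prescribed digit string really is the $\FFf{F}$-digit expansion; you instead invoke Lemma~\ref{LInfBlDTCompl} to produce the unique point with expansion $\Sf{D}_I\cdot\Sf{D}_P^\infty$ and observe that your forward derivation applies verbatim to this (possibly non-canonical) decomposition, so the formula returns exactly that point. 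Both routes are sound; yours avoids Lemma~\ref{LEqSEqD} entirely (and hence the avoiding machinery) for this direction, at the price of having to note, as you do, that the formula is insensitive to replacing the canonical decomposition by an arbitrary one — a fact that does follow from your derivation, since only $\Sf{D}=\Sf{D}_I\cdot\Sf{D}_P^\infty$ with $\Sf{D}_P$ nonempty is ever used. The remaining parts (the rationality statement and the ``$\subseteq$'' inclusions) match the paper.
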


\begin{proof}
Let $a_0,b_0,\ldots,a_{p-1},b_{p-1}\in\Z_p$ and $\FRFf{R}\ce(a_0+b_0x,\ldots,a_{p-1}+b_{p-1}x)\in\SoFibredRationalFunctions{p}(\FRFPlinearpolynomial)$. Then, $\FFf{F}=\FRFFint(\FRFf{R}\vert_{\Z_p})$ and thus
\begin{align}
\FFf{F}^k(n)=\FRFF{\FRFf{R}}{\Sf{D}[\intintuptoex{k}]}(n)
\end{align}
for all $k\in\Nz$ by Lemma~\ref{LFRF}. Furthermore, let $m\ce\FFf{F}^{\abs{\SFinitial{\Sf{D}}}}(n)$. Then, $\FFFDT(\FFf{F})[m]=\SFperiodic{\Sf{D}}^\infty$ and hence
\begin{align}
\FFf{F}^k(m)=\FRFF{\FRFf{R}}{(\SFperiodic{\Sf{D}}^\infty)[\intintuptoex{k}]}(m)
\end{align}
for all $k\in\Nz$, again by Lemma~\ref{LFRF}. Thus, Corollary~\ref{CInitPer} and Theorem~\ref{TLinDirForm} imply
\begin{align}
m
&=\FFf{F}^{\abs{\SFinitial{\Sf{D}}}}(n)
=\FRFF{\FRFf{R}}{\Sf{D}[\intintuptoex{\abs{\SFinitial{\Sf{D}}}}]}(n)
=\FRFF{\FRFf{R}}{\SFinitial{\Sf{D}}}(n)
=\frac{\FRFlinA{\FRFf{R}}{\SFinitial{\Sf{D}}}+n\FRFlinB{\FRFf{R}}{\SFinitial{\Sf{D}}}}{p^{\abs{\SFinitial{\Sf{D}}}}}\\
m
&=\FFf{F}^{\abs{\SFperiodic{\Sf{D}}}}(m)
=\FRFF{\FRFf{R}}{(\SFperiodic{\Sf{D}}^\infty)[\intintuptoex{\abs{\SFperiodic{\Sf{D}}}}]}(m)
=\FRFF{\FRFf{R}}{\SFperiodic{\Sf{D}}}(m)
=\frac{\FRFlinA{\FRFf{R}}{\SFperiodic{\Sf{D}}}+m\FRFlinB{\FRFf{R}}{\SFperiodic{\Sf{D}}}}{p^{\abs{\SFperiodic{\Sf{D}}}}}.
\end{align}
Solving the second equation for $m$ and plugging in the result into the first equation yields
\begin{align}
m&=\frac{\FRFlinA{\FFf{F}}{\SFperiodic{\Sf{D}}}}{p^{\abs{\SFperiodic{\Sf{D}}}}-\FRFlinB{\FFf{F}}{\SFperiodic{\Sf{D}}}}\\
n&=\left(\frac{\FRFlinA{\FFf{F}}{\SFperiodic{\Sf{D}}}}{p^{\abs{\SFperiodic{\Sf{D}}}}-\FRFlinB{\FFf{F}}{\SFperiodic{\Sf{D}}}}p^{\abs{\SFinitial{\Sf{D}}}}-\FRFlinA{\FFf{F}}{\SFinitial{\Sf{D}}}\right)\frac{1}{\FRFlinB{\FFf{F}}{\SFinitial{\Sf{D}}}}.
\end{align}

The  inclusion ``$\subseteq$'' in Eqn.~(\ref{CPerFormulaIPa}) and Eqn.~(\ref{CPerFormulaIPb}) of the ``In particular'' part follows directly from what we just proved. For the other inclusion let $\Sf{D}_I,\Sf{D}_P\in\CoSequences(\SPboundedby{\intintuptoex{p}},\SPfinite)$ and set
\begin{align}
m&\ce\frac{\FRFlinA{\FFf{F}}{\Sf{D}_P}}{p^{\abs{\Sf{D}_P}}-\FRFlinB{\FFf{F}}{\Sf{D}_P}}\in\Z_p\\
n&\ce\left(\frac{\FRFlinA{\FFf{F}}{\Sf{D}_P}}{p^{\abs{\Sf{D}_P}}-\FRFlinB{\FFf{F}}{\Sf{D}_P}}p^{\abs{\Sf{D}_I}}-\FRFlinA{\FFf{F}}{\Sf{D}_I}\right)\frac{1}{\FRFlinB{\FFf{F}}{\Sf{D}_I}}\in\Z_p.
\end{align}
Then $\FRFF{\FRFf{R}}{\Sf{D}_I}(n)=m\in\Z_p$ and $\FRFF{\FRFf{R}}{\Sf{D}_P}(m)=m\in\Z_p$, and since $\FRFf{R}$ is avoiding (cf. the remark after Lemma~\ref{LLinPolySuit}), we get $\FFf{F}^{\abs{\Sf{D}_I}}(n)=\FRFF{\FRFf{R}}{\Sf{D}_I}(n)=m$, and $\FFf{F}^{\abs{\Sf{D}_P}}(m)=\FRFF{\FRFf{R}}{\Sf{D}_P}(m)=m$ by Lemma~\ref{LEqSEqD}. Thus, $m\in\FFFSoPP(\FFf{F})$ and $n\in\FFFSoUPP(\FFf{F})$.

Clearly, if $\FFPlinearpolynomialcoefficients{\Q\cap\Z_p}(\FFf{F})$, i.e. $a_0,b_0,\ldots,a_{p-1},b_{p-1}\in\Q\cap\Z_p$, then
\begin{align}
\FRFlinA{\FFf{F}}{\SFinitial{\Sf{D}}},\FRFlinA{\FFf{F}}{\SFperiodic{\Sf{D}}},\FRFlinB{\FFf{F}}{\SFinitial{\Sf{D}}},\FRFlinB{\FFf{F}}{\SFperiodic{\Sf{D}}}\in\Q\cap\Z_p
\end{align}
and hence $n\in\Q\cap\Z_p$.
\end{proof}

\noindent
Another consequence of Theorem~\ref{TLinDirForm} is a complete characterization of those $p$-adic integers whose digit-expansions with respect to a given linear-polynomial $p$-adic system have a given beginning.

\begin{corollary}
\label{CBegFormula}
Let $2\leq p\in\N$, $\FFf{F}\in\SoSystems{p}(\FFPweakcanonicalform,\FFPlinearpolynomial)$, $\Sf{D}\in\CoSequences(\SPboundedby{\intintuptoex{p}},\SPfinite)$, and $n$ the unique element of $\intintuptoex{p^{\abs{\Sf{D}}}}$ satisfying $\FFFDT(\FFf{F})[n][\intintuptoex{\abs{\Sf{D}}}]=\Sf{D}$ (cf. Lemma~\ref{LFinBlDTCompl}). Then,
\begin{align}
n&=\left(b\FRFlinA{\FFf{F}}{\Sf{D}}\right)\modulo p^{\abs{\Sf{D}}}
\end{align}
where $b\in\Z$ such that
\begin{align}
p^{\abs{\Sf{D}}}a-\left(\FRFlinB{\FFf{F}}{\Sf{D}}\modulo p^{\abs{\Sf{D}}}\right)b=1
\end{align}
for some $a\in\Z$ (find $b$ with extended Euclidean algorithm). In particular,
\begin{align}
\set{m\in\Z_p\mid\FFFDT(\FFf{F})[m][\intintuptoex{\abs{\Sf{D}}}]=\Sf{D}}=n+p^{\abs{\Sf{D}}}\Z_p
\end{align}
by $\FFPblock(\FFf{F})$.
\end{corollary}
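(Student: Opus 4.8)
The plan is to mirror the proof of Corollary~\ref{CPerFormula}, using that the $k$-th iterate of $\FFf{F}$ at $n$ (with $k\ce\abs{\Sf{D}}$) is governed by the explicit linear formula of Theorem~\ref{TLinDirForm}. I would first write $\FFf{F}=\FRFFint(\FRFf{R}\vert_{\Z_p})$ for the linear-polynomial $p$-fibred rational function $\FRFf{R}\ce(a_0+b_0x,\ldots,a_{p-1}+b_{p-1}x)\in\SoFibredRationalFunctions{p}(\FRFPlinearpolynomial)$ given by the same polynomials as $\FFf{F}$. Before that I would record two coefficient facts. Since $\FFf{F}$ is a $p$-adic system, each $\FFf{F}[r]$ is $(p,r)$-suitable by Theorem~\ref{TFuncSuit}~(2); combined with $\FFPweakcanonicalform(\FFf{F})$ this forces $\Gcd{p,b_r\modulo p}=1$ (the $\nu_p(b_r)=0$ argument underlying Lemma~\ref{LLinPolySuit}~(2)), so every $b_r\in\Z_p^\times$, and then $a_r=\FFf{F}[r](r)-b_rr\in\Z_p$ because $\FFf{F}[r](r)\in p\Z_p$ by weak canonical form. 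In particular $\FRFlinA{\FFf{F}}{\Sf{D}},\FRFlinB{\FFf{F}}{\Sf{D}}\in\Z_p$, so the $\modulo p^k$ operations in the statement are meaningful.

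Next I would apply Lemma~\ref{LFRF} to get $\FFf{F}^k(n)=\FRFF{\FRFf{R}}{\Sf{D}}(n)$ (here $\Sf{D}=\FFFDT(\FFf{F})[n][\intintuptoex{k}]$ is exactly the length-$k$ prefix of the $\FFf{F}$-digit expansion of $n$), and then Theorem~\ref{TLinDirForm} to rewrite the right-hand side as $(\FRFlinA{\FFf{F}}{\Sf{D}}+n\FRFlinB{\FFf{F}}{\Sf{D}})/p^k$. Since $\FFf{F}$ is closed, $\FFf{F}^k(n)\in\Z_p$, hence $p^k$ divides the numerator, i.e. $n\,\FRFlinB{\FFf{F}}{\Sf{D}}\equiv-\FRFlinA{\FFf{F}}{\Sf{D}}\modulus{p^k}$. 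Because $\FRFlinB{\FFf{F}}{\Sf{D}}=\prod_i b_{\Sf{D}[i]}$ is a product of units it is itself a unit, so $\beta\ce\FRFlinB{\FFf{F}}{\Sf{D}}\modulo p^k$ is coprime to $p^k$ and the extended Euclidean algorithm genuinely produces $a,b\in\Z$ with $p^k a-\beta b=1$.

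Finally I would solve the congruence. From $p^k a-\beta b=1$ one reads off $\beta b\equiv-1\modulus{p^k}$, i.e. $b\equiv-\beta^{-1}\equiv-\FRFlinB{\FFf{F}}{\Sf{D}}^{-1}\modulus{p^k}$. Multiplying $n\equiv-\FRFlinB{\FFf{F}}{\Sf{D}}^{-1}\FRFlinA{\FFf{F}}{\Sf{D}}\modulus{p^k}$ (the congruence from the previous step) by $\FRFlinA{\FFf{F}}{\Sf{D}}$ on the right then gives $n\equiv b\,\FRFlinA{\FFf{F}}{\Sf{D}}\modulus{p^k}$, and since $n\in\intintuptoex{p^k}$ this is precisely $n=(b\,\FRFlinA{\FFf{F}}{\Sf{D}})\modulo p^k$. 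The ``In particular'' part is then immediate from $\FFPblock(\FFf{F})$: the length-$k$ digit prefix of $m$ equals $\Sf{D}=\FFFDT(\FFf{F})[n][\intintuptoex{k}]$ if and only if $m\equiv n\modulus{p^k}$, that is, $m\in n+p^k\Z_p$.

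I expect the only real subtlety to lie in the unit bookkeeping gathered in the first paragraph: one must guarantee that $\FRFlinA{\FFf{F}}{\Sf{D}}$ and $\FRFlinB{\FFf{F}}{\Sf{D}}$ are honest $p$-adic integers and that $\FRFlinB{\FFf{F}}{\Sf{D}}$ is invertible modulo $p^k$, both of which rest on the observation that suitability together with weak canonical form forces each $b_r$ to be a unit. Once that is in place, the remainder is a routine modular manipulation and an appeal to the block property.
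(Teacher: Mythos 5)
Your proof is correct, and it reaches the formula by a slightly different logical route than the paper. The paper interprets $\FFf{F}$ as an \emph{avoiding} $p$-fibred rational function (via the remark after Lemma~\ref{LLinPolySuit}) and invokes Lemma~\ref{LEqSEqD} to obtain the full equivalence $\FFFDT(\FFf{F})[m][\intintuptoex{\abs{\Sf{D}}}]=\Sf{D}\Leftrightarrow\FRFlinA{\FFf{F}}{\Sf{D}}+m\FRFlinB{\FFf{F}}{\Sf{D}}\in p^{\abs{\Sf{D}}}\Z_p$ for \emph{all} $m\in\Z_p$, and then checks directly that $m=b\FRFlinA{\FFf{F}}{\Sf{D}}$ satisfies the resulting congruence. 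You instead use only the forward direction (Lemma~\ref{LFRF} applied to the specific $n$ furnished by Lemma~\ref{LFinBlDTCompl}) to get $n\FRFlinB{\FFf{F}}{\Sf{D}}\equiv-\FRFlinA{\FFf{F}}{\Sf{D}}\modulus{p^{\abs{\Sf{D}}}}$, and then solve the congruence using the invertibility of $\FRFlinB{\FFf{F}}{\Sf{D}}$; uniqueness of $n$ comes for free from the statement. This buys you independence from the avoiding machinery and Lemma~\ref{LEqSEqD}, at the cost of deriving the equality $n=(b\FRFlinA{\FFf{F}}{\Sf{D}})\modulo p^{\abs{\Sf{D}}}$ indirectly (as the unique solution of a congruence that $n$ is known to satisfy) rather than verifying it constructively; both routes rest on the same two pillars, Theorem~\ref{TLinDirForm} and the unit property of $\FRFlinB{\FFf{F}}{\Sf{D}}$ from Lemma~\ref{LLinPolySuit}~(2). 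One cosmetic remark: the phrase ``multiplying \ldots by $\FRFlinA{\FFf{F}}{\Sf{D}}$ on the right'' in your final step does not quite describe what you do --- you are substituting $b\equiv-\FRFlinB{\FFf{F}}{\Sf{D}}^{-1}\modulus{p^{\abs{\Sf{D}}}}$ into $n\equiv-\FRFlinB{\FFf{F}}{\Sf{D}}^{-1}\FRFlinA{\FFf{F}}{\Sf{D}}\modulus{p^{\abs{\Sf{D}}}}$ --- but the computation itself is sound.
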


\begin{proof}
Let $A\ce\FRFlinA{\FFf{F}}{\Sf{D}}$ and $B\ce\FRFlinB{\FFf{F}}{\Sf{D}}$. If we interpret $\FFf{F}$ as the $p$-fibred rational function that is defined by the same polynomials as the $p$-adic system $\FFf{F}$, we have $\FRFPavoiding(\FFf{F})$ according to the remark after Lemma~\ref{LLinPolySuit}. Thus, Lemma~\ref{LEqSEqD} and Theorem~\ref{TLinDirForm} imply that
\begin{align}
\FFFDT(\FFf{F})[m][\intintuptoex{\abs{\Sf{D}}}]=\Sf{D}
&\Leftrightarrow
\frac{A+mB}{p^{\abs{\Sf{D}}}}=\FRFF{\FFf{F}}{\Sf{D}}(m)\in\Z_p\\
&\Leftrightarrow
A+mB\in p^{\abs{\Sf{D}}}\Z_p\\
&\Leftrightarrow
A\modulo p^{\abs{\Sf{D}}}+\left(m\modulo p^{\abs{\Sf{D}}}\right)\left(B\modulo p^{\abs{\Sf{D}}}\right)\in p^{\abs{\Sf{D}}}\Z\\
&\Leftrightarrow
\ex a\in\Z:A\modulo p^{\abs{\Sf{D}}}+\left(m\modulo p^{\abs{\Sf{D}}}\right)\left(B\modulo p^{\abs{\Sf{D}}}\right)=ap^{\abs{\Sf{D}}}\\
&\Leftrightarrow
\ex a\in\Z:p^{\abs{\Sf{D}}}a-\left(B\modulo p^{\abs{\Sf{D}}}\right)\left(m\modulo p^{\abs{\Sf{D}}}\right)=A\modulo p^{\abs{\Sf{D}}}
\end{align}
for all $m\in\Z_p$. Since $\gcd\left(p^{\abs{\Sf{D}}},B\modulo p^{\abs{\Sf{D}}}\right)=1$ by Lemma~\ref{LLinPolySuit}~(2), the equation
\begin{align}
p^{\abs{\Sf{D}}}a-\left(B\modulo p^{\abs{\Sf{D}}}\right)b=1
\end{align}
has a solution $a,b\in\Z$ by B\'{e}zout's Lemma. For any such $a,b\in\Z$ we get
\begin{align}
&p^{\abs{\Sf{D}}}a\left(A\modulo p^{\abs{\Sf{D}}}\right)-\left(B\modulo p^{\abs{\Sf{D}}}\right)b\left(A\modulo p^{\abs{\Sf{D}}}\right)=A\modulo p^{\abs{\Sf{D}}}\\
&\Leftrightarrow p^{\abs{\Sf{D}}}a\left(A\modulo p^{\abs{\Sf{D}}}\right)-\left(B\modulo p^{\abs{\Sf{D}}}\right)\left(p^{\abs{\Sf{D}}}\frac{b\left(A\modulo p^{\abs{\Sf{D}}}\right)-\left(bA\right)\modulo p^{\abs{\Sf{D}}}}{p^{\abs{\Sf{D}}}}+\left(bA\right)\modulo p^{\abs{\Sf{D}}}\right)=A\modulo p^{\abs{\Sf{D}}}\\
&\Leftrightarrow p^{\abs{\Sf{D}}}C-\left(B\modulo p^{\abs{\Sf{D}}}\right)\left(\left(bA\right)\modulo p^{\abs{\Sf{D}}}\right)=A\modulo p^{\abs{\Sf{D}}}
\end{align}
where
\begin{align}
C&\ce a\left(A\modulo p^{\abs{\Sf{D}}}\right)-\left(B\modulo p^{\abs{\Sf{D}}}\right)\frac{b\left(A\modulo p^{\abs{\Sf{D}}}\right)-\left(bA\right)\modulo p^{\abs{\Sf{D}}}}{p^{\abs{\Sf{D}}}}\in\Z.
\end{align}
Thus,
\begin{align}
\FFFDT(\FFf{F})[bA][\intintuptoex{\abs{\Sf{D}}}]=\Sf{D}
\end{align}
and consequently $n=(bA)\modulo p^{\abs{\Sf{D}}}$.
\end{proof}

\myparagraphtoc{Inverse problem: given a number and expansion, find a system.}
Corollary~\ref{CPerFormula} allows to compute the unique $p$-adic integer that has a given ultimately periodic $\FFf{F}$-digit expansion for a given linear-polynomial $p$-adic system $\FFf{F}$ in weak canonical form. In the other direction one might try to find one or even all linear-polynomial $p$-adic systems for which a given $p$-adic integer has a given ultimately periodic digit expansion. The characterization of these $p$-adic systems is given in the following corollary which is another consequence of Theorem~\ref{TLinDirForm}.

\begin{corollary}
\label{CInvSyst}
Let $2\leq p\in\N$, $\Sf{D}\in\CoSequences(\SPboundedby{\intintuptoex{p}},\SPultimatelyperiodic)$, $r\ce\Sf{D}[0]$, $n\in r+p\Z_p$, and $\FFf{F}=(a_0+b_0x,\ldots,a_{p-1}+b_{p-1}x)\in\SoSystems{p}(\FFPweakcanonicalform,\FFPlinearpolynomial)$. Furthermore, let
\begin{align}
k&\ce\abs{\SFinitial{\Sf{D}}}\\
\ell&\ce\abs{\SFperiodic{\Sf{D}}}\\
c_s&\ce\sum_{i=0}^{\SFcount{\SFinitial{\Sf{D}}}{s}-1}p^{\SFposition{\SFinitial{\Sf{D}}}{s}[i]}\prod_{t=0}^{p-1}b_t^{\SFcount{\SFinitial{\Sf{D}}[\SFposition{\SFinitial{\Sf{D}}}{s}[i]+1,k-1]}{t}},\;s\in\intintuptoex{p}\\
d_s&\ce\sum_{i=0}^{\SFcount{\SFperiodic{\Sf{D}}}{s}-1}p^{\SFposition{\SFperiodic{\Sf{D}}}{s}[i]}\prod_{t=0}^{p-1}b_t^{\SFcount{\SFperiodic{\Sf{D}}[\SFposition{\SFperiodic{\Sf{D}}}{s}[i]+1,\ell-1]}{t}},\;s\in\intintuptoex{p}\\
c&\ce\FRFlinB{\FFf{F}}{\SFinitial{\Sf{D}}}\\
d&\ce\FRFlinB{\FFf{F}}{\SFperiodic{\Sf{D}}}.
\end{align}
If $a_0,\ldots,a_{r-1},a_{r+1},\ldots,a_{p-1},b_0,\ldots,b_{p-1}$ are fixed, then there is a unique choice for $a_r$ such that $\FFFDT(\FFf{F})[n]=\Sf{D}$. This choice is given by
\begin{align}
\label{EInvSystA}
a_r=\left(cn-\sum_{s\in\intintuptoex{p}\setminus\set{r}}a_s\left(\frac{d_sp^k}{p^\ell-d}-c_s\right)\right)\Bigg/\left(\frac{d_rp^k}{p^\ell-d}-c_r\right).
\end{align}
\end{corollary}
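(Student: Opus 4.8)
The plan is to reduce the assertion, via Corollary~\ref{CPerFormula}, to a single linear equation in the unknown $a_r$ and to solve it. First I would view $\FFf{F}$ as the linear-polynomial $p$-fibred rational function $\FRFf{R}\ce(a_0+b_0x,\ldots,a_{p-1}+b_{p-1}x)\in\SoFibredRationalFunctions{p}(\FRFPlinearpolynomial)$, so that $\FFf{F}=\FRFFint(\FRFf{R}\vert_{\Z_p})$. Since $\FFf{F}\in\SoSystems{p}(\FFPweakcanonicalform,\FFPlinearpolynomial)$ and $\Sf{D}$ is ultimately periodic, the $p$-adic integer realizing $\Sf{D}$ is unique (Lemma~\ref{LInfBlDTCompl}); hence, for $a_r$ ranging over those values keeping $\FFf{F}$ in weak canonical form, the requirement $\FFFDT(\FFf{F})[n]=\Sf{D}$ is equivalent to $n$ equalling the explicit value supplied by Corollary~\ref{CPerFormula}, namely
\begin{align}
n=\left(\frac{\FRFlinA{\FFf{F}}{\SFperiodic{\Sf{D}}}}{p^{\ell}-\FRFlinB{\FFf{F}}{\SFperiodic{\Sf{D}}}}p^{k}-\FRFlinA{\FFf{F}}{\SFinitial{\Sf{D}}}\right)\frac{1}{\FRFlinB{\FFf{F}}{\SFinitial{\Sf{D}}}}.
\end{align}

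Next I would exploit that the ``$a$''-coefficients enter $\FRFlinA{\FFf{F}}{\cdot}$ linearly. Reading off the collected form of $\FRFlinA{\FFf{F}}{\cdot}$ displayed just before Theorem~\ref{TLinDirForm}, the coefficient of $a_s$ in $\FRFlinA{\FFf{F}}{\SFinitial{\Sf{D}}}$ is precisely $c_s$ and the coefficient of $a_s$ in $\FRFlinA{\FFf{F}}{\SFperiodic{\Sf{D}}}$ is precisely $d_s$, so that $\FRFlinA{\FFf{F}}{\SFinitial{\Sf{D}}}=\sum_{s}a_sc_s$ and $\FRFlinA{\FFf{F}}{\SFperiodic{\Sf{D}}}=\sum_{s}a_sd_s$, while $\FRFlinB{\FFf{F}}{\SFinitial{\Sf{D}}}=c$ and $\FRFlinB{\FFf{F}}{\SFperiodic{\Sf{D}}}=d$ depend only on the fixed $b_t$. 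Substituting these and clearing the denominators $c$ and $p^{\ell}-d$ turns the displayed equation into
\begin{align}
cn=\sum_{s=0}^{p-1}a_s\left(\frac{d_sp^{k}}{p^{\ell}-d}-c_s\right),
\end{align}
a genuinely linear equation in the $a_s$; isolating the $s=r$ summand and solving for $a_r$ produces exactly Eqn.~(\ref{EInvSystA}).

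The main obstacle is to justify that this equation has a unique solution, i.e. that the coefficient $\frac{d_rp^{k}}{p^{\ell}-d}-c_r$ of $a_r$ is a unit of $\Z_p$. Here I would use that each $b_t$ is a unit: indeed $\FPsuitable{p}{t}(a_t+b_tx)$ with Lemma~\ref{LLinPolySuit}~(2) gives $\Gcd{p,b_t\modulo p}=1$. Consequently $d$ is a product of units, $p^{\ell}-d$ is a unit, and $\frac{d_rp^{k}}{p^{\ell}-d}\in p^{k}\Z_p$. The decisive point is that $r=\Sf{D}[0]$ is the \emph{leading} digit: if $k\geq1$ then $r$ occurs at position $0$ of $\SFinitial{\Sf{D}}$, so the $i=0$ term of $c_r$ is the unit $\prod_{j=1}^{k-1}b_{\SFinitial{\Sf{D}}[j]}$ while every remaining term carries a factor of $p$, making $c_r$ a unit and hence $\frac{d_rp^{k}}{p^{\ell}-d}-c_r\equiv-c_r\not\equiv0\modulus{p}$; if $k=0$ the same argument applied to $d_r$ (whose $i=0$ term is now a unit) shows the coefficient $\frac{d_r}{p^{\ell}-d}$ is a unit. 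Either way the coefficient is invertible, giving a unique $a_r\in\Z_p$.

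Finally I would confirm that this $a_r$ keeps $\FFf{F}$ in weak canonical form, which both legitimizes the use of Corollary~\ref{CPerFormula} and secures existence within the admissible family. This I expect to fall out on reducing the linear equation modulo $p$: using $n\equiv r\modulus{p}$ together with $c\equiv b_rc_r$ and $c_s\equiv0$ for $s\neq r$ (modulo $p$) in the case $k\geq1$, and the analogous congruences for $d$, $d_s$ when $k=0$, the equation collapses to $a_r\equiv-b_rr\modulus{p}$, which is exactly the weak-canonical-form condition for the entry $a_r+b_rx$. Thus the unconstrained solution automatically lies in the admissible family, so Eqn.~(\ref{EInvSystA}) yields the unique admissible $a_r$ and no separate admissibility check is required.
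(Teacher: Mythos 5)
Your proposal is correct and follows the paper's overall strategy: rearrange the formula of Corollary~\ref{CPerFormula} into a linear equation in $a_r$, and verify that the coefficient $\frac{d_rp^k}{p^\ell-d}-c_r$ is a unit by exploiting that $r=\Sf{D}[0]$ is the leading digit (so that, depending on whether $k=0$ or $k\geq1$, the $i=0$ term of $d_r$ respectively $c_r$ is a unit while everything else is divisible by $p$); this is exactly the paper's Eqn.~(\ref{EInvSystB}). Where you diverge is in the admissibility check that the resulting $a_r$ lies in $-rb_r+p\Z_p$ (the paper's Eqn.~(\ref{EInvSystC})): you reduce the linear equation modulo $p$ directly, using $n\equiv r$, $c\equiv b_rc_r$, $c_s\equiv0$ for $s\neq r$ when $k\geq1$, and the analogous congruences through $d,d_s$ when $k=0$, to collapse it to $a_r\equiv-rb_r\modulus{p}$. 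The paper instead introduces the auxiliary system $\FFf{G}$ obtained by replacing $a_r$ with $-rb_r$ (which is automatically in weak canonical form), applies Corollary~\ref{CPerFormula} to get the point $m$ with $\FFFDT(\FFf{G})[m]=\Sf{D}$, and deduces the congruence from $m\equiv n\equiv r\modulus{p}$. Your route is more computational but self-contained; the paper's avoids the case split on $k$ at the cost of invoking the existence of $\FFf{G}$ and $m$. Both are sound, and your unit and congruence computations check out in both cases $k=0$ and $k\geq1$.
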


\begin{proof}
By rearranging the formula for $n$ in Corollary~\ref{CPerFormula} it follows that if $a_r$ can be chosen such that $\FFFDT(\FFf{F})[n]=\Sf{D}$ holds, $a_r$ must satisfy Eqn.~(\ref{EInvSystA}). We are thus left to show that $a_r$ as given in Eqn.~(\ref{EInvSystA}) is an element of $-rb_r+p\Z_p$ and thus defines a valid linear-polynomial $p$-adic system in weak canonical form (within $\FFf{F}$). This would follow in particular if we could demonstrate that
\begin{align}
\label{EInvSystB}
\Gcd{p,\left(\frac{d_rp^k}{p^\ell-d}-c_r\right)\modulo p}=1
\end{align}
and
\begin{align}
\label{EInvSystC}
cn-\sum_{s\in\intintuptoex{p}\setminus\set{r}}a_s\left(\frac{d_sp^k}{p^\ell-d}-c_s\right)\in-rb_r\left(\frac{d_rp^k}{p^\ell-d}-c_r\right)+p\Z_p
\end{align}
which is what we will do.

We begin by showing Eqn.~(\ref{EInvSystB}) and use the fact $\Sf{D}[0]=r$ and Lemma~\ref{LLinPolySuit}~(2) to compute
\begin{align}
&\Gcd{p,\left(\frac{d_rp^k}{p^\ell-d}-c_r\right)\modulo p}\\
&\quad=\Gcd{p,\left(k=0\;?\;\frac{d_r}{p^\ell-d}:-c_r\right)\modulo p}\\
&\quad=\Gcd{p,\left(k=0\;?\;\frac{\prod_{t=0}^{p-1}b_t^{\SFcount{\SFperiodic{\Sf{D}}[1,\ell-1]}{t}}}{p^\ell-\prod_{t=0}^{p-1}b_t^{\SFcount{\SFperiodic{\Sf{D}}}{t}}}:-\prod_{t=0}^{p-1}b_t^{\SFcount{\SFinitial{\Sf{D}}[1,k-1]}{t}}\right)\modulo p}\\
&\quad=1.
\end{align}

To show Eqn.~(\ref{EInvSystC}) let $m$ be the unique element of $r+p\Z_p$ satisfying $\FFFDT(\FFf{G})[m]=\Sf{D}$ (cf. Lemma~\ref{LInfBlDTCompl}) where $\FFf{G}\in\SoSystems{p}(\FFPweakcanonicalform,\FFPlinearpolynomial)$ is given by
\begin{align}
\FFf{G}\ce(a_0+b_0x,\ldots,a_{r-1}+b_{r-1}x,-rb_r+b_rx,a_{r+1}+b_{r+1}x,\ldots,a_{p-1}+b_{p-1}x).
\end{align}
Then by Corollary~\ref{CPerFormula} we get
\begin{align}
&m=\frac{1}{c}\left(-rb_r\left(\frac{d_rp^k}{p^\ell-d}-c_r\right)+\sum_{s\in\intintuptoex{p}\setminus\set{r}}a_s\left(\frac{d_sp^k}{p^\ell-d}-c_s\right)\right)
\end{align}
which implies
\begin{align}
\left(cn-\sum_{s\in\intintuptoex{p}\setminus\set{r}}a_s\left(\frac{d_sp^k}{p^\ell-d}-c_s\right)\right)\modulo p
&=\left(cm-\sum_{s\in\intintuptoex{p}\setminus\set{r}}a_s\left(\frac{d_sp^k}{p^\ell-d}-c_s\right)\right)\modulo p\\
&=\left(-rb_r\left(\frac{d_rp^k}{p^\ell-d}-c_r\right)\right)\modulo p.
\end{align}
\end{proof}

\noindent
The above corollary indeed characterizes all weak canonical linear-polynomial $p$-adic systems for which a given $p$-adic integer $n$ has a given ultimately periodic digit expansion $\Sf{D}$: simply pick all coefficients at random apart from the constant coefficient of $\FFf{F}[n\modulo p]$ which is fixed uniquely by the other coefficients and can be computed by the formula given in the corollary. A further corollary of Theorem~\ref{TLinDirForm} which will answer a similar question is proven next.

\begin{corollary}
\label{CInvSystNum}
Let $2\leq p\in\N$, $r\in\intintuptoex{p}$, $\Sf{D}\in\CoSequences(\SPboundedby{\intintuptoex{p}},\SPultimatelyperiodic)$, and $\FFf{F}=(a_0+b_0x,\ldots,a_{p-1}+b_{p-1}x)\in\SoSystems{p}(\FFPweakcanonicalform,\FFPlinearpolynomial)$. Furthermore, let $k,\ell,c_0,\ldots,c_{p-1},d_0,\ldots,d_{p-1},c,d$ as in Corollary~\ref{CInvSyst} and
\begin{align}
K&\ce(\SFcount{\Sf{D}}{r}\neq0\;?\;\SFposition{\Sf{D}}{r}[0]+1:-\infty)\\
A_1&\ce(-rb_r)\modulo p\\
B_1&\ce\frac{1}{c}\left(A_1\left(\frac{d_rp^k}{p^\ell-d}-c_r\right)
+\sum_{s\in\intintuptoex{p}\setminus\set{r}}a_s\left(\frac{d_sp^k}{p^\ell-d}-c_s\right)\right)\\
C_1&\ce\left(\SFcount{\Sf{D}}{r}\neq0\;?\;\frac{1}{cp^{K-1}}\left(\frac{d_rp^k}{p^\ell-d}-c_r\right):1\right)\\
A_2&\ce\left(\SFcount{\Sf{D}}{r}\neq0\;?\;A_1-\frac{B_1-B_1\modulo p^K}{C_1p^{K-1}}:A_1\right)\\
B_2&\ce\left(\SFcount{\Sf{D}}{r}\neq0\;?\;B_1\modulo p^K,B_1\right)\\
C_2&\ce\frac{1}{C_1}.
\end{align}
If $a_0,\ldots,a_{r-1},a_{r+1},\ldots,a_{p-1},b_0,\ldots,b_{p-1}$ are fixed, then

\begin{theoremtable}
\theoremitem{(1)}&$\set{(a_r,n)\in(\Z_p)^2\mid\FFFDT(\FFf{F})[n]=\Sf{D}}=\set{\left(A_1+mp,B_1+mC_1p^K\right)\mid m\in\Z_p}$\tabularnewline
&$\phantom{\set{(a_r,n)\in(\Z_p)^2\mid\FFFDT(\FFf{F})[n]=\Sf{D}}}=\set{\left(A_2+mC_2p,B_2+mp^K\right)\mid m\in\Z_p}$\tabularnewline
&where $A_1,B_1,C_1,A_2,B_2,C_2\in\Z_p$, $A_1\in\intintuptoex{p}$, $B_2\in\intintuptoex{p^K}$ if $\SFcount{\Sf{D}}{r}\neq0$, and \tabularnewline
&$\Gcd{p,C_1\modulo p}=\Gcd{p,C_2\modulo p}=1$\tabularnewline
\theoremitem{(2)}& If $\FFPlinearpolynomialcoefficients{\Q\cap\Z_p}(\FFf{F})$ and $K_1,L_1,K_2,L_2$ are the denominators of $B_1,C_1,A_2,C_2$ respectively then,\tabularnewline
&\quad$\set{(a_r,n)\in\Z^2\mid\FFFDT(\FFf{F})[n]=\Sf{D}}\neq\emptyset\Leftrightarrow K_1\mid L_1$\tabularnewline
&\quad$\phantom{\set{(a_r,n)\in\Z^2\mid\FFFDT(\FFf{F})[n]=\Sf{D}}\neq\emptyset}\Leftrightarrow K_2\mid L_2\land B_2\in\Z$\tabularnewline
&\quad In this case:\tabularnewline
&\quad$\set{(a_r,n)\in\Z^2\mid\FFFDT(\FFf{F})[n]=\Sf{D}}$\tabularnewline
&\qquad$=\set{\left(A_1+(S_1B_1+m)L_1p,B_1+(S_1B_1+m)L_1C_1p^K\right)\mid m\in\Z}$\tabularnewline
&\qquad$=\set{\left(A_2+(S_2A_2+m)L_2C_2p,B_2+(S_2A_2+m)L_2p^K\right)\mid m\in\Z}$\tabularnewline
&\quad where $S_1,S_2\in\Z$ such that\tabularnewline
&\quad$L_1R_1-L_1C_1p^KS_1=1$\tabularnewline
&\quad$L_2R_2-L_2C_2pS_2=1$\tabularnewline
&\quad for some $R_1,R_2\in\Z$ (find $S_1,S_2$ with extended Euclidean algorithm).
\end{theoremtable}
\end{corollary}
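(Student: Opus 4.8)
The plan is to extract both parametrizations in (1) from a single affine relation between $a_r$ and $n$, and then to obtain (2) by intersecting the resulting $p$-adic line with $\Z^2$. First I would record that, with all coefficients except $a_r$ held fixed, Corollary~\ref{CPerFormula} together with the collecting formulas for $\FRFlinA{\FFf{F}}{\Sf{D}}$ and $\FRFlinB{\FFf{F}}{\Sf{D}}$ expresses the unique $n$ with $\FFFDT(\FFf{F})[n]=\Sf{D}$ through $c\,n=\sum_{s=0}^{p-1}a_s\left(\frac{d_sp^k}{p^\ell-d}-c_s\right)$, exactly as in Eqn.~(\ref{EInvSystA}) of Corollary~\ref{CInvSyst}. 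This is affine-linear in $a_r$: evaluating at the admissible base value $a_r=A_1=(-rb_r)\modulo p$ gives $n=B_1$, so that $n=B_1+\tfrac{1}{c}\left(\tfrac{d_rp^k}{p^\ell-d}-c_r\right)(a_r-A_1)$ in general. Since $\FFf{F}[r]=a_r+b_rx$ keeps $\FFf{F}$ a weak-canonical linear-polynomial $p$-adic system precisely when $a_r\in A_1+p\Z_p$ (the branch condition of Eqn.~(\ref{EPWCanF}); suitability of $\FFf{F}[r]$ is unaffected as it depends only on the fixed unit $b_r$ by Lemma~\ref{LLinPolySuit}~(2)), substituting $a_r=A_1+mp$ with $m\in\Z_p$ produces the first parametrization once I identify $\tfrac{p}{c}\left(\tfrac{d_rp^k}{p^\ell-d}-c_r\right)=C_1p^K$.

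The \emph{main obstacle} is the valuation bookkeeping behind this identification and behind the unit claims $\Gcd{p,C_1\modulo p}=\Gcd{p,C_2\modulo p}=1$. The key computation is $\nu_p\!\left(\frac{d_rp^k}{p^\ell-d}-c_r\right)=K-1=\SFposition{\Sf{D}}{r}[0]$. Here I would use that every $b_t$ is a unit modulo $p$ (Lemma~\ref{LLinPolySuit}~(2), as $\FFf{F}$ is a $p$-adic system), whence $c=\FRFlinB{\FFf{F}}{\SFinitial{\Sf{D}}}$ and $p^\ell-d$ are units; the least-valuation term of $c_r$ is $p^{j}$ with $j$ the first occurrence of $r$ in $\SFinitial{\Sf{D}}$ and a unit coefficient, while that of $\frac{d_rp^k}{p^\ell-d}$ is $p^{k+j'}$ with $j'$ the first occurrence of $r$ in $\SFperiodic{\Sf{D}}$. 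Whichever of the initial and periodic parts contains the first occurrence of $r$ governs the valuation, yielding $K-1$, and dividing out $p^{K-1}$ leaves a unit; up to the unit $c$ this is $C_1$. The necessary case distinction (initial versus periodic first occurrence, plus the degenerate case $\SFcount{\Sf{D}}{r}=0$, where $K=-\infty$, $C_1=1$ and $n=B_1$ no longer depends on $a_r$) is where the care is needed; the rest is routine.

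Because $C_1$ is a unit, $m\mapsto mC_1$ permutes $\Z_p$, so I can eliminate $m$ in favour of $n$ and shift the base point to the standard representative $B_2=B_1\modulo p^K\in\intintuptoex{p^K}$; the resulting line has slope $C_2=1/C_1$ and intercept $A_2$ exactly as defined, and $\Gcd{p,C_2\modulo p}=1$ is immediate from $C_2=1/C_1$. This gives the second parametrization. For the converse inclusion I would note that for each $a_r\in A_1+p\Z_p$ the tuple $\FFf{F}$ is a genuine element of $\SoSystems{p}(\FFPweakcanonicalform,\FFPlinearpolynomial)$, so Lemma~\ref{LInfBlDTCompl} (or Corollary~\ref{CPerFormula}) furnishes a unique $n$ with $\FFFDT(\FFf{F})[n]=\Sf{D}$, and this $n$ must coincide with the value delivered by the affine formula; hence the displayed sets are exactly the solution set. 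This settles (1).

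For (2) I would intersect the line of (1) with $\Z^2$. As $A_1\in\intintuptoex{p}\subseteq\Z$ and, the coefficients lying in $\Q\cap\Z_p$, every occurring denominator is coprime to $p$, the relation $a_r=A_1+mp\in\Z$ forces $m\in\Z$; thus integer solutions correspond to integers $m$ with $n=B_1+mC_1p^K\in\Z$. Writing $C_1=\gamma_1/L_1$ in lowest terms (so $\gcd(\gamma_1p^K,L_1)=1$) and clearing $L_1$ turns this into one linear congruence modulo $L_1$, solvable iff the fractional part of $B_1$ lies in $\tfrac{1}{L_1}\Z$, i.e. iff $K_1\mid L_1$; a particular solution is $m_0=S_1L_1B_1$ with $S_1$ from the B\'ezout relation $L_1R_1-L_1C_1p^KS_1=1$ (note $L_1C_1p^K=\gamma_1p^K\in\Z$), and the general solution $m\in m_0+L_1\Z$ yields the first displayed integer set. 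Running the identical argument on the $n$-parametrization---where now $n\in\Z$ forces $m\in\Z$ and $B_2\in\Z$, and solvability of $a_r=A_2+mC_2p\in\Z$ becomes $K_2\mid L_2$---produces the second set together with the criterion $K_2\mid L_2\land B_2\in\Z$; since both criteria characterize non-emptiness of the same integer solution set, they are equivalent to $K_1\mid L_1$, completing the proof.
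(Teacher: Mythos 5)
Your proposal is correct and follows essentially the same route as the paper: part (1) rests on Corollary~\ref{CPerFormula} and the affine dependence of $n$ on $a_r$, with the unit/valuation claim for $C_1$ established by the same case distinction (first occurrence of $r$ in the initial versus the periodic part, plus the degenerate case $\SFcount{\Sf{D}}{r}=0$) that the paper carries out by explicit expansion, and part (2) is the same B\'ezout-based intersection with $\Z^2$. The only difference is presentational — you phrase the computation of $C_1$ as a $p$-adic valuation argument where the paper writes out the two subcase formulas explicitly — so there is nothing substantive to flag.
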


\begin{proof}
$ $\\
\proofitem{(1)}%
Clearly, $A_1,B_1,A_2,B_2\in\Z_p$, $A_1\in\intintuptoex{p}$, and $B_2\in\intintuptoex{p^K}$ if $\SFcount{\Sf{D}}{r}\neq0$. In addition, if $\SFcount{\Sf{D}}{r}=0$, then $C_1,C_2=1\in\Z_p$ and $\Gcd{p,C_1\modulo p}=\Gcd{p,C_2\modulo p}=\Gcd{p,1}=1$. If, however, $\SFcount{\Sf{D}}{r}\neq0$ then either $\SFposition{\Sf{D}}{r}[0]<\abs{\SFinitial{\Sf{D}}}$ or $\SFposition{\Sf{D}}{r}[0]\geq\abs{\SFinitial{\Sf{D}}}$. In the first case we get $K-1=\SFposition{\Sf{D}}{r}[0]=\SFposition{\SFinitial{\Sf{D}}}{r}[0]$,
\begin{align}
C_1&=\frac{1}{cp^{K-1}}\left(\frac{d_rp^k}{p^\ell-d}-c_r\right)\\
&=\frac{p^{\abs{\SFinitial{\Sf{D}}}}}{p^{\SFposition{\Sf{D}}{r}[0]}}\frac{d_r}{c(p^\ell-d)}-\frac{1}{c}\prod_{t=0}^{p-1}b_t^{\SFcount{\SFinitial{\Sf{D}}[\SFposition{\SFinitial{\Sf{D}}}{r}[0]+1,k-1]}{t}}-\vphantom{a}\\
\nonumber
&\phantom{\vphantom{a}=\vphantom{a}}\frac{1}{c}\sum_{i=1}^{\SFcount{\SFinitial{\Sf{D}}}{r}-1}\frac{p^{\SFposition{\SFinitial{\Sf{D}}}{r}[i]}}{p^{\SFposition{\SFinitial{\Sf{D}}}{r}[0]}}\prod_{t=0}^{p-1}b_t^{\SFcount{\SFinitial{\Sf{D}}[\SFposition{\SFinitial{\Sf{D}}}{r}[i]+1,k-1]}{t}}\\
&\in\Z_p
\end{align}
and
\begin{align}
\Gcd{p,C_1\modulo p}&=\Gcd{p,\left(\frac{1}{c}\prod_{t=0}^{p-1}b_t^{\SFcount{\SFinitial{\Sf{D}}[\SFposition{\SFinitial{\Sf{D}}}{r}[0]+1,k-1]}{t}}\right)\modulo p}\\
&=1
\end{align}
by Lemma~\ref{LLinPolySuit}~(2). In the second case we get $K-1=\SFposition{\Sf{D}}{r}[0]=\abs{\SFinitial{\Sf{D}}}+\SFposition{\SFperiodic{\Sf{D}}}{r}[0]$,
\begin{align}
C_1&=\frac{1}{cp^{K-1}}\left(\frac{d_rp^k}{p^\ell-d}-c_r\right)\\
&=\frac{1}{c(p^\ell-d)}\prod_{t=0}^{p-1}b_t^{\SFcount{\SFperiodic{\Sf{D}}[\SFposition{\SFperiodic{\Sf{D}}}{r}[0]+1,\ell-1]}{t}}+\vphantom{a}\\
\nonumber
&\phantom{\vphantom{a}=\vphantom{a}}\frac{1}{c(p^\ell-d)}\sum_{i=1}^{\SFcount{\SFperiodic{\Sf{D}}}{r}-1}\frac{p^{\SFposition{\SFperiodic{\Sf{D}}}{r}[i]}}{p^{\SFposition{\SFperiodic{\Sf{D}}}{r}[0]}}\prod_{t=0}^{p-1}b_t^{\SFcount{\SFperiodic{\Sf{D}}[\SFposition{\SFperiodic{\Sf{D}}}{r}[i]+1,\ell-1]}{t}}\\
&\in\Z_p
\end{align}
and
\begin{align}
\Gcd{p,C_1\modulo p}&=\Gcd{p,\left(\frac{1}{c(p^\ell-d)}\prod_{t=0}^{p-1}b_t^{\SFcount{\SFperiodic{\Sf{D}}[\SFposition{\SFperiodic{\Sf{D}}}{r}[0]+1,\ell-1]}{t}}\right)\modulo p}\\
&=1
\end{align}
again by Lemma~\ref{LLinPolySuit}~(2). We thus proved that, in any case, $C_1,C_2\in\Z_p$ and $\Gcd{p,C_1\modulo p}=\Gcd{p,C_2\modulo p}=1$.

If $a_r=A_1+mp=(-rb_r)\modulo p+mp$ for some $m\in\Z_p$ (which exactly covers all possible candidates for $a_r$ for $\FFf{F}$ to be in weak canonical form), then by Corollary~\ref{CPerFormula} there is a unique $n\in\Z_p$ satisfying $\FFFDT(\FFf{F})[n]=\Sf{D}$ which is given by
\begin{align}
n&=\left(\frac{\FRFlinA{\FFf{F}}{\SFperiodic{\Sf{D}}}}{p^{\abs{\SFperiodic{\Sf{D}}}}-\FRFlinB{\FFf{F}}{\SFperiodic{\Sf{D}}}}p^{\abs{\SFinitial{\Sf{D}}}}-\FRFlinA{\FFf{F}}{\SFinitial{\Sf{D}}}\right)\frac{1}{\FRFlinB{\FFf{F}}{\SFinitial{\Sf{D}}}}\\
&=\frac{1}{c}\sum_{r=0}^{p-1}a_r\left(\frac{d_rp^k}{p^\ell-d}-c_r\right)\\
&=\frac{1}{c}\left((A_1+mp)\left(\frac{d_rp^k}{p^\ell-d}-c_r\right)
+\sum_{s\in\intintuptoex{p}\setminus\set{r}}a_s\left(\frac{d_sp^k}{p^\ell-d}-c_s\right)\right)\\
&=B_1+mp\frac{1}{c}\left(\frac{d_rp^k}{p^\ell-d}-c_r\right)\\
&=B_1+\left(\SFcount{\Sf{D}}{r}\neq0\;?\;mp^K\frac{1}{cp^{K-1}}\left(\frac{d_rp^k}{p^\ell-d}-c_r\right):0\right)\\
&=B_1+mC_1p^K.
\end{align}
Thus, $\set{(a_r,n)\in(\Z_p)^2\mid\FFFDT(\FFf{F})[n]=\Sf{D}}=\set{\left(A_1+mp,B_1+mC_1p^K\right)\mid m\in\Z_p}$.

If, however, $a_r=A_2+mC_2p$ for some $m\in\Z_p$ (which again exactly covers all possible candidates for $a_r$ for $\FFf{F}$ to be in weak canonical form since $\Gcd{p,C_2\modulo p}=1$), then again by Corollary~\ref{CPerFormula} there is a unique $n\in\Z_p$ satisfying $\FFFDT(\FFf{F})[n]=\Sf{D}$ which is given by
\begin{align}
n&=\frac{1}{c}\left((A_2+mC_2p)\left(\frac{d_rp^k}{p^\ell-d}-c_r\right)
+\sum_{s\in\intintuptoex{p}\setminus\set{r}}a_s\left(\frac{d_sp^k}{p^\ell-d}-c_s\right)\right)\\
&=B_1-\frac{1}{c}\left(\SFcount{\Sf{D}}{r}\neq0\;?\;\left(\frac{B_1-B_1\modulo p^K}{C_1p^{K-1}}-\frac{mp}{C_1}\right)\left(C_1cp^{K-1}\right):0\right)\\
&=B_2+mp^K.
\end{align}
Thus, $\set{(a_r,n)\in(\Z_p)^2\mid\FFFDT(\FFf{F})[n]=\Sf{D}}=\set{\left(A_2+mC_2p,B_2+mp^K\right)\mid m\in\Z_p}$.

\noindent
\proofitem{(2)}%
First we observe that $c_0,\ldots,c_{p-1},d_0,\ldots,d_{p-1},c,d,A_1,B_1,C_1,K_1,L_1,A_2,B_2,C_2,K_2,L_2\in\Q\cap\Z_p$ by (1) and their respective definitions. Moreover, from (1) it follows that
\begin{align}
&\set{(a_r,n)\in\Z^2\mid\FFFDT(\FFf{F})[n]=\Sf{D}}\\
&\quad=\set{\left(A_1+mp,B_1+mC_1p^K\right)\mid m\in\Z_p}\cap\Z^2\\
&\quad=\set{\left(A_1+mp,B_1+mC_1p^K\right)\mid m\in\Z\land B_1+mC_1p^K\in\Z}\\
&\quad=\set{\left(A_1+yp,B_1+yC_1p^K\right)\mid y\in\Z\land\ex x\in\Z:Lx-LC_1p^Ky=LB_1}
\end{align}
where $L\ce\Lcm{K_1,L_1}$. B\'{e}zout's Lemma implies that the equation $Lx-LC_1p^Ky=LB_1$ has a solution $x,y\in\Z$ if and only if $\Gcd{L,LC_1p^K}$ divides $LB_1$, which in return is true if and only if $\Gcd{L,LC_1p^K}=1$, or equivalently $L=L_1$, respectively $K_1\mid L_1$. In this case the set of all solutions is given by
\begin{align}
&\set{(x,y)\in\Z^2\mid Lx-LC_1p^Ky=LB_1}=\set{((R_1B_1+mC_1p^K)L_1,(S_1B_1+m)L_1)\mid m\in\Z}
\end{align}
where $R_1,S_1\in\Z$ such that $L_1R_1-L_1C_1p^KS_1=1$. Consequently,
\begin{align}
&\set{(a_r,n)\in\Z^2\mid\FFFDT(\FFf{F})[n]=\Sf{D}}\\
&\quad=\set{\left(A_1+(S_1B_1+m)L_1p,B_1+(S_1B_1+m)L_1C_1p^K\right)\mid m\in\Z}.
\end{align}

Analogously we get,
\begin{align}
&\set{(a_r,n)\in\Z^2\mid\FFFDT(\FFf{F})[n]=\Sf{D}}\\
&\quad=\set{\left(A_2+mC_2p,B_2+mp^K\right)\mid m\in\Z_p}\cap\Z^2\\
&\quad=\set{\left(A_2+mC_2p,B_2+mp^K\right)\mid m\in\Z\land A_2+mC_2p\in\Z\land B_2\in\Z}\\
&\quad=\set{\left(A_2+yC_2p,B_2+yp^K\right)\mid y\in\Z\land\ex x\in\Z:Lx-LC_2py=LA_2\land B_2\in\Z}
\end{align}
where $L\ce\Lcm{K_2,L_2}$. As before B\'{e}zout's Lemma implies that the equation $Lx-LC_2py=LA_2$ has a solution $x,y\in\Z$ if and only if $\Gcd{L,LC_2p}$ divides $LA_2$, which in return is true if and only if $\Gcd{L,LC_2p}=1$, or equivalently $L=L_2$, respectively $K_2\mid L_2$. In this case the set of all solutions is given by
\begin{align}
&\set{(x,y)\in\Z^2\mid Lx-LC_2py=LA_2}=\set{((R_2A_2+mC_2p)L_2,(S_2A_2+m)L_2)\mid m\in\Z}
\end{align}
where $R_2,S_2\in\Z$ such that $L_2R_2-L_2C_2pS_2=1$. Consequently,
\begin{align}
&\set{(a_r,n)\in\Z^2\mid\FFFDT(\FFf{F})[n]=\Sf{D}}\\
&\quad=\set{\left(A_2+(S_2A_2+m)L_2C_2p,B_2+(S_2A_2+m)L_2p^K\right)\mid m\in\Z}.
\end{align}
\end{proof}

\noindent
As an example consider $\FFf{F}\ce\FFf{F}_C=(x,3x+1)$, $r=1$, and $\Sf{D}=(\Sf{P})^\infty\in\CoSequences(\SPboundedby{\intintuptoex{2}},\SPperiodic)$ and let
\begin{align}
U_0&\ce\sum_{i=0}^{\SFcount{\Sf{P}}{0}-1}2^{\SFposition{\Sf{P}}{0}[i]}3^{\SFcount{\Sf{P}}{1}-\SFposition{\Sf{P}}{0}[i]+i}\\
U_1&\ce\sum_{i=0}^{\SFcount{\Sf{P}}{1}-1}2^{\SFposition{\Sf{P}}{1}[i]}3^{\SFcount{\Sf{P}}{1}-i-1}\\
V&\ce2^{\abs{\Sf{P}}}-3^{\SFcount{\Sf{P}}{1}}.
\end{align}
Then $a_0=0$, $b_0=1$, and $b_1=3$. Furthermore, if $\Sf{D}\neq(0)^\infty$, simplifying all expressions in Corollary~\ref{CInvSystNum} then yields,
\begin{align}
K&=\SFposition{\Sf{P}}{1}[0]+1,&
A_1&=1,&
B_1&=\frac{U_1}{V},&
C_1&=\frac{U_1}{V}\frac{1}{2^{K-1}},&
L_1&=\frac{\abs{V}}{\Gcd{U_1,V}},
\end{align}
and $R_1,S_1\in\Z$ such that
\begin{align}
\frac{V}{\Gcd{U_1,V}}R_1-\frac{2U_1}{\Gcd{U_1,V}}S_1&=\Sgn{V}.
\end{align}
Furthermore,
\begin{align}
&\set{(a_1,n)\in\Z^2\mid\FFFDT(\FFf{F})[n]=\Sf{D}}\\
&\quad=\set{\left(A_1+(S_1B_1+m)L_12,B_1+(S_1B_1+m)L_1C_12^K\right)\mid m\in\Z}\\
&\quad=\set{\left(1+\left(S_1\frac{U_1}{V}+m\right)\frac{2\abs{V}}{\Gcd{U_1,V}},\frac{U_1}{V}+\left(S_1\frac{U_1}{V}+m\right)\frac{2\abs{V}}{\Gcd{U_1,V}}\frac{U_1}{V}\right)\mid m\in\Z}\\
&\quad=\set{\left(1,\frac{U_1}{V}\right)\left(1+\Sgn{V}\left(S_1\frac{2U_1}{\Gcd{U_1,V}}+2m\frac{V}{\Gcd{U_1,V}}\right)\right)\mid m\in\Z}\\
&\quad=\set{\left(1,\frac{U_1}{V}\right)(2m+R_1)\frac{\abs{V}}{\Gcd{U_1,V}}\mid m\in\Z}\\
&\quad=\set{\left(1,\frac{U_1}{V}\right)(2m+1)\frac{V}{\Gcd{U_1,V}}\mid m\in\Z}\\
\label{EInvA}
&\quad=\set{\left(\frac{V}{\Gcd{U_1,V}},\frac{U_1}{\Gcd{U_1,V}}\right)(2m+1)\mid m\in\Z}.
\end{align}

Analogously, if $r=0$ and $\Sf{D}\neq(1)^\infty$, then $a_1=1$, $b_0=1$, $b_1=3$,
\begin{align}
K&=\SFposition{\Sf{P}}{0}[0]+1,&
A_1&=0,&
B_1&=\frac{U_1}{V},&
C_1&=\frac{U_0}{V}\frac{1}{2^{K-1}},&
L_1&=\frac{\abs{V}}{\Gcd{U_0,V}},
\end{align}
and $R_1,S_1\in\Z$ such that
\begin{align}
\frac{V}{\Gcd{U_0,V}}R_1-\frac{2U_0}{\Gcd{U_0,V}}S_1&=\Sgn{V}.
\end{align}
In addition,
\begin{align}
\label{EInvB}
\set{(a_0,n)\in\Z^2\mid\FFFDT(\FFf{F})[n]=\Sf{D}}
&=\set{\left(\frac{V}{\Gcd{U_0,V}},\frac{U_0}{\Gcd{U_0,V}}\right)(2m+1)-1\mid m\in\Z}.
\end{align}
Note that $U_0-U_1=V$ and $\Gcd{U_0,V}=\Gcd{U_1,V}=\Gcd{U_0,U_1}$. A consequence of both Eqn.~(\ref{EInvA}) and Eqn.~(\ref{EInvB}) is that the Collatz conjecture is equivalent to
\begin{align}
\label{ECollatzDiv}
&\N\subseteq\FFFSoUPP(\FFf{F})\quad\land\quad\ex m\in\N:mV=U_0\Rightarrow\SFperiodic{\Sf{D}}\in\set{(0),(0,1),(1,0)}
\intertext{and respectively}
&\N\subseteq\FFFSoUPP(\FFf{F})\quad\land\quad\ex m\in\N:mV=U_1\Rightarrow\SFperiodic{\Sf{D}}\in\set{(0,1),(1,0)}.
\end{align}

\noindent
Variants of this result can be found in several publications on the original Collatz conjecture, such as \cite{Seifert:1988}.

\myparagraphtoc{When do all rational numbers have ultimately periodic digit expansions? Conjectures.}
In Corollary~\ref{CPerFormula} we proved that all ultimately periodic points of $(\Q\cap\Z_p)$-linear-polynomial $p$-adic systems are rational numbers. The converse question whether all rational numbers have ultimately periodic digit expansions with respect to a given $(\Q\cap\Z_p)$-linear-polynomial $p$-adic system $\FFf{F}$, i.e. whether $\FFf{F}$ is ultimately periodic on $\Q\cap\Z_p$, is incredibly difficult in general and sits right at the heart of the Collatz conjecture. The general framework of $p$-adic systems might help to shed some light on the true nature of the underlying difficulty, as it allows to discuss the question in a broader context. In this context we are able to formulate conjectures of increasing generality which are backed by computer experiments to a varying degree. Several generalizations of the original Collatz transformation which have been considered in the literature (cf. e.g. \cite{Chamberland:2003,Steiner:1981a,Steiner:1981b,BelagaMignotte:1998}) are covered by these general conjectures.

We begin by revisiting the Collatz conjecture itself.

\begin{conjecture}[Collatz]
\label{CoPeriodsA}
Let $\FFf{F}_C\ce(x,3x+1)\in\SoSystems{2}(\FFPlinearpolynomialcoefficients{\Z})$. Then,
\begin{align}
\fa n\in\N:\ex k\in\N:\FFFDT(\FFf{F}_C)[n][k]=1.
\end{align}
\end{conjecture}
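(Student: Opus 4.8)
The statement is the Collatz conjecture itself, so rather than claim a complete proof I will describe the line of attack that the framework of this paper suggests and be explicit about where it stalls; the decisive step remains open. The plan is first to translate the goal into the language of digit expansions. Reaching $1$ means the orbit of $n$ enters the cycle $(1,2)$, which in $\FFFDT(\FFf{F}_C)$ corresponds to an eventual periodic part $\SFperiodic{\FFFDT(\FFf{F}_C)[n]}\in\set{(0,1),(1,0)}$. By Corollary~\ref{CInitPer} the orbit $\FFFST(\FFf{F}_C)[n]$ is ultimately periodic if and only if $\FFFDT(\FFf{F}_C)[n]$ is, so the conjecture splits into two sub-claims: \textbf{(A)} every $n\in\N$ is an ultimately periodic point, i.e. $\N\subseteq\FFFSoUPP(\FFf{F}_C)$ (this rules out both divergent and aperiodic orbits), and \textbf{(B)} the only periodic parts reachable from a positive integer are $(0,1)$ and $(1,0)$ (this rules out nontrivial cycles; the remaining fixed pattern $(0)$ belongs to the fixed point $0\notin\N$).

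For \textbf{(A)} the first thing I would try is the contractive/expansive machinery of Lemma~\ref{LContrExp}. The obstruction is immediate: on the even branch $\FFf{F}_C(n)=n/2$ contracts while on the odd branch $\FFf{F}_C(n)=(3n+1)/2>n$ expands, so $\FFf{F}_C$ is of mixed type, $\FFPmixed(\FFf{F}_C)$, and neither hypothesis of Lemma~\ref{LContrExp} applies. The remaining lever is the transport principle of Lemma~\ref{LPeriodicOn}: I would search for a companion $2$-adic system $\FFf{G}$ that can be proved ultimately periodic on $\Q\cap\Z_2$ and for which $\pi_{\FFf{F}_C,\FFf{G}}$ admits a closed form (as produced by Theorem~\ref{TLinDirForm} for linear-polynomial systems) satisfying $\pi_{\FFf{F}_C,\FFf{G}}(\Q\cap\Z_2)=\Q\cap\Z_2$. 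Any such $\FFf{G}$ would, via the ``In particular'' part of Lemma~\ref{LPeriodicOn}, transfer ultimate periodicity on $\Q\cap\Z_2$ to $\FFf{F}_C$ and hence yield \textbf{(A)} on $\N\subseteq\Q\cap\Z_2$.

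Assuming \textbf{(A)}, I would close \textbf{(B)} with the explicit cycle formulas. Corollary~\ref{CInvSystNum} and the resulting equivalence in Eqn.~(\ref{ECollatzDiv}) show that $\N\subseteq\FFFSoUPP(\FFf{F}_C)$ together with the divisibility constraint $mV=U_0$ (respectively $mV=U_1$) forces $\SFperiodic{\Sf{D}}\in\set{(0),(0,1),(1,0)}$; discarding $(0)$ as above leaves exactly the cycle $(1,2)$, i.e. the orbit reaches $1$. This step is the classical ``no nontrivial cycles'' estimate reorganised through Corollary~\ref{CPerFormula}, and is the comparatively tractable half of the argument.

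The hard part is \textbf{(A)}, and here the plan stalls for the same reason the Collatz problem is open. There is no known contractive Lyapunov structure for a system that expands on half of $\Z_2$, so Lemma~\ref{LContrExp} cannot be coaxed into applying; and the companion system $\FFf{G}$ demanded by the transport approach is not known to exist, as the discussion preceding this conjecture records. Even the weakest consequence of \textbf{(A)}, that no orbit of a natural number diverges, is unproven. Thus the framework cleanly isolates the obstruction, namely establishing $\FFPultimatelyperiodicon{\Q\cap\Z_2}(\FFf{F}_C)$, but it does not remove it: a complete proof would require a genuinely new idea for controlling the mixed-type dynamics of $\FFf{F}_C$ on $\Q\cap\Z_2$.
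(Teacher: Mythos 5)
This statement is the Collatz conjecture itself, stated in the paper as an open conjecture with no proof, and you correctly refrain from claiming one. Your outline faithfully mirrors the paper's own surrounding discussion in Section~\ref{SLinPoly} -- the split into ultimate periodicity on $\Q\cap\Z_2$ plus the cycle-classification via Eqn.~(\ref{ECollatzDiv}), the failure of Lemma~\ref{LContrExp} because $\FFf{F}_C$ is of mixed type, and the missing companion system for the transport principle of Lemma~\ref{LPeriodicOn} -- so there is nothing to compare against and no gap to report beyond the one you already identify as genuinely open.
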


\noindent
A slightly stronger version is given by

\begin{conjecture}
\label{CoPeriodsB}
Let $\FFf{F}_C\ce(x,3x+1)\in\SoSystems{2}(\FFPlinearpolynomialcoefficients{\Z})$. Then,

\begin{theoremtable}
\theoremitem{(1)}&$\FFPultimatelyperiodicon{\Q\cap\Z_2}(\FFf{F}_C)$\tabularnewline
\theoremitem{(2)}&$\fa n\in\N:(\SPultimatelyperiodic(\FFFDT(\FFf{F}_C)[n])\Rightarrow\ex k\in\N:\FFFDT(\FFf{F}_C)[n][k]=1).$
\end{theoremtable}
\end{conjecture}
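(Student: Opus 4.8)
The two parts have quite different characters, and the plan is to dispatch part (2) directly while being candid that part (1) is essentially the Collatz conjecture and beyond current reach. For part (2) I would not even use the ultimate-periodicity hypothesis: the $\FFf{F}_C$-orbit of a positive integer stays in $\N$, since $\FFf{F}_C(n)=n/2\in\N$ for even $n$ and $\FFf{F}_C(n)=(3n+1)/2\in\N$ for odd $n$. If every digit $\FFFDT(\FFf{F}_C)[n][k]$ were $0$, then the even branch would apply at every step and $\FFf{F}_C^k(n)=n/2^k$, forcing $2^k\mid n$ for all $k\in\Nz$ and hence $n=0$. Since $n\in\N$, some digit must equal $1$, which proves (2). (If one instead reads the ``$1$'' as the \emph{value} $1$ via $\FFFST(\FFf{F}_C)$, part (2) becomes the statement that the only cycle meeting $\N$ is $(1,2)$, i.e.\ the absence of nontrivial positive cycles, which is itself open.)

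For part (1) the plan is to use the linear-polynomial machinery of this section to reduce the assertion $\FFPultimatelyperiodicon{\Q\cap\Z_2}(\FFf{F}_C)$ to a form that might be transported to a tractable system. By Corollary~\ref{CInitPer}, ultimate periodicity of $\FFFDT(\FFf{F}_C)[n]$ is equivalent to ultimate periodicity of the value sequence $\FFFST(\FFf{F}_C)[n]$, so the claim is exactly that no rational with odd denominator has a diverging $\FFf{F}_C$-orbit. Moreover Corollary~\ref{CPerFormula} already supplies the inclusion $\FFFSoUPP(\FFf{F}_C)\subseteq\Q\cap\Z_2$, so the entire content is the reverse inclusion $\Q\cap\Z_2\subseteq\FFFSoUPP(\FFf{F}_C)$.

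The main tool I would reach for is the conjugacy technique behind Lemma~\ref{LPeriodicOn}. Since $\pi_{\FFf{F}_C,\FFf{G}}={\psi_{\FFf{G}}}^{-1}\circ\psi_{\FFf{F}_C}$, the sequences $\FFFDT(\FFf{F}_C)[n]$ and $\FFFDT(\FFf{G})[\pi_{\FFf{F}_C,\FFf{G}}(n)]$ coincide, so $n\in\FFFSoUPP(\FFf{F}_C)$ if and only if $\pi_{\FFf{F}_C,\FFf{G}}(n)\in\FFFSoUPP(\FFf{G})$. Hence it would suffice to exhibit a second $2$-adic system $\FFf{G}$ that is provably ultimately periodic on $\Q\cap\Z_2$ and whose permutation $\pi_{\FFf{F}_C,\FFf{G}}$ satisfies $\pi_{\FFf{F}_C,\FFf{G}}(\Q\cap\Z_2)=\Q\cap\Z_2$; then $\FFFSoUPP(\FFf{F}_C)={\pi_{\FFf{F}_C,\FFf{G}}}^{-1}(\Q\cap\Z_2)=\Q\cap\Z_2$. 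The most promising candidates for $\FFf{G}$ are \emph{contractive} systems, for which Lemma~\ref{LContrExp}~(2) already guarantees ultimate periodicity on $\Q\cap\Z_2$ once they also contract denominators, and Theorem~\ref{TLinDirForm} together with the $\pi$-calculus of Lemma~\ref{LPeriodicOn} makes $\pi_{\FFf{F}_C,\FFf{G}}$ explicitly computable whenever $\FFf{G}$ is itself linear-polynomial.

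The hard part — and the reason this is stated as a conjecture — is that $\FFf{F}_C$ is of mixed type rather than contractive: the odd branch $(3n+1)/2$ expands while the even branch $n/2$ contracts, so Lemma~\ref{LContrExp} cannot be applied to $\FFf{F}_C$ directly. Everything therefore hinges on bridging from the mixed-type $\FFf{F}_C$ to a genuinely contractive $\FFf{G}$ through a conjugacy that is simultaneously explicit and $\Q$-preserving. Theorem~\ref{TPolyExp}~(2) forces any contractive polynomial $\FFf{G}$ to be linear-polynomial, and the only linear conjugacies currently available (such as $\pi_{(x,3x+1),(x,3x+c)}(n)=cn$ for odd $c$) connect $\FFf{F}_C$ solely to systems sharing the same expansive multiplier $3$, all of which are equally intractable. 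Producing a contractive target together with a computable, rational-preserving conjugacy is the step I expect to be the genuine obstacle, and I would not claim to overcome it with the present tools.
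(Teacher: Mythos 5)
This statement is Conjecture~\ref{CoPeriodsB}, not a theorem: the paper offers no proof of either part, presents it as a strengthening of the Collatz conjecture (Conjecture~\ref{CoPeriodsA}), and leaves it open alongside Conjectures~\ref{CoPeriodsC}--\ref{CoPeriodsF}. There is therefore no proof to compare yours against, and your proposal is best judged as a diagnosis --- which it gets essentially right. Your dispatch of part (2) under the literal $\FFFDT$ reading is correct, with one small repair: the conjecture only quantifies over $k\in\N$, so the digit at index $0$ is unconstrained, and if $n$ is odd you must run the same divisibility argument on $\FFf{F}_C(n)=(3n+1)/2$ rather than on $n$ itself. But, as your parenthetical already observes, that literal reading also renders Conjecture~\ref{CoPeriodsA} trivially true, whereas the paper explicitly identifies \ref{CoPeriodsA} with the original Collatz conjecture and later derives a nontrivial divisibility reformulation of it; the ``$=1$'' is therefore surely intended as $\FFFST(\FFf{F}_C)[n][k]=1$, i.e.\ the orbit reaches the \emph{value} $1$, and under that reading part (2) is precisely the open ``no nontrivial positive cycles'' half of the Collatz problem.

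For part (1) your reduction and strategy accurately reproduce the paper's own program: Corollary~\ref{CPerFormula} gives $\FFFSoUPP(\FFf{F}_C)\subseteq\Q\cap\Z_2$, so the content is the reverse inclusion; Lemma~\ref{LPeriodicOn} would transfer it from any $\FFf{G}$ with $\pi_{\FFf{F}_C,\FFf{G}}(\Q\cap\Z_2)=\Q\cap\Z_2$; Lemma~\ref{LContrExp} and Theorem~\ref{TLinPolyCEM} supply provably ultimately periodic targets only in the contractive (integer-coefficient) case; and Theorem~\ref{TConstIrr} together with Theorem~\ref{TOrderIrr} shows that the explicitly computable, rational-preserving conjugacies currently available reach only systems with the same multiplier multiset $\{1,3\}$, hence the same mixed type. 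You correctly identify the bridge from mixed type to contractive as the genuine obstruction and do not claim to cross it. That is the honest and correct conclusion: no proof of this conjecture exists in the paper or, to date, anywhere else.
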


\noindent
Considering both parts of the conjecture separately, there are several ways to generalize. The seemingly most arbitrary element is the definition of $\FFf{F}_C$. Why should $(x,3x+1)$ be in any way special among linear-polynomial $2$-adic systems? Experiments show that it probably isn't if we weaken (2).

\begin{conjecture}
\label{CoPeriodsC}
Let $2\leq p\in\N$, $\FFf{F}\in\SoSystems{p}(\FFPlinearpolynomialcoefficients{\Z})$, and $B\in\Z$ the product of all leading coefficients of the polynomials $\FFf{F}[r](x)$, $r\in\intintuptoex{p}$. Then,

\begin{theoremtable}
\theoremitem{(1)}&$\FFPultimatelyperiodicon{\Q\cap\Z_p}(\FFf{F})\quad\Leftrightarrow\quad\abs{B}<p^p$\tabularnewline
\theoremitem{(2)}&$\abs{\set{\SFperiodic{\FFFDT(\FFf{F})[n]}\mid n\in\Z}}<\infty.$
\end{theoremtable}
\end{conjecture}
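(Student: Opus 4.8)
The plan is to split the analysis according to the archimedean sizes of the leading coefficients and to dispatch the two ``pure'' regimes with the contraction/expansion machinery of this section, isolating a single hard ``mixed'' regime. Write $\FFf{F}[r]\vert_{r+p\Z_p}=a_r+b_rx$ with $a_r,b_r\in\Z$, so $B=\prod_{r\in\intintuptoex{p}}b_r$, and recall from Lemma~\ref{LLinPolySuit}~(2) that $\Gcd{p,b_r\modulo p}=1$ for every $r$; in particular $b_r\neq\pm p$, so each $\abs{b_r}$ is genuinely $<p$ or $>p$. Since $\abs{\FFf{F}(n)}=\frac{\abs{b_{n\modulo p}}}{p}\abs{n}+O(1)$ for the archimedean absolute value, the case $\abs{b_r}<p$ for all $r$ gives $\FFPcontractive(\FFf{F})$; moreover $\FFf{F}(a/b)$ has integer numerator and denominator dividing $pb$, and being $p$-integral with $p\nmid b$ its reduced denominator divides $b$, so $\FFPcontractsdenominators(\FFf{F})$. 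Lemma~\ref{LContrExp}~(2) together with Corollary~\ref{CPerFormula} (integer coefficients lie in $\Q\cap\Z_p$, whence $\FFFSoUPP(\FFf{F})\subseteq\Q$) then yields $\FFPultimatelyperiodicon{\Q\cap\Z_p}(\FFf{F})$, and here $\abs{B}<p^p$. Dually, if $\abs{b_r}>p$ for all $r$, then $\FFPexpansive(\FFf{F})$ by Theorem~\ref{TPolyExp}~(1), so $\neg\FFPultimatelyperiodicon{\Q\cap\Z_p}(\FFf{F})$ by Lemma~\ref{LContrExp}~(3), and $\abs{B}>p^p$. These two regimes settle both implications of (1) whenever all $\abs{b_r}$ lie on the same side of $p$.

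For (2) I would first observe that $\FFf{F}(\Z)\subseteq\Z$: for $n\in\Z$ the value $\FFf{F}[r](n)=b_rn+a_r$ is an integer and $\FFf{F}(n)=(\FFf{F}[r](n)-\FFf{F}[r](n)\modulo p)/p\in\Z$. Thus integer orbits stay in $\Z$, and by Corollary~\ref{CInitPer} a distinct value of $\SFperiodic{\FFFDT(\FFf{F})[n]}$ is either the empty word (aperiodic $n$) or a cyclic rotation of the digit word read off an integer cycle; since a cycle of length $\ell$ yields at most $\ell$ rotations, finiteness of $\set{\SFperiodic{\FFFDT(\FFf{F})[n]}\mid n\in\Z}$ reduces to finiteness of the set of integer cycles of $\FFf{F}$. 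In the contractive regime every periodic orbit meets the bounded set $A$ of Lemma~\ref{LContrExp}~(1); in the expansive regime the same holds by an elementary argument, since a periodic orbit all of whose elements have size $>M$ would be forced to grow strictly. Either way the integer cycles are bounded, hence finite in number, and (2) follows. Only the mixed regime remains.

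The main obstacle is precisely the mixed regime $\min_r\abs{b_r}<p<\max_r\abs{b_r}$, which already contains the Collatz map ($p=2$, $(b_0,b_1)=(1,3)$, $\abs{B}=3<4$) and the divergence-type problems ($(b_0,b_1)=(1,5)$, $\abs{B}=5>4$). The threshold $p^p$ arises from a Lyapunov/geometric-mean heuristic: iterating gives $\log\abs{\FFf{F}^k(n)}=\log\abs{n}+\sum_{i=0}^{k-1}\log\abs{b_{r_i}}-k\log p+O(1)$, where $r_i$ is the $i$-th digit of $n$. Since $\psi_{\FFf{F}}$ carries Haar measure on $\Z_p$ to the uniform Bernoulli measure on digit sequences (measure preservation, Lemma~\ref{LPermProp}~(1)), for Haar-almost every $n$ the digit frequencies are uniform and $\tfrac1k\log\abs{\FFf{F}^k(n)}\to\tfrac1p\sum_r\log\abs{b_r}-\log p=\log(\abs{B}^{1/p}/p)$, which is negative exactly when $\abs{B}<p^p$; so orbits contract on average and become eventually periodic for Haar-almost all $n$ in precisely the predicted range. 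The fatal gap is that $\Q\cap\Z_p$ has Haar measure zero, so this almost-everywhere equidistribution gives no control over the sparse rational orbits, and supplying such control for even one mixed system is equivalent in difficulty to the Collatz conjecture. A realistic outcome is therefore to prove the two pure regimes unconditionally, to record the heuristic and the measure-preservation reduction, and to single out the mixed case as the essential open problem underlying both (1) and (2).
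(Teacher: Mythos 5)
The statement you were asked to prove is a \emph{conjecture}; the paper offers no proof of it, only partial results and computational evidence, so your decision not to claim a complete proof is exactly right. What you do establish unconditionally --- the two ``pure'' regimes --- coincides with what the paper actually proves, and by essentially the same route: your contractive case is Theorem~\ref{TLinPolyCEM}~(1),(4) combined with Lemma~\ref{LContrExp}~(2) and the ``In particular'' part of Corollary~\ref{CPerFormula}, and your expansive case is Theorem~\ref{TLinPolyCEM}~(2) (equivalently Theorem~\ref{TPolyExp}~(1)) combined with Lemma~\ref{LContrExp}~(3); together these are precisely Corollary~\ref{CLinPolyCEM}. Your observation that $b_r\neq\pm p$ (via Lemma~\ref{LLinPolySuit}~(2)) so that the trichotomy $\abs{b_r}<p$ / mixed / $\abs{b_r}>p$ is exhaustive, and that $\abs{B}<p^p$ resp.\ $\abs{B}>p^p$ follows from $\abs{b_r}\leq p-1$ resp.\ $\abs{b_r}\geq p+1$, is correct. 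Your reduction of (2) to finiteness of integer cycles and the boundedness of periodic integer orbits in both pure regimes is also sound, and again only covers what the paper covers. Two points of added value in your write-up that the paper leaves implicit: the geometric-mean/Lyapunov heuristic explaining why the threshold sits at $p^p$, and the explicit remark that equidistribution of digits holds only Haar-almost everywhere while $\Q\cap\Z_p$ is a null set, which is the precise reason the heuristic cannot be upgraded to a proof on the rationals. Your identification of the mixed regime as the entire remaining content --- containing the Collatz map itself --- matches the paper's own assessment (cf.\ the discussion following Corollary~\ref{CLinPolyCEM} and Figure~\ref{FOvUltPer}). In short: you have correctly separated the provable part from the conjectural part, and the provable part is argued correctly and along the paper's own lines.
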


\noindent
For this conjecture numerous computer experiments have been performed by the author and while it may take quite a long time for digit expansions to become periodic (especially for larger $p$), they all did eventually. One of the experiments that were done was the computation of the maximal lengths of the initial and periodic parts of the expansions of all integers in $\intintuptoin{1000}$ for all $\Z$-linear-polynomial $p$-adic systems satisfying $\abs{B}<p^p$ where all constant coefficients are equal to $0$, i.e. the computation of
\begin{align}
m_I(p)&\ce\max\setl{\abs{\SFinitial{\FFFDT(\FFf{F})[n]}}\mid\FFf{F}=(b_0x,\ldots,b_{p-1}x)\in\SoSystems{p}(\FFPlinearpolynomialcoefficients{\Z})}\\
\nonumber
&\phantom{\vphantom{a}\ce\max\setl{\abs{\SFinitial{\FFFDT(\FFf{F})[n]}}\mid\vphantom{a}}}
\abs{b_0\ldots b_{p-1}}<p^p\\
\nonumber
&\phantom{\vphantom{a}\ce\max\setl{\abs{\SFinitial{\FFFDT(\FFf{F})[n]}}\mid\vphantom{a}}}
\;\setr{n\in\intintuptoin{1000}}\\
m_P(p)&\ce\max\setl{\abs{\SFperiodic{\FFFDT(\FFf{F})[n]}}\mid\FFf{F}=(b_0x,\ldots,b_{p-1}x)\in\SoSystems{p}(\FFPlinearpolynomialcoefficients{\Z})}\\
\nonumber
&\phantom{\vphantom{a}\ce\max\setl{\abs{\SFinitial{\FFFDT(\FFf{F})[n]}}\mid\vphantom{a}}}
\abs{b_0\ldots b_{p-1}}<p^p\\
\nonumber
&\phantom{\vphantom{a}\ce\max\setl{\abs{\SFinitial{\FFFDT(\FFf{F})[n]}}\mid\vphantom{a}}}
\;\setr{n\in\intintuptoin{1000}}.
\end{align}
For $p\in\set{2,3,4}$ these values and the corresponding $p$-adic systems and starting values generating them are
\begin{align}
&\mathrlap{m_I(2)}\phantom{m_P(4)}=160
&&\text{for }\FFf{F}=(-3x,-x),&&n=284\\
&\mathrlap{m_P(2)}\phantom{m_P(4)}=19
&&\text{for }\FFf{F}=(x,-3x),&&n=609\\
&\mathrlap{m_I(3)}\phantom{m_P(4)}=52401
&&\text{for }\FFf{F}=(x,-26x,-x),&&n=796\\
&\mathrlap{m_P(3)}\phantom{m_P(4)}=3905
&&\text{for }\FFf{F}=(-13x,-x,2x),&&n=608\\
&\mathrlap{m_I(4)}\phantom{m_P(4)}=18481661
&&\text{for }\FFf{F}=(5x,-x,-51x,-x),&&n=818\\
&\mathrlap{m_P(4)}\phantom{m_P(4)}=3291996
&&\text{for }\FFf{F}=(-51x,-x,-5x,x),&&n=416.
\end{align}
Note that the conjecture states in particular that the constant coefficients of the polynomials have no influence on the question of ultimate periodicity on $\Q\cap\Z_p$. On  this aspect of the problem some results could be achieved which are presented further down in this section.

A further generalization, the status of which is less clear, considers more general coefficients for the linear polynomials defining the $p$-adic system.

\begin{conjecture}
\label{CoPeriodsD}
Let $2\leq p\in\N$, $\FFf{F}\in\SoSystems{p}(\FFPlinearpolynomialcoefficients{\Q\cap\Z_p})$, and $B\in\Q\cap\Z_p$ the product of all leading coefficients of the polynomials $\FFf{F}[r](x)$, $r\in\intintuptoex{p}$. Then,

\begin{theoremtable}
\theoremitem{(1)}&$\FFPultimatelyperiodicon{\Q\cap\Z_p}(\FFf{F})\quad\Leftrightarrow\quad B\in\Z\land\abs{B}<p^p$\tabularnewline
\theoremitem{(2)}&$\abs{\set{\SFperiodic{\FFFDT(\FFf{F})[n]}\mid n\in\Z}}<\infty.$
\end{theoremtable}
\end{conjecture}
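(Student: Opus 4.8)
The plan is to split Conjecture~\ref{CoPeriodsD} into the two implications of (1) together with the finiteness statement (2), and to attack each with the explicit linear machinery of Theorem~\ref{TLinDirForm} and Corollary~\ref{CPerFormula}. Writing $\FFf{F}=(a_0+b_0x,\ldots,a_{p-1}+b_{p-1}x)$ in weak canonical form, so that $\Gcd{p,b_r\modulo p}=1$ for all $r$ by Lemma~\ref{LLinPolySuit}~(2), and $B=\prod_{r=0}^{p-1}b_r$, the governing heuristic is that along an orbit of length $N$ each residue $r\in\intintuptoex{p}$ is visited with asymptotic frequency $1/p$, so that the archimedean size is multiplied by roughly $\prod_r(\abs{b_r}/p)^{N/p}=(\abs{B}/p^p)^{N/p}$; hence $\abs{B}<p^p$ is the contractive ``drift'' regime and $\abs{B}>p^p$ the expansive one, while $B\in\Z$ separately controls whether denominators at primes not dividing $p$ stay bounded. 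First I would establish the tractable necessary conditions (the ``$\Rightarrow$'' of (1)), then reduce the sufficiency direction to a boundedness statement and explain why it is open.

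For necessity of $B\in\Z$: if $B\notin\Z$ there is a prime $q\nmid p$ with $\nu_q(B)=\sum_r\nu_q(b_r)<0$. Since $p$ is a $q$-adic unit, one step of $\FFf{F}$ acts $q$-adically by $n\mapsto a_r+b_rn$ up to a $q$-unit, so I would exhibit a rational whose $q$-adic valuation strictly decreases over each residue-balanced block; its denominator then takes infinitely many values, whereas an ultimately periodic point has a finite forward orbit and hence a bounded denominator, a contradiction (in the spirit of Lemma~\ref{LContrExp}~(4) applied at $q$). For necessity of $\abs{B}<p^p$ with $B\in\Z$: in the clean subcase $\abs{b_r}>p$ for every $r$, Theorem~\ref{TPolyExp}~(1) already gives $\FFPexpansive(\FFf{F})$ and then Lemma~\ref{LContrExp}~(3) yields $\neg\FFPultimatelyperiodicon{\Q\cap\Z_p}(\FFf{F})$. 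The genuinely mixed case $\abs{B}\ge p^p$ with some $\abs{b_r}\le p$ needs an averaging argument: by Theorem~\ref{TLinDirForm} the size of $\FFf{F}^N(n)$ is controlled by $\abs{\FRFlinB{\FFf{F}}{\Sf{D}[\intintuptoex{N}]}}/p^N=\prod_r\abs{b_r}^{\SFcount{\Sf{D}[\intintuptoex{N}]}{r}}/p^N$, and I would construct a single rational starting value whose expansion visits the residues with frequencies close enough to $1/p$ (prescribing a suitable initial digit pattern via Corollary~\ref{CPerFormula} and Lemma~\ref{LInfBlDTCompl}) to force this product, and hence $\abs{\FFf{F}^N(n)}$, to grow without bound, so that $n$ is aperiodic.

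The sufficiency direction, $B\in\Z\land\abs{B}<p^p\Rightarrow\FFPultimatelyperiodicon{\Q\cap\Z_p}(\FFf{F})$, is where the plan meets its main obstacle. The drift heuristic predicts that the orbit of a typical $a/b\in\Q\cap\Z_p$ eventually enters a bounded region; combined with control of the denominators (over a residue-balanced block the denominator factor is governed by $\FRFlinB{\FFf{F}}{\Sf{P}}$, whose aggregate $q$-adic valuations are nonnegative precisely because $B\in\Z$), the trapping region would be finite, and a finite forward orbit is ultimately periodic by pigeonhole. The difficulty is that ``contractive on average'' does not bound \emph{every} individual orbit: a trajectory may take arbitrarily long expansive excursions before any contraction sets in. For $p=2$ and $\FFf{F}=\FFf{F}_C=(x,3x+1)$ one has exactly $B=3<4=p^p$, so this implication specializes to the Collatz conjecture (Conjecture~\ref{CoPeriodsA}); hence the sufficiency direction is at least as hard as Collatz and must be regarded as open.

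Part (2), being stated for integer starting values, amounts to the finiteness of the set of cycle words met by $\Z$; via Corollary~\ref{CPerFormula} each candidate word $\Sf{P}$ corresponds to the rational $\FRFlinA{\FFf{F}}{\Sf{P}}/(p^{\abs{\Sf{P}}}-\FRFlinB{\FFf{F}}{\Sf{P}})$, and I would try to show that under $\abs{B}<p^p$ only finitely many such words yield points actually reachable from $\Z$. This finiteness seems somewhat more approachable than pinning the cycles down exactly, but it still rests on the same boundedness phenomenon; I therefore expect the Collatz-type claim that no orbit escapes to infinity to remain the essential and open core of the conjecture.
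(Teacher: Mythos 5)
The statement you are addressing is a \emph{conjecture}: the paper offers no proof of it, and its own Figure~\ref{FOvUltPer} records that everything outside the fully expansive case (all $\abs{b_r}>p$, settled by Theorem~\ref{TPolyExp} and Lemma~\ref{LContrExp}~(3)) and the fully contractive integral case (all $a_r,b_r\in\Z$, $\abs{b_r}<p$, settled by Corollary~\ref{CLinPolyCEM}) remains open. You correctly identify that the sufficiency direction contains the Collatz conjecture (for $\FFf{F}_C$ one has $B=3<4=2^2$) and must be left open, and your citations for the two settled subcases match the paper's Corollary~\ref{CConExpSummary}. To that extent your assessment of the status of the statement is accurate.

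However, you present the two necessity claims as "tractable," and the sketches you give for them do not go through; the paper leaves both open for good reason. For the necessity of $B\in\Z$, your plan is to find a prime $q\nmid p$ with $\nu_q(B)<0$ and a rational whose $q$-adic valuation "strictly decreases over each residue-balanced block." But the orbit chooses its own residue sequence, and nothing forces it to be residue-balanced: if the orbit happens to avoid the branches $r$ with $\nu_q(b_r)<0$, the denominator never grows. Worse, Theorem~\ref{TLinPolyCEM}~(5) proves that \emph{no} $(\Q\cap\Z_p)$-linear-polynomial $p$-adic system expands denominators, i.e.\ there is always a rational whose denominator does not increase under one application, so the appeal to Lemma~\ref{LContrExp}~(4) "in the spirit of" a prime $q$ is blocked at the first step. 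The same equidistribution assumption underlies your argument for the mixed case $\abs{B}\ge p^p$: constructing a single starting value whose digit frequencies are provably close to $1/p$ for all $N$ is precisely the kind of statement that is not known for any nontrivial linear-polynomial system, and is of the same difficulty class as the sufficiency direction you correctly flag as Collatz-hard. Finally, note that the paper itself warns (in the discussion of $\FFf{F}_4=(21/5x,5/7x+1)$, where $B=3\in\Z$ and $\abs{B}<p^p$ yet ultimate periodicity of the orbit of $27$ cannot even be guessed after $10^{10}$ iterations) that the condition $B\in\Z$ may only be a placeholder for a slightly different condition excluding an extremal case; any purported proof of the exact biconditional as stated would therefore have to resolve that borderline phenomenon as well.
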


\noindent
One of the observations often pointed out as being a first hint that the Collatz conjecture is indeed very hard to prove, is the fact that $27$ takes $70$ steps to reach $1$ under $\FFf{F}_C=(x,3x+1)$. In the context of the conjecture above we can do ``much worse'' as the following examples show. Let
\begin{align}
\FFf{F}_1&\ce(3/11x+2,-11x+1)\\
\FFf{F}_2&\ce(1/7x+2,21x+1)\\
\FFf{F}_3&\ce(1/5x-4,-15x+3).
\end{align}
Then,
\begin{align}
\abs{\SFinitial{\FFFDT(\FFf{F}_1)[27]}}&=816179238
&
\abs{\SFperiodic{\FFFDT(\FFf{F}_1)[27]}}&=5890445\\
\abs{\SFinitial{\FFFDT(\FFf{F}_2)[27]}}&=312815429
&
\abs{\SFperiodic{\FFFDT(\FFf{F}_2)[27]}}&=22014805908\\
\abs{\SFinitial{\FFFDT(\FFf{F}_3)[27]}}&=18966150
&
\abs{\SFperiodic{\FFFDT(\FFf{F}_3)[27]}}&=122858925930.
\end{align}
Furthermore,
\begin{align}
\SFperiodic{\FFFST(\FFf{F}_1)[27]}[0]&=\frac{292064}{11^3}
&
d_I(\FFf{F}_1)&=11^{25086}
&
d_P(\FFf{F}_1)&=11^{2912}\\
\SFperiodic{\FFFST(\FFf{F}_2)[27]}[0]&=\frac{197828}{7^3}
&
d_I(\FFf{F}_2)&=7^{9154}
&
d_P(\FFf{F}_2)&=7^{170632}\\
\SFperiodic{\FFFST(\FFf{F}_3)[27]}[0]&=\frac{101772}{5^6}
&
d_I(\FFf{F}_3)&=5^{3008}
&
d_P(\FFf{F}_3)&=5^{320048}
\end{align}
where $d_I(\FFf{F}_i)$ and $d_P(\FFf{F}_i)$ denote the largest denominators occurring in the initial and periodic parts of the sequence $\FFFST(\FFf{F}_i)[27]$ for $i\in\set{1,2,3}$. Another $2$-adic system that was considered in computer experiments is
\begin{align}
\FFf{F}_4&\ce(21/5x,5/7x+1)
\end{align}
which satisfies $\abs{\SFinitial{\FFFDT(\FFf{F}_4)[27]}}>10^{10}$. The denominator of $\FFFDT(\FFf{F}_4)^{\left(10^{10}\right)}(n)$ is $5^{12806}7^{119930}\approx4.1477678\cdot10^{110303}$. Its numerator is approximately $8.2260293\cdot10^{110305}$ which makes the entire fraction approximately equal to $0.0050422$. It appears hard to even guess whether $\FFFDT(\FFf{F}_4)[27]$ is ultimately periodic. Figure~\ref{FOrbitsA} and Figure~\ref{FOrbitsB} below show the developments of the magnitudes of the denominators in the sequences $\FFFST(\FFf{F}_i)[27]$, $i\in\set{1,2,3,4}$. While for $\FFf{F}_1,\FFf{F}_2,\FFf{F}_3$ these magnitudes can increase and decrease at any time, in the case of $\FFf{F}_4$ there is a tradeoff between the $5$-adic and $7$-adic valuations of the denominators of successive entries of the sequence $\FFFST(\FFf{F}_4)[27]$. If the $5$- and $7$-adic valuations of the denominator are both positive and $\FFf{F}_4[0]$ is applied, then the $5$-adic valuation of the denominator increases by $1$ while the $7$-adic valuation decreases by $1$. If, however, $\FFf{F}_4[1]$ is applied, it is the other way around. This means that the sum of the $5$- and $7$-adic valuations of the denominator can only ever change if one of the two is equal to $0$ which explains the shape of the graph showing this sum in Figure~\ref{FOrbitsB}. The consequence of this observation is that on the one hand the denominators of the sequence $\FFFST(\FFf{F}_4)[27]$ get large, which makes it unlikely that a period occurs, but on the other hand the sums of the $5$- and $7$-adic valuations of the denominator stay constant for a large number of steps, which increases the chances for the occurrence of a period. Which of these effects is stronger in the (infinitely) long run, remains an open question.

\begin{figure}[H]
\centering
\begin{overpic}[width=0.45\textwidth]{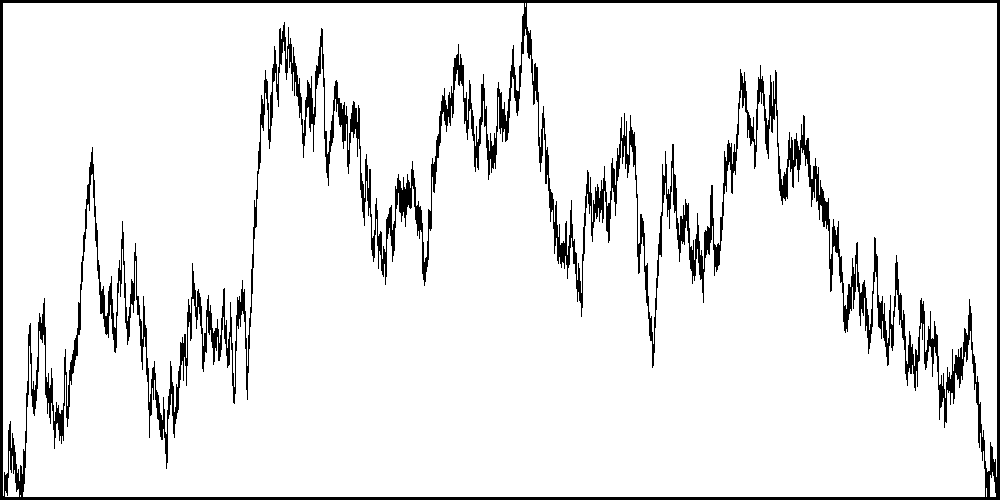}
\put(100.00,-3.00){\tiny$\mathllap{816179238}$}
\put(0.00,51.00){\tiny$25086$}
\end{overpic}
\qquad
\begin{overpic}[width=0.45\textwidth]{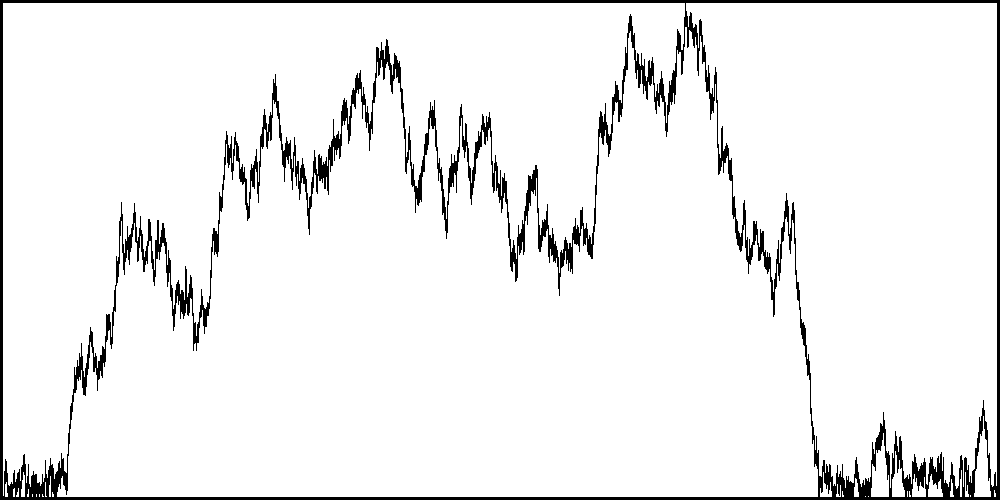}
\put(100.00,-3.00){\tiny$\mathllap{5890445}$}
\put(0.00,51.00){\tiny$2912$}
\end{overpic}\\[1.5\baselineskip]
\begin{overpic}[width=0.45\textwidth]{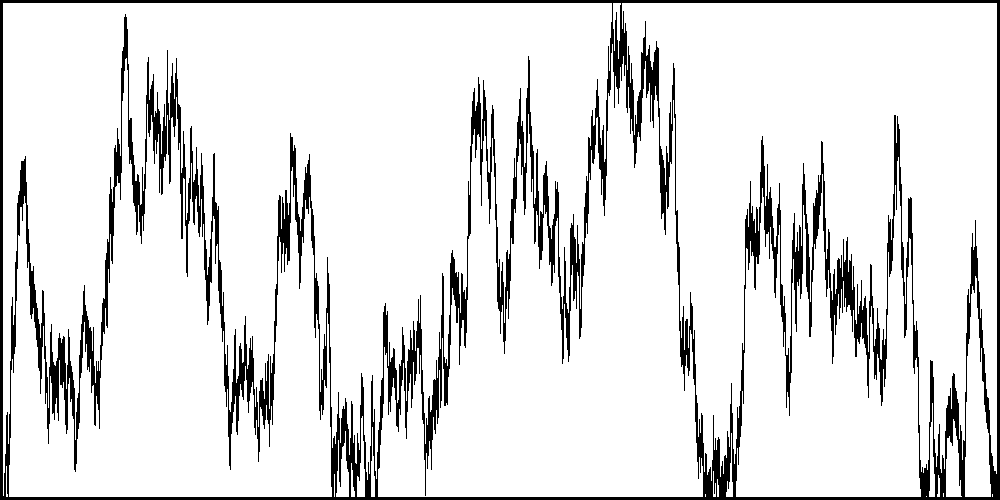}
\put(100.00,-3.00){\tiny$\mathllap{312815429}$}
\put(0.00,51.00){\tiny$9154$}
\end{overpic}
\qquad
\begin{overpic}[width=0.45\textwidth]{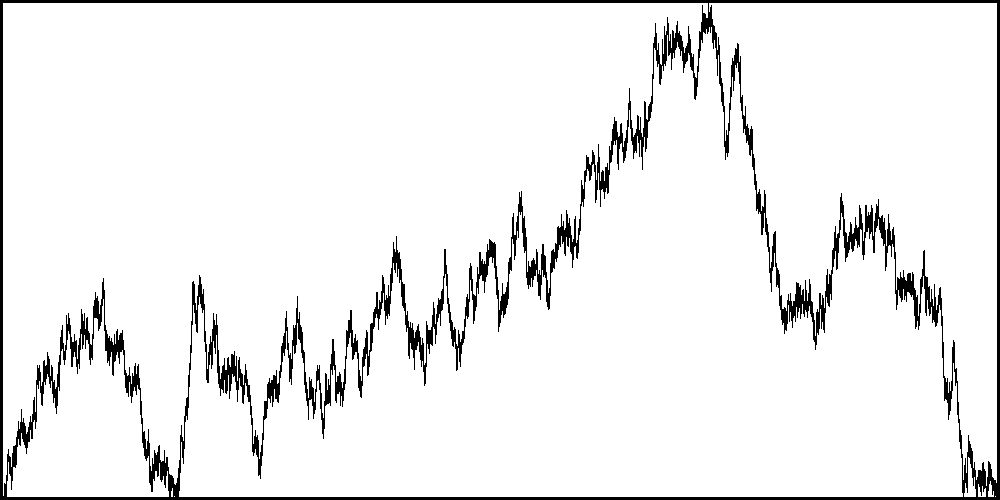}
\put(100.00,-3.00){\tiny$\mathllap{22014805908}$}
\put(0.00,51.00){\tiny$170632$}
\end{overpic}\\[1.5\baselineskip]
\begin{overpic}[width=0.45\textwidth]{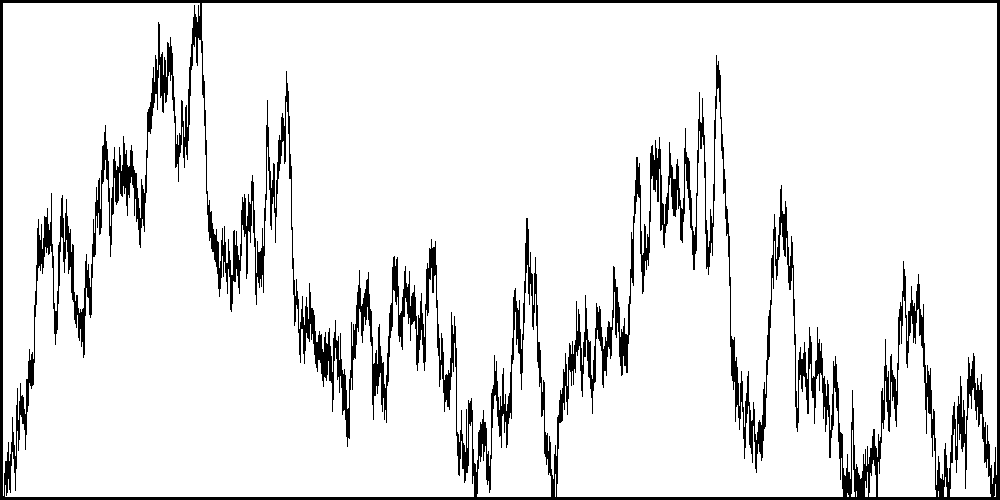}
\put(100.00,-3.00){\tiny$\mathllap{18966150}$}
\put(0.00,51.00){\tiny$3008$}
\end{overpic}
\qquad
\begin{overpic}[width=0.45\textwidth]{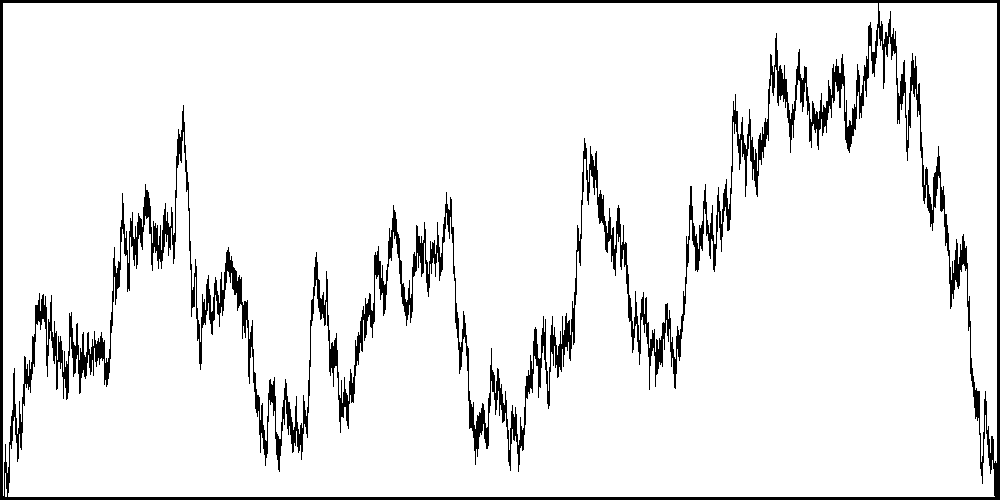}
\put(100.00,-3.00){\tiny$\mathllap{122858925930}$}
\put(0.00,51.00){\tiny$320048$}
\end{overpic}
\caption{The magnitudes (i.e. the $11$-, $7$-, and $5$-adic valuations respectively) of the denominators in the sequences $\FFFST(\FFf{F}_1)[27]$ (top row), $\FFFST(\FFf{F}_2)[27]$ (middle row), and $\FFFST(\FFf{F}_3)[27]$ (bottom row). The left column shows the initial parts and the right column shows the periodic parts of the respective sequences.}
\label{FOrbitsA}
\end{figure}

\begin{figure}[H]
\centering
\begin{overpic}[width=0.45\textwidth]{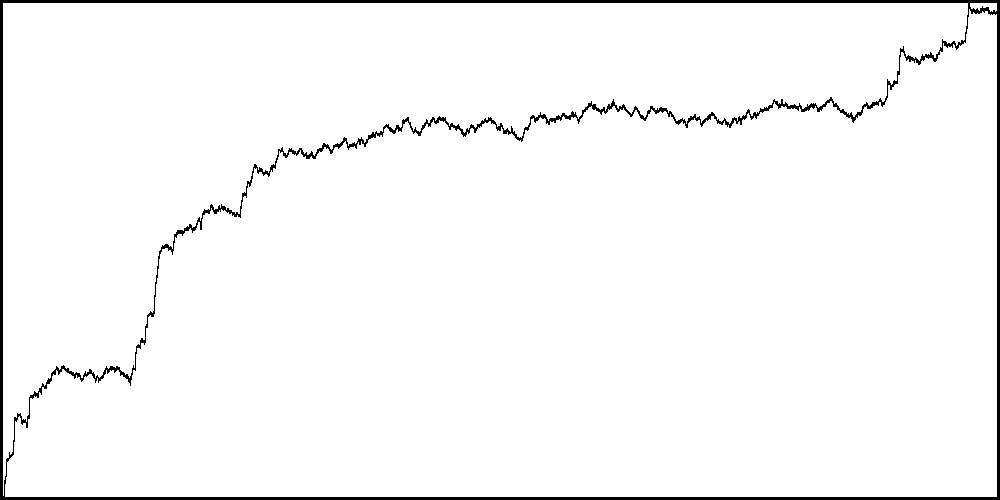}
\put(100.00,-3.00){\tiny$\mathllap{10000000000}$}
\put(0.00,51.00){\tiny$72649$}
\end{overpic}
\qquad
\begin{overpic}[width=0.45\textwidth]{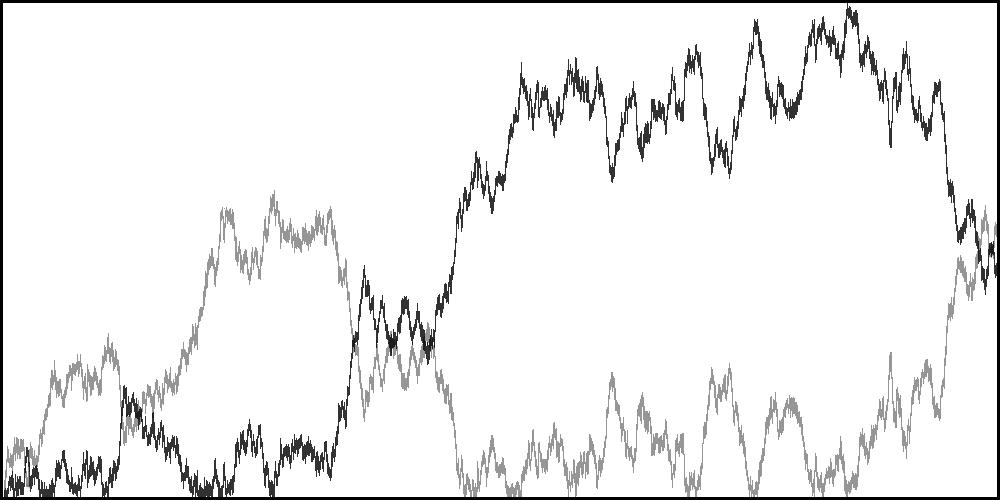}
\put(100.00,-3.00){\tiny$\mathllap{500000000}$}
\put(0.00,51.00){\tiny$36304$}
\end{overpic}\\[1.5\baselineskip]
\begin{overpic}[width=0.45\textwidth]{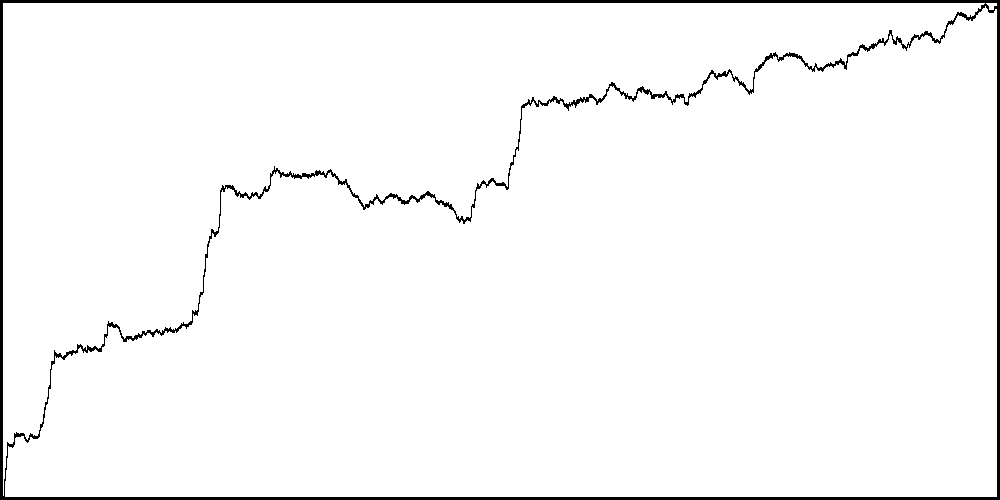}
\put(100.00,-3.00){\tiny$\mathllap{500000000}$}
\put(0.00,51.00){\tiny$18464$}
\end{overpic}
\qquad
\begin{overpic}[width=0.45\textwidth]{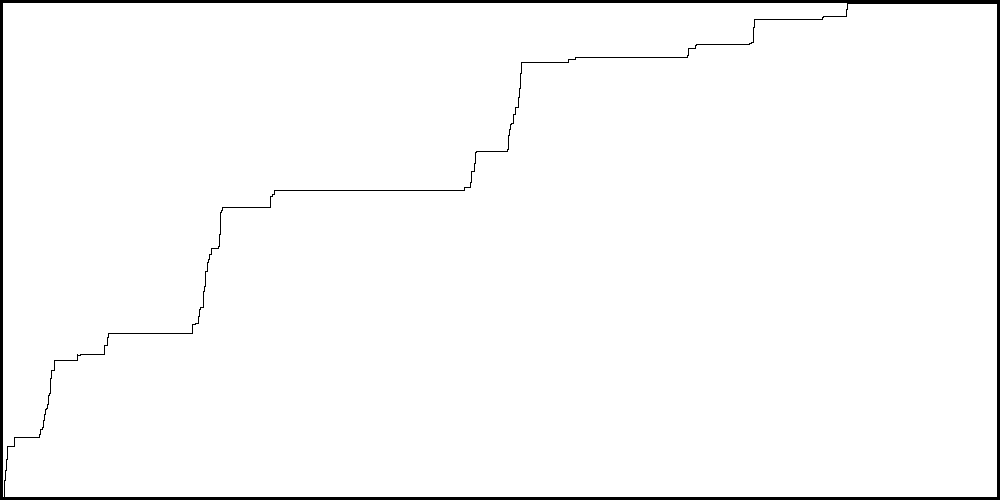}
\put(100.00,-3.00){\tiny$\mathllap{500000000}$}
\put(0.00,51.00){\tiny$36304$}
\end{overpic}
\caption{The two images in the left column show the magnitudes (i.e. the number of digits in base $35$) of the denominators of the first $10^{10}$ and $5\cdot10^8$ entries of the sequences $\FFFST(\FFf{F}_4)[27]$ respectively. The top right image shows the $5$- (dark gray) and $7$-adic valuations (light gray) of the denominators and the bottom right image shows their sums. It can be seen that changes (which are mostly increases) in the sum only occur, if one of the two valuations is equal to $0$.}
\label{FOrbitsB}
\end{figure}

\noindent
Comparing the two conditions
\begin{align}
&B\in\Z\\
&\abs{B}<p^p
\end{align}
of Conjecture~\ref{CoPeriodsD}~(1) one can see that there is a conceptual difference between them that might be significant in explaining the difference between $\FFf{F}_1,\FFf{F}_2,\FFf{F}_3$ on the one hand and $\FFf{F}_4$ on the other. While the second condition does not include the extremal case $\abs{B}=p^p$ (indeed, by Lemma~\ref{LLinPolySuit}~(2) it could not even occur), the first condition does, in the sense explained above. It thus might be necessary to replace the condition $B\in\Z$ by a slightly stronger one which excludes the extremal case in some way. The condition $B\in\Z$ may be understood as a placeholder for a possibly slightly modified condition ``in the same spirit'', i.e. a condition which also involves only the absolute values of the linear coefficients of the polynomials in a ``simple'' way.

The following two conjectures state that we do not gain anything by increasing the degrees of the polynomials \ldots

\begin{conjecture}
\label{CoPeriodsE}
Let $2\leq p\in\N$, $\FFf{F}\in\SoSystems{p}(\FFPpolynomialcoefficients{\Q\cap\Z_p})$, and $B\in\Z_p$ the product of all leading coefficients of the polynomials $\FFf{F}[r](x)$, $r\in\intintuptoex{p}$. Then,

\begin{theoremtable}
\theoremitem{(1)}&$\FFPultimatelyperiodicon{\Q\cap\Z_p}(\FFf{F})\quad\Leftrightarrow\quad\FFf{F}\in\SoSystems{p}(\FFPlinearpolynomialcoefficients{\Q\cap\Z_p})\land B\in\Z\land\abs{B}<p^p$\tabularnewline
\theoremitem{(2)}&$\abs{\set{\SFperiodic{\FFFDT(\FFf{F})[n]}\mid n\in\Z}}<\infty.$
\end{theoremtable}
\end{conjecture}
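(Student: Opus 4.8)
\noindent
The plan is to reduce Conjecture~\ref{CoPeriodsE} to the linear case treated in Conjecture~\ref{CoPeriodsD}, isolating as the only genuinely new ingredient the claim that a $(\Q\cap\Z_p)$-polynomial $p$-adic system which is ultimately periodic on $\Q\cap\Z_p$ is necessarily $(\Q\cap\Z_p)$-linear-polynomial. Concretely, the biconditional in (1) splits into three implications: the direction ``$\FFPlinearpolynomialcoefficients{\Q\cap\Z_p}(\FFf{F})\land B\in\Z\land\abs{B}<p^p\Rightarrow\FFPultimatelyperiodicon{\Q\cap\Z_p}(\FFf{F})$'' is precisely the ``$\Leftarrow$'' half of Conjecture~\ref{CoPeriodsD}~(1); the direction ``$\FFPultimatelyperiodicon{\Q\cap\Z_p}(\FFf{F})\Rightarrow\FFPlinearpolynomialcoefficients{\Q\cap\Z_p}(\FFf{F})$'' is the new reduction step; and, once linearity is known, ``$\FFPultimatelyperiodicon{\Q\cap\Z_p}(\FFf{F})\Rightarrow B\in\Z\land\abs{B}<p^p$'' is the ``$\Rightarrow$'' half of Conjecture~\ref{CoPeriodsD}~(1). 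Part (2) likewise reduces to Conjecture~\ref{CoPeriodsD}~(2) in the linear case and must be argued separately for non-linear $\FFf{F}$.

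For the reduction to linearity I would first dispose of the two pure cases with the tools of Section~\ref{SPeriodicExpansions}. If every branch $\FFf{F}[r]$ is of degree $\geq2$ or linear with leading coefficient of absolute value exceeding $p$, then $\FFPexpansive(\FFf{F})$ by Theorem~\ref{TPolyExp}~(1), and the ``In particular'' part of Lemma~\ref{LContrExp}~(3) yields $\neg\FFPultimatelyperiodicon{\Q\cap\Z_p}(\FFf{F})$ outright; contrapositively, ultimate periodicity on $\Q\cap\Z_p$ excludes this configuration. Dually, Theorem~\ref{TPolyExp}~(2) shows that a \emph{contractive} polynomial system is automatically linear-polynomial. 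The remaining, and essential, difficulty is the mixed case $\FFPmixed(\FFf{F})$, where some branches contract while others (linear with large coefficient, or of degree $\geq2$) expand. Here I would track the denominators of the orbit $\FFf{F}^k(n)$, exactly as in the analysis of $\FFf{F}_4$ above: a branch of degree $\geq2$ forces the $q$-adic valuations of the denominator (for primes $q\mid p$) to grow super-linearly along any orbit that meets it with positive frequency, and the aim is to exhibit a single rational $n$ whose itinerary, constrained by the block property and Corollary~\ref{CPerFormula}, visits an expanding branch infinitely often without sufficient compensating contraction, so that $\FFf{F}^k(n)$ cannot enter a finite cycle.

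To trim the parameter space I would use the conjugacy permutations $\pi_{\FFf{F},\FFf{G}}$ of Section~\ref{SInterpretations}. By Lemma~\ref{LPeriodicOn}, two systems are ultimately periodic on the same set exactly when $\pi_{\FFf{F},\FFf{G}}$ preserves $\FFFSoUPP(\FFf{F})$; together with the explicit linear formulas for $\pi_{\FFf{F},\FFf{G}}(n)$ derived in the upcoming section (which send $\Q\cap\Z_p$ to itself when all coefficients lie in $\Q\cap\Z_p$), this should let me normalise all constant coefficients to $0$ while keeping the leading coefficients fixed, thereby confirming the assertion of Conjectures~\ref{CoPeriodsC} and~\ref{CoPeriodsD} that ultimate periodicity on $\Q\cap\Z_p$ depends only on the leading-coefficient datum $B$. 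For part (2), once linearity and $B\in\Z$ are in force, every period meeting $\Q$ contains a bounded element by Lemma~\ref{LContrExp}~(1), and a denominator argument as in Lemma~\ref{LContrExp}~(2) bounds the cycles reachable from integer inputs, so that only finitely many periodic parts $\SFperiodic{\FFFDT(\FFf{F})[n]}$, $n\in\Z$, can occur.

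The main obstacle is plain: the ``$\Leftarrow$'' half together with the threshold $\abs{B}<p^p$ constitute Conjecture~\ref{CoPeriodsD}, which already contains the Collatz conjecture (the case $p=2$, $\FFf{F}=\FFf{F}_C$, $B=3<4=p^p$) and is therefore open. Even granting Conjecture~\ref{CoPeriodsD}, the genuinely new hard step is the mixed-type reduction to linearity: the expansive and contractive sub-cases fall to Theorem~\ref{TPolyExp} and Lemma~\ref{LContrExp}, but excluding ultimate periodicity on \emph{all} of $\Q\cap\Z_p$ for a system that mixes contracting and super-linearly expanding branches demands control over the itinerary of every rational orbit, which is exactly the global information that makes problems of Collatz type intractable. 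A realistic intermediate goal would therefore be to establish the implication ``ultimately periodic on $\Q\cap\Z_p$ $\Rightarrow$ linear-polynomial'' unconditionally, cleanly separating the degree reduction from the Collatz-hard threshold question.
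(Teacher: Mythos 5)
This statement is labelled a \emph{conjecture} in the paper, and the paper offers no proof of it: it explicitly remains open, and the text only records partial results (Theorem~\ref{TGenPolyUPer} showing the equivalence of Conjecture~\ref{CoPeriodsE}~(1) with Conjecture~\ref{CoPeriodsF}~(1), Theorem~\ref{TPolyExp} and Lemma~\ref{LContrExp} for the purely expansive and contractive cases, and Corollary~\ref{CConExpSummary} as a summary). Your proposal is therefore correctly pitched as a reduction strategy rather than a proof, and the decomposition you give --- the ``$\Leftarrow$'' direction being exactly Conjecture~\ref{CoPeriodsD}~(1), the new content being ``ultimately periodic on $\Q\cap\Z_p$ $\Rightarrow$ linear-polynomial'', the expansive and contractive sub-cases falling to Theorem~\ref{TPolyExp} and Lemma~\ref{LContrExp}, and the mixed case being the genuinely Collatz-hard residue --- matches the paper's own framing precisely, including the observation that the $p=2$, $\FFf{F}_C$ instance already contains the Collatz conjecture.

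One concrete flaw in your sketch of part (2): you invoke Lemma~\ref{LContrExp}~(1) and (2) to bound the periods once ``linearity and $B\in\Z$ are in force'', but both parts of that lemma require $\FFPcontractive(\FFf{F})$ (and (2) additionally $\FFPcontractsdenominators(\FFf{F})$), whereas the hypotheses $B\in\Z$ and $\abs{B}<p^p$ are satisfied by mixed-type systems such as $\FFf{F}_C=(x,3x+1)$, for which $\abs{b_1}=3>2=p$ and Theorem~\ref{TLinPolyCEM}~(3) gives $\FFPmixed(\FFf{F}_C)$. For such systems neither clause of Lemma~\ref{LContrExp} applies, so your argument for the finiteness of $\set{\SFperiodic{\FFFDT(\FFf{F})[n]}\mid n\in\Z}$ does not go through; indeed part (2) is open for exactly this reason (finiteness of the cycle set for $\FFf{F}_C$ is the ``no divergent trajectories and finitely many cycles'' half of the Collatz problem). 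A similar caveat applies to your appeal to the $\pi_{\FFf{F},\FFf{G}}$ normalisation of constant coefficients: the explicit formula of Theorem~\ref{TConstIrr} is proved only for $p=2$, and for $p\geq3$ this step is itself Conjecture~\ref{CoConstIrr}.
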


\noindent
\ldots or by further extending the set from where to take the coefficients of the polynomials.

\begin{conjecture}
\label{CoPeriodsF}
Let $2\leq p\in\N$, $\FFf{F}\in\SoSystems{p}(\FFPpolynomial)$, and $B\in\Z_p$ the product of all leading coefficients of the polynomials $\FFf{F}[r](x)$, $r\in\intintuptoex{p}$. Then,

\begin{theoremtable}
\theoremitem{(1)}&$\FFPultimatelyperiodicon{\Q\cap\Z_p}(\FFf{F})\quad\Leftrightarrow\quad\FFf{F}\in\SoSystems{p}(\FFPlinearpolynomialcoefficients{\Q\cap\Z_p})\land B\in\Z\land\abs{B}<p^p$\tabularnewline
\theoremitem{(2)}&$\abs{\set{\SFperiodic{\FFFDT(\FFf{F})[n]}\mid n\in\Z}}<\infty.$
\end{theoremtable}
\end{conjecture}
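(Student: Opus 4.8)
The plan is to prove Conjecture~\ref{CoPeriodsF} by collapsing it, via the expansiveness machinery already in place, onto a single Collatz-type core and then isolating exactly which piece is genuinely open. First I would dispatch the ``only if'' direction of part~(1). If some branch $\FFf{F}[r]$ has degree $\ge 2$, then Theorem~\ref{TPolyExp}~(1) makes $\FFf{F}$ expansive, so Lemma~\ref{LContrExp}~(3) gives $\neg\FFPultimatelyperiodicon{\Q\cap\Z_p}(\FFf{F})$; hence ultimate periodicity on $\Q\cap\Z_p$ already forces $\FFPlinearpolynomialcoefficients{\Q_p}(\FFf{F})$, which collapses Conjecture~\ref{CoPeriodsF} and Conjecture~\ref{CoPeriodsE} onto the linear case treated in Conjecture~\ref{CoPeriodsD}. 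To then force the coefficients into $\Q\cap\Z_p$, I would use Corollary~\ref{CPerFormula}: by Corollary~\ref{CInitPer} an ultimately periodic point has an orbit eventually cycling through finitely many values, and the formula of Corollary~\ref{CPerFormula} exhibits these as fixed rational functions of the $a_r,b_r$ and $p$, so irrational $p$-adic coefficients would produce ultimately periodic points outside $\Q$; this rules out genuinely $\Q_p$-coefficient systems being ultimately periodic on $\Q\cap\Z_p$ and leaves only $(\Q\cap\Z_p)$-linear systems to consider.

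Within the linear case I would first eliminate the constant coefficients: by Lemma~\ref{LPeriodicOn} together with the conjugacy maps $\pi_{\FFf{F},\FFf{G}}$ (such as the $n\mapsto cn$ maps, which preserve $\Q\cap\Z_p$), ultimate periodicity on $\Q\cap\Z_p$ is invariant under changing the $a_r$, so one may assume $\FFf{F}=(b_0x,\ldots,b_{p-1}x)$. For the necessity of $B\in\Z$ and $\abs{B}<p^p$, I would read off the single-period multiplier $\FRFlinB{\FFf{F}}{\Sf{D}}/p^{\abs{\Sf{D}}}$ from Theorem~\ref{TLinDirForm}: if $B\notin\Z$ this multiplier carries a denominator that grows along digit blocks of balanced frequency, so $\FFf{F}$ expands denominators and Lemma~\ref{LContrExp}~(4) yields $\FFFSoUPP(\FFf{F})\cap\Q=\emptyset$; and if $B\in\Z$ with $\abs{B}\ge p^p$, the same multiplier has magnitude $\ge 1$ on balanced blocks, which I would leverage to exhibit a rational with strictly growing orbit, again precluding ultimate periodicity. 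Part~(2), the finiteness of the set of occurring periods, should then follow by bounding the lengths of periodic cycles through the tree of cycles of the associated $p$-adic permutation (Theorem~\ref{TCycles}, Corollary~\ref{CCycles}) once orbit magnitudes are under control.

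The sufficiency direction is the heart of the matter and is where I expect everything to stall: that $B\in\Z$ together with $\abs{B}<p^p$ forces \emph{every} element of $\Q\cap\Z_p$ to be an ultimately periodic point. Combined with the inclusion $\FFFSoUPP(\FFf{F})\subseteq\Q\cap\Z_p$ from Corollary~\ref{CPerFormula}, this would complete part~(1), but it contains the Collatz conjecture verbatim as the instance $p=2$, $\FFf{F}=(x,3x+1)$, $B=3<4$. The natural route is Lemma~\ref{LContrExp}~(2), since $B\in\Z$ with integer $b_r$ makes $\FFf{F}$ contract denominators; if $\FFf{F}$ were moreover contractive, we would be done. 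The \emph{main obstacle} is precisely that $\abs{B}<p^p$ encodes only contraction \emph{on average} over a full residue cycle, whereas $\FFPcontractive$ demands the pointwise eventual decrease $\abs{\FFf{F}(n)}<\abs{n}$ for all large $n$, which genuinely fails (for Collatz, odd $n\mapsto(3n+1)/2>n$). Bridging average contraction to the pointwise hypothesis of Lemma~\ref{LContrExp} — that is, ruling out divergent and aperiodic rational orbits under a merely average-contracting linear system — is exactly the phenomenon that has kept the Collatz conjecture open, and I do not expect the present machinery to close it; a genuinely new idea, presumably a rigorous replacement for the heuristic that the orbit behaves like a contracting random walk, would be required at this step.
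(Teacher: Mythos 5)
This statement is a \emph{conjecture}; the paper offers no proof of it and explicitly cannot, since the instance $p=2$, $\FFf{F}=(x,3x+1)$ is the Collatz conjecture. You correctly identify that the sufficiency direction of (1) is the genuinely open core, and your honesty about that is the right call. However, your assessment of which pieces are ``dispatchable'' with the paper's machinery overstates what the existing results deliver, in three places.

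First, your reduction from the polynomial case to the linear case is invalid. Theorem~\ref{TPolyExp}~(1) requires that \emph{every} branch $\FFf{F}[r]$ be either of degree $\geq2$ or linear with $\abs{b_r}>p$; it does not apply when a single branch has degree $\geq2$ while another is a mildly contracting linear branch. The mixed case, e.g.\ $\FFf{F}=(x^2+x,x)$, is precisely the content of Conjecture~\ref{CoPeriodsE} beyond Corollary~\ref{CConExpSummary} (region (II) minus (III) in Figure~\ref{FOvUltPer}), and the paper lists proving aperiodicity of even one integer under $(x^2+x,x)$ as an open problem in Section~\ref{SQuestions}. So ultimate periodicity on $\Q\cap\Z_p$ does \emph{not} yet force linearity. (Your second step, ruling out non-rational coefficients, is essentially Theorem~\ref{TGenPolyUPer} and is fine, though the paper's proof goes through a linear-algebra argument on the coefficients rather than Corollary~\ref{CPerFormula}, which only applies to linear-polynomial systems.)

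Second, in the linear case the necessity of $\abs{B}<p^p$ is also open. Corollary~\ref{CLinPolyCEM}~(2) handles only the uniformly expansive case $\abs{b_r}>p$ for all $r$; when $\abs{B}\geq p^p$ but some $\abs{b_r}<p$ (mixed type), no result in the paper produces a divergent rational orbit, and your ``average multiplier $\geq1$ on balanced blocks'' argument faces exactly the same average-versus-pointwise obstruction you correctly diagnose for the sufficiency direction — there is no guarantee that any particular orbit realizes balanced digit frequencies. Third, part~(2) (finiteness of the set of periods) does not follow from Theorem~\ref{TCycles} or Corollary~\ref{CCycles}: those control cycle structure of the induced permutations $\pi_k$ modulo $p^k$, not the actual cycles of $\FFf{F}$ on $\Z$, and finiteness of Collatz cycles on the integers is itself a famous open problem. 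In short, the only parts of the conjecture that the paper's machinery settles are those recorded in Theorem~\ref{TGenPolyUPer} and Corollary~\ref{CConExpSummary}; everything else, including several steps you present as routine, remains open.
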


\noindent
An overview of the conjectures above is given in Figure~\ref{FOvUltPer} below.

Note that if $\FFf{F}\in\SoSystems{p}$, then
\begin{align}
\FFPlinearpolynomial(\FFf{F})\Leftrightarrow\FFPpolynomialcoefficientsdegree{\Z_p}{1}(\FFf{F})
\end{align}
by Theorem~\ref{TFuncSuit}~(2) and Lemma~\ref{LLinPolySuit}~(2), and if $\FFf{F}\in\SoSystems{p}(\FFPweakcanonicalform,\FFPlinearpolynomialcoefficients{\Q\cap\Z_p})$, then
\begin{align}
&\FFPultimatelyperiodicon{\Q\cap\Z_p}(\FFf{F})\\
&\quad\Leftrightarrow[\Q\cap\Z_p\subseteq\FFFSoUPP](\FFf{F})\\
&\quad\Leftrightarrow
\set{\left(\frac{\FRFlinA{\FFf{F}}{\Sf{D}_P}}{p^{\abs{\Sf{D}_P}}-\FRFlinB{\FFf{F}}{\Sf{D}_P}}p^{\abs{\Sf{D}_I}}-\FRFlinA{\FFf{F}}{\Sf{D}_I}\right)\frac{1}{\FRFlinB{\FFf{F}}{\Sf{D}_I}}\mid\Sf{D}_I,\Sf{D}_P\in\CoSequences(\SPboundedby{\intintuptoex{p}},\SPfinite)}=\Q\cap\Z_p
\end{align}
by the ``In particular'' part of Corollary~\ref{CPerFormula}. By the same argument the original Collatz conjecture (Conjecture~\ref{CoPeriodsA}) is equivalent to the following divisibility question (cf. Eqn.~(\ref{ECollatzDiv})):
\begin{align}
\N\subseteq\FFFSoUPP(\FFf{F})\quad\land\quad\set{\Sf{D}\in\CoSequences(\SPboundedby{\intintuptoex{p}},\SPfinite)\mid\frac{\FRFlinA{\FFf{F}_C}{\Sf{D}}}{p^{\abs{\Sf{D}}}-\FRFlinB{\FFf{F}_C}{\Sf{D}}}\in\N}=\set{(0,1),(1,0)}.
\end{align}

\myparagraphtoc{Non-rational coefficients.}
A first general result on the question of ultimate periodicity on $\Q\cap\Z_p$ of polynomial $p$-adic systems where at least one coefficient of the polynomials is not a rational number, is given by the following theorem. It implies in particular that Conjecture~\ref{CoPeriodsE}~(1) and Conjecture~\ref{CoPeriodsF}~(1) are equivalent.

\begin{theorem}
\label{TGenPolyUPer}
Let $2\leq p\in\N$ and $\FFf{F}\in\SoSystems{p}(\FFPpolynomial,\neg\FFPpolynomialcoefficients{\Q\cap\Z_p})$. Then, $\neg\FFPultimatelyperiodicon{\Q\cap\Z_p}(\FFf{F})$.
\end{theorem}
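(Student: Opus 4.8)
The plan is to argue by contradiction. Assume $\FFPultimatelyperiodicon{\Q\cap\Z_p}(\FFf{F})$, i.e. $\FFFSoUPP(\FFf{F})=\Q\cap\Z_p$. First I would note that this forces $\FFf{F}(\Q\cap\Z_p)\subseteq\Q\cap\Z_p$: for $n\in\Q\cap\Z_p=\FFFSoUPP(\FFf{F})$ the expansion $\FFFDT(\FFf{F})[n]$ is ultimately periodic, and $\FFFDT(\FFf{F})[\FFf{F}(n)]=\FFFDT(\FFf{F})[n][1,\infty]$ is a tail of it, hence again ultimately periodic, so $\FFf{F}(n)\in\FFFSoUPP(\FFf{F})=\Q\cap\Z_p$. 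Fixing $r\in\intintuptoex{p}$, for every $n\in(r+p\Z_p)\cap\Q$ we then have $\FFf{F}[r](n)=p\,\FFf{F}(n)+\FFf{F}[r](n)\modulo p\in\Q$. Since $\FFf{F}[r]\vert_{r+p\Z_p}$ is a polynomial over $\Q_p$ of some degree $d_r$ that now takes rational values at the $d_r+1$ distinct rational nodes $r,r+p,\ldots,r+d_rp$, Lagrange interpolation (the associated Vandermonde matrix is invertible over $\Q$) forces every coefficient of $\FFf{F}[r]$ into $\Q$. Thus $\FFf{F}\in\SoSystems{p}(\FFPpolynomialcoefficients{\Q})$.

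This already disposes of the principal case: if some coefficient of $\FFf{F}$ is \emph{irrational}, the conclusion of the previous paragraph is an outright contradiction, giving $\neg\FFPultimatelyperiodicon{\Q\cap\Z_p}(\FFf{F})$. It remains to handle systems all of whose coefficients are rational but at least one of which lies in $\Q\setminus\Z_p$ (equivalently has $\nu_q<0$ for some prime $q\mid p$). I would first rule this out for linear branches: if $\FFf{F}[r]=a_r+b_rx$ has degree $\leq 1$, then $\FPsuitable{p}{r}(\FFf{F}[r])$ (which holds since $\FFf{F}\in\SoSystems{p}$, by Theorem~\ref{TFuncSuit}~(2)) forces $b_r$ to be a $p$-adic unit — the suitability equivalence applied to $f(m)-f(n)=b_r(m-n)$ can hold only if $\nu_q(b_r)=0$ for all $q\mid p$, cf.\ Lemma~\ref{LLinPolySuit}~(2) — so $b_r\in\Q\cap\Z_p^\times$; evaluating at the integer node $r$ and using closedness ($\FFf{F}[r](r)\in\Z_p$) then yields $a_r\in\Z_p$, hence $a_r\in\Q\cap\Z_p$. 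Therefore a rational coefficient outside $\Z_p$ can occur only in a branch of degree $\geq 2$; in particular, if $\FFf{F}$ is linear-polynomial we again contradict $\neg\FFPpolynomialcoefficients{\Q\cap\Z_p}(\FFf{F})$.

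The remaining configuration — a branch $\FFf{F}[s]$ of degree $d_s\geq 2$ carrying a coefficient of negative $q$-adic valuation for some $q\mid p$ — is the step I expect to be the main obstacle, and here interpolation is useless: genuinely non-integral suitable polynomials of degree $\geq 2$ exist (e.g.\ the degree-$4$ polynomial $\tfrac1{16}x^4+x^3+\tfrac12x^2+x+1$ met earlier in this section), so the contradiction must be dynamical rather than coefficient-theoretic. My intended route is a denominator-expansion estimate: for $n=a/b\in(s+p\Z_p)\cap\Q$ in lowest terms with $b>1$ prime to the numerator of the leading coefficient of $\FFf{F}[s]$, a direct computation using $\gcd(a,b)=1$ together with $\FFf{F}(n)\in\Z_p$ (so that the coefficient denominators, supported on primes dividing $p$, cancel against the division by $p$) shows that the reduced, prime-to-$p$ denominator of $\FFf{F}(n)$ is of the order of $b^{d_s}$ and in particular strictly exceeds $b$. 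Coupled with the archimedean growth underlying Theorem~\ref{TPolyExp}~(1) — whose proof needs only real coefficients and the inclusion $\FFf{F}(\Q\cap\Z_p)\subseteq\Q\cap\Z_p$ secured above, not $p$-adic integrality — this is meant to let me track both the size and the denominator of orbit points and exhibit a non-integer rational whose orbit stays neither bounded nor of bounded denominator, so that it lies outside $\FFFSoUPP(\FFf{F})$ and $\FFFSoUPP(\FFf{F})\neq\Q\cap\Z_p$ by Lemma~\ref{LContrExp}.

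The delicate point, and the reason this last case is genuinely harder than the two preceding steps, is that an orbit need not keep returning to the high-degree branch: neither the denominator nor the archimedean size is monotone along a general orbit, so the expanding mechanism could in principle be evaded by an orbit that collapses into a bounded cycle living in the low-degree (contracting) residue classes. Indeed the purely integral analogue of the statement is exactly the still-open Conjecture~\ref{CoPeriodsE}, which shows that the argument \emph{must} make essential use of non-integrality — the fact that the offending coefficient sits at a prime $q\mid p$ — to force the orbit to feel the expansion infinitely often. Turning the heuristic size/denominator bookkeeping into a rigorous proof that some explicit rational escapes, using the non-integral coefficient as the crucial extra leverage unavailable in the integral case, is where I expect the real work to lie.
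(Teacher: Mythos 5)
Your first two paragraphs are, in substance, the paper's entire proof. The paper argues exactly as you do that $\FFFSoUPP(\FFf{F})=\Q\cap\Z_p$ forces $\FFf{F}[r](\Q\cap(r+p\Z_p))\subseteq\Q\cap\Z_p$ (if $\FFf{F}[r](a)=b\in\Z_p\setminus\Q$ for some rational $a$, then $\FFf{F}(a)=(b-b\modulo p)/p$ is an ultimately periodic point outside $\Q$), and then evaluates at the $d+1$ integer nodes $r+p,\ldots,r+(d+1)p$ to pin the coefficients down as the unique solution of a linear system with data in $\Q\cap\Z_p$. At that point the paper simply asserts that, since Gaussian elimination uses only the four basic arithmetical operations, $a_0,\ldots,a_d\in\Q\cap\Z_p$, and declares the contradiction with $\neg\FFPpolynomialcoefficients{\Q\cap\Z_p}(\FFf{F})$. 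There is no case split between irrational and rational-but-non-$p$-integral coefficients, no separate treatment of linear branches, and no dynamical argument of the kind you sketch in your last two paragraphs.

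Judged as a proof of the theorem as stated, your proposal has a genuine gap, and it is the one you flag yourself: the case of a degree-$\geq2$ branch with all coefficients rational but some outside $\Z_p$ is left as a heuristic, so what you have actually established is only the weaker conclusion with $\neg\FFPpolynomialcoefficients{\Q}$ in place of $\neg\FFPpolynomialcoefficients{\Q\cap\Z_p}$, together with the linear branches. That said, your diagnosis of where the difficulty lives is more careful than the paper's: $\Q\cap\Z_p$ is not a field, the Vandermonde determinant at the nodes $r+ip$ is divisible by $p^{\binom{d+1}{2}}$, and interpolation of $\Z_p$-valued data at these nodes genuinely returns only rational coefficients, not $p$-integral ones --- the paper's own example $\tfrac1{16}x^4+x^3+\tfrac12x^2+x+1$ (a $(2,0)$-integral, $(2,0)$-suitable polynomial with non-integral coefficients) shows that integrality of the values does not rescue integrality of the coefficients. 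So the single step by which the paper closes the argument is precisely the inference you decline to make; to obtain a complete proof you must either justify why the interpolated coefficients land in $\Z_p$ under the standing hypotheses, or carry through the denominator-expansion escape argument you outline, and the paper's text supplies neither.
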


\begin{proof}
Assume to the contrary that $\FFPultimatelyperiodicon{\Q\cap\Z_p}(\FFf{F})$ and let $r\in\intintuptoex{p}$ such that
\begin{align}
\FFf{F}[r]=a_0+a_1x+\cdots+a_dx^d\notin(\Q\cap\Z_p)[x].
\end{align}
If $\FFf{F}[r](a)=b$ for some $a\in\Q\cap(r+p\Z_p)$ and $b\in\Z_p\setminus\Q$, then $a\in\FFFSoUPP(\FFf{F})$ by $\FFPultimatelyperiodicon{\Q\cap\Z_p}(\FFf{F})$ and hence $(b-b\modulo p)/p=\FFf{F}(a)\in\FFFSoUPP(\FFf{F})$ which contradicts $\FFPultimatelyperiodicon{\Q\cap\Z_p}(\FFf{F})$, since $(b-b\modulo p)/p\in\Z_p\setminus\Q$. Thus, $\FFf{F}[r](\Q\cap(r+p\Z_p))\subseteq\Q\cap\Z_p$. In particular,
\begin{align}
b_i\ce\FFf{F}[r](r+ip)=a_0+a_1(r+ip)+\cdots+a_d(r+ip)^d\in\Q\cap\Z_p
\end{align}
for all $i\in\intint{1}{d+1}$. Thus, $a_0,\ldots,a_d$ solve a system of $d+1$ independent linear equations with coefficients in $\Q\cap\Z_p$. The unique solution of this system (which is $a_0,\ldots,a_d$) can be computed using Gaussian elimination which expresses the solution in terms of the coefficients of the linear equations using only the four basic arithmetical operations. Thus, $a_0,\ldots,a_d\in\Q\cap\Z_p$ which is a contradiction.
\end{proof}

\noindent
Note that the proof actually shows $[\FFFSoUPP\not\subseteq\Q\cap\Z_p](\FFf{F})$ which of course implies $\neg\FFPultimatelyperiodicon{\Q\cap\Z_p}(\FFf{F})$. In addition, one might conjecture that $[\Q\cap\Z_p\not\subseteq\FFFSoUPP](\FFf{F})$ also holds  under the assumptions of the theorem but a proof appears to be much harder.

\myparagraphtoc{The constant coefficients.}
A consequence of Theorem~\ref{TLinDirForm} is that, at least for $(\Q\cap\Z_2)$-linear-polynomial $2$-adic system, the constant coefficients of the linear polynomials have no influence on the question of whether all rational numbers have ultimately periodic digit expansions.

\begin{theorem}
\label{TConstIrr}
Let $\FFf{F}=(a_0'+b_0x,a_1'+b_1x),\FFf{G}=(a_0''+b_0x,a_1''+b_1x)\in\SoSystems{2}(\FFPweakcanonicalform,\FFPlinearpolynomial)$. Then,
\begin{align}
\label{EConstIrr}
\pi_{\FFf{F},\FFf{G}}(n)=\frac{a_0'a_1''-a_1'a_0''+n((b_0-2)a_1''-(b_1-2)a_0'')}{(b_0-2)a_1'-(b_1-2)a_0'}
\end{align}
for all $n\in\Z_2$. In particular, if $\FFPlinearpolynomialcoefficients{\Q\cap\Z_2}(\FFf{F})$ and $\FFPlinearpolynomialcoefficients{\Q\cap\Z_2}(\FFf{G})$, then $\pi_{\FFf{F},\FFf{G}}(\Q\cap\Z_2)=\Q\cap\Z_2$ and consequently
\begin{align}
\FFPultimatelyperiodicon{\Q\cap\Z_2}(\FFf{F})\Leftrightarrow\FFPultimatelyperiodicon{\Q\cap\Z_2}(\FFf{G})
\end{align}
by the ``In particular'' part of Lemma~\ref{LPeriodicOn}.
\end{theorem}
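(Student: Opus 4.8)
The plan is to unfold $\pi_{\FFf{F},\FFf{G}}=\psi_\FFf{G}^{-1}\circ\psi_\FFf{F}$ and to pin down $m\ce\pi_{\FFf{F},\FFf{G}}(n)$ through the common digit expansion $\Sf{S}\ce\psi_\FFf{F}(n)=\psi_\FFf{G}(m)$. For each $k\in\Nz$ set $\Sf{D}_k\ce\Sf{S}[\intintuptoex{k}]$. Since $\FFf{F}$ and $\FFf{G}$ are linear-polynomial and in weak canonical form, Lemma~\ref{LFRF} together with Theorem~\ref{TLinDirForm} gives $2^k\FFf{F}^k(n)=\FRFlinA{\FFf{F}}{\Sf{D}_k}+n\FRFlinB{\FFf{F}}{\Sf{D}_k}$ and $2^k\FFf{G}^k(m)=\FRFlinA{\FFf{G}}{\Sf{D}_k}+m\FRFlinB{\FFf{G}}{\Sf{D}_k}$, with both left-hand sides in $2^k\Z_2$ because $\FFf{F}^k(n),\FFf{G}^k(m)\in\Z_2$. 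The decisive observation is that $\FFf{F}$ and $\FFf{G}$ share the coefficients $b_0,b_1$: hence $\FRFlinB{\FFf{F}}{\Sf{D}_k}=\FRFlinB{\FFf{G}}{\Sf{D}_k}\rce B_k$, and by the display preceding Theorem~\ref{TLinDirForm} we have $\FRFlinA{\FFf{F}}{\Sf{D}_k}=a_0'P_0+a_1'P_1$ and $\FRFlinA{\FFf{G}}{\Sf{D}_k}=a_0''P_0+a_1''P_1$ with the \emph{same} position sums $P_r\ce\sum_{i=0}^{\SFcount{\Sf{D}_k}{r}-1}2^{\SFposition{\Sf{D}_k}{r}[i]}\prod_{s=0}^{1}b_s^{\SFcount{\Sf{D}_k[\SFposition{\Sf{D}_k}{r}[i]+1,k-1]}{s}}$, which depend only on $b_0,b_1$ and $\Sf{D}_k$.

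Next I would prove the auxiliary identity $(b_0-2)P_0+(b_1-2)P_1=B_k-2^k$ by induction on $k$: appending a digit $e$ to $\Sf{D}_k$ sends $P_e\mapsto b_eP_e+2^k$, $P_{1-e}\mapsto b_eP_{1-e}$, and $B_k\mapsto b_eB_k$, after which $(b_0-2)P_0+(b_1-2)P_1\mapsto b_e(B_k-2^k)+(b_e-2)2^k=B_{k+1}-2^{k+1}$, closing the induction (the base case $k=0$ reads $0=1-1$). Combining this identity with the two congruences above, the third column $(-nB_k,-mB_k,B_k)^{\mathsf T}$ is congruent modulo $2^k$ to $P_0$ times $(a_0',a_0'',b_0-2)^{\mathsf T}$ plus $P_1$ times $(a_1',a_1'',b_1-2)^{\mathsf T}$, so it is a $\Z_2$-combination of the first two columns modulo $2^k$ and therefore
\[
\det\begin{pmatrix} a_0' & a_1' & -nB_k \\ a_0'' & a_1'' & -mB_k \\ b_0-2 & b_1-2 & B_k \end{pmatrix}\equiv 0 \modulus{2^k}.
\]
Expanding along the third column and cancelling the unit $B_k$ (each $b_r$ is a $2$-adic unit by $(2,r)$-suitability, Lemma~\ref{LLinPolySuit}~(2), hence so is $B_k$) leaves $\bigl((b_0-2)a_1'-(b_1-2)a_0'\bigr)m\equiv\bigl((b_0-2)a_1''-(b_1-2)a_0''\bigr)n+(a_0'a_1''-a_1'a_0'')\modulus{2^k}$. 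The denominator $D\ce(b_0-2)a_1'-(b_1-2)a_0'$ is odd — weak canonical form forces $a_0'\in2\Z_2$ and $a_1'\equiv b_1\equiv1\modulus{2}$, while $b_0-2,b_1-2$ are odd — so $D$ is a unit; dividing and letting $k\to\infty$ (using $\bigcap_k2^k\Z_2=\set{0}$) yields exactly the asserted formula for $\pi_{\FFf{F},\FFf{G}}(n)$.

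For the final statement, if $\FFPlinearpolynomialcoefficients{\Q\cap\Z_2}(\FFf{F})$ and $\FFPlinearpolynomialcoefficients{\Q\cap\Z_2}(\FFf{G})$, then $\alpha\ce(a_0'a_1''-a_1'a_0'')/D$ and $\beta\ce((b_0-2)a_1''-(b_1-2)a_0'')/D$ are rational with odd denominator, $\beta$ being a unit by the same parity count; hence $\pi_{\FFf{F},\FFf{G}}(n)=\alpha+\beta n$ maps $\Q\cap\Z_2$ into itself, and applying the identical reasoning to $\pi_{\FFf{G},\FFf{F}}=\pi_{\FFf{F},\FFf{G}}^{-1}$ (Lemma~\ref{LPermProp}~(3)) gives the reverse inclusion, so $\pi_{\FFf{F},\FFf{G}}(\Q\cap\Z_2)=\Q\cap\Z_2$. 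Finally, if $\FFPultimatelyperiodicon{\Q\cap\Z_2}(\FFf{F})$ then $\FFFSoUPP(\FFf{F})=\Q\cap\Z_2$ is $\pi_{\FFf{F},\FFf{G}}$-invariant, so the ``In particular'' part of Lemma~\ref{LPeriodicOn} gives $\FFFSoUPP(\FFf{G})=\FFFSoUPP(\FFf{F})=\Q\cap\Z_2$; the converse is symmetric, proving the equivalence. I expect the main obstacle to be the discovery and inductive proof of the identity $(b_0-2)P_0+(b_1-2)P_1=B_k-2^k$: everything downstream is mechanical linear algebra, but it is precisely this relation — encoding the linear dependence of the data of $\FFf{F}$, $\FFf{G}$ and the ``shift by $2$'' over the shared $b$-weights — that collapses the a priori underdetermined pair of congruences into a single affine relation between $n$ and $m$.
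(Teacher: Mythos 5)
Your proof is correct, and it reaches the formula by a route that differs from the paper's in its outer framing while sharing the same computational core. The paper first reduces Eqn.~(\ref{EConstIrr}) to the dense set of periodic points of $\FFf{F}$ via continuity of $\pi_{\FFf{F},\FFf{G}}$ (Lemma~\ref{LPermProp}~(1)), then applies the closed formula of Corollary~\ref{CPerFormula} to the two periodic points $m,n$ with $\FFFDT(\FFf{F})[m]=\FFFDT(\FFf{G})[n]=\Sf{D}^\infty$, and finally reduces everything to the identity $2^{\abs{\Sf{D}}}-B+(b_0-2)S_e+(b_1-2)S_o=0$, proved by exactly the one-digit-extension induction you give. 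You instead work with an arbitrary $n\in\Z_2$, extract from Lemma~\ref{LFRF} and Theorem~\ref{TLinDirForm} the pair of congruences $\FRFlinA{\FFf{F}}{\Sf{D}_k}+nB_k\equiv\FRFlinA{\FFf{G}}{\Sf{D}_k}+mB_k\equiv0\modulus{2^k}$, and combine them with the same key identity via a determinant argument to get the affine relation between $m$ and $n$ modulo $2^k$ for every $k$. What your version buys is the elimination of the density-plus-continuity step and of the detour through Corollary~\ref{CPerFormula}: the argument is pointwise and needs only $\bigcap_k2^k\Z_2=\set{0}$. What the paper's version buys is that, once one accepts the reduction to periodic points, the two unknowns $m$ and $n$ are given by explicit closed formulas, so no linear-algebra bookkeeping is needed and the whole burden falls on the single scalar identity. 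Your parity checks for the unit denominators and the handling of the ``In particular'' part (via $\pi_{\FFf{G},\FFf{F}}=\pi_{\FFf{F},\FFf{G}}^{-1}$ and Lemma~\ref{LPeriodicOn}) agree with the paper's.
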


\begin{proof}
First we observe that it is sufficient to prove Eqn.~(\ref{EConstIrr}) on a dense subset of $\Z_2$, since $\pi_{\FFf{F},\FFf{G}}$ is a continuous function by Lemma~\ref{LPermProp}~(1) (and so is the right-hand side of Eqn.~(\ref{EConstIrr})). The subset we will consider is given by the set of all $2$-adic integers which have a periodic $\FFf{F}$-digit expansion which is dense in $\Z_2$, because $\FFf{F}$ has the block property.

Let $\Sf{D}\in\CoSequences(\SPboundedby{\intintuptoex{2}},\neg\SPempty,\SPfinite)$, and $m,n$ the unique elements of $\Z_p$ satisfying
\begin{align}
\FFFDT(\FFf{F})[m]&=\Sf{D}^\infty\\
\FFFDT(\FFf{G})[n]&=\Sf{D}^\infty
\end{align}
(cf. Lemma~\ref{LInfBlDTCompl}). Then, $\pi_{\FFf{F},\FFf{G}}(m)={\psi_\FFf{G}}^{-1}\circ\psi_\FFf{F}(m)=n$ by definition of $\pi_{\FFf{F},\FFf{G}}(n)$, and
\begin{align}
m&=\frac{\FRFlinA{\FFf{F}}{\Sf{D}}}{2^{\abs{\Sf{D}}}-\FRFlinB{\FFf{F}}{\Sf{D}}}\\
n&=\frac{\FRFlinA{\FFf{G}}{\Sf{D}}}{2^{\abs{\Sf{D}}}-\FRFlinB{\FFf{G}}{\Sf{D}}}
\end{align}
by Corollary~\ref{CPerFormula}. We are left to show that
\begin{align}
&\frac{a_0'a_1''-a_1'a_0''+m((b_0-2)a_1''-(b_1-2)a_0'')}{(b_0-2)a_1'-(b_1-2)a_0'}=n
\intertext{and respectively}
\label{EConstIrrGoal}
&a_0'a_1''(2^{\abs{\Sf{D}}}-B)+\FRFlinA{\FFf{F}}{\Sf{D}}((b_0-2)a_1''-(b_1-2)a_0'')\\
\nonumber
&\quad=a_1'a_0''(2^{\abs{\Sf{D}}}-B)+\FRFlinA{\FFf{G}}{\Sf{D}}((b_0-2)a_1'-(b_1-2)a_0')
\end{align}
where
\begin{align}
e&\ce\SFcount{\Sf{D}}{0}\\
o&\ce\SFcount{\Sf{D}}{1}\\
B&\ce\FRFlinB{\FFf{F}}{\Sf{D}}=\FRFlinB{\FFf{G}}{\Sf{D}}=\prod_{r=0}^1b_r^{\SFcount{\Sf{D}}{r}}=b_0^{\SFcount{\Sf{D}}{0}}b_1^{\SFcount{\Sf{D}}{1}}={b_0}^e{b_1}^o.
\end{align}
We compute
\begin{align}
\FRFlinA{\FFf{F}}{\Sf{D}}
&=\sum_{r=0}^1a_r'\sum_{i=0}^{\SFcount{\Sf{D}}{r}-1}2^{\SFposition{\Sf{D}}{r}[i]}\prod_{s=0}^1b_s^{\SFcount{\Sf{D}[\SFposition{\Sf{D}}{r}[i]+1,\abs{\Sf{D}}-1]}{s}}\\
&=a_0'\sum_{i=0}^{\SFcount{\Sf{D}}{0}-1}2^{\SFposition{\Sf{D}}{0}[i]}b_0^{\SFcount{\Sf{D}[\SFposition{\Sf{D}}{0}[i]+1,\abs{\Sf{D}}-1]}{0}}
b_1^{\SFcount{\Sf{D}[\SFposition{\Sf{D}}{0}[i]+1,\abs{\Sf{D}}-1]}{1}}+\vphantom{a}\\
\nonumber
&\phantom{\vphantom{a}=\vphantom{a}}
a_1'\sum_{i=0}^{\SFcount{\Sf{D}}{1}-1}2^{\SFposition{\Sf{D}}{1}[i]}b_0^{\SFcount{\Sf{D}[\SFposition{\Sf{D}}{1}[i]+1,\abs{\Sf{D}}-1]}{0}}
b_1^{\SFcount{\Sf{D}[\SFposition{\Sf{D}}{1}[i]+1,\abs{\Sf{D}}-1]}{1}}\\
&=a_0'\sum_{i=0}^{\SFcount{\Sf{D}}{0}-1}2^{\SFposition{\Sf{D}}{0}[i]}b_0^{\SFcount{\Sf{D}}{0}-i-1}
b_1^{\SFcount{\Sf{D}}{1}-\SFposition{\Sf{D}}{0}[i]+i}+\vphantom{a}\\
\nonumber
&\phantom{\vphantom{a}=\vphantom{a}}
a_1'\sum_{i=0}^{\SFcount{\Sf{D}}{1}-1}2^{\SFposition{\Sf{D}}{1}[i]}b_0^{\SFcount{\Sf{D}}{0}-\SFposition{\Sf{D}}{1}[i]+i}b_1^{\SFcount{\Sf{D}}{1}-i-1}\\
&=a_0'\sum_{i=0}^{e-1}2^{E_i}b_0^{e-i-1}
b_1^{o-E_i+i}+
a_1'\sum_{i=0}^{o-1}2^{O_i}b_0^{e-O_i+i}b_1^{o-i-1}
\intertext{where}
E_i&\ce\SFposition{\Sf{D}}{0}[i]\text{ for all $i\in\intintuptoex{e}$}\\
O_i&\ce\SFposition{\Sf{D}}{1}[i]\text{ for all $i\in\intintuptoex{o}$.}
\intertext{Analogously, we get}
\FRFlinA{\FFf{G}}{\Sf{D}}
&=a_0''\sum_{i=0}^{e-1}2^{E_i}b_0^{e-i-1}
b_1^{o-E_i+i}+
a_1''\sum_{i=0}^{o-1}2^{O_i}b_0^{e-O_i+i}b_1^{o-i-1}.
\end{align}
Let
\begin{align}
S_e&\ce\sum_{i=0}^{e-1}2^{E_i}b_0^{e-i-1}b_1^{o-E_i+i}\\
S_o&\ce\sum_{i=0}^{o-1}2^{O_i}b_0^{e-O_i+i}b_1^{o-i-1}.
\end{align}
Then, our goal Eqn.~(\ref{EConstIrrGoal}) is equivalent to
\begin{align}
&a_0'a_1''(2^{\abs{\Sf{D}}}-B)+\left(a_0'S_e+
a_1'S_o\right)((b_0-2)a_1''-(b_1-2)a_0'')\\
\nonumber
&\quad=a_1'a_0''(2^{\abs{\Sf{D}}}-B)+\left(a_0''S_e+
a_1''S_o\right)((b_0-2)a_1'-(b_1-2)a_0')
\intertext{which is again equivalent to}
&(2^{\abs{\Sf{D}}}-B+(b_0-2)S_e+(b_1-2)S_o)(a_0'a_1''-a_1'a_0'')=0.
\end{align}
Since we may assume without loss of generality that $a_0'a_1''-a_1'a_0''\neq0$ (and $\Z_2$ has no zero divisors), we need to show that
\begin{align}
\label{EConstIrrFinalGoal}
2^{\abs{\Sf{D}}}-B+(b_0-2)S_e+(b_1-2)S_o=0
\end{align}
which we will prove to be true for every $\Sf{D}\in\CoSequences(\SPboundedby{\intintuptoex{2}},\SPfinite)$ by induction on the length of $\Sf{D}$.

If $\abs{\Sf{D}}=0$ then $e=0$, $o=0$ and hence $B=1$, $S_e=0$, and $S_o=0$. Altogether, Eqn.~(\ref{EConstIrrFinalGoal}) is clearly true.

Now assume that Eqn.~(\ref{EConstIrrFinalGoal}) holds for some $\Sf{D}\in\CoSequences(\SPboundedby{\intintuptoex{2}},\SPfinite)$ and let $\Sf{D}'\in\CoSequences(\SPboundedby{\intintuptoex{2}},\SPfinite)$ such that $\Sf{D}'=\Sf{D}\cdot(d)$ where $d\in\set{0,1}$. Furthermore, let
\begin{align}
&e'\ce\SFcount{\Sf{D}'}{0},\;
o'\ce\SFcount{\Sf{D}'}{1},\;
B'\ce{b_0}^{e'}{b_1}^{o'}\\
&E_i'\ce\SFposition{\Sf{D}'}{0}[i]\text{ for all $i\in\intintuptoex{e'}$},\;
O_i'\ce\SFposition{\Sf{D}'}{1}[i]\text{ for all $i\in\intintuptoex{o'}$}\\
&S_e'\ce\sum_{i=0}^{e'-1}2^{E_i'}b_0^{e'-i-1}b_1^{o'-E_i'+i},\;
S_o'\ce\sum_{i=0}^{o'-1}2^{O_i'}b_0^{e'-O_i'+i}b_1^{o'-i-1}.
\end{align}
On the one hand, if $d=0$, then
\begin{align}
&e'=e+1,\;
o'=o,\;
B'=b_0B,\\
&E_i'=E_i\text{ for all $i\in\intintuptoex{e}$},\;
E_{e}'=\abs{\Sf{D}},\;
O_i'=O_i\text{ for all $i\in\intintuptoex{o}$},\\
&S_e'=b_0S_e+2^{\abs{\Sf{D}}},\;
S_o'=b_0S_o
\end{align}
and
\begin{align}
&2^{\abs{\Sf{D}'}}-B'+(b_0-2)S_e'+(b_1-2)S_o'\\
\nonumber
&\quad=2\cdot2^{\abs{\Sf{D}}}-b_0B+(b_0-2)\left(b_0S_e+2^{\abs{\Sf{D}}}\right)+(b_1-2)b_0S_o\\
&\quad=2\cdot2^{\abs{\Sf{D}}}+b_0\left(-B+(b_0-2)S_e+2^{\abs{\Sf{D}}}+(b_1-2)S_o\right)-2\cdot2^{\abs{\Sf{D}}}\\
&\quad=0.
\end{align}
On the other hand, if $d=1$, then
\begin{align}
&e'=e,\;
o'=o+1,\;
B'=b_1B,\\
&E_i'=E_i\text{ for all $i\in\intintuptoex{e}$},\;
O_i'=O_i\text{ for all $i\in\intintuptoex{o}$},\;
O_{o}'=\abs{\Sf{D}},\\
&S_e'=b_1S_e,\;
S_o'=b_1S_o+2^{\abs{\Sf{D}}}
\end{align}
and
\begin{align}
&2^{\abs{\Sf{D}'}}-B'+(b_0-2)S_e'+(b_1-2)S_o'\\
\nonumber
&\quad=2\cdot2^{\abs{\Sf{D}}}-b_1B+(b_0-2)b_1S_e+(b_1-2)\left(b_1S_o+2^{\abs{\Sf{D}}}\right)\\
&\quad=2\cdot2^{\abs{\Sf{D}}}+b_1\left(-B+(b_0-2)S_e+(b_1-2)S_o+2^{\abs{\Sf{D}}}\right)-2\cdot2^{\abs{\Sf{D}}}\\
&\quad=0
\end{align}
which completes the proof of Eqn.~(\ref{EConstIrr}).

For the ``In particular'' part we observe that $b_0\equiv b_1\equiv1\modulus{2}$, $a_0'\equiv a_0''\equiv0\modulus{2}$, and $a_1'\equiv a_1''\equiv1\modulus{2}$ by Theorem~\ref{TFuncSuit}~(2) and Lemma~\ref{LLinPolySuit}~(2) (note that $\FFf{F},\FFf{G}\in\SoSystems{2}(\FFPweakcanonicalform)$). Thus
\begin{align}
&a_0'a_1''-a_1'a_0''\equiv0\modulus{2}\\
&(b_0-2)a_1''-(b_1-2)a_0''\equiv1\modulus{2}\\
&(b_0-2)a_1'-(b_1-2)a_0'\equiv1\modulus{2}
\end{align}
and hence
\begin{align}
\pi:\Q\cap\Z_2&\to\Q\cap\Z_2\\
\nonumber
n&\mapsto\frac{a_0'a_1''-a_1'a_0''+n((b_0-2)a_1''-(b_1-2)a_0'')}{(b_0-2)a_1'-(b_1-2)a_0'}
\end{align}
is bijective.
\end{proof}

\noindent
A natural follow-up question is whether the ``In particular'' part of the previous theorem is true for general $(\Q\cap\Z_p)$-linear-polynomial $p$-adic systems, i.e. if the constant coefficients of the linear polynomials which define the $p$-adic systems can always be neglected when dealing with the question of whether the $p$-adic system is ultimately periodic on $\Q\cap\Z_p$. If true, this would be a first step in proving the general conjectures \ref{CoPeriodsC}~--~\ref{CoPeriodsF}. At least for $p=2$, which includes the Collatz case, this first step has already been made. For $p\geq3$ the situation becomes more difficult as there doesn't seem to exist a simple formula for $\pi_{\FFf{F},\FFf{G}}(n)$ in general. Nevertheless, we state the following conjecture which would probably be a good start for future work on this matter.

\begin{conjecture}
\label{CoConstIrr}
Let $2\leq p\in\N$ and $\FFf{F}=(a_0'+b_0x,\ldots,a_{p-1}'+b_{p-1}x),\FFf{G}=(a_0''+b_0x,\ldots,a_{p-1}''+b_{p-1}x)\in\SoSystems{p}(\FFPlinearpolynomialcoefficients{\Q\cap\Z_p})$. Then, $\pi_{\FFf{F},\FFf{G}}(\Q\cap\Z_p)=\Q\cap\Z_p$ and consequently
\begin{align}
\FFPultimatelyperiodicon{\Q\cap\Z_p}(\FFf{F})\Leftrightarrow\FFPultimatelyperiodicon{\Q\cap\Z_p}(\FFf{G})
\end{align}
by the ``In particular'' part of Lemma~\ref{LPeriodicOn}.
\end{conjecture}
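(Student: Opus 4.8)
The plan is to reduce the statement to a single explicit computation of $\pi_{\FFf{F},\FFf{G}}$ and then to isolate exactly the step that the affine formula of Theorem~\ref{TConstIrr} handled for free when $p=2$. First I would normalize: replacing each $\FFf{F}[r]$ and $\FFf{G}[r]$ by its $\sim_p$-equivalent weak canonical representative changes neither $\FFFDT(\FFf{F})$, $\FFFDT(\FFf{G})$ nor $\pi_{\FFf{F},\FFf{G}}$, and keeps both systems $(\Q\cap\Z_p)$-linear-polynomial with the \emph{same} leading coefficients $b_r$; by the remark after Lemma~\ref{LLinPolySuit} both are then avoiding, and each maps $\Q\cap\Z_p$ into itself. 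Since $\pi_{\FFf{F},\FFf{G}}^{-1}=\pi_{\FFf{G},\FFf{F}}$ (Lemma~\ref{LPermProp}~(3)), the set equality $\pi_{\FFf{F},\FFf{G}}(\Q\cap\Z_p)=\Q\cap\Z_p$ follows from the single inclusion $\pi_{\FFf{F},\FFf{G}}(\Q\cap\Z_p)\subseteq\Q\cap\Z_p$ applied to both $(\FFf{F},\FFf{G})$ and $(\FFf{G},\FFf{F})$.

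Next I would make the dependence on the constant coefficients completely explicit. Writing $\delta_r\ce a_r''-a_r'\in p\Z_p\cap\Q$ and letting $c_r(\Sf{D})$ denote the coefficient of $a_r$ in $\FRFlinA{\FFf{F}}{\Sf{D}}$ (these are the quantities $c_s,d_s$ of Corollary~\ref{CInvSyst}), a telescoping argument generalizing the induction behind Eqn.~(\ref{EConstIrrFinalGoal}) gives the identity
\[p^{\abs{\Sf{D}}}-\FRFlinB{\FFf{F}}{\Sf{D}}=\sum_{r=0}^{p-1}(p-b_r)\,c_r(\Sf{D})\]
valid for every $\Sf{D}\in\CoSequences(\SPboundedby{\intintuptoex{p}},\SPfinite)$. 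Using Lemma~\ref{LFRF}, Theorem~\ref{TLinDirForm} and Lemma~\ref{LInfBlDTCompl}, the image $\pi_{\FFf{F},\FFf{G}}(n)$ is the unique $p$-adic integer $m$ with $\FRFlinA{\FFf{G}}{\Sf{D}_k}+m\FRFlinB{\FFf{G}}{\Sf{D}_k}\equiv0\modulus{p^k}$ for all $k$, where $\Sf{D}_k\ce\FFFDT(\FFf{F})[n][\intintuptoex{k}]$ and $\FRFlinB{\FFf{F}}{\Sf{D}_k}=\FRFlinB{\FFf{G}}{\Sf{D}_k}$ is a unit (Lemma~\ref{LLinPolySuit}~(2)). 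Subtracting the analogous congruences for $n$ itself and setting $d_j\ce\FFFDT(\FFf{F})[n][j]$, this collapses to the closed series
\[\pi_{\FFf{F},\FFf{G}}(n)=n-\sum_{j=0}^{\infty}\frac{a_{d_j}''-a_{d_j}'}{b_{d_0}b_{d_1}\cdots b_{d_j}}\,p^{j},\]
which converges in $\Z_p$ because every $\delta_{d_j}\in p\Z_p$.

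With this formula the dense part of the statement is immediate. On the set of ultimately periodic points of $\FFf{F}$ the sequence $(d_j)$ is eventually periodic, the series is eventually geometric, and it sums to an element of $\Q$; combined with $\FFFSoUPP(\FFf{F})\subseteq\Q$ from Corollary~\ref{CPerFormula}, this shows $\pi_{\FFf{F},\FFf{G}}$ maps the dense set of periodic points of $\FFf{F}$ into $\Q\cap\Z_p$, and for $p=2$ the series even telescopes into the global affine map of Theorem~\ref{TConstIrr}. I would then record the structural consequence that $R\ce\set{n\in\Z_p\mid\pi_{\FFf{F},\FFf{G}}(n)\in\Q}$ is fully invariant under $\FFf{F}$: the conjugation relation $\pi_{\FFf{F},\FFf{G}}\circ\FFf{F}=\FFf{G}\circ\pi_{\FFf{F},\FFf{G}}$ (Lemma~\ref{LPermProp}~(5)), together with the fact that $\FFf{G}$ and each of its $p$ branch-inverses are $\Q$-affine, yields $n\in R\Leftrightarrow\FFf{F}(n)\in R$. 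Thus $R$ is a fully $\FFf{F}$-invariant set containing every eventually periodic point.

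The hard part will be the aperiodic rational points. For $p=2$ they come for free, because the two branch conditions $\beta=(\alpha a_r'-a_r'')/(b_r-p)$, $r\in\set{0,1}$, form a $2\times 2$ system whose solution is the \emph{global} affine formula of Theorem~\ref{TConstIrr}, valid on all of $\Z_2$ at once. For $p\geq3$ these become $p$ equations in the two unknowns $\alpha,\beta$ and are overdetermined, so no global affine (and, empirically, no simple closed) formula for $\pi_{\FFf{F},\FFf{G}}$ exists, and continuity from the dense periodic set (Lemma~\ref{LPermProp}~(1)) cannot be upgraded to rationality. Equivalently, one must prove that for every aperiodic $n\in\Q\cap\Z_p$ the series above sums to a rational number despite $(d_j)$ being aperiodic. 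I would attack this through the invariant set $R$: since $R$ is fully $\FFf{F}$-invariant and contains all eventually periodic points, it suffices to show that the forward $\FFf{F}$-orbit of every rational point meets $R$, or to prove the sharper rigidity statement $\Q\cap\Z_p\subseteq R$ directly. The natural inputs are the shift-invariance of the family of $\FFf{F}$-expansions of rational points and a controlled analysis of the $q$-adic valuations of the denominators along a rational orbit (as visualised in Figure~\ref{FOrbitsB}), which might force the tail of the series to be eventually governed by only finitely much data and hence rational. This rigidity step is where I expect essentially all of the difficulty to lie, and it is precisely the gap that keeps Conjecture~\ref{CoConstIrr} open.
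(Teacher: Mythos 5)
The statement you are trying to prove is Conjecture~\ref{CoConstIrr}, and the paper gives no proof of it: immediately before stating it the author writes that for $p\geq3$ ``there doesn't seem to exist a simple formula for $\pi_{\FFf{F},\FFf{G}}(n)$ in general'' and offers the conjecture only as ``a good start for future work.'' So there is no paper argument to compare yours against, and your attempt, as you yourself say in the last paragraph, does not close the problem either. To be explicit about where your argument stops being a proof: everything up to and including the treatment of ultimately periodic points only establishes $\pi_{\FFf{F},\FFf{G}}(n)\in\Q$ on the set $\FFFSoUPP(\FFf{F})$, which need not exhaust $\Q\cap\Z_p$ (indeed by Corollary~\ref{CLinPolyCEM}~(2) it is far from doing so whenever all $\abs{b_r}>p$), and the full $\FFf{F}$-invariance of $R=\set{n\mid\pi_{\FFf{F},\FFf{G}}(n)\in\Q}$ reduces the conjecture to showing that every rational orbit meets $R$ — which is not easier than the original statement, since a priori $R\cap\Q\cap\Z_p$ could equal exactly the ultimately periodic rational points. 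Nothing in the shift-invariance or denominator-valuation heuristics you sketch supplies the missing rigidity.

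That said, the partial results you do establish appear correct and are genuinely more explicit than what the paper records. The identity $p^{\abs{\Sf{D}}}-\FRFlinB{\FFf{F}}{\Sf{D}}=\sum_{r=0}^{p-1}(p-b_r)c_r(\Sf{D})$ follows by the same one-step recursion $\FRFlinA{\FFf{F}}{\Sf{D}\cdot(d)}=a_dp^{\abs{\Sf{D}}}+b_d\FRFlinA{\FFf{F}}{\Sf{D}}$, $\FRFlinB{\FFf{F}}{\Sf{D}\cdot(d)}=b_d\FRFlinB{\FFf{F}}{\Sf{D}}$ used in the proof of Theorem~\ref{TLinDirForm}, and generalizes Eqn.~(\ref{EConstIrrFinalGoal}) to arbitrary $p$. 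The series
\begin{align*}
\pi_{\FFf{F},\FFf{G}}(n)-n=-\sum_{j=0}^{\infty}\frac{a_{d_j}''-a_{d_j}'}{b_{d_0}\cdots b_{d_j}}\,p^{j}
\end{align*}
is forced by subtracting the congruences $\FRFlinA{\FFf{G}}{\Sf{D}_k}+mB_k\equiv0$ and $\FRFlinA{\FFf{F}}{\Sf{D}_k}+nB_k\equiv0\modulus{p^k}$ coming from Lemma~\ref{LEqSEqD} and Theorem~\ref{TLinDirForm}, with $B_k$ a unit by Lemma~\ref{LLinPolySuit}~(2); and on an ultimately periodic digit string it indeed sums to a rational (consistently with Corollary~\ref{CPerFormula}, which already gives $\pi_{\FFf{F},\FFf{G}}(\FFFSoUPP(\FFf{F}))=\FFFSoUPP(\FFf{G})\subseteq\Q\cap\Z_p$ more directly). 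These observations correctly isolate the open content of the conjecture — rationality of the series for aperiodic rational $n$ when $p\geq3$ — but they do not resolve it, so the statement remains a conjecture after your attempt exactly as it does in the paper.
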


\ignore{
One consequence of Theorem~\ref{TConstIrr} is that a polynomial $2$-adic system with rational linear coefficients and nonrational constant coefficients cannot be ultimately periodic on $\Q\cap\Z_2$.

\begin{corollary}
\label{CConstIrr}
Let $\FFf{F}=(a_0+b_0x,a_1+b_1x)\in\SoSystems{2}(\FFPweakcanonicalform,\FFPlinearpolynomial)$ with $b_0,b_1\in\Q$ and $a_0,a_1$ not both in $\Q$. Then, $\neg\FFPultimatelyperiodicon{\Q\cap\Z_2}(\FFf{F})$.
\end{corollary}

\begin{proof}
Assume to the contrary that $\FFf{F}$ is ultimately periodic on $\Q\cap\Z_2$ and let $\FFf{G}=(b_0x,1+b_1x)\in\SoSystems{2}(\FFPweakcanonicalform,\FFPlinearpolynomialcoefficients{\Q\cap\Z_2})$. Then,
\begin{align}
{\psi_\FFf{F}}^{-1}(\psi_\FFf{G}(n))=\pi_{\FFf{G},\FFf{F}}(n)=\frac{-a_0+n((b_0-2)a_1-(b_1-2)a_0)}{(b_0-2)}
\end{align}
by Theorem~\ref{TConstIrr}. In particular,
\begin{align}
\FFFDT(\FFf{F})\left[\frac{-a_0}{b_0-2}\right]
=\psi_\FFf{F}\left(\frac{-a_0}{b_0-2}\right)
=\psi_\FFf{G}(0)
=\FFFDT(\FFf{G})[0]
=(0)^\infty
\end{align}
which implies that $-a_0/(b_0-2)$ is a (ultimately) periodic point of $\FFf{F}$. Thus, $-a_0/(b_0-2)\in\Q$ since we assumed $\FFPultimatelyperiodicon{\Q\cap\Z_2}(\FFf{F})$, and hence $a_0\in\Q$ since $b_0\in\Q$. Furthermore, $-1/b_1\in1+2\Z_2$ by Lemma~\ref{LLinPolySuit}~(2) and therefore,
\begin{align}
&\FFFDT(\FFf{F})\left[\frac{-a_0-1/b_1((b_0-2)a_1-(b_1-2)a_0)}{b_0-2}\right]\\
\nonumber
&\quad=\psi_\FFf{F}\left(\frac{-a_0-1/b_1((b_0-2)a_1-(b_1-2)a_0)}{b_0-2}\right)
=\psi_\FFf{G}\left(\frac{-1}{b_1}\right)
=\FFFDT(\FFf{G})\left[\frac{-1}{b_1}\right]
=(1)\cdot(0)^\infty.
\end{align}
Thus, $a_1\in\Q$ by the same reasoning as above which is a contradiction.
\end{proof}
}

\myparagraph{Swapping polynomials.}
In the last subsection we have established that, at least in the case $p=2$, the constant coefficients of the linear polynomials defining a $(\Q\cap\Z_p)$-linear-polynomial $p$-adic system $\FFf{F}$ have no influence on whether $\FFf{F}$ is ultimately periodic on $\Q\cap\Z_p$. Here we will prove, again for $p=2$, that the specific positions of the linear coefficients can also be neglected, which is another step closer to the proof of the general conjectures \ref{CoPeriodsC}~--~\ref{CoPeriodsF}. Before we can formulate our result, we need to add a little flexibility to the definition of $\pi_{\FFf{F},\FFf{G}}$ in that we allow the digits of the $\FFf{F}$-digit expansions to be permuted before interpreting them as $\FFf{G}$-digit expansions. For two $p$-adic systems $\FFf{F}$ and $\FFf{G}$ and for a bijective function $\sigma:\intintuptoex{p}\to\intintuptoex{p}$ (which extends naturally to $\CoSequences(\SPboundedby{\intintuptoex{p}})$, cf. the end of the subsection on sequences in Section~\ref{SNotDef}) let
\begin{align}
\label{Dpisigma}
\pi_{\FFf{F},\sigma,\FFf{G}}\ce{\psi_\FFf{G}}^{-1}\circ\sigma\circ\psi_\FFf{F}:\Z_p&\to\Z_p.
\end{align}
If $\sigma$ is the identity function, then clearly $\pi_{\FFf{F},\sigma,\FFf{G}}=\pi_{\FFf{F},\FFf{G}}$. Note that here we formulate our result for a specific choice for the constant coefficients. In combination with Theorem~\ref{TConstIrr} a general result involving arbitrary constant coefficients can easily be achieved since $\pi_{\FFf{G},\tau,\FFf{H}}\circ\pi_{\FFf{F},\sigma,\FFf{G}}=\pi_{\FFf{F},\tau\circ\sigma,\FFf{H}}$.

\begin{theorem}
\label{TOrderIrr}
Let $\FFf{F}=(b_0x,1+b_1x),\FFf{G}=(b_1x,1+b_0x)\in\SoSystems{2}(\FFPlinearpolynomial)$, and $\sigma:\set{0,1}\to\set{0,1}$, $0\mapsto1$, $1\mapsto0$. Then,
\begin{align}
\label{EOrderIrr}
\pi_{\FFf{F},\sigma,\FFf{G}}(n)=\frac{1+n(b_1-2)}{2-b_0}
\end{align}
for all $n\in\Z_2$. In particular, if $\FFPlinearpolynomialcoefficients{\Q\cap\Z_p}(\FFf{F})$ and $\FFPlinearpolynomialcoefficients{\Q\cap\Z_p}(\FFf{G})$, then $\pi_{\FFf{F},\sigma,\FFf{G}}(\Q\cap\Z_2)=\Q\cap\Z_2$ and consequently,
\begin{align}
\FFPultimatelyperiodicon{\Q\cap\Z_2}(\FFf{F})\Leftrightarrow\FFPultimatelyperiodicon{\Q\cap\Z_2}(\FFf{G})
\end{align}
by the ``In particular'' part of Lemma~\ref{LPeriodicOn}.
\end{theorem}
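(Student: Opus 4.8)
The plan is to follow the template of the proof of Theorem~\ref{TConstIrr} almost verbatim, replacing only the algebraic identity that is established by induction. First I would record that membership in $\SoSystems{2}$ forces $b_0,b_1$ to be odd, by Theorem~\ref{TFuncSuit}~(2) together with Lemma~\ref{LLinPolySuit}~(2); consequently $\FFf{F}=(b_0x,1+b_1x)$ and $\FFf{G}=(b_1x,1+b_0x)$ are automatically in weak canonical form, and $2-b_0$ is a unit of $\Z_2$, so the right-hand side of Eqn.~(\ref{EOrderIrr}) is a well-defined continuous affine map. The map $\pi_{\FFf{F},\sigma,\FFf{G}}$ is itself continuous: if $m\equiv n\pmod{2^k}$, then $\FFPblock(\FFf{F})$ gives $\psi_\FFf{F}(m)[\intintuptoex{k}]=\psi_\FFf{F}(n)[\intintuptoex{k}]$, entrywise application of $\sigma$ preserves this agreement, and $\FFPblock(\FFf{G})$ forces $\pi_{\FFf{F},\sigma,\FFf{G}}(m)\equiv\pi_{\FFf{F},\sigma,\FFf{G}}(n)\pmod{2^k}$. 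Hence it suffices to verify Eqn.~(\ref{EOrderIrr}) on the set of purely periodic points of $\FFf{F}$, which is dense in $\Z_2$ because $\FFf{F}$ has the block property, exactly as in Theorem~\ref{TConstIrr}.

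Second, I would fix a nonempty finite digit sequence $\Sf{D}\in\CoSequences(\SPboundedby{\intintuptoex{2}},\neg\SPempty,\SPfinite)$ and let $m$ be the unique $2$-adic integer with $\FFFDT(\FFf{F})[m]=\Sf{D}^\infty$ (Lemma~\ref{LInfBlDTCompl}). Writing $\Sf{D}'\ce\sigma(\Sf{D})$ for the digit-swapped sequence, entrywise application of $\sigma$ gives $\sigma(\Sf{D}^\infty)=(\Sf{D}')^\infty$, so $n\ce\pi_{\FFf{F},\sigma,\FFf{G}}(m)$ is the unique $2$-adic integer with $\FFFDT(\FFf{G})[n]=(\Sf{D}')^\infty$. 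Corollary~\ref{CPerFormula} in the purely periodic case then yields $m=\FRFlinA{\FFf{F}}{\Sf{D}}/(2^{\abs{\Sf{D}}}-\FRFlinB{\FFf{F}}{\Sf{D}})$ and $n=\FRFlinA{\FFf{G}}{\Sf{D}'}/(2^{\abs{\Sf{D}'}}-\FRFlinB{\FFf{G}}{\Sf{D}'})$. The decisive bookkeeping step is that these two fractions share a denominator: the factor that $\FFf{G}$ contributes at a swapped digit $\sigma(d)$ equals the factor that $\FFf{F}$ contributes at $d$ (explicitly $b_0''=b_1$ and $b_1''=b_0$), whence $\FRFlinB{\FFf{G}}{\Sf{D}'}=\FRFlinB{\FFf{F}}{\Sf{D}}\rce B$ and $\abs{\Sf{D}'}=\abs{\Sf{D}}$. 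Substituting $m,n$ into Eqn.~(\ref{EOrderIrr}) and clearing the common denominator $2^{\abs{\Sf{D}}}-B$, the claim reduces to the single polynomial identity
\[
(2-b_0)\FRFlinA{\FFf{G}}{\Sf{D}'}=2^{\abs{\Sf{D}}}-B+(b_1-2)\FRFlinA{\FFf{F}}{\Sf{D}}.
\]

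Third, I would prove this identity by induction on $\abs{\Sf{D}}$, mirroring the induction in Theorem~\ref{TConstIrr}. The base case $\Sf{D}=()$ is trivial, since $\FRFlinA{\FFf{F}}{()}=\FRFlinA{\FFf{G}}{()}=0$ and $B=1$. For the step, appending a digit $d$ transforms the data through the recursion inherited from Theorem~\ref{TLinDirForm}, namely $\FRFlinA{\FFf{F}}{\Sf{D}\cdot(d)}=a_d2^{\abs{\Sf{D}}}+b_d\FRFlinA{\FFf{F}}{\Sf{D}}$ with $B\mapsto b_dB$ (here $a_0=0,a_1=1$), and correspondingly for $\FFf{G}$ at the swapped digit $\sigma(d)$ with the coefficients $a_0''=0,a_1''=1,b_0''=b_1,b_1''=b_0$. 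Treating the cases $d=0$ and $d=1$ separately and inserting the induction hypothesis reproduces the target identity after elementary rearrangement, the $2^{\abs{\Sf{D}}+1}$ terms cancelling in the same pattern as in Theorem~\ref{TConstIrr}. I expect this two-case verification to be the only place demanding care: it is routine but must be carried out for both values of $d$, and the position of the appended constant differs between $\FFf{F}$ and $\FFf{G}$, which is exactly what the invariant $\FRFlinB{\FFf{G}}{\Sf{D}'}=\FRFlinB{\FFf{F}}{\Sf{D}}$ is designed to absorb. Continuity together with agreement on the dense set of periodic points then extends Eqn.~(\ref{EOrderIrr}) to all of $\Z_2$.

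Finally, for the ``In particular'' part, when all coefficients lie in $\Q\cap\Z_2$ the affine map of Eqn.~(\ref{EOrderIrr}) has coefficients in $\Q\cap\Z_2$ with invertible leading factor $2-b_0$, and its inverse $k\mapsto((2-b_0)k-1)/(b_1-2)$ is again rational-affine with invertible leading factor $b_1-2$ (both $b_0,b_1$ being odd); since both the map and its inverse are rational-valued self-maps of $\Z_2$, it restricts to a bijection of $\Q\cap\Z_2$, i.e. $\pi_{\FFf{F},\sigma,\FFf{G}}(\Q\cap\Z_2)=\Q\cap\Z_2$. The equivalence of ultimate periodicity then follows from the reasoning of Lemma~\ref{LPeriodicOn}: its ``In particular'' argument applies verbatim with $\pi_{\FFf{F},\sigma,\FFf{G}}$ in place of $\pi_{\FFf{F},\FFf{G}}$, because the digit permutation $\sigma$ fixes the set of ultimately periodic sequences, giving $\pi_{\FFf{F},\sigma,\FFf{G}}(\FFFSoUPP(\FFf{F}))=\FFFSoUPP(\FFf{G})$; combining this with $\pi_{\FFf{F},\sigma,\FFf{G}}(\Q\cap\Z_2)=\Q\cap\Z_2$ yields $\FFPultimatelyperiodicon{\Q\cap\Z_2}(\FFf{F})\Leftrightarrow\FFPultimatelyperiodicon{\Q\cap\Z_2}(\FFf{G})$.
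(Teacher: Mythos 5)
Your proposal is correct and follows essentially the same route as the paper: reduce to the dense set of purely periodic points by continuity, apply Corollary~\ref{CPerFormula} to both $m$ and $n=\pi_{\FFf{F},\sigma,\FFf{G}}(m)$, observe $\FRFlinB{\FFf{G}}{\sigma(\Sf{D})}=\FRFlinB{\FFf{F}}{\Sf{D}}$, and reduce to the polynomial identity $(2-b_0)\FRFlinA{\FFf{G}}{\sigma(\Sf{D})}=2^{\abs{\Sf{D}}}-\FRFlinB{\FFf{F}}{\Sf{D}}+(b_1-2)\FRFlinA{\FFf{F}}{\Sf{D}}$. The only (harmless) divergence is that you re-prove this identity by a fresh induction on $\abs{\Sf{D}}$ using the append-a-digit recursion, whereas the paper rewrites $\FRFlinA{\FFf{F}}{\Sf{D}}$ and $\FRFlinA{\FFf{G}}{\sigma(\Sf{D})}$ as the sums $S_o$ and $S_e$ from the proof of Theorem~\ref{TConstIrr} and observes that the identity is literally Eqn.~(\ref{EConstIrrFinalGoal}), already established there.
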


\noindent
Note that we actually need a slight generalization of Lemma~\ref{LPeriodicOn} to account for the permutation $\sigma$ in $\pi_{\FFf{F},\sigma,\FFf{G}}$. The proof can easily be adapted to show that the lemma also holds if $\pi_{\FFf{F},\FFf{G}}$ is replaced by $\pi_{\FFf{F},\sigma,\FFf{G}}$, if the set $B$ from the assumptions is stable under $\sigma$, i.e. if $\sigma(B)=B$. For $B\ce\set{\Sf{S}\in\CoSequences(\SPboundedby{\intintuptoex{p}})\mid\SPperiodic(\Sf{S})}$, $B\ce\set{\Sf{S}\in\CoSequences(\SPboundedby{\intintuptoex{p}})\mid\SPultimatelyperiodic(\Sf{S})}$, and $B\ce\set{\Sf{S}\in\CoSequences(\SPboundedby{\intintuptoex{p}})\mid\SPaperiodic(\Sf{S})}$ this is true for every $\sigma$ which implies that the ``In particular'' part of the lemma holds for $\pi_{\FFf{F},\sigma,\FFf{G}}$ without additional assumptions.

\begin{proof}[Proof of Theorem~\ref{TOrderIrr}]
As we did in the proof of Theorem~\ref{TConstIrr}, we again only consider a dense subset of $\Z_2$ (clearly, $\pi_{\FFf{F},\sigma,\FFf{G}}$ and the right-hand side of Eqn.~(\ref{EOrderIrr}) are also continuous functions) which will again be the set of all $2$-adic integers which have a periodic $\FFf{F}$-digit expansion.

Let $\Sf{D}\in\CoSequences(\SPboundedby{\intintuptoex{2}},\neg\SPempty,\SPfinite)$, and $m,n$ the unique elements of $\Z_p$ satisfying
\begin{align}
\FFFDT(\FFf{F})[m]&=\Sf{D}^\infty\\
\FFFDT(\FFf{G})[n]&=\sigma(\Sf{D}^\infty)
\end{align}
(cf. Lemma~\ref{LInfBlDTCompl}). Then, $\pi_{\FFf{F},\sigma,\FFf{G}}(m)={\psi_\FFf{G}}^{-1}\circ\sigma\circ\psi_\FFf{F}(m)=n$ by definition of $\pi_{\FFf{F},\sigma,\FFf{G}}(n)$, and
\begin{align}
m&=\frac{\FRFlinA{\FFf{F}}{\Sf{D}}}{2^{\abs{\Sf{D}}}-\FRFlinB{\FFf{F}}{\Sf{D}}}\\
n&=\frac{\FRFlinA{\FFf{G}}{\sigma(\Sf{D})}}{2^{\abs{\Sf{D}}}-\FRFlinB{\FFf{G}}{\sigma(\Sf{D})}}
\end{align}
by Corollary~\ref{CPerFormula}. Note that,
\begin{align}
\FRFlinB{\FFf{F}}{\Sf{D}}&=b_0^{\SFcount{\Sf{D}}{0}}b_1^{\SFcount{\Sf{D}}{1}}=b_1^{\SFcount{\sigma(\Sf{D})}{0}}b_0^{\SFcount{\sigma(\Sf{D})}{1}}=\FRFlinB{\FFf{G}}{\sigma(\Sf{D})}.
\end{align}
We are thus left to show that
\begin{align}
&\frac{1+m(b_1-2)}{2-b_0}=n
\intertext{and respectively}
\label{EOrderIrrGoal}
&2^{\abs{\Sf{D}}}-{b_0}^e{b_1}^o+(b_0-2)\FRFlinA{\FFf{G}}{\sigma(\Sf{D})}+(b_1-2)\FRFlinA{\FFf{F}}{\Sf{D}}=0
\end{align}
where
\begin{align}
e&\ce\SFcount{\Sf{D}}{0}\\
o&\ce\SFcount{\Sf{D}}{1}.
\end{align}
As in the proof of Theorem~\ref{TConstIrr} we compute
\begin{align}
\FRFlinA{\FFf{F}}{\Sf{D}}
&=\sum_{i=0}^{o-1}2^{O_i}b_0^{e-O_i+i}b_1^{o-i-1}\\
\FRFlinA{\FFf{G}}{\sigma(\Sf{D})}
&=\sum_{i=0}^{e-1}2^{E_i}b_0^{e-i-1}b_1^{o-E_i+i}
\intertext{where}
E_i&\ce\SFposition{\Sf{D}}{0}[i]\text{ for all $i\in\intintuptoex{e}$}\\
O_i&\ce\SFposition{\Sf{D}}{1}[i]\text{ for all $i\in\intintuptoex{o}$.}
\end{align}
Thus, our goal Eqn.~(\ref{EOrderIrrGoal}) is equivalent to
\begin{align}
2^{\abs{\Sf{D}}}-{b_0}^e{b_1}^o+(b_0-2)\sum_{i=0}^{e-1}2^{E_i}b_0^{e-i-1}b_1^{o-E_i+i}+(b_1-2)\sum_{i=0}^{o-1}2^{O_i}b_0^{e-O_i+i}b_1^{o-i-1}=0
\end{align}
which is identical to Eqn.~(\ref{EConstIrrFinalGoal}) which we already proved.
\end{proof}

\noindent
As we did after Theorem~\ref{TConstIrr}, we again formulate a conjecture on how to generalize the previous theorem.

\begin{conjecture}
\label{CoOrderIrr}
Let $2\leq p\in\N$, $\sigma:\intintuptoex{p}\to\intintuptoex{p}$ a bijective function, and $\FFf{F}=(b_0x,\ldots,b_{p-1}x),\FFf{G}=(b_{\sigma(0)}x,\ldots,b_{\sigma(p-1)}x)\in\SoSystems{p}(\FFPlinearpolynomial)$. Then, $\pi_{\FFf{F},\sigma,\FFf{G}}(\Q\cap\Z_p)=\Q\cap\Z_p$ and consequently,
\begin{align}
\FFPultimatelyperiodicon{\Q\cap\Z_p}(\FFf{F})\Leftrightarrow\FFPultimatelyperiodicon{\Q\cap\Z_p}(\FFf{G})
\end{align}
by the ``In particular'' part of (the mentioned generalization of) Lemma~\ref{LPeriodicOn}.
\end{conjecture}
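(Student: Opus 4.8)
The entire content of the statement is the set identity $\pi_{\FFf{F},\sigma,\FFf{G}}(\Q\cap\Z_p)=\Q\cap\Z_p$; once this is secured, the equivalence of ultimate periodicity on $\Q\cap\Z_p$ follows exactly as in Theorem~\ref{TOrderIrr} from the ($\sigma$-adapted) ``In particular'' part of Lemma~\ref{LPeriodicOn}. So the plan is to prove that $\pi\ce\pi_{\FFf{F},\sigma,\FFf{G}}$ maps $\Q\cap\Z_p$ onto itself. Since $\pi^{-1}=\pi_{\FFf{G},\sigma^{-1},\FFf{F}}$ has the same shape (a relabelling of branches by $\sigma^{-1}$), it suffices to prove the inclusion $\pi(\Q\cap\Z_p)\subseteq\Q\cap\Z_p$ and then apply the identical argument to $\pi^{-1}$. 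First I would replace $\FFf{F}$ and $\FFf{G}$ by their weak-canonical representatives (same digit tables, same $\psi_\FFf{F},\psi_\FFf{G}$, same $\pi$); passing to weak canonical form keeps the linear coefficients and only fixes the constants per residue class, so Corollary~\ref{CPerFormula} becomes applicable. I would record three structural facts: (i) $\pi$ is $1$-Lipschitz, since $m\equiv m'\modulus{p^k}$ iff the first $k$ $\FFf{F}$-digits agree iff (applying $\sigma$ digit-wise) the first $k$ $\sigma$-images agree iff $\pi(m)\equiv\pi(m')\modulus{p^k}$, using the block property of $\FFf{F}$ and $\FFf{G}$; (ii) because $\sigma$ acts position-wise and commutes with the shift, $\pi$ maps $\FFFSoPP(\FFf{F}),\FFFSoUPP(\FFf{F}),\FFFSoAPP(\FFf{F})$ bijectively onto the corresponding sets for $\FFf{G}$; and (iii) the intertwining $\pi\circ\FFf{F}=\FFf{G}\circ\pi$ holds (combine Lemma~\ref{LPermProp}(4) with $\sigma$ commuting with the shift). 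I would also note the combinatorial identity $\SFcount{\sigma(\Sf{D})}{s}=\SFcount{\Sf{D}}{\sigma^{-1}(s)}$, reading $\FFf{G}$ so that its $\sigma(r)$-th branch carries the coefficient $b_r$ (the two index conventions coincide when $\sigma$ is an involution, as for $p=2$), whence $\FRFlinB{\FFf{G}}{\sigma(\Sf{D})}=\FRFlinB{\FFf{F}}{\Sf{D}}$ for every finite $\Sf{D}$.

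Next I would dispatch the (ultimately) periodic rationals completely. For a purely periodic point $m$ with period $\Sf{D}$, Lemma~\ref{LFRF}, Theorem~\ref{TLinDirForm} and Corollary~\ref{CPerFormula} give $m=\FRFlinA{\FFf{F}}{\Sf{D}}\big/(p^{\abs{\Sf{D}}}-\FRFlinB{\FFf{F}}{\Sf{D}})$ and, using the denominator identity above, $\pi(m)=\FRFlinA{\FFf{G}}{\sigma(\Sf{D})}\big/(p^{\abs{\Sf{D}}}-\FRFlinB{\FFf{F}}{\Sf{D}})$, with the analogous $\SFinitial{}/\SFperiodic{}$-formula for ultimately periodic points. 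Both expressions lie in $\Q\cap\Z_p$ once the coefficients are in $\Q\cap\Z_p$. In fact Corollary~\ref{CPerFormula} already shows $\FFFSoUPP(\FFf{F}),\FFFSoUPP(\FFf{G})\subseteq\Q\cap\Z_p$, and by fact (ii) above $\pi$ carries $\FFFSoUPP(\FFf{F})$ bijectively onto $\FFFSoUPP(\FFf{G})$; hence the ultimately periodic part of $\Q\cap\Z_p$ is mapped into $\Q\cap\Z_p$ with no omissions. These points are dense in $\Z_p$ because $\FFf{F}$ has the block property.

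\textbf{The main obstacle} is the aperiodic rationals, and it is exactly here that the $p=2$ proof fails to generalize. Since every ultimately periodic point is rational, $\Q\cap\Z_p$ splits into the ultimately periodic points (handled) and the aperiodic rationals, so it remains to show that $\pi$ sends aperiodic rationals to (necessarily aperiodic) rationals. For $p=2$ this is free: writing $S_r(\Sf{D})\ce\sum_{i\in\SFposition{\Sf{D}}{r}}p^i\prod_{j>i}b_{\Sf{D}[j]}$, one proves by induction on $\abs{\Sf{D}}$ the identity $p^{\abs{\Sf{D}}}-\FRFlinB{\FFf{F}}{\Sf{D}}=\sum_{r=0}^{p-1}(p-b_r)S_r(\Sf{D})$, which generalizes Eqn.~\eqref{EConstIrrFinalGoal}. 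With $p=2$ there are only two structure sums $S_0,S_1$, and the three linear relations expressing $mD$, $\pi(m)D$ and $D\ce p^{\abs{\Sf{D}}}-\FRFlinB{\FFf{F}}{\Sf{D}}$ in terms of them admit the elimination of $S_0,S_1$, forcing a single M\"obius relation $\pi(m)=\phi(m)$ defined over $\Q$ (this is Eqn.~\eqref{EConstIrr}); $\phi$ then extends by the $1$-Lipschitz property to all of $\Z_2$ and visibly preserves $\Q\cap\Z_2$, so the aperiodic points are covered automatically. For $p\geq3$, however, there are $p$ structure sums $S_0,\dots,S_{p-1}$ but still only three relations, so the $S_r$ cannot be eliminated and $m$ does not determine $\pi(m)$ through any fixed rational map — this is precisely the failure of a ``simple formula'' noted before the statement. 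Consequently the $1$-Lipschitz property alone cannot transport rationality from the dense periodic set to the aperiodic rationals.

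To close the gap I would attack the aperiodic case directly through the intertwining $\pi\circ\FFf{F}=\FFf{G}\circ\pi$ together with denominator control: for rational $m$ the whole $\FFf{F}$-orbit is rational, and the prime factors of the occurring denominators lie over the prime factors of $p$ (these are exactly the valuations tracked in Figures~\ref{FOrbitsA}--\ref{FOrbitsB}); the aim would be to show that the $\FFf{G}$-orbit of $\pi(m)$ inherits the same controlled denominator behaviour, forcing $\pi(m)\in\Q$, and symmetrically for $\pi^{-1}$. I expect this denominator control in the aperiodic regime to be the genuine obstruction: it is entangled with the same phenomena that make the Collatz-type ultimate-periodicity questions hard, which is why the statement remains a conjecture for $p\geq3$ while the periodic skeleton of the argument goes through unconditionally.
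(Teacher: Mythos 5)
The statement you are attempting is stated in the paper as a \emph{conjecture}; the paper supplies no proof, and its surrounding discussion explicitly concedes that ``for $p\geq3$ the situation becomes more difficult as there doesn't seem to exist a simple formula for $\pi_{\FFf{F},\FFf{G}}(n)$ in general.'' Your proposal is consistent with that assessment: it does not close the conjecture, and you say so. What you do establish is correct. The reduction to $\pi(\Q\cap\Z_p)\subseteq\Q\cap\Z_p$ together with the symmetric statement for $\pi^{-1}=\pi_{\FFf{G},\sigma^{-1},\FFf{F}}$ is sound; the treatment of the ultimately periodic points via Corollary~\ref{CPerFormula} works (passing to weak canonical form only introduces constants in $\intintuptoex{p}$ and keeps the same linear coefficients), since $\sigma$ acts entry-wise and therefore carries $\FFFSoUPP(\FFf{F})$ bijectively onto $\FFFSoUPP(\FFf{G})$, both of which lie in $\Q\cap\Z_p$ once the coefficients are rational; and your identity $p^{\abs{\Sf{D}}}-\FRFlinB{\FFf{F}}{\Sf{D}}=\sum_{r=0}^{p-1}(p-b_r)S_r(\Sf{D})$ is a correct generalization of Eqn.~(\ref{EConstIrrFinalGoal}) (a one-line induction on $\abs{\Sf{D}}$ confirms it).

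The genuine gap is exactly where you locate it: the aperiodic rational points. For $p=2$ the two structure sums can be eliminated from the relations expressing $m$, $\pi(m)$ and the common denominator, which yields the M\"obius formula of Theorem~\ref{TOrderIrr} and then extends to all of $\Z_2$ by continuity; for $p\geq3$ there are $p$ structure sums and no such elimination, so density of the ultimately periodic points plus the $1$-Lipschitz property of $\pi$ does not transport rationality to the aperiodic rationals. Your fallback --- controlling denominators along orbits via the intertwining $\pi\circ\FFf{F}=\FFf{G}\circ\pi$ --- is a plausible direction but is not carried out, and it runs into the same denominator-growth phenomena that keep Conjectures~\ref{CoPeriodsC}--\ref{CoPeriodsF} open. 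Two technical points to fix if you pursue this: the conjecture as stated does not actually require $b_0,\ldots,b_{p-1}\in\Q\cap\Z_p$ (compare Conjecture~\ref{CoConstIrr}), yet every step of your argument needs it; and the convention $\FFf{G}[r]=b_{\sigma(r)}x$ combined with $\pi_{\FFf{F},\sigma,\FFf{G}}$ gives $\FRFlinB{\FFf{G}}{\sigma(\Sf{D})}=\prod_{i}b_{\sigma^2(\Sf{D}[i])}$, which equals $\FRFlinB{\FFf{F}}{\Sf{D}}$ only when $\sigma$ is an involution, so for general $\sigma$ one of the two occurrences of $\sigma$ must be replaced by $\sigma^{-1}$ for the denominators to match.
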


We give an example which combines the statements of Theorem~\ref{TConstIrr} and Theorem~\ref{TOrderIrr} to illustrate how to compute $\pi_{\FFf{F},\sigma,\FFf{G}}(n)$ in a general situation.

\begin{example}
\label{EConstOrderIrr}
Let $\FFf{F}_1\ce(5x+4,-7x+19)$, $\FFf{F}_2\ce(5x,-7x+1)$, $\FFf{F}_3\ce(-7x,5x+1)$, $\FFf{F}_4\ce(-7x-12,5x+3)$, and $\sigma:\set{0,1}\to\set{0,1}$, $0\mapsto1$, $1\mapsto0$. Then, Theorem~\ref{TConstIrr} and Theorem~\ref{TOrderIrr} imply that
\begin{align}
\pi_{\FFf{F}_1,\FFf{F}_2}(n)&=\frac{4+3n}{93},&&
\pi_{\FFf{F}_2,\sigma,\FFf{F}_3}(n)&=\frac{-1+9n}{3},&&
\pi_{\FFf{F}_3,\FFf{F}_4}(n)&=\frac{-12-9n}{9}
\end{align}
and consequently
\begin{align}
\pi_{\FFf{F}_1,\sigma,\FFf{F}_4}(n)&=\pi_{\FFf{F}_3,\FFf{F}_4}(n)\circ\pi_{\FFf{F}_2,\sigma,\FFf{F}_3}(n)\circ\pi_{\FFf{F}_1,\FFf{F}_2}(n)=\frac{-35-3n}{31}.
\end{align}
Indeed, if $m\ce\frac{156065447}{59288775}$ and $n\ce-\frac{847767822}{612650675}$, then $\frac{-35-3m}{31}=n$ and
\begin{align}
\FFf{F}_1(m)&=(1,0,1,0)\cdot(1,1,0,1,0,0,0,1)^\infty\\
\FFf{F}_4(n)&=(0,1,0,1)\cdot(0,0,1,0,1,1,1,0)^\infty.
\end{align}
\end{example}

\myparagraphtoc{The contractive and expansive cases.}
In the previous subsections we have proven for $p=2$ and conjectured for $p\geq3$ that neither the constant coefficients nor the order of the linear coefficients of the polynomials defining a $(\Q\cap\Z_p)$-linear-polynomial $p$-adic system matter when it comes to the question of whether the $p$-adic system is ultimately periodic on $\Q\cap\Z_p$. These observations are closely related to the conjectures \ref{CoPeriodsC}~--~\ref{CoPeriodsF} which essentially state that the answer to this question only depends on the product of the linear coefficients. In the special cases where the $(\Q\cap\Z_p)$-linear-polynomial $p$-adic system is either contractive or expansive, we can even go further and prove some of the conjectures under these additional constraints. Among other things the following theorem characterizes contractive and expansive $(\Q\cap\Z_p)$-linear-polynomial $p$-adic systems.

\begin{theorem}
\label{TLinPolyCEM}
Let $2\leq p\in\N$, $\FFf{F}=(a_0+b_0x,\ldots,a_{p-1}+b_{p-1}x)\in\SoSystems{p}(\FFPlinearpolynomialcoefficients{\Q\cap\Z_p})$. Then,

\begin{theoremtable}[rlX]
\theoremitem{(1)}&$\FFPcontractive(\FFf{F})$&$\hspace{-3.pt}\quad\Leftrightarrow\quad\fa r\in\intintuptoex{p}:\abs{b_r}<p$\tabularnewline
\theoremitem{(2)}&$\FFPexpansive(\FFf{F})$&$\hspace{-3.pt}\quad\Leftrightarrow\quad\fa r\in\intintuptoex{p}:\abs{b_r}>p$.\tabularnewline
\theoremitem{(3)}&$\FFPmixed(\FFf{F})$&$\hspace{-3.pt}\quad\Leftrightarrow\quad\ex r\in\intintuptoex{p}:\abs{b_r}<p\land\ex r\in\intintuptoex{p}:\abs{b_r}>p$\tabularnewline
\theoremitem{(4)}&$\FFPcontractsdenominators(\FFf{F})$&$\hspace{-3.pt}\quad\Leftrightarrow\quad\fa r\in\intintuptoex{p}:a_r,b_r\in\Z$\tabularnewline
\theoremitem{(5)}&$\neg\FFPexpandsdenominators(\FFf{F})$.
\end{theoremtable}
\end{theorem}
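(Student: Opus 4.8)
The plan is to split the five claims into two groups: (1)--(3) concern the archimedean size $\abs{\cdot}$ of rational iterates and will follow from elementary two-sided estimates, while (4)--(5) concern denominators and will follow from an integer-input computation and a fixed-point argument. Throughout I would work from the explicit branch form: for $n\in\Q\cap\Z_p$ with $r\ce n\modulo p$, writing $d\ce(a_r+b_rn)\modulo p\in\intintuptoex{p}$, one has $\FFf{F}(n)=(a_r+b_rn-d)/p$. Since $a_r,b_r\in\Q\cap\Z_p$, this already shows $\FFf{F}(\Q\cap\Z_p)\subseteq\Q\cap\Z_p$, so the closure clause in each predicate is automatic and may be suppressed. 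The one structural fact I would record first is a \emph{dichotomy}: as $\FFf{F}$ is a $p$-adic system, Theorem~\ref{TFuncSuit}~(2) makes each $\FFf{F}[r]$ $(p,r)$-suitable, so Lemma~\ref{LLinPolySuit}~(2) gives $\Gcd{p,b_r\modulo p}=1$; since the only rationals of absolute value $p$ are $\pm p$ (both satisfying $(\pm p)\modulo p=0$), we get $\abs{b_r}\neq p$, whence for each $r$ exactly one of $\abs{b_r}<p$, $\abs{b_r}>p$ holds.

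For (1) and (2) I would use $\abs{b_r}\abs{n}-\abs{a_r}-(p-1)\leq\abs{a_r+b_rn-d}\leq\abs{b_r}\abs{n}+\abs{a_r}+(p-1)$, valid because $0\leq d\leq p-1$. For ``$\Leftarrow$'' of (1), assuming $\abs{b_r}<p$ for all $r$, the upper estimate gives $\abs{\FFf{F}(n)}<\abs{n}$ once $\abs{n}>(\abs{a_r}+p-1)/(p-\abs{b_r})$; taking $M$ to be the maximum of these finitely many thresholds yields $\FFPcontractive(\FFf{F})$. For ``$\Rightarrow$'', if some $\abs{b_{r_0}}>p$ I restrict to the integers $r_0+pk$ ($k\in\N$), which lie in $\Q\cap\Z_p$, have $\modulo p$ equal to $r_0$, and attain arbitrarily large $\abs{\cdot}$; the lower estimate forces $\abs{\FFf{F}(n)}>\abs{n}$ for all large such $n$, contradicting contractivity. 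Claim (2) uses the same two estimates with the inequalities exchanged. Claim (3) is then formal: $\FFPmixed$ is $\neg\FFPcontractive\wedge\neg\FFPexpansive$, and (1), (2) together with the dichotomy rewrite this as $(\ex r:\abs{b_r}>p)\wedge(\ex r:\abs{b_r}<p)$, exactly the stated right-hand side.

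For (4) I would use only integer inputs. If $\FFf{F}$ contracts denominators, every $n\in\Z$ (denominator $1$) maps to something of denominator $\leq1$, i.e. $\FFf{F}(n)\in\Z$; from $p\FFf{F}(n)=a_r+b_rn-d$ with $d\in\Z$ this forces $a_r+b_rn\in\Z$ for all integers $n\equiv r\modulus{p}$. Taking $n=r$ and $n=r+p$ and subtracting gives $p\,b_r\in\Z$; since $b_r\in\Z_p$ its reduced denominator is coprime to $p$ yet divides $p$, hence equals $1$, so $b_r\in\Z$, and then $a_r=(a_r+b_rr)-b_rr\in\Z$. Conversely, if all $a_r,b_r\in\Z$ and $n=a/b$ is in lowest terms with $b\in\N$ (automatically $\gcd(b,p)=1$), then $p\FFf{F}(n)=(a_rb+b_ra-db)/b$ has integer numerator, so $\FFf{F}(n)=(\text{integer})/(pb)$; its reduced denominator divides $pb$ and is coprime to $p$ (as $\FFf{F}(n)\in\Z_p$), hence divides $b$ and is $\leq b$, which is denominator contraction.

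For (5) I would exhibit a single rational input whose denominator is not increased, namely a fixed point. Since all predicates here depend only on the ordinary function $\FFf{F}:\Z_p\to\Z_p$, which is $\sim_p$-invariant, I may pass to the weak canonical form; for a $(\Q\cap\Z_p)$-linear-polynomial system this is again $(\Q\cap\Z_p)$-linear-polynomial (each branch becomes $(a_r-d_r)+b_rx$), so WLOG $\FFf{F}\in\SoSystems{p}(\FFPweakcanonicalform,\FFPlinearpolynomial)$. By Lemma~\ref{LInfBlDTCompl} there is a unique $n^*$ with $\FFFDT(\FFf{F})[n^*]=(0)^\infty$; since applying $\FFf{F}$ shifts the digit expansion (Lemma~\ref{LPermProp}~(4)), $\FFFDT(\FFf{F})[\FFf{F}(n^*)]=(0)^\infty=\FFFDT(\FFf{F})[n^*]$, so $\FFf{F}(n^*)=n^*$ by Lemma~\ref{LInfBlDTEq}, and the ``In particular'' part of Corollary~\ref{CPerFormula} gives $n^*\in\Q\cap\Z_p$. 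Then $\FFf{F}(n^*)=n^*$ has the same reduced denominator, so the ``$d>b$'' clause of $\FFPexpandsdenominators$ fails at $a/b=n^*$, yielding $\neg\FFPexpandsdenominators(\FFf{F})$. I expect the only real friction to be bookkeeping rather than conceptual: making the threshold $M$ in (1)/(2) uniform across the $p$ residue classes, and, in (5), cleanly justifying the reduction to weak canonical form and the rationality of the fixed point (both supplied by $\sim_p$-invariance and Corollary~\ref{CPerFormula}).
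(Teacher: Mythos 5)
Your proof is correct, and for parts (1)--(3) it coincides with the paper's argument: the same dichotomy $\abs{b_r}\neq p$ (from Theorem~\ref{TFuncSuit}~(2) and Lemma~\ref{LLinPolySuit}~(2)), the same two-sided estimates with thresholds of the form $(\abs{a_r}+p-1)/\abs{\abs{b_r}-p}$, and the same integer test points $r+pk$ for the converse directions. Parts (4) and (5), however, take genuinely different routes. For the forward direction of (4) the paper argues by contradiction, writing $a_r=a/b$, $b_r=c/d$ and using B\'ezout's Lemma in a two-case analysis ($d\neq1$ versus $d=1$) to manufacture an integer $n\equiv r\modulus{p}$ with $\FFf{F}(n)\in\Q\setminus\Z$; your argument is more economical --- evaluate at the two integers $n=r$ and $n=r+p$, subtract to get $pb_r\in\Z$, and use that the reduced denominator of $b_r\in\Q\cap\Z_p$ is coprime to $p$ --- and it avoids the case split entirely (your spelled-out ``$\Leftarrow$'', including the observation that a denominator dividing $pb$ and coprime to $p$ divides $b$, is also sound). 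For (5) the paper constructs, via two elementary number-theoretic facts plus B\'ezout, an explicit witness $n\in\Q\cap p\Z_p$ whose reduced denominator does not grow under $\FFf{F}$; you instead pass to a weak canonical form (correctly observing that the relevant predicates depend only on the ordinary function, hence are $\sim_p$-invariant, and that the weak canonical form of a $(\Q\cap\Z_p)$-linear-polynomial system stays in that class) and take the fixed point $n^*$ with $\FFFDT(\FFf{F})[n^*]=(0)^\infty$, which exists and is unique by Lemma~\ref{LInfBlDTCompl}, is fixed by Lemma~\ref{LInfBlDTEq}, and is rational by the ``In particular'' part of Corollary~\ref{CPerFormula}; since $\FFf{F}(n^*)=n^*$, its reduced denominator is trivially not increased, so the universally quantified clause of $\FFPexpandsdenominators$ fails. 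Your route buys brevity and conceptual clarity by reusing the periodic-point machinery already in place (and since $\FFFDT(\FFf{F})[n^*]$ begins with $0$, your witness $n^*$ even lies in $\Q\cap p\Z_p$, matching the paper's), while the paper's proof is self-contained and elementary, working with denominators directly and needing none of the block-property apparatus; both are complete proofs of the statement.
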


\begin{proof}
Let
\begin{align}
M\ce\max\set{\abs{\frac{\abs{a_r}+p-1}{\abs{b_r}-p}}\mid r\in\intintuptoex{p}}.
\end{align}
Note that $M$ is well-defined since $\abs{b_r}\neq p$ for all $r\in\intintuptoex{p}$ by Theorem~\ref{TFuncSuit}~(2) and Lemma~\ref{LLinPolySuit}~(2).

\noindent
\proofitem[\qquad\qquad\quad ]{(1), ``$\Rightarrow$''}%
Assume to the contrary that $\abs{b_r}>p$ for some $r\in\intintuptoex{p}$ and let $m\in\N$ such that
\begin{align}
m>\frac{\abs{a_r}+p-1-r(\abs{b_r}-p)}{p(\abs{b_r}-p)}\quad\left(\Leftrightarrow\frac{\abs{b_r}(mp+r)-\abs{a_r}-p+1}{p}>mp+r\right).
\end{align}
Then,
\begin{align}
\abs{\FFf{F}(mp+r)}
&=\abs{\frac{a_r+b_r(mp+r)-(a_r+b_r(mp+r))\modulo p}{p}}\\
&\geq\frac{\abs{b_r}(mp+r)-\abs{a_r}-p+1}{p}\\
&>mp+r
=\abs{mp+r}
\end{align}
and hence $\FFf{F}$ cannot be contractive.

\noindent
\proofitem[\qquad\qquad\quad ]{(1), ``$\Leftarrow$''}%
We compute
\begin{align}
\abs{n}>M
&\Rightarrow\abs{n}\abs{\abs{b_{n\modulo p}}-p}>\abs{a_{n\modulo p}}+p-1\\
&\Leftrightarrow\abs{n}(p-\abs{b_{n\modulo p}})>\abs{a_{n\modulo p}}+p-1\\
&\Leftrightarrow\abs{n}p>\abs{a_{n\modulo p}}+\abs{b_{n\modulo p}n}+p-1\\
&\Rightarrow\abs{n}p>\abs{a_{n\modulo p}}+\abs{b_{n\modulo p}n}+(a_{n\modulo p}+b_{n\modulo p}n)\modulo p\\
&\Rightarrow\abs{n}>\abs{\frac{a_{n\modulo p}+b_{n\modulo p}n-(a_{n\modulo p}+b_{n\modulo p}n)\modulo p}{p}}=\FFf{F}(n).
\end{align}

\noindent
\proofitem[\qquad\qquad\quad ]{(2), ``$\Rightarrow$''}%
Assume to the contrary that $\abs{b_r}<p$ for some $r\in\intintuptoex{p}$ and let $m\in\N$ such that
\begin{align}
m>\frac{\abs{a_r}+p-1-r(p-\abs{b_r})}{p(p-\abs{b_r})}\quad \left(\Leftrightarrow\frac{\abs{b_r}(mp+r)+\abs{a_r}+p-1}{p}<mp+r\right).
\end{align}
Then,
\begin{align}
\abs{\FFf{F}(mp+r)}
&=\abs{\frac{a_r+b_r(mp+r)-(a_r+b_r(mp+r))\modulo p}{p}}\\
&\leq\frac{\abs{b_r}(mp+r)+\abs{a_r}+p-1}{p}\\
&<mp+r
=\abs{mp+r}
\end{align}
and hence $\FFf{F}$ cannot be expansive.

\noindent
\proofitem[\qquad\qquad\quad ]{(2), ``$\Leftarrow$''}%
We compute
\begin{align}
\abs{n}>M
&\Rightarrow\abs{n}\abs{\abs{b_{n\modulo p}}-p}>\abs{a_{n\modulo p}}+p-1\\
&\Leftrightarrow\abs{n}(\abs{b_{n\modulo p}}-p)>\abs{a_{n\modulo p}}+p-1\\
&\Leftrightarrow\abs{n}p<-\abs{a_{n\modulo p}}+\abs{b_{n\modulo p}n}-p+1\\
&\Rightarrow\abs{n}p<-\abs{a_{n\modulo p}}+\abs{b_{n\modulo p}n}-(a_{n\modulo p}+b_{n\modulo p}n)\modulo p\\
&\Rightarrow\abs{n}<\abs{\frac{a_{n\modulo p}+b_{n\modulo p}n-(a_{n\modulo p}+b_{n\modulo p}n)\modulo p}{p}}=\FFf{F}(n).
\end{align}
Note that we also could have referred to Theorem~\ref{TPolyExp} as (2), ``$\Leftarrow$'' is a special case of the theorem.

\noindent
\proofitem[\qquad\qquad\quad ]{(3)}%
Follows directly from (1) and (2).

\noindent
\proofitem[\qquad\qquad\quad ]{(4), ``$\Rightarrow$''}%
Assume to the contrary that $a_r=a/b$ with $(a,b)\in\Z\times\N$ coprime, $b_r=c/d$ with $(c,d)\in\Z\times\N$ coprime for some $r\in\intintuptoex{p}$, and $(b,d)\neq(1,1)$. Since $a/b=a_r\in\Z_p$ and $a$ and $b$ are coprime it follows that also $p$ and $b$ are coprime. Thus, there are $x,y\in\Z$ such that $xp+yb=1$ by B\'{e}zout's Lemma.

First we consider the case $d\neq1$. Let $n\ce r-rxp=ryb\in\Z$. Then, $n\in(r+p\Z)\cap b\Z$ and we get
\begin{align}
\FFf{F}(n)
&=\frac{an/b+c/d-(an/b+c/d)\modulo p}{p}
\end{align}
with $an/b-(an/b+c/d)\modulo p\in\Z$ and $c/d\in\Q\setminus\Z$ (since $d\neq1$). Thus, $\FFf{F}(n)\in\Q\setminus\Z$ but $n\in\Z$ which contradicts $\FFPcontractsdenominators(\FFf{F})$.

For the case $d=1$ let $n\ce r-rxp+p=ryb+p$.  Then, $n\in(r+p\Z)$ but $n\notin b\Z$ (since $b\neq 1$ and $p$ and $b$ are coprime) and we get $c/d-(an/b+c/d)\modulo p\in\Z$ and $an/b\in\Q\setminus\Z$ (since $n\notin b\Z$ and $a$ and $b$ are coprime). Thus, $\FFf{F}(n)\in\Q\setminus\Z$ but $n\in\Z$, which again contradicts $\FFPcontractsdenominators(\FFf{F})$.

\noindent
\proofitem[\qquad\qquad\quad ]{(4), ``$\Leftarrow$''}%
Follows directly from the definitions.

\noindent
\proofitem[\qquad\qquad\quad ]{(5)}%
Let $(a,b)\in\Z\times\N$ coprime with $a_0=a/b$ and $(c,d)\in\Z\times\N$ coprime with $b_0=c/d$. Furthermore, let $u,v\in\Z$ such that
\begin{align}
&apu+c\equiv0\modulus{\gcd(b,d)}\\
&\gcd(\gcd(b,d)v+u,d/\gcd(b,d))=1.
\end{align}
Such $u$ and $v$ exist because of two basic facts from elementary number theory, namely
\begin{align}
\fa m\in\N:\fa a,b\in\Z:&\quad\left(\ex x\in\Z:a+bx\equiv0\modulus{m}\quad\Leftrightarrow\quad\gcd(m,b)\mid a\right)\\
\fa a,b\in\Z:\fa 0\neq c\in\Z:&\quad\left(\ex x\in\Z:\gcd(a+bx,c)=1\quad\Leftrightarrow\quad\gcd(a,b,c)=1\right).
\end{align}
To see that the conditions of these two statements are satisfied, note that $\gcd(p,b)=1$ by Lemma~\ref{LLinPolySuit}, and hence $\gcd(\gcd(b,d),ap)=1\mid c$, and that $\gcd(\gcd(b,d),u)=1$ (because the equation $c+ux\equiv0\modulus{\gcd(b,d)}$ has a solution, viz. $ap$, and $\gcd(\gcd(b,d),c)=1$) and hence $\gcd(u,\gcd(b,d),d/\gcd(b,d))=1$. Using $u$ and $v$ we set
\begin{align}
n&\ce\frac{p(u+v\gcd(b,d))b/\gcd(b,d)}{d/\gcd(b,d)}\in\Q\cap p\Z_p.
\end{align}
By definition of $u$ and $v$ we get $\gcd(p(u+v\gcd(b,d))b/\gcd(b,d),d/\gcd(b,d))=1$, which implies that $n$ as given above is in lowest terms. Since $n\modulo p=0$ we get
\begin{align}
\FFf{F}(n)
&=\frac{1}{p}\left(\frac{an}{b}+\frac{c}{d}-\left(\frac{an}{b}+\frac{c}{d}\right)\modulo p\right)\\
&=\frac{1}{p}\left(\frac{(apu+c)/\gcd(b,d)+apv}{d/\gcd(b,d)}-\left(\frac{an}{b}+\frac{c}{d}\right)\modulo p\right).
\end{align}
Thus, the denominator of $\FFf{F}(n)$ in lowest terms is at most (in absolute value) $d/\gcd(b,d)$ (since $\gcd(b,d)$ divides $apu+c$ by definition of $u$) which is the denominator of $n$ in lowest terms.
\end{proof}

\noindent
For contractive $\Z$-linear-polynomial- and for expansive $(\Q\cap\Z_p)$-linear-polynomial $p$-adic systems this settles the question of ultimate periodicity on $\Q\cap\Z_p$ (cf. Conjecture~\ref{CoPeriodsC}).

\begin{corollary}
\label{CLinPolyCEM}
Let $2\leq p\in\N$, $\FFf{F}=(a_0+b_0x,\ldots,a_{p-1}+b_{p-1}x)\in\SoSystems{p}(\FFPlinearpolynomialcoefficients{\Q\cap\Z_p})$. Then,

\begin{theoremtable}[rlX]
\theoremitem{(1)}&$\fa r\in\intintuptoex{p}:a_r,b_r\in\Z\land\abs{b_r}<p$&$\hspace{-3.pt}\quad\Rightarrow\quad\FFPultimatelyperiodicon{\Q\cap\Z_p}(\FFf{F})$\tabularnewline
\theoremitem{(2)}&$\fa r\in\intintuptoex{p}:\abs{b_r}>p$&$\hspace{-3.pt}\quad\Rightarrow\quad\neg\FFPultimatelyperiodicon{\Q\cap\Z_p}(\FFf{F})$.\tabularnewline[0.5\baselineskip]
\end{theoremtable}
\end{corollary}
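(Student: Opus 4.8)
The plan is to derive both parts directly from the characterization of contractive and expansive linear-polynomial systems in Theorem~\ref{TLinPolyCEM} together with the consequences for ultimate periodicity recorded in Lemma~\ref{LContrExp}. Throughout I would use that the set $\FFFSoUPP(\FFf{F})$ and the predicates $\FFPcontractive$, $\FFPcontractsdenominators$, $\FFPexpansive$ depend only on $\FFf{F}$ as an ordinary function, hence are invariant under $\sim_p$ (Lemma~\ref{LFFCanForm}); this lets me pass to a weak canonical form whenever a cited result requires one, noting that the weak canonical form of a $(\Q\cap\Z_p)$-linear-polynomial system is again $(\Q\cap\Z_p)$-linear-polynomial, since on each residue class $r+p\Z_p$ one only subtracts the constant integer $(a_r+b_r r)\modulo p$ from the constant coefficient.

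For part (1), first I would invoke Theorem~\ref{TLinPolyCEM}~(1) to get $\FFPcontractive(\FFf{F})$ from $\abs{b_r}<p$ for all $r\in\intintuptoex{p}$, and Theorem~\ref{TLinPolyCEM}~(4) to get $\FFPcontractsdenominators(\FFf{F})$ from $a_r,b_r\in\Z$ for all $r$. Lemma~\ref{LContrExp}~(2) then yields $\FFFSoUPP(\FFf{F})\cap\Q=\Q\cap\Z_p$. This alone is not enough, as it does not exclude ultimately periodic points in $\Z_p\setminus\Q$; to close this gap I would apply the last statement of Corollary~\ref{CPerFormula} (after passing to the weak canonical form), which gives $\FFFSoUPP(\FFf{F})\subseteq\Q\cap\Z_p$. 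Feeding $\FFFSoUPP(\FFf{F})\subseteq\Q$ into the ``In particular'' clause of Lemma~\ref{LContrExp}~(2) then produces $\FFPultimatelyperiodicon{\Q\cap\Z_p}(\FFf{F})$.

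Part (2) is shorter: from $\abs{b_r}>p$ for all $r$, Theorem~\ref{TLinPolyCEM}~(2) gives $\FFPexpansive(\FFf{F})$, and the ``In particular'' clause of Lemma~\ref{LContrExp}~(3) immediately gives $\neg\FFPultimatelyperiodicon{\Q\cap\Z_p}(\FFf{F})$, since expansiveness confines the rational ultimately periodic points to a bounded set while $\Q\cap\Z_p$ is unbounded.

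The only genuinely delicate point lies in part (1): Lemma~\ref{LContrExp}~(2) by itself establishes only the inclusion $\Q\cap\Z_p\subseteq\FFFSoUPP(\FFf{F})$, so the proof must supplement it with the reverse inclusion $\FFFSoUPP(\FFf{F})\subseteq\Q\cap\Z_p$ coming from the explicit rational formula of Corollary~\ref{CPerFormula}, and must justify the harmless switch to a weak canonical form so that Corollary~\ref{CPerFormula} applies. Once both inclusions are in hand, the equality $\FFFSoUPP(\FFf{F})=\Q\cap\Z_p$, i.e. $\FFPultimatelyperiodicon{\Q\cap\Z_p}(\FFf{F})$, is immediate, and no calculation beyond invoking the cited results is required.
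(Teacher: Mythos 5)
Your proposal is correct and follows exactly the route of the paper, whose proof is the one-line citation of Theorem~\ref{TLinPolyCEM}, Lemma~\ref{LContrExp}, and the ``In particular'' part of Corollary~\ref{CPerFormula}; you have merely spelled out the details, including the genuinely necessary step of combining $\Q\cap\Z_p\subseteq\FFFSoUPP(\FFf{F})$ from Lemma~\ref{LContrExp}~(2) with the reverse inclusion from Corollary~\ref{CPerFormula}, and the harmless passage to a weak canonical form.
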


\begin{proof}
Follows from Theorem~\ref{TLinPolyCEM}, Lemma~\ref{LContrExp}, and the ``In particular'' part of Corollary~\ref{CPerFormula}.
\end{proof}

\myparagraphtoc{The mixed case.}
There are two mixed cases to be considered: a $(\Q\cap\Z_p)$-linear-polynomial $p$-adic system $\FFf{F}$ could be of mixed type (i.e. $\FFPmixed(\FFf{F})$) or mixing denominators (i.e. $\FFPmixesdenominators(\FFf{F})$). To the best knowledge of the author the question for ultimate periodicity on $\Q\cap\Z_p$ has not been settled for even a single such $\FFf{F}$. The most famous example is of course given by $\FFf{F}_C=(x,3x+1)$ of the Collatz conjecture which is of mixed type by Theorem~\ref{TLinPolyCEM}~(3). While it appears to be completely out of reach to answer whether or not $\FFPultimatelyperiodicon{\Q\cap\Z_2}$ holds for $\FFf{F}_C$ at the moment, the general framework that is $p$-adic systems, might provide examples that are easier to tackle without being ``trivial'' like in the contractive (e.g. $\FFf{F}_p=(x,\ldots,x)$, standard base $p$) or expansive (e.g. $\FFf{F}=(3x,3x+1)$) cases which are settled by Corollary~\ref{CLinPolyCEM}. Before we list some of these examples we will summarize what has already been achieved in this and in the previous section.

\begin{corollary}
\label{CConExpSummary}
Let $2\leq p\in\N$ and $\FFf{F}\in\SoSystems{p}(\FFPpolynomialcoefficients{\Q\cap\Z_p})$. Then,

\begin{theoremtable}
\theoremitem{(1)}&If $\FFf{F}[r]=a_r+b_rx$ with $a_r,b_r\in\Z$ and $\abs{b_r}<p$ for all $r\in\intintuptoex{p}$, then $\FFPultimatelyperiodicon{\Q\cap\Z_p}(\FFf{F})$\tabularnewline
\theoremitem{(2)}&If either $\FFf{F}[r]$ is of degree $2$ or higher or $\FFf{F}[r]=a_r+b_rx$ with $\abs{b_r}>p$ for all $r\in\intintuptoex{p}$, then $\neg\FFPultimatelyperiodicon{\Q\cap\Z_p}(\FFf{F})$.
\end{theoremtable}
\end{corollary}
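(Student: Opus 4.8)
The plan is to derive both parts of Corollary~\ref{CConExpSummary} directly from the two structural results established earlier in this section, namely Theorem~\ref{TPolyExp} and Corollary~\ref{CLinPolyCEM}; no genuinely new argument is required beyond matching hypotheses, so I would present this as a short deduction.

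For part (1), I would first observe that the additional assumption ($\FFf{F}[r]=a_r+b_rx$ with $a_r,b_r\in\Z$ and $\abs{b_r}<p$ for every $r\in\intintuptoex{p}$) upgrades the ambient hypothesis $\FFf{F}\in\SoSystems{p}(\FFPpolynomialcoefficients{\Q\cap\Z_p})$ to $\FFf{F}\in\SoSystems{p}(\FFPlinearpolynomialcoefficients{\Q\cap\Z_p})$, since $\Z\subseteq\Q\cap\Z_p$ and each entry is now visibly linear. With this in hand the hypothesis of Corollary~\ref{CLinPolyCEM}~(1) is met verbatim, and that corollary yields $\FFPultimatelyperiodicon{\Q\cap\Z_p}(\FFf{F})$ at once.

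For part (2), I would note that the stated condition --- each $\FFf{F}[r]$ is either of degree $\geq 2$ or linear with $\abs{b_r}>p$ --- is exactly the hypothesis imposed on the first argument of Theorem~\ref{TPolyExp}. Since part (1) of that theorem constrains only $\FFf{F}$ (the auxiliary system $\FFf{G}$ enters solely in its part (2)), I may instantiate it with $\FFf{G}\ce\FFf{F}$, obtaining $\FFPexpansive(\FFf{F})$; its ``In particular'' clause then delivers $\neg\FFPultimatelyperiodicon{\Q\cap\Z_p}(\FFf{F})$. Under the hood this ``In particular'' rests on Lemma~\ref{LContrExp}~(3), so one could alternatively route through expansiveness and that lemma, but citing Theorem~\ref{TPolyExp} directly suffices.

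The only point demanding any care --- and hence the nearest thing to an obstacle --- is the bookkeeping around the coefficient predicates: confirming that the $\FFPpolynomialcoefficients$ phrasing in the corollary's hypothesis is compatible with the $\FRFPpolynomialcoefficients$ phrasing used in Theorem~\ref{TPolyExp} for elements of $\SoSystems{p}$, and that passing from integer coefficients with $\abs{b_r}<p$ to the $(\Q\cap\Z_p)$-linear-polynomial class is legitimate. Both are immediate from the definitions, so this corollary is genuinely a summary that repackages Theorem~\ref{TPolyExp} and Corollary~\ref{CLinPolyCEM} with no substantive new content.
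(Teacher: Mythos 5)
Your proposal is correct and matches the paper's own (one-line) proof, which simply cites Theorem~\ref{TPolyExp}, Theorem~\ref{TLinPolyCEM}, and Corollary~\ref{CLinPolyCEM}; your routing of part (1) through Corollary~\ref{CLinPolyCEM}~(1) and part (2) through Theorem~\ref{TPolyExp}~(1) with $\FFf{G}\ce\FFf{F}$ is exactly the intended deduction. The remarks on the predicate bookkeeping and on the role of Lemma~\ref{LContrExp}~(3) are accurate but not needed beyond what you already say.
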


\begin{proof}
Follows directly from Theorem~\ref{TPolyExp}, Theorem~\ref{TLinPolyCEM}, and Corollary~\ref{CLinPolyCEM}.
\end{proof}

\myparagraphtoc{Progress on conjectures: summary and open questions.}
Figure~\ref{FOvUltPer} below gives an overview of the status of the conjectures \ref{CoPeriodsC}~--~\ref{CoPeriodsF} and the mixed case. Related results and conjectures are given in Theorem~\ref{TConstIrr}, Conjecture~\ref{CoConstIrr}, Theorem~\ref{TOrderIrr}, Conjecture~\ref{CoOrderIrr}, Theorem~\ref{TLinPolyCEM}, and Corollary~\ref{CConExpSummary}.

\begin{figure}[H]
\centering
\begin{tikzpicture}
\draw[rounded corners=1.0cm] (-7.5cm, -0.6cm) rectangle (7.5cm, 9.2cm) {};
\draw[rounded corners=1.0cm] (-7.2cm, -0.5cm) rectangle (7.2cm, 8.2cm) {};
\draw[rounded corners=0.5cm] (-6.9cm, 6.2cm) rectangle (6.9cm, 7.2cm) {};
\draw[rounded corners=1.0cm] (-6.9cm, -0.4cm) rectangle (6.9cm, 6.1cm) {};
\draw[rounded corners=0.5cm] (-6.6cm, 4.1cm) rectangle (6.6cm, 5.1cm) {};
\draw[rounded corners=1.0cm] (-6.6cm, -0.3cm) rectangle (6.6cm, 4.0cm) {};
\draw[rounded corners=1.0cm] (-6.3cm, -0.2cm) rectangle (6.3cm, 3.0cm) {};
\draw[rounded corners=1.0cm] (-6.0cm, -0.1cm) rectangle (6.0cm, 2.0cm) {};
\draw[rounded corners=0.5cm] (-5.55cm, 0.0cm) rectangle (5.55cm, 1.0cm) {};
\node[text width=5.0cm,align=left] at (-4.25cm,8.7cm) {$\smash{\FFPpolynomial(\FFf{F})}$\vphantom{A}};
\node[text width=5.0cm,align=right] at (4.25cm,8.7cm) {\smash{Thm.~\ref{TGenPolyUPer}: $\neg\FFPultimatelyperiodicon{\Q_\cap\Z_p}(\FFf{F})$}\vphantom{A}};
\node[text width=1.0cm,align=center] at (-0.5cm,8.7cm) {\smash{(I)}\vphantom{A}};
\node[text width=5.0cm,align=left] at (-3.95cm,7.7cm) {$\smash{\FFPpolynomialcoefficients{\Q\cap\Z_p}(\FFf{F})}$\vphantom{A}};
\node[text width=5.0cm,align=right] at (3.95cm,7.7cm) {\smash{Con.~\ref{CoPeriodsE}: $\neg\FFPultimatelyperiodicon{\Q_\cap\Z_p}(\FFf{F})$}\vphantom{A}};
\node[text width=1.0cm,align=center] at (-0.5cm,7.7cm) {\smash{(II)}\vphantom{A}};
\node[text width=5.0cm,align=left] at (-3.8cm,6.7cm) {$\abs{b_r}>p$ if $\FFf{F}[r]$ \rlap{linear}\vphantom{A}};
\node[text width=5.0cm,align=right] at (3.8cm,6.7cm) {\smash{Cor.~\ref{CConExpSummary}: $\neg\FFPultimatelyperiodicon{\Q_\cap\Z_p}(\FFf{F})$}\vphantom{A}};
\node[text width=1.0cm,align=center] at (-0.5cm,6.7cm) {\smash{(III)}\vphantom{A}};
\node[text width=5.0cm,align=left] at (-3.65cm,5.6cm) {$\smash{\FFPlinearpolynomialcoefficients{\Q\cap\Z_p}(\FFf{F})}$\vphantom{A}};
\node[text width=5.0cm,align=right] at (3.65cm,5.6cm) {\smash{Con.~\ref{CoPeriodsD}: $\neg\FFPultimatelyperiodicon{\Q_\cap\Z_p}(\FFf{F})$}\vphantom{A}};
\node[text width=1.0cm,align=center] at (-0.5cm,5.6cm) {\smash{(IV)}\vphantom{A}};
\node[text width=5.0cm,align=left] at (-3.5cm,4.6cm) {$\smash{\abs{b_0},\ldots,\abs{b_{p-1}}>p}$\vphantom{A}};
\node[text width=5.0cm,align=right] at (3.5cm,4.6cm) {\smash{Cor.~\ref{CConExpSummary}: $\neg\FFPultimatelyperiodicon{\Q_\cap\Z_p}(\FFf{F})$}\vphantom{A}};
\node[text width=1.0cm,align=center] at (-0.5cm,4.6cm) {\smash{(V)}\vphantom{A}};
\node[text width=5.0cm,align=left] at (-3.35cm,3.5cm) {$\smash{\abs{b_0\cdots b_{p-1}}<p^p}$\vphantom{A}};
\node[text width=5.0cm,align=right] at (3.35cm,3.5cm) {\smash{Con.~\ref{CoPeriodsD}: $\neg\FFPultimatelyperiodicon{\Q_\cap\Z_p}(\FFf{F})$}\vphantom{A}};
\node[text width=1.0cm,align=center] at (-0.5cm,3.5cm) {\smash{(VI)}\vphantom{A}};
\node[text width=5.0cm,align=left] at (-3.05cm,2.5cm) {$\smash{b_0\cdots b_{p-1}\in\Z}$\vphantom{A}};
\node[text width=5.0cm,align=right] at (3.05cm,2.5cm) {\smash{Con.~\ref{CoPeriodsD}: $\FFPultimatelyperiodicon{\Q_\cap\Z_p}(\FFf{F})$}\vphantom{A}};
\node[text width=1.0cm,align=center] at (-0.5cm,2.5cm) {\smash{(VII)}\vphantom{A}};
\node[text width=5.0cm,align=left] at (-2.75cm,1.5cm) {$\smash{\FFPlinearpolynomialcoefficients{\Z}(\FFf{F})}$\vphantom{A}};
\node[text width=5.0cm,align=right] at (2.75cm,1.5cm) {\smash{Con.~\ref{CoPeriodsC}: $\FFPultimatelyperiodicon{\Q_\cap\Z_p}(\FFf{F})$}\vphantom{A}};
\node[text width=1.0cm,align=center] at (-0.5cm,1.5cm) {\smash{(VIII)}\vphantom{A}};
\node[text width=5.0cm,align=left] at (-2.45cm,0.5cm) {$\smash{\abs{b_0},\ldots,\abs{b_{p-1}}<p}$\vphantom{A}};
\node[text width=5.0cm,align=right] at (2.45cm,0.5cm) {\smash{Cor.~\ref{CConExpSummary}: $\FFPultimatelyperiodicon{\Q_\cap\Z_p}(\FFf{F})$}\vphantom{A}};
\node[text width=1.0cm,align=center] at (-0.5cm,0.5cm) {\smash{(IX)}\vphantom{A}};
\end{tikzpicture}
\caption{Overview of settled and open cases on the question of ultimate periodicity on $\Q\cap\Z_p$ of a $p$-adic system $\FFf{F}$, where $b_r$ is the linear coefficient of the polynomial $\FFf{F}[r]$ for $r\in\underline{\smash{p}}$.}
\label{FOvUltPer}
\end{figure}
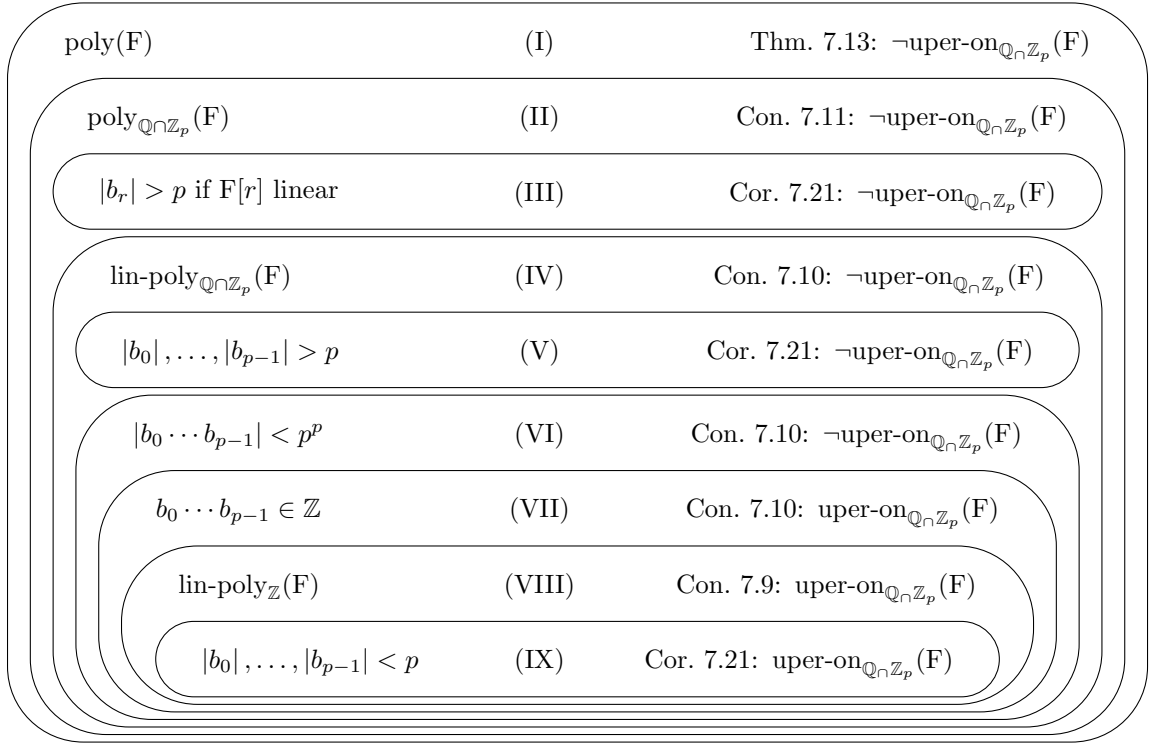

\noindent
Referring to the different regions in Figure~\ref{FOvUltPer} we give a list of possibly interesting examples of $p$-adic systems, some of which only just fall into the respective cases.

\bgroup
\setlength\LTleft{0pt}
\setlength\LTright{0pt}
\begin{longtable}{@{}ll@{\extracolsep{\fill}}l@{}}
(I)&$\FFf{F}=\left(a_0+b_0x,a_1+b_1x\right)$, where $\set{1}\neq\set{a_0,b_0,a_1,b_1}\subseteq\set{1,\sqrt{17},-\sqrt{17}}$\tabularnewline
&$\FFf{F}=\left(a_0+b_0x,\ldots,a_2+b_2x\right)$, where $\set{1}\neq\set{a_0,b_0,\ldots,a_2,b_2}\subseteq\set{1,\sqrt{-2},-\sqrt{-2}}$\tabularnewline
&$\FFf{F}=(a_0+b_0x,\ldots,a_4+b_4x)$, where $\set{1}\neq\set{a_0,b_0,\ldots,a_4,b_4}\subseteq\set{1,\I,-\I}$\tabularnewline
(II)&$\FFf{F}=(x^2+x)\cdot(x)^{p-1}$\tabularnewline
&$\FFf{F}=(x^{ap}+x)^{p-1}\cdot((p-1)x)$, where $a\in\N$ (say, $a=1000$)\tabularnewline
(III)&$\FFf{F}=(3x,x^2+x)$\tabularnewline
&$\FFf{F}=(x^2+x,3x)$\tabularnewline
&$\FFf{F}=(x^2+x,x^2+x)$\tabularnewline
(IV)&$\FFf{F}=((p^p+1)x)\cdot(x)^{p-1}$\tabularnewline
&$\FFf{F}=((p+(p=2\;?\;3:1))x)^{p-1}((p-1)x)$\tabularnewline
(V)&$\FFf{F}=((p+1)x)^p$\tabularnewline
&$\FFf{F}=((p^p+1)x)^p$\tabularnewline
(VI)&$\FFf{F}=(1/(p+1)x,(p-1)x)\cdot(x)^{p-2}$\tabularnewline
&$\FFf{F}=(1/(p+1)x)^{\floor{p/2}+1}\cdot((p+1)x)^{\ceiling{p/2}-1}$\tabularnewline
(VII)&$\FFf{F}=(1/(p+1)x,(p+1)x)\cdot(x)^{p-2}$\tabularnewline
&$\FFf{F}=(1/(p+1)x)^{\floor{p/2}}\cdot((p+1)x)^{\ceiling{p/2}}$\tabularnewline
(VIII)&$\FFf{F}=((p+1)x)\cdot(x)^{p-1}$\tabularnewline
&$\FFf{F}=((p+1)x)^{\floor{p/2}}\cdot((p-1)x)^{\ceiling{p/2}}$\tabularnewline
&$\FFf{F}=((p^p-1)x)\cdot(x)^{p-1}$\tabularnewline
(IX)&$\FFf{F}=(x)^p$ (standard base $p$)\tabularnewline
&$\FFf{F}=((p-1)x)^p$.
\end{longtable}
\egroup

\noindent
Note that (cf. Example~\ref{EHensel}),
\begin{align}
\smash{\sqrt{17}}&\in\set{\ldots0010111,\ldots1101001}\subseteq\Z_2\\
\smash{\sqrt{-2}}&\in\set{\ldots0200211,\ldots2022012}\subseteq\Z_3\\
\I&\in\set{\ldots2431212,\ldots2013233}\subseteq\Z_5.
\end{align}

\myparagraphtoc{Generalizations.}
The previous subsections of this section deal with the question of ultimate periodicity on a specific set ($\Q\cap\Z_p$) for a specific kind of $p$-adic systems (mostly $(\Q\cap\Z_p)$-linear-polynomial ones). While the set $\Q$ of rational numbers is certainly special among all dense subsets of $\Z_p$ and polynomial functions are also special among all functions on $\Z_p$, there is no obvious reason why this combination of ``subset of $\Z_p$'' and ``class of functions on $\Z_p$'' should be the only one to result in interesting patterns and relations. Analyzing previous results, the following generalization appears to be natural: for $2\leq p\in\N$ and $A\subseteq\SoSystems{p}$ let
\begin{align}
\label{DFFFSoPUAPP}
\FFFSoPP(A)\ce\bigcup_{\FFf{F}\in A}\FFFSoPP(\FFf{F}),\;
\FFFSoUPP(A)\ce\bigcup_{\FFf{F}\in A}\FFFSoUPP(\FFf{F}),\;
\FFFSoAPP(A)\ce\bigcup_{\FFf{F}\in A}\FFFSoAPP(\FFf{F}).
\end{align}
We say that an element $\FFf{F}$ of $A$ \emph{generates the periodic, ultimately periodic, or aperiodic points of $A$}, if $\FFFSoPP(\FFf{F})=\FFFSoPP(A)$, $\FFFSoUPP(\FFf{F})=\FFFSoUPP(A)$, or $\FFFSoAPP(\FFf{F})=\FFFSoAPP(A)$ respectively, and denote by $\FFFSoGenPP(A)$, $\FFFSoGenUPP(A)$, and $\FFFSoGenAPP(A)$ the respective sets of all these $\FFf{F}\in A$.

\noindent
By the ``In particular'' part of Corollary~\ref{CPerFormula} we get
\begin{align}
\FFFSoUPP(\SoSystems{p}(\FFPlinearpolynomialcoefficients{\Q\cap\Z_p}))=\Q\cap\Z_p
\end{align}
and Conjecture~\ref{CoPeriodsD} can be expressed as
\begin{align}
\FFFSoGenUPP(\SoSystems{p}(\FFPlinearpolynomialcoefficients{\Q\cap\Z_p}))&\overset{?}{=}\smash{\setl{\FFf{F}\in\SoSystems{p}(\FFPlinearpolynomialcoefficients{\Q\cap\Z_p})\;\big\vert}}\\
\nonumber
&\phantom{\vphantom{a}=\vphantom{a}}\qquad\FFf{F}=(a_0+b_0x,\ldots,a_{p-1}+b_{p-1}x)\\
\nonumber
&\phantom{\vphantom{a}=\vphantom{a}}\qquad b_0\cdots b_{p-1}\in\Z\\
\nonumber
&\phantom{\vphantom{a}=\vphantom{a}}\qquad\smash{\setr{\abs{b_0\cdots b_{p-1}}<p^p\vphantom{\FFPlinearpolynomialcoefficients{\Q\cap\Z_p}}}}.
\end{align}
Related results and conjectures are given in Theorem~\ref{TConstIrr}, Conjecture~\ref{CoConstIrr}, Theorem~\ref{TOrderIrr}, Conjecture~\ref{CoOrderIrr}, Theorem~\ref{TLinPolyCEM}, and Corollary~\ref{CLinPolyCEM}. Generalizing Conjecture~\ref{CoPeriodsD} one might try to study any of the sets $\FFFSoPP(A)$, $\FFFSoUPP(A)$, $\FFFSoAPP(A)$, $\FFFSoGenPP(A)$, $\FFFSoGenUPP(A)$, or $\FFFSoGenAPP(A)$ for other classes $A$ of $p$-adic systems, such as $\SoSystems{p}(\FFPpolynomialcoefficientsdegree{C}{D})$ where $C$ is a (possibly dense) subset of $\Z_p$, and $D\subseteq\Nz$ is a set of allowed degrees, some class of rational functions on $\Z_p$ (Section~\ref{SCharacterization}), $p$-adic systems defined by certain permutation polynomials (Section~\ref{SPermPoly}), etc.

\section{Permutation polynomials and trees of cycles}
\label{SPermPoly}
In Section~\ref{SCharacterization} we proved that ``almost all'' $\Z_p$-polynomial $p$-fibred functions are actually $p$-fibred systems (cf. Theorem~\ref{TFuncSuit} and Theorem~\ref{TPropPolyF}) giving us a multitude of examples. In addition to this class, the different interpretations of $p$-adic systems discussed in Section~\ref{SInterpretations} allow us to find even more $p$-adic systems that are essentially different from those we already know. This new class is again defined by polynomials but in a very different way. It turns out that $p$-adic permutations, which we proved to be just a different interpretation of $p$-adic systems (cf. Theorem~\ref{TPermEq}), can be polynomial functions and are thus defined by a single polynomial in $\Z_p[x]$ in this case (in the following we will use the terms ``polynomial'' and ``polynomial function'' interchangeably). These polynomials are exactly what is commonly known as permutation polynomials. A polynomial $f\in\Z_p[x]$ is called a \emph{$p$-permutation polynomial}, where $2\leq p\in\N$, if the following holds:
\begin{align}
\label{EPermPolyA}
&f_1\text{ is the identity on }\Z_p\slash p\Z_p\\
\label{EPermPolyB}
\fa k\in\N:\vphantom{a}&f_k\text{ is bijective},
\end{align}
where
\begin{align}
\label{Dpermpolyk}
f_k:\Z_p\slash p^k\Z_p&\to\Z_p\slash p^k\Z_p.\\
\nonumber
x+p^k\Z_p&\mapsto f(x)+p^k\Z_p
\end{align}
Please note that we slightly adapted the usual notion of permutation polynomials (cf. \cite{KellerOlsen:1968,LidlMullen:1988,LidlMullen:1993,Shallue:2012}) which usually are required only to satisfy the second condition but not the first. We need the first condition because $p$-adic permutations must satisfy Eqn.~(\ref{EPermA}). Using the following lemma which can also be found in \cite{KellerOlsen:1968}, it is very easy to test whether a given polynomial is a $p$-permutation polynomial.

\begin{lemma}
\label{LCharacPermPoly}
Let $2\leq p\in\N$ and $f\in\Z_p[x]$. Then $f$ is a $p$-permutation polynomial if and only if $f_1$ is the identity on $\Z_p\slash p\Z_p$ and $f_2$ is bijective.
\end{lemma}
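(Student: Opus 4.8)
The plan is to recast the arithmetic condition ``$f_k$ is bijective'' in terms of the $(p,r)$-suitability predicates of Section~\ref{SCharacterization}, and then invoke Theorem~\ref{TPropPolyF}, which for polynomials over $\Z_p$ already reduces suitability at all levels to suitability at a single level $k\geq2$. The forward implication of the lemma is immediate: a $p$-permutation polynomial satisfies Eqn.~(\ref{EPermPolyA}) and Eqn.~(\ref{EPermPolyB}) for every $k$, so in particular $f_1$ is the identity and $f_2$ is bijective. All the content lies in the converse, so assume henceforth that $f_1$ is the identity and $f_2$ is bijective.

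First I would record the bridge between bijectivity of $f_k$ and suitability. For $n\in r+p\Z_p$ one has $f(n)\equiv f(r)\modulus{p}$, so $f\modulo p$ is \emph{constant} on $r+p\Z_p$; consequently the defining condition of $\FPsuitableat{p}{r}{k}(f)$ collapses to $m\equiv n\modulus{p^k}\Leftrightarrow f(m)\equiv f(n)\modulus{p^k}$ for $m,n\in r+p\Z_p$. Since a polynomial automatically respects congruences, the ``$\Rightarrow$'' direction here is free, whence $\FPsuitableat{p}{r}{k}(f)$ is equivalent to injectivity of $f$ on $(r+p\Z_p)\slash p^k\Z_p$. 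Because $f_1$ is the identity, $f$ sends distinct residue classes modulo $p$ to distinct ones, so any coincidence $f(m)\equiv f(n)\modulus{p^k}$ with $k\geq1$ forces $m\equiv n\modulus{p}$; hence $f_k$ is injective on all of $\Z_p\slash p^k\Z_p$ if and only if $f$ is injective on each class $r+p\Z_p$ separately. As $\Z_p\slash p^k\Z_p$ is finite, injective maps are bijective, and we obtain, for every $k\geq1$,
\[
f_k\text{ bijective}\quad\Longleftrightarrow\quad\fa r\in\intintuptoex{p}:\FPsuitableat{p}{r}{k}(f).
\]

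With this equivalence in hand the proof reduces entirely to Theorem~\ref{TPropPolyF}. Applied at $k=2$ it shows that $f_2$ is bijective if and only if $\Gcd{p,f'(r)\modulo p}=1$ for every $r\in\intintuptoex{p}$; applied at an arbitrary level $k\geq2$ it shows this same scalar condition is equivalent to $\FPsuitableat{p}{r}{k}(f)$, while parts~(1) and~(2) grant suitability at $k\leq1$ for free. Therefore $f_2$ bijective implies $\FPsuitable{p}{r}(f)$ for all $r$, hence $\FPsuitableat{p}{r}{k}(f)$ for all $r$ and all $k$, hence $f_k$ bijective for all $k$ by the displayed equivalence. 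Together with $f_1$ being the identity this is exactly the statement that $f$ is a $p$-permutation polynomial.

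The routine points to check carefully are the constancy of $f\modulo p$ on a residue class modulo $p$ and the bookkeeping that promotes per-class injectivity to global injectivity of $f_k$; both are elementary and use only that $f_1$ is the identity. The single substantive input—the level-independence of suitability for polynomials—is precisely what Theorem~\ref{TPropPolyF}~(3) supplies, so no separate Hensel-style lifting argument is required. The main thing to watch is that $p$ need not be prime, but Theorem~\ref{TPropPolyF} is already stated for arbitrary $2\leq p\in\N$ with the relevant condition phrased through $\Gcd{p,f'(r)\modulo p}=1$, so this causes no difficulty.
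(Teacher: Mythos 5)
Your proof is correct. Note that the paper itself offers no proof of Lemma~\ref{LCharacPermPoly} at all — it simply cites the reference [Keller--Olsen 1968] — so there is no internal argument to compare against; what you have written is a genuinely self-contained derivation from the paper's own machinery. The two key steps both check out: the observation that $f\modulo p$ is constant on each class $r+p\Z_p$ (so the suitability condition collapses to $f(m)\equiv f(n)\modulus{p^k}\Leftrightarrow m\equiv n\modulus{p^k}$), and the reduction of global injectivity of $f_k$ to per-class injectivity via $f_1=\id$, together give the equivalence $f_k$ bijective $\Leftrightarrow\fa r\in\intintuptoex{p}:\FPsuitableat{p}{r}{k}(f)$. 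From there Theorem~\ref{TPropPolyF}~(3) does exactly the work you ask of it: suitability at any single level $k\geq2$ is equivalent to the level-independent condition $\Gcd{p,f'(r)\modulo p}=1$, and parts (1)--(2) cover $k\leq1$. Your approach has the merit of exhibiting the Keller--Olsen criterion as a corollary of the paper's characterization of $(p,r)$-suitable polynomials rather than an external import, and it is correctly phrased for composite $p$ since Theorem~\ref{TPropPolyF} already is. The only cosmetic remark is that the equivalence between $f_k$ bijective and per-class suitability is essentially a restatement of Theorem~\ref{TFuncSuit}~(2) combined with Lemma~\ref{LOrdFuncBlock}~(2) specialized to the $p$-fibred function $(pf,\ldots,pf)\sim_p f$; citing those would shorten the bridge, but your direct argument is equally valid.
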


If $\pi\ce f$, then Eqn.~(\ref{EPermPolyA}) and Eqn.~(\ref{EPermPolyB}) are clearly equivalent to Eqn.~(\ref{EPermA}) and Eqn.~(\ref{EPermB}) respectively which proves the following theorem.

\begin{theorem}
\label{TCharacPermPoly}
Let $2\leq p\in\N$ and $f\in\Z_p[x]$. Then $f$ is a $p$-permutation polynomial if and only if $f$ is a $p$-adic permutation (i.e. $f\in\SoPermutations{p}$). 
\end{theorem}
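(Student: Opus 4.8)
The plan is to show that, when $\pi=f$ is a polynomial function, the two defining conditions of a $p$-permutation polynomial, Eqn.~(\ref{EPermPolyA}) and Eqn.~(\ref{EPermPolyB}), are equivalent to the two defining conditions of a $p$-adic permutation, Eqn.~(\ref{EPermA}) and Eqn.~(\ref{EPermB}), together with the requirement (implicit in the word ``permutation'') that $f$ be a bijection of $\Z_p$.

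First I would record the elementary congruence property of polynomials over $\Z_p$: for every $k\in\N$ and all $m,n\in\Z_p$ with $m\equiv n\modulus{p^k}$ one has $f(m)\equiv f(n)\modulus{p^k}$, since $f(m)-f(n)=\sum_i a_i(m^i-n^i)$ and $(m-n)\mid(m^i-n^i)$ in $\Z_p$ (this is exactly the computation carried out in the proof of Theorem~\ref{TPropPolyF}~(1)). Two consequences follow: each map $f_k$ of Eqn.~(\ref{Dpermpolyk}) is well-defined, and the implication ``$\Rightarrow$'' in the biconditional of Eqn.~(\ref{EPermB}) holds automatically. Hence, for a polynomial $f$, Eqn.~(\ref{EPermB}) is equivalent to its reverse implication $f(m)\equiv f(n)\modulus{p^k}\Rightarrow m\equiv n\modulus{p^k}$, i.e. to the injectivity of every $f_k$.

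Next I would match the conditions level by level. Condition Eqn.~(\ref{EPermPolyA}) says $f_1=\id$ on $\Z_p\slash p\Z_p$, which unwinds to $f(n)\equiv n\modulus{p}$ for all $n$, precisely Eqn.~(\ref{EPermA}). For the second pair, observe that $\Z_p\slash p^k\Z_p$ is a finite set of $p^k$ elements, so a self-map of it is injective if and only if it is bijective; therefore ``$f_k$ injective for all $k$'' (the content of Eqn.~(\ref{EPermB}) for polynomials, by the previous paragraph) coincides with ``$f_k$ bijective for all $k$'', which is Eqn.~(\ref{EPermPolyB}). This establishes the equivalence of the finite-level conditions in both directions.

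Finally I would upgrade the finite-level statements to a genuine bijection of $\Z_p$, which is the one piece of the definition of $p$-adic permutation not literally visible in Eqn.~(\ref{EPermA})--(\ref{EPermB}) and, I expect, the only non-formal step. For injectivity: if $f(m)=f(n)$ then $f(m)\equiv f(n)\modulus{p^k}$ for every $k$, whence $m\equiv n\modulus{p^k}$ for every $k$ by injectivity of $f_k$, so $m=n$. For surjectivity I would use that $f$ is continuous and $\Z_p$ is compact, so $f(\Z_p)$ is closed; since each $f_k$ is surjective, $f(\Z_p)$ meets every residue class modulo every $p^k$ and is therefore dense, and a closed dense subset of $\Z_p$ is all of $\Z_p$. (Equivalently, one may invoke $\Z_p=\varprojlim_k\Z_p\slash p^k\Z_p$ and note that a compatible system of bijections $f_k$ induces a bijection on the inverse limit.) Assembling the three paragraphs yields both implications: a $p$-permutation polynomial satisfies Eqn.~(\ref{EPermA}) and Eqn.~(\ref{EPermB}) and is bijective, hence lies in $\SoPermutations{p}$; conversely a polynomial $p$-adic permutation satisfies Eqn.~(\ref{EPermPolyA}) and, having all $f_k$ injective hence bijective, also Eqn.~(\ref{EPermPolyB}).
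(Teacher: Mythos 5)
Your proof is correct and follows the same route as the paper, whose own proof simply asserts that, upon setting $\pi=f$, Eqn.~(\ref{EPermPolyA})--(\ref{EPermPolyB}) are ``clearly equivalent'' to Eqn.~(\ref{EPermA})--(\ref{EPermB}). You supply exactly the details the paper leaves implicit: the divisibility $(m-n)\mid f(m)-f(n)$ making the forward implication in Eqn.~(\ref{EPermB}) automatic for polynomials, the injective-iff-bijective reduction on the finite quotients $\Z_p\slash p^k\Z_p$, and the compactness/inverse-limit argument upgrading levelwise bijectivity to a genuine bijection of $\Z_p$.
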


\noindent
Theorem~\ref{TPermEq} thus implies that for every $p$-permutation polynomial $f$ and every $p$-adic system $\FFf{G}$ there is a $p$-adic system $\FFf{F}$ such that $f=\pi_{\FFf{F},\FFf{G}}$. A natural follow-up question to this observation is whether all $p$-adic permutations are actually polynomial functions, i.e. $p$-permutation polynomials. The following example shows that this is not the case even if $\FFf{F}$ and $\FFf{G}$ are $\Z$-linear-polynomial.

\begin{example}
\label{ENotPermPoly}
Let $\FFf{F}\ce\FFf{F}_C=(x,3x+1)$, $\FFf{G}\ce\FFf{F}_2=(x,x-1)$, and $\pi\ce\pi_{\FFf{F},\FFf{G}}$. Then $\pi\in\SoPermutations{2}$ (since $\FFf{F}$ and $\FFf{G}$ are in $\SoSystems{2}$), $\pi(n)=-n/3$ for all $n\in\set{2^k\mid k\in\N}$, and $\pi(n)=-23n/9$ for all $n\in\set{2^k+2^{k-1}\mid k\in\N}$. If we assume that $\pi$ is a polynomial function then both $\pi(x)+x/3$ and $\pi(x)+23x/9$ have infinitely many roots. This implies (since $\Q_2$ does not contain zero divisors) that both polynomials are equal to $0$. But then $x/3$ is equal to $23x/9$ (as polynomials) which is a contradiction.
\end{example}

\noindent
In the other direction one might ask if every $p$-permutation polynomial $f$ can be written as $f=\pi_{\FFf{F},\FFf{G}}$ where both $\FFf{F}$ and $\FFf{G}$ are $\Z_p$-polynomial $p$-adic systems. This is not the case either, but there does not seem to be a proof as easy as the one given above for the other direction. Instead we will make use of the tree of cycles introduced at the end of Section~\ref{SInterpretations} to demonstrate that certain $p$-permutation polynomials cannot be represented in this way. The method we will use is quite general in nature and can probably be adapted to show that other classes of $p$-adic systems which may be found in the future, are also distinct from classes known up to this point.

\myparagraphtoc{Cycle trees.}
For a general $p$-adic permutation $\pi\in\SoPermutations{p}$ we recall some of the properties of its tree of cycles $\mathcal{G}(\pi)=(\mathcal{V}(\pi),\mathcal{E}(\pi))$ and the corresponding edge labeling $c(\pi):\mathcal{E}(\pi)\to\intintuptoex{p}$ as given in Corollary~\ref{CCycles}:

\begin{theoremtable}
$\bullet$&$\mathcal{G}(\pi)$ is a directed, infinite, rooted tree,\tabularnewline
$\bullet$&the out-degrees of all vertices are contained in $\intint{1}{p}$ and the out-degree of the root is $p$,\tabularnewline
$\bullet$&the labels of all outgoing edges of a given vertex sum up to $p$.\tabularnewline[0.5\baselineskip]
\end{theoremtable}

\noindent
We call any edge labeled tree satisfying all of the above properties a \emph{$p$-cycle tree}.

The first natural question in the context of $p$-adic systems that arises is whether every $p$-cycle tree can be realized as the tree of cycles of some $p$-adic permutation. This is indeed the case as the following theorem shows.

\begin{theorem}
\label{TPermFromTree}
Let $2\leq p\in\N$ and $\mathcal{G}=(\mathcal{V},\mathcal{E})$ be a $p$-cycle tree with edge labeling $c:\mathcal{E}\to\intint{1}{p}$. Let $o\in\mathcal{V}$ denote the root of $\mathcal{G}$ and for every vertex $o\neq v\in\mathcal{V}$ let $p(v)\in\mathcal{V}$ denote the predecessor of $v$, i.e. the unique vertex satisfying $p(v)v\in\mathcal{E}$. Furthermore, for every $v\in\mathcal{V}$ let $\Sf{S}(v)\in\CoSequences(\SPboundedby{\mathcal{V}})$ be a minimal sequence of all successors of $v$ in an arbitrary but fixed order, i.e. $v\Sf{S}(v)[i]\in\mathcal{E}$ for all $i\in\intintuptoex{\abs{\Sf{S}(v)}}$, $\set{u\in\mathcal{V}\mid vu\in\mathcal{E}}=\set{\Sf{S}(v)[i]\mid i\in\intintuptoex{\abs{\Sf{S}(v)}}}$, and $\abs{\set{u\in\mathcal{V}\mid vu\in\mathcal{E}}}=\abs{\Sf{S}(v)}$. We label the vertices of $\mathcal{G}$ by the following function (which we will also call $c$ as there is no danger of ambiguity),
\begin{align}
c:\mathcal{V}&\to\intint{1}{p}.\\
\nonumber
v&\mapsto
\begin{cases}
1&\text{if }v=o\\
c(p(v)v)&\text{if }v\neq o
\end{cases}
\intertext{Furthermore, we define the functions $o$ (which stands for ``offset'' and, again, is not at risk of being confused with the identically named root), $\Sf{P}$ (standing for ``positions''), and $\Sf{E}$ (standing for ``entries''),}
o:\mathcal{V}&\to\intintuptoex{p}\\
\nonumber
v&\mapsto
\begin{cases}
0&\text{if }v=o\\
\sum_{i=0}^{\SFposition{\Sf{S}(p(v))}{v}-1}c(\Sf{S}(p(v))[i])&\text{if }v\neq o
\end{cases}\\
\Sf{P}:\mathcal{V}&\to\CoSequences(\SPboundedby{\Nz})\\
\nonumber
v&\mapsto
\begin{cases}
(0)&\text{if }v=o\\
\prod_{i=0}^{c(v)-1}\left(\Sf{P}(p(v))+(o(v)+i)p^{k(v)-1}\right)&\text{if }v\neq o
\end{cases}\\
\Sf{E}:\mathcal{V}&\to\CoSequences(\SPboundedby{\intintuptoex{p}})\\
\nonumber
v&\mapsto
\begin{cases}
(0)&\text{if }v=o\\
\prod_{i=0}^{c(v)-1}(o(v)+i)^{\abs{\Sf{E}(p(v))}-1}\cdot(o(v)+(i+1)\modulo c(v))&\text{if }v\neq o
\end{cases}
\intertext{where the products mean products of sequences (i.e. their concatenation) and $k(v)$ denotes the layer of $v$, i.e. its distance from the root, i.e.,}
k:\mathcal{V}&\to\Nz.\\
\nonumber
v&\mapsto
\begin{cases}
0&\text{if }v=o\\
k(p(v))+1&\text{if }v\neq o
\end{cases}
\end{align}
Using $P$ and $E$ we define an infinite $p$-digit table $\STf{D}\in\SoTables{p}$ with domain $\Z_p$ and block property by writing the entries in $E$ at positions $P$ in the following way:
\begin{align}
\STf{D}\left[\Sf{P}(v)[i]+ap^{k(v)}\right]\left[\vphantom{p^{k(v)}}k(v)-1\right]=\Sf{E}(v)[i]
\end{align}
for all $o\neq v\in\mathcal{V}$, $i\in\intintuptoex{\abs{\Sf{P}(v)}}$, and $a\in\Z_p$. Let $\FFf{F}\in\SoSystems{p}(\FFPcanonicalform)$ be the $p$-adic system corresponding to $\STf{D}$ according to Eqn.~(\ref{EEqSummary}), $\pi\ce\pi_{\FFf{F},(x)^p}\in\SoPermutations{p}$, and
\begin{align}
\varphi:\mathcal{V}&\to\mathcal{V}(\pi).\\
\nonumber
v&\mapsto\left(k(v),[([\Sf{P}(v)[0]],\ldots,[\Sf{P}(v)[\abs{\Sf{P}(v)}-1]])]_{\sim_\sigma}\right)
\end{align}
Then, $\varphi$ is an isomorphism between $\mathcal{G}$ and $\mathcal{G}(\pi)$ that respects the labelings $c$ and $c(\pi)$. In particular, $(\mathcal{G},c)$ and $(\mathcal{G}(\pi),c(\pi))$ are isomorphic.
\end{theorem}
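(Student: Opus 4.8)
The plan is to argue by induction on the layer index $k$, tracking how the permutation $\pi=\pi_{\FFf{F},(x)^p}$ acts on the position sequences $\Sf{P}(v)$. Throughout I use that $(x)^p$ realizes the standard base-$p$ expansion, so $\psi_{(x)^p}^{-1}$ sends a digit sequence to the $p$-adic integer it represents in base $p$; hence $\pi(n)$ is the integer whose base-$p$ expansion is $\STf{D}[n]=\FFFDT(\FFf{F})[n]$, and $\pi(n)\modulus{p^k}$ is read off from the first $k$ digits $\STf{D}[n][\intintuptoex{k}]$. By Lemma~\ref{LPermProp}~(1) the induced maps $\pi_k$ on $\Z_p\slash p^k\Z_p$ are well defined, and $\pi_k([n])=[\pi(n)]$.

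First I would settle the combinatorial bookkeeping, independently of $\pi$. A straightforward induction on $k(v)$ using the recursions gives $\abs{\Sf{P}(v)}=\abs{\Sf{E}(v)}$ and $\abs{\Sf{P}(v)}=c(v)\abs{\Sf{P}(p(v))}$ (adding a scalar entrywise preserves length, and each of the $c(v)$ blocks has the parent length). I also record the tiling fact: the intervals $\set{o(u),\ldots,o(u)+c(u)-1}$, taken over the children $u$ of a fixed vertex, partition $\intintuptoex{p}$, since the child labels sum to $p$ and $o(u)$ is the cumulative sum of the labels of earlier siblings. From these a purely combinatorial induction on $k$ yields the tiling property (P): the residues $\Sf{P}(v)[i]\modulus{p^k}$, over all layer-$k$ vertices $v$ and indices $i$, run through a complete residue system modulo $p^k$. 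Granting (P), the filling rule assigns the digit at index $k-1$ of each residue class modulo $p^k$ exactly once, so $\STf{D}$ is a well-defined $p$-digit table; the weak block property is immediate, and the full block property follows from injectivity of the map (residue modulo $p^k$) $\mapsto$ (first $k$ digits), which I would check by showing that the $p$ refinements modulo $p^{j+1}$ of any layer-$j$ class receive pairwise distinct digits at index $j$ (the refining digit is reproduced on the interior block positions and cyclically shifted within each sibling block on the terminal positions). Thus $\STf{D}\in\SoTables{p}$, and $\FFf{F}$ and $\pi$ are legitimately defined.

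The core is the inductive statement (Q): for every layer-$k$ vertex $v$ with $r=\abs{\Sf{P}(v)}$ one has $\pi(\Sf{P}(v)[i])\equiv\Sf{P}(v)[(i+1)\modulo r]\modulus{p^k}$, so $([\Sf{P}(v)[0]],\ldots,[\Sf{P}(v)[r-1]])$ is a single cycle of $\pi_k$. The base case $k=1$ holds because $\STf{D}[n][0]=n\modulo p$ forces $\pi_1$ to be the identity with fixed points $[0],\ldots,[p-1]$, matching the $p$ layer-$1$ vertices whose positions are $(0),\ldots,(p-1)$. For the step, write a layer-$(k+1)$ vertex $u$ with parent $v$ and index $ir+t$ ($i\in\intintuptoex{c(u)}$, $t\in\intintuptoex{r}$), so that $\Sf{P}(u)[ir+t]=\Sf{P}(v)[t]+(o(u)+i)p^{k}$. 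Because $\Sf{P}(u)[ir+t]\equiv\Sf{P}(v)[t]\modulus{p^{k}}$, the first $k$ digits of $\pi(\Sf{P}(u)[ir+t])$ equal those of $\pi(\Sf{P}(v)[t])$, which by (Q) at level $k$ represent $\Sf{P}(v)[(t+1)\modulo r]\modulus{p^k}$; the digit at index $k$ is exactly $\Sf{E}(u)[ir+t]$ by the filling rule. Reading off the two branches of the $\Sf{E}$-recursion ($t<r-1$ gives $o(u)+i$; $t=r-1$ gives $o(u)+(i+1)\modulo c(u)$) and comparing with $\Sf{P}(u)[(ir+t+1)\modulo(c(u)r)]=\Sf{P}(v)[(t+1)\modulo r]+(o(u)+i')p^k$ for the appropriate $i'$, one verifies in both cases that $\pi(\Sf{P}(u)[ir+t])\equiv\Sf{P}(u)[(ir+t+1)\modulo(c(u)r)]\modulus{p^{k+1}}$, which is (Q) at level $k+1$. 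This matching of the cyclic successor in the concatenated sequence $\Sf{P}(u)$ with the carry from the parent cycle plus the newly prescribed top digit is the main obstacle, and it is precisely where the interior-versus-wrap-around split of the $\Sf{E}$-recursion is needed.

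Once (P) and (Q) hold for all $k$, the map $\varphi$ is a layer-preserving bijection $\mathcal{V}\to\mathcal{V}(\pi)$: (Q) says each $\Sf{P}(v)$ is one cycle, and (P) says every residue lies in exactly one such cycle, so distinct layer-$k$ vertices give distinct cycles and all of $\Sigma(\pi_k)$ is hit. Since $\Sf{P}(u)[ir+t]\equiv\Sf{P}(p(u))[t]\modulus{p^{k}}$, the cycle of $\varphi(p(u))$ is the parent of the cycle of $\varphi(u)$ in the sense of Eqn.~(\ref{Dtreeofcyclesedges}) (equivalently Theorem~\ref{TCycles}), so $\varphi$ commutes with the predecessor map and sends the root $o$ to $(0,[([0])]_{\sim_\sigma})$; two layer-graded rooted trees with matching roots and predecessor maps are isomorphic. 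Finally $\abs{\Sf{P}(u)}=c(u)\abs{\Sf{P}(p(u))}$ gives $c(\pi)(\varphi(p(u))\varphi(u))=\abs{\Sf{P}(u)}/\abs{\Sf{P}(p(u))}=c(u)=c(p(u)u)$, so $\varphi$ respects the edge labelings, whence $(\mathcal{G},c)$ and $(\mathcal{G}(\pi),c(\pi))$ are isomorphic.
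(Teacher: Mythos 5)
Your proposal is correct and follows essentially the same route as the paper's proof: establish the tiling/partition property of the position sequences to get well-definedness and the block property of $\STf{D}$, then induct on the layer to show that $\pi_k$ acts as the cyclic shift on each $\Sf{P}(v)$ (your statement (Q) is exactly the paper's key identity $\pi_k(\Sf{P}(w)[i])=\Sf{P}(w)[(i+1)\modulo\abs{\Sf{P}(w)}]$), and conclude the labeled isomorphism from $\abs{\Sf{P}(u)}=c(u)\abs{\Sf{P}(p(u))}$.
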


\noindent
To illustrate the workings of the theorem consider the example given in Figure~\ref{FPermFromTree} below.

\bgroup
\newcommand{\hdummy}{{\rule{2pt}{0pt}}}
\newcommand{\vdummy}{{\rule[-1.5pt]{0pt}{\fontcharht\font`0+3pt}}}
\newcommand{\vdummys}{{\rule[-0.35pt]{0pt}{\fontcharht\font`0+0.7pt}}}
\newcommand{\bdummy}{{\rule{0.4pt}{\fontcharht\font`0}}}
\definecolor{colorAl}{rgb}{0.7,0.7,0.7}
\definecolor{colorBl}{rgb}{0.8,0.8,0.8}
\definecolor{colorCl}{rgb}{0.9,0.9,0.9}
\definecolor{colorAd}{rgb}{0.4,0.4,0.4}
\definecolor{colorBd}{rgb}{0.5,0.5,0.5}
\definecolor{colorCd}{rgb}{0.6,0.6,0.6}

\def\temptable#1#2#3#4#5#6#7{
\begin{tabular}{cccccccccccccccccccccccccccccccccccccccccccc}
\vdummy&#7\bdummy&#7 &#7\hdummy&#7\hdummy
&#70&#70&#70&#70&#70&#70&#70&#70&#70&#70&#71&#71&#71&#71&#71&#71&#71&#71&#71&#71&#72
&#72&#72&#72&#72&#72&#72&#7\hdummy&#7\hdummy&#7 &#7 &#7 &#7 &#7 &#7 &#7 &#7 &#7\bdummy&\tabularnewline
\vdummy&#7\bdummy&#7 &#7&#7
&#70&#71&#72&#73&#74&#75&#76&#77&#78&#79&#70&#71&#72&#73&#74&#75&#76&#77&#78&#79&#70
&#71&#72&#73&#74&#75&#76&#7\hdummy&#7\hdummy&#7 &#7 &#7 &#7 &#7 &#7 &#7 &#7 &#7\bdummy&\tabularnewline
\hline
\vdummy&#7\bdummy&#7 &#7&#7
&#70&#71&#72&#70&#71&#72&#70&#71&#72&#70&#71&#72&#70&#71&#72&#70&#71&#72&#70&#71&#72
&#70&#71&#72&#70&#71&#72&#7\hdummy&#7\hdummy&#7 &#7 &#7 &#7 &#7 &#7 &#7 &#7 &#7\bdummy&\tabularnewline
\vdummy&#7\bdummy&#7 &#7&#7
&#70&#70&#70&#71&#71&#71&#72&#72&#72&#70&#70&#70&#71&#71&#71&#72&#72&#72&#70&#70&#70
&#71&#71&#71&#72&#72&#72&#7&#7&#7 &#7 &#7 &#7 &#7 &#7 &#7 &#7 &#7\bdummy&\tabularnewline
\vdummy&#7\bdummy&#7 &#7&#7
&#70&#70&#70&#70&#70&#70&#70&#70&#70&#71&#71&#71&#71&#71&#71&#71&#71&#71&#72&#72&#72
&#72&#72&#72&#72&#72&#72&#7&#7&#7 &#7 &#7 &#7 &#7 &#7 &#7 &#7 &#7\bdummy&\tabularnewline
\hline
\hline
\vdummy&#71&#7 &#7&#7
&#40&#5 &#6 &#10&#2 &#3 &#10&#2 &#3 &#10&#2 &#3 &#10&#2 &#3 &#10&#2 &#3 &#10&#2 &#3 
&#10&#2 &#3 &#10&#2 &#3 &#7&#7&#7 &#7 &#7 &#7 &#7 &#7 &#7 &#7 &#70&\tabularnewline
\vdummy&#72&#7 &#7&#7
&#4 &#51&#6 &#1 &#21&#3 &#1 &#21&#3 &#1 &#21&#3 &#1 &#21&#3 &#1 &#21&#3 &#1 &#21&#3 
&#1 &#21&#3 &#1 &#21&#3 &#7&#7&#7 &#7 &#7 &#7 &#7 &#7 &#7 &#7 &#71&\tabularnewline
\vdummy&#73&#7 &#7&#7
&#4 &#5 &#62&#1 &#2 &#32&#1 &#2 &#32&#1 &#2 &#32&#1 &#2 &#32&#1 &#2 &#32&#1 &#2 &#32
&#1 &#2 &#32&#1 &#2 &#32&#7&#7&#7 &#7 &#7 &#7 &#7 &#7 &#7 &#7 &#72&\tabularnewline
\vdummy&#7\bdummy&#7 &#7&#7
&#40&#51&#62&#10&#21&#32&#10&#21&#32&#10&#21&#32&#10&#21&#32&#10&#21&#32&#10&#21&#32
&#10&#21&#32&#10&#21&#32&#7&#7&#7 &#7 &#7 &#7 &#7 &#7 &#7 &#7 &#7\bdummy&\tabularnewline
\hline
\vdummy&#74&#7 &#7&#7
&#41&#5 &#6 &#42&#5 &#6 &#40&#5 &#6 &#11&#2 &#3 &#12&#2 &#3 &#10&#2 &#3 &#11&#2 &#3 
&#12&#2 &#3 &#10&#2 &#3 &#7&#7&#7 &#7 &#7 &#7 &#7 &#7 &#71&#72&#70&\tabularnewline
\vdummy&#75&#7 &#7&#7
&#4 &#50&#6 &#4 &#5 &#6 &#4 &#5 &#6 &#1 &#20&#3 &#1 &#2 &#3 &#1 &#2 &#3 &#1 &#20&#3 
&#1 &#2 &#3 &#1 &#2 &#3 &#7&#7&#7 &#7 &#7 &#7 &#7 &#7 &#7 &#7 &#70&\tabularnewline
\vdummy&#76&#7 &#7&#7
&#4 &#5 &#6 &#4 &#52&#6 &#4 &#51&#6 &#1 &#2 &#3 &#1 &#22&#3 &#1 &#21&#3 &#1 &#2 &#3 
&#1 &#22&#3 &#1 &#21&#3 &#7&#7&#7 &#7 &#7 &#7 &#7 &#7 &#7 &#72&#71&\tabularnewline
\vdummy&#77&#7 &#7&#7
&#4 &#5 &#61&#4 &#5 &#60&#4 &#5 &#6 &#1 &#2 &1#3&#1 &#2 &#30&#1 &#2 &#3 &#1 &#2 &#31
&#1 &#2 &#30&#1 &#2 &#3 &#7&#7&#7 &#7 &#7 &#7 &#7 &#7 &#7 &#71&#70&\tabularnewline
\vdummy&#78&#7 &#7&#7
&#4 &#5 &#6 &#4 &#5 &#6 &#4 &#5 &#62&#1 &#2 &#3 &#1 &#2 &#3 &#1 &#2 &#32&#1 &#2 &#3 
&#1 &#2 &#3 &#1 &#2 &#32&#7&#7&#7 &#7 &#7 &#7 &#7 &#7 &#7 &#7 &#72&\tabularnewline
\vdummy&#7\bdummy&#7 &#7&#7
&#41&#50&#61&#42&#52&#60&#40&#51&#62&#11&#20&#31&#12&#22&#30&#10&#21&#32&#11&#20&#31
&#12&#22&#30&#10&#21&#32&#7&#7&#7 &#7 &#7 &#7 &#7 &#7 &#7 &#7 &#7\bdummy&\tabularnewline
\hline
\vdummy&#79&#7 &#7&#7
&#40&#5 &#6 &#40&#5 &#6 &#41&#5 &#6 &#41&#5 &#6 &#41&#5 &#6 &#42&#5 &#6 &#42&#5 &#6 
&#42&#5 &#6 &#40&#5 &#6 &#7&#7&#70&#70&#71&#71&#71&#72&#72&#72&#70&\tabularnewline
\vdummy&#71&#70&#7&#7
&#4 &#51&#6 &#4 &#5 &#6 &#4 &#5 &#6 &#4 &#52&#6 &#4 &#5 &#6 &#4 &#5 &#6 &#4 &#50&#6 
&#4 &#5 &#6 &#4 &#5 &#6 &#7&#7&#7 &#7 &#7 &#7 &#7 &#7 &#71&#72&#70&\tabularnewline
\vdummy&#71&#71&#7&#7
&#4 &#5 &#6 &#4 &#50&#6 &#4 &#50&#6 &#4 &#5 &#6 &#4 &#5 &#6 &#4 &#5 &#6 &#4 &#5 &#6 
&#4 &#5 &#6 &#4 &#5 &#6 &#7&#7&#7 &#7 &#7 &#7 &#7 &#7 &#7 &#70&#70&\tabularnewline
\vdummy&#71&#72&#7&#7
&#4 &#5 &#6 &#4 &#5 &#6 &#4 &#5 &#6 &#4 &#5 &#6 &#4 &#51&#6 &#4 &#52&#6 &#4 &#5 &#6 
&#4 &#52&#6 &#4 &#51&#6 &#7&#7&#7 &#7 &#7 &#7 &#7 &#71&#72&#72&#71&\tabularnewline
\vdummy&#71&#73&#7&#7
&#4 &#5 &#60&#4 &#5 &#61&#4 &#5 &#6 &#4 &#5 &#61&#4 &#5 &#62&#4 &#5 &#6 &#4 &#5 &#62
&#4 &#5 &#60&#4 &#5 &#6 &#7&#7&#7 &#7 &#7 &#70&#71&#71&#72&#72&#70&\tabularnewline
\vdummy&#71&#74&#7&#7
&#4 &#5 &#6 &#4 &#5 &#6 &#4 &#5 &#60&#4 &#5 &#6 &#4 &#5 &#6 &#4 &#5 &#6 &#4 &#5 &#6 
&#4 &#5 &#6 &#4 &#5 &#6 &#7&#7&#7 &#7 &#7 &#7 &#7 &#7 &#7 &#7 &#70&\tabularnewline
\vdummy&#71&#75&#7&#7
&#4 &#5 &#6 &#4 &#5 &#6 &#4 &#5 &#6 &#4 &#5 &#6 &#4 &#5 &#6 &#4 &#5 &#61&#4 &#5 &#6 
&#4 &#5 &#6 &#4 &#5 &#6 &#7&#7&#7 &#7 &#7 &#7 &#7 &#7 &#7 &#7 &#71&\tabularnewline
\vdummy&#71&#76&#7&#7
&#4 &#5 &#6 &#4 &#5 &#6 &#4 &#5 &#6 &#4 &#5 &#6 &#4 &#5 &#6 &#4 &#5 &#6 &#4 &#5 &#6 
&#4 &#5 &#6 &#4 &#5 &#62&#7&#7&#7 &#7 &#7 &#7 &#7 &#7 &#7 &#7 &#72&\tabularnewline
\vdummy&#7\bdummy&#7 &#7&#7
&#40&#51&#60&#40&#50&#61&#41&#50&#60&#41&#52&#61&#41&#51&#62&#42&#52&#61&#42&#50&#62
&#42&#52&#60&#40&#51&#62&#7&#7&#7 &#7 &#7 &#7 &#7 &#7 &#7 &#7 &#7\bdummy&
\end{tabular}
}

\begin{figure}[H]
\bgroup
\def\arraystretch{0.0}
\setlength\tabcolsep{0pt}
\begin{minipage}{5.6855cm}
\includegraphics[width=5.6855cm]{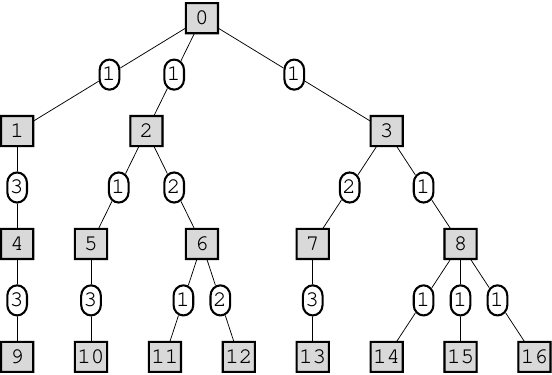}
\end{minipage}
$\rightarrow$
\temptable{\cellcolor{colorAd}}{\cellcolor{colorBd}}{\cellcolor{colorCd}}{\cellcolor{colorAl}}{\cellcolor{colorBl}}{\cellcolor{colorCl}}{}\llap{\temptable{\phantom}{\phantom}{\phantom}{\phantom}{\phantom}{\phantom}{\phantom}}
$\rightarrow$
\begin{tabular}{rc|cccc}
\vdots&\hdummy&\hdummy&\vdots&\vdots&\vdots\tabularnewline
\phantom{0}&\tabularnewline
\vdummys0&&&\cellcolor{colorAl}0&\cellcolor{colorAl}1&\cellcolor{colorAl}0\tabularnewline
\vdummys1&&&\cellcolor{colorBl}1&\cellcolor{colorBl}0&\cellcolor{colorBl}1\tabularnewline
\vdummys2&&&\cellcolor{colorCl}2&\cellcolor{colorCl}1&\cellcolor{colorCl}0\tabularnewline
\vdummys3&&&\cellcolor{colorAd}0&\cellcolor{colorAl}2&\cellcolor{colorAl}0\tabularnewline
\vdummys4&&&\cellcolor{colorBd}1&\cellcolor{colorBl}2&\cellcolor{colorBl}0\tabularnewline
\vdummys5&&&\cellcolor{colorCd}2&\cellcolor{colorCl}0&\cellcolor{colorCl}1\tabularnewline
\vdummys6&&&\cellcolor{colorAd}0&\cellcolor{colorAl}0&\cellcolor{colorAl}1\tabularnewline
\vdummys7&&&\cellcolor{colorBd}1&\cellcolor{colorBl}1&\cellcolor{colorBl}0\tabularnewline
\vdummys8&&&\cellcolor{colorCd}2&\cellcolor{colorCl}2&\cellcolor{colorCl}0\tabularnewline
\vdummys9&&&\cellcolor{colorAd}0&\cellcolor{colorAd}1&\cellcolor{colorAl}1\tabularnewline
\vdummys10&&&\cellcolor{colorBd}1&\cellcolor{colorBd}0&\cellcolor{colorBl}2\tabularnewline
\vdummys11&&&\cellcolor{colorCd}2&\cellcolor{colorCd}1&\cellcolor{colorCl}1\tabularnewline
\vdummys12&&&\cellcolor{colorAd}0&\cellcolor{colorAd}2&\cellcolor{colorAl}1\tabularnewline
\vdummys13&&&\cellcolor{colorBd}1&\cellcolor{colorBd}2&\cellcolor{colorBl}1\tabularnewline
\vdummys14&&&\cellcolor{colorCd}2&\cellcolor{colorCd}0&\cellcolor{colorCl}2\tabularnewline
\vdummys15&&&\cellcolor{colorAd}0&\cellcolor{colorAd}0&\cellcolor{colorAl}2\tabularnewline
\vdummys16&&&\cellcolor{colorBd}1&\cellcolor{colorBd}1&\cellcolor{colorBl}2\tabularnewline
\vdummys17&&&\cellcolor{colorCd}2&\cellcolor{colorCd}2&\cellcolor{colorCl}1\tabularnewline
\vdummys18&&&\cellcolor{colorAd}0&\cellcolor{colorAd}1&\cellcolor{colorAl}2\tabularnewline
\vdummys19&&&\cellcolor{colorBd}1&\cellcolor{colorBd}0&\cellcolor{colorBl}0\tabularnewline
\vdummys20&&&\cellcolor{colorCd}2&\cellcolor{colorCd}1&\cellcolor{colorCl}2\tabularnewline
\vdummys21&&&\cellcolor{colorAd}0&\cellcolor{colorAd}2&\cellcolor{colorAl}2\tabularnewline
\vdummys22&&&\cellcolor{colorBd}1&\cellcolor{colorBd}2&\cellcolor{colorBl}2\tabularnewline
\vdummys23&&&\cellcolor{colorCd}2&\cellcolor{colorCd}0&\cellcolor{colorCl}0\tabularnewline
\vdummys24&&&\cellcolor{colorAd}0&\cellcolor{colorAd}0&\cellcolor{colorAl}0\tabularnewline
\vdummys25&&&\cellcolor{colorBd}1&\cellcolor{colorBd}1&\cellcolor{colorBl}1\tabularnewline
\vdummys26&&&\cellcolor{colorCd}2&\cellcolor{colorCd}2&\cellcolor{colorCl}2\tabularnewline
\vdots&&&\vdots&\vdots&\vdots
\end{tabular}
\egroup
\caption{The image to the left shows layers $0$ to $3$ of a $3$-cycle tree. The table in the middle shows the corresponding sequences of entries $\Sf{E}(v)$ (right part) and the sequences of positions $\Sf{P}(v)$ (colored center part) for all $16$ vertices (excluding the root). For convenience, the top part of the table contains the indices of the respective positions in base $10$ and base $3$. The table to the right shows the resulting $3$-digit table $\STf{D}$ (cf. also Eqn.~(\ref{EBijDTSeq})).}
\label{FPermFromTree}
\end{figure}
\egroup

\noindent
The $3$-adic system $\FFf{F}$ corresponding to $\STf{D}$ in Figure~\ref{FPermFromTree} satisfies
\begin{align}
\FFf{F}(\underline{\smash{3^3}})\modulo3^2=(1,0,1,5,5,0,0,1,5,7,3,7,2,2,3,3,4,2,4,6,4,8,8,6,6,7,8)
\end{align}
(cf. Corollary~\ref{CComputeDT}~(2)), and the resulting $3$-adic permutation $\pi$ thus has the cycles (to improve readability we omit the square brackets indicating equivalence classes, i.e. we write $(0,3,6)$ for $[([0],[3],[6])]_{\sim_\sigma}$)
\begin{align}
\Sigma(\pi_0):&\;\;(0)\\
\Sigma(\pi_1):&\;\;(0),\;(1),\;(2)\\
\Sigma(\pi_2):&\;\;(0,3,6),\;(1),\;(4,7),\;(2,5),\;(8)\\
\Sigma(\pi_3):&\;\;(0,3,6,9,12,15,18,21,24),\;(1,10,19),\;(4,7),\;(13,16,22,25),\;(2,5,11,14,20,23),\\
\nonumber
&\;\;(8),\;(17),\;(26)
\end{align}
which coincide with the sequences $\Sf{P}(v)$ and also define the same tree as the one given in Figure~\ref{FPermFromTree}.

\begin{proof}[Proof of Theorem~\ref{TPermFromTree}]
We will prove the theorem by induction on the layer. It may be helpful to consult Figure~\ref{FPermFromTree} when following the argument. In the following we denote the set of vertices contained in the $k$-th layer of $\mathcal{G}$, $k\in\Nz$, by $\mathcal{V}_k=\set{v\in\mathcal{V}\mid k(v)=k}$ (analogously we set $\mathcal{V}(\pi)_k=\set{(\ell,\sigma)\in\mathcal{V}(\pi)\mid \ell=k}$).

The first thing we observe is that the $k$-th layer of the graph $\mathcal{G}(\pi)$ only depends on $\STf{D}\llbracket\intintuptoex{k}\rrbracket$, i.e. the columns $0$ to $k-1$ of $\STf{D}$. Additionally, by definition of $\STf{D}$ the $k$-th column $\STf{D}\llbracket k-1\rrbracket$ of $\STf{D}$ (note that we start at $0$ when indexing columns) is completely determined by ``the first $p^k$ entries'' of the column, i.e. the entries $\STf{D}[0][k-1],\ldots,\STf{D}[p^k-1][k-1]$ (the quotation marks are due to the fact that a column of $\STf{D}$ technically has entries for all $p$-adic integers on which the notion ``first'' does not make sense as there is no natural order on $\Z_p$). We thus define the sequence
\begin{align}
\Sf{S}_k\ce(\STf{D}[0][k-1],\ldots,\STf{D}[p^k-1][k-1])\in\CoSequences(\SPboundedby{\intintuptoex{p}},\SPlength{p^k})
\end{align}
for all $k\in\N$ (in the example given in Figure~\ref{FPermFromTree} these sequences are the lighter shaded parts of the columns of the rightmost table, i.e. $\Sf{S}_1=(0,1,2)$, $\Sf{S}_2=(1,0,1,2,2,0,0,1,2)$, and $\Sf{S}_3=(0,1,0,0,0,1,1,0,0,1,2,1,1,1,2,2,2,1,2,0,2,2,2,0,0,1,2)$). By definition of $\STf{D}$ we thus get
\begin{align}
\Sf{S}_k[j]=\STf{D}\left[\Sf{P}(v)[i]\right]\left[k-1\right]=\Sf{E}(v)[i]
\end{align}
for all $k\in\N$ and $j\in\intintuptoex{p^k}$ where $v\in\mathcal{V}_k$ and $i\in\intintuptoex{\abs{\Sf{P}(v)}}$ are the unique elements satisfying $j=\Sf{P}(v)[i]$ and whose existence and uniqueness we will prove in the following.

For that we first observe that $\Sf{P}(v)$ is $\intintuptoex{p^{k(v)}}$-bounded for all $v\in\mathcal{V}$ which can easily be shown by induction on $k$ using the fact that the labels of all vertices that share a parent vertex, sum up to $p$. In the same way it can be readily verified that $\sum_{v\in\mathcal{V}_k}\abs{\Sf{P}(v)}=p^k$ for all $k\in\Nz$. We are thus left to show that for all $k\in\Nz$, $\Sf{P}(v)$ and $\Sf{P}(w)$ don't have any common entries if $v$ and $w$ are distinct elements of $\mathcal{V}_k$. This follows again by induction on $k$ (in order to conclude that $\Sf{P}(v)$ and $\Sf{P}(w)$ don't share any entries if $v$ and $w$ have distinct parent vertices) and by definition of the offset function $o$ (in order to conclude that $\Sf{P}(v)$ and $\Sf{P}(w)$ don't share any entries if $v$ and $w$ have the same parent vertex). Altogether we just have proven that for all $k\in\Nz$ and $j\in\intintuptoex{p^k}$ there is a unique $v\in\mathcal{V}_k$ and $i\in\intintuptoex{\abs{\Sf{P}(v)}}$ satisfying $j=\Sf{P}(v)[i]$. Together with the fact that $\abs{\Sf{P}(v)}=\abs{\Sf{E}(v)}$ for all $v\in\mathcal{V}$ this implies that the sequences $\Sf{S}_k$, $k\in\N$ and consequently $\STf{D}$ are well-defined.

Next we note that the concatenation $\Sf{S}\ce\prod_{k=1}^\infty\Sf{S}_k$ of all sequences $\Sf{S}_k$ is exactly the infinite sequence corresponding to $\STf{D}$ in the sense of Eqn.~(\ref{EBijDTSeq}). Again by induction on the layer $k$ and by the definition of $o$ it can be shown that $\Sf{S}$ has the $(p,k)$-block property and thus $\STf{D}\in\SoTables{p}$.

We are now ready to proceed with the induction argument to show that $\varphi$ is an isomorphism between $(\mathcal{G}(\pi),c(\pi))$ (which we now know to be well-defined) and $(\mathcal{G},c)$.  Clearly, the zeroth layers of $(\mathcal{G},c)$ and $(\mathcal{G}(\pi),c(\pi))$ only consist of the respective roots which are mapped to each other by $\varphi$. Now assume that $\varphi$ is an isomorphism between the layers $0$ to $k-1$ of $(\mathcal{G},c)$ and $(\mathcal{G}(\pi),c(\pi))$ for some $k\in\N$ and let $v\in\mathcal{V}_{k-1}$ and $v'\ce\varphi(v)$. Then it follows from the induction hypothesis that $v'=(k-1,\Sf{P}(v))\in\mathcal{V}(\pi)_{k-1}$ (to economize notation we will identify $\Sf{P}(v)$ and $[([\Sf{P}(v)[0]],\ldots,[\Sf{P}(v)[\abs{\Sf{P}(v)}-1]])]_{\sim_\sigma}$). We are left to show that $\varphi$ maps the children of $v$ exactly to the children of $v'$ and also preserves the labels of all edges between $v$ and its children. For that let $w\in\mathcal{V}_k$ such that $vw\in\mathcal{E}$. We need to show that $w'\ce\varphi(w)=(k,\Sf{P}(w))\in\mathcal{V}(\pi)$, $v'w'\in\mathcal{E}(\pi)$ and $\abs{\Sf{P}(w)}/\abs{\Sf{P}(v)}=c(w)=c(vw)$. The latter part follows directly from the definition of $\Sf{P}$. To prove the other two statements we observe that
\begin{align}
\label{EPermFromTree}
{\psi_{(x)^p,k-1}}^{-1}\circ\psi_{\FFf{F},k-1}(\Sf{P}(v)[i])=\pi_{k-1}(\Sf{P}(v)[i])=\Sf{P}(v)[(i+1)\modulo\abs{\Sf{P}(v)}]
\end{align}
for all $i\in\intintuptoex{\abs{\Sf{P}(v)}}$, where
\begin{align}
\label{Dpsik}
\psi_{\FFf{F},k}:\Z_p\slash p^k\Z_p&\to\CoSequences(\SPboundedby{\intintuptoex{p}},\SPlength{k})\\
\nonumber
n&\mapsto\psi_\FFf{F}(n)[\intintuptoex{k}]
\end{align}
for any $p$-adic system $\FFf{F}$ and $k\in\Nz$ (again we omit the square brackets indicating equivalence classes to improve readability). Rearranging Eqn.~(\ref{EPermFromTree}) results in
\begin{align}
\psi_{\FFf{F},k-1}(\Sf{P}(v)[i])=\psi_{(x)^p,k-1}\left(\Sf{P}(v)[(i+1)\modulo\abs{\Sf{P}(v)}]\right)
\end{align}
for all $i\in\intintuptoex{\abs{\Sf{P}(v)}}$, i.e. the first $k-1$ digits of the standard base $p$ expansion of $\Sf{P}(v)[(i+1)\modulo\abs{\Sf{P}(v)}]\in\intintuptoex{p^{k-1}}$. Furthermore, from the definitions of $\Sf{P}$ and $\Sf{E}$ it follows that
\begin{align}
\Sf{P}(w)&=\prod_{i=0}^{c(w)-1}\left(P(v)+(o(w)+i)p^{k-1}\right)\\
\Sf{E}(w)&=\prod_{i=0}^{c(w)-1}(o(w)+i)^{\abs{\Sf{P}(v)}-1}\cdot(o(w)+(i+1)\modulo c(w)).
\end{align}
We thus get
\begin{align}
\psi_{\FFf{F},k}(\Sf{P}(w)[i])
&=\STf{D}[\Sf{P}(w)[i]][\intintuptoex{k}]\\
&=\STf{D}[\Sf{P}(v)[i\modulo\abs{\Sf{P}(v)}][\intintuptoex{k-1}]\cdot(\Sf{E}[w][i])\\
&=\psi_{\FFf{F},k-1}\left(\Sf{P}(v)[i\modulo\abs{\Sf{P}(v)}]\right)\cdot(\Sf{E}(w)[i])\\
&=\psi_{(x)^p,k-1}\left(\Sf{P}(v)[(i+1)\modulo\abs{\Sf{P}(v)}]\right)\cdot\\
\nonumber
&\phantom{=\vphantom{a}}\left(o(w)+(\floor{i/\abs{\Sf{P}(v)}}+(i\modulo\abs{\Sf{P}(v)}\neq\abs{\Sf{P}(v)}-1\;?\;0:1))\modulo c(w)\right)\\
&=\psi_{(x)^p,k-1}\left(\Sf{P}(v)[(i+1)\modulo\abs{\Sf{P}(v)}]\right)\cdot\left(o(w)+\floor{(i+1)/\abs{\Sf{P}(v)}}\modulo c(w)\right)\\
&=\psi_{(x)^p,k}\left(\Sf{P}(w)[(i+1)\modulo\abs{\Sf{P}(w)}]\right)
\end{align}
for all $i\in\intintuptoex{\abs{\Sf{P}(w)}}$, or equivalently,
\begin{align}
\pi_{k}(\Sf{P}(w)[i])={\psi_{(x)^p,k}}^{-1}\circ\psi_{\FFf{F},k}(\Sf{P}(w)[i])=\Sf{P}(w)[(i+1)\modulo\abs{\Sf{P}(w)}].
\end{align}
Since all elements of $\Sf{P}(w)$ are mutually distinct modulo $p^k$, $\Sf{P}(w)$ is a cycle of $\pi_k$ and consequently $w'\in\mathcal{V}(\pi)$. In addition, $\Sf{P}[w]\modulo p^{k-1}=\Sf{P}[v]^{c(w)}$ and thus $v'w'\in\mathcal{E}(\pi)$ which completes the proof.
\end{proof}

The following theorem characterizes completely the sets of all isomorphism classes of trees with up to $4$ layers which may occur as subtrees of trees of cycles of $2$-adic permutations defined by $\Z_2$-polynomial $2$-adic systems or $2$-permutation polynomials. This will allow also to show that the sets of all $p$-adic permutations defined by $\Z_p$-polynomial $p$-adic systems and $p$-permutation polynomials respectively, have both exclusive elements (that the two sets are not disjoint either follows from the simple observation that $\pi_{\FFf{F},\FFf{F}}(n)=n$ for all $p$-adic systems $\FFf{F}$ and $n\in\Z_p$).

\begin{theorem}
\label{TSubtrees}
For $2\leq p\in\N$ and $k\in\Nz$ let
\begin{align}
\mathrlap{S_{p,k}}\phantom{U_{p,k}}&\ce\big\{\text{isomorphism class of } T\mid\FFf{F},\FFf{G}\in\SoSystems{p}(\FFPpolynomialcoefficients{\Z_p})\\
\nonumber
&\phantom{\vphantom{a}\ce\big\{\text{isomorphism class of } T\mid\vphantom{a}}T\text{ full $k$-layer rooted subtree of $(\mathcal{G}(\pi_{\FFf{F},\FFf{G}}),c(\pi_{\FFf{F},\FFf{G}}))$}\big\}\\
\mathrlap{T_{p,k}}\phantom{U_{p,k}}&\ce\big\{\text{isomorphism class of } T\mid\FFf{F},\FFf{G}\in\SoSystems{p}(\FFPpolynomialcoefficients{\Z_p})\\
\nonumber
&\phantom{\vphantom{a}\ce\big\{\text{isomorphism class of } T\mid\vphantom{a}} T\text{ full $k$-layer rooted subtree of $(\mathcal{G}(\pi_{\FFf{F},\FFf{G}}),c(\pi_{\FFf{F},\FFf{G}}))$}\\
\nonumber
&\phantom{\vphantom{a}\ce\big\{\text{isomorphism class of } T\mid\vphantom{a}}\abs{\sigma}>1\text{ for root $(\ell,\sigma)$ of $T$}\big\}\\
\mathrlap{U_{p,k}}\phantom{U_{p,k}}&\ce\big\{\text{isomorphism class of } T\mid f\in\Z_p[x]\text{ $p$-permutation polynomial}\\
\nonumber
&\phantom{\vphantom{a}\ce\big\{\text{isomorphism class of } T\mid\vphantom{a}}T\text{ full $k$-layer rooted subtree of $(\mathcal{G}(f),c(f))$}\big\}\\
\mathrlap{V_{p,k}}\phantom{U_{p,k}}&\ce\big\{\text{isomorphism class of } T\mid f\in\Z_p[x]\text{ $p$-permutation polynomial}\\
\nonumber
&\phantom{\vphantom{a}\ce\big\{\text{isomorphism class of } T\mid\vphantom{a}}T\text{ full $k$-layer rooted subtree of $(\mathcal{G}(f),c(f))$}\\
\nonumber
&\phantom{\vphantom{a}\ce\big\{\text{isomorphism class of } T\mid\vphantom{a}}\abs{\sigma}>1\text{ for root $(\ell,\sigma)$ of $T$}\big\}.
\end{align}
Then,
\begin{align}
\abs{S_{2,2}}&=5&\abs{S_{2,3}}&=20&\abs{S_{2,4}}&=71\\
\abs{T_{2,2}}&=5&\abs{T_{2,3}}&=12&\abs{T_{2,4}}&=50\\
\abs{U_{2,2}}&=5&\abs{U_{2,3}}&=18&\abs{U_{2,4}}&=83\\
\abs{V_{2,2}}&=3&\abs{V_{2,3}}&=5&\abs{V_{2,4}}&=7
\end{align}
and all these sets are given in Figure~\ref{FSubtrees} where membership in the respective sets is indicated by the black boxes below each graph in the order $S$, $T$, $U$, $V$.
\end{theorem}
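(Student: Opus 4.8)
The plan is to reduce the four equalities to a finite verification, by exploiting that a $k$-layer subtree of a tree of cycles is governed by a bounded amount of combinatorial data, and then to carry out the corresponding enumeration. First I would record the ambient finiteness. By Theorem~\ref{TCycles}, for $p=2$ every vertex of a $2$-cycle tree carries either a single child joined by an edge of label $2$ or exactly two children joined by edges of label $1$, so the number $a_k$ of isomorphism classes of edge-labeled $2$-cycle trees of depth $k$ satisfies $a_k = a_{k-1} + \binom{a_{k-1}+1}{2}$ with $a_0=1$. This gives $a_2=5$, $a_3=20$, $a_4=230$, and by Theorem~\ref{TPermFromTree} each of these $a_k$ trees does occur as a subtree of $\mathcal{G}(\pi)$ for some $\pi\in\SoPermutations{2}$; hence $S_{2,k},T_{2,k},U_{2,k},V_{2,k}$ are subsets of this explicit finite set and the whole content lies in deciding which trees survive the restriction to the two special classes. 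The subsets $T$ and $V$ arise from $S$ and $U$ by discarding every tree whose root $(\ell,\sigma)$ has $\abs{\sigma}=1$; by Corollary~\ref{CCycleLengths} the root length is a power of $2$, so this condition is read off the edge labels.

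Next I would set up the two finite searches. By Corollary~\ref{CCycles} the layers $0,\ldots,\ell+k$ of $\mathcal{G}(\pi)$, hence every $k$-layer subtree rooted at depth $\ell$, are determined by the induced permutations $\pi_1,\ldots,\pi_{\ell+k}$ of $\Z/p^{\ell+k}\Z$. For $\pi=\pi_{\FFf{F},\FFf{G}}$ this depends only on $\FFFDT(\FFf{F})\llbracket\intintuptoex{\ell+k}\rrbracket$ and $\FFFDT(\FFf{G})\llbracket\intintuptoex{\ell+k}\rrbracket$; by Theorem~\ref{TReduceDegree} every $\Z_p$-polynomial $p$-adic system shares its length-$(\ell+k)$ truncated digit table with one whose branches have degree $<\ell+k$ and coefficients in $\intintuptoex{p^{\ell+k}}$, and by Corollary~\ref{CPropPolyF}(2) such a tuple is a $p$-adic system as soon as it has the block property at level $\ell+k$, a condition testable on finitely many residues. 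Thus for each fixed modulus only finitely many truncated tables occur, in an explicitly computable family, and the subtrees rooted near the top are obtained by direct computation. For class $U$ the reduction is cleaner still: by Theorem~\ref{TCharacPermPoly} the relevant permutations are induced by $2$-permutation polynomials, the induced map on $\Z/p^{\ell+k}\Z$ depends only on $f\bmod p^{\ell+k}$ (finitely many polynomial functions), and Lemma~\ref{LCharacPermPoly} reduces the permutation test to $\Z/p^2\Z$.

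These searches yield all subtree types produced by roots of bounded depth, which gives the membership (lower-bound) half of every count. The delicate half is \emph{exhaustiveness}: one must show that roots at unbounded depth $\ell$ create no new type, and dually that the trees declared absent really are unrealizable in the relevant class. For $S$ I would argue that the portion of $\mathcal{G}(\pi_{\FFf{F},\FFf{G}})$ above a level-$\ell$ cycle is itself the tree of cycles of an induced $p$-adic permutation on the corresponding tube of residues, and that this induced permutation again lies in the polynomial class, so that any depth-$k$ subtree occurring at depth $\ell$ already occurs at depth $0$; combined with the top-level search this closes the $S$- and $T$-lists. I expect this bounding of the root depth, together with the non-realizability arguments for $U$ — where the polynomiality of $f$ must be shown to obstruct the two depth-$3$ trees missing from $U_{2,3}$ and the further trees missing at depth $4$, presumably through the derivative/Hensel-type constraints of Theorem~\ref{TPropPolyF}(3) and Theorem~\ref{TSuitFUnRootEq} — to be the main obstacle. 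The positive realizations are routine, but ruling out all realizations across all depths demands a genuine structural invariant separating the classes.

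Finally I would assemble the data: intersecting each computed list with the ambient set of $a_k$ trees yields $\abs{S_{2,k}},\abs{T_{2,k}},\abs{U_{2,k}},\abs{V_{2,k}}$, which I would tabulate and match against Figure~\ref{FSubtrees}, recording membership by the four boxes in the order $S,T,U,V$. The stated values $5,20,71$; $5,12,50$; $5,18,83$; $3,5,7$ then follow. Since $\abs{S_{2,3}}=20\neq18=\abs{U_{2,3}}$ and $\abs{S_{2,4}}=71\neq83=\abs{U_{2,4}}$, each class contains subtree types absent from the other, which is precisely the separation of $\Z_p$-polynomial $p$-adic systems from $p$-permutation polynomials that the section is aiming for.
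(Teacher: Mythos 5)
Your outline gets the overall architecture right (finite ambient set of $5$, $20$, $230$ trees; concrete realizations for the lower bounds; an exclusion argument for the upper bounds), and the realization half is indeed handled in the paper exactly as you predict, by a long table of explicit pairs $\FFf{F},\FFf{G}$ and polynomials $f$. The genuine gap is in your exhaustiveness argument. For the class $S$ you propose that the subtree above a level-$\ell$ cycle is the tree of cycles of an induced permutation on a ``tube'' $a+2^\ell\Z_2$ which \emph{again lies in the polynomial class}, so that every subtree occurring at depth $\ell$ already occurs at depth $0$. You give no argument for this self-similarity, and it is not available: conjugating $\pi_{\FFf{F},\FFf{G}}$ by $x\mapsto a+2^\ell x$ does not return a permutation of the form $\pi_{\FFf{F}',\FFf{G}'}$ with $\FFf{F}',\FFf{G}'$ $\Z_2$-polynomial, and nothing in the paper supports such a closure property. (It would also not close the $T$-list, since whether a root has $\abs{\sigma}>1$ is not readable from the subtree's own edge labels but from its position in the ambient tree.) For $U$ you defer entirely to ``presumably derivative/Hensel-type constraints,'' which is not a proof.

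What the paper actually does for this half is to prove \emph{depth-uniform} congruence relations on the digit functions $\varphi_{\pi,k}(n)$ (the $k$-th binary digit of $\pi(n)$): relation (1) for $\pi=\pi_{\FFf{F},\FFf{G}}$ with polynomial $\FFf{F},\FFf{G}$, and relations (2)--(4) for $2$-permutation polynomials, each holding for all $k$ and all $n\in\Z_2$ simultaneously. Relation (1) is obtained by first reducing to $\FFf{G}=\FFf{F}_2$ via a finite check that the relation is stable under composition and inversion of $2$-adic permutations, then verifying the base case $k=0$ by a finite search (using Theorem~\ref{TReduceDegree} to bound degrees and coefficients), and then propagating to all $k$ by the second-difference congruence $\FFf{F}^k(n)-\FFf{F}^k(n+2^{k+\ell})\equiv\FFf{F}^k(n+2^k)-\FFf{F}^k(n+2^k+2^{k+\ell})\modulus{2^{\ell+2}}$, proved by induction. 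Relations (2)--(4) come from a Taylor expansion of $f(n+c2^k)$ modulo $2^{k+5}$, which reduces everything to finitely many residues of $f(n)$, $f'(n)$, $f''(n)/2$. The excluded trees are then shown to force a parity violation of these relations (the paper works this out for tree $4$-$52$). Without a mechanism of this kind that is valid at every depth, your enumeration only certifies subtrees rooted near the top and cannot rule out new isomorphism types appearing at large $\ell$, so the claimed equalities $\abs{S_{2,4}}=71$, $\abs{U_{2,4}}=83$, etc.\ remain unproved.
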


\noindent
Note that Figure~\ref{FSubtrees} lists all possible isomorphism classes of $2$-, $3$-, and $4$-layer rooted trees with out-degrees in $\set{1,2}$ of which there are $5$, $20$, and $230$ respectively. Also note that the edge labels are not shown as they are uniquely fixed by the graph itself in the case $p=2$.

\begin{figure}[H]
\centering
\includegraphics[width=\textwidth]{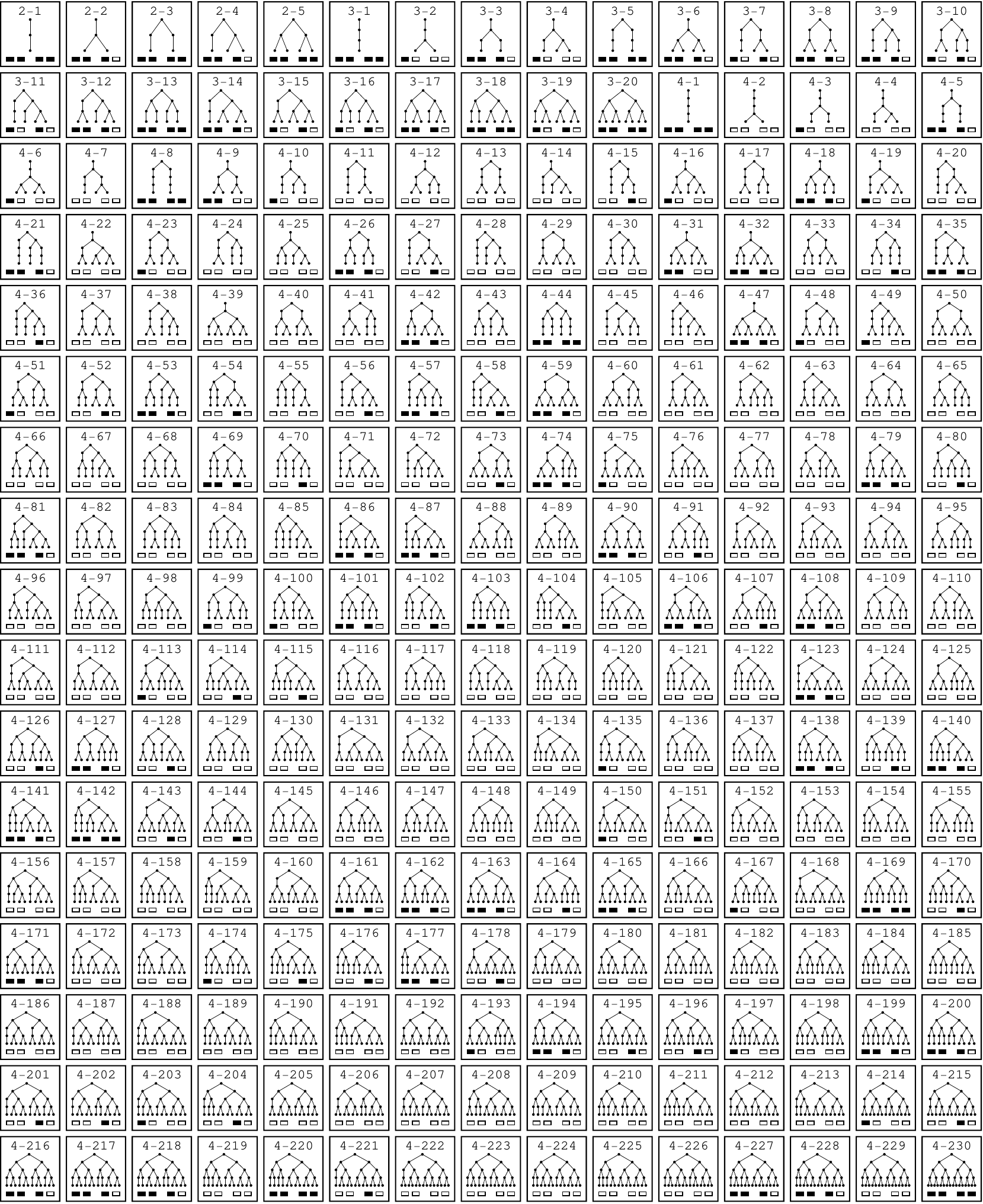}
\caption{The sets $S_{p,k}$, $T_{p,k}$, $U_{p,k}$, and $V_{p,k}$ for $p=2$ and $k\in\set{2,3,4}$.}
\label{FSubtrees}
\end{figure}

\begin{proof}[Proof of Theorem~\ref{TSubtrees}]
We begin with the easier part of the proof and show that all the isomorphism classes marked by black boxes in Figure~\ref{FSubtrees} actually are elements of the respective sets by listing examples of $2$-adic permutations $\pi_{\FFf{F},\FFf{G}}$, where $\FFf{F}$ and $\FFf{G}$ are $\Z_2$-polynomial $2$-adic systems, and $2$-adic permutations defined by $2$-permutation polynomials whose trees of cycles contain the given isomorphism classes as subtrees. These examples are summarized in the following table, where the $\Z_2$-polynomial $2$-adic system $\FFf{F}$ (for $S_{2,k}$ and $T_{2,k}$) and the $2$-permutation polynomial $f$ (for $U_{2,k}$ and $V_{2,k}$) are given and $\FFf{G}\ce(x,x-1)\in\SoSystems{2}$ is supposed to be fixed. Note that all claimed subtrees are found within the layers $0$ to $8$ of $\mathcal{G}(\pi_{\FFf{F},\FFf{G}})$ and $\mathcal{G}(f)$ respectively.

\bgroup
\fontsize{6}{0}\selectfont
\allowdisplaybreaks
\begin{align*}
\textbf{Subtree}&&&\mathbf{S_{2,k},\;\FFf{F}=\ldots}&&\mathbf{T_{2,k},\;\FFf{F}=\ldots}&&\mathbf{U_{2,k},\;f=\ldots}&&\mathbf{V_{2,k},\;f=\ldots}\\
\hline\\
2-1:&&&\smash{(x\!+\!2,x\!+\!1)}&&\smash{(x\!+\!2,x\!+\!1)}&&\smash{x\!+\!2}&&\smash{x\!+\!2}\\
2-2:&&&\smash{(x,x\!+\!1)}&&\smash{(3x\!+\!2,x\!+\!1)}&&\smash{3x}&&\\
2-3:&&&\smash{(x\!+\!2,x\!+\!1)}&&\smash{(3x\!+\!2,3x\!+\!3)}&&\smash{x\!+\!2}&&\smash{3x}\\
2-4:&&&\smash{(x,x\!+\!1)}&&\smash{(3x,x\!+\!1)}&&\smash{3x}&&\\
2-5:&&&\smash{(3x,x\!+\!1)}&&\smash{(x,x\!+\!1)}&&\smash{x}&&\smash{2x^2\!+\!3x\!+\!2}\\
3-1:&&&\smash{(x\!+\!2,x\!+\!1)}&&\smash{(x\!+\!2,x\!+\!1)}&&\smash{x\!+\!2}&&\smash{x\!+\!2}\\
3-2:&&&\smash{(3x\!+\!2,x\!+\!1)}&&&&&&\\
3-3:&&&\smash{(x\!+\!4,x\!+\!1)}&&\smash{(3x\!+\!2,x\!+\!3)}&&\smash{3x}&&\\
3-4:&&&\smash{(3x,x\!+\!1)}&&&&&&\\
3-5:&&&\smash{(x\!+\!2,x\!+\!1)}&&\smash{(x\!+\!4,x\!+\!1)}&&\smash{x\!+\!2}&&\smash{3x}\\
3-6:&&&\smash{(x,x\!+\!1)}&&\smash{(3x\!+\!2,x\!+\!1)}&&\smash{2x^2\!+\!3x\!+\!2}&&\\
3-7:&&&\smash{(3x\!+\!2,x\!+\!1)}&&&&\smash{2x^3\!+\!3x\!+\!2}&&\\
3-8:&&&\smash{(3x\!+\!2,3x\!+\!3)}&&\smash{(2x^2\!+\!x\!+\!4,x^2\!+\!x)}&&\smash{2x^2\!+\!3x\!+\!2}&&\\
3-9:&&&\smash{(3x\!+\!4,3x\!+\!3)}&&\smash{(x^2\!+\!x\!+\!2,x^2\!+\!x)}&&\smash{2x^2\!+\!x\!+\!2}&&\\
3-10:&&&\smash{(3x\!+\!4,x\!+\!1)}&&&&\smash{2x^3\!+\!x\!+\!4}&&\\
3-11:&&&\smash{(3x\!+\!2,x\!+\!3)}&&&&\smash{2x^3\!+\!x\!+\!2}&&\\
3-12:&&&\smash{(x,x\!+\!1)}&&\smash{(x\!+\!2,3x\!+\!1)}&&\smash{3x}&&\\
3-13:&&&\smash{(x\!+\!4,x\!+\!3)}&&\smash{(3x\!+\!2,3x\!+\!3)}&&\smash{x\!+\!4}&&\smash{2x^2\!+\!3x\!+\!2}\\
3-14:&&&\smash{(3x,3x\!+\!1)}&&\smash{(3x,x\!+\!1)}&&\smash{2x^2\!+\!x}&&\\
3-15:&&&\smash{(3x,x\!+\!1)}&&&&\smash{2x^3\!+\!x}&&\\
3-16:&&&\smash{(3x,x\!+\!1)}&&&&\smash{2x^3\!+\!3x}&&\\
3-17:&&&\smash{(3x,3x\!+\!1)}&&\smash{(2x^2\!+\!x,x^2\!+\!x)}&&\smash{2x^2\!+\!3x}&&\\
3-18:&&&\smash{(x,x\!+\!3)}&&\smash{(x^2\!+\!3x,x^2\!+\!x)}&&\smash{2x^2\!+\!x}&&\smash{x^4\!+\!3x^2\!+\!x\!+\!2}\\
3-19:&&&\smash{(x,3x\!+\!1)}&&&&\smash{2x^3\!+\!3x\!+\!4}&&\\
3-20:&&&\smash{(2x^2\!+\!x,x^2\!+\!x\!+\!2)}&&\smash{(x,x\!+\!1)}&&\smash{x}&&\smash{2x^3\!+\!2x^2\!+\!3x\!+\!2}\\
4-1:&&&\smash{(x\!+\!2,x\!+\!1)}&&\smash{(x\!+\!2,x\!+\!1)}&&\smash{x\!+\!2}&&\smash{x\!+\!2}\\
4-3:&&&\smash{(3x\!+\!2,x\!+\!3)}&&&&&&\\
4-5:&&&\smash{(x\!+\!4,x\!+\!1)}&&\smash{(3x\!+\!2,x\!+\!3)}&&\smash{3x}&&\\
4-6:&&&\smash{(3x,x\!+\!1)}&&&&&&\\
4-8:&&&\smash{(x\!+\!2,x\!+\!1)}&&\smash{(x\!+\!4,x\!+\!1)}&&\smash{x\!+\!2}&&\smash{3x}\\
4-9:&&&\smash{(3x\!+\!4,x\!+\!3)}&&\smash{(3x\!+\!4,x\!+\!3)}&&&&\\
4-10:&&&\smash{(3x,x\!+\!1)}&&&&&&\\
4-15:&&&&&&&\smash{2x^3\!+\!3x\!+\!2}&&\\
4-16:&&&\smash{(3x,x\!+\!1)}&&&&&&\\
4-18:&&&\smash{(3x\!+\!2,3x\!+\!3)}&&\smash{(4x^2\!+\!x\!+\!4,x^2\!+\!x)}&&\smash{7x}&&\\
4-19:&&&\smash{(3x,x\!+\!1)}&&&&&&\\
4-21:&&&\smash{(x\!+\!4,3x\!+\!1)}&&\smash{(x\!+\!4,3x\!+\!1)}&&\smash{2x^2\!+\!x\!+\!2}&&\\
4-23:&&&\smash{(3x,x\!+\!1)}&&&&&&\\
4-26:&&&\smash{(3x^2\!+\!3x\!+\!4,x^2\!+\!x)}&&\smash{(2x^2\!+\!x,x^2\!+\!x\!+\!2)}&&\smash{4x^3\!+\!2x^2\!+\!7x\!+\!2}&&\\
4-27:&&&&&&&\smash{2x^3\!+\!7x\!+\!2}&&\\
4-31:&&&\smash{(3x\!+\!2,x\!+\!1)}&&\smash{(3x\!+\!2,x\!+\!1)}&&&&\\
4-32:&&&\smash{(3x\!+\!4,x\!+\!1)}&&\smash{(3x\!+\!4,x\!+\!1)}&&\smash{x^4\!+\!x^2\!+\!x\!+\!6}&&\\
4-34:&&&&&&&\smash{2x^3\!+\!x\!+\!4}&&\\
4-35:&&&\smash{(3x,3x\!+\!1)}&&\smash{(2x^2\!+\!3x\!+\!4,x^2\!+\!x)}&&\smash{x^4\!+\!2x^3\!+\!x^2\!+\!3x\!+\!4}&&\\
4-36:&&&&&&&\smash{x^4\!+\!x^2\!+\!x\!+\!4}&&\\
4-42:&&&\smash{(x,3x\!+\!3)}&&\smash{(x,3x\!+\!3)}&&\smash{2x^2\!+\!3x\!+\!2}&&\\
4-44:&&&\smash{(x\!+\!4,x\!+\!3)}&&\smash{(3x\!+\!2,3x\!+\!3)}&&\smash{x\!+\!4}&&\smash{2x^2\!+\!3x\!+\!2}\\
4-47:&&&\smash{(x,x\!+\!1)}&&\smash{(x\!+\!2,3x\!+\!3)}&&\smash{2x^2\!+\!3x}&&\\
4-48:&&&\smash{(3x\!+\!2,x\!+\!3)}&&&&&&\\
4-49:&&&\smash{(x,3x\!+\!3)}&&&&&&\\
4-51:&&&\smash{(3x\!+\!4,x\!+\!1)}&&&&&&\\
4-52:&&&&&&&\smash{x^4\!+\!x^2\!+\!x\!+\!2}&&\\
4-53:&&&\smash{(3x\!+\!4,x\!+\!3)}&&\smash{(3x\!+\!4,x\!+\!3)}&&\smash{x^4\!+\!2x^3\!+\!3x^2\!+\!x\!+\!6}&&\\
4-54:&&&&&&&\smash{2x^3\!+\!5x\!+\!4}&&\\
4-56:&&&&&&&\smash{2x^3\!+\!x\!+\!2}&&\\
4-57:&&&\smash{(2x^2\!+\!3x,x^2\!+\!x\!+\!2)}&&\smash{(2x^2\!+\!3x,x^2\!+\!x\!+\!2)}&&\smash{2x^2\!+\!x\!+\!6}&&\\
4-58:&&&&&&&\smash{x^4\!+\!x^2\!+\!x}&&\\
4-59:&&&\smash{(3x\!+\!2,3x\!+\!1)}&&\smash{(2x^2\!+\!x,x^2\!+\!x)}&&\smash{4x^3\!+\!2x^2\!+\!3x\!+\!2}&&\\
4-69:&&&\smash{(2x^2\!+\!x\!+\!4,x^2\!+\!x\!+\!2)}&&\smash{(4x^2\!+\!3x\!+\!2,x^2\!+\!x\!+\!2)}&&\smash{x^5\!+\!3x^3\!+\!5x\!+\!4}&&\\
4-70:&&&&&&&\smash{x^5\!+\!x^3\!+\!2x^2\!+\!x\!+\!4}&&\\
4-73:&&&&&&&\smash{x^4\!+\!3x^2\!+\!3x\!+\!4}&&\\
4-74:&&&\smash{(2x^2\!+\!3x\!+\!4,x^2\!+\!x)}&&\smash{(2x^2\!+\!3x\!+\!4,x^2\!+\!x)}&&\smash{x^4\!+\!2x^3\!+\!3x^2\!+\!x\!+\!2}&&\\
4-75:&&&\smash{(3x\!+\!4,x\!+\!3)}&&&&&&\\
4-79:&&&\smash{(x\!+\!4,x\!+\!1)}&&\smash{(x\!+\!2,3x\!+\!1)}&&\smash{3x}&&\\
4-80:&&&&&&&\smash{4x^4\!+\!2x^3\!+\!6x^2\!+\!7x\!+\!6}&&\\
4-81:&&&\smash{(x,3x\!+\!1)}&&\smash{(x,3x\!+\!1)}&&\smash{x^4\!+\!2x^3\!+\!3x^2\!+\!x}&&\\
4-86:&&&\smash{(3x,x\!+\!1)}&&\smash{(3x,x\!+\!1)}&&\smash{x^4\!+\!2x^3\!+\!x^2\!+\!3x}&&\\
4-87:&&&\smash{(2x^2\!+\!x\!+\!4,x^2\!+\!x)}&&\smash{(4x^2\!+\!x,x^2\!+\!x)}&&\smash{2x^2\!+\!x}&&\\
4-90:&&&\smash{(x^2\!+\!3x\!+\!4,x^2\!+\!x\!+\!2)}&&\smash{(3x^2\!+\!3x\!+\!2,2x^2\!+\!3x\!+\!3)}&&\smash{x^4\!+\!2x^3\!+\!3x^2\!+\!3x\!+\!4}&&\\
4-91:&&&&&&&\smash{x^4\!+\!x^2\!+\!3x\!+\!4}&&\\
4-99:&&&\smash{(3x^2\!+\!x\!+\!4,x^2\!+\!x\!+\!2)}&&&&&&\\
4-100:&&&\smash{(3x\!+\!4,x\!+\!3)}&&&&&&\\
4-101:&&&\smash{(x^2\!+\!3x\!+\!2,x^2\!+\!x\!+\!2)}&&\smash{(2x^2\!+\!3x,x^2\!+\!3x)}&&\smash{x^4\!+\!2x^3\!+\!x^2\!+\!x\!+\!4}&&\\
4-102:&&&&&&&\smash{2x^3\!+\!3x}&&\\
4-103:&&&\smash{(3x\!+\!4,3x\!+\!3)}&&\smash{(3x\!+\!4,3x\!+\!3)}&&\smash{4x^2\!+\!x\!+\!4}&&\\
4-104:&&&&&&&\smash{x^5\!+\!x^3\!+\!2x^2\!+\!5x\!+\!4}&&\\
4-106:&&&\smash{(x,x\!+\!1)}&&\smash{(x\!+\!4,3x\!+\!1)}&&\smash{7x}&&\\
4-107:&&&&&&&\smash{4x^4\!+\!6x^3\!+\!6x^2\!+\!3x\!+\!6}&&\\
4-108:&&&\smash{(x\!+\!4,3x\!+\!3)}&&\smash{(x\!+\!4,3x\!+\!3)}&&\smash{x^4\!+\!2x^3\!+\!3x^2\!+\!x\!+\!4}&&\\
4-113:&&&\smash{(3x,x\!+\!1)}&&&&&&\\
4-114:&&&&&&&\smash{x^4\!+\!3x^2\!+\!3x}&&\\
4-115:&&&&&&&\smash{6x^3\!+\!5x}&&\\
4-123:&&&\smash{(3x\!+\!4,x\!+\!1)}&&\smash{(3x\!+\!4,x\!+\!1)}&&\smash{2x^2\!+\!5x}&&\\
4-126:&&&&&&&\smash{x^5\!+\!x^3\!+\!3x}&&\\
4-127:&&&\smash{(2x^2\!+\!3x\!+\!4,x^2\!+\!3x\!+\!4)}&&\smash{(3x^2\!+\!3x\!+\!2,2x^2\!+\!3x\!+\!1)}&&\smash{x^5\!+\!5x^4\!+\!x^3\!+\!7x^2\!+\!7x\!+\!4}&&\\
4-128:&&&&&&&\smash{x^4\!+\!x^2\!+\!7x\!+\!4}&&\\
4-135:&&&\smash{(3x,x\!+\!1)}&&&&&&\\
4-138:&&&\smash{(3x\!+\!4,x\!+\!1)}&&\smash{(3x\!+\!4,x\!+\!1)}&&\smash{x^5\!+\!3x^3\!+\!2x^2\!+\!3x}&&\\
4-139:&&&&&&&\smash{x^5\!+\!5x^4\!+\!7x^3\!+\!7x^2\!+\!x\!+\!4}&&\\
4-140:&&&\smash{(3x,x\!+\!1)}&&\smash{(3x,x\!+\!1)}&&\smash{x^4\!+\!2x^3\!+\!x^2\!+\!5x}&&\\
4-141:&&&\smash{(2x^2\!+\!x\!+\!4,x^2\!+\!x)}&&\smash{(2x^2\!+\!3x\!+\!2,x^2\!+\!x)}&&\smash{x^5\!+\!3x^3\!+\!x\!+\!4}&&\\
4-142:&&&\smash{(x,x\!+\!3)}&&\smash{(3x\!+\!4,3x\!+\!3)}&&\smash{5x}&&\smash{x^4\!+\!3x^2\!+\!x\!+\!2}\\
4-143:&&&&&&&\smash{x^4\!+\!x^2\!+\!x\!+\!6}&&\\
4-144:&&&&&&&\smash{2x^3\!+\!5x}&&\\
4-150:&&&\smash{(x,3x\!+\!1)}&&&&&&\\
4-151:&&&&&&&\smash{2x^3\!+\!x}&&\\
4-161:&&&\smash{(x^2\!+\!3x,x^2\!+\!x\!+\!2)}&&\smash{(3x^2\!+\!3x\!+\!2,4x^2\!+\!3x\!+\!1)}&&\smash{x^4\!+\!2x^3\!+\!3x^2\!+\!7x}&&\\
4-162:&&&\smash{(4x^2\!+\!x,x^2\!+\!x\!+\!2)}&&\smash{(2x^2\!+\!3x\!+\!2,2x^2\!+\!x\!+\!3)}&&\smash{x^5\!+\!3x^3\!+\!x}&&\\
4-163:&&&\smash{(2x^2\!+\!x\!+\!4,x^2\!+\!x)}&&\smash{(2x^2\!+\!x\!+\!4,x^2\!+\!x)}&&\smash{2x^2\!+\!3x}&&\\
4-164:&&&&&&&\smash{2x^3\!+\!7x}&&\\
4-165:&&&\smash{(2x^2\!+\!x\!+\!2,x^2\!+\!x\!+\!2)}&&\smash{(4x^2\!+\!x\!+\!2,x^2\!+\!3x\!+\!4)}&&\smash{x^5\!+\!3x^3\!+\!2x^2\!+\!3x\!+\!4}&&\\
4-167:&&&\smash{(x^2\!+\!x\!+\!2,x^2\!+\!x\!+\!2)}&&&&&&\\
4-169:&&&\smash{(3x^2\!+\!x\!+\!2,3x^2\!+\!3x\!+\!4)}&&\smash{(3x^2\!+\!x\!+\!2,3x^2\!+\!3x\!+\!4)}&&\smash{2x^2\!+\!3x}&&\smash{2x^2\!+\!3x}\\
4-170:&&&&&&&\smash{x^5\!+\!5x^4\!+\!3x^3\!+\!7x^2\!+\!5x\!+\!4}&&\\
4-171:&&&\smash{(3x\!+\!2,x\!+\!1)}&&\smash{(3x\!+\!2,x\!+\!1)}&&\smash{x^4\!+\!2x^3\!+\!x^2\!+\!x}&&\\
4-174:&&&\smash{(2x^2\!+\!3x,x^2\!+\!x\!+\!2)}&&&&&&\\
4-175:&&&&&&&\smash{x^4\!+\!x^2\!+\!7x}&&\\
4-176:&&&&&&&\smash{x^5\!+\!x^3\!+\!2x^2\!+\!5x}&&\\
4-177:&&&\smash{(x^2\!+\!3x,x^2\!+\!x)}&&\smash{(x^2\!+\!3x,x^2\!+\!x)}&&\smash{2x^2\!+\!x}&&\\
4-178:&&&&&&&\smash{6x^3\!+\!x}&&\\
4-193:&&&\smash{(x,3x\!+\!1)}&&&&&&\\
4-194:&&&\smash{(2x^2\!+\!x,x^2\!+\!x\!+\!2)}&&\smash{(x^2\!+\!3x\!+\!2,2x^2\!+\!3x\!+\!1)}&&\smash{x^5\!+\!3x^3\!+\!4x^2\!+\!5x}&&\\
4-195:&&&&&&&\smash{x^5\!+\!x^3\!+\!3x\!+\!4}&&\\
4-196:&&&&&&&\smash{2x^3\!+\!3x\!+\!4}&&\\
4-197:&&&\smash{(3x^2\!+\!x,x^2\!+\!x)}&&&&&&\\
4-199:&&&\smash{(x^2\!+\!x\!+\!2,x^2\!+\!x)}&&\smash{(x^2\!+\!x\!+\!2,x^2\!+\!x)}&&\smash{3x^5\!+\!x^4\!+\!7x^3\!+\!7x^2\!+\!7x}&&\\
4-200:&&&\smash{(3x^2\!+\!3x,x^2\!+\!x)}&&\smash{(3x^2\!+\!3x,x^2\!+\!x)}&&\smash{4x^3\!+\!5x}&&\\
4-201:&&&&&&&\smash{x^4\!+\!x^2\!+\!3x}&&\\
4-202:&&&&&&&\smash{x^5\!+\!x^3\!+\!6x^2\!+\!5x}&&\\
4-203:&&&\smash{(x^2\!+\!x,x^2\!+\!x)}&&&&&&\\
4-204:&&&&&&&\smash{x^5\!+\!x^3\!+\!6x^2\!+\!x}&&\\
4-214:&&&\smash{(x,3x\!+\!1)}&&&&&&\\
4-215:&&&&&&&\smash{2x^3\!+\!2x^2\!+\!5x}&&\\
4-216:&&&\smash{(3x^2\!+\!x\!+\!4,x^2\!+\!x)}&&\smash{(3x^2\!+\!x\!+\!4,x^2\!+\!x)}&&\smash{x^4\!+\!2x^3\!+\!3x^2\!+\!3x}&&\\
4-217:&&&\smash{(2x^2\!+\!3x\!+\!2,x^2\!+\!x\!+\!2)}&&\smash{(2x^2\!+\!3x\!+\!2,x^2\!+\!x\!+\!2)}&&\smash{x^5\!+\!3x^3\!+\!4x^2\!+\!x}&&\\
4-218:&&&\smash{(x^2\!+\!3x,x^2\!+\!x)}&&\smash{(x^2\!+\!3x,x^2\!+\!x)}&&\smash{2x^2\!+\!5x}&&\\
4-220:&&&\smash{(x^2\!+\!x\!+\!4,x^2\!+\!3x)}&&\smash{(x^2\!+\!x\!+\!4,x^2\!+\!3x)}&&\smash{2x^2\!+\!x}&&\smash{2x^3\!+\!2x^2\!+\!3x\!+\!2}\\
4-221:&&&&&&&\smash{x^5\!+\!x^3\!+\!2x^2\!+\!x}&&\\
4-227:&&&\smash{(2x^2\!+\!x,x^2\!+\!x\!+\!2)}&&\smash{(2x^2\!+\!3x\!+\!2,3x^2\!+\!x\!+\!4)}&&\smash{x^5\!+\!3x^3\!+\!5x}&&\\
4-228:&&&\smash{(x^2\!+\!x\!+\!2,x^2\!+\!3x)}&&\smash{(2x^2\!+\!x\!+\!2,x^2\!+\!3x\!+\!2)}&&\smash{4x^3\!+\!4x^2\!+\!x}&&\\
4-230:&&&\smash{(x,x\!+\!1)}&&\smash{(x,x\!+\!1)}&&\smash{x}&&\smash{2x^3\!+\!x\!+\!2}
\end{align*}
\egroup

In the other direction we need to show that all isomorphism classes not marked by black boxes cannot be realized as the isomorphism class of a subtree of the tree of cycles of a $2$-adic permutation of the respective type. To do so we define the auxiliary function
\begin{align}
\varphi_{\pi,k}:\Z_2&\to\set{0,1}\\
\nonumber
n&\mapsto\psi_{\FFf{F}_2,k+1}(\pi_{k+1}(n))[k]=\FFFDT((x,x-1))[\pi(n)][k]
\end{align}
for all $2$-adic permutations $\pi$ and all $k\in\Nz$, i.e. $\varphi_{\pi,k}(n)$ is the $k$-th digit (the digit corresponding to $2^k$) in the base $2$ expansion of $\pi(n)$ (note, $\FFf{F}_2=(x,x-1)$ and we omit the square brackets indicating equivalence classes to improve readability). Also, by Eqn.~(\ref{EPermB}) we get
\begin{align}
\varphi_{\pi,k}(n)
&=1-\varphi_{\pi,k}(n+2^k)\\
&=\varphi_{\pi,k}(n+2^{k+1})
\end{align}
for all $k\in\Nz$ and $n\in\Z_2$.

For arbitrary $\Z_2$-polynomial $2$-adic systems $\FFf{F}$ and $\FFf{G}$, $2$-permutation polynomial $f$, and $k\in\Nz$ we will prove the relations

\begin{theoremtable}
\theoremitem{(1)}&$\varphi_{\pi,k+2}(n_{000})+\varphi_{\pi,k+2}(n_{010})+\varphi_{\pi,k+3}(n_{100})+\varphi_{\pi,k+3}(n_{110})+\varphi_{\pi,k+3}(n_{101})+\varphi_{\pi,k+3}(n_{111})\equiv$\tabularnewline
&$\varphi_{\pi,k+1}(n_{000})+\varphi_{\pi,k+1}(n_{100})\modulus{2}$\tabularnewline
\theoremitem{(2)}&$\varphi_{\mathrlap{f}\phantom{\pi},k+2}(n_{000})+\varphi_{\mathrlap{f}\phantom{\pi},k+2}(n_{010})+\varphi_{\mathrlap{f}\phantom{\pi},k+3}(n_{000})+\varphi_{\mathrlap{f}\phantom{\pi},k+3}(n_{010})+\varphi_{\mathrlap{f}\phantom{\pi},k+3}(n_{001})+\varphi_{\mathrlap{f}\phantom{\pi},k+3}(n_{011})\equiv$\tabularnewline
&$0\modulus{2}$\tabularnewline
\theoremitem{(3)}&$k\geq1\Rightarrow$\tabularnewline
&\quad$\varphi_{f,k+1}(n_{000})+\varphi_{f,k+2}(n_{000})+\varphi_{f,k+3}(n_{010})+\varphi_{f,k+3}(n_{011})\equiv0\modulus{2}$\tabularnewline
\theoremitem{(4)}&$k\geq1\Rightarrow$\tabularnewline
&\quad$\left(\varphi_{f,k+1}(n_{000})+\varphi_{f,k+2}(n_{010})+\varphi_{f,k+3}(n_{000})\in\set{0,3}\;?\;0:1\right)\equiv$\tabularnewline
&\quad$\varphi_{f,k}(n_{000})+\varphi_{f,k+1}(n_{100})+\varphi_{f,k+2}(n_{000})+\varphi_{f,k+3}(n_{010})+\varphi_{f,k+4}(n_{000})+\varphi_{f,k+4}(n_{001})\modulus{2}$\tabularnewline[0.5\baselineskip]
\ignore{
\theoremitem{(4)}&$k\geq3\Rightarrow$\tabularnewline
&\leavevmode\rlap{(a)}\phantom{(b)} $\varphi_{f,k+0}(n_{000})+\varphi_{f,k+2}(n_{000})+\varphi_{f,k+3}(n_{100})+\varphi_{f,k+3}(n_{111})\equiv$\tabularnewline
&\leavevmode\phantom{(b)} $\varphi_{f,k+0}(n_{000})+\varphi_{f,k+1}(n_{000})\equiv$\tabularnewline
&\leavevmode\phantom{(b)} $0\modulus{2}\quad\lor$\tabularnewline
&\leavevmode\phantom{(b)} $\varphi_{f,k+2}(n_{000})+\varphi_{f,k+2}(n_{100})+\varphi_{f,k+3}(n_{100})+\varphi_{f,k+3}(n_{111})\equiv$\tabularnewline
&\leavevmode\phantom{(b)} $\varphi_{f,k+0}(n_{000})+\varphi_{f,k+1}(n_{000})\equiv$\tabularnewline
&\leavevmode\phantom{(b)} $1\modulus{2}$\tabularnewline
&(b) $\varphi_{f,k+0}(n_{000})+\varphi_{f,k+2}(n_{110})+\varphi_{f,k+3}(n_{000})+\varphi_{f,k+3}(n_{011})\equiv$\tabularnewline
&\leavevmode\phantom{(b)} $\varphi_{f,k+0}(n_{000})+\varphi_{f,k+1}(n_{100})\equiv$\tabularnewline
&\leavevmode\phantom{(b)} $0\modulus{2}\quad\lor$\tabularnewline
&\leavevmode\phantom{(b)} $\varphi_{f,k+2}(n_{010})+\varphi_{f,k+2}(n_{110})+\varphi_{f,k+3}(n_{000})+\varphi_{f,k+3}(n_{011})\equiv$\tabularnewline
&\leavevmode\phantom{(b)} $\varphi_{f,k+0}(n_{000})+\varphi_{f,k+1}(n_{100})\equiv$\tabularnewline
&\leavevmode\phantom{(b)} $1\modulus{2}$.\tabularnewline[0.5\baselineskip]
}
\end{theoremtable}
for all $n\in\Z_2$, where $\pi\ce\pi_{\FFf{F},\FFf{G}}$ and $n_{(a_02^0+\cdots+a_\ell2^\ell)}\ce n_{a_0\ldots a_\ell}\ce n+a_02^{k+0}+\cdots+a_\ell2^{k+\ell}$ for all $\ell\in\Nz$ and $a_0,\ldots,a_\ell\in\set{0,1}$.

To prove (1) we first show that if arbitrary $2$-adic permutations $\pi,\pi_1,\pi_2$ satisfy (1), then so do $\pi^{-1}$ and $\pi_2\circ\pi_1$. For all $n\in\Z_2$ there is an $s_n\in\intintuptoex{2^4}$ such that
\begin{align}
\psi_{\FFf{F}_2,k+4}(n_{(r)})=\psi_{\FFf{F}_2,k}(n)\cdot\psi_{\FFf{F}_2,4}(r+s_n)
\end{align}
for all $r\in\intintuptoex{2^4}$. Furthermore, if $\pi$ is a $2$-adic permutation it follows from the definition that there are unique $m_{\pi,n,[0]},\ldots,m_{\pi,n,[15]}\in\intintuptoex{2^4}$ such that
\begin{align}
\psi_{\FFf{F}_2,k+4}(\pi(n_{(r)}))=\psi_{\FFf{F}_2,k}(\pi(n))\cdot\psi_{\FFf{F}_2,4}(m_{\pi,n,r})
\end{align}
for all $r\in\intintuptoex{2^4}$ and the $m_{\pi,n,r}$ satisfy
\begin{align}
r\equiv s\modulus{2^\ell}\Leftrightarrow m_{\pi,n,r}\equiv m_{\pi,n,s}\modulus{2^\ell}
\end{align}
for all $\ell\in\intintuptoin{4}$ and $r,s\in\intintuptoex{2^\ell}$. Thus there is a bijective function
\begin{align}
\pi_{(n)}:\Z_2\slash2^4\Z_2&\to\Z_2\slash2^4\Z_2\\
\nonumber
r&\mapsto m_{\pi,n,r-s_n}
\end{align}
satisfying $\pi_{(n)}(r+s_n)=m_{\pi,n,r}$ for all $r\in\intintuptoex{2^4}$ and
\begin{align}
\label{ESubtreesA}
r\equiv s\modulus{2^\ell}\Leftrightarrow\pi_{(n)}(r)\equiv\pi_{(n)}(s)\modulus{2^\ell}
\end{align}
for all $\ell\in\intintuptoin{4}$ and $r,s\in\intintuptoex{2^\ell}$. Thus we get
\begin{align}
&\fa n\in\Z_2:\fa r\in\intintuptoex{2^4}:\psi_{\FFf{F}_2,k+4}(n_{(r)})=\psi_{\FFf{F}_2,k}(n)\cdot\psi_{\FFf{F}_2,4}(r+s_n)\\
&\quad\Rightarrow\fa n\in\Z_2:\psi_{\FFf{F}_2,k+4}(\pi^{-1}(\pi(n)))=\psi_{\FFf{F}_2,k}(\pi^{-1}(\pi(n)))\cdot\psi_{\FFf{F}_2,4}(\pi_{(n)}^{-1}(\pi_{(n)}(s_n)))\\
&\quad\Rightarrow\fa n\in\Z_2:\psi_{\FFf{F}_2,k+4}(\pi^{-1}(n))=\psi_{\FFf{F}_2,k}(\pi^{-1}(n))\cdot\psi_{\FFf{F}_2,4}(\pi_{(\pi^{-1}(n))}^{-1}(\pi_{(\pi^{-1}(n))}(s_{\pi^{-1}(n)})))\\
&\quad\Rightarrow\fa n\in\Z_2:\psi_{\FFf{F}_2,k+4}(\pi^{-1}(n))=\psi_{\FFf{F}_2,k}(\pi^{-1}(n))\cdot\psi_{\FFf{F}_2,4}(\pi_{(\pi^{-1}(n))}^{-1}(m_{\pi,\pi^{-1}(n),0}))\\
&\quad\Rightarrow\fa n\in\Z_2:\psi_{\FFf{F}_2,k+4}(\pi^{-1}(n))=\psi_{\FFf{F}_2,k}(\pi^{-1}(n))\cdot\psi_{\FFf{F}_2,4}(\pi_{(\pi^{-1}(n))}^{-1}(s_n))\\
&\quad\Rightarrow\fa n\in\Z_2:\fa r\in\intintuptoex{2^4}:\psi_{\FFf{F}_2,k+4}(\pi^{-1}(n_{(r)}))=\psi_{\FFf{F}_2,k}(\pi^{-1}(n))\cdot\psi_{\FFf{F}_2,4}(\pi_{(\pi^{-1}(n))}^{-1}(s_{n_{(r)}}))\\
&\quad\Rightarrow\fa n\in\Z_2:\fa r\in\intintuptoex{2^4}:\psi_{\FFf{F}_2,k+4}(\pi^{-1}(n_{(r)}))=\psi_{\FFf{F}_2,k}(\pi^{-1}(n))\cdot\psi_{\FFf{F}_2,4}(\pi_{(\pi^{-1}(n))}^{-1}(r+s_n)).
\end{align}
By the definitions we have the following identities
\begin{align}
\varphi_{\pi,k+\ell}(n_{(r)})
&=\psi_{\FFf{F}_2,4}(\pi_{(n)}(r+s_n))[\ell]\\
\varphi_{\pi^{-1},k+\ell}(n_{(r)})
&=\psi_{\FFf{F}_2,4}(\pi_{(\pi^{-1}(n))}^{-1}(r+s_n))[\ell]
\end{align}
for all $n\in\Z_p$, $r\in\intintuptoex{2^4}$, and $\ell\in\intintuptoin{3}$. The condition that $\pi$ satisfies (1) is thus equivalent to
\begin{align}
\label{ESubtreesB}
&\psi_{\FFf{F}_2,4}(\pi_{(n)}(0+s_n))[2]+\psi_{\FFf{F}_2,4}(\pi_{(n)}(2+s_n))[2]+\psi_{\FFf{F}_2,4}(\pi_{(n)}(1+s_n))[3]+\vphantom{a}\\
\nonumber
&\quad\psi_{\FFf{F}_2,4}(\pi_{(n)}(3+s_n))[3]+\psi_{\FFf{F}_2,4}(\pi_{(n)}(5+s_n))[3]+\psi_{\FFf{F}_2,4}(\pi_{(n)}(7+s_n))[3]\equiv\\
&\psi_{\FFf{F}_2,4}(\pi_{(n)}(1+s_n))[2]+\psi_{\FFf{F}_2,4}(\pi_{(n)}(3+s_n))[2]+\psi_{\FFf{F}_2,4}(\pi_{(n)}(0+s_n))[3]+\vphantom{a}\\
\nonumber
&\quad\psi_{\FFf{F}_2,4}(\pi_{(n)}(2+s_n))[3]+\psi_{\FFf{F}_2,4}(\pi_{(n)}(4+s_n))[3]+\psi_{\FFf{F}_2,4}(\pi_{(n)}(6+s_n))[3]\equiv\\
\label{ESubtreesC}
&\psi_{\FFf{F}_2,4}(\pi_{(n)}(0+s_n))[1]+\psi_{\FFf{F}_2,4}(\pi_{(n)}(1+s_n))[1]\modulus{2}
\end{align}
for all $n\in\Z_2$, and the condition that $\pi^{-1}$ satisfies (1) is equivalent to
\begin{align}
\label{ESubtreesD}
&\psi_{\FFf{F}_2,4}(\pi_{(n)}^{-1}(0+s_{\pi^{-1}(n)}))[2]+\psi_{\FFf{F}_2,4}(\pi_{(n)}^{-1}(2+s_{\pi^{-1}(n)}))[2]+\psi_{\FFf{F}_2,4}(\pi_{(n)}^{-1}(1+s_{\pi^{-1}(n)}))[3]+\vphantom{a}\\
\nonumber
&\quad\psi_{\FFf{F}_2,4}(\pi_{(n)}^{-1}(3+s_{\pi^{-1}(n)}))[3]+\psi_{\FFf{F}_2,4}(\pi_{(n)}^{-1}(5+s_{\pi^{-1}(n)}))[3]+\psi_{\FFf{F}_2,4}(\pi_{(n)}^{-1}(7+s_{\pi^{-1}(n)}))[3]\equiv\\
&\psi_{\FFf{F}_2,4}(\pi_{(n)}^{-1}(1+s_{\pi^{-1}(n)}))[2]+\psi_{\FFf{F}_2,4}(\pi_{(n)}^{-1}(3+s_{\pi^{-1}(n)}))[2]+\psi_{\FFf{F}_2,4}(\pi_{(n)}^{-1}(0+s_{\pi^{-1}(n)}))[3]+\vphantom{a}\\
\nonumber
&\quad\psi_{\FFf{F}_2,4}(\pi_{(n)}^{-1}(2+s_{\pi^{-1}(n)}))[3]+\psi_{\FFf{F}_2,4}(\pi_{(n)}^{-1}(4+s_{\pi^{-1}(n)}))[3]+\psi_{\FFf{F}_2,4}(\pi_{(n)}^{-1}(6+s_{\pi^{-1}(n)}))[3]\equiv\\
\label{ESubtreesE}
&\psi_{\FFf{F}_2,4}(\pi_{(n)}^{-1}(0+s_{\pi^{-1}(n)}))[1]+\psi_{\FFf{F}_2,4}(\pi_{(n)}^{-1}(1+s_{\pi^{-1}(n)}))[1]\modulus{2}
\end{align}
for all $n\in\Z_2$ (note that since the condition must be satisfied for all $n\in\Z_2$ and $\pi:\Z_2\to\Z_2$ is bijective, we may interchange $n$ and $\pi^{-1}(n)$ which is what we did here). There are only finitely many options for $\pi_{(n)}:\Z_2\slash2^4\Z_2\to\Z_2\slash2^4\Z_2$ which satisfy Eqn.~(\ref{ESubtreesA}) ($16384$, to be precise) and among those there are $8192$ which also satisfy Eqn.~(\ref{ESubtreesB})~--~(\ref{ESubtreesC}) for at least one choice for $s_n\in\intintuptoex{2^4}$. It can be easily checked using a computer that any of those $8192$ choices for $\pi_{(n)}$ satisfy Eqn.~(\ref{ESubtreesD})~--~(\ref{ESubtreesE}) for all choices for $s_{\pi^{-1}(n)}\in\intintuptoex{2^4}$. This completes the proof that if $\pi$ satisfies (1), then so does $\pi^{-1}$. On the other hand, if $\pi=\pi_2\circ\pi_1$ (which is a $2$-adic permutation by Theorem~\ref{TPermSubgroup}) then
\begin{align}
\psi_{\FFf{F}_2,k+4}(\pi_2\circ\pi_1(n_{(r)}))=\psi_{\FFf{F}_2,k}(\pi_2\circ\pi_1(n))\cdot\psi_{\FFf{F}_2,4}(\smash{(\pi_2)}_{(\pi_1(n))}\circ\smash{(\pi_1)}_{(n)}(r+s_n))
\end{align}
and thus 
\begin{align}
\varphi_{\pi_2\circ\pi_1,k+\ell}(n_{(r)})
&=\psi_{\FFf{F}_2,4}(\smash{(\pi_2)}_{(\pi_1(n))}\circ\smash{(\pi_1)}_{(n)}(r+s_n))[\ell]
\end{align}
for all $n\in\Z_p$, $r\in\intintuptoex{2^4}$, and $\ell\in\intintuptoin{3}$. Thus, if we set $\pi_{(n)}\ce\smash{(\pi_2)}_{(\pi_1(n))}\circ\smash{(\pi_1)}_{(n)}$ for all $n\in\Z_2$, the condition that $\pi_2\circ\pi_1$ satisfies (1) reads exactly as Eqn.~(\ref{ESubtreesB})~--~(\ref{ESubtreesC}) and we can again verify using a computer that if $\smash{(\pi_1)}_{(n)}$ and $\smash{(\pi_2)}_{(\pi_1(n))}$ are chosen among the $8192$ possible options mentioned above, then $\pi_{(n)}=\smash{(\pi_2)}_{(\pi_1(n))}\circ\smash{(\pi_1)}_{(n)}$ satisfies Eqn.~(\ref{ESubtreesB})~--~(\ref{ESubtreesC}) for all choices for $s_{n}\in\intintuptoex{2^4}$. This completes the proof that if $\pi_1$ and $\pi_2$ satisfies (1), then so does $\pi_2\circ\pi_1$. Thus it suffices to prove (1) for the case where $\FFf{G}=\FFf{F}_2$, because a general $\pi=\pi_{\FFf{F},\FFf{G}}$ can always be written as $\pi=\pi_{\FFf{G},\FFf{F}_2}^{-1}\circ\pi_{\FFf{F},\FFf{F}_2}$ (cf. also Lemma~\ref{LPermProp}~(3)).

If $\FFf{G}=\FFf{F}_2$, then
\begin{align}
\varphi_{\pi,k}(n)
&=\psi_{\FFf{F}_2,k+1}(\pi_{k+1}(n))[k]
=\psi_{\FFf{F}_2,k+1}({\psi_{\FFf{F}_2,k+1}}^{-1}(\psi_{\FFf{F},k+1}(n)))[k]
=\psi_{\FFf{F},k+1}(n)[k]\\
&=\FFFDT(\FFf{F})[n][k].
\end{align}
Thus (1) simplifies to
\begin{align}
\label{ESubtreesF}
&\FFFDT(\FFf{F})[n_{000}][k+2]+\FFFDT(\FFf{F})[n_{010}][k+2]+\FFFDT(\FFf{F})[n_{100}][k+3]+\FFFDT(\FFf{F})[n_{110}][k+3]\\
\nonumber
&\quad+\FFFDT(\FFf{F})[n_{001}][k+3]+\FFFDT(\FFf{F})[n_{011}][k+3]\equiv\\
\label{ESubtreesG}
&\FFFDT(\FFf{F})[n_{000}][k+1]+\FFFDT(\FFf{F})[n_{100}][k+1]\modulus{2}.
\end{align}
The first step will be to prove the statement for the case $k=0$. This can easily be done with the aid of a computer, since Theorem~\ref{TReduceDegree} implies that it suffices to check the statement for all $\intintuptoex{2^4}$-polynomial $2$-adic systems with degree in $\intintuptoex{4}$. The next step is to prove the auxiliary property
\begin{align}
\label{ESubtreesH}
&\FFf{F}^k(n)-\FFf{F}^k(n+2^{k+\ell})\equiv\FFf{F}^k(n+2^k)-\FFf{F}^k(n+2^k+2^{k+\ell})\modulus{2^{\ell+2}}
\end{align}
for all $k,\ell\in\Nz$ and $n\in\Z_2$ which we will do by induction on $k$. The statement is clearly true for $k=0$. Assume that it is also true for $k\in\Nz$. Then,
\begin{align}
\FFf{F}^k(n)-\FFf{F}^k(n+2^{k+\ell+1})&\equiv\FFf{F}^k(n+2^k)-\FFf{F}^k(n+2^k+2^{k+\ell+1})\\
&\equiv\FFf{F}^k(n+2^{k+1})-\FFf{F}^k(n+2^{k+1}+2^{k+\ell+1})\modulus{2^{\ell+3}}.
\end{align}
If $A\ce\FFf{F}^k(n)$, $B\ce\FFf{F}^k(n+2^{k+\ell+1})$, $C\ce\FFf{F}^k(n+2^{k+1})$, and $D\ce\FFf{F}^k(n+2^{k+1}+2^{k+\ell+1})$ then
\begin{align}
&A-B-C+D\equiv0\modulus{2^{\ell+3}}
\intertext{by definition, and}
&A\equiv B\equiv C\equiv D\modulus{2}\\
&A\equiv B\modulus{2^{\ell+1}}\\
&C\equiv D\modulus{2^{\ell+1}}
\end{align}
by Theorem~\ref{TPropPolyF}~(1). Consequently,
\begin{align}
\FFf{F}(A)-\FFf{F}(B)-\FFf{F}(C)+\FFf{F}(D)
&=\frac{1}{2}\sum_{i=1}^da_i(A^i-B^i-C^i+D^i)
\end{align}
where $\FFf{F}[A\modulo2]=\sum_{i=0}^da_ix^i\in\Z_2[x]$ (note that for the whole theorem we define $0^0\ce1$ and we assume without loss of generality that $\FFf{F}$ is in weak canonical form). Our goal is to show that $\FFf{F}(A)-\FFf{F}(B)-\FFf{F}(C)+\FFf{F}(D)\equiv0\modulus{2^{\ell+2}}$ which obviously would follow if we could prove
\begin{align}
\label{ESubtreesI}
A^i-B^i-C^i+D^i\equiv0\modulus{2^{\ell+3}}
\end{align}
for all $i\in\N$ which we will do by induction on $i$. The statement is clearly true for $i=1$. Now assume that it is also true for some $i\in\N$ and let $S,T,U,V,W,X,Y\in\Z_2$ such that,
\begin{align}
2^{\ell+3}S&=A-B-C+D\\
2^{\ell+3}T&=A^i-B^i-C^i+D^i\\
2^{\ell+1}U&=A-B\\
2^{\ell+1}V&=C-D\\
2W&=B-C\\
2^{\ell+1}X&=A^i-B^i\\
2^{\ell+1}Y&=C^i-D^i.
\end{align}
It is easy to show that,
\begin{align}
&i=1\Rightarrow U=X\\
&i\neq1\Rightarrow(X\modulo2=1\Rightarrow B\modulo2=i\modulo2=1\Rightarrow U\modulo2=X\modulo2)
\end{align}
which implies
\begin{align}
X+U\sum_{j=0}^{i-1}B^j(B-2W)^{i-1-j}\equiv0\modulus{2}.
\end{align}
If we let $Z\in\Z_2$ such that
\begin{align}
2Z&=X+U\sum_{j=0}^{i-1}B^j(B-2W)^{i-1-j}
\end{align}
then
\begin{align}
A^{i+1}-B^{i+1}-C^{i+1}+D^{i+1}&=2^{\ell+3}(TD+WZ+SC^i+2^{\ell-1}X(U+V))\in2^{\ell+3}\Z_2
\end{align}
which completes the proof of Eqn.~(\ref{ESubtreesH}) and thus also the proof of Eqn.~(\ref{ESubtreesI}). Using the fact that Eqn.~(\ref{ESubtreesF})~--~Eqn.~(\ref{ESubtreesG}) is true for $k=0$ together with Eqn.~(\ref{ESubtreesH}), we are now able to prove Eqn.~(\ref{ESubtreesF})~--~Eqn.~(\ref{ESubtreesG}) for general $k$ which will complete the proof of relation (1). We do so again by using a computer to verify that
\begin{align}
&\STf{D}[\mathrlap{a}\phantom{x}][2]+\STf{D}[\mathrlap{c}\phantom{x}][2]+\STf{D}[\mathrlap{b}\phantom{x}][3]+\STf{D}[\mathrlap{d}\phantom{x}][3]+\STf{D}[\mathrlap{f}\phantom{x}][3]+\STf{D}[\mathrlap{h}\phantom{x}][3]\equiv
\STf{D}[\mathrlap{a}\phantom{x}][1]+\STf{D}[\mathrlap{b}\phantom{x}][1]\modulus{2}.
\end{align}
for all $\STf{D}\in\SoDigitTables{2}(\STPlength{4},\DTPblock)$ satisfying Eqn.~(\ref{ESubtreesF})~--~Eqn.~(\ref{ESubtreesG}) for $k=0$ (if $\FFFDT(\FFf{F})=\STf{D}$ there) and all $a,b,c,d,e,f,g,h\in\intintuptoex{2^4}$ satisfying
\begin{align}
&\mathrlap{a}\phantom{x}\not\equiv\mathrlap{b}\phantom{x}\not\equiv\mathrlap{c}\phantom{x}\not\equiv\mathrlap{d}\phantom{x}\not\equiv\mathrlap{e}\phantom{x}\not\equiv\mathrlap{f}\phantom{x}\not\equiv\mathrlap{g}\phantom{x}\not\equiv\mathrlap{h}\phantom{x}\modulus2\\
&\mathrlap{a}\phantom{x}\not\equiv\mathrlap{c}\phantom{x}\not\equiv\mathrlap{e}\phantom{x}\not\equiv\mathrlap{g}\phantom{x}\mathrlap{,}\phantom{\vphantom{a}\not\equiv\vphantom{a}}\mathrlap{b}\phantom{x}\not\equiv\mathrlap{d}\phantom{x}\not\equiv\mathrlap{f}\phantom{x}\not\equiv\mathrlap{h}\phantom{x}\modulus2^2\\
&\mathrlap{a}\phantom{x}\not\equiv\mathrlap{e}\phantom{x}\mathrlap{,}\phantom{\vphantom{a}\not\equiv\vphantom{a}}\mathrlap{b}\phantom{x}\not\equiv\mathrlap{f}\phantom{x}\mathrlap{,}\phantom{\vphantom{a}\not\equiv\vphantom{a}}\mathrlap{c}\phantom{x}\not\equiv\mathrlap{g}\phantom{x}\mathrlap{,}\phantom{\vphantom{a}\not\equiv\vphantom{a}}\mathrlap{d}\phantom{x}\not\equiv\mathrlap{h}\phantom{x}\modulus{2^3}\\
&\mathrlap{a}\phantom{x}-\mathrlap{c}\phantom{x}\equiv\mathrlap{b}\phantom{x}-\mathrlap{d}\phantom{x}\equiv\mathrlap{c}\phantom{x}-\mathrlap{e}\phantom{x}\equiv\mathrlap{d}\phantom{x}-\mathrlap{f}\phantom{x}\equiv\mathrlap{e}\phantom{x}-\mathrlap{g}\phantom{x}\equiv\mathrlap{f}\phantom{x}-\mathrlap{h}\phantom{x}\equiv\mathrlap{g}\phantom{x}-\mathrlap{a}\phantom{x}\equiv\mathrlap{h}\phantom{x}-\mathrlap{b}\phantom{x}\modulus{2^{3}}\\
&\mathrlap{a}\phantom{x}-\mathrlap{e}\phantom{x}\equiv\mathrlap{b}\phantom{x}-\mathrlap{f}\phantom{x}\equiv\mathrlap{c}\phantom{x}-\mathrlap{g}\phantom{x}\equiv\mathrlap{d}\phantom{x}-\mathrlap{h}\phantom{x}\equiv\mathrlap{e}\phantom{x}-\mathrlap{a}\phantom{x}\equiv\mathrlap{f}\phantom{x}-\mathrlap{b}\phantom{x}\equiv\mathrlap{g}\phantom{x}-\mathrlap{c}\phantom{x}\equiv\mathrlap{h}\phantom{x}-\mathrlap{d}\phantom{x}\modulus{2^{4}}.
\end{align}
Note that the first three conditions correspond to the block property and the last two correspond to Eqn.~(\ref{ESubtreesH}) for $\ell=1$ and $\ell=2$ if $a,\ldots,h=\FFf{F}^k(n_{000}),\ldots,\FFf{F}^k(n_{111})$. This completes the proof of relation (1).

To prove (2) we assume $k\geq1$ (we leave it to the reader to verify (2) for $k=0$ by, for example, using an adapted version of Lemma~\ref{LReduceDegree} to bound the degree of $f$), $f(x)=\sum_{i=0}^da_ix^i$, and $c\in\Z_2$ and compute
\begin{align}
f\left(n+c2^k\right)
&=\sum_{i=0}^da_i\sum_{j=0}^i\binom{i}{j}c^j2^{jk}n^{i-j}\\
&=\sum_{i=0}^da_i\sum_{j=0}^5\binom{i}{j}c^j2^{jk}n^{i-j}+\sum_{i=6}^da_i\sum_{j=6}^i\binom{i}{j}c^j2^{jk}n^{i-j}\\
&=\sum_{i=0}^5c^i2^{ik}\frac{f^{(i)}(n)}{i!}+2^{k+5}2^{5k-5}\sum_{i=6}^da_i\sum_{j=6}^i\binom{i}{j}c^j2^{(j-6)k}n^{i-j}\\
&\equiv\sum_{i=0}^5c^i2^{ik}\frac{f^{(i)}(n)}{i!}\modulus{2^{k+5}}
\end{align}
where $f^{(i)}$ denotes the $i$-th derivative of $f$. Thus,
\begin{align}
&\FFFDT(\FFf{F}_2)[f(n+c2^k)][k,k+4]\\
&\quad=\FFFDT(\FFf{F}_2)\left[\sum_{i=0}^5c^i2^{ik}\frac{f^{(i)}(n)}{i!}\right][k,k+4]\\
\label{ESubtreesJ}
&\quad=\FFFDT(\FFf{F}_2)\left[\frac{f(n)-f(n)\modulo2^k}{2^k}+cf'(n)+\left(k\leq4\;?\;\sum_{i=2}^5c^i2^{(i-1)k}\frac{f^{(i)}(n)}{i!}:0\right)\right][\intintuptoin{4}].
\end{align}
Consequently, (2) can be rewritten as
\begin{align}
&\varphi_{\mathrlap{f}\phantom{\pi},k+2}(n_{000})+\varphi_{\mathrlap{f}\phantom{\pi},k+2}(n_{010})+\varphi_{\mathrlap{f}\phantom{\pi},k+3}(n_{000})+\varphi_{\mathrlap{f}\phantom{\pi},k+3}(n_{010})+\vphantom{a}\\
\nonumber
&\quad\varphi_{\mathrlap{f}\phantom{\pi},k+3}(n_{001})+\varphi_{\mathrlap{f}\phantom{\pi},k+3}(n_{011})\equiv\\
\nonumber
&0\modulus{2}\Leftrightarrow\\
&\FFFDT(\FFf{F}_2)\left[f(n+0\cdot2^k)\right][k+2]+\FFFDT(\FFf{F}_2)\left[f(n+2\cdot2^k)\right][k+2]+\vphantom{a}\\
\nonumber
&\quad\FFFDT(\FFf{F}_2)\left[f(n+0\cdot2^k)\right][k+3]+\FFFDT(\FFf{F}_2)\left[f(n+2\cdot2^k)\right][k+3]+\vphantom{a}\\
\nonumber
&\quad\FFFDT(\FFf{F}_2)\left[f(n+4\cdot2^k)\right][k+3]+\FFFDT(\FFf{F}_2)\left[f(n+6\cdot2^k)\right][k+3]\equiv\\
\nonumber
&0\modulus{2}\Leftrightarrow\\
\label{ESubtreesK}
&\FFFDT(\FFf{F}_2)\left[u_0\right][2]+\FFFDT(\FFf{F}_2)\left[u_0+2u_1\right][2]+\vphantom{a}\\
\nonumber
&\quad\FFFDT(\FFf{F}_2)\left[u_0\right][3]+\FFFDT(\FFf{F}_2)\left[u_0+2u_1+\left(k=1\;?\;8u_2:0\right)\right][3]+\vphantom{a}\\
\nonumber
&\quad\FFFDT(\FFf{F}_2)\left[u_0+4u_1\right][3]+\FFFDT(\FFf{F}_2)\left[u_0+6u_1+\left(k=1\;?\;8u_2:0\right)\right][3]\equiv\\
\nonumber
&0\modulus{2}
\end{align}
where $u_0\ce\left((f(n)-f(n)\modulo2^k)/2^k\right)\modulo2^4\in\intintuptoex{2^4}$, $u_1\ce f'(n)\modulo2^4\in\intintuptoex{2^4}\cap(2\Z+1)$ (if $f'(n)\in2\Z_2$ then $f(n+2^k)\equiv f(n)+2^{k+1}f'(n)/2\equiv f(n)\modulus{2^{k+1}}$ but $n+2^k\not\equiv n\modulus{2^{k+1}}$ which contradicts the assumption that $f$ is a $2$-permutation polynomial), and $u_2=\left(f''(n)/2\right)\modulo2^4\in\intintuptoex{2^4}$. It can easily be verified that Eqn.~(\ref{ESubtreesK}) holds for all $k\in\set{1,2,3,4}$, $u_0,u_2\in\intintuptoex{2^4}$, and $u_1\in\intintuptoex{2^4}\cap(2\Z+1)$ which completes the proof of (2).

The proofs of (3) and (4) can be done in an analogous fashion using again Eqn.~(\ref{ESubtreesJ}). The relations we need to verify in order to prove (3) are 
\begin{align}
&\FFFDT(\FFf{F}_2)\left[u_0\right][1]+\FFFDT(\FFf{F}_2)\left[u_0\right][2]+\FFFDT(\FFf{F}_2)\left[u_0+2u_1+\left(k=1\;?\;8u_2:0\right)\right][3]+\vphantom{a}\\
\nonumber
&\quad\FFFDT(\FFf{F}_2)\left[u_0+6u_1+\left(k=1\;?\;8u_2:0\right)\right][3]\equiv\\
\nonumber
&0\modulus{2}
\end{align}
for all $k\in\set{1,2,3,4}$, $u_0,u_2\in\intintuptoex{2^4}$, and $u_1\in\intintuptoex{2^4}\cap(2\Z+1)$. To prove (4) we need to show that
\begin{align}
&(\FFFDT(\FFf{F}_2)\left[u_0\right][1]+\FFFDT(\FFf{F}_2)\left[u_0+2u_1\right][2]+\FFFDT(\FFf{F}_2)\left[u_0\right][3]\in\set{0,3}\;?\;0:1)\equiv\\
\nonumber
&\FFFDT(\FFf{F}_2)\left[u_0\right][0]+\FFFDT(\FFf{F}_2)\left[u_0+u_1+\left(k=1\;?\;2u_2:0\right)\right][1]+\FFFDT(\FFf{F}_2)\left[u_0\right][2]+\vphantom{a}\\
\nonumber
&\quad\FFFDT(\FFf{F}_2)\left[u_0+2u_1+\left(k=1\;?\;8u_2:0\right)\right][3]+\FFFDT(\FFf{F}_2)\left[u_0\right][4]+\FFFDT(\FFf{F}_2)\left[u_0+4u_1\right][4]\modulus{2}
\end{align}
for all $k\in\set{1,2,3,4}$, $u_0,u_2\in\intintuptoex{2^5}$, and $u_1\in\intintuptoex{2^5}\cap(2\Z+1)$.

\ignore{
To prove (2) we assume $k\geq4$ (to verify (2) for smaller $k$ we can use an adapted version of Lemma~\ref{LReduceDegree} to bound the degree of $f$), $f(x)=\sum_{i=0}^da_ix^i$, and $c\in\Z_2$ and compute
\begin{align}
f\left(n+c2^k\right)
&=\sum_{i=0}^da_i\sum_{j=0}^i\binom{i}{j}c^j2^{jk}n^{i-j}\\
&=\sum_{i=0}^da_i\left(n^i+ic2^kn^{i-1}\right)+2^{2k}\sum_{i=2}^da_i\sum_{j=2}^i\binom{i}{j}c^j2^{(j-2)k}n^{i-j}\\
&=f(n)+c2^kf'(n)+2^{k+4}2^{k-4}\sum_{i=2}^da_i\sum_{j=2}^i\binom{i}{j}c^j2^{(j-2)k}n^{i-j}\\
&\equiv f(n)+c2^kf'(n)\modulus{2^{k+4}}.
\end{align}
Thus,
\begin{align}
\FFFDT(\FFf{F}_2)[f(n+c2^k)][k,k+3]
&=\FFFDT(\FFf{F}_2)[f(n)+c2^kf'(n)][k,k+3]\\
\label{ESubtreesJ}
&=\FFFDT(\FFf{F}_2)[(f(n)-f(n)\modulo2^k)/2^k+cf'(n)][\intintuptoin{3}].
\end{align}
Consequently, (2) can be rewritten as
\begin{align}
&\varphi_{\mathrlap{f}\phantom{\pi},k+2}(n_{000})+\varphi_{\mathrlap{f}\phantom{\pi},k+2}(n_{010})+\varphi_{\mathrlap{f}\phantom{\pi},k+3}(n_{000})+\varphi_{\mathrlap{f}\phantom{\pi},k+3}(n_{010})+\vphantom{a}\\
\nonumber
&\quad\varphi_{\mathrlap{f}\phantom{\pi},k+3}(n_{001})+\varphi_{\mathrlap{f}\phantom{\pi},k+3}(n_{011})\equiv\\
\nonumber
&\varphi_{\mathrlap{f}\phantom{\pi},k+2}(n_{100})+\varphi_{\mathrlap{f}\phantom{\pi},k+2}(n_{110})+\varphi_{\mathrlap{f}\phantom{\pi},k+3}(n_{100})+\varphi_{\mathrlap{f}\phantom{\pi},k+3}(n_{110})+\vphantom{a}\\
\nonumber
&\quad\varphi_{\mathrlap{f}\phantom{\pi},k+3}(n_{101})+\varphi_{\mathrlap{f}\phantom{\pi},k+3}(n_{111})\equiv\\
\nonumber
&0\modulus{2}\Leftrightarrow\\
&\FFFDT(\FFf{F}_2)\left[f(n+0\cdot2^k)\right][k+2]+\FFFDT(\FFf{F}_2)\left[f(n+2\cdot2^k)\right][k+2]+\vphantom{a}\\
\nonumber
&\quad\FFFDT(\FFf{F}_2)\left[f(n+0\cdot2^k)\right][k+3]+\FFFDT(\FFf{F}_2)\left[f(n+2\cdot2^k)\right][k+3]+\vphantom{a}\\
\nonumber
&\quad\FFFDT(\FFf{F}_2)\left[f(n+4\cdot2^k)\right][k+3]+\FFFDT(\FFf{F}_2)\left[f(n+6\cdot2^k)\right][k+3]\equiv\\
\nonumber
&\FFFDT(\FFf{F}_2)\left[f(n+1\cdot2^k)\right][k+2]+\FFFDT(\FFf{F}_2)\left[f(n+3\cdot2^k)\right][k+2]+\vphantom{a}\\
\nonumber
&\quad\FFFDT(\FFf{F}_2)\left[f(n+1\cdot2^k)\right][k+3]+\FFFDT(\FFf{F}_2)\left[f(n+3\cdot2^k)\right][k+3]+\vphantom{a}\\
\nonumber
&\quad\FFFDT(\FFf{F}_2)\left[f(n+5\cdot2^k)\right][k+3]+\FFFDT(\FFf{F}_2)\left[f(n+7\cdot2^k)\right][k+3]\equiv\\
\nonumber
&0\modulus{2}\Leftrightarrow\\
\label{ESubtreesK}
&\FFFDT(\FFf{F}_2)[a+0\cdot b][2]+\FFFDT(\FFf{F}_2)[a+2\cdot b][2]+\FFFDT(\FFf{F}_2)[a+0\cdot b][3]+\FFFDT(\FFf{F}_2)[a+2\cdot b][3]+\vphantom{a}\\
\nonumber
&\quad\FFFDT(\FFf{F}_2)[a+4\cdot b][3]+\FFFDT(\FFf{F}_2)[a+6\cdot b][3]\equiv\\
\nonumber
&\FFFDT(\FFf{F}_2)[a+1\cdot b][2]+\FFFDT(\FFf{F}_2)[a+3\cdot b][2]+\FFFDT(\FFf{F}_2)[a+1\cdot b][3]+\FFFDT(\FFf{F}_2)[a+3\cdot b][3]+\vphantom{a}\\
\nonumber
&\quad\FFFDT(\FFf{F}_2)[a+5\cdot b][3]+\FFFDT(\FFf{F}_2)[a+7\cdot b][3]\equiv\\
\nonumber
&0\modulus{2}
\end{align}
where $a\ce\left((f(n)-f(n)\modulo2^k)/2^k\right)\modulo2^4\in\intintuptoex{2^4}$ and $b\ce f'(n)\modulo2^4\in\intintuptoex{2^4}\cap(2\Z+1)$ (if $f'(n)\in2\Z_2$ then $f(n+2^k)\equiv f(n)+2^{k+1}f'(n)/2\equiv f(n)\modulus{2^{k+1}}$ but $n+2^k\not\equiv n\modulus{2^{k+1}}$ which contradicts the assumption that $f$ is a $2$-permutation polynomial). It can easily be verified that Eqn.~(\ref{ESubtreesK}) holds for all $a\in\intintuptoex{2^4}$ and $b\in\intintuptoex{2^4}\cap(2\Z+1)$ which completes the proof of (2).

The proofs of (3) and (4) can be done in an analogous fashion using an adapted version of Lemma~\ref{LReduceDegree} (for $k\leq3$) and Eqn.~(\ref{ESubtreesJ}) (for $k\geq4$) again.
}

With the relations (1)~--~(4) at our disposal we are now able to prove the original statement of the theorem. It turns out that the isomorphism classes of subtrees of the tree of cycles $\mathcal{G}(\pi)$ of some $2$-adic permutation $\pi$ which satisfies the relations given in (1)~--~(4) are exactly those indicated by black boxes in Figure~\ref{FSubtrees}. Specifically, we will show,
\begin{align}
\mathrlap{S_{2,4}'\supseteq S_{2,4}''}\phantom{U_{2,4}'\supseteq U_{2,4}''}&\ce\big\{\text{isomorphism class of } T\mid\pi\in\SoPermutations{2}\text{ such that $\pi$ satisfies (1)}\\
\nonumber
&\phantom{\vphantom{a}\ce\big\{\text{isomorphism class of } T\mid\vphantom{a}}T\text{ full $4$-layer rooted subtree of $(\mathcal{G}(\pi),c(\pi))$}\big\}\\
\mathrlap{T_{2,4}'\supseteq T_{2,4}''}\phantom{U_{2,4}'\supseteq U_{2,4}''}&\ce\big\{\text{isomorphism class of } T\mid\pi\in\SoPermutations{2}\text{ such that $\pi$ satisfies (1)}\\
\nonumber
&\phantom{\vphantom{a}\ce\big\{\text{isomorphism class of } T\mid\vphantom{a}} T\text{ full $4$-layer rooted subtree of $(\mathcal{G}(\pi),c(\pi))$}\\
\nonumber
&\phantom{\vphantom{a}\ce\big\{\text{isomorphism class of } T\mid\vphantom{a}}\abs{\sigma}>1\text{ for root $(\ell,\sigma)$ of $T$}\big\}\\
\mathrlap{U_{2,4}'\supseteq U_{2,4}''}\phantom{U_{2,4}'\supseteq U_{2,4}''}&\ce\big\{\text{isomorphism class of } T\mid f\in\SoPermutations{2}\text{ such that $f$ satisfies (2), (3)}\\
\nonumber
&\phantom{\vphantom{a}\ce\big\{\text{isomorphism class of } T\mid\vphantom{a}}T\text{ full $4$-layer rooted subtree of $(\mathcal{G}(f),c(f))$}\big\}\\
\mathrlap{V_{2,4}'\supseteq V_{2,4}''}\phantom{U_{2,4}'\supseteq U_{2,4}''}&\ce\big\{\text{isomorphism class of } T\mid f\in\SoPermutations{2}\text{ such that $f$ satisfies (2), (3), (4)}\\
\nonumber
&\phantom{\vphantom{a}\ce\big\{\text{isomorphism class of } T\mid\vphantom{a}}T\text{ full $4$-layer rooted subtree of $(\mathcal{G}(f),c(f))$}\\
\nonumber
&\phantom{\vphantom{a}\ce\big\{\text{isomorphism class of } T\mid\vphantom{a}}\abs{\sigma}>1\text{ for root $(\ell,\sigma)$ of $T$}\big\}
\end{align}
where $S_{2,4}'$, $T_{2,4}'$, $U_{2,4}'$, and $V_{2,4}'$ are the sets of those $71$, $50$, $83$, and $7$ isomorphism classes of $4$-layer rooted trees with out-degrees in $\set{1,2}$ which are claimed to form the sets $S_{2,4}$, $T_{2,4}$, $U_{2,4}$, and $V_{2,4}$ respectively in Figure~\ref{FSubtrees}. Note that we already proved $S_{2,4}'\subseteq S_{2,4}$, $T_{2,4}'\subseteq T_{2,4}$, $U_{2,4}'\subseteq U_{2,4}$, $V_{2,4}'\subseteq V_{2,4}$ (by listing examples) and $S_{2,4}\subseteq S_{2,4}''$, $T_{2,4}\subseteq T_{2,4}''$, $U_{2,4}\subseteq U_{2,4}''$, $V_{2,4}\subseteq V_{2,4}''$ (by showing (1)~--~(4)).

We demonstrate the idea of proof by showing that tree $4$-$52$ from Figure~\ref{FSubtrees} does not belong to $S_{2,4}''$. All other trees can be dealt with in an analogous fashion. Figure~\ref{F4x52} shows a possible realization of tree $4$-$52$ and it is indicated in the caption that $k=4$ and $n\in\set{2,8}$ violate relation (1). In the following we will argue that any such possible realization of tree $4$-$52$ necessarily violates relation (1) and thus the isomorphism class of the tree cannot be contained in $S_{2,4}''$. We start by defining the auxiliary function
\begin{align}
\overline\varphi_{\pi,k}(n):\Z_2&\to\set{0,1}\\
\nonumber
n&\mapsto\left(\varphi_{\id_{\Z_2},k}(n)+\varphi_{\pi,k}(n)\right)\modulo2
\end{align}
for all $2$-adic permutations $\pi$ and all $k\in\Nz$, where $\id_{\Z_2}$ is the identity function on $\Z_2$. Then,
\begin{align}
\label{ESubtreesL}
\overline\varphi_{\pi,k}(n)&=\left(\FFFDT((x,x-1))[n][k]=\FFFDT((x,x-1))[\pi(n)][k]\;?\;0:1\right)
\end{align}
for all $n\in\Z_2$, i.e. $\overline\varphi_{\pi,k}(n)=0$ if the $k$-th binary digits of $n$ and $\pi(n)$ coincide, and $\overline\varphi_{\pi,k}(n)=1$ otherwise. Furthermore,
\begin{align}
&\overline\varphi_{\pi,k}(n)=\overline\varphi_{\pi,k}(n+2^k)
\end{align}
and if $\pi=\pi_{\FFf{F},\FFf{G}}$ for some $\Z_2$-polynomial $2$-adic systems $\FFf{F}$ and $\FFf{G}$ then,
\begin{align}
&\overline\varphi_{\pi,k+1}(n_{0})+\overline\varphi_{\pi,k+1}(n_{1})+\vphantom{a}\\
\nonumber
&\quad\overline\varphi_{\pi,k+2}(n_{00})+\overline\varphi_{\pi,k+2}(n_{01})+\vphantom{a}\\
\nonumber
&\quad\overline\varphi_{\pi,k+3}(n_{100})+\overline\varphi_{\pi,k+3}(n_{110})+\overline\varphi_{\pi,k+3}(n_{101})+\overline\varphi_{\pi,k+3}(n_{111})\\
\nonumber
&\equiv0\modulus{2}\\
&\overline\varphi_{\pi,k+1}(n_{0})+\overline\varphi_{\pi,k+1}(n_{1})+\vphantom{a}\\
\nonumber
&\quad\overline\varphi_{\pi,k+2}(n_{10})+\overline\varphi_{\pi,k+2}(n_{11})+\vphantom{a}\\
\nonumber
&\quad\overline\varphi_{\pi,k+3}(n_{000})+\overline\varphi_{\pi,k+3}(n_{010})+\overline\varphi_{\pi,k+3}(n_{001})+\overline\varphi_{\pi,k+3}(n_{011})\\
\nonumber
&\equiv0\modulus{2}
\end{align}
for all $k\in\Nz$ and $n\in\Z_2$ by (1). Consequently,
\begin{align}
\label{ESubtreesM}
&\smash[b]{\sum_{i=0}^{\ell-1}\Big(}\overline\varphi_{\pi,k+1}(n[i]_{0})+\overline\varphi_{\pi,k+1}(n[i]_{1})+\vphantom{a}\\
\nonumber
&\hphantom{\sum_{i=0}^{\ell-1}\Big(}\overline\varphi_{\pi,k+2}(n[i]_{d[i]0})+\overline\varphi_{\pi,k+2}(n[i]_{d[i]1})+\vphantom{a}\\
\nonumber
&\hphantom{\sum_{i=0}^{\ell-1}\Big(}\overline\varphi_{\pi,k+3}(n[i]_{e[i]00})+\overline\varphi_{\pi,k+3}(n[i]_{e[i]10})+\overline\varphi_{\pi,k+3}(n[i]_{e[i]01})+\overline\varphi_{\pi,k+3}(n[i]_{e[i]11})\smash{\Big)}\\
\nonumber
&\equiv0\modulus{2}
\end{align}
for all $\ell\in\N$, $\Sf{n}\in\CoSequences(\SPboundedby{2^k},\SPlength{\ell})$, and $\Sf{d},\Sf{e}\in\CoSequences(\SPboundedby{\set{0,1}},\SPlength{\ell})$ with $\Sf{e}=1-\Sf{d}$.

Using Eqn.~(\ref{ESubtreesM}) we will now show that tree $4$-$52$ from Figure~\ref{FSubtrees} does not belong to $S_{2,4}''$. For this purpose let $\pi\in\SoPermutations{2}$ and $k\in\Nz$ such that $(\mathcal{G}(\pi),c(\pi))$ contains a subtree $T$ which is isomorphic to tree $4$-$52$. Note that in the example given in Figure~\ref{F4x52} we have $k=4$ and $\sigma_0=(0,2,8,10)$ (to improve readability we omit the square brackets indicating equivalence classes). Let $v_1,\ldots,v_{15}$ denote the remaining vertices of $T$, ordered in a way that is compatible (regarding graph isomorphy) with the ordering given in Figure~\ref{F4x52}, and let $\sigma_0,\ldots,\sigma_{15}$ denote the corresponding cycles. If we set $\ell\ce\abs{\sigma_0}$, then $\abs{\sigma_1}=\abs{\sigma_2}=\abs{\sigma_4}=\abs{\sigma_5}=\ell$, $\abs{\sigma_3}=\abs{\sigma_6}=\abs{\sigma_7}=\abs{\sigma_8}=\abs{\sigma_9}=\abs{\sigma_{12}}=\abs{\sigma_{13}}=\abs{\sigma_{14}}=\abs{\sigma_{15}}=2\ell$, and $\abs{\sigma_{10}}=\abs{\sigma_{11}}=4\ell$ by Corollary~\ref{CCycles}. Furthermore, by the defining properties of $p$-adic permutations (cf. Eqn.~(\ref{EPermA}), Eqn.~(\ref{EPermB}), and Theorem~\ref{TCycles}) we get the following structural properties of the tables representing the binary expansions of the cycles corresponding to the vertices of $T$ (cf. Figure~\ref{F4x52}):

\begin{theoremtable}
$\bullet$&The block below the first row of every child vertex (gray and dark gray parts) is either a copy of the whole table of its parent vertex (if it has a sibling) or two such copies next to each other (if it is an only child).\tabularnewline
$\bullet$&The first rows (light gray parts) of two siblings are ones' complements of each other and the first row of an only child has two parts of equal lengths which are ones' complements of each other.\tabularnewline
$\bullet$&On top of every ``parent block'' the first row of every child vertex $v$ which has a sibling has an even number $c(v)$ of entries which differ from the respective following entries (cyclically) and the first row of every only child has an odd number of such entries (in Figure~\ref{F4x52} the numbers $c(v)$ are given next to the respective vertex). If $v=(k,\sigma)$, $c(v)=\sum_{i=0}^{l-1}\overline\varphi_{\pi,k-1}(\sigma[i])$ by Eqn.~(\ref{ESubtreesL}).\tabularnewline[0.5\baselineskip]
\end{theoremtable}

\noindent
Let $\Sf{n}\in\CoSequences(\SPboundedby{2^k},\SPlength{\ell})$ such that $\sigma_0=[([\Sf{n}[0]],\ldots,[\Sf{n}[\ell-1]])]_{\sim_\sigma}$ ($\Sf{n}$ is one of the sequences $(0,2,8,10)$, $(2,8,10,0)$, $(8,10,0,2)$, $(10,0,2,8)$ in the example given in Figure~\ref{F4x52}). Furthermore, let $\Sf{d}\in\CoSequences(\SPboundedby{\set{0,1}},\SPlength{\ell})$ be the corresponding top row of $v_1$ (i.e. $(0,1,0,1)$, $(1,0,1,0)$, $(0,1,0,1)$, or $(1,0,1,0)$ in the example given in Figure~\ref{F4x52}) and set $\Sf{e}\ce1-\Sf{d}$. Since $v_3$ is an only child and $v_4$, $v_6$, $v_{12}$, and $v_{14}$ all have siblings, $c(v_3)+c(v_4)+c(v_6)+c(v_{12})+c(v_{14})$ is odd. But by definition of $\Sf{n}$, $\Sf{d}$, and $\Sf{e}$, this sum is equal to the sum given in Eqn.~(\ref{ESubtreesM}) which is even if $\pi$ satisfies (1). Consequently, tree $4$-$52$ cannot belong to $S_{2,4}''$ as claimed.

\bgroup
\newcommand{\ab}{\allowbreak}
\newcommand\colorfbox[2]{{\color{#1}\fbox{#2}}}
\setlength{\fboxsep}{2pt}
\setlength{\fboxrule}{1.5pt}

\begin{figure}[H]
\centering
\begin{overpic}[width=0.9\textwidth]{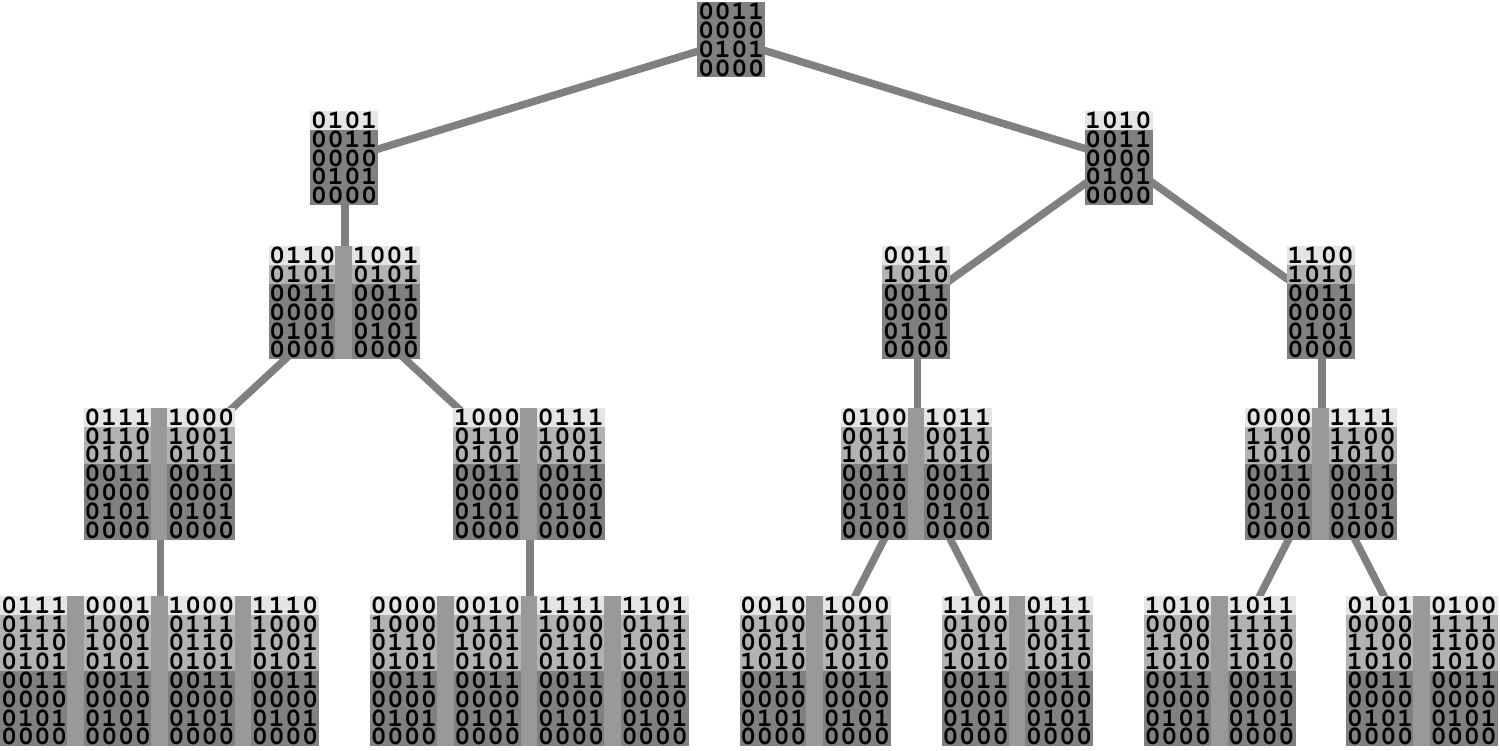}
\put(47.62,43.21){$v_0$}
\put(23.85,34.69){$v_1\text{\scriptsize:\,4}$}
\put(73.44,34.69){$v_2\text{\scriptsize:\,4}$}
\put(17.00,26.07){\colorfbox{gray!50}{\phantom{\rule{17.12pt}{28.93pt}}}}
\put(21.825,24.42){$v_3\text{\scriptsize:\,3}$}
\put(57.91,26.07){\colorfbox{gray!50}{\phantom{\rule{17.40pt}{28.93pt}}}}
\put(61.94,24.42){$v_4\text{\scriptsize:\,2}$}
\put(88.93,24.42){$v_5\text{\scriptsize:\,2}$}
\put(4.67,14.04){\colorfbox{gray!50}{\phantom{\rule{38.85pt}{33.76pt}}}}
\put(11.505,12.40){$v_6\text{\scriptsize:\,2}$}
\put(36.115,12.40){$v_7\text{\scriptsize:\,2}$}
\put(59.965,12.40){$v_8\text{\scriptsize:\,3}$}
\put(86.93,12.40){$v_9\text{\scriptsize:\,1}$}
\put(9.00,-1.38){$v_{10}\text{\scriptsize:\,3}$}
\put(33.68,-1.38){$v_{11}\text{\scriptsize:\,3}$}
\put(48.39,0.26){\colorfbox{gray!50}{\phantom{\rule{38.83pt}{38.58pt}}}}
\put(52.705,-1.38){$v_{12}\text{\scriptsize:\,4}$}
\put(66.185,-1.38){$v_{13}\text{\scriptsize:\,4}$}
\put(75.35,0.26){\colorfbox{gray!50}{\phantom{\rule{38.88pt}{38.58pt}}}}
\put(79.655,-1.38){$v_{14}\text{\scriptsize:\,6}$}
\put(93.165,-1.38){$v_{15}\text{\scriptsize:\,6}$}
\end{overpic}\\
$ $
\caption{A possible scenario for obtaining tree $4$-$52$ from Figure~\ref{FSubtrees}. The vertices $v_0,\ldots,v_{15}$ correspond to the cycles $\sigma_0=(0,\ab2,\ab8,\ab10)$ of $\pi_4$, $\sigma_1=(0,\ab18,\ab8,\ab26)$, $\sigma_2=(16,\ab2,\ab24,\ab10)$ of $\pi_5$, $\sigma_3=(0,\ab50,\ab40,\ab26,\ab32,\ab18,\ab8,\ab58)$, $\sigma_4=(16,\ab2,\ab56,\ab42)$, $\sigma_5=(48,\ab34,\ab24,\ab10)$ of $\pi_6$, $\sigma_6=(0,\ab114,\ab104,\ab90,\ab96,\ab18,\ab8,\ab58)$, $\sigma_7=(64,\ab50,\ab40,\ab26,\ab32,\ab82,\ab72,\ab122)$, $\sigma_8=(16,\ab66,\ab56,\ab42,\ab80,\ab2,\ab120,\ab106)$, $\sigma_9=(48,\ab34,\ab24,\ab10,\ab112,\ab98,\ab88,\ab74)$ of $\pi_7$, and $\sigma_{10}=(0,\ab242,\ab232,\ab218,\ab96,\ab18,\ab8,\ab186,\ab128,\ab114,\ab104,\ab90,\ab224,\ab146,\ab136,\ab58)$, $\sigma_{11}=(64,\ab50,\ab40,\ab26,\ab32,\ab82,\ab200,\ab122,\ab192,\ab178,\ab168,\ab154,\ab160,\ab210,\ab72,\ab250)$, $\sigma_{12}=(16,\ab66,\ab184,\ab42,\ab208,\ab2,\ab120,\ab106)$, $\sigma_{13}=(144,\ab194,\ab56,\ab170,\ab80,\ab130,\ab248,\ab234)$, $\sigma_{14}=(176,\ab34,\ab152,\ab10,\ab240,\ab98,\ab216,\ab202)$, $\sigma_{15}=(48,\ab162,\ab24,\ab138,\ab112,\ab226,\ab88,\ab74)$ of $\pi_8$, written in base $2$ with most significant digits being in the top rows. The tree violates $\varphi_{\pi,k+2}(n_{000})+\varphi_{\pi,k+2}(n_{010})+\varphi_{\pi,k+3}(n_{100})+\varphi_{\pi,k+3}(n_{110})+\varphi_{\pi,k+3}(n_{101})+\varphi_{\pi,k+3}(n_{111})\equiv\varphi_{\pi,k+1}(n_{000})+\varphi_{\pi,k+1}(n_{100})\modulus{2}$ (relation (1)) for $k=4$ and $n\in\set{2,8}$ ($1+0+0+0+1+0\not\equiv1+0\modulus{2}$ and $0+0+1+1+0+0\not\equiv1+0\modulus{2}$) and can thus not be realized as a subtree of $(\mathcal{G}(\pi_{\FFf{F},\FFf{G}}),c(\pi_{\FFf{F},\FFf{G}}))$ for any $\FFf{F},\FFf{G}\in\SoSystems{2}(\FFPpolynomialcoefficients{\Z_2})$.}
\label{F4x52}
\end{figure}

\egroup
\end{proof}

Theorem~\ref{TSubtrees} finally allows us to prove that there is a $p$-permutation polynomial $f$ which cannot be written as $f=\pi_{\FFf{F},\FFf{G}}$ where $\FFf{F}$ and $\FFf{G}$ are $\Z_p$-polynomial $p$-adic systems as the following example shows.

\begin{example}
\label{ENotPolySyst}
Let $f(x)=2x^3+x+2\in\Z_2[x]$. Then $f$ is a $2$-permutation polynomial by Lemma~\ref{LCharacPermPoly} and $(\mathcal{G}(f),c(f))$ contains a subtree which is isomorphic to tree $4$-$15$ from Figure~\ref{FSubtrees}. Thus $f\neq\pi_{\FFf{F},\FFf{G}}$ for all $\Z_2$-polynomial $2$-adic systems $\FFf{F}$ and $\FFf{G}$ by Theorem~\ref{TSubtrees}.
\end{example}

\bgroup
\newcommand{\e}{\!=\!}
\newcommand{\p}{\!+\!}
\newcommand{\m}{\!-\!}

\setlength{\tabcolsep}{0pt}

\fontsize{6}{6}\selectfont

\begin{figure}[H]
\begin{tabularx}{\textwidth}{>{\hsize=1.0\hsize}X>{\hsize=1.0\hsize}X>{\hsize=1.0\hsize}X>{\hsize=1.0\hsize}X}
\centering\includegraphics[width=0.21\textwidth]{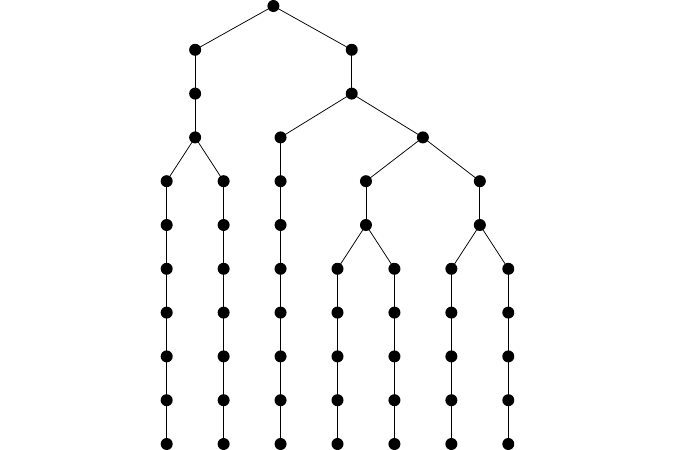}&
\centering\includegraphics[width=0.21\textwidth]{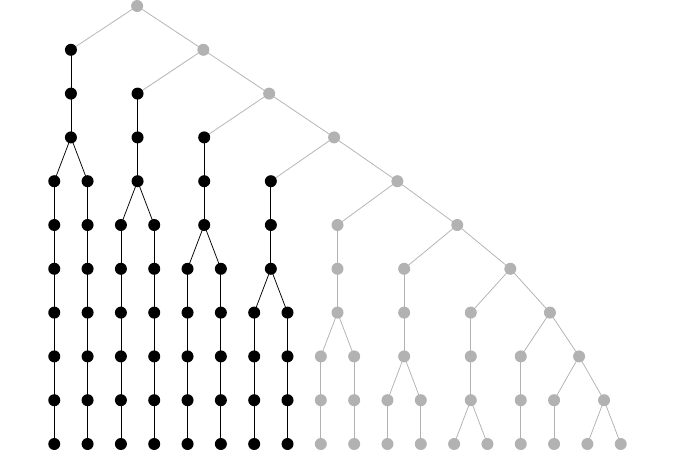}&
\centering\includegraphics[width=0.21\textwidth]{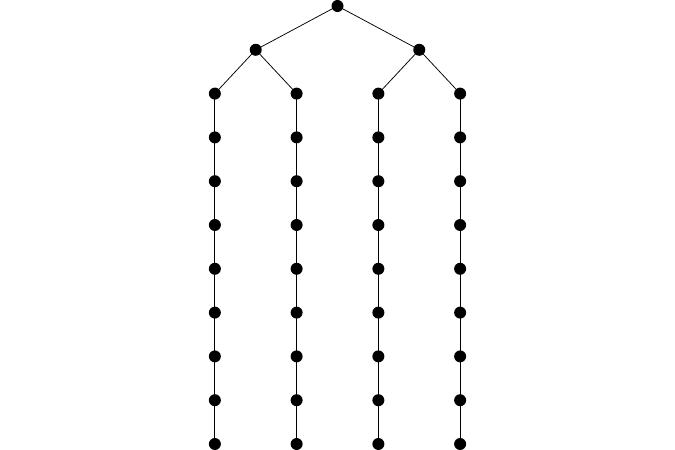}&
\centering\includegraphics[width=0.21\textwidth]{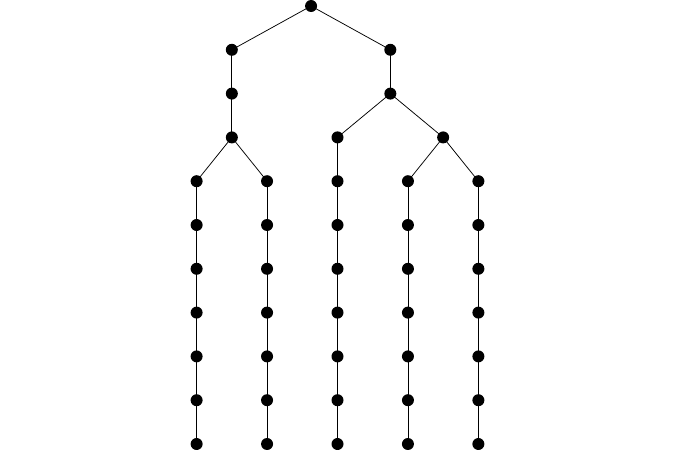}\tabularnewline[0.5\baselineskip]
\centering
$\begin{array}{l}
\mathrlap{\FFf{F}}\phantom{\FFf{G}}\e(x,3x\p1)\\
\FFf{G}\e(x\m2,x\p5)
\end{array}$
&\centering
$\begin{array}{l}
\mathrlap{\FFf{F}}\phantom{\FFf{G}}\e(x,3x\p1)\\
\FFf{G}\e(x,x\m7)
\end{array}$
&\centering
$\begin{array}{l}
\mathrlap{\FFf{F}}\phantom{\FFf{G}}\e(x,5x\p1)\\
\FFf{G}\e(x\m4,x\p1)
\end{array}$
&\centering
$\begin{array}{l}
\mathrlap{\FFf{F}}\phantom{\FFf{G}}\e(5x\p6,-7x\p1)\\
\FFf{G}\e(-3x\p+8,7x\p9)
\end{array}$\tabularnewline
\tabularnewline[0.5\baselineskip]
\centering\includegraphics[width=0.21\textwidth]{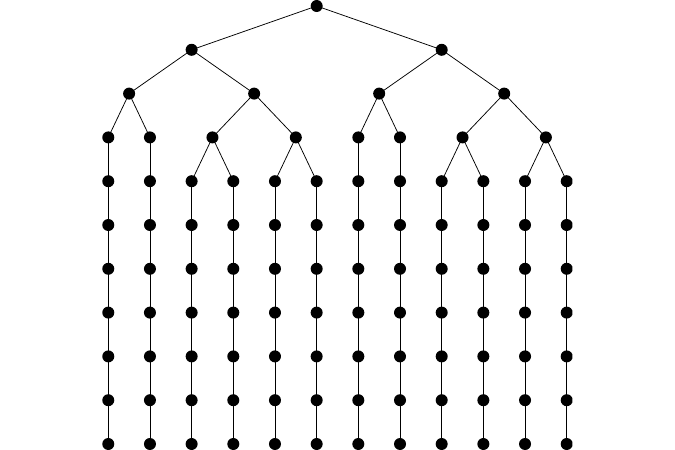}&
\centering\includegraphics[width=0.21\textwidth]{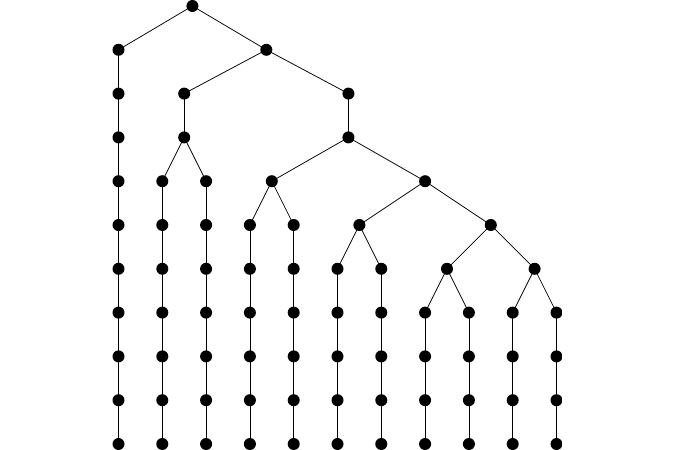}&
\centering\includegraphics[width=0.21\textwidth]{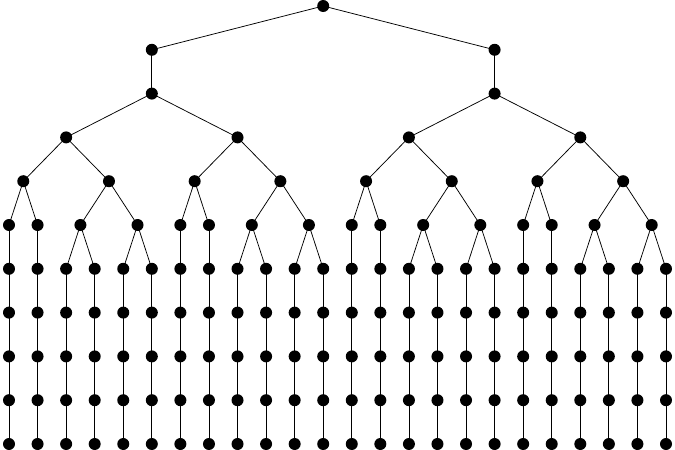}&
\centering\includegraphics[width=0.21\textwidth]{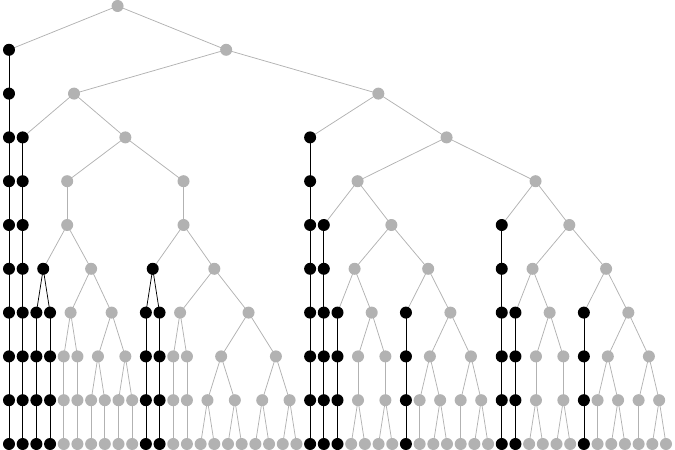}\tabularnewline[0.5\baselineskip]
\centering
$\begin{array}{l}
\mathrlap{\FFf{F}}\phantom{\FFf{G}}\e(-x\p6,3x\p3)\\
\FFf{G}\e(-5x\m2,-x\p7)
\end{array}$
&\centering
$\begin{array}{l}
\mathrlap{\FFf{F}}\phantom{\FFf{G}}\e(x\p4,-5x\p9)\\
\FFf{G}\e(7x\p6,5x\p7)
\end{array}$
&\centering
$\begin{array}{l}
\mathrlap{\FFf{F}}\phantom{\FFf{G}}\e(-9x\m6,-7x\p7)\\
\FFf{G}\e(9x\p4,7x\p7)
\end{array}$
&\centering
$\begin{array}{l}
\mathrlap{\FFf{F}}\phantom{\FFf{G}}\e(-3x\m8,5x\p3)\\
\FFf{G}\e(7x\m6,-x\p5)
\end{array}$\tabularnewline
\tabularnewline[0.5\baselineskip]
\centering\includegraphics[width=0.21\textwidth]{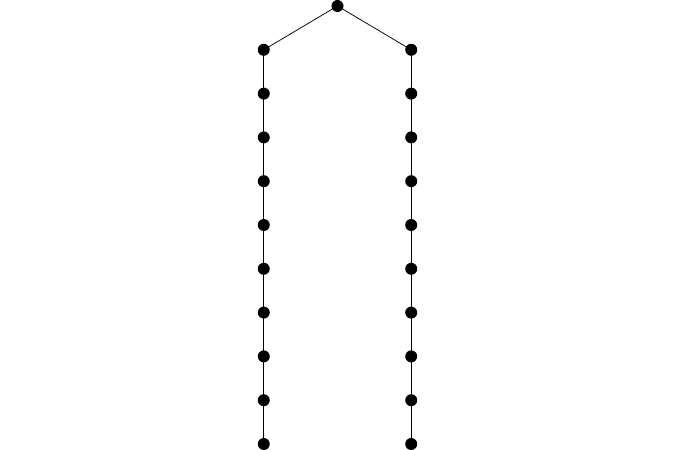}&
\centering\includegraphics[width=0.21\textwidth]{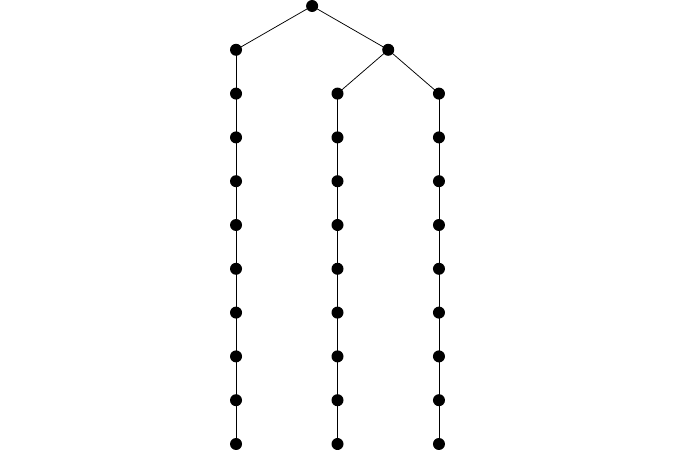}&
\centering\includegraphics[width=0.21\textwidth]{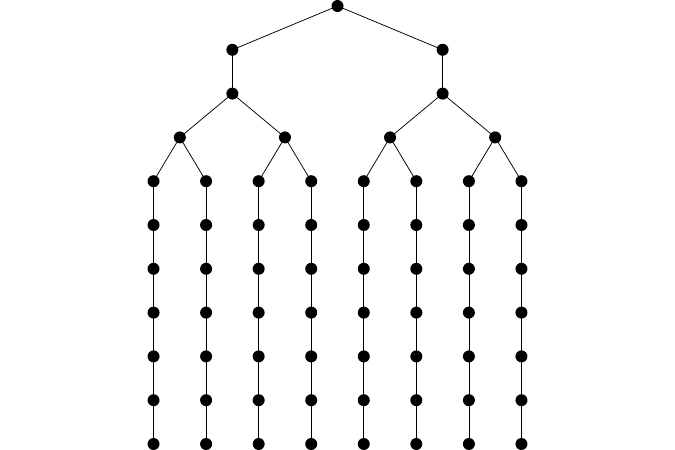}&
\centering\includegraphics[width=0.21\textwidth]{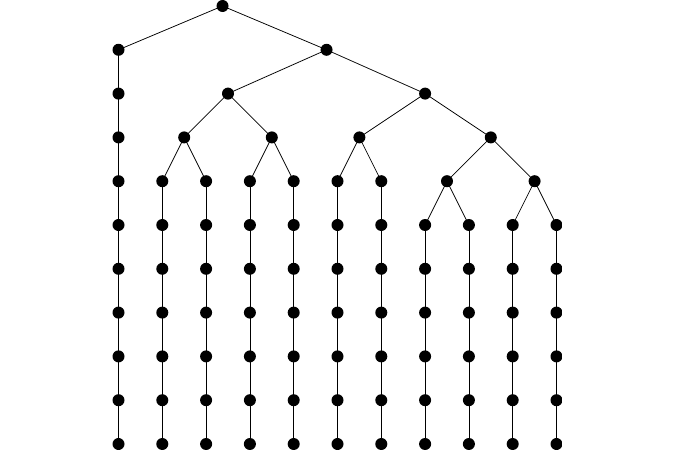}\tabularnewline[0.5\baselineskip]
\centering
$\begin{array}{l}
\mathrlap{\FFf{F}}\phantom{\FFf{G}}\e(5x^4\p5x^3\m5x^2\p7x\p6,\\
\phantom{\FFf{G}\e(}{-6}x^4\p2x^3\m5x^2\m x)\\
\FFf{G}\e(-6x^4\m2x^3\m6x^2\p9x\p8\\
\phantom{\FFf{G}\e(}2x^4\p6x^3\p6x^2\m x\p7)
\end{array}$
&\centering
$\begin{array}{l}
\mathrlap{\FFf{F}}\phantom{\FFf{G}}\e(4x^4\p7x^3\p x^2\m5x,\\
\phantom{\FFf{G}\e(}{-2}x^4\m4x^3\p4x^2\m3x\p3)\\
\FFf{G}\e(4x^4\p3x^3\m x^2\m3x\p2\\
\phantom{\FFf{G}\e(}8x^4\p3x^3\m4x\m5)
\end{array}$
&\centering
$\begin{array}{l}
\mathrlap{\FFf{F}}\phantom{\FFf{G}}\e(8x^4\m8x^3\m3x\p4,\\
\phantom{\FFf{G}\e(}{-}x^4\m4x^3\m5x^2\p9x+1)\\
\FFf{G}\e(-7x^4\p x^3\m x^2\m3x\p6\\
\phantom{\FFf{G}\e(}{-5}x^4\m x^3\p3x^2\p4x\m3)
\end{array}$
&\centering
$\begin{array}{l}
\mathrlap{\FFf{F}}\phantom{\FFf{G}}\e(-4x^4\p3x^3\m x^2\m x\p2,\\
\phantom{\FFf{G}\e(}{-}x^4\m9x^3\m4x^2\m4x\p2)\\
\FFf{G}\e(-5x^4\p6x^2\m x\p8\\
\phantom{\FFf{G}\e(}7x^4\p2x^3\m2x^2\m9x\m2)
\end{array}$\tabularnewline
\tabularnewline[0.5\baselineskip]
\centering\includegraphics[width=0.21\textwidth]{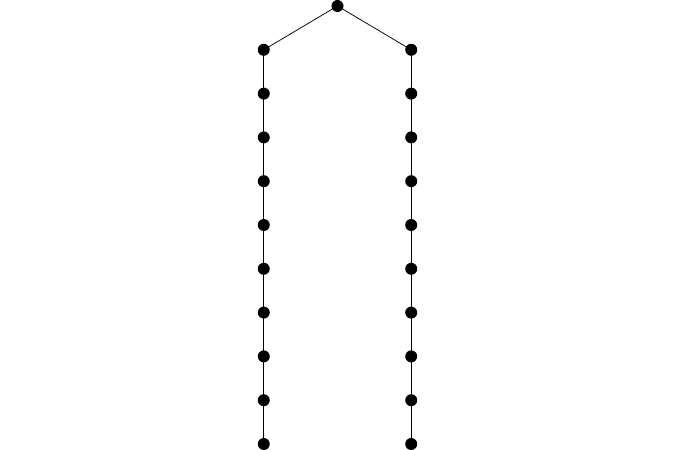}&
\centering\includegraphics[width=0.21\textwidth]{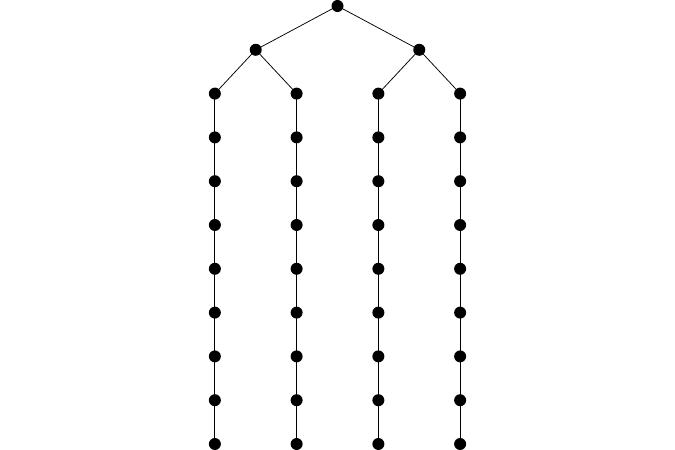}&
\centering\includegraphics[width=0.21\textwidth]{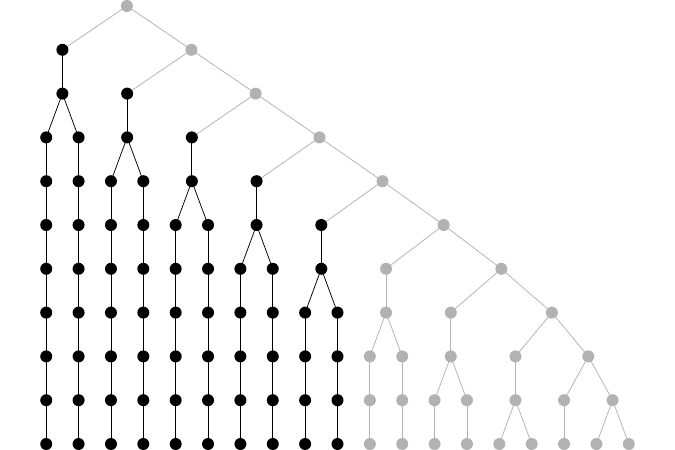}&
\centering\includegraphics[width=0.21\textwidth]{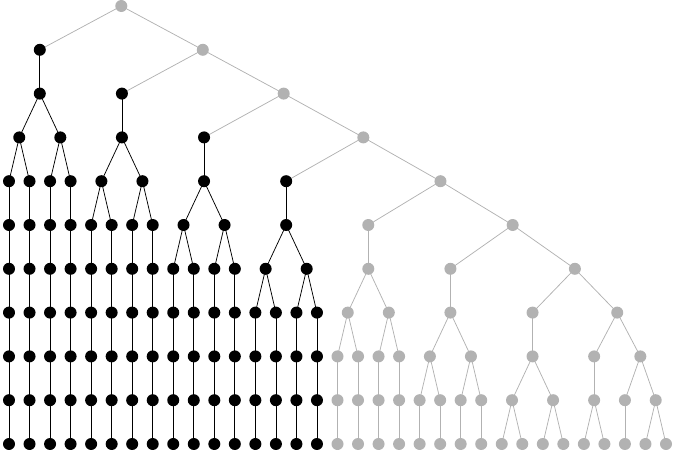}\tabularnewline[0.5\baselineskip]
\centering$f\e5x\m6$&
\centering$f\e x\p4$&
\centering$f\e3x\m4$&
\centering$f\e7x\p4$\tabularnewline
\tabularnewline[0.5\baselineskip]
\centering\includegraphics[width=0.21\textwidth]{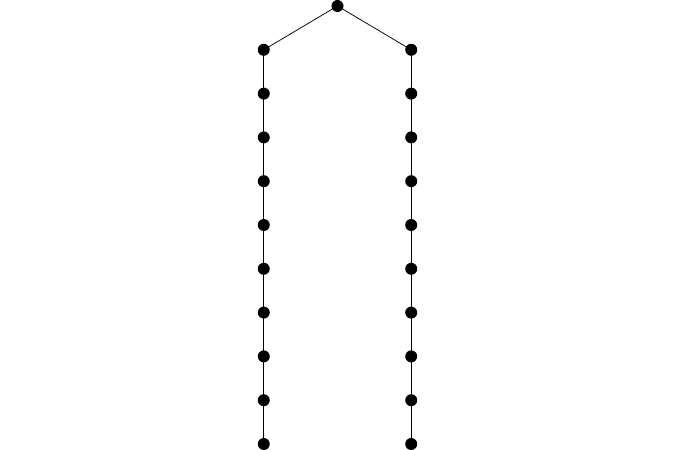}&
\centering\includegraphics[width=0.21\textwidth]{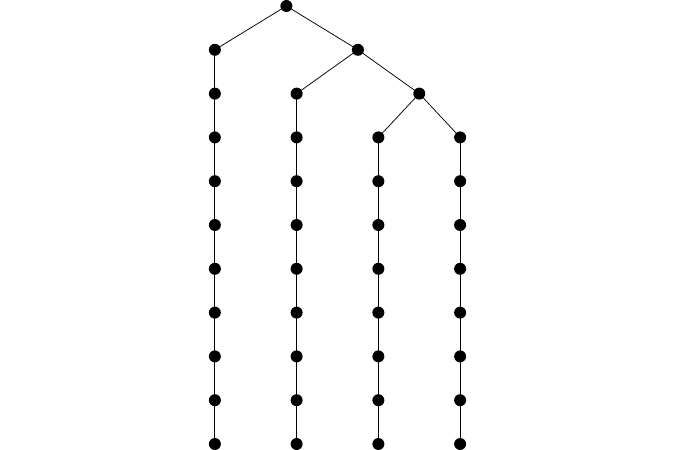}&
\centering\includegraphics[width=0.21\textwidth]{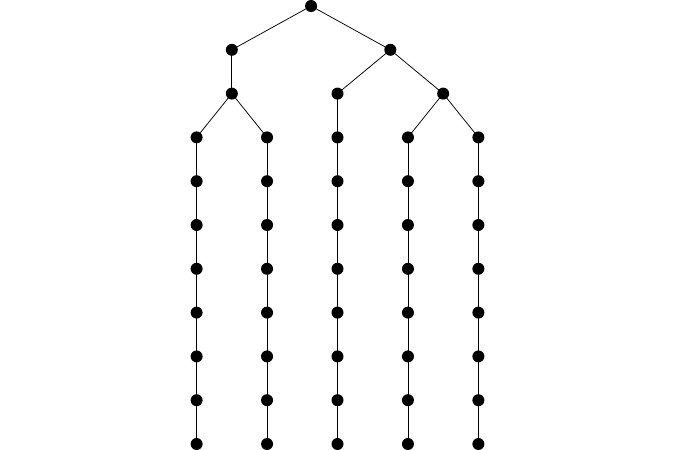}&
\centering\includegraphics[width=0.21\textwidth]{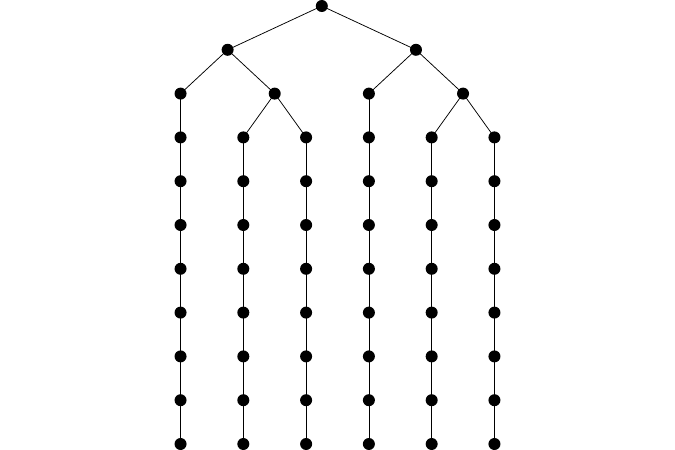}\tabularnewline[0.5\baselineskip]
\centering$f\e-3x^4\p2x^3\m x^2\m9x\m6$&
\centering$f\e-3x^4\p4x^3\m x^2\p7x\p6$&
\centering$f\e x^4\p2x^3\m9x^2\p x\m4$&
\centering$f\e-3x^4\p2x^3\p9x^2\p x\p4$\tabularnewline
\tabularnewline[0.5\baselineskip]
\centering\includegraphics[width=0.21\textwidth]{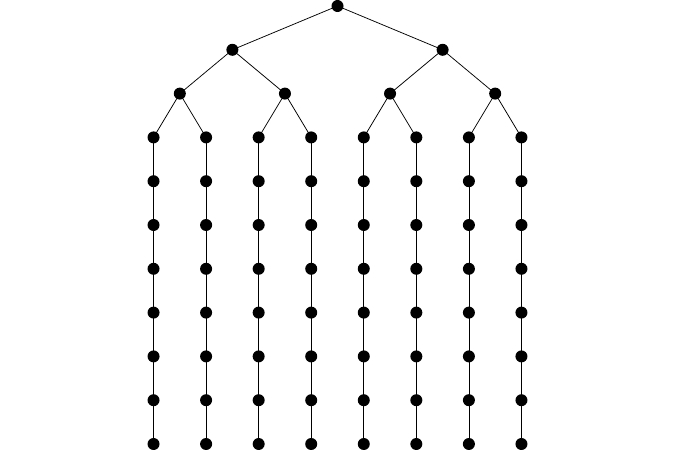}&
\centering\includegraphics[width=0.21\textwidth]{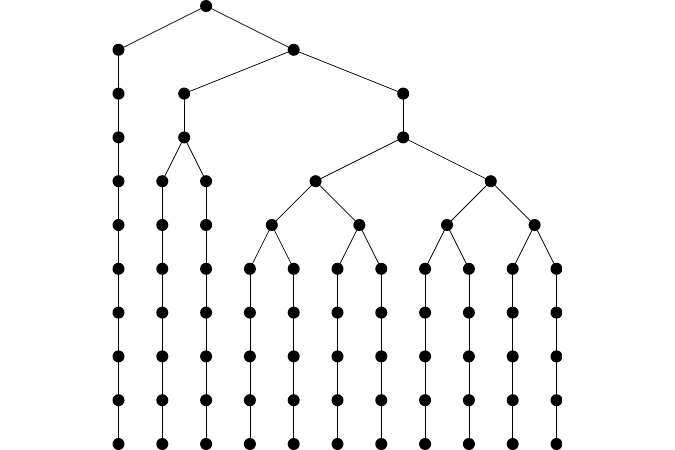}&
\centering\includegraphics[width=0.21\textwidth]{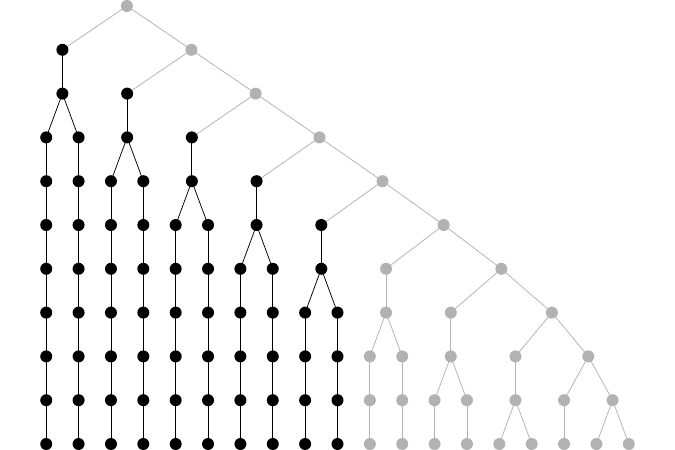}&
\centering\includegraphics[width=0.21\textwidth]{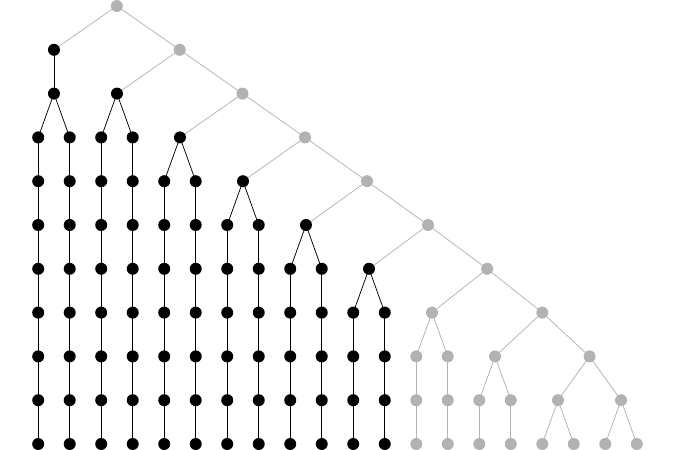}\tabularnewline[0.5\baselineskip]
\centering$f\e4x^4\m4x^2\p x\p8$&
\centering$f\e-9x^4\m6x^3\m5x^2\p7x\p2$&
\centering$f\e4x^4\p4x^2\p3x\m8$&
\centering$f\e-4x^4\m2x^3\m3x\m8$
\end{tabularx}

\caption{Several examples of $2$-cycle trees of the form $\mathcal{G}(\pi_{\FFf{F},\FFf{G}})$, respectively $\mathcal{G}(f)$. Infinite subtrees rooted in black vertices are completely characterized due to Corollary~\ref{CSubtrees}.}
\label{FInfiniteTrees}
\end{figure}

\egroup

In certain situations the following corollary of Theorem~\ref{TSubtrees} allows for the characterization of the complete (infinite) tree $(\mathcal{G}(\pi_{\FFf{F},\FFf{G}}),c(\pi_{\FFf{F},\FFf{G}}))$ where $\FFf{F}$ and $\FFf{G}$ are $\Z_2$-polynomial $2$-adic systems, respectively $(\mathcal{G}(f),c(f))$ where $f$ is a $2$-permutation polynomial. 

\begin{corollary}[No ``Y'' property]
\label{CSubtrees}
Let $\FFf{F},\FFf{G}\in\SoSystems{2}(\FFPpolynomialcoefficients{\Z_2})$, $f$ a $2$-permutation polynomial, and $\pi=\pi_{\FFf{F},\FFf{G}}$ or $\pi=f$. Furthermore, let $\ell\in\N$ and $(v_1,\ldots,v_\ell)$ be a (directed) path in $\mathcal{G}(\pi)$ with $\deg_o(v_i)=1$ (out-degree) for all $i\in\intint{1}{\ell-1}$ and $deg_o(v_\ell)=2$ (i.e. the graph induced by the vertices $v_1,\ldots,v_\ell$ and the two children of $v_\ell$ looks like the letter ``Y''). Then, $\ell\leq3$.
\end{corollary}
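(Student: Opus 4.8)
The plan is to read this off Theorem~\ref{TSubtrees}, whose finite list of admissible $4$-layer subtrees already contains the whole statement; the only thing to notice is a layer count. If $(v_1,\ldots,v_\ell)$ is a path as in the hypothesis, then the ``Y'' consisting of $v_1,\ldots,v_\ell$ together with the two children of $v_\ell$ has height exactly $\ell$: rooted at $v_1$, the apex $v_1$ sits at relative depth $0$ and the two children of $v_\ell$ at relative depth $\ell$. In particular a ``Y'' whose stem has length $4$ is itself a full $4$-layer rooted subtree, so the $4$-layer characterization of Theorem~\ref{TSubtrees} is exactly strong enough to detect it. (As the count of $230$ height-$4$ shapes and the span of Figure~\ref{F4x52} both confirm, a ``$4$-layer subtree'' has its vertices at relative depths $0,1,2,3,4$, with all leaves at depth $4$; this uses that every cycle has out-degree at least $1$ by Theorem~\ref{TCycles}.)

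First I would assume, for contradiction, that $\ell\geq4$. Then the vertex $v_{\ell-3}$ exists, the three vertices $v_{\ell-3},v_{\ell-2},v_{\ell-1}$ each have out-degree $1$, and $v_\ell$ has out-degree $2$. Consequently the full $4$-layer rooted subtree $T$ of $\mathcal{G}(\pi)$ with root $v_{\ell-3}$ is precisely the directed path $v_{\ell-3}\to v_{\ell-2}\to v_{\ell-1}\to v_\ell$ extended by the two children of $v_\ell$: each of the first three vertices contributes a single child, and the two children of $v_\ell$ are the leaves of $T$. Hence $T$ is isomorphic to the ``long-stem Y'', the height-$4$ tree whose top four vertices form a path with out-degree sequence $1,1,1,2$. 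By Theorem~\ref{TSubtrees} the isomorphism class of $T$ must lie in $S_{2,4}$ in the case $\pi=\pi_{\FFf{F},\FFf{G}}$ and in $U_{2,4}$ in the case $\pi=f$; but this isomorphism class is marked in neither list in Figure~\ref{FSubtrees}. This contradiction forces $\ell\leq3$. Observe that rooting at $v_{\ell-3}$ (rather than at $v_1$) lets this single excluded tree settle all $\ell\geq4$ at once.

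The substantive mathematics is thus entirely inside Theorem~\ref{TSubtrees} (its relations $(1)$--$(4)$ are what exclude the long-stem ``Y''), and the remaining work is twofold: the bookkeeping above that recognises the stem-$4$ ``Y'' as one of the height-$4$ subtrees, and the purely visual step of locating that tree among the $230$ candidates in Figure~\ref{FSubtrees} and checking that it carries neither the $S$- nor the $U$-box. I expect this figure reading, together with keeping the ``layer'' convention straight, to be the only real obstacle, as no new estimate is needed. Finally, to confirm that the bound $\ell\leq3$ is sharp I would exhibit a stem-$3$ ``Y'' that genuinely occurs: the chain $(0)\to(0,2)\to(0,2,4,6)$ in the tree of cycles of $\pi_{(x,3x+1),(5x+18,x-7)}$ displayed in Figure~\ref{FTreeCyc}, whose branch vertex $(0,2,4,6)$ has out-degree $2$ while its two predecessors have out-degree $1$.
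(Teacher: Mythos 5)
Your proof is correct and is essentially the paper's own argument: the paper proves the corollary in one line by citing the absence of black boxes under tree $4$-$2$ (your ``long-stem Y'') in Figure~\ref{FSubtrees}, and your rooting at $v_{\ell-3}$ together with the layer-count bookkeeping just makes explicit the reduction the paper leaves implicit. The sharpness example is a nice extra but not part of the claimed statement.
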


\begin{proof}
Follows directly from the absence of black boxes underneath tree $4$-$2$ in Figure~\ref{FSubtrees}.
\end{proof}

\noindent
Informally, the previous corollary states that if a branch of a $2$-cycle tree defined by $\Z_2$-polynomial $2$-adic systems or by $2$-permutation polynomials doesn't split for $3$ consecutive times, it will never split. Thus, several of the trees shown in Figure~\ref{FInfiniteTrees} are completely characterized and others are characterized at least in (infinite) parts.

\newcommand{\qi}[2][\quad\qquad ]{{\rlap{\textbf{#2}}#1}}

\section{Open questions and problems}
\label{SQuestions}
To provide a possible roadmap for future investigations of $p$-adic systems in many different directions we give a list, by no means exhaustive, of potentially interesting questions and problems below. Whenever we refer to ``classes of $p$-adic systems'' (or, analogously, ``classes of $p$-digit tables with block property'' or ``classes of $p$-adic permutations'', cf. Section~\ref{SInterpretations}), we intend this to be understood as any meaningful collection of $p$-adic systems that is described in this article or that will be found during future investigations of $p$-adic systems. Examples of such classes are

\begin{theoremtable}[rrX]
$\bullet$&\multicolumn{2}{X}{the class $\SoSystems{p}(\FFPpolynomialcoefficientsdegree{A}{D})$ of $A$-polynomial $p$-adic systems with degree in $D$ with some natural choices for $A$ and $D$, such as $A=\N,\Nz,\Z,\Q\cap\Z_p,\Z_p,\Q_p,\ldots$ and $D=\set{d},\intintuptoin{d},\N,\ldots$}\tabularnewline
$\bullet$&\multicolumn{2}{X}{$p$-adic systems defined by rational functions or power series, again with possible restrictions to the occurring coefficients and degrees}\tabularnewline
$\bullet$&\multicolumn{2}{X}{$p$-adic systems of the form $\FFf{F}_\STf{D}$ (cf. Theorem~\ref{TASeqDTbl}) where $\STf{D}$ belongs to some class of $p$-digit tables with block property, such as $p$-digit tables defined by sequences like the Thue-Morse sequence (p.~\pageref{DThueMorse})}\tabularnewline
$\bullet$&\multicolumn{2}{X}{$p$-adic systems of the form ${\Pi_\FFf{G}}^{-1}(f)$ where $\FFf{G}$ is a fixed $p$-adic system or itself ranging over some class of $p$-adic systems and $f$ belongs to some class of $p$-adic permutations, such as}\tabularnewline
&$\circ$&$p$-permutation polynomials with possible restrictions to the occurring coefficients and degrees\tabularnewline
&$\circ$&$p$-adic permutations of the form $\pi_{\FFf{F},\FFf{G}}$ with $\FFf{F}$ and $\FFf{G}$ again belonging to some classes of $p$-adic systems\tabularnewline
&$\circ$&finite products $\FFf{F}_1\circ_\FFf{G}\ldots\circ_\FFf{G}\FFf{F}_\ell$ of $p$-adic systems from a certain class (cf. Theorem~\ref{TPermSubgroup} et seq.) with a fixed or bounded number $\ell$ of operands\tabularnewline
&$\circ$&the closure under $\circ_\FFf{G}$ of any union of previously mentioned classes of $p$-adic permutations\tabularnewline
$\bullet$&\multicolumn{2}{X}{any of the above, with bounds on $p$ (fixed value, range, only primes, etc.) or restrictions imposed by demanding additional properties, such as being contractive, expansive, of mixed type, avoiding, periodic, ultimately periodic, or aperiodic on some given set, etc.}\tabularnewline
\end{theoremtable}
$ $\\[-0.5\baselineskip]
With these examples of classes of $p$-adic systems at hand we are ready to provide the announced list of open questions and problems.\\[0.25\baselineskip]
\qi{1)}%
Investigate $p$-adic systems from the perspective of them being number systems. For a $p$-adic system $\FFf{F}$ let $\Sf{S}\star_\FFf{F}\Sf{T}\ce\psi_\FFf{F}\left(\psi_\FFf{F}^{-1}(\Sf{S})\star\psi_\FFf{F}^{-1}(\Sf{T})\right)$ for all $\Sf{S},\Sf{T}\in\CoSequences(\SPboundedby{\intintuptoex{p}},\neg\SPfinite)$ and any operation $\star\in\set{+,-,\cdot}$. If $\FFf{F}=\FFf{F}_p$, there are efficient algorithms for the computation of $\Sf{S}\star_\FFf{F}\Sf{T}$. Are there other choices for $\FFf{F}$ for which useful algorithms can be found? Furthermore, if $\FFf{F}=\FFf{F}_p$ there is no known efficient algorithm for the computation of a prime factor of some $n\in\N$ from its $\FFf{F}$-digit expansion $\psi_\FFf{F}(n)$. Are there other choices for $\FFf{F}$ for which efficient algorithms can be found?\\[0.25\baselineskip]
\qi{2)}%
Prove Conjecture~\ref{ConPropRatPolyF} on the characterization of all (weakly) $(p,r)$-suitable rational functions (cf. also Theorem~\ref{TPropRatF} which proves a special case of the conjecture). More generally, characterize all (weakly) $(p,r)$-suitable analytic functions. Generalize Theorem~\ref{TCharacAvPolyF} and characterize all $(p,r)$-avoiding polynomial functions in $\Q_p[x]$, all $(p,r)$-avoiding rational functions, or even all $(p,r)$-avoiding analytic functions.\\[0.25\baselineskip]
\qi{3)}%
Example~\ref{ENoCharacWBlk} demonstrates that the weak block property for $p$-fibred functions does not permit a necessary and sufficient characterization that only considers the functions $\FFf{F}[0],\ldots,\FFf{F}[p-1]$ independently from one another. Furthermore, Example~\ref{ENoCharacWBlkEx} shows that there is a $p$-digit table with weak block property that is the $p$-digit table of a $p$-fibred function but cannot be realized as the $p$-digit table of a $p$-fibred function, whose entries are weakly $(p,r)$-suitable functions. Is there a predicate $P$ on the set of functions on $\Z_p$ other than being weakly $(p,r)$-suitable which satisfies that a $p$-digit table that is the $p$-digit table of a $p$-fibred function has the weak block property if and only if it can be realized as the $p$-digit table of a $p$-fibred function whose entries satisfy the predicate $P$?\\[0.25\baselineskip]
\qi{4)}%
For every $p$-adic permutation $\pi$ and every $p$-adic system $\FFf{G}$ there is a unique $p$-adic system $\FFf{F}$ such that $\pi=\pi_{\FFf{F},\FFf{G}}$ (cf. Theorem~\ref{TPermEq}). If $\pi$ belongs to a certain subclass of $p$-adic permutations, are there particularly ``nice'' choices for $\FFf{F}$ and $\FFf{G}$? As examples consider the $2$-adic permutations \begin{align}
f(x)&=10x^2-3x+4\\
g(x)&=-2x^2+7x-6
\end{align}
from the class of $2$-permutation polynomials. If we set (cf. Example~\ref{EHensel})
\begin{align}
\FFf{F}_1&\ce\left(\frac{\sqrt{200x^2-60x-71}+3}{10},\frac{\sqrt{200x^2-60x-91}+3}{10}\right)\\
\FFf{F}_2&\ce\left(\frac{\sqrt{200x^2-60x+90-161}+3}{10},\frac{\sqrt{3(200x^2-60x+90)-161}+3}{10}\right)\\
\FFf{F}_3&\ce\left(\frac{\sqrt{8x^2-28x+25}+7}{2},\frac{\sqrt{8x^2-28x+29}+7}{2}\right)\\
\FFf{F}_4&\ce\left(\frac{\sqrt{8x^2-28x+22+3}+7}{2},\frac{\sqrt{3(8x^2-28x+22)+3}+7}{2}\right)\\
\FFf{G}_1&\ce(x,x)\\
\FFf{G}_2&\ce(x,3x+1),
\end{align}
then $f=\pi_{\FFf{F}_1,\FFf{G}_1}=\pi_{\FFf{F}_2,\FFf{G}_2}$ and $g=\pi_{\FFf{F}_3,\FFf{G}_1}=\pi_{\FFf{F}_4,\FFf{G}_2}$. Under which conditions can $\FFf{F}$ and $\FFf{G}$ be chosen to be both polynomial or from some other fixed class of $p$-adic systems?\\[0.25\baselineskip]
\qi{5)}%
Study the relation between $p$-adic systems and known sequences which have the $(p,k)$-block property such as the (slightly modified) Thue-Morse sequence (cf. p.~\pageref{DThueMorse}).\\[0.25\baselineskip]
\qi{6)}%
Investigate the group structure of $\left(\SoSystems{p},\circ_\FFf{G}\right)$, respectively $\left(\SoPermutations{p},\circ\right)$. What do the subgroups gained from forming the closure of any of the classes of $p$-adic systems under $\circ_\FFf{G}$ look like? Are any of the classes of $p$-adic systems already closed under $\circ_\FFf{G}$? If $\FFf{F}_1$ and $\FFf{F}_2$ are $p$-adic systems, what is the relation between the sets of periodic, ultimately periodic, or aperiodic points of $\FFf{F}_1$, $\FFf{F}_2$, and their product $\FFf{F}_1\circ_\FFf{G}\FFf{F}_2$? Can $\FFf{F}_C=(x,3x+1)$ be written as the product of other (possibly polynomial) $2$-adic systems whose sets of ultimately periodic points are known? More generally, can specific $p$-adic systems or even all $p$-adic systems from a certain class be written as the product of ``nice'' $p$-adic systems (e.g. whose sets of periodic, ultimately periodic, or aperiodic points are known, which are contractive, expansive, avoiding, etc.)?\\[0.25\baselineskip]
\qi{7)}%
Hensel's Lemma can be used to show that certain real or complex numbers that are defined by polynomial equations (such as $\sqrt{2}$ or $\I$) have counterparts within $\Z_p$ for some $2\leq p\in\N$. Do the generalizations of Hensel's Lemma (Theorem~\ref{TSuitFUnRoot} and Theorem~\ref{TAvoidFRFUnFP}) have similar applications, possibly with respect to other classes of functions?\\[0.25\baselineskip]
\qi{8)}%
Further investigate trees of cycles. What are the possible finite subtrees of trees of cycles of classes of $p$-adic permutations other than those covered by Theorem~\ref{TSubtrees} (especially for $p\geq3$ and $k\geq4$ there)? Is it possible to characterize all trees of cycles of $p$-adic permutations of the form $\pi_{\FFf{F},\FFf{G}}$, where $\FFf{F}$ and $\FFf{G}$ are $\Z_p$-polynomial $p$-adic systems, or $\pi=f$ for some $p$-permutation polynomial $f\in\Z_p[x]$ by extending the results of Theorem~\ref{TSubtrees} (cf. Figure~\ref{FSubSec} below)? If $\pi_1$ and $\pi_2$ are $p$-adic permutations, what can be said about the relation between the trees of cycles $(\mathcal{G}(\pi_1),c(\pi_1))$, $(\mathcal{G}(\pi_2),c(\pi_2))$, and $(\mathcal{G}(\pi_1\circ\pi_2),c(\pi_1\circ\pi_2))$? ``Having identical trees of cycles'' defines an equivalence relation on the set of all $p$-adic systems. Theorem~\ref{TPermFromTree} gives an explicit construction of at least one $p$-adic permutation from a given equivalence class (given by its shared tree of cycles). Find a full characterization of all $p$-adic permutations in a given equivalence class. Does every equivalence class contain elements of a specific class of $p$-adic systems and can they too be characterized? What can be said about the relation between trees of cycles $(\mathcal{G}(\pi_{\FFf{F},\FFf{G}}),c(\pi_{\FFf{F},\FFf{G}}))$ and the sets of periodic, ultimately periodic, and aperiodic points of $\FFf{F}$ and $\FFf{G}$? Conjecture~\ref{CoPeriodsC} states that $\FFf{F}_2=(x,x-1)$ and $\FFf{F}_C=(x,3x+1)$ have identical sets of ultimately periodic points ($\Q\cap\Z_2$), but $\FFf{F}_2$ and $\FFf{F}=(x,5x+1)$ do not (cf. also the ``In particular'' of Theorem~\ref{TConstIrr}). Can trees of cycles shed some light on why this is the case (cf. the first three trees in the first row of Figure~\ref{FInfiniteTrees})?

\begin{figure}[H]
\centering
\includegraphics[width=0.9\textwidth]{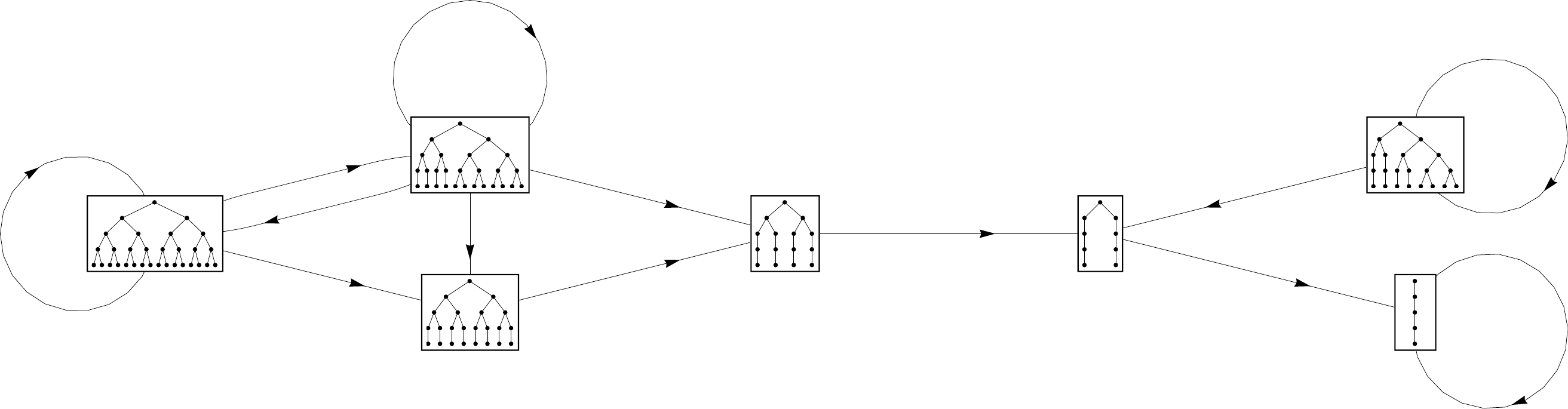}
\caption{The seven elements of $V_{2,4}$ of Theorem~\ref{TSubtrees}. A directed edge from tree $S$ to tree $T$ indicates that $S$ can be extended by $T$ by one layer. It can be seen that tree $4$-$1$ (on the bottom right) is a terminal object of the graph which is essentially the statement of Corollary~\ref{CSubtrees} on trees of cycles of $2$-permutation polynomials (no ``Y'' property). By studying $V_{2,k}$ for $k\geq4$ more terminal objects may be found allowing for a full characterization of all trees of cycles $(\mathcal{G}(\pi),c(\pi))$ where $\pi=f$ for some $2$-permutation polynomial $f$. The corresponding tree representing the extensibility of the $50$-element set $T_{2,4}$ has also only one terminal object (tree $4$-$1$ again, which is also part of the statement of Corollary~\ref{CSubtrees}). Here too it may be possible to find other terminal objects by analyzing $T_{2,k}$ for larger $k$.}
\label{FSubSec}
\end{figure}

\noindent
\qi{9)}%
Prove conjectures \ref{CoPeriodsA}~--~\ref{CoPeriodsF} or at least achieve first non-trivial results on the question of ultimate periodicity of linear-polynomial $p$-adic systems of mixed type, such as proving

\begin{theoremtable}
$\bullet$&$\FFFDT((x^2+x,x))[n]$ aperiodic for some concrete $n\in\Z$\tabularnewline
$\bullet$&$\FFFDT((x^{1000000p}+x)^{p-1}\cdot((p-1)x))[n]$ aperiodic for some concrete $2\leq p\in\N$ and $n\in\Z$\tabularnewline
$\bullet$&$\FFFDT((1000001x,x))[n]$ aperiodic for some concrete $n\in\Z$\tabularnewline
$\bullet$&$\FFFDT(((p^{1000000p}+1)x)^{p-1}\cdot((p-1)x))[n]$ aperiodic for some concrete $2\leq p\in\N$ and $n\in\Z$\tabularnewline
$\bullet$&$\FFFDT((1/(p+1)x,(p-1)x)\cdot(x)^{p-2})[n]$ aperiodic for some concrete $2\leq p\in\N$ and $n\in\Z$\tabularnewline
$\bullet$&$\FFPultimatelyperiodicon{\Q\cap\Z_p}(((p+1)x)\cdot(x)^{p-1})$ for some concrete $2\leq p\in\N$\tabularnewline
$\bullet$&$\FFPultimatelyperiodicon{\Q\cap\Z_p}((1/3x,3x))$\tabularnewline
$\bullet$&$\FFPultimatelyperiodicon{\Q\cap\Z_2}((1/(p-1)x)\cdot((p-1)x)^{p-1})$ for some concrete $3\leq p\in\N$\tabularnewline[0.5\baselineskip]
\end{theoremtable}
(cf. also Corollary~\ref{CConExpSummary}, Figure~\ref{FOvUltPer}, and the subsequent list of examples of $p$-adic systems). Is $\FFFDT((21/5x,5/7x+1))[27]$ ultimately periodic or aperiodic? If it is aperiodic, try to formulate the correct version of the condition $B\in\Z$ in Conjecture~\ref{CoPeriodsD} (cf. the discussion of the issue between Conjecture~\ref{CoPeriodsD} and Conjecture~\ref{CoPeriodsE}). Show $[\Q\cap\Z_p\not\subseteq\FFFSoUPP](\FFf{F})$ for every polynomial $p$-adic system $\FFf{F}$ which is not also $(\Q\cap\Z_p)$-polynomial (cf. Theorem~\ref{TGenPolyUPer} and the subsequent comment).\\[0.25\baselineskip]
\qi{10)}%
Prove Conjecture~\ref{CoConstIrr} and Conjecture~\ref{CoOrderIrr} for $p\geq3$: the constant coefficients and the order of the linear coefficients of $(\Q\cap\Z_p)$-linear-polynomial $p$-adic systems are irrelevant for the question of ultimate periodicity on $\Q\cap\Z_p$. Considering the condition $\abs{B}<p^p$ in Conjecture~\ref{CoPeriodsC}, prove for $p\geq2$ that the signs of the linear coefficients of $(\Q\cap\Z_p)$-linear-polynomial $p$-adic systems are irrelevant for the question of ultimate periodicity on $\Q\cap\Z_p$. Do other classes of $p$-adic systems (especially $(\Q\cap\Z_p)$-polynomial $p$-adic systems of larger degrees) have similar symmetries which also fix the sets of periodic, ultimately periodic, or aperiodic points?\\[0.25\baselineskip]
\qi{11)}%
What can be said about the sets $\FFFSoPP(\FFf{F})$, $\FFFSoUPP(\FFf{F})$, and $\FFFSoAPP(\FFf{F})$ for a $p$-adic system $\FFf{F}$ from a specific class? Do these sets have any structure, symmetries, invariants, etc.? The second generalization of Hensel's Lemma (Theorem~\ref{TAvoidFRFUnFP}) reveals structural properties of the set $\FFFSoPP(\FFf{F})$ if $\FFf{F}$ is an avoiding $\Z_p$-polynomial $p$-adic system, as the example following Theorem~\ref{TCharacAvPolyF} shows: if $\FFf{F}=(7x^3-4x^2+x-6,3x^7-x+1,5x^4+4x-1)$, then the set of periodic points of $\FFf{F}$ is equal to the set of all fixed points of arbitrary compositions of the polynomial functions $\FFf{F}[0]/3$, $\FFf{F}[1]/3$, and $\FFf{F}[2]/3$. $\FFFSoPP(\FFf{F})$ can be interpreted as a ``generalized zero set'' defined by three polynomials in $\Q_3[x]$.\\[0.25\baselineskip]
\qi{12)}%
Study any of the sets $\FFFSoPP(A)$, $\FFFSoUPP(A)$, $\FFFSoAPP(A)$, $\FFFSoGenPP(A)$, $\FFFSoGenUPP(A)$, or $\FFFSoGenAPP(A)$ for any class $A$ of $p$-adic systems (cf. the subsection ``Generalizations'' of Section~\ref{SLinPoly}). Do these sets have any structure, symmetries, invariants, etc.? Theorem~\ref{TConstIrr} and Theorem~\ref{TOrderIrr} provide first results in this direction: for $\FFf{F}$ from the class $A$ of $(\Q\cap\Z_2)$-linear-polynomial $2$-adic systems , $\FFFSoUPP(\FFf{F})$ is invariant under the change of constant coefficients or the change of the order of linear coefficients. What is
\begin{align}
&\FFFSoGenUPP(\set{\FFf{F}})\cap\SoSystems{p}(\FFPpolynomialcoefficients{\Q\cap\Z_p}),
\end{align}
i.e. for which $(\Q\cap\Z_p)$-polynomial $p$-adic systems $\FFf{G}$ does one get $\FFFSoUPP(\FFf{F})=\FFFSoUPP(\FFf{G})$, where $\FFf{F}=(x,5x+1)$, $\FFf{F}=(5x,5x+1)$, $\FFf{F}=(x,x^2+x)$, $\FFf{F}=(x^2+x,x^2+x)$, or $\FFf{F}$ is some other concrete $(\Q\cap\Z_p)$-polynomial $p$-adic system? This can be seen as the inverse problem of the previous question 11): for a fixed ``generalized zero set'' $Z\subseteq\Z_p$, what can be said about the $p$-adic systems from a certain class whose sets of periodic points are equal to the given $Z$? Is there a generalized Galois theory in this setting?\\[0.25\baselineskip]
\qi{13)}%
If $\FFf{F}$ is a $(\Q\cap\Z_p)$-linear-polynomial $p$-adic system, then $[\FFFSoUPP\subseteq\Q\cap\Z_p](\FFf{F})$ by the ``In particular'' part of Corollary~\ref{CPerFormula} (cf. also Conjecture~\ref{CoPeriodsD} on when we have $[\FFFSoUPP=\Q\cap\Z_p](\FFf{F})$). Find any such $\FFf{F}$ and a $p$-adic integer $n$ whose $\FFf{F}$-digit expansion is aperiodic and equal to any known sequence in $\CoSequences(\SPboundedby{\intintuptoex{p}},\neg\SPfinite)$ (like the real base $p$ expansion of some irrational number or the Thue-Morse sequence). Specifically, what is $\pi_{(x,3x+1),(x,x-1)}(\sqrt{17})$ (cf. Example~\ref{EHensel})?\\[0.25\baselineskip]
\qi{14)}%
For every $2\leq p\in\N$, every countable subset $A$ of $\Z_p$ and every finite $\intintuptoex{p}$-bounded sequence $\Sf{S}$ there is a $p$-adic system $\FFf{F}$ such that the periodic part of the $\FFf{F}$-digit expansion of every $n$ in $A$ is cyclically equivalent to $\Sf{S}$ (cf. the definition of $\sim_\sigma$ on p.~\pageref{Dcyceq}). In order to find such an $\FFf{F}$, pick any countable subset $B$ of $\Z_p$ which is dense in $\Z_p$ and construct a $p$-digit table $\STf{D}$ by fixing an ultimately periodic $\STf{D}$-digit expansion with period $\Sf{S}$ (or an aperiodic $\STf{D}$-digit expansion if $\Sf{S}$ is empty) for the elements of $A\cup B$ one at a time, in a way that is compatible (regarding the block property) with what has already been fixed. After $\STf{D}$ has been constructed, let $\STf{E}\in\SoTables{p}$ be its unique extension by Lemma~\ref{LDTExt} and let  $\FFf{F}\ce\FFf{F}_\STf{E}$ be the $p$-adic system corresponding to $\STf{E}$ according to Theorem~\ref{TASeqDTbl}. For specific choices for $A$ and $\Sf{S}$, are there ``nice'' $p$-adic systems $\FFf{F}$ with the described property? As an example consider the non-standard ternary (cf. \cite{vandeWoestijne:2008,Kenyon:2015}) system $\FFf{F}=(x,x+1,x-1)$ which has the property that all integers (i.e. $A=\Z$) have an ultimately periodic $\FFf{F}$-digit expansion with period $(0)$ (i.e. $\Sf{S}=(0)$). This can be proven by verifying that the $\FFf{F}$-sequences of all integers $n$ with $\abs{n}\leq2$ are ultimately periodic with period $(0)$ (cf. the definition of $M$ in the proof of Theorem~\ref{TLinPolyCEM} and the ``In particular'' part of Lemma~\ref{LContrExp}~(1)). The ``niceness'' of $\FFf{F}$ in this case is of course given by the fact that $\FFf{F}$ is $\Z$-linear-polynomial. Is there a ``nice'' (from a specific class, closed on the integers, etc.) $p$-adic system $\FFf{F}$ which has the property that all rational numbers (i.e. $A=\Q\cap\Z_p$) have an ultimately periodic $\FFf{F}$-digit expansion with period $(0)$?\\[0.25\baselineskip]
\qi{15)}%
If $\FFf{F}$ and $\FFf{G}$ are concrete $p$-adic system systems and $f$ is a concrete $p$-permutation polynomial, plotting any of the permutations $\pi_k$ of $\intintuptoex{p^k}$ (we identify $\Z_p\slash p^k\Z_p$ and $\intintuptoex{p^k}$) for $\pi\ce\pi_{\FFf{F},\FFf{G}}$ or $\pi=f$ often reveals intriguing patterns, as Figure~\ref{FPermutations} shows. Study such permutations with regard to randomness and discrepancy (see \cite{DrmotaTichy:1997} for an introduction to discrepancy theory and notions of randomness).
\begin{figure}[H]
\centering
\includegraphics[height=6.1cm]{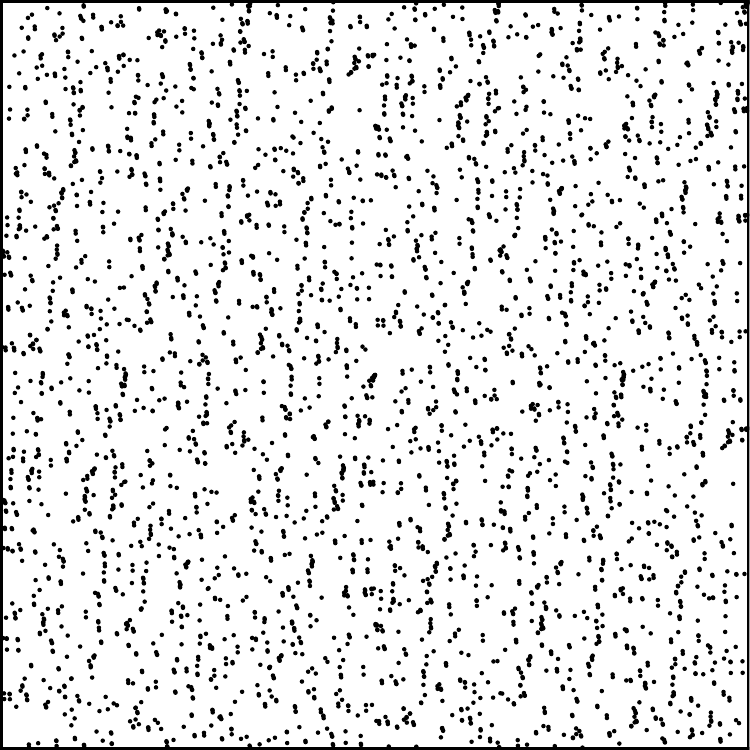}
\qquad
\includegraphics[height=6.1cm]{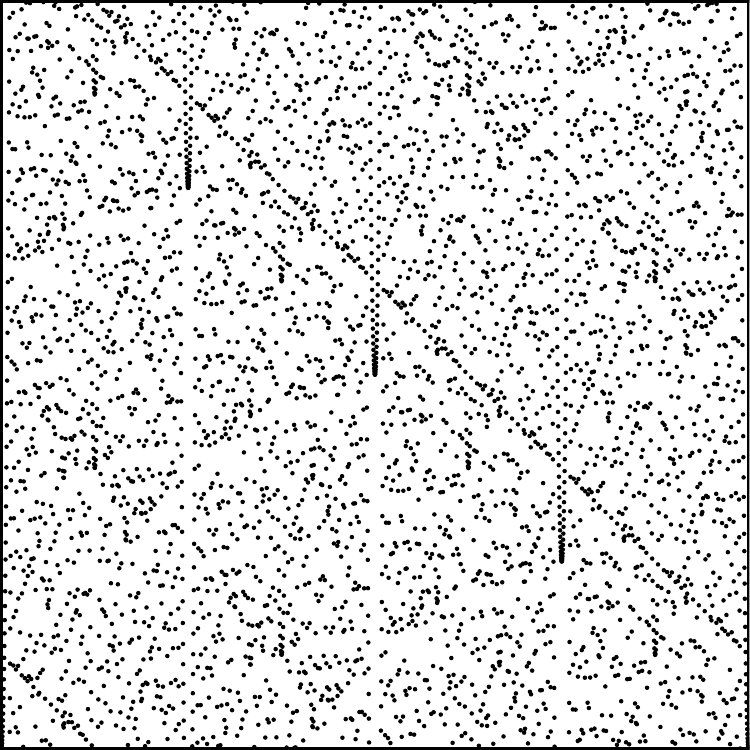}\\[1.5\baselineskip]
\includegraphics[height=6.1cm]{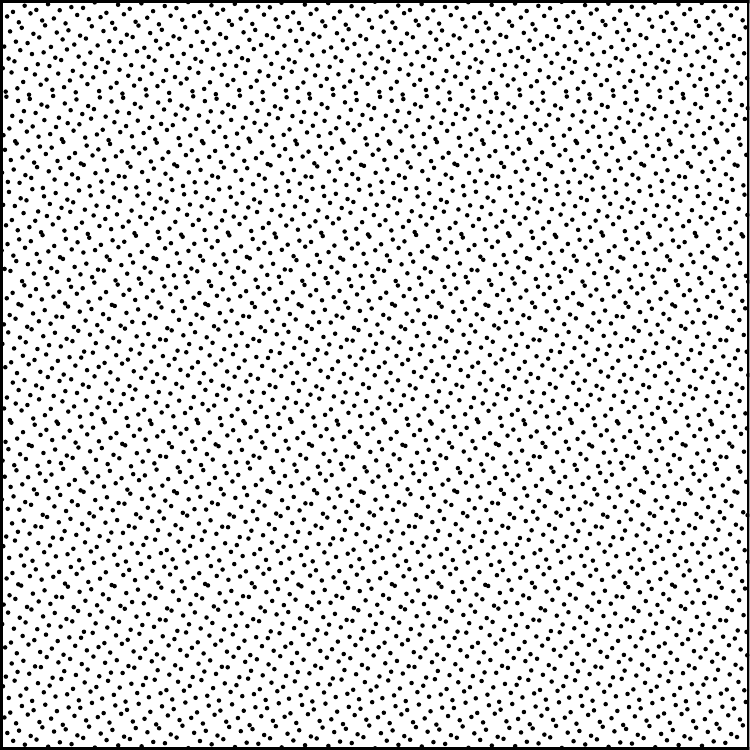}
\qquad
\includegraphics[height=6.1cm]{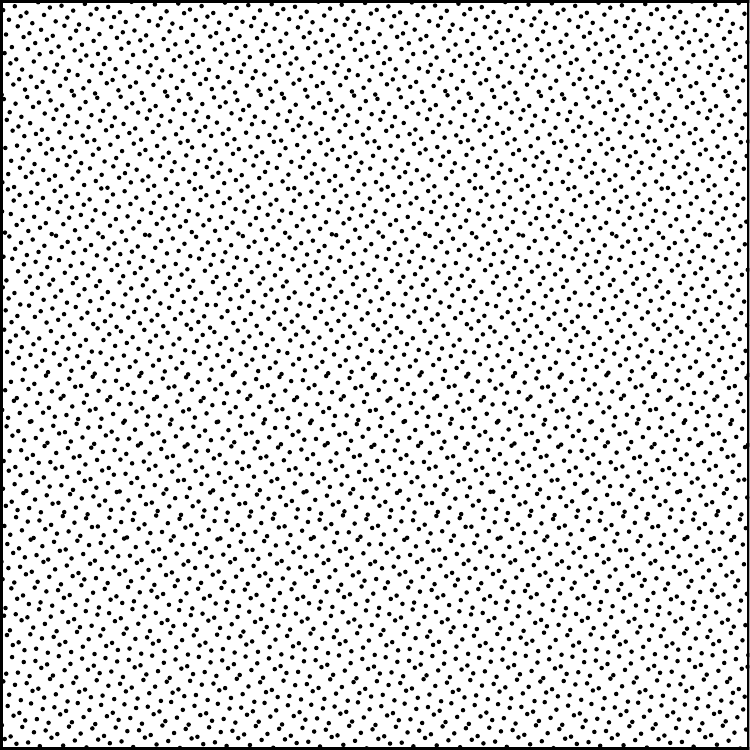}
\caption{The permutation $\pi_{12}:\underline{2^{12}}\to\underline{2^{12}}$ for $\pi=\pi_{(x,3x+1),(x,x-1)}$ (top left) and $\pi=2x^2+3x+2$ (top right). The respective images in the bottom row show the permutations $\operatorname{inv}\circ\pi_{12}:\underline{2^{12}}\to\underline{2^{12}}$ where $\operatorname{inv}:\underline{2^{12}}\to\underline{2^{12}}$ inverts the $12$ digits of the binary expansion of its input (e.g. $\operatorname{inv}(318)=1992$).}
\label{FPermutations}
\end{figure}

\section*{Acknowledgment}
\noindent
The author is supported by the FWF stand-alone project P 30205-NBL ``Arithmetic Dynamical Systems, Polynomials, and Polytopes'' (\url{http://marioweitzer.com/fwf_adspp}).

\bibliographystyle{siam}
\bibliography{p-adic_systems_-_weitzer}

\section*{Appendix}
\myparagraphtoc{$p$-adic pseudo-valuations.}
For $2\leq p\in\N$ let
\begin{align}
\tag{A.1}
\nu_p:\Q_p&\to\Z\cup\set{\infty}.\\
\nonumber
x&\mapsto
\begin{cases}
\max\set{k\in\Nz\mid x/p^k\in\Z_p}&\text{if }x\in\Z_p\setminus\set{0}\\
-\min\set{k\in\N\mid xp^k\in\Z_p}&\text{if }x\in\Q_p\setminus\Z_p\\
\infty&\text{if }x=0
\end{cases}
\end{align}
If $p$ is a prime then $\nu_p$ is the $p$-adic valuation. If $p$ is not a prime then $\nu_p$ is not a valuation as the multiplicative property $\nu_p(ab)=\nu_p(a)+\nu(b)$ is violated in general (not only due to the existence of zero divisors but even if $p$ is a prime power: $\nu_4(2\cdot2)=1\neq0=\nu_4(2)+\nu_4(2)$). It is well-known that if $q_1,\ldots,q_s\in\P$ are the distinct prime factors of $2\leq p\in\N$ then $\Q_p$ and $\Q_{q_1}\times\cdots\times\Q_{q_s}$ are isomorphic with an isomorphism given by
\begin{align}
\tag{A.2}
\varphi_p:\Q_p&\to\Q_{q_1}\times\cdots\times\Q_{q_s}.\\
\nonumber
\sum_{k=j}^\infty a_kp^k&\mapsto\left(\sum_{k=j}^\infty a_k(p/q_1)^kq_1^k,\ldots,\sum_{k=j}^\infty a_k(p/q_s)^kq_s^k\right)
=\left(\sum_{k=j_1}^\infty a^{(1)}_kq_1^k,\ldots,\sum_{k=j_s}^\infty a^{(s)}_kq_s^k\right)
\end{align}
We thus may extend the $q$-adic valuation $\nu_q$ to $\Q_p$ for every $q\in\set{q_1,\ldots,q_s}$ by
\begin{align}
\tag{A.3}
\nu_q:\Q_p&\to\Z\cup\set{\infty}.\\
\nonumber
\sum_{k=j}^\infty a_kp^k&\mapsto\nu_q\left(\sum_{k=j}^\infty a_k(p/q)^kq^k\right)
\end{align}
$\nu_p$ and $\nu_{q_1},\ldots,\nu_{q_s}$ satisfy the following properties ($x,y\in\Q_p$, $k\in\Z$):

\begin{theoremtable}
$\bullet$&$\nu_p(x)=\inf\set{\floor{\nu_q(x)/\nu_q(p)}\mid q\in\set{q_1,\ldots,q_s}}$\tabularnewline
$\bullet$&$x=0\Leftrightarrow\nu_p(x)=\infty\Leftrightarrow\fa q\in\set{q_1,\ldots,q_s}:\nu_q(x)=\infty$\tabularnewline
$\bullet$&$x\in p^k\Z_p\Leftrightarrow\nu_p(x)\geq k\Leftrightarrow\fa q\in\set{q_1,\ldots,q_s}:\nu_q(x)\geq\nu_q(p)k$\tabularnewline
$\bullet$&$\fa q\in\set{q_1,\ldots,q_s}:\nu_q(xy)=\nu_q(x)+\nu_q(y)$\tabularnewline
$\bullet$&$\fa q\in\set{q_1,\ldots,q_s}:\nu_q(x+y)\geq\inf\set{\nu_q(x),\nu_q(y)}$\tabularnewline
$\bullet$&$\fa q\in\set{q_1,\ldots,q_s}:\nu_q(x)\neq\nu_q(y)\lor\nu_q(x)=\infty\lor\nu_q(y)=\infty\Rightarrow\nu_q(x+y)=\inf\set{\nu_q(x),\nu_q(y)}$.\tabularnewline[0.5\baselineskip]
\end{theoremtable}

\newcommand{\ti}[2]{{\textbf{#1~\ref{#2}} (p.~\pageref{#2}):}}

\myparagraphtoc{List of theorems.}
$ $\\[0.25\baselineskip]
\ti{Lemma}{LFFCanForm}
Every $p$-fibred function has a unique canonical form. $\sim_p$ is an equivalence relation on $\SoFibredFunctions{p}$ and the canonical forms constitute a complete set of representatives.\\[0.25\baselineskip]
\ti{Lemma}{LOrdFuncBlock}
A sufficient condition for the weak block property and a necessary and sufficient condition for the block property of closed $p$-fibred functions when interpreted as ordinary functions.\\[0.25\baselineskip]
\ti{Example}{EOrdFuncBlock}
The sufficient condition for the weak block property in Lemma~\ref{LOrdFuncBlock} neither is necessary.\\[0.25\baselineskip]
\ti{Theorem}{TDTFF}
Characterization of all $p$-fibred functions which define a given $p$-digit table $\STf{D}$ in terms of the sets $\STf{D}(n)$, $n\in\STFdomain(\STf{D})$.\\[0.25\baselineskip]
\ti{Lemma}{LFinBlDTCompl}
If $\STf{D}$ is a $p$-digit table of length $k$ which has the block property at $k$, then every given sequence of length $k$ with entries in $\intintuptoex{p}$ can be found exactly once among the initial parts of length $k$ of the $\STf{D}$-digit expansions of any CRS modulo $p^k$.\\[0.25\baselineskip]
\ti{Lemma}{LSBlDTBl}
If a $p$-digit table whose domain contains a CRS modulo $p^\ell$ has the block property at $\ell$ and the weak block property at $k\leq\ell$, then it also has the block property at $k$.\\[0.25\baselineskip]
\ti{Theorem}{TDnStructure}
An analysis of the structure of the sets $\STf{D}(n)$ for a $p$-digit table $\STf{D}$ under the assumption of various (weak) block properties.\\[0.25\baselineskip]
\ti{Corollary}{CComputeDT}
Simplifying the computation of $\FFf{F}$-digit expansions for a closed $p$-fibred function $\FFf{F}$ which has various (weak) block properties.\\[0.25\baselineskip]
\ti{Lemma}{LInfBlDTEq}
If $\STf{D}$ is an infinite $p$-digit table with block property, then the function mapping an element of the domain of $\STf{D}$ to its $\STf{D}$-digit expansion is injective.\\[0.25\baselineskip]
\ti{Corollary}{CInitPer}
For a $p$-adic system $\FFf{F}$ the lengths of the initial and periodic parts of the $\FFf{F}$-sequence and the $\FFf{F}$-digit expansion of some $p$-adic integer coincide.\\[0.25\baselineskip]
\ti{Lemma}{LInfBlDTCompl}
If $\STf{D}$ is an infinite $p$-digit table with domain $\Z_p$ and block property, then the function mapping an element of $\Z_p$ to its $\STf{D}$-digit expansion is bijective (infinite version of Lemma~\ref{LFinBlDTCompl} and specialization of Lemma~\ref{LInfBlDTEq}). In particular, the sets $\STf{D}(n)$ are singletons.\\[0.25\baselineskip]
\ti{Theorem}{TASeqDTbl}
For every infinite $p$-digit table with domain $\Z_p$ and block property $\STf{D}$ there is a unique $p$-fibred function $\FFf{F}$ in canonical form whose $\FFf{F}$-digit table coincides with $\STf{D}$.\\[0.25\baselineskip]
\ti{Example}{EWBlockF}
An example of a $p$-digit table with domain $\Z_p$ and weak block property which cannot be expressed as the $\FFf{F}$-digit table of any $p$-fibred function $\FFf{F}$.\\[0.25\baselineskip]
\ti{Lemma}{LEquDT}
If two $p$-digit tables with weak block property and equal domain coincide on some subset of their shared domain that is dense in $\Z_p$, then the two $p$-digit tables coincide as a whole.\\[0.25\baselineskip]
\ti{Lemma}{LDTExt}
Explicit construction of the unique $p$-digit table $\STf{E}$ with domain $\Z_p$ and weak block property which extends a given $p$-digit table $\STf{D}$ with weak block property, whose domain contains a subset which is dense in $\Z_p$.\\[0.25\baselineskip]
\ti{Corollary}{CFFwbDTExt}
If a $p$-fibred function $\FFf{G}$ with weak block property extends to all of $\Z_p$ another $p$-fibred function $\FFf{F}$ whose domain is dense in $\Z_p$, it does so in accordance with the unique extension of the corresponding $p$-digit tables as given in Lemma~\ref{LDTExt}.\\[0.25\baselineskip]
\ti{Lemma}{LPermProp}
Basic properties of permutations of $\Z_p$ of the form $\pi_{\FFf{F},\FFf{G}}$ if $\FFf{F}$ and $\FFf{G}$ are $p$-adic systems.\\[0.25\baselineskip]
\ti{Theorem}{TPermEq}
For every $p$-adic permutation $\pi$ and every $p$-adic system $\FFf{G}$ there is a unique (up to $\sim_p$) $p$-adic system $\FFf{F}$ so that $\pi=\pi_{\FFf{F},\FFf{G}}$.\\[0.25\baselineskip]
\ti{Theorem}{TPermSubgroup}
The set of all $p$-adic permutations forms a subgroup of the set of all permutations of $\Z_p$ with respect to composition.\\[0.25\baselineskip]
\ti{Lemma}{LGroupProp}
Basic properties of the isomorphism $\Pi_\FFf{G}$ which transports the group structure on the set of $p$-adic permutations to the set of $p$-adic systems and an explicit formula for the resulting group operation on $p$-adic systems.\\[0.25\baselineskip]
\ti{Example}{EGroupProp}
An example illustrating the group operation on $p$-adic systems analyzed in Lemma~\ref{LGroupProp}.\\[0.25\baselineskip]
\ti{Theorem}{TCycles}
If $\pi$ is a $p$-adic permutation, then every cycle $\Sf{S}$ of $\pi_k$ splits into up to $p$ cycles of $\pi_{k+1}$ which are congruent to $\Sf{S}$ modulo $p^k$ (entry-wise, cyclically) and every cycle of $\pi_{k+1}$ is a ``child'' of some cycle of $\pi_k$ in this way.\\[0.25\baselineskip]
\ti{Corollary}{CCycleLengths}
If $\pi$ is a $p$-adic permutation, then the prime factors of the lengths of all cycles of $\pi_k$ are contained in $\intintuptoin{p}$.\\[0.25\baselineskip]
\ti{Corollary}{CCycles}
Basic properties of the edge labeled graph $(\mathcal{G}(\pi),c(\pi))$ defined by a $p$-adic permutation $\pi$.\\[0.25\baselineskip]
\ti{Theorem}{TFuncSuit}
A sufficient condition for the weak block property and a necessary and sufficient condition for the block property of a closed $p$-fibred function $\FFf{F}$ in terms of weak $(p,r)$-suitability and $(p,r)$-suitability of the functions $\FFf{F}[r]$, $r\in\intintuptoex{p}$.\\[0.25\baselineskip]
\ti{Example}{ENoCharacWBlk}
The weak block property of a closed $p$-fibred function $\FFf{F}$ neither does (like the block property) permit a necessary and sufficient characterization that only considers the functions $\FFf{F}[0],\ldots,\FFf{F}[p-1]$ independently from one another.\\[0.25\baselineskip]
\ti{Example}{ENoCharacWBlkEx}
An example of a $p$-digit table with weak block property which is the $p$-digit table of a $p$-fibred function but cannot be realized as the $p$-digit table of a $p$-fibred function whose entries are weakly $(p,r)$-suitable functions.\\[0.25\baselineskip]
\ti{Corollary}{CComputeDTStrong}
A stronger version of statement (3) of Corollary~\ref{CComputeDT}, which loosens the condition that $\FFf{F}$ must have the block property at $k$ to the weak block property S at $k$.\\[0.25\baselineskip]
\ti{Lemma}{LSuitFProp}
Basic properties of $(p,r)$-suitable functions.\\[0.25\baselineskip]
\ti{Corollary}{CSuitFProp}
A $p$-adic system $\FFf{F}$ is surjective and $p$-to-one (as a function on $\Z_p$) and its restriction $\FFf{F}\vert_{r+p\Z_p}$, $r\in\intintuptoex{p}$, is surjective and one-to-one.\\[0.25\baselineskip]
\ti{Theorem}{TSuitFProd}
If $g$ is a weakly $(p,r)$-suitable function at $\intintuptoin{k}$ satisfying $\gcd(p,g(n)\modulo p)=1$ for all $n$ in its domain, then the product $fg$ is (weakly) $(p,r)$-suitable at $\intintuptoin{k}$ if and only if $f$ is (weakly) $(p,r)$-suitable at $\intintuptoin{k}$.\\[0.25\baselineskip]
\ti{Theorem}{TPropPolyF}
Characterization of (weakly) $(p,r)$-suitable polynomial functions in $\Z_p[x]$.\\[0.25\baselineskip]
\ti{Lemma}{LRatPolyIntegral}
Characterization of $(p,r)$-integral polynomial functions in $\Q_p[x]$.\\[0.25\baselineskip]
\ti{Theorem}{TPropRatPolyF}
Characterization of (weakly) $(p,r)$-suitable polynomial functions in $\Q_p[x]$.\\[0.25\baselineskip]
\ti{Corollary}{CPropPolyF}
Every $\Z_p$-polynomial $p$-fibred function has the weak property S and it has the block property if and only if it has the block property at $k$ for any $k\geq2$.\\[0.25\baselineskip]
\ti{Corollary}{CPropRatPolyF}
For every polynomial $p$-fibred function $\FFf{F}$ a $K\in\Nz$ is constructed such that $\FFf{F}$ has the weak property S if and only if it has the weak property S at $\intintuptoin{K+1}$ and $\FFf{F}$ has the block property if and only if it has the block property at $\intintuptoin{K+3}$.\\[0.25\baselineskip]
\ti{Corollary}{CPolySDTExt}
The $p$-digit table of the extension of a $p$-fibred function $\FFf{F}$ with domain $\Z$ defined by polynomial functions in $\Z[x]$ obtained by extending the domain from $\Z$ to $\Z_p$ coincides with the unique extension of the $p$-digit table of $\FFf{F}$ as given by Lemma~\ref{LDTExt}.\\[0.25\baselineskip]
\ti{Lemma}{LReduceDegree}
Construction of a polynomial $g\in\intintuptoex{p^k}[x]$ with degree less than $k$ which coincides modulo $p^k$ on $r+p\Z_p$ with a given polynomial $f\in\Z_p[x]$.\\[0.25\baselineskip]
\ti{Theorem}{TReduceDegree}
Construction of a $\intintuptoex{p^k}$-polynomial $p$-fibred function $\FFf{G}$ with degree in $\intintuptoex{k}$ satisfying $\FFFDT(\FFf{F})\llbracket\intintuptoex{k}\rrbracket=\FFFDT(\FFf{G})\llbracket\intintuptoex{k}\rrbracket$ for a given $\Z_p$-polynomial $p$-fibred function $\FFf{F}$.\\[0.25\baselineskip]
\ti{Theorem}{TPropRatF}
Characterization of (weakly) $(p,r)$-suitable rational functions $f=g/h$ with $g,h\in\Z_p[x]$ and $\gcd(p,h(r)\modulo p)=1$ for all $n\in r+p\Z_p$.\\[0.25\baselineskip]
\ti{Corollary}{CPropRatF}
Every $p$-fibred function defined by rational functions of the kind treated in Theorem~\ref{TPropRatF} has the weak property S and it has the block property if and only if it has the block property at $k$ for any $k\geq2$.\\[0.25\baselineskip]
\ti{Conjecture}{ConPropRatPolyF}
Conjecture on the characterization of (weakly) $(p,r)$-suitable rational functions $f=g/h$ with $g,h\in\Z_p[x]$ but without the condition $\gcd(p,h(r)\modulo p)=1$ for all $n\in r+p\Z_p$.\\[0.25\baselineskip]
\ti{Lemma}{LHensel}
Hensel's Lemma: a polynomial $f\in\Z_p[x]$ has a unique root in $r+p\Z_p$ if $f(r)\modulo p=0$ and $f'(r)\modulo p\neq0$.\\[0.25\baselineskip]
\ti{Theorem}{TSuitFUnRoot}
Generalization of Hensel's Lemma: a general function $f\in\Z_p[x]$ has a unique root in $r+p\Z_p$ if $f(r+p\Z_p)\subseteq p\Z_p$ and $f$ is $(p,r)$-suitable.\\[0.25\baselineskip]
\ti{Lemma}{LShiftedSuitF}
The sum of a function which is weakly $(p,r)$-suitable at $k$ and a linear polynomial is weakly $(p,r)$-suitable at $k$, and the sum of a function which is $(p,r)$-suitable at $\intintuptoin{k}$ and a linear polynomial whose linear coefficient is in $p\Z_p$ is $(p,r)$-suitable at $\intintuptoin{k}$.\\[0.25\baselineskip]
\ti{Example}{EShiftedSuitF}
Lemma~\ref{LShiftedSuitF} cannot be generalized by loosening the condition ``$(p,r)$-suitable at $\intintuptoin{k}$'' by requiring ``$(p,r)$-suitable at $k$'' instead. \\[0.25\baselineskip]
\ti{Example}{EHensel}
Examples of applications of Hensel's Lemma to prove that certain real or complex numbers defined by polynomial equations have counterparts within $\Z_p$.\\[0.25\baselineskip]
\ti{Theorem}{TSuitFUnRootEq}
Stronger version of Theorem~\ref{TSuitFUnRoot}: A general function $f\in\Z_p[x]$ which maps $r+p\Z_p$ to $p\Z_p$ is $(p,r)$-suitable if and only if $f$ has a unique root $a$ in $r+p\Z_p$ and $\Gcd{p,g(r)\modulo p}=1$, where $g\in\Z_p[x]$ so that $f(x)=(x-a)g(x)$.\\[0.25\baselineskip]
\ti{Example}{ESuitFUnRootEq}
Even if $p$ is prime the condition $\Gcd{p,g(r)\modulo p}=1$ in Theorem~\ref{TSuitFUnRootEq} cannot be dropped.\\[0.25\baselineskip]
\ti{Lemma}{LFRF}
Relation between the application of a $p$-fibred rational function $\FRFf{R}$ and the corresponding $p$-fibred function $\FFf{F}\ce\FRFFint(\FRFf{R}\vert_{\Z_p\cap\FRFFdomain(\FRFf{R})})$.\\[0.25\baselineskip]
\ti{Lemma}{LEqSEqD}
Relation between the application of a $p$-fibred rational function $\FRFf{R}$ and the corresponding $p$-fibred function $\FFf{F}\ce\FRFFint(\FRFf{R}\vert_{\Z_p})$ under the condition that $\FRFf{R}$ is avoiding.\\[0.25\baselineskip]
\ti{Example}{EEqSEqD}
The assumptions of Lemma~\ref{LEqSEqD} cannot be loosened, even if the entries of $\FRFf{R}$ are polynomials.\\[0.25\baselineskip]
\ti{Theorem}{TAvoidFRFUnFP}
Generalization of Hensel's Lemma: under natural technical conditions, the function $\FRFF{\FRFf{R}}{\Sf{D}}$ has a unique fixed point in $\Z_p$ for every avoiding $p$-fibred rational function $\FRFf{R}$ and every sequence of digits $\Sf{D}$.\\[0.25\baselineskip]
\ti{Theorem}{TCharacAvPolyF}
Characterization of $(p,r)$-avoiding polynomial functions in $\Z_p[x]$.\\[0.25\baselineskip]
\ti{Lemma}{LPeriodicOn}
Characterization of when two $p$-adic systems $\FFf{F}$ and $\FFf{G}$ are periodic, ultimately periodic, or aperiodic on the same sets using $\pi_{\FFf{F},\FFf{G}}$.\\[0.25\baselineskip]
\ti{Lemma}{LContrExp}
Consequences of a $p$-fibred function being contractive or expansive for periodic and ultimately periodic digit expansions.\\[0.25\baselineskip]
\ti{Theorem}{TPolyExp}
$(\Q\cap\Z_p)$-polynomial $p$-adic systems where each polynomial is either of degree $2$ or higher or has a linear coefficient greater than $p$ in absolute value, are expansive, and $(\Q\cap\Z_p)$-polynomial $p$-adic systems that are contractive are linear-polynomial.\\[0.25\baselineskip]
\ti{Lemma}{LLinPolySuit}
Analysis of (weak) $(p,r)$-suitability and $(p,r)$-avoidance of linear polynomials.\\[0.25\baselineskip]
\ti{Theorem}{TLinDirForm}
Explicit formula for $\FRFF{\FRFf{R}}{\Sf{D}}(n)$ if $\FRFf{R}$ is a linear-polynomial $p$-fibred rational function.\\[0.25\baselineskip]
\ti{Corollary}{CPerFormula}
Explicit formula for the unique $p$-adic integer having a given ultimately periodic $\FFf{F}$-digit expansion for a given linear-polynomial $p$-adic system $\FFf{F}$.\\[0.25\baselineskip]
\ti{Corollary}{CBegFormula}
Explicit formula for the unique element of $\intintuptoex{p^{\abs{\Sf{D}}}}$ having a given initial $\FFf{F}$-digit expansion for a given linear-polynomial $p$-adic system $\FFf{F}$.\\[0.25\baselineskip]
\ti{Corollary}{CInvSyst}
Explicit construction of all linear-polynomial $p$-adic systems $\FFf{F}$ for which a given $p$-adic integer $n$ has a given ultimately periodic digit expansion $\Sf{D}$.\\[0.25\baselineskip]
\ti{Corollary}{CInvSystNum}
Explicit construction of all pairs $(\FFf{F},n)$ of linear-polynomial $p$-adic systems and $p$-adic integers for which a given ultimately periodic digit expansion $\Sf{D}$ coincides with the $\FFf{F}$-digit expansion of $n$.\\[0.25\baselineskip]
\ti{Conjecture}{CoPeriodsA}
The original Collatz conjecture.\\[0.25\baselineskip]
\ti{Conjecture}{CoPeriodsB}
Generalization of the Collatz conjecture: $\FFf{F}_C$ is ultimately periodic on $\Q\cap\Z_2$ and ultimately periodic orbits of natural numbers end up at $1$.\\[0.25\baselineskip]
\ti{Conjecture}{CoPeriodsC}
Variant of the the Collatz conjecture for $\Z$-linear-polynomial $p$-adic systems.\\[0.25\baselineskip]
\ti{Conjecture}{CoPeriodsD}
Variant of the the Collatz conjecture for $(\Q\cap\Z_p)$-linear-polynomial $p$-adic systems.\\[0.25\baselineskip]
\ti{Conjecture}{CoPeriodsE}
Variant of the the Collatz conjecture for $(\Q\cap\Z_p)$-polynomial $p$-adic systems.\\[0.25\baselineskip]
\ti{Conjecture}{CoPeriodsF}
Variant of the the Collatz conjecture for polynomial $p$-adic systems.\\[0.25\baselineskip]
\ti{Theorem}{TGenPolyUPer}
Conjecture~\ref{CoPeriodsE}~(1) and Conjecture~\ref{CoPeriodsF}~(1) are equivalent.\\[0.25\baselineskip]
\ti{Theorem}{TConstIrr}
Explicit formula for $\pi_{\FFf{F},\FFf{G}}(n)$ if $\FFf{F}$ and $\FFf{G}$ are linear-polynomial $2$-adic systems with matching linear coefficients, which implies that the constant coefficients of $(\Q\cap\Z_2)$-linear-polynomial $2$-adic systems have no influence on the question of whether all rational numbers have ultimately periodic digit expansions.\\[0.25\baselineskip]
\ti{Conjecture}{CoConstIrr}
Conjecture that the constant coefficients of $(\Q\cap\Z_p)$-linear-polynomial $p$-adic systems have no influence on the question of whether all rational numbers have ultimately periodic digit expansions.\\[0.25\baselineskip]
\ti{Theorem}{TOrderIrr}
Explicit formula for $\pi_{\FFf{F},\sigma,\FFf{G}}(n)$ if $\FFf{F}$ and $\FFf{G}$ are linear-polynomial $2$-adic systems with swapped linear coefficients, which implies that the order of the linear coefficients of $(\Q\cap\Z_2)$-linear-polynomial $2$-adic systems has no influence on the question of whether all rational numbers have ultimately periodic digit expansions.\\[0.25\baselineskip]
\ti{Conjecture}{CoOrderIrr}
Conjecture that the order of the linear coefficients of $(\Q\cap\Z_p)$-linear-polynomial $p$-adic systems has no influence on the question of whether all rational numbers have ultimately periodic digit expansions.\\[0.25\baselineskip]
\ti{Example}{EConstOrderIrr}
Application of the formulas for $\pi_{\FFf{F},\FFf{G}}(n)$ and $\pi_{\FFf{F},\sigma,\FFf{G}}(n)$ given in Theorem~\ref{TConstIrr} and Theorem~\ref{TOrderIrr}.\\[0.25\baselineskip]
\ti{Theorem}{TLinPolyCEM}
Characterization of contractive, expansive, and mixed type $(\Q\cap\Z_p)$-linear-polynomial $p$-adic systems.\\[0.25\baselineskip]
\ti{Corollary}{CLinPolyCEM}
Answer to the question of ultimate periodicity on $\Q\cap\Z_p$ for contractive $\Z$-linear-polynomial- and for expansive $(\Q\cap\Z_p)$-linear-polynomial $p$-adic systems.\\[0.25\baselineskip]
\ti{Corollary}{CConExpSummary}
Summary of results on the question of ultimate periodicity on $\Q\cap\Z_p$ for $(\Q\cap\Z_p)$-polynomial $p$-adic systems.\\[0.25\baselineskip]
\ti{Lemma}{LCharacPermPoly}
Characterization of $p$-permutation polynomials.\\[0.25\baselineskip]
\ti{Theorem}{TCharacPermPoly}
A polynomial $f\in\Z_p[x]$ is a $p$-permutation polynomial if and only if it is a $p$-adic permutation.\\[0.25\baselineskip]
\ti{Example}{ENotPermPoly}
There are $p$-adic permutations defined by $\Z$-linear-polynomial $p$-adic systems which are neither $p$-permutation polynomials.\\[0.25\baselineskip]
\ti{Theorem}{TPermFromTree}
Every $p$-cycle tree can be realized as the tree of cycles of some $p$-adic permutation.\\[0.25\baselineskip]
\ti{Theorem}{TSubtrees}
Characterization of the sets of all isomorphism classes of trees with up to $4$ layers which may occur as subtrees of trees of cycles of $2$-adic permutations defined by $\Z_2$-polynomial $2$-adic systems or by $2$-permutation polynomials.\\[0.25\baselineskip]
\ti{Example}{ENotPolySyst}
There are $p$-permutation polynomials which are neither $p$-adic permutations defined by $\Z_p$-polynomial $p$-adic systems.\\[0.25\baselineskip]
\ti{Corollary}{CSubtrees}
No ``Y'' property: the trees of cycles of $2$-adic permutations defined by $\Z_2$-polynomial $2$-adic systems or by $2$-permutation polynomials do not contain ``Y''-shaped subtrees.

\newpage

\myparagraphtoc{List of symbols in order of first appearance.}
$ $

\bgroup
\setlength\LTleft{0pt}
\setlength\LTright{0pt}
\begin{longtable}{@{\extracolsep{\fill}}llr@{}}
$A(\mathcal{P})$
&elements of $A$ satisfying predicates in $\mathcal{P}$
&\pageref{DSetPred}\tabularnewline
$\intint{a}{b}$
&integer interval $\set{n\in\Z\mid a\leq n\leq b}$
&\pageref{DIntInt}\tabularnewline
$\intintuptoex{a}$
&integer interval $\intint{0}{a-1}$
&\pageref{DIntInt}\tabularnewline
$\intintuptoin{a}$
&integer interval $\intint{0}{a}$
&\pageref{DIntInt}\tabularnewline
$\CoSequences$
&class of sequences
&\pageref{DCoSequences}\tabularnewline
$\abs{\Sf{S}}$
&length/size of $\Sf{S}$
&\pageref{DSlength}\tabularnewline
$\Sf{S}[A]$
&subsequence of $\Sf{S}$, indices in $A$
&\pageref{DSsubseq}\tabularnewline
$\Sf{S}[i,j]$
&subsequence of $\Sf{S}$, indices in $\intint{i}{j}$
&\pageref{DSsubseqshort}\tabularnewline
$\SPlength{A}(\Sf{S})$
&$\Sf{S}$ has length in $A$
&\pageref{DSPlength}\tabularnewline
$\SPfinite(\Sf{S})$
&$\Sf{S}$ is finite
&\pageref{DSPfinite}\tabularnewline
$\SPempty(\Sf{S})$
&$\Sf{S}$ is empty
&\pageref{DSPempty}\tabularnewline
$\SPboundedby{A}(\Sf{S})$
&$\Sf{S}$ is $A$-bounded
&\pageref{DSPboundedby}\tabularnewline
$\SPprefix{\Sf{T}}(\Sf{S})$
&$\Sf{S}$ has prefix $\Sf{T}$
&\pageref{DSPprefix}\tabularnewline
$\SPsuffix{\Sf{T}}(\Sf{S})$
&$\Sf{S}$ has suffix $\Sf{T}$
&\pageref{DSPsuffix}\tabularnewline
$\Sf{S}\cdot\Sf{T}$
&product/concatenation of $\Sf{S}$ and $\Sf{T}$
&\pageref{DSmult}\tabularnewline
$\Sf{S}^n$
&$n$-th power of $\Sf{S}$
&\pageref{DSpow}\tabularnewline
$\Sf{S}^\infty$
&infinite periodic sequence with period $\Sf{S}$
&\pageref{DSpowinf}\tabularnewline
$\SFinitial{\Sf{S}}$
&initial part of $\Sf{S}$
&\pageref{DSFinitial}\tabularnewline
$\SFperiodic{\Sf{S}}$
&periodic part of $\Sf{S}$
&\pageref{DSFperiodic}\tabularnewline
$\SPperiodic(\Sf{S})$
&$\Sf{S}$ is (purely) periodic
&\pageref{DSPperiodic}\tabularnewline
$\SPultimatelyperiodic(\Sf{S})$
&$\Sf{S}$ is ultimately periodic
&\pageref{DSPultimatelyperiodic}\tabularnewline
$\SPaperiodic(\Sf{S})$
&$\Sf{S}$ is aperiodic
&\pageref{DSPaperiodic}\tabularnewline
$f(\Sf{S})$
&entry-wise application of $f$ to $\Sf{S}$
&\pageref{DSfunc}\tabularnewline
$\CoSequenceTables$
&class of sequence tables
&\pageref{DCoSequenceTables}\tabularnewline
$\STFdomain(\STf{S})$
&domain of $\STf{S}$
&\pageref{DSTFdomain}\tabularnewline
$\abs{\STf{S}}$
&length/size of $\STf{S}$
&\pageref{DSTlength}\tabularnewline
$\STf{S}[n]$
&$n$-th row of $\STf{S}$/$\STf{S}$-sequence of $n$
&\pageref{DSTentry}\tabularnewline
$\STf{S}\vert_A$
&restriction of $\STf{S}$ to $A$
&\pageref{DSTrestriction}\tabularnewline
$\STf{S}\llbracket A\rrbracket$
&subtable of $\STf{S}$, indices in $A$
&\pageref{DSTsubtable}\tabularnewline
$\STf{S}\llbracket i,j\rrbracket$
&subtable of $\STf{S}$, indices in $\intint{i}{j}$
&\pageref{DSTsubtableshort}\tabularnewline
$\STf{S}\cdot\STf{T}$
&product/concatenation of $\STf{S}$ and $\STf{T}$
&\pageref{DSTmultpowfunc}\tabularnewline
$\STf{S}^n$
&$n$-th power of $\STf{S}$
&\pageref{DSTmultpowfunc}\tabularnewline
$\STf{S}^\infty$
&infinite periodic table with period $\STf{S}$
&\pageref{DSTmultpowfunc}\tabularnewline
$f(\STf{S})$
&entry-wise application of $f$ to $\STf{S}$
&\pageref{DSTmultpowfunc}\tabularnewline
$\STPdomain{A}(\STf{S})$
&$\STf{S}$ has domain $A$
&\pageref{DSTPdomain}\tabularnewline
$\STPlength{A}(\STf{S})$
&$\STf{S}$ has length in $A$
&\pageref{DSTPlength}\tabularnewline
$\STPfinite(\STf{S})$
&$\STf{S}$ is finite
&\pageref{DSTPfinite}\tabularnewline
$\STPempty(\STf{S})$
&$\STf{S}$ is empty
&\pageref{DSTPempty}\tabularnewline
$\STPboundedby{A}(\STf{S})$
&$\STf{S}$ is $A$-bounded
&\pageref{DSTPboundedby}\tabularnewline
$\SoDigitTables{p}$
&set of $p$-digit tables
&\pageref{DSoDigitTables}\tabularnewline
$\STf{D}[n]$
&$\STf{D}$-digit expansion of $n$
&\pageref{DDTentry}\tabularnewline
$\STf{D}[n][k]$
&$k$-th digit of $n$ with respect to $\STf{D}$
&\pageref{DDTentryentry}\tabularnewline
$\DTPweakblockat{K}(\STf{D})$
&$\STf{D}$ has the weak block property at $K$
&\pageref{DDTPweakblockat}\tabularnewline
$\DTPweakblock(\STf{D})$
&$\STf{D}$ has the weak block property
&\pageref{DDTPweakblock}\tabularnewline
$\DTPblockat{K}(\STf{D})$
&$\STf{D}$ has the block property at $K$
&\pageref{DDTPblockat}\tabularnewline
$\DTPblock(\STf{D})$
&$\STf{D}$ has the block property
&\pageref{DDTPblock}\tabularnewline
$\SoFibredFunctions{p}$
&set of $p$-fibred functions
&\pageref{DSoFibredFunctions}\tabularnewline
$\FFFdomain(\FFf{F})$
&domain of $\FFf{F}$
&\pageref{DFFFdomain}\tabularnewline
$\FFf{F}(n)$
&application of $\FFf{F}$ to $n$
&\pageref{EFibredFunction}\tabularnewline
$\modulo$
&modulo function
&\pageref{Dmodulo}\tabularnewline
$\FFf{F}\vert_A$
&restriction of $\FFf{F}$ to $A$
&\pageref{DFFrestriction}\tabularnewline
$\FFf{F}\sim_p\FFf{G}$
&equivalence of $\FFf{F}$ and $\FFf{G}$
&\pageref{EEquiv}\tabularnewline
$\FFPcanonicalform(\FFf{F})$
&$\FFf{F}$ is in canonical form
&\pageref{EPCanF}\tabularnewline
$\FFPweakcanonicalform(\FFf{F})$
&$\FFf{F}$ is in weak canonical form
&\pageref{EPWCanF}\tabularnewline
$\FFPdomain{A}(\FFf{F})$
&$\FFf{F}$ has domain $A$
&\pageref{DFFPdomain}\tabularnewline
$\FFPboundedby{A}(\FFf{F})$
&$\FFf{F}$ is $A$-bounded
&\pageref{DFFPboundedby}\tabularnewline
$\FFPclosed(\FFf{F})$
&$\FFf{F}$ is closed
&\pageref{DFFPclosed}\tabularnewline
$\FFFST(\FFf{F})$
&$\FFf{F}$-sequence table
&\pageref{DFFFST}\tabularnewline
$\FFFDT(\FFf{F})$
&$\FFf{F}$-digit table
&\pageref{DFFFDT}\tabularnewline
$\FFFST(\FFf{F})[n]$
&$\FFf{F}$-sequence of $n$
&\pageref{DFseq}\tabularnewline
$\FFFDT(\FFf{F})[n]$
&$\FFf{F}$-digit expansion of $n$
&\pageref{DFdigexp}\tabularnewline
$\FFFDT(\FFf{F})[n][k]$
&$k$-th digit of $n$ with respect to $\FFf{F}$
&\pageref{DFdigit}\tabularnewline
$\FFPweakblockat{K}(\FFf{F})$
&$\FFf{F}$ has the weak block property at $K$
&\pageref{DFPblock}\tabularnewline
$\FFPweakblock(\FFf{F})$
&$\FFf{F}$ has the weak block property
&\pageref{DFPblock}\tabularnewline
$\FFPblockat{K}(\FFf{F})$
&$\FFf{F}$ has the block property at $K$
&\pageref{DFPblock}\tabularnewline
$\FFPblock(\FFf{F})$
&$\FFf{F}$ has the block property
&\pageref{DFPblock}\tabularnewline
$\SoSystems{p}$
&set of $p$-adic systems
&\pageref{DSoSystems}\tabularnewline
$\FFf{F}_C$
&Collatz transformation
&\pageref{ECollatz}\tabularnewline
$\FFf{F}_2$
&binary transformation
&\pageref{EBinary}\tabularnewline
$(P(x)\;?\;f(x):g(x))$
&conditional function
&\pageref{Dcondfunc}\tabularnewline
$\SoFunctions{p}$
&set of functions with $p$-block property
&\pageref{DSoFunctions}\tabularnewline
$\SoTables{p}$
&set of $p$-digit tables with block property
&\pageref{DSoTables}\tabularnewline
$\STf{D}(n)$
&application of $\STf{D}$ to $n$
&\pageref{DDTfunc}\tabularnewline
$\powerset(A)$
&powerset of $n$
&\pageref{Dpowerset}\tabularnewline
$R(n)$
&application of $R$ to $n$
&\pageref{DCRSfunc}\tabularnewline
$\FFf{F}_\STf{D}$
&unique $p$-adic system defined by given $p$-digit table $\STf{D}$
&\pageref{DFD}\tabularnewline
$\SPweakblock{p}{k}(\Sf{S})$
&$\Sf{S}$ has the weak $(p,k)$-block property
&\pageref{DSPweakblock}\tabularnewline
$\SPblock{p}{k}(\Sf{S})$
&$\Sf{S}$ has the $(p,k)$-block property
&\pageref{DSPblock}\tabularnewline
$\psi_\FFf{F}(n)$
&$\FFf{F}$-digit expansion of $n$
&\pageref{Dpsi}\tabularnewline
$\pi_{\FFf{F},\FFf{G}}(n)$
&number whose $\FFf{G}$-digit expansion equals the $\FFf{F}$-digit expansion of $n$
&\pageref{Dpi}\tabularnewline
$\FFf{F}_p$
&$p$-ary transformation
&\pageref{DFp}\tabularnewline
$\SoPermutations{p}$
&set of $p$-adic permutations
&\pageref{DSoPermutations}\tabularnewline
$\pi_k$
&$\pi$ modulo $p^k$
&\pageref{Dpik}\tabularnewline
$\Pi_{\FFf{G}}$
&group isomorphism between $\SoSystems{p}\slash_{\sim_p}$ and $\SoPermutations{p}$ with respect to $\FFf{G}$
&\pageref{Dgroupisom}\tabularnewline
$\circ_\FFf{G}$
&group operation on $\SoSystems{p}$ transported by $\Pi_{\FFf{G}}$
&\pageref{Dgroupop}\tabularnewline
$\sigma(\Sf{S},s)$
&cyclic shift of $\Sf{S}$ by $s$ steps
&\pageref{ECycShift}\tabularnewline
$\Sigma(\pi)$
&set of cycles of $\pi$
&\pageref{DSocycles}\tabularnewline
$\Sf{S}\sim_\sigma\Sf{T}$
&cyclical equivalence of $\Sf{S}$ and $\Sf{T}$
&\pageref{Dcyceq}\tabularnewline
$\abs{[\Sf{S}]_{\sim_\sigma}}$
&length/size of $[\Sf{S}]_{\sim_\sigma}$
&\pageref{Dcyclelength}\tabularnewline
$\mathcal{V}(\pi)$
&set of vertices of tree of cycles of $\pi$
&\pageref{Dtreeofcyclesvertices}\tabularnewline
$\mathcal{E}(\pi)$
&set of edges of tree of cycles of $\pi$
&\pageref{Dtreeofcyclesedges}\tabularnewline
$\mathcal{G}(\pi)$
&tree of cycles of $\pi$
&\pageref{Dtreeofcycles}\tabularnewline
$c(\pi)$
&edge labeling of tree of cycles of $\pi$
&\pageref{Dtreeofcyclescolor}\tabularnewline
$\FPweaklysuitableat{p}{r}{K}(f)$
&$f$ is weakly $(p,r)$-suitable at $K$
&\pageref{DFPweaklysuitableat}\tabularnewline
$\FPweaklysuitable{p}{r}(f)$
&$f$ is weakly $(p,r)$-suitable
&\pageref{DFPweaklysuitable}\tabularnewline
$\FPsuitableat{p}{r}{K}(f)$
&$f$ is $(p,r)$-suitable at $K$
&\pageref{DFPsuitableat}\tabularnewline
$\FPsuitable{p}{r}(f)$
&$f$ is $(p,r)$-suitable
&\pageref{DFPsuitable}\tabularnewline
$\DTPweakblockFat{K}(\STf{D})$
&$\STf{D}$ has the weak block property F at $K$
&\pageref{DDTPweakblockFat}\tabularnewline
$\DTPweakblockF(\STf{D})$
&$\STf{D}$ has the weak block property F
&\pageref{DDTPweakblockF}\tabularnewline
$\DTPweakblockSat{K}(\STf{D})$
&$\STf{D}$ has the weak block property S at $K$
&\pageref{DDTPweakblockSat}\tabularnewline
$\DTPweakblockS(\STf{D})$
&$\STf{D}$ has the weak block property S
&\pageref{DDTPweakblockS}\tabularnewline
$\FFPweakblockFat{K}(\FFf{F})$
&$\FFf{F}$ has the weak block property F at $K$
&\pageref{DFFPweakblockFat}\tabularnewline
$\FFPweakblockF(\FFf{F})$
&$\FFf{F}$ has the weak block property F
&\pageref{DFFPweakblockF}\tabularnewline
$\FFPweakblockSat{K}(\FFf{F})$
&$\FFf{F}$ has the weak block property S at $K$
&\pageref{DFFPweakblockSat}\tabularnewline
$\FFPweakblockS(\FFf{F})$
&$\FFf{F}$ has the weak block property S
&\pageref{DFFPweakblockS}\tabularnewline
$\FPintegral{p}{r}(f)$
&$f$ is $(p,r)$-integral
&\pageref{DFPintegral}\tabularnewline
$\FFPpolynomialcoefficientsdegree{A}{D}(\FFf{F})$
&$\FFf{F}$ is $A$-polynomial with degree in $D$
&\pageref{DFFPpolynomialcoefficientsdegree}\tabularnewline
$\FFPpolynomialcoefficients{A}(\FFf{F})$
&$\FFf{F}$ is $A$-polynomial
&\pageref{DFFPpolynomialcoefficients}\tabularnewline
$\FFPpolynomial(\FFf{F})$
&$\FFf{F}$ is polynomial
&\pageref{DFFPpolynomial}\tabularnewline
$\FFPlinearpolynomialcoefficients{A}(\FFf{F})$
&$\FFf{F}$ is $A$-linear-polynomial
&\pageref{DFFPlinearpolynomialcoefficients}\tabularnewline
$\FFPlinearpolynomial(\FFf{F})$
&$\FFf{F}$ is linear-polynomial
&\pageref{DFFPlinearpolynomial}\tabularnewline
$\SoFibredRationalFunctions{p}$
&set of $p$-fibred rational functions
&\pageref{DSoFibredRationalFunctions}\tabularnewline
$\FRFFdomain(\FRFf{R})$
&domain of $\FRFf{R}$
&\pageref{DFRFFdomain}\tabularnewline
$\FRFf{R}\vert_A$
&restriction of $\FRFf{R}$ to $A$
&\pageref{DFRFrestriction}\tabularnewline
$\FRFPdomain{A}(\FRFf{R})$
&$\FRFf{R}$ has domain $A$
&\pageref{DFRFPdomain}\tabularnewline
$\FRFPboundedby{A}(\FRFf{R})$
&$\FRFf{R}$ is $A$-bounded
&\pageref{DFRFPboundedby}\tabularnewline
$\FRFPclosed(\FRFf{R})$
&$\FRFf{R}$ is closed
&\pageref{DFRFPclosed}\tabularnewline
$\FRFPintegral(\FRFf{R})$
&$\FRFf{R}$ is integral
&\pageref{DFRFPintegral}\tabularnewline
$\FRFF{\FRFf{R}}{\Sf{D}}(x)$
&application of $\FRFF{\FRFf{R}}{\Sf{D}}$ to $x$
&\pageref{EFibredRFunction}\tabularnewline
$\FRFFST{\Sf{D}}(\FRFf{R})$
&$\FRFf{R}$-sequence table with respect to $\Sf{D}$
&\pageref{DFRFFST}\tabularnewline
$\FRFFST{\Sf{D}}(\FRFf{R})(n)$
&$\FRFf{R}$-sequence of $n$ with respect to $\Sf{D}$
&\pageref{DFRFFSTentry}\tabularnewline
$\FRFPpolynomialcoefficientsdegree{A}{D}(\FRFf{R})$
&$\FRFf{R}$ is $A$-polynomial with degree in $D$
&\pageref{DFRFPpolynomialcoefficientsdegree}\tabularnewline
$\FRFPpolynomialcoefficients{A}(\FRFf{R})$
&$\FRFf{R}$ is $A$-polynomial
&\pageref{DFRFPpolynomialcoefficients}\tabularnewline
$\FRFPpolynomial(\FRFf{R})$
&$\FRFf{R}$ is polynomial
&\pageref{DFRFPpolynomial}\tabularnewline
$\FRFPlinearpolynomialcoefficients{A}(\FRFf{R})$
&$\FRFf{R}$ is $A$-linear-polynomial
&\pageref{DFRFPlinearpolynomialcoefficients}\tabularnewline
$\FRFPlinearpolynomial(\FRFf{R})$
&$\FRFf{R}$ is linear-polynomial
&\pageref{DFRFPlinearpolynomial}\tabularnewline
$\FRFFint(\FRFf{R})$
&$p$-fibred function corresponding to $\FRFf{R}$
&\pageref{DFRFFint}\tabularnewline
$\FPavoiding{p}{r}(f)$
&$f$ is $(p,r)$-avoiding
&\pageref{DFPavoiding}\tabularnewline
$\FRFPavoiding(\FRFf{R})$
&$\FRFf{R}$ is avoiding
&\pageref{DFRFPavoiding}\tabularnewline
$\FFPavoiding(\FFf{F})$
&$\FFf{F}$ is avoiding
&\pageref{DFFPavoiding}\tabularnewline
$\FFFSoPP(\FFf{F})$
&set of periodic points of $\FFf{F}$
&\pageref{DFFFSoPP}\tabularnewline
$\FFFSoUPP(\FFf{F})$
&set of ultimately periodic points of $\FFf{F}$
&\pageref{DFFFSoUPP}\tabularnewline
$\FFFSoAPP(\FFf{F})$
&set of aperiodic points of $\FFf{F}$
&\pageref{DFFFSoAPP}\tabularnewline
$\FFPsatisfies{E}(\FFf{F})$
&$\FFf{F}$ satisfies $E$
&\pageref{DFFPsatisfies}\tabularnewline
$\FFPperiodicon{A}(\FFf{F})$
&$\FFf{F}$ is periodic on $A$
&\pageref{DFFPperiodicon}\tabularnewline
$\FFPultimatelyperiodicon{A}(\FFf{F})$
&$\FFf{F}$ is ultimately periodic on $A$
&\pageref{DFFPultimatelyperiodicon}\tabularnewline
$\FFPaperiodicon{A}(\FFf{F})$
&$\FFf{F}$ is aperiodic on $A$
&\pageref{DFFPaperiodicon}\tabularnewline
$\FFPcontractive(\FFf{F})$
&$\FFf{F}$ is contractive
&\pageref{DFFPcontractive}\tabularnewline
$\FFPexpansive(\FFf{F})$
&$\FFf{F}$ is expansive
&\pageref{DFFPexpansive}\tabularnewline
$\FFPmixed(\FFf{F})$
&$\FFf{F}$ is of mixed type
&\pageref{DFFPmixed}\tabularnewline
$\FFPcontractsdenominators(\FFf{F})$
&$\FFf{F}$ contracts denominators
&\pageref{DFFPcontractsdenominators}\tabularnewline
$\FFPexpandsdenominators(\FFf{F})$
&$\FFf{F}$ expands denominators
&\pageref{DFFPexpandsdenominators}\tabularnewline
$\FFPmixesdenominators(\FFf{F})$
&$\FFf{F}$ mixes denominators
&\pageref{DFFPmixesdenominators}\tabularnewline
$\FRFlinA{\FFf{F}}{\Sf{D}}$
&constant coefficient of $p^{\abs{\Sf{D}}}\FRFF{\FRFf{R}}{\Sf{D}}(x)\in\Z_p[x]$
&\pageref{DFRFlinA}\tabularnewline
$\FRFlinB{\FFf{F}}{\Sf{D}}$
&linear coefficient of $p^{\abs{\Sf{D}}}\FRFF{\FRFf{R}}{\Sf{D}}(x)\in\Z_p[x]$
&\pageref{DFRFlinB}\tabularnewline
$\pi_{\FFf{F},\sigma,\FFf{G}}(n)$
&variant of $\pi_{\FFf{F},\FFf{G}}(n)$ involving swapping of digits as specified by $\sigma$
&\pageref{Dpisigma}\tabularnewline
$\FFFSoPP(A)$
&union of sets of periodic points of $\FFf{F}$ for all $\FFf{F}\in A$
&\pageref{DFFFSoPUAPP}\tabularnewline
$\FFFSoUPP(A)$
&union of sets of ultimately periodic points of $\FFf{F}$ for all $\FFf{F}\in A$
&\pageref{DFFFSoPUAPP}\tabularnewline
$\FFFSoAPP(A)$
&union of sets of aperiodic points of $\FFf{F}$ for all $\FFf{F}\in A$
&\pageref{DFFFSoPUAPP}\tabularnewline
$f_k$
&$f$ modulo $k$
&\pageref{Dpermpolyk}\tabularnewline
$\psi_{\FFf{F},k}$
&$\psi_\FFf{F}$ modulo $k$
&\pageref{Dpsik}
\end{longtable}
\egroup

\end{document}